\let\oldmarginpar\marginpar
\renewcommand\marginpar[1]{\-\oldmarginpar[\raggedleft\footnotesize #1]%
{\raggedright\footnotesize #1}}
\theoremstyle{plain}
\newtheorem{theorem}                                        {Theorem}      [section]
\newtheorem{proposition}             [theorem]              {Proposition}
\newtheorem{corollary}               [theorem]              {Corollary}
\newtheorem{lemma}                   [theorem]              {Lemma}
\newtheorem{theoremwithoutsection}                          {Theorem}      [chapter]
\newtheorem{corollarywithoutsection} [theoremwithoutsection]{Corollary}
\theoremstyle{definition}
\newtheorem{example}                 [theorem]  {Example}
\newtheorem{remark}                  [theorem]  {Remark}
\newtheorem{definition}              [theorem]  {Definition}
\newtheorem{remarkwithoutsection}    [theoremwithoutsection]{Remark}
\numberwithin{equation}{section}
\def \theoremwithoutnumber#1#2 {\vskip .25cm\noindent{\bf Theorem #1.\ }{\it #2}\vskip .25cm}
\def \corollarywithoutnumber#1#2 {\vskip .25cm\noindent{\bf Corollary #1.\ }{\it #2}\vskip .25cm}
\def \propositionwithoutnumber#1#2 {\vskip .25cm\noindent{\bf Proposition #1.\ }{\it #2}\vskip .25cm}
\def \lemmawithoutnumber#1#2 {\vskip .25cm\noindent{\bf Lemma #1.\ }{\it #2}\vskip .25cm}
\renewcommand{\thechapter}{\Roman{chapter}}
\def \g{\mathfrak{g}}
\def \gl{\mathfrak{gl}}
\def \so{\mathfrak{so}}
\def \o{\mathfrak{o}}
\def \m{\mathfrak{m}}
\def \u{\mathfrak{u}}
\def \O{\text{\bf O}}
\def \SO{\text{\bf SO}}
\def \U{\text{\bf U}}
\def \GL{\text{\bf GL}}
\def \G{\text{\bf G}}
\def \K{\text{\bf K}}
\def \cn{\mathbb{C}}
\def \rn{\mathbb{R}}
\def \V{\mathcal{V}}
\def \H{\mathcal{H}}
\def \J{\mathcal{J}}
\def \V{\mathcal{V}}
\def \openU{\mathcal{U}}
\def \L{\mathcal{L}}
\def \C{\mathcal{C}}
\def \N{\mathcal{N}}
\def \d{\mathrm{d}}
\def \equi{\Leftrightarrow}
\def \impl{\Rightarrow}
\DeclareMathOperator{\Hom}{Hom} \DeclareMathOperator{\Id}{Id}
\DeclareMathOperator{\trace}{trace}
\DeclareMathOperator{\proj}{proj} \DeclareMathOperator{\Real}{Re}
\DeclareMathOperator{\Imag}{Im} 
 \DeclareMathOperator{\grad}{grad}
\DeclareMathOperator{\wordspan}{span}
\def \ddtatzero{\left.\frac{\partial}{\partial t}\right|_{t=0}}
\begin{document}

\pagestyle{empty} 

\begin{titlepage}
\centering\Huge
Twistorial constructions of harmonic morphisms and Jacobi fields\\
\Large
\vspace{1.8cm}
Bruno Manuel Ascenso da Silva Simões\\
\large
\vspace{3.7cm}
Submitted in accordance with the requirements for the degree of Doctor of Philosophy\\
\vspace{1cm}
\Large
The University of Leeds \\
Department of Pure Mathematics \\
\vspace{0.8cm}
September 2007\\

\large \vspace{2.0cm}
The candidate confirms that the work submitted is his own and that appropriate
credit has been given where reference has been made to the work of others.
This copy has been supplied on the understanding that it is copyright material
and that no quotation from the thesis may be published without proper
acknowledgement.
\normalsize
\end{titlepage}


\cleardoublepage
\vspace*{\stretch{1}}
\begin{flushright}
\Large{\textit{To my wife Joana}}
\end{flushright}
\vspace*{\stretch{2}} 
\newpage

\vspace*{\stretch{1}}
\parbox{15cm}
{\textit{It is not knowledge, but the act of learning,}\\
\textit{ not possession but the act of getting there,\\ which grants the greatest enjoyment.}\\
Gauss (1808)\\~\\~\\
\textit{O Binómio de Newton é tão belo como a Vénus de Milo.}\\
\textit{O que há é pouca gente para dar por isso.}\\
Álvaro de Campos (Fernando Pessoa) (1928)
}
\vspace*{\stretch{2}}


\chapter*{Acknowledgements}\label{Chapter:Acknowledgements}


\pagestyle{fancy}
\fancyhf{}                                                  
\fancyhead[LE,RO]{\thepage}
\renewcommand{\headrulewidth}{0 pt}
\pagenumbering{roman}
\setcounter{page}{5}
\pagestyle{fancyplain}


First of all, I am very grateful to my supervisor Professor John C. Wood for his infinite patience during these years.
Without his compelling enthusiasm for my ideas throughout my stay in Leeds, this work would have never come to fruition.

To my colleague and friend Martin Svensson, I would like to dedicate as many lines in this work as the hours we spent debating ideas, mathematical and otherwise.

A special thanks to Professor Maria João Ferreira for her interest in my work and for all the support she has always
given me.

I thank all my friends in Leeds without whom I would not have enjoyed my stay half as much.

I thank my family in Lisbon for their boundless care and for making the distance between us seem so short.

Finally, I express my gratitude to my wife Joana, for the joy and the colour she brings to my life.

This work was supported by a grant from Fundação para a Ciência e Tecnologia (SFRH/BD/13633/2003),
which I wish to acknowledge, especially for their efficiency in dealing with any matters I raised with them.


\pdfbookmark[0]{Acknowledgements}{Acknowledgements}

\chapter*{Abstract}\label{Abstract}


\emph{Harmonic maps} are mappings between Riemannian manifolds which extremize a certain natural energy functional generalizing the Dirichlet integral to the setting of Riemannian manifolds. A \emph{harmonic morphism} is a map between Riemannian manifolds which pulls back locally defined \emph{harmonic functions} on the codomain to locally defined harmonic functions on the domain. Harmonic morphisms can be geometrically characterized as harmonic maps which also satisfy the partial conformality condition of \emph{horizontal (weak) conformality} (\cite{Fuglede:81}, \cite{Ishihara:79}). Twistor methods provide a powerful tool in the study of harmonic maps and harmonic morphisms. Indeed, their use has enabled us to produce a variety of examples of harmonic morphisms defined on $4$-dimensional manifolds, and a complete classification in some cases (\cite{BairdWood:03}, \cite{Ville:03}, \cite{Wood:92}). In the first part of this work, we generalize those constructions to obtain harmonic morphisms from higher-dimensional manifolds.

The \emph{infinitesimal deformations} of harmonic maps are called \emph{Jacobi fields}. They satisfy a system of partial differential equations given by the linearization of the equations for harmonic maps. The use of twistor methods in the study of Jacobi fields has proved quite fruitful, leading to a series of results (\cite{LemaireWood:96}, \cite{MontielUrbano:97}, \cite{Wood:02}). In \cite{LemaireWood:02} and \cite{LemaireWood:07} several properties of Jacobi fields along harmonic maps from the $2$-sphere to the complex projective plane and to the $4$-sphere are obtained by carefully studying the twistorial construction of those harmonic maps. In particular, relating the infinitesimal
deformations of the harmonic maps to those of the holomorphic data describing them. In the second part of this work we give a general treatment of these relations between Jacobi fields and variations in the \emph{twistor space}, obtaining \emph{first-order} analogues of twistorial constructions.



\pdfbookmark[0]{Abstract}{Abstract}
\cleardoublepage 


\pagenumbering{arabic} 
\pdfbookmark[0]{Table of Contents}{contents}
\tableofcontents

\chapter*{Introduction}\label{Chapter:Introduction}

\addcontentsline{toc}{chapter}{\numberline{}Introduction}


\emph{Harmonic maps} are mappings between Riemannian manifolds which extremize a certain natural energy functional generalizing the Dirichlet integral to the setting of Riemannian manifolds. A bibliography can be found in \cite{BurstallLemaireRawnsley:Online} and for some useful summaries on this topic, see \cite{EellsLemaire:78}, \cite{EellsLemaire:88}. \textit{Inter alia}, harmonic maps include isometries (distance-preserving maps), harmonic functions (solutions to Laplace's equation on a Riemannian manifold), geodesics, and holomorphic (complex analytic) maps between suitable complex manifolds.

\emph{Harmonic morphisms} are maps between Riemannian manifolds which pull back local harmonic functions on the codomain to local harmonic functions on the domain. Geometrically, harmonic morphisms are characterized as harmonic maps which satisfy the partial conformality condition of \emph{horizontal (weak) conformality} (\cite{Fuglede:81}, \cite{Ishihara:79}). Together, these two conditions amount to an over-determined, non-linear system of partial differential equations. It is thus not surprising that the question of the existence of harmonic morphisms is, in general, very difficult to answer.

Nevertheless, the study of harmonic morphisms from $4$-dimensional manifolds to Riemann surfaces has been greatly advanced with the aid of \emph{twistor} methods. In some cases, even a complete classification of maps has been found, see
\cite{BairdWood:03}, \cite{Ville:03}, \cite{Wood:92}.

To generalize these results notice that, when the domain is a $4$-dimensional Einstein manifold and the codomain is a surface, a harmonic morphism can be characterized as a map which is holomorphic and has \emph{superminimal} fibres with respect to some integrable Hermitian structure on its domain. With this observation in mind, in Chapter \ref{Chapter:HarmonicMorphismsAndTwistorSpaces}, we further the application of twistor methods to the construction of \emph{holomorphic maps with superminimal fibres}, giving large families of superminimal (and so minimal) submanifolds and extend some of the results from the $4$-dimensional case to arbitrary even dimensions.

In Chapter \ref{Chapter:TwistorSpaces}, we define a new notion of \emph{twistor space} $\Sigma^+V$ for a vector bundle $V$ over a manifold $M$. This generalizes the usual notion of twistor space over a Riemannian manifold and will play a crucial role in all subsequent chapters. Using the Koszul-Malgrange Theorem (\cite{KoszulMalgrange:58}), we prove that a holomorphic structure can be introduced on $\Sigma^+V$, provided a certain condition on the curvature of $V$ is satisfied (Theorem \ref{Theorem:KoszulMalgrangeComplexStructureOnSigmaPlusV}). Moreover, we shall prove a parametric version of such a result and use it later on when studying variations of harmonic maps.

When $N^{2n}$ is a Riemannian manifold, its twistor space $\Sigma^+N$ is equipped with two almost complex structures $\J^1$ and $\J^2$, the second of which is never integrable (see, \textit{e.g.}, \cite{BurstallRawnsley:90}). It is well known (\cite{DavidovSergeev:93}, \cite{Salamon:85}) that twistor methods relate \emph{conformal} harmonic maps $\varphi:M^2\to{N}^{2n}$ from Riemann surfaces with holomorphic maps to the twistor space $\Sigma^+N$. In higher dimensions, \textit{conformal} and \textit{harmonic} must be replaced by \textit{pluriconformal} and either \textit{pluriharmonic} or \textit{$(1,1)$-geodesic}. We investigate the properties of $(1,1)$\emph{-geodesic maps} $\varphi:M^{2m}\to N^{2n}$ and their relations with twistor lifts.

In Chapter \ref{Chapter:HarmonicMapsAndTwistorSpaces}, we prove the following result (\cite{SimoesSvensson:06}, see also Remark 5.7 of \cite{Rawnsley:85}):
\theoremwithoutnumber{\ref{Theorem:NewTheorem35}}{Let $(M,g,J^M)$ be a $(1,2)$-symplectic manifold and $N^{2n}$ an oriented even-dimensional Riemannian manifold. Consider a $(J^M,\J^2)$-holomorphic map $\psi:M^{2m}\to\Sigma^+N^{2n}$. Then, the projected map $\varphi:M^{2m}\to N^{2n}$, $\varphi=\pi\circ\psi$, is pluriconformal and $(1,1)$-geodesic.}

Under some further conditions on the curvature of the normal bundle $V=\big(\d\varphi(TM)\big)^\perp$, we also have a converse to this theorem (Theorem \ref{Theorem:NewTheorem4.1}). We show that the condition on the curvature is always satisfied when $N$ is a Riemannian symmetric space of Euclidean, compact or non-compact type.

In the same way that harmonicity of a map $\varphi:M\to N$ can be interpreted as $\J^2$-holomorphicity of its twistor lift $\psi:M\to\Sigma^+N$, we relate pluriconformality and real-isotropy of $\varphi$ with, respectively, $\H$ and $\J^1$-holomorphicity of its twistor lift. This is done in Section \ref{Section:SectionInChapter:TwistorSpacesAndHarmonicMaps:TheHAndJ1CasesRealIsotropyAndTotallyUmbilicMaps} and will also play a crucial role in Chapter \ref{Chapter:JacobiVectorFieldsAndTwistorSpaces}.

In Chapter \ref{Chapter:HarmonicMorphismsAndTwistorSpaces}, we apply these results to the study of holomorphic maps with
superminimal fibres. We provide a twistor method for the construction of such maps, generalizing the methods in the $4$-dimensional case, see \cite{BairdWood:03}, \cite{Wood:92}. More precisely, we prove the following result (\cite{SimoesSvensson:06}):
\theoremwithoutnumber{\ref{Theorem:ConstructionOfHarmonicMorphismsWithSuperminimalFibresBruSve}}{
Let $N^{2n}$ and $P^{2p}$ be complex manifolds and $M^{2(n+p)}$ an oriented Riemannian manifold. Denote by $\pi_1:N\times P\to N$ the projection onto the first factor. Assume that we have a $\J^1$-holomorphic map
\[H:W\subseteq N\times P\to\Sigma^+M,\quad(z,\xi)\mapsto H(z,\xi)\text{,}\]
defined on some open subset $W$, such that \newline\indent\textup{(i)} for each $z$, the map $P\ni\xi\to{H}(z,\xi)\in\Sigma^+M$ is horizontal on its domain;
\newline\indent\textup{(ii)} the map $h=\pi\circ H$ is a diffeomorphism onto its image.
\newline\noindent Then, the map $\pi_1\circ h^{-1}:h(W)\subseteq M\to N$ is holomorphic and has superminimal fibres with respect to the Hermitian structure on $h(W)$ defined by the section $H\circ{h}^{-1}$. In particular, when $n=1$, $\pi_1\circ h^{-1}$ is a harmonic morphism.}

We also prove a converse to this result (Theorem \ref{Theorem:ConverseToThe:Theorem:ConstructionOfHarmonicMorphismsWithSuperminimalFibresBruSve}).

In the last chapter, we apply and extend the results in Chapters \ref{Chapter:TwistorSpaces}--\ref{Chapter:HarmonicMapsAndTwistorSpaces} to the study of \emph{infinitesimal variations} of harmonic maps. A \emph{Jacobi field} $v$ along a harmonic map $\varphi_0:M\to N$ is characterized as being a solution to the linear equation $J_{\varphi_0}(v)=0$. A Jacobi field $v$ along $\varphi_0$ is said to be integrable if there is a smooth variation $\varphi$ of $\varphi_0$ tangent to $v$ (\textit{i.e.}, $\varphi:I\times M\to N$, $(t,x)\to\varphi(t,x)=\varphi_t(x)$, with $\varphi(0)=\varphi_0$ and $\ddtatzero\varphi=v$) by harmonic maps (\textit{i.e.}, $\varphi_t:M\to N$ is harmonic for all $t\in{I}\subseteq\rn$). For two real-analytic manifolds, all Jacobi fields are integrable if and only if the space of harmonic maps is a manifold whose tangent bundle is given by the Jacobi fields (\cite{AdamsSimon:88}, see \cite{LemaireWood:02}). L.~Lemaire and J.~C. Wood studied the integrability of harmonic maps from the $2$-sphere to the complex projective plane and to the $4$-sphere, obtaining a positive answer for the first case \cite{LemaireWood:02} and negative for second \cite{LemaireWood:07}. This was achieved by carefully studying the twistorial construction of those harmonic maps; in particular, relating the infinitesimal deformations of the harmonic maps to those of the holomorphic data describing them. For maps $\psi:I\times (M,J^M)\to (N,h,J^N)$, we introduce the concept of maps \emph{holomorphic to first order} as those for which simultaneously $\d\psi_0(J^MX)=J^N\d\psi_0(X)$ and $\nabla_{\ddtatzero}\big(\d\psi_t(J^MX)-J^N\d\psi_t(X)\big)=0$. In order to advance this programme, we then prove a series of relations between infinitesimal properties of the map $\varphi$ and those of its twistor lift $\psi$, obtaining \emph{first-order} analogues of twistorial constructions; thus, providing a unified twistorial framework for the results in \cite{LemaireWood:02} and \cite{LemaireWood:07}.


\renewcommand{\chaptermark}[1]{\markboth{#1}{}}             
\renewcommand{\sectionmark}[1]{\markright{\thesection\ #1}} 

\chapter{Twistor spaces}\label{Chapter:TwistorSpaces}


\fancyhf{}                                                  
\fancyhead[LE,RO]{\thepage}
\fancyhead[LO]{\nouppercase{\slshape\rightmark}} 
\fancyhead[RE]{\nouppercase{\slshape\leftmark}}
\renewcommand{\headrulewidth}{0 pt}
\pagestyle{fancyplain} 



In this chapter, we define a new notion of \emph{twistor space} $\Sigma^+V$ for an orientable vector bundle $V$ of even rank over a manifold $M$. This generalizes the usual notion of twistor space over a Riemannian manifold where $V=TM$ and
will become quite useful in Chapters \ref{Chapter:HarmonicMapsAndTwistorSpaces} and \ref{Chapter:JacobiVectorFieldsAndTwistorSpaces}. In Section \ref{Section:SectionInChapter:TwistorSpaces:HolomorphicStructureOnSigmaPlusVAndKoszulMalgrangeTheorem} we shall see under what circumstances we can give $\Sigma^+V$ the structure of a holomorphic bundle over a complex manifold $M$ and the importance of the Koszul-Malgrange Theorem. We also take the opportunity to generalize this theorem into a ``time-dependent" version, that will come into play when we study variations of harmonic maps in Chapter \ref{Chapter:JacobiVectorFieldsAndTwistorSpaces}.

Throughout this and subsequent chapters, all vector spaces are assumed to be finite-dimensional. We shall also use the following standard notation: if $E$ is an oriented Euclidean vector space and $J$ a \emph{Hermitian structure} on $E$ (Definition \ref{Definition:ComplexStructureOnAVectorSpace}):\index{$\GL(E)$}\index{$\O(E)$}\index{$\SO(E)$}\index{$k$-linear~maps}\index{$\L^k(E,E')$}\index{$\U_J(E)$}\index{$\gl(E)$}\index{$\L(E,E)$}\index{$\so(E)$}\index{$\u_J(E)$}\index{$\m_J(E)$!}\index{Lie~groups~and~Lie~algebras~notation}

\addtolength{\arraycolsep}{-1.2mm}$\begin{array}{cll}
\GL(E)&=& \{\lambda:E\to E:\,\lambda\text{ is an isomorphism}\}\\
\O(E)&=& \{\lambda\in\GL(E):\lambda\circ\lambda^\top=Id\}\\
\SO(E)&=&\{\lambda\in\O(E):\,\lambda\text{~is~orientation~preserving}\}\\
\U_J(E)&=&\{\lambda\in\SO(E):\,\lambda\circ J=J\circ\lambda\}\\
\L(E,E)\footnotemark&=&T_{Id}\GL(E)=\gl(E)\\
\o(E)&=&\so(E)=T_{Id}\O(E)=T_{Id}\SO(E)=\{\lambda\in\L(E,E):\lambda+\lambda^\top=0\}\\
u_J(E)&=&T_{Id}\U_J(E)=\{\lambda\in\so(E):\lambda\circ{J}=J\circ\lambda\}\\
m_J(E)&=&\{\lambda\in\so(E):\,\lambda\circ J=-J\circ\lambda\}
\end{array}$\addtolength{\arraycolsep}{1.2mm}

\footnotetext{More generally, when $E$ and $E'$ are two vector spaces, $\L^k(E,E')$ denotes the space of $k$-linear maps from $E\times...\times{E}$ ($k$ copies) to $E'$.}\index{Complexified!vector~space}\index{Complexified!metric}\index{Metric!complexified}\index{$E^\cn$}\noindent
When $W$ is a complex vector space, we shall add the prefix $\cn$ to indicate that we are dealing with complex-linear maps, thus obtaining the usual groups $\GL(\cn,W)$, $\O(\cn,W)$, $\SO(\cn,W)$, \textit{etc.}. To be more precise, any complex vector space $W$ that we shall be dealing with will be the \emph{complexification} of some Euclidean vector space $E$, \textit{i.e.}, $W=E^\cn=E\otimes\cn$. Hence, on $W$, we have the complex bilinear extension of the Euclidean metric $<,>$ on $E$ (the \emph{complexified metric}, that we still denote by $<,>$). Then we can define $<\lambda^\top(u),v>=<u,\lambda(v)>$ for $\lambda\in\L(\cn,W)$ and obtain the above groups. When $W=\cn^k\simeq\rn^k\otimes\cn$ we shall use the more usual notation $\GL(\cn,k)$ instead of $\GL(\cn,\cn^k)$, and similarly for the other standard groups.


\section{Hermitian~structures~on~a~vector~space}\label{Section:SectionIn:Chapter:TwistorSpaces:HermitianStructuresonAVectorSpace}


The following definitions and results are standard in the literature (\textit{e.g.} \cite{BairdWood:03}, \cite{DavidovSergeev:93}, \cite{KobayashiNomizu:69}, \cite{Salamon:85}).

\begin{definition}[\emph{Hermitian~structure~on~a~vector~space}]\label{Definition:ComplexStructureOnAVectorSpace}\index{$\Sigma~E$}\index{Complex~structure!on~a~vector~space}\index{Hermitian~structure!on~a~Euclidean~vector~space}\index{Compatible!complex~structure~and~metric}\index{$\Sigma~E$}
Let $E$ be a real vector space. A \emph{complex structure} on $E$ is a linear map $J:E\to E$ such that $J^2=-\Id$. If $E$ is a Euclidean vector space, we call a complex structure $J$ \emph{Hermitian} if it is \emph{compatible} with the Euclidean metric:
\begin{equation}\label{Equation:EquationIn:Definition:ComplexStructureOnAVectorSpace}
<J u,J v>=<u,v>,\quad\forall\,u,v\in{E}.
\end{equation}
We shall represent the set of Hermitian structures on $E$ by
$\Sigma~E$.
\end{definition}\index{Complex~structure!orientation~preserving}\index{Complex~structure!even~dimension}\index{Orientation!induced~by~a~complex~structure}

It is easy to see that if $E$ is a real vector space which admits an almost complex structure $J$, then $E$ admits the structure of a complex vector space, by defining $i.u=J u$ for $u\in{E}$. In particular, it is easy to see that if we consider a basis $\{e_{1},...,e_k\}\in{E}$ for $E$ as a complex vector space, then $\{e_1,J e_1,...,e_k,J e_k\}$ is a basis for $E$ as a real vector space. Hence $E$ must have even real dimension. Two bases of $E$ of the form $\{e_1,Je_1,...,e_k,Je_k\}$ always differ by a matrix of positive determinant. Consequently, we can define an orientation on $E$ by declaring $\{e_1, J e_1,...,e_k,J e_k\}$ to be positive; we call this orientation the \emph{natural orientation induced by the almost complex structure} $J$ on the vector space $E$. If $J$ is some complex structure on $E$, it has two and only two eigenvalues (as a linear isomorphism on $E^\cn$), namely $i$ and $-i$; their
corresponding eigenspaces will be denoted by $E^{10}$ and $E^{01}$,
respectively, so that we have the decomposition
\[E^\cn=E^{10}\oplus E^{01},\,E^{10}=\{u-iJu\in{E}^{\cn}:\,u\in{E}\},\,E^{01}=\{u+iJu\in{E}^\cn:\,u\in{E}\}\text{.}\]
Finally, if $J$ is a Hermitian structure on $E$, it is easy to prove the existence of an orthonormal basis $\{e_1,Je_1,...,e_k,Je_k\}$ for $E$ (\textit{e.g.}, \cite{KobayashiNomizu:69}, Vol 2, Proposition 1.8).\index{$E^\cn$,~$E^{10}$,~$E^{01}$}\index{Decomposition!into~$(1,0)$~and~$(0,1)$~subspaces}\index{$(1,0)$,~$(0,1)$~subspaces}

\begin{definition}[\emph{Positive~Hermitian~structure~on~a~vector~space}]\label{Definition:PositiveHermitianStructureOnAEuclideanVectorSpace}\index{Positive~Hermitian~structure!on~a~vector~space}\index{Hermitian~structure!positive~(on~a~vector~space)}\index{Hermitian~structure!negative}
Let $J$ be a Hermitian structure on an oriented Euclidean space $E$. We say that $J$ is \emph{positive} if the natural orientation it induces on $E$ is the orientation of $E$. Equivalently, $J$ is positive if and only if there is a positive basis of the form $\{e_1,J e_1,...,e_k,J e_k\}$. The set of all positive Hermitian structures on an oriented Euclidean space will be represented by $\Sigma^+ E$. Similarly we can define $\Sigma^-E$; clearly, if $\tilde{E}$ represents $E$ with the opposite orientation, $\Sigma^+\tilde{E}=\Sigma^-E$.
\end{definition}

Now, $\SO(E)$ acts on $\Sigma^+E$ by the formula\index{Action!of~$\SO(E)$~on~$\Sigma^+E$}\addtolength{\arraycolsep}{-1.2mm}
\begin{equation}\label{Equation:ActionOfSOVOnSigmaPlusV}
\begin{array}{cccll}
\SO(E)&\times&\Sigma^+ E&\to&\Sigma^+ E\\
 (S&,& J)&\to& S\cdot J=S\circ J\circ S^{-1}\text{.}
\end{array}
\end{equation}
It is easy to check that this is indeed a well-defined transitive action with isotropy subgroup at an element $J\in\Sigma^+E$ given by
\begin{equation}\label{Equation:GroupUJE}\index{$\U_J(E)$}
\U_J(E)=\{S\in\SO(E):S\cdot J=J\cdot S\}\text{.}
\end{equation}\addtolength{\arraycolsep}{1.2mm}\noindent
It is clear that $\U_J(E)$ is a closed subgroup of $\SO(E)$ so that $\Sigma^+E$ has the unique differentiable structure such that this action is smooth (\textit{e.g.}, \cite{BairdWood:03}). On the other hand, since the action \eqref{Equation:ActionOfSOVOnSigmaPlusV} admits a smooth extension to an open set of $\L(E,E)$ and $\SO(E)$ is compact, $\Sigma^+E$ is a submanifold of $\L(E,E)$.
\label{Page:DefinitionOfTheDifferentiableStructureOnSigmaPlusV}\index{$\Sigma^+E$!differentiable~structure}\index{$\Sigma^+E$!as~a~submanifold~of~$\L(E,E)$}

Fix $J\in\Sigma^+E$ and consider $\U_J(E)$ as well as the \emph{involution}
$\sigma$ defined by\index{Involution}\addtolength{\arraycolsep}{-1.2mm}
\begin{equation}\label{Equation:InvolutionInSOV}
\begin{array}{lcll}
\sigma:&\SO(E)&\to&\SO(E)\\
~&S&\to& J\circ S\circ J^{-1}\quad(=-J\circ S\circ J)\text{.}
\end{array}
\end{equation}\addtolength{\arraycolsep}{1.2mm}\noindent
Then, $\big(\SO(E),\U_J(E),\sigma\big)$ is a symmetric space (see, for example, \cite{KobayashiNomizu:69}, Vol. 2 Ch. XI for standard notation and terminology) and it gives rise to a symmetric Lie algebra, $\big(\so(E),u_J(E),\sigma^\star\big)$ where $\sigma^{\star}=\d\sigma_{\Id}$. Let $\m_J(E)$ denote the eigenspace of $\so(E)$ associated to the eigenvalue $-1$ of
$\sigma^\star$:\index{$\m_J(E)$}\index{$\u_J(E)$}\index{$\so(E)$}\index{Symmetric!space}\index{Symmetric!Lie~algebra}
\[\m_J(E)=\{\lambda\in\so(E):\,\sigma^\star(\lambda)=-\lambda\}=\{\lambda\in\so(E):\,\lambda\circ{J}=-J\circ\lambda\}\text{;}\]
then, $\so(E)=\u_J(E)\oplus \m_J(E)$. We can now define a complex structure on $\m_J(E)$ (see ,\textit{e.g.}, \cite{BairdWood:03} Ch. 7 or \cite{DavidovSergeev:93}):

\begin{definition}[\emph{Complex~structure~on~$\m_J(E)$}]\label{Definition:AlmostComplexStructureOnMJV}\index{Complex~structure!on~$\m_J(E)$}\index{$\m_J(E)$!complex~structure}
We can define a complex structure $\J^\V$ on $\m_J(E)$ by\addtolength{\arraycolsep}{-1.2mm}
\begin{equation}\label{Equation:EquationIn:Definition:AlmostComplexStructureOnMJV}
\begin{array}{lcll}
\J^\V: &\m_J&\to& \m_J \\
~& \lambda&\to& \J^\V(\lambda)=J\circ\lambda\,\,\big(=-\lambda\circ{J}\big)\text{.}
\end{array}
\end{equation}\addtolength{\arraycolsep}{1.2mm}
\end{definition}

With the (unique) differentiable structure that makes the action \eqref{Equation:ActionOfSOVOnSigmaPlusV} smooth, $\Sigma^+E$ is a submanifold of $\L(E,E)$. Choose $J\in\Sigma^+E$ and consider the smooth submersion \index{$R_J$,~right~action}\index{Action!$R_J$}$R_J:\SO(E)\to\Sigma^+E$ obtained by the
right action of $J$, $R_J(S)=S\cdot J=S\circ J\circ S^{-1}$. Clearly, $R_J(\Id)=J$ and $\d{R}_{J_{\Id}}:T_{\Id}\SO(E)=\so(E)\to T_J\Sigma^+E$ is a surjective map. Hence, $T_J\Sigma^+E=\{\lambda\in\L(E,E):\,\exists\,\beta\in\so(V)\text{~with~}\d{R}_{J_{\Id}}(\beta)=\lambda\}$. Now, as
$\d{R}_{J_{\Id}}(\beta)=\beta J-J\beta$, we can rewrite
$T_J\Sigma^+E$ as the set $\{\lambda\in\L(E,E):\,\exists\,\beta\in\so(E)\text{~with~}\beta{J}-J\beta=\lambda\}$. Finally, it is easy to check that this set is nothing but $\m_J (E)$ and therefore
\begin{equation}\label{Equation:TheTangentSpaceToSigmaPlusVAtAPointJ}
T_J\Sigma^+E=\m_J(E)\text{.}
\end{equation}
\noindent As a consequence, from Definition \ref{Definition:AlmostComplexStructureOnMJV}, we conclude that $\Sigma^+E$ is an almost complex manifold. We shall see in the next section that $\Sigma^+E$ is, indeed, a \emph{complex} manifold and that this complex structure can be given in a different way\footnote{We can also give $\Sigma^-E$ an almost complex
structure: just consider $\Sigma^-E$ as $\Sigma^+\tilde{E}$ where $\tilde{E}$ is the Euclidean space $E$ with the opposite orientation.}.\label{Page:DefinitionOfAlmostComplexStructureOnSigmaPlusE}\index{Almost~complex~structure!on~$\Sigma^+E$}\index{Almost~complex~structure!on~$\Sigma^-E$}\index{$\Sigma^+E$!almost~complex~structure}


\subsection{\texorpdfstring{$\Sigma^+E$ as the complex manifold $G^+_{iso}(E^\cn)$}{Sigma+E as the complex manifold G+iso(Ec)}}\label{Subsection:SigmaPlusEAsTheComplexManifoldGkOplusIsoEcn}\index{$G^+_{iso}(E^\cn)$}


\label{Page:kDimensionalIsotropicSubspacesAndHermitianStructures}\index{Isotropic~subspace}\index{Positive~isotropic~subspace|see{Isotropic~subspace}}\index{Isotropic~subspace!positive}\index{Isotropic~subspace!}\index{Hermitian~structure!induced~by~a~isotropic~subspace}\index{Isotropic~subspace!and~Hermitian~structures}\index{Positive~isotropic~subspace|see{Isotropic~subspace}}\index{Isotropic~subspace!positive}\index{$J_F$}\noindent
Let $(E,<,>)$ be an oriented even-dimensional Euclidean vector space with (real) dimension $2k$. Consider its complexification $E^\cn$, equipped with the complexified metric. Given a complex linear subspace $F\subseteq E^\cn$, we shall say that $F$ is \emph{isotropic} if $<F,F>=0$; \textit{i.e.}, $<u,v>=0$ for every pair of vectors $u,v\in{F}$. We now wish to introduce the notion of \emph{positive isotropic subspace}. We start by noticing that to each $k$-dimensional isotropic linear subspace on $E^\cn$ corresponds one and only one Hermitian structure $J$ (the \emph{induced Hermitian structure}) on $E$ such that $F$ is the corresponding $(1,0)$-subspace. Explicitly, to each $J$ in $\Sigma~E$ we make correspond the isotropic $k$-dimensional subspace in $E^\cn$ given by $E^{10}$ and, conversely, given an isotropic $k$-dimensional subspace $F$ in $E^\cn$, we define $J^\cn$ as acting as $i$ on $F$ and $-i$ on its conjugate $\overline{F}:=\{\overline{u}\in{E}^\cn:\,u\in{F}\}$. It is easy to check that $J^\cn$ preserves $E$ and has $F$ as its associated $(1,0)$-subspace. Hence, we can now define an isotropic $k$-dimensional complex linear subspace $F$ in $E^\cn$ as being \emph{positive} if the induced Hermitian structure $J_F$ is positive. Therefore, we can construct the Grassmannian of the $k$-dimensional positive isotropic subspaces on $E^\cn$,
\begin{equation*}\index{Grassmannian!of~the~positive~isotropic~$k$-dimensional~subspaces~in~$E^\cn$}\index{Isotropic~subspace!Grassmannian}\index{$\Sigma^+E$!as~$G^+_{iso}(E^\cn)$}\index{$G^+_{iso}(E^\cn)$}\index{Twistor~space!of~a~Euclidean~space}
G^+_{iso}(E^\cn):=\{F\,\subseteq{E}^\cn,\,F\text{~isotropic~}k\text{-dimensional~positive~subspace}\}\text{,}
\end{equation*}
which, by its very definition and the preceding discussion, can be identified with the set $\Sigma^+E$ of all positive Hermitian structures on $E$. We now wish to introduce on this set a complex structure (and not just an almost complex structure as previously). We can do so by showing that there is a left transitive action of a complex Lie group which has a complex Lie subgroup as isotropy subgroup, as follows:

\begin{theorem}[\textup{$G^+_{iso}(E^\cn)$~as~a~complex~manifold}]\label{Theorem:ComplexStructureOnTheGrassmannianOfThekDimensionalIsotropicPositiveSubspacesOfcnTwok}\index{Complex~structure!on~the~manifold~$G^+_{iso}(E^\cn)$}\index{$G^+_{iso}(E^\cn)$!complex~structure}
Let $G^+_{iso}(E^\cn)$ be defined as above. Consider the complex Lie group $\SO(\cn,E^\cn)$ and the map\addtolength{\arraycolsep}{-1.2mm}
\begin{equation}\label{Equation:FirstEquationIn:Theorem:ComplexStructureOnTheGrassmannianOfThekDimensionalIsotropicPositiveSubspacesOfcnTwok}
\begin{array}{ccclc}
\SO(\cn,E^\cn)&\times& G^+_{iso}(E^\cn)& \to & G^+_{iso}(E^\cn) \\
(\lambda&,&F)&\to&\lambda(F)\text{.}
\end{array}
\end{equation}\addtolength{\arraycolsep}{1.2mm}\noindent
This is a well-defined transitive left action of the group $\SO(\cn,E^\cn)$ on the set $G^+_{iso}(E^\cn)$. Moreover, its isotropy group is a complex Lie subgroup of $\SO(\cn,E^\cn)$. In particular, $G^+_{iso}(E^\cn)$ is a complex manifold, whose differentiable (holomorphic) structure is the only one for which the action
\eqref{Equation:FirstEquationIn:Theorem:ComplexStructureOnTheGrassmannianOfThekDimensionalIsotropicPositiveSubspacesOfcnTwok}
is holomorphic\footnote{Notice that we could do an analogous construction for $\Sigma~E$ instead of $\Sigma^+E$: in this case, we should drop the condition ``$F$ is positive" and consider the Grassmannian $G_{iso}(E^\cn)$ of isotropic $k$-dimensional subspaces instead of $G^+_{iso}(E^\cn)$.}.
\end{theorem}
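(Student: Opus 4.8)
The plan is to realize $G^+_{iso}(E^\cn)$ as a homogeneous space $\SO(\cn,E^\cn)/P$ and then invoke the standard fact that the quotient of a complex Lie group by a closed complex Lie subgroup carries a unique complex structure for which the projection---and hence the action---is holomorphic. So the work splits into three parts: well-definedness of the action, transitivity, and the identification of the isotropy group as a complex Lie subgroup.

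First I would check well-definedness, \textit{i.e.} that $\lambda(F)\in G^+_{iso}(E^\cn)$ whenever $\lambda\in\SO(\cn,E^\cn)$ and $F\in G^+_{iso}(E^\cn)$. Since $\lambda$ is a linear isomorphism, $\dim_\cn\lambda(F)=k$, and since $\lambda$ preserves the complexified metric, $\langle\lambda u,\lambda v\rangle=\langle u,v\rangle=0$ for $u,v\in F$, so $\lambda(F)$ is again isotropic. The only genuinely non-formal point is that positivity is preserved. Here I would argue topologically: $\SO(\cn,E^\cn)$ is connected, so for fixed $F_0$ the orbit map $\lambda\mapsto\lambda(F_0)$ has connected image inside the space of all maximal isotropic subspaces; since the induced Hermitian structure $J_F$ depends continuously on $F$ and its induced orientation is a locally constant invariant with two values, the positive and negative isotropic subspaces form disjoint open subsets, and so the connected orbit of $F_0\in G^+_{iso}(E^\cn)$ must remain inside $G^+_{iso}(E^\cn)$.

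Next, for transitivity I would pull back to the real picture. Complexification gives a homomorphism $\SO(E)\hookrightarrow\SO(\cn,E^\cn)$, $S\mapsto S^\cn$, and a direct computation shows $S^\cn$ sends the $(1,0)$-space $E^{10}$ of a Hermitian structure $J$ to the $(1,0)$-space of $S\circ J\circ S^{-1}$ (conjugating $J$ by $S$ conjugates its $i$-eigenspace by $S^\cn$). Under the bijection $\Sigma^+E\cong G^+_{iso}(E^\cn)$ already established, the transitivity of the $\SO(E)$-action on $\Sigma^+E$ recorded in \eqref{Equation:ActionOfSOVOnSigmaPlusV} then immediately yields transitivity of $\SO(\cn,E^\cn)$ on $G^+_{iso}(E^\cn)$---in fact already through the image of the real subgroup.

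Finally, fixing a base point $F_0$ with $E^\cn=F_0\oplus\overline{F_0}$, I would identify the isotropy group $P=\{\lambda\in\SO(\cn,E^\cn):\lambda(F_0)=F_0\}$. It is closed, as the stabilizer of a point. Choosing a basis adapted to the splitting $E^\cn=F_0\oplus\overline{F_0}$, the condition $\lambda(F_0)\subseteq F_0$ (which forces equality by dimension) is the vanishing of the block of $\lambda$ mapping $F_0$ into $\overline{F_0}$; these are holomorphic, indeed polynomial, equations in the entries of $\lambda\in\SO(\cn,E^\cn)$, so $P$ is a complex Lie subgroup with $\p=\mathrm{Lie}(P)$ a complex subalgebra of $\so(\cn,E^\cn)$. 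The homogeneous-space theorem for complex Lie groups then endows $\SO(\cn,E^\cn)/P\cong G^+_{iso}(E^\cn)$ with a complex manifold structure, unique with respect to making the projection, and hence the action, holomorphic. I expect the positivity-preservation step to be the main obstacle, since it is the only place where a genuine argument---connectedness of $\SO(\cn,E^\cn)$ together with the two-component structure of the isotropic Grassmannian---is needed rather than a formal verification.
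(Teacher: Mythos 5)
Your proposal is correct and follows the same global strategy as the paper---realizing $G^+_{iso}(E^\cn)$ as a quotient $\SO(\cn,E^\cn)/P$ and invoking the homogeneous-space theorem for complex Lie groups---but it differs at both substantive steps, in instructive ways. For positivity, the paper also exploits connectedness of $\SO(\cn,E^\cn)$, but concretely: it builds from $\lambda$ the real-linear map $\beta(\lambda)=\Real\lambda+\Imag\lambda\circ J_F$, which carries a positive $J_F$-adapted basis of $E$ to the basis $\{v_i,\tilde v_i\}$ adapted to $J_{\lambda(F)}$, and observes that $\det\circ\beta$ is continuous, nonvanishing and equal to $1$ at the identity, hence positive throughout. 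Your version---positivity is a locally constant invariant on the isotropic Grassmannian, so the connected orbit stays in one component---is the same idea one level of abstraction up; it is cleaner and sidesteps the paper's explicit worry that $\overline{\lambda(F)}\neq\lambda(\overline{F})$ in general, but it presupposes exactly what the determinant computation proves, namely that the orientation of $J_F$ is locally constant in $F$ (equivalently that $G^+_{iso}$ and $G^-_{iso}$ are open), so it is not really shorter once made rigorous. Transitivity via the complexified $\SO(E)$-action is identical to the paper. The genuine divergence is the isotropy group: you cut $P$ out by the holomorphic block equations expressing $\lambda(F_0)\subseteq F_0$ and conclude directly that $P$ is a complex Lie subgroup, whereas the paper proves a dedicated Proposition \ref{Proposition:ComplexLieGroupsAndDensities} by a density-and-exponential argument (the invertible elements of the complex subspace $W=\{\lambda\in\L(\cn,2k):\lambda(F_0)\subseteq F_0\}$ are dense in $W$, exponentials of tangent vectors obtained as limits stay in $W$, hence $T_{\Id}$ of the stabilizer equals $T_{\Id}\GL(\cn,2k)\cap W$, which is stable under multiplication by $i$). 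Be aware that your inference ``holomorphic equations $\Rightarrow$ complex Lie subgroup'' is a true theorem but not a formal triviality: the zero set of holomorphic functions is an analytic set which may be singular, and one needs either homogeneity of the singular locus under the group, or Cartan's closed-subgroup theorem combined with complex stability of the Lie algebra. The cheapest repair, which also makes the paper's density machinery unnecessary, is to note that the stabilizer of $F_0$ in $\GL(\cn,E^\cn)$ is $\GL(\cn,E^\cn)\cap W$, an \emph{open} subset of the complex linear subspace $W$ and hence manifestly a complex Lie subgroup, and that $P$ is its intersection with $\SO(\cn,E^\cn)$, a closed subgroup whose Lie algebra $\so(\cn,E^\cn)\cap W$ is a complex subspace. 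In short: the paper's route buys elementary self-containedness, yours buys brevity and contact with the standard parabolic-stabilizer picture, at the cost of quoting (or patching in) one nontrivial fact about analytic subgroups.
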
\index{Compatible!$\Sigma^+E$~and~$G^+_{iso}(E^\cn)$!the~differentiable~structure}\index{$G^+_{iso}(E^\cn)$!and~$\Sigma^+E$~differentiable~structures}

See Section \ref{Section:SectionIn:Chapter:Appendix:ComplexLieGroupsAndProofOf:Theorem:ComplexStructureOnTheGrassmannianOfThekDimensionalIsotropicPositiveSubspacesOfcnTwok}
for the proof of this theorem and some more details. The transitivity of the above action
\eqref{Equation:FirstEquationIn:Theorem:ComplexStructureOnTheGrassmannianOfThekDimensionalIsotropicPositiveSubspacesOfcnTwok}
follows from that of $\SO(E)$ on $G^+_{iso}(E^\cn)$; therefore, we could have defined a left transitive action $\SO(E)$ on $G^+_{iso}(E^\cn)$ by the same expression. For this action, the isotropy group at $F\in{G}^+_{iso}(E^\cn)$ would be $\U_J(E)$, where $J$ is the complex structure induced by $F$. In particular, as $\SO(E)$ is a (real) Lie subgroup of $\SO(\cn,E^\cn)$, we deduce that the underlying differentiable structure on $G^+_{iso}(E^\cn)$
makes the identification $\Sigma^+E\longleftrightarrow{G}^+_{iso}(E^\cn)$ smooth. The following proposition shows that this identification is anti-holomorphic for the \emph{almost complex} structure introduced on $\Sigma^+E$ \textit{via} \eqref{Equation:EquationIn:Definition:AlmostComplexStructureOnMJV} and \eqref{Equation:TheTangentSpaceToSigmaPlusVAtAPointJ} and the \emph{complex} structure on $G^+_{iso}(E^\cn)$ given by the above Theorem \ref{Theorem:ComplexStructureOnTheGrassmannianOfThekDimensionalIsotropicPositiveSubspacesOfcnTwok}. For the following, see also \cite{DavidovSergeev:93}, p. 22:

\begin{proposition}[\textup{$G^+_{iso}(E^\cn)$~and~$\Sigma^+E$}]\label{Proposition:ComptibilityBetweenSigmaPlusEAndGrassmannianOfTheIsotropicSubspacesInEcnTheComplexStructure}\index{Compatible!$\Sigma^+E$~and~$G^+_{iso}(E^\cn)$!the~complex~structure}
Take $\Sigma^+E$ with the almost complex structure introduced \textit{via} \eqref{Equation:EquationIn:Definition:AlmostComplexStructureOnMJV} and \eqref{Equation:TheTangentSpaceToSigmaPlusVAtAPointJ}, $G^+_{iso}(E^\cn)$ with the (almost) complex structure in Theorem
\ref{Theorem:ComplexStructureOnTheGrassmannianOfThekDimensionalIsotropicPositiveSubspacesOfcnTwok}.
Let $\eta:\Sigma^+E\to G^+_{iso}(E^\cn)$ be the identification map that to each complex structure $J$ makes correspond $E^{10}$. Then, not only is $\eta$ smooth (as in the previous discussion) but it is anti-holomorphic. In other words, both these almost complex structures are integrable and conjugates of each other.
\end{proposition}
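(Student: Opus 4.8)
The plan is to exploit the homogeneity of both spaces and reduce everything to a single reference point. The action \eqref{Equation:ActionOfSOVOnSigmaPlusV} of $\SO(E)$ on $\Sigma^+E$ is transitive, and one checks directly that the almost complex structure $\J^\V$ of Definition \ref{Definition:AlmostComplexStructureOnMJV} is invariant under it: conjugation $\mu\mapsto S\mu S^{-1}$ carries $\m_J(E)$ to $\m_{S\cdot J}(E)$ and commutes with left composition by the Hermitian structure, so $(S\cdot J)\circ(S\mu S^{-1})=S(J\circ\mu)S^{-1}$. On the other side, the complex structure of $G^+_{iso}(E^\cn)$ is invariant under $\SO(\cn,E^\cn)\supseteq\SO(E)$ by Theorem \ref{Theorem:ComplexStructureOnTheGrassmannianOfThekDimensionalIsotropicPositiveSubspacesOfcnTwok}. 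Finally $\eta$ is $\SO(E)$-equivariant, since $\eta(S\cdot J)$ is the $+i$-eigenspace of $S^\cn J^\cn (S^\cn)^{-1}$, namely $S^\cn\big(E^{10}\big)=S\cdot\eta(J)$. Consequently it suffices to verify the compatibility of $d\eta$ with the two structures at one fixed $J$, and I would set $F=\eta(J)=E^{10}$ and work there, using $E^\cn=F\oplus\overline F$.

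Next I would make $d\eta$ explicit through the spectral projection. For each $J$ the operator $P_J=\tfrac12(\Id-iJ^\cn)$ is the projection of $E^\cn$ onto $E^{10}=F_J$ along $\overline{F_J}$ (it is the identity on the $+i$-eigenspace and kills the $-i$-eigenspace). If $J$ moves along a curve with $\dot J=\mu\in\m_J(E)=T_J\Sigma^+E$, the induced tangent vector to the Grassmannian, viewed inside $\Hom_\cn(F,\overline F)$, is $u\mapsto(\Id-P_J)\dot P_J\,u$ for $u\in F$, where $\dot P_J=-\tfrac i2\,\mu^\cn$. The defining relation $\mu\circ J=-J\circ\mu$ complexifies to the statement that $\mu^\cn$ interchanges the eigenspaces $F$ and $\overline F$ of $J^\cn$; hence $\mu^\cn u\in\overline F=\ker P_J$ for $u\in F$, and the formula collapses to
\[ d\eta_J(\mu)(u)=-\tfrac i2\,\mu^\cn u,\qquad u\in F. \]

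I would then run the two structures through this clean formula. On one hand $\J^\V\mu=J\circ\mu$, whose complexification sends $u\in F$ to $J^\cn(\mu^\cn u)=-i\,\mu^\cn u$ (since $\mu^\cn u\in\overline F$ and $J^\cn=-i$ there), so $d\eta_J(\J^\V\mu)(u)=-\tfrac i2(-i)\mu^\cn u=-\tfrac12\,\mu^\cn u$. On the other hand the complex structure of $G^+_{iso}(E^\cn)$ is multiplication by $i$ in the target of $\Hom_\cn(F,\overline F)$, giving $i\cdot d\eta_J(\mu)(u)=\tfrac12\,\mu^\cn u$. Thus $d\eta_J\circ\J^\V=-(\text{mult.\ by }i)\circ d\eta_J$: the map $\eta$ reverses the two almost complex structures, i.e.\ it is anti-holomorphic, and equivariance spreads this identity over all of $\Sigma^+E$. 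Since $G^+_{iso}(E^\cn)$ is a genuine complex manifold by Theorem \ref{Theorem:ComplexStructureOnTheGrassmannianOfThekDimensionalIsotropicPositiveSubspacesOfcnTwok}, its conjugate structure is integrable as well, and pulling it back along the diffeomorphism $\eta$ shows that $\J^\V$ is integrable; hence both structures are in fact complex and conjugate under $\eta$, as asserted.

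The step I expect to be the main obstacle is not the projection computation but the careful matching of the homogeneous complex structure furnished by Theorem \ref{Theorem:ComplexStructureOnTheGrassmannianOfThekDimensionalIsotropicPositiveSubspacesOfcnTwok} with the naive ``multiplication by $i$'' structure on $\Hom_\cn(F,\overline F)$ inherited from the ambient complex Grassmannian of $k$-planes in $E^\cn$ (into which $G^+_{iso}(E^\cn)$ embeds as a complex submanifold). Coupled with this is the delicate bookkeeping of the powers of $i$ that separates anti-holomorphic from holomorphic: getting that single sign right is essentially the entire content of the proposition.
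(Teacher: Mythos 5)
Your proof is correct, but it takes a genuinely different route from the paper's. The paper argues through holomorphic curves: it reduces to $E=\rn^{2k}$, takes an arbitrary holomorphic map $f:\openU\subseteq\cn\to G^+_{iso}(\cn^{2k})$, lifts it holomorphically to $\SO(\cn,2k)\subseteq\L(\cn,2k)$, and combines the Cauchy--Riemann equations for the lift with the defining relation $J_f\cdot\Real f=-\Imag f$ to reach the matrix identity $J_f\cdot\partial_xJ_f=-\partial_yJ_f$, i.e.\ $\d J_f(\partial_y)=-\J^\V\d J_f(\partial_x)$. You instead argue infinitesimally at a single point: $\SO(E)$-equivariance of $\eta$ together with invariance of both structures reduces everything to one $J$, and there the differential is computed in closed form via the spectral projection $P_J=\tfrac12(\Id-iJ^\cn)$, giving $\d\eta_J(\mu)=-\tfrac{i}{2}\,\mu^\cn|_F$ and making the sign reversal transparent: it is precisely the eigenvalue $-i$ of $J^\cn$ on $\overline{F}$ acting after $\mu^\cn$ has interchanged the eigenspaces. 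What your approach buys is a coordinate-free argument that isolates exactly where conjugation enters; what the paper's buys is that it never has to describe the complex structure of $G^+_{iso}(E^\cn)$ on tangent spaces at all --- it only invokes the existence of holomorphic lifts through $\SO(\cn,2k)\to G^+_{iso}(\cn^{2k})$, which is how Theorem \ref{Theorem:ComplexStructureOnTheGrassmannianOfThekDimensionalIsotropicPositiveSubspacesOfcnTwok} presents that structure. The one step you flag but leave open --- that the theorem's homogeneous structure is multiplication by $i$ under $T_FG^+_{iso}(E^\cn)\cong\Hom_\cn(F,E^\cn/F)\cong\Hom_\cn(F,\overline{F})$ --- is a real obligation, but it closes in a few lines: holomorphicity of the action \eqref{Equation:FirstEquationIn:Theorem:ComplexStructureOnTheGrassmannianOfThekDimensionalIsotropicPositiveSubspacesOfcnTwok} makes each orbit map $\lambda\mapsto\lambda(F)$ holomorphic; its derivative at the identity is $X\mapsto\big(u\mapsto Xu\bmod F\big)$, which visibly intertwines multiplication by $i$ on $\so(\cn,E^\cn)$ with multiplication by $i$ on $\Hom_\cn(F,E^\cn/F)$; and surjectivity of this derivative (transitivity) then forces the theorem's structure to be multiplication by $i$ there.
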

\begin{proof}
Since the constructions are independent of choice of basis, by fixing an orthonormal positive basis for $E$ and identifying $E$ with $\rn^{2k}$, we can reduce to the case $\Sigma^+\rn^{2k}$. Now, all we have to prove is that for any holomorphic map $f:\openU\subseteq\cn\to G^+_{iso}(\cn^{2k})$ the corresponding map $J_f$ to $\Sigma^+ \rn^{2k}$ is anti-holomorphic for the structure defined \textit{via} \eqref{Equation:EquationIn:Definition:AlmostComplexStructureOnMJV} and \eqref{Equation:TheTangentSpaceToSigmaPlusVAtAPointJ}. Now, $f$ is holomorphic if and only if it has a holomorphic lift (also denoted by $f$) to $\SO(\cn,2k)\subseteq \L(\cn,2k)$. As $\L(\cn,2k)$ is a complex vector space, holomorphicity of $f$ is equivalent to the Cauchy-Riemann equations:
\begin{equation}\label{Equation:FirstEquationInTheProofOf:Proposition:ComptibilityBetweenSigmaPlusEAndGrassmannianOfTheIsotropicSubspacesInEcnTheComplexStructure}\index{Cauchy-Riemann~equations}\index{$F_0$,~$(1,0)$-subspace~of~$J_0$}\index{$J_0$,~canonical~complex~structure~on~$\rn^{2k}$}
\left\{\begin{array}{l}
\partial_x \Real f=\partial_y \Imag f\\
\partial_y \Real f=-\partial_x \Imag f\text{,}
\end{array}\right.
\end{equation}
where $f=\Real f+i\Imag f\in\L(\cn,2k)$ with $\Real f,\Imag{f}\in\L(\rn,2k)$ and $z=x+iy\in\cn$. On the other hand, given the map $f:z\to f(z)\in\SO(\cn,2k)$ and assuming for simplicity that $f(0)=\Id$, the induced map to $G^+_{iso}(\cn^{2k})$ is precisely $f(z)F_0$, where $F_0=\wordspan \{e_{2i-1}-ie_{2i}\}_{i=1,...,k}$ so\footnote{$F_0$ is the $(1,0)$-subspace associated with the canonical complex structure $J_0$ of $\rn^{2k}$ defined by $J_0 e_{2i-1}=e_{2i}$, $i=1,...,k$.} that $f(z)F_0=\wordspan\{f(z)e_{2i-1}-if(z)e_{2i}\}_{i=1,...,k}=\wordspan\{\Real f(z)e_{2i-1}+\Imag f(z)e_{2i}-i(\Real f(z) e_{2i}-\Imag f(z)e_{2i-1})\}_{i-1,...,k}$ and therefore $J_f:\openU\subseteq\cn\to\Sigma^+\rn^{2k}$ is given by $J_f(\Real f e_{2i-1}+\Imag fe_{2i})=\Real f e_{2i}-\Imag fe_{2i-1}$, as $T^{10}_{J_f(z)}\rn^{2k}=f(z)F_0$. Equivalently, letting $\cdot$ denote the usual matrix multiplication in $\L(\rn,2k)$, $J_f$ is determined by the conditions\addtolength{\arraycolsep}{-1.0mm}
\[\left\{\begin{array}{ccc}
J_f\cdot \Real fe_{2i-1}&=&-\Imag f{e}_{2i-1}\\
J_f\cdot \Imag fe_{2i}&=&\Real f e_{2i}
\end{array}\right.\]\addtolength{\arraycolsep}{1.0mm}\noindent
which are equivalent to $J_f\cdot \Real f=-\Imag f$. Applying the
derivatives $\partial_x$ and $\partial_y$ yields\addtolength{\arraycolsep}{-1.0mm}
\[\left\{\begin{array}{l}
\partial_x J_f\cdot \Real f+J_f \cdot\partial_x \Real f=-\partial_x \Imag f\\
\partial_y J_f\cdot \Real f+J_f \cdot\partial_y \Real f=-\partial_y \Imag f\text{.}\\
\end{array}\right.\]
Applying $J_f$ to both sides of the first equation and using \eqref{Equation:FirstEquationInTheProofOf:Proposition:ComptibilityBetweenSigmaPlusEAndGrassmannianOfTheIsotropicSubspacesInEcnTheComplexStructure}
on the second identity gives
\[\left\{\begin{array}{l}
J_f \cdot\partial_x J_f\cdot \Real f=\partial_x \Real f-J_f\partial_x \Imag f\\
\partial_y J_f\cdot \Real f=J_f \cdot\partial_x \Imag f-\partial_x \Real f\\
\end{array}\right.\]
and using the second equation to simplify the first as well as the fact that $\Real f(0)=\Id$ (so that $\Real f$ is invertible near the origin) yields
\[J_f\cdot\partial_xJ_f\cdot\Real{f}=-\partial_yJ_f\cdot\Real{f}\,\impl\,J_f\cdot\partial_xJ_f=-\partial_yJ_f\text{.}\]\addtolength{\arraycolsep}{1.0mm}\noindent
Hence, $\d J_f (\partial_y)=\d{J}_f(J^{\rn^2}_0\partial_x)=-J_f \d J_f(\partial_x)=-\J^\V \d J_f(\partial_x)$ by
\eqref{Equation:EquationIn:Definition:AlmostComplexStructureOnMJV},
proving that $J_f$ is anti-holomorphic for the complex structure introduced \textit{via} \eqref{Equation:EquationIn:Definition:AlmostComplexStructureOnMJV} and \eqref{Equation:TheTangentSpaceToSigmaPlusVAtAPointJ} and thus concluding our proof.
\end{proof}


\section{Fundamental $2$-form on vector bundles}


Let $(M,g,J)$ be an almost Hermitian manifold, thus $J_x\in\Sigma~T_xM$ and the assignment $x\to J_x$ is smooth as a
section of the bundle $\L(TM,TM)$. Then we have the usual fundamental $2$-form (or \emph{Kähler form}) $\omega$ associated with $J$:
\begin{equation}\label{Equation:DefinitionOfKahlerFundamentalForm}\index{Fundamental~Kähler~form}\index{Fundamental~$2$-form!}\index{Kähler~form!}\index{$\omega$,~fundamental~$2$-form}\index{Compatible!linear~connection}\index{Metric!compatible~with~linear~connection}\index{Riemannian!vector~bundle}\index{Vector~bundle!Riemannian}\index{Compatible!metric~and~connection}\index{Compatible!complex~structure~the~metric}
\omega(X,Y):=g(JX,Y),\, X,Y\in\Gamma(TM)\text{.}
\end{equation}
In a more general context, consider an (orientable even-dimensional) vector bundle $V$ over a manifold $M$ (not necessarily equipped with an almost complex structure for now); let $V$ be equipped with a metric
$g_V$ and a \emph{compatible linear connection}, $\nabla^V$, in the sense that $\nabla^V g_V=0$; \textit{i.e.},
\begin{equation}\label{Equation:CompatibilityOfMetricAndConnectionOnAVectorBundle}\index{Almost~Hermitian~structure!on~a~vector~bundle}\index{Vector~bundle!almost~Hermitian~structure}
X(g_V(U,W))=g(\nabla^V_X{U},W)+g_V(U,\nabla^V_XW),\quad\forall\,X\in\Gamma(TM),\,U,W\in\Gamma(V)
\end{equation}
(such a vector bundle $(V,g_V,\nabla^V)$ will be called a \emph{Riemannian vector bundle} as in, for example, \cite{doCarmo:88}). Moreover, assume that $V$ is equipped with an almost Hermitian structure $J_V$. In other words, $J_{V_x}:V_x\to{V}_x$ with $J^2_{V_x}=-\Id_{V_x}$, $J_{V_x}$ compatible with the metric $g_{V_x}$ and, of course, $J$ is a smooth section of $\L(V,V)$. We shall call such a vector bundle $(V,g_V,\nabla^V,J_V)$ an \emph{almost Hermitian vector bundle}. On such a vector bundle, we also have a fundamental $2$-form defined by
\begin{equation}\label{Equation:DefinitionOfKahlerFormForVectorBundles}\index{Kähler~form!on~vector~bundles}\index{Fundamental~$2$-form!on~vector~bundles}\index{Almost~Hermitian~vector~bundle}\index{Vector~bundle!almost~Hermitian}
\omega_{V}(U,W)=g_V(J_V U,W),\quad\forall\,U,W\in\Gamma(V)\text{.}
\end{equation}
As in the usual case when $V=TM$ we also have a natural splitting of the bundle $V^{\cn}$ into \emph{$(1,0)$} and \emph{$(0,1)$-subspaces},
\[V^\cn=V^{10}\oplus V^{01}\text{,}\]
where $V^{10}$ (respectively $V^{01}$) is the eigenspace associated to the eigenvalue $i$ (respectively $-i$) of $J_V$.

We establish the following result, which is known for the case $V=TM$ (\cite{Salamon:85}, Lemma 1.1):
\begin{lemma}\label{Lemma:SalamonsLemma.1.1Generalized}
Let $(V,g_V,\nabla^V,J_V)$ be an almost Hermitian vector bundle over the Riemannian manifold $(M,g)$. Then, for all $X\in\Gamma(TM)$, $U^{10},W^{10}\in\Gamma(V^{10})$ and $W^{01}\in\Gamma(V^{01})$,
\begin{equation}\label{Equation:FirstEquationIn:Lemma:SalamonsLemma1.1Generalized}
(\nabla_X\omega_V)(U^{10},W^{10})=2ig_V(\nabla^V_XU^{10},W^{10})\text{~and}
\end{equation}
\begin{equation}\label{Equation:SecondEquationIn:Lemma:SalamonsLemma1.1Generalized}
(\nabla_X\omega_V)(U^{10},W^{01})=0\text{,}
\end{equation}
where the connection $\nabla^V$, the metric $g_V$ and the fundamental $2$-form $\omega_V$ are just the complex bilinear extensions of the given ones on $V$.
\end{lemma}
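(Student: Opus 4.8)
The plan is to reduce both identities to the single algebraic formula
\[
(\nabla_X\omega_V)(U,W)=g_V\big((\nabla^V_X J_V)U,\,W\big),
\]
valid for arbitrary sections $U,W$ of $V^\cn$, where $(\nabla^V_X J_V)U:=\nabla^V_X(J_V U)-J_V\nabla^V_X U$. The two stated identities then follow by feeding in the eigenvalue relations $J_V U^{10}=iU^{10}$, $J_V W^{10}=iW^{10}$ and $J_V W^{01}=-iW^{01}$ together with the skew-adjointness of $J_V$.

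To establish the formula, I would write out the covariant derivative of the $2$-tensor $\omega_V$ by the Leibniz rule,
\[
(\nabla_X\omega_V)(U,W)=X\big(\omega_V(U,W)\big)-\omega_V(\nabla^V_X U,W)-\omega_V(U,\nabla^V_X W),
\]
substitute $\omega_V(\cdot,\cdot)=g_V(J_V\cdot,\cdot)$ from \eqref{Equation:DefinitionOfKahlerFormForVectorBundles}, and expand $X\big(g_V(J_V U,W)\big)$ using the metric compatibility $\nabla^V g_V=0$ of \eqref{Equation:CompatibilityOfMetricAndConnectionOnAVectorBundle}. The term $g_V(J_V U,\nabla^V_X W)$ generated by compatibility cancels exactly against $\omega_V(U,\nabla^V_X W)$, leaving $g_V\big(\nabla^V_X(J_V U)-J_V\nabla^V_X U,\,W\big)$, which is the claimed formula. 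Since everything is taken to be the complex-bilinear extension, the identity holds verbatim on $V^\cn$. I would also record the skew-adjointness $g_V(J_V a,b)=-g_V(a,J_V b)$, which follows from the compatibility $g_V(J_V a,J_V b)=g_V(a,b)$ by replacing $b$ with $-J_V b$ and using $J_V^2=-\Id$; it too passes to the complex-bilinear extension.

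The evaluation is then immediate. For the first identity, $(\nabla^V_X J_V)U^{10}=\nabla^V_X(iU^{10})-J_V\nabla^V_X U^{10}=i\,\nabla^V_X U^{10}-J_V\nabla^V_X U^{10}$; pairing against $W^{10}$ and moving $J_V$ across by skew-adjointness, with $J_V W^{10}=iW^{10}$, converts the second summand into $+i\,g_V(\nabla^V_X U^{10},W^{10})$, so the two contributions add up to $2i\,g_V(\nabla^V_X U^{10},W^{10})$. For the second identity the same computation against $W^{01}$, now with $J_V W^{01}=-iW^{01}$, yields $i\,g_V(\nabla^V_X U^{10},W^{01})-i\,g_V(\nabla^V_X U^{10},W^{01})=0$.

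There is no deep obstacle here: the entire content is the two cancellations — the metric-compatibility cancellation that produces the compact formula, and the sign supplied by skew-adjointness in the final pairing. The only point demanding care is to treat $J_V$ consistently as its complex-bilinear extension when acting on $V^{10}$ and $V^{01}$, so that the eigenvalue relations $J_V U^{10}=iU^{10}$ and $J_V W^{01}=-iW^{01}$ are applied with the correct signs; once this is kept straight, both identities drop out of the formula above.
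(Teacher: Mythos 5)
Your proof is correct and uses the same ingredients in the same way as the paper's: the Leibniz rule for $\nabla_X\omega_V$, metric compatibility $\nabla^V g_V=0$, skew-adjointness of $J_V$, and the eigenvalue relations on $V^{10}$, $V^{01}$. The only difference is organizational — you first abstract the computation into the tensor identity $(\nabla_X\omega_V)(U,W)=g_V\big((\nabla^V_XJ_V)U,W\big)$ and then evaluate on eigensections, whereas the paper carries out the same cancellations directly on $U^{10},W^{10}$; your intermediate formula is in fact the identity the paper records later as \eqref{Equation:DerivativeOfJAndOmega}, so the two arguments are essentially identical.
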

The same result holds if we allow $X\in\Gamma(T^{\cn}M)$ and then the first equation will easily lead to
\begin{equation}\label{Equation:ThirdEquationIn:Lemma:SalamonsLemma1.1Generalized}
(\nabla_X\omega_V)(U^{01},W^{01})=-2ig_V(\nabla^V_XU^{01},W^{01})\text{,}
\end{equation}
for $X\in\Gamma(T^\cn{M})$), $U^{01},W^{01}\in\Gamma(V^{01})$. In particular, we deduce Salamon's Lemma 1.1 in \cite{Salamon:85}
when $V$ is just the usual tangent bundle over an almost Hermitian manifold, expressed in terms of forms (instead of using what he calls ``fundamental $2$-vectors"). Notice, moreover, that the preceding identities are tensorial. For instance, identity \eqref{Equation:FirstEquationIn:Lemma:SalamonsLemma1.1Generalized} can be written as
\[(\nabla_X\omega_V)(U^{10},W^{10})=2ig_V(\nabla^V_XU^{10},W^{10}),\quad\forall\,x\in{M},\,X\in{T}_{x}M,\,U^{10},W^{10}\in{V}^{10}_{x}\text{.}\]
The left-hand side of each equation is tensorial. For the right-hand side, we can directly check its tensoriality using again the fact that $g_V(V^{10},V^{10})=0$ (or the $(0,1)$-analogue):
\[g_V(\nabla_X(f.U^{10}),W^{10})=g_V(X(f).U^{10}\hspace{-1mm},W^{10})+fg_V(\nabla_XU^{10}\hspace{-1mm},W^{10})=fg_V(\nabla_XU^{10}\hspace{-1mm},W^{10})\text{,}\]
for all smooth functions $f$ (locally) defined on $M$.

\noindent\textit{Proof of Lemma
\ref{Lemma:SalamonsLemma.1.1Generalized}.} We have\addtolength{\arraycolsep}{-1.0mm}
\[\begin{array}{lll}
(\nabla_X\omega_V)(U^{10},W^{10})&=&X\big(\omega_V(U^{10},W^{10})\big)-\omega_V(\nabla^V_XU^{10},W^{10})-\omega_V(U^{10},\nabla^V_XW^{10})\\
~&=&X(g_V(J_VU^{10}\hspace{-1mm},W^{10}))-g_V(J\nabla_XU^{10}\hspace{-1mm},W^{10})-g_V(JU^{10}\hspace{-1mm},\nabla_XW^{10})\\
~&=&X(ig_V(U^{10},W^{10}))+g_V(\nabla_XU^{10},JW^{10})-ig_V(U^{10},\nabla_XW^{10})\\
~&=&iX(g_V(U^{10},W^{10}))+ig_V(\nabla_XU^{10},W^{10})-ig_V(U^{10},\nabla_XW^{10})\\
~&=&2ig_V(\nabla_XU^{10},W^{10})\text{,}\end{array}\]\addtolength{\arraycolsep}{1.0mm}\noindent
as desired. For the other two equations \eqref{Equation:SecondEquationIn:Lemma:SalamonsLemma1.1Generalized}
and \eqref{Equation:ThirdEquationIn:Lemma:SalamonsLemma1.1Generalized} the arguments are similar.\qed

A useful way of stating the last lemma is the following: consider $V^{10}$ and its dual, $V^{10^{\star}}$; $V^{10^\star}_x$ is the set of $\cn$-valued linear maps from $V^{10}_x$ or, if we prefer, the set of $\cn$-valued linear maps from $V^\cn_x$\index{Dual~of~$V^{10}$~and~$V^{01}$} that vanish on every vector of $V^{01}_x$. Define $V^{01^{\star}}$ in an analogous fashion. Now, take $V^{20^{\star}}$ as given by\index{$V^{20^{\star}}$,~$V^{02^{\star}}$,~$V^{11^{\star}}$}\addtolength{\arraycolsep}{-1.0mm}

\noindent$\begin{array}{llll}~&V^{20^{\star}}&:=&\{\lambda:V^\cn\times V^\cn\to\cn\text{, bilinear and skew-symmetric,~}\lambda(V^{01},V^\cn)=0\}\\
~&~&\simeq&\{\lambda:V^{10}\times{V}^{10}\to\cn\text{,~bilinear~and~skew-symmetric}\}\text{.}\\
\multicolumn{4}{l}{\text{\put(-5,0){Analogously,}}}\\
~&V^{02^{\star}}&:=&\{\lambda:V^\cn\times V^\cn\to\cn\text{, bilinear and skew-symmetric,~}\lambda(V^{10},V^\cn)=0\}\\
~&~&\simeq&\{\lambda:V^{01}\times{V}^{01}\to\cn\text{,~bilinear~and~skew-symmetric}\}\quad\text{and}\\
~&V^{11^{\star}}&:=&\left\{\begin{array}{ll}\lambda:V^\cn\times V^\cn\to\cn&\text{,~bilinear~and~skew-symmetric,~}\\~&\lambda(V^{01},V^{01})=\lambda(V^{10},V^{10})=0\end{array}\right\}\text{.}\\
\end{array}$

\addtolength{\arraycolsep}{1.0mm}\noindent Then $\Lambda^2(V^\cn)$ can be decomposed into the direct sum
\[\Lambda^2(V^\cn)=V^{20^{\star}}\oplus V^{11^\star}\oplus V^{02^\star}\]
and, since $\omega_V$ is skew-symmetric, so is $\nabla_X\omega_V$. Lemma \ref{Lemma:SalamonsLemma.1.1Generalized} states that $\nabla_X\omega_V\in{V}^{20^\star}\oplus V^{02^\star}$ and that its $(2,0)$ and $(0,2)$-parts are given respectively by \eqref{Equation:FirstEquationIn:Lemma:SalamonsLemma1.1Generalized} and \eqref{Equation:ThirdEquationIn:Lemma:SalamonsLemma1.1Generalized}.

To proceed further, assume now that the base manifold also has an almost Hermitian structure. Then, not only $V^\cn$ can be split into its $(1,0)$ and $(0,1)$-parts, but the same happens to $T^\cn M$. In particular we can write, for any vector fields $X\in\Gamma(T^{\cn}M),\,U,\,W\in\Gamma(V^\cn)$, denoting by $X^{10}$ the $(1,0)$-part of $X$ in $T^\cn{M}$, $U^{10}$ the $(1,0)$-part of $U$ in $V^\cn$, \textit{etc.} (again, for any vectors at a point, with the same argument as after Lemma \ref{Lemma:SalamonsLemma.1.1Generalized}):\addtolength{\arraycolsep}{-1.5mm}
\begin{equation}\label{Equation:DecompositionOfDerivativeOfTheFundamentalKahlerFormOnIts1.2.11.12.etcPartsMorePrecise}
\begin{array}{lll}
(\nabla_{\hspace{-1mm}X}\omega)(U,W)&=&\d_1^1\omega(X,\hspace{-0.4mm}U,\hspace{-0.4mm}W)\hspace{-0.5mm}+\hspace{-0.5mm}\d_1^2\omega(X,\hspace{-0.4mm}U,\hspace{-0.4mm}W)\hspace{-0.5mm}+\hspace{-0.5mm}\d_2^1\omega(X,\hspace{-0.4mm}U,\hspace{-0.4mm}W)\hspace{-0.5mm}+\hspace{-0.5mm}\d_2^2\omega(X,\hspace{-0.4mm}U,\hspace{-0.4mm}W)\\
~&=&d_1\omega(X,\hspace{-0.4mm}U,\hspace{-0.4mm}W)\hspace{-0.5mm}+\hspace{-0.5mm}\d_2\omega(X,\hspace{-0.4mm}U,\hspace{-0.4mm}W)\text{, where}
\end{array}
\end{equation}\addtolength{\arraycolsep}{1.5mm}\noindent
\[\begin{array}{ll}
\d_1^1\omega(X,U,W)=(\nabla_{X^{10}}\omega)(U^{10},W^{10})\text{,}&\d_1^2\omega(X,U,W)=(\nabla_{X^{01}}\omega)(U^{01},W^{01})\text{,}\\ \d_2^1\omega(X,U,W)=(\nabla_{X^{10}}\omega)(U^{01},W^{01})\text{,}&\d_2^2\omega(X,U,W)=(\nabla_{X^{01}}\omega)(U^{10},W^{10}) \end{array}\]
and $\d_1\omega=\d_1^1\omega+\d_1^2\omega$, $\d_2\omega=\d_2^1\omega+\d_2^2\omega$.
Equation \eqref{Equation:DecompositionOfDerivativeOfTheFundamentalKahlerFormOnIts1.2.11.12.etcPartsMorePrecise} is valid since all the other parts vanish, in virtue of \eqref{Equation:SecondEquationIn:Lemma:SalamonsLemma1.1Generalized}.
We then have the following lemma, which is known for the case $V=TM$
(\cite{Salamon:85}, Lemma 1.2):
\begin{lemma}\label{Lemma:Salamon1.2Generalized}
Let $(M,J^M,V,g_V,\nabla^V,J_V)$ be as before. Then,
\begin{equation}\label{Equation:FirstEquationIn:Lemma:Salamon1.2Generalized}
\d_1\omega=0\,\,\equi\,\,\nabla^V_{T^{10}M}V^{10}\subseteq V^{10}\text{~and}
\end{equation}
\begin{equation}\label{Equation:SecondEquationIn:Lemma:Salamon1.2Generalized}
\d_2\omega=0\,\,\equi\,\,\nabla^V_{T^{01}M}V^{10}\subseteq{V}^{10}\text{.}\hspace{6mm}
\end{equation}
Moreover\footnote{We could also write \eqref{Equation:SecondEquationIn:Lemma:Salamon1.2Generalized} as $\d_2\omega=0\,\,\equi\,\,\nabla^V_{T^{10}M}V^{01}\subseteq{V}^{01}$.}, these identities are valid pointwise on $M$:
\begin{equation}\label{Equation:ThirdEquationIn:Lemma:Salamon1.2Generalized}
\d_1\omega(x)=0\,\,\equi\,\,\nabla^V_{X^{10}_{x}}U^{10}\subseteq{V}^{10}_{x},\quad\forall\,U^{10}\,\in\Gamma(V^{10})\text{~and}
\end{equation}
\begin{equation}\label{Equation:FourthEquationIn:Lemma:Salamon1.2Generalized}
\d_2\omega(x)=0\,\,\equi\,\,\nabla^V_{X^{01}_{x}}U^{10}\subseteq{V}^{10}_{x},\quad\forall\,U^{10}\,\in\Gamma(V^{10})\text{.}\,\,\,\quad
\end{equation}
\end{lemma}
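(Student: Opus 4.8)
The plan is to deduce everything from Lemma~\ref{Lemma:SalamonsLemma.1.1Generalized} together with the isotropy of the eigenbundles $V^{10}$ and $V^{01}$. First I would note that $\d_1^1\omega$ and $\d_2^2\omega$ involve only $(1,0)$-arguments in the $V$-slots, while $\d_1^2\omega$ and $\d_2^1\omega$ involve only $(0,1)$-arguments, and that each of the four pieces is built from a purely $(1,0)$- or purely $(0,1)$-vector $X$. Hence, by feeding purely $(1,0)$- or purely $(0,1)$-vectors into the three slots, the vanishing of $\d_1\omega$ (respectively $\d_2\omega$) is equivalent to the \emph{separate} vanishing of $\d_1^1\omega$ and $\d_1^2\omega$ (respectively $\d_2^1\omega$ and $\d_2^2\omega$). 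Applying \eqref{Equation:FirstEquationIn:Lemma:SalamonsLemma1.1Generalized} and \eqref{Equation:ThirdEquationIn:Lemma:SalamonsLemma1.1Generalized} then rewrites each component, up to the nonzero factor $\pm 2i$, as a quantity of the form $g_V(\nabla^V_Z U',W')$ with $Z,U',W'$ of pure type; for instance $\d_1^1\omega(X,U,W)=2i\,g_V(\nabla^V_{X^{10}}U^{10},W^{10})$.

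The key step is to convert $g_V$-orthogonality to all of $V^{10}$ into membership in $V^{10}$. I would first record that $V^{10}$ and $V^{01}$ are isotropic for the complexified metric: for $u,w\in V^{10}$ one has $g_V(u,w)=g_V(J_Vu,J_Vw)=i^2g_V(u,w)=-g_V(u,w)$, so $g_V(u,w)=0$, and likewise on $V^{01}$. Now write $\nabla^V_{X^{10}}U^{10}=A+B$ with $A\in V^{10}$ and $B\in V^{01}$. Isotropy of $V^{10}$ gives $g_V(\nabla^V_{X^{10}}U^{10},W^{10})=g_V(B,W^{10})$ for every $W^{10}\in V^{10}$. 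If this vanishes for all such $W^{10}$, then, since $B$ is also $g_V$-orthogonal to $V^{01}$ by isotropy, $B$ lies in the radical of the (non-degenerate) complexified metric on $V^\cn$, whence $B=0$; that is, $\nabla^V_{X^{10}}U^{10}\in V^{10}$. The converse is immediate from isotropy. Combined with the first paragraph this yields $\d_1^1\omega=0\equi\nabla^V_{T^{10}M}V^{10}\subseteq V^{10}$, and the identical argument (with the roles of the two eigenbundles interchanged) handles $\d_1^2\omega$, $\d_2^1\omega$ and $\d_2^2\omega$.

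It remains to check that the two conditions coming from $\d_1^1\omega=0$ and $\d_1^2\omega=0$ coincide (and likewise for $\d_2$). Here I would use that $\nabla^V$ is the complex-bilinear extension of a real connection, so that $\overline{\nabla^V_XU}=\nabla^V_{\bar X}\bar U$; since $\overline{T^{10}M}=T^{01}M$ and $\overline{V^{10}}=V^{01}$, conjugating $\nabla^V_{T^{10}M}V^{10}\subseteq V^{10}$ gives $\nabla^V_{T^{01}M}V^{01}\subseteq V^{01}$, so the two conditions are equivalent and \eqref{Equation:FirstEquationIn:Lemma:Salamon1.2Generalized} follows. The same manoeuvre applied to $\d_2^1\omega$ and $\d_2^2\omega$ gives \eqref{Equation:SecondEquationIn:Lemma:Salamon1.2Generalized} and the equivalent form recorded in the footnote. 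Finally, the pointwise statements \eqref{Equation:ThirdEquationIn:Lemma:Salamon1.2Generalized} and \eqref{Equation:FourthEquationIn:Lemma:Salamon1.2Generalized} are immediate, since all the identities produced above are tensorial, as already observed after Lemma~\ref{Lemma:SalamonsLemma.1.1Generalized}. The only genuinely non-formal point is the non-degeneracy conversion of the second paragraph; everything else is bookkeeping of the types $(1,0)$ and $(0,1)$.
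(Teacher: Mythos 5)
Your proof is correct and follows essentially the same route as the paper's: decompose $\d_a\omega$ into its components, apply Lemma~\ref{Lemma:SalamonsLemma.1.1Generalized} to each, identify the two resulting conditions via conjugation/reality, and convert orthogonality to all of $V^{10}$ into membership in $V^{10}$. The only difference is that you spell out the isotropy-plus-non-degeneracy argument for that last conversion, which the paper treats as immediate.
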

\begin{proof}Let us start by proving \eqref{Equation:FirstEquationIn:Lemma:Salamon1.2Generalized}:

Let us prove $\d_1^1\omega=0\,\equi\,\d_1^2\omega=0\,\equi\,\d_1\omega=0$. We have $\d_1\omega=0$ if and only if both $\d_1^1\omega$ and $\d_1^2\omega$ vanish, since $\d_1^1\omega$ and $\d_1^2\omega$ are just the different components of $\d_1\omega$. On the other hand,\addtolength{\arraycolsep}{-1.0mm}
\[\begin{array}{ll}~&\d_1^1\omega=0\,\equi\,(\nabla_{X^{10}}\omega)(U^{10},W^{10})=0\,\equi\,\overline{(\nabla_{X^{10}}\omega)(U^{10},W^{10})}=0\\
\equi&(\nabla_{\overline{X^{10}}}\omega)(\overline{U^{10}},\overline{W^{10}})=0\,\equi\,(\nabla_{X^{01}}\omega)(U^{01},W^{01})=0\,\equi\,\d_1^2\omega=0\text{.}\end{array}\]\addtolength{\arraycolsep}{1.0mm}\noindent

Now, we have $\d_1\omega=0$ if and only if $\d_1^1\omega=0$; equivalently, using \eqref{Equation:FirstEquationIn:Lemma:SalamonsLemma1.1Generalized}, if and only if $g_V(\nabla_{X^{10}}U^{10},W^{10})=0$, the latter condition being equivalent to the requirement $\nabla_{X^{10}}U^{10}\in{V}^{10}$, concluding the proof of \eqref{Equation:FirstEquationIn:Lemma:Salamon1.2Generalized}.

To prove the second identity \eqref{Equation:SecondEquationIn:Lemma:Salamon1.2Generalized}, we apply similar arguments, now using \eqref{Equation:ThirdEquationIn:Lemma:SalamonsLemma1.1Generalized}. Finally, the fact that these identities are valid pointwise on $M$ is easy to check.
\end{proof}


\section{Types~of~almost~Hermitian~manifolds}\label{Section:SectionIn:Chapter:TwistorSpaces:TypesOfAlmostHermitianManifolds}


We shall need in the sequel the definition and characterization of the following types of almost Hermitian manifolds (see, \textit{e.g.}, \cite{BairdWood:03}, \cite{Salamon:85}):

\begin{definition}[\emph{Types~of~almost~Hermitian~manifolds}]\label{Definition:TheMainTypesOfAlmostComplexManifolds}\index{$(1,2)$-symplectic~manifold!}\index{Cosymplectic~manifold}\index{Almost~Hermitian~manifold!types}\index{Hermitian~manifold}\index{Complex~structure!induced}\index{Complex~manifold}
Let $(M,g,J)$ be an almost Hermitian manifold. Then, $M$ is said to be

(i) \emph{Hermitian} if $J$ is integrable; \textit{i.e.}, if and only if there is a complex structure on $M$ with $J$ as the induced almost complex structure \footnote{If we consider only $(M,J)$ with $J$ integrable then, as usual, $(M,J)$ is said to be a \emph{complex manifold}. When $M$ is a complex manifold, the \emph{induced complex structure} $J$ on $M$ is defined by $J\partial_{x_i}=\partial_{y_i}$ where $z=(x_1+iy_1,...,x_{m}+iy_{m})$ is a complex chart for $M$.}.

(ii) \emph{$\mathit{(1,2)}$-symplectic} if $\nabla_{Z^{01}}Y^{10}\in\Gamma(T^{10}M)$, for every $Z^{01}\,\in\Gamma(T^{01}M)$, $Y^{10}\in\Gamma(T^{10}M)$ (see \cite{BairdWood:03}, p. 251).

(iii) \emph{Cosymplectic} if $\trace\nabla J=0$. In other words, if $\{X_i\}$ is an orthonormal basis for $T_xM$, then $\sum_i(\nabla_{X_i}J)X_i=0$.
\end{definition}

Let us recall the following properties of almost Hermitian manifolds (see \cite{KobayashiNomizu:69}, \cite{Salamon:85}):

\begin{proposition}[\textup{Properties~of~almost~Hermitian~manifolds}]\label{Proposition:AnAlternativeCharacterizationOfAlmostComplexManifolds}\index{Nijenhuis~tensor}\index{$\N$,~Nijenhuis~tensor}
Let $(M,g,J)$ be as before. Then:

\textup{(i)} the following conditions are equivalent.

\textup{$~$(i$_\text{a}$)} $M$ is Hermitian\footnote{We could replace the word ``Hermitian" with ``complex".}.

\textup{$~$(i$_\text{b}$)} The Nijenhuis tensor $\N$ associated with $J$ vanishes, where
\begin{equation}\label{Equation:DefinitionofTheNijenhuisTensor}
\N(X,Y):=[X,Y]+J[JX,Y]+J[X,JY]-[JX,JY].
\end{equation}

\textup{$~$(i$_\text{c}$)} The bundle $T^{10}M$ is closed under the Lie bracket: $[T^{10}M,T^{10}M]\subseteq{T}^{10}M$.

\textup{(ii)} $M$ is $(1,2)$-symplectic if and only if $(\d\omega)^{12}=0$, where $(\d\omega)^{12}$ denotes the $(1,2)$-part of $\d\omega$ given by\addtolength{\arraycolsep}{-1.0mm}
\[(\d\omega)^{12}(X^{10},Y^{01},Z^{01})\hspace{-0.1cm}=\hspace{-0.1cm}\big(\nabla_{X^{10}}\omega\big)(Y^{01},Z^{01})-\big(\nabla_{Y^{01}}\omega\big)(X^{10},Z^{01})+\big(\nabla_{Z^{01}}\omega\big)(X^{10},Y^{01})\text{.}\]\addtolength{\arraycolsep}{1.0mm}\noindent

\textup{(iii)} Every $(1,2)$-symplectic manifold is cosymplectic.
\end{proposition}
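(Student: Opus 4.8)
The plan is to establish each of the three parts by translating the geometric conditions into statements about the fundamental 2-form $\omega$ and its covariant derivative, leaning on the machinery already set up in Lemmas \ref{Lemma:SalamonsLemma.1.1Generalized} and \ref{Lemma:Salamon1.2Generalized} (specialized to $V=TM$). Part (i) is classical: the equivalence of (i$_\text{a}$), (i$_\text{b}$) and (i$_\text{c}$) is the Newlander--Nirenberg circle of ideas together with the standard identity relating the Nijenhuis tensor to the Lie bracket on $T^{10}M$. For (i$_\text{b}$)$\Leftrightarrow$(i$_\text{c}$), I would simply compute $\N(X,Y)$ on vectors of type $(1,0)$ and observe that $\N$ vanishes precisely when $[T^{10}M,T^{10}M]$ has no $(0,1)$-component; the hard implication (i$_\text{c}$)$\Rightarrow$(i$_\text{a}$) is the Newlander--Nirenberg theorem, which I would invoke as a cited result since its proof is well beyond the scope here.

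For part (ii), the task is to show $M$ is $(1,2)$-symplectic if and only if $(\d\omega)^{12}=0$. First I would expand the right-hand side using the definition of $(\d\omega)^{12}$ together with equation \eqref{Equation:SecondEquationIn:Lemma:SalamonsLemma1.1Generalized}, which kills the mixed terms $(\nabla_X\omega)(U^{10},W^{01})$; concretely the terms $\big(\nabla_{Y^{01}}\omega\big)(X^{10},Z^{01})$ and $\big(\nabla_{Z^{01}}\omega\big)(X^{10},Y^{01})$ vanish identically because each pairs a $(1,0)$ with a $(0,1)$ argument. This leaves $(\d\omega)^{12}(X^{10},Y^{01},Z^{01})=\big(\nabla_{X^{10}}\omega\big)(Y^{01},Z^{01})$, which by \eqref{Equation:ThirdEquationIn:Lemma:SalamonsLemma1.1Generalized} equals $-2i\,g\big(\nabla_{X^{10}}Y^{01},Z^{01}\big)$. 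Hence $(\d\omega)^{12}=0$ iff $\nabla_{X^{10}}Y^{01}\in T^{10}M$ for all such vectors, which by conjugation is exactly the condition $\nabla_{Z^{01}}Y^{10}\in T^{10}M$ of Definition \ref{Definition:TheMainTypesOfAlmostComplexManifolds}(ii). This is precisely the statement \eqref{Equation:SecondEquationIn:Lemma:Salamon1.2Generalized} of the preceding lemma with $V=TM$, so part (ii) is essentially a direct reading of that lemma.

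For part (iii), I would show every $(1,2)$-symplectic manifold is cosymplectic, i.e. $\trace\nabla J=\sum_i(\nabla_{X_i}J)X_i=0$. The plan is to express the trace in a unitary frame adapted to $J$, writing the orthonormal basis in terms of a $(1,0)$ frame $\{Z_a\}$ and its conjugates. The $(1,2)$-symplectic condition gives $\nabla_{Z^{01}}Z^{10}_a\in T^{10}M$, equivalently (by metric compatibility and the reality of the connection) control over how the $(1,0)$-distribution twists; summing $(\nabla_{X_i}J)X_i$ over the frame and regrouping into $(1,0)$ and $(0,1)$ pieces, the $(1,2)$-symplectic condition forces the relevant components to cancel. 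The main obstacle I expect is bookkeeping: correctly relating the trace of $\nabla J$ over a real orthonormal basis to the complexified derivatives $\nabla_{Z_a}Z_b$ and $\nabla_{\overline{Z_a}}Z_b$, and verifying that the cosymplectic condition is implied by the strictly weaker-looking $(1,2)$-symplectic vanishing without needing the full $(2,1)$-part. I would handle this by noting that $\trace\nabla J$ is built from a symmetric contraction whose surviving terms are exactly those annihilated by the $(1,2)$-symplectic hypothesis, so cosymplecticity follows.
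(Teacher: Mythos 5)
Your plan is correct and follows essentially the same route as the paper: part (i) invokes Newlander--Nirenberg for the hard implication and relates $\N$ to brackets of $(1,0)$-vectors, part (ii) is exactly the reduction to Lemma \ref{Lemma:Salamon1.2Generalized} after killing the mixed terms via \eqref{Equation:SecondEquationIn:Lemma:SalamonsLemma1.1Generalized} (this is precisely how the paper argues, routing through Proposition \ref{Proposition:SalamonsLemma1.3}), and part (iii) is the same complexified-frame argument the paper carries out via the identity $\big(J\,\trace\nabla{J}\big)^{01}=4\big(\sum_j\nabla_{\overline{Z}_j}Z_j\big)^{01}$ together with reality of $\trace\nabla J$. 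One small slip in (ii): vanishing of $g(\nabla_{X^{10}}Y^{01},Z^{01})$ for all $Z^{01}$ gives $\nabla_{X^{10}}Y^{01}\in T^{01}M$ (not $T^{10}M$); after conjugation your stated conclusion $\nabla_{Z^{01}}Y^{10}\in T^{10}M$ is then the correct one.
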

\begin{proof}
The equivalence between (i$_\text{a}$) and (i$_\text{b}$) is the well-known Newlander-Nirenberg theorem (\cite{NewlanderNirenberg:57}; see also \cite{KobayashiNomizu:69}). As for the equivalence between (i$_\text{b}$) and (i$_\text{c}$): $\N=0$ if and only if $<\N(X,Y),Z>=0$ for all $X,Y,Z$ in $\Gamma(TM)$. But \[<[X-iJX,Y-iJY],Z-iJZ>=<\N(X,Y),Z>-i<\N(X,Y),JZ>\text{.}\]
Hence, $\N=0\,\impl\,<[T^{10}M,T^{10}M],T^{10}M>=0\,\impl\,[T^{10}M,T^{10}M]\subseteq{T}^{10}M$ and, conversely, $[T^{10}M,T^{10}M]$ $\subseteq{T}^{10}M\,\impl\,0=\Real <[X-iJX,Y-iJY],Z-iJZ>=$ $<\N(X,Y),Z>\,\impl\,\N=0$, as wanted. The proof of (ii) is direct consequence of \eqref{Equation:SecondEquationIn:Lemma:Salamon1.2Generalized} and
\eqref{Equation:SecondEquationIn:Lemma:SalamonsLemma1.3} (see next lemma):
\[(\d\omega)^{12}=0\text{~if~and~only~if~}\d_2\omega=0\text{~if~and~only~if~}\nabla_{T^{01}M}T^{10}M\subseteq{T}^{10}M\text{.}\]
Finally, (iii) follows from the first characterization we gave of $(1,2)$-symplectic manifolds and from the fact that, on an almost Hermitian manifold $(M,g,J)$,
\begin{equation}\label{Equation:FirstEquationIn:Lemma:ProofThat1.2SymplecticAreCosymplecticThe01PartOfJTraceJ}
\big(J\,\trace\nabla{J}\big)^{01}=4\big(\sum_j\nabla_{\overline{Z}_j}Z_j\big)^{01}
\end{equation}
where $Z_j=\frac{1}{2}(X_j-iJX_j)$, with $\{X_j,JX_j\}$ an orthonormal frame for $TM$ (\cite{BairdWood:03}, Lemma 8.1.2). In fact, assuming that $(M,g,J)$ is $(1,2)$-symplectic, we have $(J\trace\nabla{J})^{01}=4\sum_j(\nabla_{\overline{Z}_j}Z_j)^{01}=0$ since $\nabla_{\overline{Z}_j}Z_j\in{T}^{10}M$ as $M$ is $(1,2)$-symplectic. Therefore, $J\trace\nabla J$ lies in $TM$ and has vanishing $(0,1)$-part, which by reality implies it must vanish and so must $\trace\,\nabla J$, as desired.
\end{proof}

The following result is proved using forms in \cite{Salamon:85} (Proposition 1.3); we give a different, more explicit, proof using vector fields:

\begin{proposition}\label{Proposition:SalamonsLemma1.3}
Let $(M,g,J)$ be an almost Hermitian manifold. Then
\begin{equation}\label{Equation:FirstEquationIn:Lemma:SalamonsLemma1.3}
\d_1\omega=0\text{ if and only if } J\text{ is integrable (\textit{i.e.}, $M$ is a Hermitian manifold).}
\end{equation}
\begin{equation}\label{Equation:SecondEquationIn:Lemma:SalamonsLemma1.3}
\begin{array}{l}
\d_2\omega=0\text{ if and only if }(\d\omega)^{12}=0\text{ if and only if }\nabla_{T^{01}M}T^{10}M\,\subseteq\,T^{10}M \\
\text{(\textit{i.e.,~}$(M,g,J)$ is a $(1,2)$-symplectic manifold).} \
\end{array}
\end{equation}
\end{proposition}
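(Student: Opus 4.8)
The plan is to read off both equivalences from Lemma \ref{Lemma:Salamon1.2Generalized}, specialized to the tangent bundle $V=TM$ with $\nabla$ the Levi-Civita connection of $g$ (which is metric-compatible, as required by that lemma, and in addition torsion-free, a fact I shall exploit below). That lemma converts the vanishing of $\d_1\omega$ and of $\d_2\omega$ into the connection conditions \eqref{Equation:FirstEquationIn:Lemma:Salamon1.2Generalized} and \eqref{Equation:SecondEquationIn:Lemma:Salamon1.2Generalized}; the remaining work is to translate those into the integrability, respectively $(\d\omega)^{12}=0$, statements of the proposition. Throughout I use that $T^{10}M$ is isotropic for the complexified metric and that the pairing $T^{10}M\times T^{01}M\to\cn$ induced by $g$ is non-degenerate.

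For \eqref{Equation:FirstEquationIn:Lemma:SalamonsLemma1.3}, Lemma \ref{Lemma:Salamon1.2Generalized} gives $\d_1\omega=0\ \equi\ \nabla_{T^{10}M}T^{10}M\subseteq T^{10}M$, and Proposition \ref{Proposition:AnAlternativeCharacterizationOfAlmostComplexManifolds}(i) states that $J$ is integrable iff $[T^{10}M,T^{10}M]\subseteq T^{10}M$. Hence it suffices to show
\[\nabla_{T^{10}M}T^{10}M\subseteq T^{10}M\ \equi\ [T^{10}M,T^{10}M]\subseteq T^{10}M\text{.}\]
The implication $\impl$ is immediate from torsion-freeness: for $X,Y\in\Gamma(T^{10}M)$ one has $[X,Y]=\nabla_XY-\nabla_YX\in T^{10}M$. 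For the converse I would invoke the Koszul formula: for $X,Y,Z\in\Gamma(T^{10}M)$,
\[2g(\nabla_XY,Z)=Xg(Y,Z)+Yg(X,Z)-Zg(X,Y)+g([X,Y],Z)-g([X,Z],Y)-g([Y,Z],X)\text{.}\]
Since $T^{10}M$ is isotropic the three metric terms vanish, and if $[T^{10}M,T^{10}M]\subseteq T^{10}M$ then all three bracket terms pair vectors of $T^{10}M$ and so vanish too; thus $g(\nabla_XY,Z)=0$ for all $Z\in T^{10}M$, whence $\nabla_XY\in T^{10}M$ by non-degeneracy of the $T^{10}M\times T^{01}M$ pairing.

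For \eqref{Equation:SecondEquationIn:Lemma:SalamonsLemma1.3}, Lemma \ref{Lemma:Salamon1.2Generalized} gives $\d_2\omega=0\ \equi\ \nabla_{T^{01}M}T^{10}M\subseteq T^{10}M$, and the latter is exactly the $(1,2)$-symplectic condition of Definition \ref{Definition:TheMainTypesOfAlmostComplexManifolds}(ii); this already yields the last equivalence of the statement. It remains to connect these with $(\d\omega)^{12}=0$. Using the expression for $(\d\omega)^{12}$ from Proposition \ref{Proposition:AnAlternativeCharacterizationOfAlmostComplexManifolds}(ii) and the vanishing of mixed terms \eqref{Equation:SecondEquationIn:Lemma:SalamonsLemma1.1Generalized}, the second and third summands $(\nabla_{Y^{01}}\omega)(X^{10},Z^{01})$ and $(\nabla_{Z^{01}}\omega)(X^{10},Y^{01})$ both drop out, leaving
\[(\d\omega)^{12}(X^{10},Y^{01},Z^{01})=(\nabla_{X^{10}}\omega)(Y^{01},Z^{01})=\d_2^1\omega(X,Y,Z)\text{.}\]
Finally, by the same conjugation argument used in the proof of Lemma \ref{Lemma:Salamon1.2Generalized} (reality of $\omega$ together with conjugation interchanging $T^{10}M$ and $T^{01}M$, so that $\overline{\d_2^1\omega}=\d_2^2\omega$), one gets $\d_2^1\omega=0\ \equi\ \d_2^2\omega=0\ \equi\ \d_2\omega=0$. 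Combining, $(\d\omega)^{12}=0\ \equi\ \d_2\omega=0$, closing the chain of equivalences.

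The one genuinely delicate point is the converse direction in the first equivalence: torsion-freeness alone controls only the antisymmetric part $\nabla_XY-\nabla_YX$, so it cannot by itself promote closure of $T^{10}M$ under the bracket to invariance under $\nabla$. The Koszul formula, read against the isotropy of $T^{10}M$, is precisely what kills all six terms simultaneously and supplies the missing symmetric information; that is where the metric-compatibility of $\nabla$ (beyond what Lemma \ref{Lemma:Salamon1.2Generalized} already packages) does the real work. Everything else is a matter of bookkeeping with the type decomposition and the conjugation symmetry.
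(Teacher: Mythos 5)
Your proof is correct and follows essentially the same route as the paper: both parts reduce to Lemma \ref{Lemma:Salamon1.2Generalized}, the forward direction of \eqref{Equation:FirstEquationIn:Lemma:SalamonsLemma1.3} uses torsion-freeness of the Levi-Civita connection, and the chain $(\d\omega)^{12}=0\,\equi\,\d_2^1\omega=0\,\equi\,\d_2\omega=0$ is handled identically, via the vanishing of mixed terms \eqref{Equation:SecondEquationIn:Lemma:SalamonsLemma1.1Generalized} and the conjugation argument from the proof of Lemma \ref{Lemma:Salamon1.2Generalized}. The only cosmetic difference is in the converse of the first equivalence: where you invoke the Koszul formula and kill all six terms using isotropy and bracket closure, the paper carries out the equivalent cancellation by hand with repeated use of metric compatibility, ending in $2<\nabla_{X^{10}}Y^{10},Z^{10}>=0$ --- the same computation, packaged differently.
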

Notice that there is not a version of this lemma for a general vector bundle $V$, in contrast with Lemmas
\ref{Lemma:SalamonsLemma.1.1Generalized} and \ref{Lemma:Salamon1.2Generalized}.
\begin{proof}
Let us start by proving \eqref{Equation:FirstEquationIn:Lemma:SalamonsLemma1.3}: by Proposition
\ref{Proposition:AnAlternativeCharacterizationOfAlmostComplexManifolds} (i), we know that $J$ is integrable if and only if $T^{10}M$ is stable under the Lie bracket. Now, if $\d_1 \omega=0$, using Lemma \ref{Lemma:Salamon1.2Generalized}, we know that $\nabla_{T^{10}M}T^{10}M\subseteq T^{10}M$ and so $[X^{10},Y^{10}]=\nabla_{X^{10}}Y^{10}-\nabla_{Y^{10}}X^{10}\in{T}^{10}M$ which proves the integrability of $J$. Conversely, if $J$ is integrable, $\N=0$ and consequently $[T^{10}M,T^{10}M]\subseteq{T}^{10}M$ so that we obtain\addtolength{\arraycolsep}{-1.2mm}
\[\begin{array}{rll}0&=&<[X^{10},Y^{10}],Z^{10}>\\
~&=&<\nabla_{X^{10}}Y^{10},Z^{10}>-Y^{10}<X^{10},Z^{10}>+<X^{10},\nabla_{Y^{10}}Z^{10}>.\\
\multicolumn{3}{l}{\text{\put(-47,0){Since $<X^{10},Z^{10}>=0$ and $[Y^{10},Z^{10}]\in{T}^{10}M$, we obtain}}}\\
0&=&<\nabla_{X^{10}}Y^{10},Z^{10}>+<X^{10},[Y^{10},Z^{10}]+\nabla_{Z^{10}}Y^{10}>\\
~&=&<\nabla_{X^{10}}Y^{10},Z^{10}>+Z^{10}<X^{10},Y^{10}>-<\nabla_{Z^{10}}X^{10},Y^{10}>\\
~&=&<\nabla_{X^{10}}Y^{10},Z^{10}>-<[Z^{10},X^{10}]+\nabla_{X^{10}}Z^{10},Y^{10}>\\
~&=&<\nabla_{X^{10}}Y^{10},Z^{10}>-X^{10}<Z^{10},Y^{10}>+<Z^{10},\nabla_{X^{10}}Y^{10}>\\
~&=&2<\nabla_{X^{10}}Y^{10},Z^{10}>\end{array}\]\addtolength{\arraycolsep}{1.2mm}\noindent
which implies that $\nabla_{X^{10}}Y^{10}\in{T}^{10}M$, as desired. To prove the second statement: from Lemma \ref{Lemma:Salamon1.2Generalized}, we know that $\d_2\omega=0$ if
and only if $\nabla_{T^{01}M}T^{10}M\subseteq T^{10}M$. On the other hand, using \eqref{Equation:SecondEquationIn:Lemma:SalamonsLemma1.1Generalized}, we know that $\big(\nabla_{Y^{01}}\omega\big)(X^{10},Z^{01})=0$ for all $Y^{01},Z^{01}$ in $T^{01}M$, $X^{10}$ in $T^{10}M$. Thus,\addtolength{\arraycolsep}{-1.0mm}
\[\begin{array}{ll}~&(\d\omega)^{12}=0\,\equi\,\d\omega(X^{10},Y^{01},Z^{01})=0\\
\equi&\big(\nabla_{X^{10}}\omega\big)(Y^{01},Z^{01})-\big(\nabla_{Y^{01}}\omega\big)(X^{10},Z^{01})+\big(\nabla_{Z^{01}}\omega\big)(X^{10},Y^{01})=0\\
\equi&\d_2^1\omega=0\,\equi\text{~(as~in~the~first~part~of~the~proof~of~Lemma~\ref{Lemma:Salamon1.2Generalized})~}\d_2\omega=0,
\end{array}\]\addtolength{\arraycolsep}{1.0mm}\noindent
concluding our proof.
\end{proof}

As a consequence of the two previous results, we get the diagram (\cite{Salamon:85})

\begin{picture}(100,100)(0,-10)
\put(200,00){\oval(60,130)} \put(160,00){\oval(130,60)}
\put(200,00){\oval(80,180)}
\put(169,-77){\text{\tiny{$\begin{array}{c}
  \text{cosymplectic} \\
  ``\trace\,\d_2\omega=0"
\end{array}$}}} \put(110,00){\text{\tiny{$\begin{array}{c}
  \text{complex} \\
  \d_1\omega=0
\end{array}$}}}
\put(168,00){\text{\tiny{$\begin{array}{c}
  \text{Kähler} \\
  \d_1\omega=\d_2\omega=0
\end{array}$}}}
\put(168,-47){\text{\tiny{$\begin{array}{c}
  (1,2)-\text{symplectic} \\
  \d_2\omega=0
\end{array}$}}}
\end{picture}
\vspace{2.5cm}


\section{The bundle \texorpdfstring{$\Sigma^+M$}{Sigma+M}}\label{Section:TheBundleSigmaPlusM}


If $(M,g)$ is an oriented even-dimensional Riemannian manifold, for each $x$ on $M$ we can set $E=T_xM$ and proceed as in Section \ref{Section:SectionIn:Chapter:TwistorSpaces:HermitianStructuresonAVectorSpace} to define $\Sigma^+ T_x M$. We can then consider the total bundle\addtolength{\arraycolsep}{-1.0mm}
\begin{equation}\index{$\Sigma^+M$}\index{Twistor~space!of~a~Riemannian~manifold}
\begin{array}{lll}\Sigma^+M&=&\SO(M)\times_{\SO(2m)}\Sigma^+\rn^{2m}=\SO(M)\times_{\SO(2m)}\SO(2m)/\U(2m)\\
~&=&\{(x,J_x),\,x\in{M},\,J_x\in\Sigma^+T_xM\}\end{array}
\end{equation}\addtolength{\arraycolsep}{1.0mm}\noindent
whose fibre at $x$ is precisely $\Sigma^+ T_xM$ and whose projection map $\pi:\Sigma^+M\to M$ is defined by $\pi(x,J_x)\to x$. This is a subbundle of $\L(TM,TM)$ and the connection in the latter defines a splitting of $T\Sigma^+M$ into its \emph{horizontal} and \emph{vertical} parts: taking $\H_{(x,J_x)}$ as the set
\begin{equation}\label{Equation:DefinitionOfTheHorizontalSpaceOfTheTwistorSpace}\index{Decomposition!of~$T\Sigma^+M$}\index{$\H_{(x,J_x)}$}\index{$\V_{(x,J_x)}$}
\H_{(x,J_x)}=\left\{\d\sigma_x(X),\begin{array}{l}\sigma\text{~smooth~section\footnotemark~of~}\Sigma^+M\text{~with}\\
\nabla^{\L(TM,TM)}_{X_x}\sigma=0\text{~and~}\sigma(x)=(x,J_x)\end{array}\right\}
\end{equation}
\footnotetext{Hence, also a smooth section of $\L(TM,TM)$.}and $\V_{(x,J_x)}$ as the kernel of $\d\pi_{(x,J_x)}$, we have
\begin{equation}
T_{(x,J_x)}\Sigma^+=\H_{(x,J_x)}\oplus\V_{(x,J_x)}\text{.}
\end{equation}
For this decomposition, $\d\pi_{(x,J_x)}:T_{(x,J_x)}\Sigma^+M\to{T}_xM$ is an isomorphism when restricted to $\H_{(x,J_x)}$. In particular, we can define a complex structure on $\H_{(x,J_x)}$ by transporting the almost complex structure $J_x$ on $T_xM$:
\begin{equation}\label{Equation:DefinitionOfTheAlmostComplexStructureOnH}
\J^\H_{(x,J_x)}=\d\pi_{(x,J_x)}|_{\H}^{-1}\circ J_x\circ\d\pi_{(x,J_x)}|_\H\text{.}
\end{equation}
On the other hand, $\V_{(x,J_x)}$ is the tangent space to the fibre through $(x,J_x)$ at $J_x$. Hence, $\V_{(x,J_x)}\simeq T_{J_x}\Sigma^+T_x M=\m_{J_x}(T_xM)$ has the complex structure $\J^\V$ of Definition \ref{Definition:AlmostComplexStructureOnMJV}. We can define two almost complex structures $\J^1$ and $\J^2$ on $T_{(x,J_x)}\Sigma^+M$:
\begin{equation}\label{Equation:DefinitionOfTheAlmostComplexStructuresJ1AndJ2OnSigmaPlusM}
\begin{array}{ll}
\J^1=\left\{\begin{array}{ll}\J^\H\text{ on }\H \\
\J^\V\text{ on }\V\end{array}\right. \text{ and }
\J^2=\left\{\begin{array}{l}\,\,\J^\H\,\text{ on }\H \\
-\J^\V\text{ on }\V\text{.}\end{array}\right.
\end{array}
\end{equation}
It follows from the above definition that $\pi$ is a ``holomorphic map", in the sense that
\begin{equation}\label{Equation:HolomorphicityOfTheProjectionMap}\index{Holomorphic!projection~map~from~$\Sigma^+M$}
\d\pi_{(x,J_x)}(\J^a X)=J_x\d\pi_{(x,J_x)}X,\quad\forall\,X\in{T}\Sigma^+ M\text{,}
\end{equation}
as on the horizontal space $\J^a_{(x,J_x)}$ is transported from $J_x$ \textit{via} $\pi$ and $\V_{(x,J_x)}$ is precisely the kernel of $\d\pi_{(x,J_x)}$.

\index{$\sigma_J$,~associated~section}\index{$J_\sigma$,~associated~almost~Hermitian~structure}\index{Associated!almost~Hermitian~structure}\index{Associated!section}
An almost Hermitian manifold $(M,g,J)$ is thus a Riemannian manifold $(M,g)$ equipped with a smooth section $J$ of the bundle $\L(TM,TM)$ with $J^2=-Id$ and $J_x$ positive at each $x$ on $M$. To each such $J$ corresponds a section $\sigma_J$ of the bundle $\Sigma^+M$ and we shall refer to the latter as the \emph{associated section}. Similarly, if $\sigma$ is a section of $\Sigma^+M$, there is a corresponding section $J_\sigma$ of $\L(TM,TM)$ and we shall call $J_\sigma$
the \emph{almost Hermitian structure defined by} $\sigma$.

If $\sigma_J$ is a section of $\Sigma^+M$ with associated almost Hermitian structure $J$, identifying $\V_{(x,J_x)}$ with
$T_{J_x}\Sigma^+T_xM$, we have (see \textit{e.g.} \cite{Salamon:85}, p. 182))
\begin{equation}\label{EquationIn:Remark:DecompositionOfASectionIntoVerticalAndHorizontalParts}
(\d\sigma_J X)^\V=\nabla^{\L(TM,TM)}_X J\text{.}
\end{equation}
More precisely, given such a section $\sigma_J$,
\begin{equation}\label{Equation:DecompositionIntoHorizontalAndVerticalPartsOfTheDerivativeOfASection}\index{Decomposition!of~a~section~of~$\Sigma^+M$}
\d\sigma_{J_x}(X_x)=\{\d\sigma_{J_x}(X_x)-\nabla_{X_x}J\}\oplus\nabla_{X_x}J\in\H\oplus\V
\end{equation}
gives the decomposition of $\d\sigma_{J_x}(X_x)$ into its horizontal and vertical parts.

\begin{remark}
Another way of defining these structures is the following: consider the usual canonical isomorphism between $\L(TM,TM)$ and $\L^2(TM,\rn)$ given by the metric,\addtolength{\arraycolsep}{-1.2mm}
\begin{equation}\label{Equation:IsomorphismTildegGivenByTheMetric}\index{Metric!isomorphism~induced~by}\index{$\tilde{g}$}
\begin{array}{lcll}
\tilde{g}_x: & \L(T_xM,T_xM)&\to&\L(T_xM\times T_xM,\rn)\\
~ & \lambda_x&\to&\tilde{g}_x(\lambda_x):T_xM\times T_xM \to \rn\\
~ & ~ & ~& \hspace{15mm} (X_x\hspace{4mm},\hspace{3mm} Y_x)\,\to\tilde{g}(\lambda_x)(X_x,Y_x)=g_x(\lambda_x X_x,Y_x)\text{.}
\end{array}
\end{equation}\addtolength{\arraycolsep}{1.2mm}\noindent
Then, $\lambda_x\in\so(T_xM)$ if and only if $\tilde{g}_x(\lambda_x)$ is skew-symmetric and such a $\lambda_x$ lies in $\m_{J_x}(T_xM)$ if and only if, considering the complex bilinear extended map $(\tilde{g}_x(\lambda_x))^\cn$, we have $(\tilde{g}_x(\lambda_x))^\cn(X_x^{10},Y_x^{01})=0$. Equivalently, under the isomorphism $\tilde{g}$,
\begin{equation}\label{Equation:mJTxMAsT20StarPlusT02Star}
\m_{J_x}(T_xM)\simeq T^{20^\star}_xM\oplus T^{02^\star}_xM\text{.}
\end{equation}
Hence, we can define $\J^\V_{(x,J_x)}$ as acting as $i$ on $T^{02^\star}_xM$ and as $-i$ on $T^{20^\star}_xM$ and then define $\J^1$ and $\J^2$ just as before. It is easy to check that both definitions agree.
\end{remark}

\begin{remark}\label{Remark:TheCaseOfGeneralVectorBundles}\index{Almost~complex~structure!on~$\Sigma^+V$}\index{$\Sigma^+V$!almost~complex~structures~$\J^a$}
Imagine that we have a vector bundle $V$ on an almost Hermitian manifold $M$. We could then introduce two almost complex structures $\J^a$ on $\Sigma^+ V$, defined as before on the vertical spaces but very differently in the horizontal spaces. In fact, we could identify each horizontal space $\H_{(x,J_x)}$ with $T_xM$ but now the only natural almost complex structure is precisely the one from $M$ at $x$, since $J_x\in\Sigma^+V_x$ is not even defined over $T_xM$. So, we would get an almost complex structure whose horizontal part ``does not change along the fibre" and would therefore be in a much more rigid situation. We shall come back to this topic in Section \ref{Section:SectionIn:TwistorSpacesAndHarmonicMaps:FundamentalLemma}.
\end{remark}


\subsection{Holomorphic sections of \texorpdfstring{$\Sigma^+M$}{Sigma+M}}


The goal of this section is to give a more transparent proof of the following theorem (see \cite{Salamon:85}, Proposition 3.2 for the equivalence between (i) and (ii)):

\begin{theorem}[\textup{Holomorphic~sections~of~$\Sigma^+M$}]\label{Theorem:SalamonsTheorem3.2}\index{Holomorphic!sections~of~$\Sigma^+M$}
Let $(M,g)$ be an oriented even-dimensional Riemannian manifold as before and consider its positive twistor bundle, $\Sigma^+M$. For each local section $\sigma$ of this bundle, the following conditions are equivalent:

\textup{(i)} $\d_a\omega_\sigma=0$, where $\omega_\sigma$ is the fundamental $2$-form associated with the almost Hermitian structure defined by $\sigma$.

\textup{(ii)} $\sigma$ is a holomorphic map from $(M,J_\sigma)$ to $\big(\Sigma^+M,\J^a\big)$, where $J_\sigma$ is the almost Hermitian structure defined by $\sigma$ on $M$.

\textup{(iii)} $\sigma$ is a $\J^a$\emph{-stable map}; \textit{i.e.}, $\d\sigma(TM)$ is a $\J^a$-stable subspace of
$T\Sigma^+M$ (see Definition \ref{Definition:HolomorphicityInCertainSubbundles} below).\index{Stable!map}\index{$\J^a$!-stable map}
\end{theorem}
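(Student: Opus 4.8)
The plan is to dispose of the formal equivalence (ii)$\,\equi\,$(iii) first, and then reduce (i)$\,\equi\,$(ii) to a purely \emph{vertical} computation controlled by Salamon's Lemma~\ref{Lemma:Salamon1.2Generalized}. For (ii)$\,\equi\,$(iii): holomorphicity of $\sigma$ trivially yields $\J^a$-stability of $\d\sigma(TM)$. For the converse I would use that $\sigma$ is a section, so $\d\pi\circ\d\sigma=\Id$; given $X$, stability provides some $Y$ with $\J^a\d\sigma(X)=\d\sigma(Y)$, and applying $\d\pi$ together with the holomorphicity of the projection \eqref{Equation:HolomorphicityOfTheProjectionMap} forces $Y=\d\pi(\J^a\d\sigma X)=J_\sigma\,\d\pi(\d\sigma X)=J_\sigma X$. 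Hence $\J^a\d\sigma(X)=\d\sigma(J_\sigma X)$, which is precisely holomorphicity.

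For (i)$\,\equi\,$(ii), the first step is to split $\d\sigma(X)$ into its horizontal and vertical parts by \eqref{EquationIn:Remark:DecompositionOfASectionIntoVerticalAndHorizontalParts} and \eqref{Equation:DecompositionIntoHorizontalAndVerticalPartsOfTheDerivativeOfASection}, the vertical part being $\nabla_X J$. Since $\J^\H$ is transported from $J_\sigma$ through $\d\pi|_\H$ and $\d\pi\circ\d\sigma=\Id$, the horizontal lift of $J_\sigma X$ equals $\J^\H$ applied to the horizontal part of $\d\sigma(X)$; thus the horizontal parts of $\d\sigma(J_\sigma X)$ and of $\J^a\d\sigma(X)$ always agree, independently of $\sigma$. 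Consequently holomorphicity collapses to the vertical identity $\nabla_{J_\sigma X}J=(\pm\J^\V)(\nabla_X J)$, the sign being $+$ for $a=1$ and $-$ for $a=2$, read off from \eqref{Equation:DefinitionOfTheAlmostComplexStructuresJ1AndJ2OnSigmaPlusM}.

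Next I would pass to the form picture through the metric isomorphism $\tilde g$ of \eqref{Equation:IsomorphismTildegGivenByTheMetric}, under which $\nabla_X J$ corresponds to $\nabla_X\omega_\sigma$ and, by \eqref{Equation:mJTxMAsT20StarPlusT02Star}, $\J^\V$ acts as $-i$ on $T^{20^\star}M$ and $+i$ on $T^{02^\star}M$. Complexifying $X$ and testing the vertical identity separately on $X^{10}$ and on $X^{01}$, each instance reduces to the vanishing of a single bidegree component of $\nabla_{X^{10}}\omega_\sigma$ or $\nabla_{X^{01}}\omega_\sigma$. Salamon's identities \eqref{Equation:FirstEquationIn:Lemma:SalamonsLemma1.1Generalized} and \eqref{Equation:ThirdEquationIn:Lemma:SalamonsLemma1.1Generalized} rewrite these as $g(\nabla_{\bullet}\bullet,\bullet)=0$, and the fact that a vector lies in $T^{10}M$ iff it is orthogonal to all of $T^{10}M$ converts them into $\nabla_{T^{10}M}T^{10}M\subseteq T^{10}M$ in the case $a=1$ and $\nabla_{T^{01}M}T^{10}M\subseteq T^{10}M$ in the case $a=2$. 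By Lemma~\ref{Lemma:Salamon1.2Generalized} these are exactly $\d_1\omega_\sigma=0$ and $\d_2\omega_\sigma=0$, that is, $\d_a\omega_\sigma=0$.

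The hard part will be the bookkeeping of bidegrees and signs: one must verify that imposing the vertical identity on $X^{10}$ and on $X^{01}$ does not produce two independent constraints, but that—through the reality of $\nabla$ and conjugation of $T^{10}M$ with $T^{01}M$—both recover the \emph{same} single condition $\d_a\omega_\sigma=0$. By contrast, the cancellation of the horizontal parts is routine, driven entirely by \eqref{Equation:HolomorphicityOfTheProjectionMap} and the definition of $\J^\H$, and the equivalence (ii)$\,\equi\,$(iii) is immediate once the section relation is used.
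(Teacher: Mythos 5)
Your proposal is correct and follows essentially the same route as the paper: the paper also disposes of the horizontal parts via the automatic $\H$-holomorphicity of sections (its Lemma \ref{Lemma:HorizontalHolomorphicityOfAnySection}), reduces holomorphicity to the vertical identity $\nabla_{J_\sigma X}J=(-1)^{a+1}J\nabla_XJ$, converts it through the isomorphism $\tilde{g}$ and Lemmas \ref{Lemma:SalamonsLemma.1.1Generalized}--\ref{Lemma:Salamon1.2Generalized} into $\d_a\omega_\sigma=0$ (its Lemma \ref{Lemma:VerticalJaHolomorphicityNadNullityOfda}), and settles (iii)$\impl$(ii) by the same use of $\d\pi\circ\d\sigma=\Id$ and \eqref{Equation:HolomorphicityOfTheProjectionMap}, your version of that last step being marginally more direct than the paper's.
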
\label{Page:CommentsTo:Theorem:SalamonsTheorem3.2:TheImportanceOfTheConditionDaEqualsZeroInsteadOfJaHolomorphicity}

The proof of this theorem will be done in several steps. Since some of them are important on their own, we shall present them as lemmas. As we already noticed in Remark \ref{Remark:TheCaseOfGeneralVectorBundles}, when we deal with the general case of a vector bundle $V$ over the manifold $M$, there is no ``canonical" almost complex structure on $\Sigma^+V$; however, conditions $\d_a\sigma=0$ still arise in a very natural way, as we saw in Section
\ref{Section:SectionIn:Chapter:TwistorSpaces:TypesOfAlmostHermitianManifolds}. Indeed, we shall see in Chapter \ref{Chapter:HarmonicMapsAndTwistorSpaces} that these last conditions are the correct generalization to bundles $\Sigma^+V$ of the conditions $\J^a$-holomorphicity on the special case $V=TM$. We begin with a definition that will be useful in the sequel:
\begin{definition}\label{Definition:HolomorphicityInCertainSubbundles}\index{$\H$-holomorphic!map}\index{$\V$-holomorphic!map}\index{Holomorphic!in~subbundles}\index{Stable!subbundle}\index{Stable!decomposition} Let $(M,J^M)$ and $(Z,J^Z)$ be two almost complex manifolds. Let $TZ=\H\oplus\V$
be a decomposition of $TZ$ into $J^Z$\emph{-stable} subbundles;
\textit{i.e.}, $J^Z\H\subseteq \H$ and
$J^Z\V\subseteq\V$\footnote{Equivalently, $J^Z\H=\H$ and
$J^Z\V=\V$.}. We shall call such a decomposition into stable subbundles a ($J^Z$-)\emph{stable decomposition}. Let $\psi:M\to Z$ be a smooth map. We shall say that $\psi$ is $\H$\emph{-holomorphic} (or $(J^M,J^Z)$\emph{-horizontally holomorphic}) if
\begin{equation}\label{Equation:EquationIn:Definition:HolomorphicityInCertainSubbundles}
(\d\psi (J^MX))^\H=J^Z (\d\psi X)^\H,\quad\forall\,X\in{T}M.
\end{equation}
Changing $\H$ to $\V$ in the above equation gives the definition for ``$\psi$ is $\V$\emph{-holomorphic}" (or $(J^M,J^Z)$\emph{-vertically holomorphic}).
\end{definition}
A smooth map $\psi:M\to Z$ is holomorphic if and only if it is both $\H$ and $\V$-holomorphic for some, and so any, stable decomposition $TZ=\H\oplus\V$. Taking $Z=\Sigma^+M$, the decomposition $T\Sigma^+M=\H\oplus\V$ is stable for both the almost complex structures $\J^1$ and $\J^2$ on $\Sigma^+M$, from their very definition.

\begin{lemma}[\textup{Horizontal~holomorphicity~of~any~section~of~$\Sigma^+M$}]\label{Lemma:HorizontalHolomorphicityOfAnySection}\index{Horizontal~holomorphicity~of~any~section}\index{$\H$-holomorphic!sections}
Let $\sigma$ be any local section of $\Sigma^+M$. Then, $\sigma$ is always $(J_\sigma,\J^a)$-horizontally holomorphic, $a=1,2$, where $J_\sigma$ is the almost complex structure defined by $\sigma$ on $M$ (more precisely, on the open set $\openU$ of $M$ where $\sigma$ is defined). In other words,
\begin{equation}\label{Equation:EquationIn:Lemma:HorizontalHolomorphicityOfAnySection}
(\d\sigma(J_{\sigma}X))^\H=\J^{\H,a}(\d\sigma{X})^\H,\quad\forall\,X\in{T}M,\,a=1,2\text{.}
\end{equation}
\end{lemma}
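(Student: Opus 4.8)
The plan is to exploit the single structural fact that distinguishes a \emph{section} from an arbitrary map into $\Sigma^+M$: since $\pi\circ\sigma=\mathrm{id}_M$, differentiating gives $\d\pi_{\sigma(x)}\circ\d\sigma_x=\mathrm{id}_{T_xM}$. Because the vertical space $\V_{(x,J_x)}$ is by definition the kernel of $\d\pi_{(x,J_x)}$, this identity pins down the horizontal part of $\d\sigma$ completely. First I would record the key observation
\[
(\d\sigma X)^\H=\big(\d\pi_{(x,J_x)}|_\H\big)^{-1}(X),\qquad\forall\,X\in T_xM,
\]
which follows at once: applying $\d\pi$ annihilates $(\d\sigma X)^\V$, so $\d\pi\big((\d\sigma X)^\H\big)=\d\pi(\d\sigma X)=X$, and since $\d\pi_{(x,J_x)}|_\H$ is an isomorphism onto $T_xM$ we may invert it. Note that here $J_x=J_\sigma|_x$, the almost Hermitian structure at $x$ determined by the section, so the fibre coordinate of $\sigma(x)=(x,J_x)$ is precisely $J_\sigma$ evaluated at $x$.

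With this in hand the verification of \eqref{Equation:EquationIn:Lemma:HorizontalHolomorphicityOfAnySection} is a direct computation of each side. For the left-hand side, replacing $X$ by $J_\sigma X$ in the displayed observation gives
\[
\big(\d\sigma(J_\sigma X)\big)^\H=\big(\d\pi_{(x,J_x)}|_\H\big)^{-1}(J_\sigma X)=\big(\d\pi_{(x,J_x)}|_\H\big)^{-1}(J_x X).
\]
For the right-hand side I would substitute the definition \eqref{Equation:DefinitionOfTheAlmostComplexStructureOnH} of $\J^\H_{(x,J_x)}=\d\pi|_\H^{-1}\circ J_x\circ\d\pi|_\H$ and then feed in the horizontal part computed above:
\[
\J^{\H,a}\big((\d\sigma X)^\H\big)=\big(\d\pi|_\H\big)^{-1}\!\circ J_x\circ\d\pi|_\H\Big(\big(\d\pi|_\H\big)^{-1}(X)\Big)=\big(\d\pi|_\H\big)^{-1}(J_x X).
\]
The two expressions coincide, establishing the identity.

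The reason the statement holds uniformly for both $a=1,2$ is that, by the very definition \eqref{Equation:DefinitionOfTheAlmostComplexStructuresJ1AndJ2OnSigmaPlusM}, the almost complex structures $\J^1$ and $\J^2$ differ only on the vertical distribution and agree on $\H$, where both equal $\J^\H$; since \eqref{Equation:EquationIn:Lemma:HorizontalHolomorphicityOfAnySection} is an assertion purely about horizontal parts, it makes no distinction between the two. Strictly speaking there is no genuine obstacle here: the content is entirely that a section has a canonically determined horizontal lift and that $\J^a$ was \emph{constructed} to agree with the transported structure $J_x$ on $\H$. The one point requiring care, and the only place the argument could go wrong, is the bookkeeping in the observation $(\d\sigma X)^\H=(\d\pi|_\H)^{-1}(X)$ — one must keep straight that $\d\pi$ kills exactly the vertical component and that $\d\pi|_\H$, not $\d\pi$ on all of $T_{(x,J_x)}\Sigma^+M$, is what gets inverted. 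Everything else is formal, so I would present the lemma essentially as the two-line computation above, preceded by the section identity $\pi\circ\sigma=\mathrm{id}_M$.
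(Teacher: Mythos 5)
Your proof is correct and follows essentially the same route as the paper's: both exploit $\pi\circ\sigma=\mathrm{id}_M$ together with the fact that $\V=\ker\d\pi$ to identify $(\d\sigma X)^\H=\big(\d\pi_{\sigma(x)}|_\H\big)^{-1}(X)$, then compute both sides of \eqref{Equation:EquationIn:Lemma:HorizontalHolomorphicityOfAnySection} via the definition \eqref{Equation:DefinitionOfTheAlmostComplexStructureOnH} of $\J^\H$, noting that $\J^1$ and $\J^2$ coincide on $\H$. The only presentational difference is that you isolate the inversion identity as a standalone observation before substituting, whereas the paper performs the same inversion inline; the mathematical content is identical.
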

\begin{proof}
Let $\sigma$ be a smooth section of $\Sigma^+M$ and $x\in{M}$. Since the horizontal part of $\J^a$ does not depend on $a=1,2$, we can just write $\J^\H$. Since $\d\pi_{\sigma(x)}$ vanishes on the vertical space, we have $\d\pi_{\sigma(x)}|_\H\big((\d\sigma_xX_x)^\H\big)=\d\pi_{\sigma(x)}(\d\sigma_x X_x)=\d(\pi\circ\sigma)_xX_x=X_x$ for all $X_x\in{T}_xM$ so that
\[\big(\d\sigma_x(J_{\sigma_x}X_x)\big)^\H=\d\pi_{\sigma(x)}|_\H^{-1}\big(\d\pi_{\sigma(x)}|_\H\big(\d\sigma_x(J_{\sigma_x}X_x)\big)^\H\big)=\d\pi_{\sigma(x)}|_\H^{-1}(J_{\sigma_x}X_x)\text{.}\]
On the other hand,
\[\J^{\H}(\d\sigma_xX_x)^\H=\d\pi_{\sigma(x)}|_\H^{-1}\big(J_{\sigma_x}\d\pi_{\sigma(x)}|_\H(\d\sigma_xX_x)^\H\big)=\d\pi_{\sigma(x)}|_\H^{-1}(J_{\sigma_x}X_x\big)\text{.}\]
Hence, equation \eqref{Equation:EquationIn:Lemma:HorizontalHolomorphicityOfAnySection} holds, as desired.
\end{proof}

\begin{lemma}[\textup{Vertical~$\J^a$-holomorphicity~of~a~section~and~vanishing~of~$\d_a$}]\label{Lemma:VerticalJaHolomorphicityNadNullityOfda} Let $\sigma$ be a smooth local section of $\Sigma^+M$. Then, $\sigma$ is $(J_\sigma,\J^a)$-vertically holomorphic if and only if $\d_a\omega_\sigma=0$, where $\omega_\sigma$ denotes the fundamental $2$-form associated with the almost complex structure $J_\sigma$;
\textit{i.e.},
\begin{equation}\label{Equation:EquationIn:Lemma:VerticalJaHolomorphicityNadNullityOfda}
(\d\sigma(J_{\sigma}X))^\V=\J^{\V,a}(\d\sigma{X})^\V,\quad\forall\,X\in{T}M,\,a=1,2\text{.}
\end{equation}
\end{lemma}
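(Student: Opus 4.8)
The plan is to transport the vertical condition \eqref{Equation:EquationIn:Lemma:VerticalJaHolomorphicityNadNullityOfda}, which lives in the vertical fibre $\V_{\sigma(x)}\simeq\m_{J_\sigma}(T_xM)$, into a statement about the fundamental $2$-form and then read off the vanishing of $\d_a\omega_\sigma$ by comparing eigenvalues of $\J^\V$. Write $J=J_\sigma$. By \eqref{EquationIn:Remark:DecompositionOfASectionIntoVerticalAndHorizontalParts} we have $(\d\sigma\,X)^\V=\nabla_X J$, and by Definition \ref{Definition:AlmostComplexStructureOnMJV} together with \eqref{Equation:DefinitionOfTheAlmostComplexStructuresJ1AndJ2OnSigmaPlusM} the vertical complex structures are $\J^{\V,1}=\J^\V$ and $\J^{\V,2}=-\J^\V$, where $\J^\V(\lambda)=J\circ\lambda$. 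Hence \eqref{Equation:EquationIn:Lemma:VerticalJaHolomorphicityNadNullityOfda} is equivalent to the operator identity
\begin{equation*}
\nabla_{JX}J=(-1)^{a+1}\,J\circ(\nabla_X J),\qquad\forall\,X\in TM.
\end{equation*}
Since $\nabla$ is tensorial in $X$ and $J$ is complex linear, both sides extend $\cn$-linearly, so it suffices to test this identity for $X\in T^{10}M$ and $X\in T^{01}M$ separately.

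Next I would rephrase everything in terms of $\omega_\sigma$ via the metric isomorphism $\tilde{g}$ of \eqref{Equation:IsomorphismTildegGivenByTheMetric}. A one-line computation using $\nabla g=0$ gives $(\nabla_X\omega_\sigma)(U,W)=g_x\big((\nabla_X J)U,W\big)=\tilde{g}(\nabla_X J)(U,W)$, so under $\tilde{g}$ the vertical vector $\nabla_X J$ corresponds exactly to the $2$-form $\nabla_X\omega_\sigma$, which by Lemma \ref{Lemma:SalamonsLemma.1.1Generalized} lies in $T^{20^\star}_xM\oplus T^{02^\star}_xM$. Crucially, under the identification \eqref{Equation:mJTxMAsT20StarPlusT02Star} the structure $\J^\V$ acts as $-i$ on the $(2,0)$-part and $+i$ on the $(0,2)$-part (the Remark following \eqref{Equation:mJTxMAsT20StarPlusT02Star}). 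Thus the operator identity becomes the $2$-form identity $\nabla_{JX}\omega_\sigma=(-1)^{a+1}\J^\V(\nabla_X\omega_\sigma)$, to be tested on pairs $(U^{10},W^{10})$ and $(U^{01},W^{01})$.

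The last step is pure bookkeeping by bidegree. Evaluating on $(U^{10},W^{10})$ picks out the $(2,0)$-part, on which $\J^\V$ multiplies by $-i$; and for $X=X^{10}$ one has $\nabla_{JX^{10}}\omega_\sigma=i\,\nabla_{X^{10}}\omega_\sigma$, while for $X=X^{01}$ one gets a factor $-i$. Matching the two sides shows that for $a=1$ the only nontrivial content is $(\nabla_{X^{10}}\omega_\sigma)(U^{10},W^{10})=0$, i.e. $\d_1^1\omega_\sigma=0$, the component coming from $X^{01}$ being automatically satisfied; evaluating on $(U^{01},W^{01})$ symmetrically yields $\d_1^2\omega_\sigma=0$. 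Since $\d_1\omega_\sigma=\d_1^1\omega_\sigma+\d_1^2\omega_\sigma$, this is exactly $\d_1\omega_\sigma=0$. Replacing $\J^\V$ by $-\J^\V$ for $a=2$ flips which $X$-type is content-bearing and produces instead $\d_2^2\omega_\sigma=\d_2^1\omega_\sigma=0$, i.e. $\d_2\omega_\sigma=0$. All implications are reversible, giving the claimed equivalences.

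The computation is essentially mechanical once these identifications are in place; the only real care needed is in the sign conventions, to ensure that the $+\J^\V$ of $\J^1$ lands on $\d_1$ and the $-\J^\V$ of $\J^2$ lands on $\d_2$, and in noticing that, for each value of $a$, exactly one of the two $X$-types contributes a genuine equation while the other is an identity. An equivalent, more hands-on route avoids $\tilde{g}$ altogether: from Lemma \ref{Lemma:SalamonsLemma.1.1Generalized} one extracts the pointwise formula $(\nabla_X J)U^{10}=2i(\nabla_X U^{10})^{01}$ (and its conjugate), substitutes it into the operator identity, and then reads off $\nabla_{T^{10}M}T^{10}M\subseteq T^{10}M$ for $a=1$ and $\nabla_{T^{01}M}T^{10}M\subseteq T^{10}M$ for $a=2$, which are $\d_1\omega_\sigma=0$ and $\d_2\omega_\sigma=0$ by Lemma \ref{Lemma:Salamon1.2Generalized}.
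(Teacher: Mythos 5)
Your proof is correct and follows essentially the same route as the paper's: both transport the vertical condition through the metric isomorphism $\tilde{g}$, identify $\m_{J_\sigma}$ with $T^{20^\star}M\oplus T^{02^\star}M$ on which $\J^\V$ acts as $\mp i$, use $\langle(\nabla_XJ)Y,Z\rangle=(\nabla_X\omega_\sigma)(Y,Z)$, and read off the components $\d_a^b\omega_\sigma=0$. The only cosmetic difference is that the paper tests just $X^{10}$ and invokes the conjugation equivalence $\d_a^1\omega=0\Leftrightarrow\d_a\omega=0$, whereas you test both $X$-types and obtain the two components directly.
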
\index{$\V$-holomorphic!sections~of~$\Sigma^+M$}\index{$\Sigma^+M$!$\V$-holomorphic~sections}
Notice that, using \eqref{Equation:DefinitionOfTheAlmostComplexStructuresJ1AndJ2OnSigmaPlusM},
\eqref{EquationIn:Remark:DecompositionOfASectionIntoVerticalAndHorizontalParts} and \eqref{Equation:EquationIn:Definition:AlmostComplexStructureOnMJV} we can rewrite equation
\eqref{Equation:EquationIn:Lemma:VerticalJaHolomorphicityNadNullityOfda} as
\begin{equation}\label{Equation:FirstEquationAfter:Lemma:VerticalJaHolomorphicityNadNullityOfda}
\nabla_{JX}J=(-1)^{a+1}J\nabla_XJ,\quad\forall\,X\in{T}M,\,a=1,2\text{.}
\end{equation}
\begin{proof}
Equation \eqref{Equation:EquationIn:Lemma:VerticalJaHolomorphicityNadNullityOfda} holds if and only if for each $x$ on $M$, $(\d\sigma_xX_x^{10,J_\sigma})^\V\in\V^{10,a}$. On the other hand, we know that $(\d\sigma_xX^{10}_x)^\V=\nabla_{X^{10}_x} J_\sigma$ and that under the isomorphism $\tilde{g}$ in
\eqref{Equation:IsomorphismTildegGivenByTheMetric}, it will belong to $\V^{10,a}$ if and only if it belongs to $T^{02^\star}_xM$ (if $a=1$) or to $T^{20^\star}_xM$ (if $a=2$), where the decompositions are relative to the almost Hermitian structure $J_\sigma$. Hence, in the case $a=1$ we have\addtolength{\arraycolsep}{-1.2mm}
\[\begin{array}{ll}
~&\sigma\text{ is vertically holomorphic if and only if }\tilde{g}\big(\nabla_{X^{10}}J_\sigma\big)\in{T}^{02^\star}_xM\\
\equi&
 \left\{\begin{array}{llll}
       <(\nabla_{X^{10}}J)Y,Z>&=&-<(\nabla_{X^{10}}J)Z,Y>&\multirow{2}{*}{$\left\}\begin{array}{l}~\\~\end{array}\right.\text{(A,~always~true)}$}\\
       <(\nabla_{X^{10}}J)Y^{10},Z^{01}>&=&0&~ \\
       <(\nabla_{X^{10}}J)Y^{10},Z^{10}>&=&0&~
       \end{array}\right. \\
\equi&<(\nabla_{X^{10}}J)Y^{10},Z^{10}>=0\equi\d_1^1\omega_\sigma=0\equi\d_1\omega_\sigma=0\text{,}\end{array}\]\addtolength{\arraycolsep}{1.2mm}\noindent
where we have used\addtolength{\arraycolsep}{-1.0mm}
\begin{equation}\label{Equation:DerivativeOfJAndOmega}
\begin{array}{lll}<(\nabla_XJ)Y,Z>&=&X<JY,Z>-<JY,\nabla_XZ>-<J\nabla_XY,Z>\\
~&=&(\nabla_X\omega_J)(Y,Z)\text{.}\end{array}
\end{equation}\addtolength{\arraycolsep}{1.0mm}\noindent
Thus, we are left with proving the equations in (A). We could argue that these equations are always true since $\nabla_{X}J$ belongs to $T_{J_x}(\Sigma^+T_xM)$ (see \eqref{Equation:mJTxMAsT20StarPlusT02Star}) or we can just check it
directly: as a matter of fact, for all $X,Y,Z$ in $\Gamma(T^{\cn}M)$,\addtolength{\arraycolsep}{-1.0mm}
\[\begin{array}{rll}<(\nabla_X J)Y,Z>&=&<\nabla_XJY-J\nabla_XY,Z>\\
~&=&-X<Y,JZ>+<Y,J\nabla_XZ>+\\
~&~&+X<Y,JZ>-<Y,\nabla_XJZ>\\
&=&-<Y,(\nabla_XJ)Z>,\\
<(\nabla_{X^{10}}J)Y^{10},Z^{01}>&=&i<\nabla_{X^{10}}Y^{10},Z^{01}>+<\nabla_{X^{10}}Y^{10},JZ^{01}>=0\text{,}\end{array}\]\addtolength{\arraycolsep}{1.0mm}\noindent
since $(JZ^{01}=-iZ^{01})$. For the case $a=2$, we have\addtolength{\arraycolsep}{-1.2mm}
\[\begin{array}{ll}
~&\sigma\text{~is~vertically~holomorphic~if~and~only~if~}\tilde{g}\big(\nabla_{X^{10}}J_\sigma\big)\in{T}^{20^\star}_xM\\
\equi&\left\{\begin{array}{llll}
<(\nabla_{X^{10}}J)Y,Z>&=&-<(\nabla_{X^{10}}J)Z,Y>&~\multirow{2}{*}{$\left\}\begin{array}{l}~\\~\end{array}\right.\text{(B,~always~true)}$}\\
<(\nabla_{X^{10}}J)Y^{01},Z^{10}>&=&0&~ \\
<(\nabla_{X^{10}}J)Y^{01},Z^{01}>&=&0&~
\end{array}\right.\\
\equi&<(\nabla_{X^{10}}J)Y^{01},Z^{01}>=0\equi\d_2^1\omega_\sigma=0\equi\d_2\omega_\sigma=0\text{.}\end{array}\]\addtolength{\arraycolsep}{1.2mm}\noindent
We can then prove (B) using similar arguments to those used to prove (A), concluding the proof.
\end{proof}

We are now ready to prove Theorem \ref{Theorem:SalamonsTheorem3.2}:

\noindent\textit{Proof of Theorem \ref{Theorem:SalamonsTheorem3.2}.}
Since we are dealing with sections of the bundle $\Sigma^+M$, they are always $\H$-holomorphic, by Lemma \ref{Lemma:HorizontalHolomorphicityOfAnySection}. Hence, given a section $\sigma$, it will be $(J_\sigma,\J^a)$-holomorphic as a map $M\to\Sigma^+M$ if and only if it is vertically holomorphic which, using Lemma \ref{Lemma:VerticalJaHolomorphicityNadNullityOfda}, is equivalent to the condition $\d_a\omega_\sigma=0$. Therefore, (i)
and (ii) are equivalent. If (ii) holds, $\sigma$ is a $\J^a$-holomorphic map and therefore $\J^a$-stable; consequently,
(iii) is satisfied. Conversely, let us prove that (iii) implies (ii).

Suppose then that we have a $\J^a$-stable smooth section $\sigma$. As $\sigma$ is a section, Lemma \ref{Lemma:HorizontalHolomorphicityOfAnySection} guarantees that it is $\H$-holomorphic. Hence, we are left with checking that $\J^a$-stability implies $\J^a$-vertical holomorphicity. We know, since $\sigma$ is $\J^a$-stable, that for each vector $Y_x=J_\sigma{X}_x\in{T}_xM$ there is a new vector $Z_x\in{T}_xM$ such that $\J^a(\d\sigma_xY_x)=\d\sigma_xZ_x$. On the other hand, since $\sigma$ is $\H$-holomorphic, $(\d\sigma_x(J_\sigma{X}_x)\big)^\H=\J^\H(\d\sigma_x X_x)^\H$. Therefore, we have\addtolength{\arraycolsep}{-1.0mm}
\[\begin{array}{llll}
~& \J^a(\d\sigma_x(J_\sigma X_x))=\d\sigma_x(Z_x)&\impl&\J^{\H}(\d\sigma_x(J_\sigma X_x))^\H=(\d\sigma_xZ_x)^\H \\
\impl&(-\d\sigma_xX_x)^\H=(\d\sigma_xZ_x)^\H&~&~\\
\impl&\multicolumn{3}{l}{\d\pi_{\sigma(x)}|_\H^{-1}(\d\pi_{\sigma(x)}(\d\sigma_x(-X_x))^\H)=\d\pi_{\sigma(x)}|_\H^{-1}(\d\pi_{\sigma(x)}(\d\sigma{Z}_x)^\H)}\\
\impl&\multicolumn{3}{l}{\text{(as~$\d\pi_{\sigma(x)}|_\H$~is~an~isomorphism)~}\,\d\Id_{M_x}(-X_x)=\d\Id_{M_x}(Z_x)\,\impl\,X_x=-Z_x}
\end{array}\]\addtolength{\arraycolsep}{1.0mm}\noindent
and consequently $\J^a(\d\sigma_x(J_\sigma{X}_x))=\d\sigma_x(-X_x)=\J^a.\J^a\d\sigma_x X$ so that $\d\sigma_x(J_\sigma{X}_x)=\J^a\d\sigma_x X_x$, as desired.\qed

\begin{remark}\label{Remark:TheImportanceOfBeingDealingWithSectionsInSalamonTheorem3.2}
Notice that it is very important that we are dealing with sections of this bundle: this is what allows us to deduce the horizontal holomorphicity and consequently establish vertical holomorphicity only knowing $\J^a$-stability.
\end{remark}

From Theorem \ref{Theorem:SalamonsTheorem3.2} and Proposition \ref{Proposition:SalamonsLemma1.3}, we deduce the following characterization of Hermitian and $(1,2)$-symplectic manifolds (see \cite{Salamon:85}):

\begin{proposition}[\textup{Types~of~manifolds~and~$\J^1$,~$\J^2$-holomorphic~sections}]\label{Proposition:CorollaryToSalamonTheorem3.2ComplexAnd12SymplecticVersusJ1AndJ2Holomorphicity}
Let $(M,g,J^M)$ be an almost Hermitian manifold and consider its
twistor space $\Sigma^+M$. Consider the section $\sigma_J$
associated with the almost complex structure $J^M$. Then:
\begin{equation}\label{Equation:FirstEquationIn:Proposition:CorollaryToSalamonTheorem3.2ComplexAnd12SymplecticVersusJ1AndJ2Holomorphicity}
M\text{~is~Hermitian~if~and~only~if~}\sigma_{J}:M\to\Sigma^+M\text{~is~}(J_\sigma,\J^1)\text{-holomorphic.}
\end{equation}
\begin{equation}\label{Equation:SecondEquationIn:Proposition:CorollaryToSalamonTheorem3.2ComplexAnd12SymplecticVersusJ1AndJ2Holomorphicity}
M\text{~is~$(1,2)$-symplectic~if~and~only~if~}\sigma_{J}:M\to\Sigma^+M\text{~is~}(J_\sigma,\J^2)\,\text{-holomorphic.}
\end{equation}
\end{proposition}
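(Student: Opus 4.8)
The plan is to treat this proposition as a direct corollary of the two preceding results, Theorem \ref{Theorem:SalamonsTheorem3.2} and Proposition \ref{Proposition:SalamonsLemma1.3}, by chaining together their equivalences. The one preliminary observation that makes the machinery apply is this: when $\sigma=\sigma_J$ is the section of $\Sigma^+M$ associated with the given almost Hermitian structure $J^M=J$, the almost Hermitian structure $J_\sigma$ that $\sigma$ defines on $M$ is exactly $J$ itself, by the very correspondence $J\leftrightarrow\sigma_J$ set up on page \pageref{Page:DefinitionOfAlmostComplexStructureOnSigmaPlusE}. Consequently the fundamental $2$-form $\omega_\sigma$ featuring in Theorem \ref{Theorem:SalamonsTheorem3.2} coincides with the fundamental $2$-form $\omega$ of $(M,g,J)$, and the operators $\d_a\omega_\sigma$ and $\d_a\omega$ are literally the same object. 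This identification is what lets me pass freely between the ``section'' statements of Theorem \ref{Theorem:SalamonsTheorem3.2} and the ``manifold'' statements of Proposition \ref{Proposition:SalamonsLemma1.3}.

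For the first assertion I would take $a=1$ in the equivalence (i)\,$\Leftrightarrow$\,(ii) of Theorem \ref{Theorem:SalamonsTheorem3.2}, which gives that $\sigma_J$ is $(J_\sigma,\J^1)$-holomorphic precisely when $\d_1\omega=0$. Combining this with the characterization \eqref{Equation:FirstEquationIn:Lemma:SalamonsLemma1.3} of Proposition \ref{Proposition:SalamonsLemma1.3}, namely $\d_1\omega=0\,\equi\,J$ integrable, yields
\[
M\text{~Hermitian~}\equi\d_1\omega=0\equi\sigma_J\text{~is~}(J_\sigma,\J^1)\text{-holomorphic,}
\]
which is exactly \eqref{Equation:FirstEquationIn:Proposition:CorollaryToSalamonTheorem3.2ComplexAnd12SymplecticVersusJ1AndJ2Holomorphicity}. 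The second assertion follows by the identical argument with $a=2$: Theorem \ref{Theorem:SalamonsTheorem3.2} gives $(J_\sigma,\J^2)$-holomorphicity of $\sigma_J$ iff $\d_2\omega=0$, and the characterization \eqref{Equation:SecondEquationIn:Lemma:SalamonsLemma1.3} identifies $\d_2\omega=0$ with $(M,g,J)$ being $(1,2)$-symplectic, producing the chain
\[
M\text{~is~}(1,2)\text{-symplectic}\equi\d_2\omega=0\equi\sigma_J\text{~is~}(J_\sigma,\J^2)\text{-holomorphic,}
\]
establishing \eqref{Equation:SecondEquationIn:Proposition:CorollaryToSalamonTheorem3.2ComplexAnd12SymplecticVersusJ1AndJ2Holomorphicity}.

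Since every step is merely an invocation of an already-proved equivalence, there is no genuine analytic obstacle here. The only point that requires any care is the bookkeeping identification $\omega_\sigma=\omega$ described above, together with the local-versus-global reconciliation: Theorem \ref{Theorem:SalamonsTheorem3.2} and Proposition \ref{Proposition:SalamonsLemma1.3} are stated for local sections and pointwise on $M$, so the global statement about $M$ follows simply because each of the three conditions (integrability, $(1,2)$-symplecticity, and $(J_\sigma,\J^a)$-holomorphicity of $\sigma_J$) holds on all of $M$ if and only if it holds at every point, which is immediate from the pointwise form of the two cited results.
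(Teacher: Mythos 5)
Your proposal is correct and follows exactly the paper's own route: the paper likewise deduces the proposition by combining Theorem \ref{Theorem:SalamonsTheorem3.2} (holomorphicity of a section $\sigma$ is equivalent to $\d_a\omega_\sigma=0$) with Proposition \ref{Proposition:SalamonsLemma1.3} ($\d_1\omega=0$ iff Hermitian, $\d_2\omega=0$ iff $(1,2)$-symplectic), using the identification $J_{\sigma_J}=J^M$ and hence $\omega_{\sigma_J}=\omega$. Your added remarks on this identification and on the local/pointwise bookkeeping are sound and match the paper's implicit reasoning.
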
\label{Page:CommentsToProposition:CorollaryToSalamonTheorem3.2ComplexAnd12SymplecticVersusJ1AndJ2Holomorphicity}\index{$(1,2)$-symplectic~manifold!\textit{via}~covariant~derivative}
Notice that, as any section is $\H$-holomorphic (Lemma \ref{Lemma:HorizontalHolomorphicityOfAnySection}), we only need to
be concerned with $\V$-holomorphicity. The latter condition can be expressed as \eqref{Equation:FirstEquationAfter:Lemma:VerticalJaHolomorphicityNadNullityOfda}:
\[\nabla_{JX}J=(-1)^{a+1}J\nabla_XJ,\quad\forall\,X\in{T}M,\,a=1,2\text{,}\]
so that $J$ is integrable if and only if $\nabla_{JX}J=J\nabla_XJ,\quad\forall\,X\in{T}M$ and $(1,2)$-symplectic
if and only if $\nabla_{JX}J=-J\nabla_XJ,\quad\forall\,X\in{T}M$.

Theorem \ref{Theorem:SalamonsTheorem3.2} also allows an immediate proof of the following well-known fact (see \cite{BairdWood:03}, Theorem 7.1.3 (iii), p. 209):
\begin{corollary}\label{Corollary:Theorem7.1.3iiiBairdWoodComplexSubmanifoldsOfSigmaPlusMAndTheStructureJ1}
There is a one-to-one correspondence between locally defined (integrable) Hermitian structures on $M$ and complex submanifolds of $(\Sigma^+M,\J^1)$ which are diffeomorphically mapped onto $M$ via the projection map $\pi$.
\end{corollary}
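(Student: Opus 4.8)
The plan is to exhibit the correspondence explicitly through the associated-section construction and then to read off each direction from Theorem~\ref{Theorem:SalamonsTheorem3.2} and Proposition~\ref{Proposition:CorollaryToSalamonTheorem3.2ComplexAnd12SymplecticVersusJ1AndJ2Holomorphicity}. To an integrable Hermitian structure $J$ on an open set $\openU\subseteq M$ I would assign the image $\sigma_J(\openU)$ of its associated section $\sigma_J$, and to a complex submanifold $S$ of $(\Sigma^+M,\J^1)$ that $\pi$ maps diffeomorphically onto $\openU:=\pi(S)$ I would assign the almost Hermitian structure $J_\sigma$ defined by the section $\sigma:=(\pi|_S)^{-1}$. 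The substance of the argument is to check that each assignment lands in the target class and that the two are mutually inverse.

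For the forward direction, since $J$ is integrable, $(\openU,J)$ is a complex manifold, and by Proposition~\ref{Proposition:CorollaryToSalamonTheorem3.2ComplexAnd12SymplecticVersusJ1AndJ2Holomorphicity} the section $\sigma_J$ is $(J_\sigma,\J^1)$-holomorphic. Because $\pi\circ\sigma_J=\Id_\openU$, the differential $\d\sigma_J$ is everywhere injective and $\sigma_J$ is an embedding (its inverse being $\pi|_{\sigma_J(\openU)}$); hence $\sigma_J(\openU)$ is an embedded submanifold whose tangent spaces $\d\sigma_J(T\openU)$ are $\J^1$-invariant, since for a holomorphic map $\J^1\d\sigma_J(X)=\d\sigma_J(J_\sigma X)$ lies again in $\d\sigma_J(T\openU)$. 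Thus $\sigma_J(\openU)$ is a complex submanifold of $(\Sigma^+M,\J^1)$, and the same relation $\pi\circ\sigma_J=\Id_\openU$ shows that $\pi$ restricts to a diffeomorphism of $\sigma_J(\openU)$ onto $\openU$.

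For the converse, $\sigma=(\pi|_S)^{-1}$ satisfies $\pi\circ\sigma=\Id_\openU$, so it is a genuine smooth section of $\Sigma^+M$ over $\openU$ and therefore determines an almost Hermitian structure $J_\sigma$. Since $\sigma$ parametrizes $S$, we have $\d\sigma_x(T_x\openU)=T_{\sigma(x)}S$, and the hypothesis that $S$ is a complex submanifold makes these tangent spaces $\J^1$-invariant; hence $\sigma$ is a $\J^1$-stable section in the sense of Definition~\ref{Definition:HolomorphicityInCertainSubbundles}. By the implication (iii)$\impl$(ii) of Theorem~\ref{Theorem:SalamonsTheorem3.2}, $\sigma$ is $(J_\sigma,\J^1)$-holomorphic, and then Proposition~\ref{Proposition:CorollaryToSalamonTheorem3.2ComplexAnd12SymplecticVersusJ1AndJ2Holomorphicity} forces $J_\sigma$ to be integrable, i.e.\ $(\openU,J_\sigma)$ is Hermitian.

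Finally, the two assignments are mutually inverse by unwinding the definitions: the section associated with $J$ is exactly the inverse of $\pi$ restricted to $\sigma_J(\openU)$, and the structure $J_\sigma$ defined by a section $\sigma$ recovers $\sigma$ as its own associated section, so $J_{\sigma_J}=J$ and $\sigma_{J_\sigma}=\sigma$. The one genuinely delicate point, where I expect the real content to sit, is the converse step: converting the geometric hypothesis ``$S$ is $\J^1$-complex'' into the algebraic $\J^1$-stability of the section and then concluding integrability. This works only because $\sigma$ is a \emph{section}, so that horizontal holomorphicity is automatic (Lemma~\ref{Lemma:HorizontalHolomorphicityOfAnySection}) and mere $\J^1$-stability already upgrades to full holomorphicity, exactly as flagged in Remark~\ref{Remark:TheImportanceOfBeingDealingWithSectionsInSalamonTheorem3.2}.
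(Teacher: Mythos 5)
Your proposal follows essentially the same route as the paper: both directions are read off from the associated-section correspondence, Theorem~\ref{Theorem:SalamonsTheorem3.2}, and Proposition~\ref{Proposition:CorollaryToSalamonTheorem3.2ComplexAnd12SymplecticVersusJ1AndJ2Holomorphicity}, and your converse step (stability of a section upgrades to holomorphicity, hence integrability of $J_\sigma$) is exactly the paper's argument, with the bijectivity check spelled out more explicitly than in the paper.

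There is, however, one point where your forward direction falls short of the statement as the paper intends it. You conclude that $\sigma_J(\openU)$ is a \emph{complex} submanifold of $(\Sigma^+M,\J^1)$ merely from the $\J^1$-invariance of its tangent spaces. But $\J^1$ is in general \emph{not} integrable on $\Sigma^+M$, and the paper's convention (stated immediately after the corollary) is that ``complex submanifold'' means a submanifold on which the \emph{restriction} of $\J^1$ is integrable --- not merely $\J^1$-stable. Stability alone would only give an almost complex submanifold, which is precisely the weaker conclusion that survives in the $\J^2$ case; the paper stresses that the $\J^1$ and $\J^2$ situations differ exactly here. The missing step is short and you already have the ingredients: since $\sigma_J$ is a diffeomorphism onto its image intertwining $J$ with $\J^1|_{\sigma_J(\openU)}$, the restricted structure is the pushforward of the integrable structure $J$ under a diffeomorphism, hence integrable --- this is the final sentence of the paper's proof (``since $J$ is integrable and $\sigma_J$ holomorphic, its image must be a submanifold on which the restriction of $\J^1$ is integrable''). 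With that line added, your argument is complete.
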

\begin{proof}Proving this result based on our previous discussion is easy: it is obvious that each integrable structure $J$ gives rise to a submanifold of $\Sigma^+M$ that is mapped diffeomorphically by $\pi$ into $M$: take $\sigma_J$ to be the corresponding section and $\sigma_J(\openU)$ (and conversely). Moreover, since $J$ is integrable if and only if $\sigma_J$ is $(J,\J^1)$-holomorphic, it is obvious that these submanifolds must be $\J^1$-stable. Finally, since $J$ is integrable and $\sigma_J$ holomorphic, its image must be a (complex) submanifold on which the restriction of $\J^1$ is integrable.
\end{proof}

Notice that, in general, $\J^1$ is not an integrable complex structure on $\Sigma^+M$; however, when we refer to ``complex submanifolds" we are indeed claiming that we have a submanifold on which (the restriction of) $\J^1$ is integrable\footnote{In fact, $\J^1$ is integrable on $\Sigma^+M^{2m}$ if and only if $M^{2m}$ is conformally flat ($m\geq3$) or anti-self-dual ($m=2$). As for $\J^2$, it is never integrable. For more details, see \cite{AtiyahHitchinSinger:78}, \cite{OBrianRawnsley:85}, \cite{Salamon:85}; a discussion on this topic can also be found in \textit{e.g.} \cite{DavidovSergeev:93} and references therein.}.

\label{Page:RemarkTo:Corollary:Theorem7.1.3iiiBairdWoodComplexSubmanifoldsOfSigmaPlusMAndTheStructureJ1}\index{Integrability!of~$\J^1$~and~$\J^2$}\index{$\J^a$!integrability}\index{Complex~submanifold!of~$\Sigma^+M$~and~$\J^1$,~$\J^2$}\index{$\J^a$!complex~submanifolds}
Moreover, this same result does not hold for the $\J^2$ case. As a matter of fact, we can still say that there is a one-to-one correspondence between almost complex submanifolds of $(\Sigma^+M,\J^2)$ which are diffeomorphically mapped into $M$ and $(1,2)$-symplectic structures on $M$ but now we cannot deduce that these submanifolds are complex submanifolds of $(\Sigma^+M,\J^2)$: they are just the image under a holomorphic map of a $(1,2)$-symplectic manifold.


\section{The bundle \texorpdfstring{$\Sigma^+V^\cn$}{Sigma+Vc} and the Koszul-Malgrange Theorem}\label{Section:SectionOf:Chapter:TwistorSpaces:TheBundleSigmaPlusVCnAndKoszulMalgrangeTheorem}


In the sequel, we shall construct the bundle $\Sigma^+V$\index{$\Sigma^+V$}
\[\Sigma^+ V=\SO(V^\cn)\times_{\SO(\cn,2k)} G^+_{iso}(\cn^{2k})\]
for a given vector bundle $V$ over an almost complex manifold $M$. In addition, we shall introduce, under special circumstances, a complex structure on such a bundle. In this chapter we discuss those circumstances and the importance of the Koszul-Malgrange Theorem (\cite{KoszulMalgrange:58}, Theorem 1). In Chapter \ref{Chapter:JacobiVectorFieldsAndTwistorSpaces}, a parameter-dependent version of this last result will be necessary and we therefore start by establishing such a generalization.


\subsection{Parametric~Koszul-Malgrange~Theorem}\label{Subsection:SubsectionOf:Section:SectionOf:Chapter:TwistorSpaces:TheBundleSigmaPlusVCnAndKoszulMalgrangeTheorem:ParametricKoszulMalgrangeTheorem}


In what follows, $\G$ will denote a complex Lie group with $\g$ its Lie algebra and $M$ a complex manifold with complex dimension $m$. Given a $\g$-valued $1$-form $\alpha$ on $M$ we shall say that $\alpha$ is of type $(0,1)$ if $\alpha(\partial_{z_i})=0$ for all $1\leq i\leq m$. We shall denote by $\d\alpha^{02}$ the $\g$-valued $2$-form obtained by restriction of $\d\alpha$ to $T^{01}M\times{T}^{01}M$; in other words, $\d\alpha^{02}:T^{01}M\times{T}^{01}M\to\g$, $\d\alpha^{02}(\partial_{\bar{z}_i},\partial_{\bar{z}_j})=\d\alpha(\partial_{\bar{z}_i},\partial_{\bar{z}_j})$. We wish to find a (locally defined) function $f:M\to G$ with
\begin{equation}\label{Equation:TheInitialKoszulMalgrangeEquation}
f(z_0)=e\text{ and }f^{-1}\d f^{01}=\alpha
\end{equation}
where $\d f^{01}$ is the restriction of $\d f$ to $T^{01}M$. The usual Koszul-Malgrange Theorem (\cite{KoszulMalgrange:58}, Theorem 1) tells us that a solution to equation \eqref{Equation:TheInitialKoszulMalgrangeEquation} exists if and only if
\begin{equation}\label{Equation:TheInitialKoszulMalgrangeConditionToExistenceOfSolutions}
\d\alpha^{02}+[\alpha,\alpha]=0\text{.}
\end{equation}
We establish the following parametric version of this result:

\begin{theorem}[\textup{Parametric~Koszul-Malgrange~Theorem}]\label{Theorem:KoszulMalgrangeTheorem:ParametricVersion}\index{Koszul-Malgrange!Theorem!parametric~version}
Let $F$ be a (real) vector space and let $F_0$ be any subset of $F$ containing its origin. Let $\alpha$ be a smooth $\g$-valued $(0,1)$-form on $M$ defined on an open set $A\subseteq{F}_0\times M$ containing $(0,x_0)$ (\textit{i.e.},
$\alpha(t,x)\in\L(T_x^{01}M,\g)$ and $\alpha$ is smooth in $(t,x)$). Then, there is a smooth solution $f(t,x):\V\subseteq{F}_0\times M\to\G$ on a neighbourhood $\V\subseteq A$ containing $(0,x_0)$ to the equation
\begin{equation}\label{Equation:FirstEquationIn:Theorem:KoszulMalgrangeTheorem:ParametricVersion}
f^{-1}\d f_t^{01}=\alpha_t,\quad\forall\,t
\end{equation}
for the initial conditions $f_t(x_0)=e$ $\forall t$ if and only if
\begin{equation}\label{Equation:SecondEquationIn:Theorem:KoszulMalgrangeTheorem:ParametricVersion}
\d\alpha_t^{02}+[\alpha_t,\alpha_t]=0\quad\forall\,t\text{.}
\end{equation}
\end{theorem}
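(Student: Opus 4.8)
The plan is to prove necessity directly from the non-parametric result and to obtain sufficiency by re-running the classical proof of the Koszul--Malgrange Theorem (\cite{KoszulMalgrange:58}), carrying $t$ through as an extra smooth parameter at every stage. Necessity is immediate: for each fixed $t$ the map $x\mapsto f(t,x)$ solves \eqref{Equation:FirstEquationIn:Theorem:KoszulMalgrangeTheorem:ParametricVersion}, so the ``only if'' part of the usual theorem applied to equation \eqref{Equation:TheInitialKoszulMalgrangeEquation} gives \eqref{Equation:SecondEquationIn:Theorem:KoszulMalgrangeTheorem:ParametricVersion} for that $t$; equivalently, writing $\theta_t=f_t^{-1}\d f_t$ for the pulled-back Maurer--Cartan form, the $(0,2)$-part of the Maurer--Cartan equation involves only $\theta_t^{01}=\alpha_t$ and yields $\d\alpha_t^{02}+[\alpha_t,\alpha_t]=0$ for every $t$. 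For sufficiency I would first localize: choosing a holomorphic chart I may assume $x_0=0$ and that $M$ is a polydisc in $\cn^m$. The scheme is then an induction on $m=\dim_\cn M$, performed simultaneously for all $t$.

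In the base case $m=1$ the integrability condition \eqref{Equation:SecondEquationIn:Theorem:KoszulMalgrangeTheorem:ParametricVersion} is vacuous, and \eqref{Equation:FirstEquationIn:Theorem:KoszulMalgrangeTheorem:ParametricVersion} reduces to the single linear $\bar\partial$-equation $\partial_{\bar z}f=f\,\alpha_t(\partial_{\bar z})$ with $f(t,0)=e$. This I would solve on a fixed small disc by rewriting it as a Cauchy-transform (Pompeiu) integral equation $f=e+T(f\,\alpha_t(\partial_{\bar z}))$, where $T$ inverts $\partial_{\bar z}$, and applying the contraction mapping principle on the disc chosen small enough that $T$ has norm below $1$. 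Since the affine operator $f\mapsto e+T(f\,\alpha_t(\partial_{\bar z}))$ depends smoothly on $t$ and on the spectator variables $z_1,\dots,z_{m-1}$, the parametrized fixed-point theorem gives a fixed point $f$ that is smooth in all of these parameters; smoothness in $z$ follows from standard regularity for $\bar\partial$ and the smoothness of $\alpha_t$.

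For the inductive step I would first use the base case in the $\bar z_m$-direction, treating $(z_1,\dots,z_{m-1},t)$ as parameters, to obtain a smooth $g(t,x)$ with $g^{-1}\partial_{\bar z_m}g=\alpha_t(\partial_{\bar z_m})$ and $g|_{z_m=0}=e$. Writing $f=f'g$, the original equation for $f$ is equivalent to $f'^{-1}\d f'^{01}=\tilde\alpha_t$ with the gauged form $\tilde\alpha_t:=g\,\alpha_t\,g^{-1}-(\d g^{01})g^{-1}$, which satisfies $\tilde\alpha_t(\partial_{\bar z_m})=0$ and, curvature being conjugated under gauge, still obeys \eqref{Equation:SecondEquationIn:Theorem:KoszulMalgrangeTheorem:ParametricVersion} for every $t$. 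The crucial use of integrability is now that its $(0,2)$-component in the directions $\partial_{\bar z_m},\partial_{\bar z_j}$ with $j<m$ reduces, once $\tilde\alpha_t(\partial_{\bar z_m})=0$, to $\partial_{\bar z_m}\tilde\alpha_t(\partial_{\bar z_j})=0$; hence $\tilde\alpha_t$ is holomorphic in $z_m$ and descends to a $\g$-valued $(0,1)$-form on the $(m-1)$-dimensional polydisc, still integrable for each $t$. The inductive hypothesis yields a smooth solution $f'(t,x)$, and $f:=f'g$ solves \eqref{Equation:FirstEquationIn:Theorem:KoszulMalgrangeTheorem:ParametricVersion} with $f(t,0)=e$ on a common neighbourhood $\V$ of $(0,0)$ in $F_0\times M$.

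The main obstacle is not existence for each fixed $t$ --- that is exactly the non-parametric theorem --- but the \emph{joint} smoothness in $(t,x)$, together with a neighbourhood $\V$ uniform for $t$ near $0$. Both are controlled by the single point that the only analytic input at each stage is the solution of a linear $\bar\partial$-equation, whose contraction-mapping (equivalently Cauchy-transform) construction depends smoothly on parameters and can be carried out on a fixed polydisc times a fixed neighbourhood of $0$ in $F_0$. Since \eqref{Equation:SecondEquationIn:Theorem:KoszulMalgrangeTheorem:ParametricVersion} is assumed for every $t$, no stage of the induction is obstructed, and the smooth dependence on $t$ is inherited through each gauge composition $f=f'g$ and each dimension reduction; this is precisely what upgrades the fibrewise solutions of \cite{KoszulMalgrange:58} into the required smooth family.
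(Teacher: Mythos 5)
Your overall architecture (necessity fibrewise; sufficiency by gauging away one $\bar\partial$-direction at a time and feeding the integrability condition back in) is the same skeleton as the paper's proof, which itself follows \cite{KoszulMalgrange:58}; the difference is only in the analytic engine (you solve the one-variable $\bar\partial$-equation by a Cauchy-transform contraction, the paper by a parametric Frobenius theorem for total differential equations). However, your inductive step has a genuine gap at exactly the delicate point. From $\tilde\alpha_t(\partial_{\bar z_m})=0$ and integrability you correctly get $\partial_{\bar z_m}\tilde\alpha_t(\partial_{\bar z_j})=0$, i.e.\ the remaining components are \emph{holomorphic} in $z_m$; but holomorphic in $z_m$ is not the same as independent of $z_m$, so $\tilde\alpha_t$ does \emph{not} ``descend'' to a form on the $(m-1)$-dimensional polydisc. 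The only way to invoke the inductive hypothesis is to treat $z_m$ (together with $t$) as a parameter, and then the conclusion of the theorem you are inducting on is too weak: it gives an $f'$ that is merely \emph{smooth} in $z_m$, whereas $f=f'g$ solves the original equation in the $\bar z_m$-direction only if $\partial_{\bar z_m}f'=0$, since
\begin{equation*}
f^{-1}\partial_{\bar z_m}f \;=\; g^{-1}\bigl(f'^{-1}\partial_{\bar z_m}f'\bigr)g \;+\; \alpha_t(\partial_{\bar z_m})\text{.}
\end{equation*}
Nothing in your induction forces the extra term to vanish.

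The repair is to strengthen the statement you prove by induction: whenever the data depend holomorphically on some (complex) parameters, the solution can be chosen to depend holomorphically on them too. This is precisely what the paper carries through all its lemmas (the ``holomorphic in $z_1,\dots,z_p$'' clauses in Lemmas \ref{Lemma:FirstLemmaToProve:Theorem:KoszulMalgrangeTheorem:ParametricVersion:GeneralizationOfLemma2InKoszulMalgrangePaper}--\ref{Lemma:ThirdLemmaToProve:Theorem:KoszulMalgrangeTheorem:ParametricVersion:GeneralizationOfLemma1InKoszulMalgrangePaper}), and what its Proposition \ref{Proposition:HolomorphicDependentSolutionOnALieGroup} establishes by showing $\partial_{\bar z}f$ solves a linear homogeneous equation with zero initial condition, hence vanishes by uniqueness. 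Your contraction-mapping base case can in fact deliver this strengthening (the fixed point of a holomorphically parametrized contraction is holomorphic in the parameter), but it must be stated and proved as part of the inductive hypothesis, not left implicit. A secondary point: your Cauchy-transform solution naturally lives in $\L(E,E)$, so you also owe an argument that it takes values in the subgroup $\G$ when $\alpha$ is $\g$-valued; the paper avoids this because its Frobenius-theorem formulation works directly on the group manifold, with the vector field $X(t,x,y)$ valued in $T_y\G$.
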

For a proof, see Section \ref{Section:SectionIn:Chapter:Addendums:KoszulMalgrangeVariationalTheorem}. Our proof of the existence of a holomorphic structure on the principal bundle $\SO(V^\cn)$ will depend on the existence of orthonormal holomorphic frames. This in turn depends on the existence of solutions to a certain system of differential equations, which can be guaranteed by the Koszul-Malgrange Theorem, as we shall see.
\index{Orthonormal~and~holomorphic~sections}\index{Complexified!bundle}\index{Orientation!preserving}\index{$\SO(V^\cn)$!}
Suppose, then, that $V$ is an oriented even-dimensional Riemannian real vector bundle with rank $2k$ over a manifold $M$ and form the \textit{complexified bundle}, $V^\cn=V\otimes\cn$. We can then consider the bundle
\[\SO(V^\cn):=\left\{\begin{array}{ll}(x,\lambda_x),&x\in{M},\,\lambda_x:\cn^{2k}\to{V}^\cn_x,\,\lambda\text{~$\cn$-linear}\\
~&\text{and preserves metric and orientation}\end{array}\right\}\]
where $\cn^{2k}$ has metric induced from $\rn^{2k}$ by complex bilinear extension and \emph{preserving orientation} means that, choosing the canonical basis $\{e_i\}_{i=1,...,2k}$ for $\rn^{2k}$ and another orthonormal oriented basis $\{U_j\}_{j=1,...,{2k}}$ for $V_x$, the determinant of the matrix of $\lambda_x$ in these basis equals $1$.

$\SO(V^\cn)$ is a principal bundle over $M$ with group $\SO(\cn,2k)$ in the natural way. Trivializations are given as usual: for $x\in{M}$, take $\{e_i\}_{i=1,...,2k}$ and $\{U_j\}_{j=1,...,2k}$ as before and\addtolength{\arraycolsep}{-1.2mm}
\[\begin{array}{lcll}
\eta: & \pi^{-1}(\openU)&\to&\openU\times\SO(\cn,2k) \\
~&(x,\lambda_x)&\to&(x,\mathcal{M}(\lambda_x,e_i,X_j):=\text{~matrix~of~}\lambda_x\,\text{~with~respect~to~the~basis~}\{e_i\},\{U_j\})\text{,}
\end{array}\]\addtolength{\arraycolsep}{1.2mm}\noindent
where $\pi:\SO(V^\cn)\to M$ is the canonical projection, $\pi(x,\lambda_x)=x$. Of course, we could have done the exact same construction without imposing that $V$ be even-dimensional but that will be the case that will interest us in the sequel.


\subsection{The Koszul-Malgrange Theorem and \texorpdfstring{the existence of holomorphic orthonormal frames}{holomorphic o.n. frames}}\label{Subsection:SubsectionOf:Section:SectionOf:Chapter:TwistorSpaces:TheBundleSigmaPlusVCnAndKoszulMalgrangeTheorem:TheKoszulMalgrangeTheoremAndTheExistenceOfHolomorphicOrthonormalFrames}


Let us start by stating our problem to show why the Koszul-Malgrange Theorem is important. We have seen that a differentiable structure can be introduced in the bundle $\SO(V^\cn)$ defining coordinates on this bundle by
\[(x,\lambda_x)\to (\varphi(x),\lambda_{ij}(x))\]
where $\varphi$ is a chart for the manifold and $\lambda_{ij}(x)$ are the entries of the matrix of $\lambda_x$ with respect to suitable bases $\{e_i\}$, $\{U_j\}$. However, even when $M$ is a complex manifold, we cannot be sure that we get a system of holomorphic charts for our bundle just by requiring $\varphi$ to be a holomorphic chart for the manifold. Suppose, however, that $M$ is a complex manifold and that around each point $z\in{M}$ we can choose a positive orthonormal frame for $V$ such that
\[\nabla^V_{\partial_{\bar{z}_i}} U_j=0,\quad\forall\,i,j\text{.}\]
In that case, choosing a system of charts for $\SO(V^\cn)$ as before, we shall get a holomorphic structure for our bundle. Indeed, if $\eta_{\varphi,U}\simeq(\varphi,U_j)$ and $\eta_{\psi,W}\simeq(\psi,W_j)$ are two such charts (\textit{i.e.},
$\varphi$ and $\psi$ are holomorphic charts for $M$ and $\{U_j\},\{W_j\}$ are two orthonormal positive frames with vanishing $\partial_{\bar{z}_i}$ covariant derivatives), then the transition map
$\eta_{\varphi,U_j}\circ\eta_{\psi,W_j}^{-1}:\openU\subseteq\cn^m\times\SO(\cn,2k)\to\V\times\SO(\cn,2k)$ is given by
\[\eta_{\varphi,U_j}\circ\eta_{\psi,W_j}^{-1}(z,\lambda_{ij})=\big(\varphi\circ\psi^{-1}(z),\tilde{\lambda}_{ij}(z)\big)\text{.}\]
Now, $\varphi\circ\psi^{-1}$ is holomorphic and $\tilde{\lambda}_{ij}(z)=\sum_k \lambda_{ik}<U_k(z),W_j(z)>$ is also holomorphic as a function of $\lambda_{ij}$ as well as as a function of $z$ since
\[\partial_{\bar{z}_i}<U_j(z),W_k(z)>=<\nabla^V_{\partial_{\bar{z}_i}}U_j,W_k>+<U_j,\nabla^V_{\partial_{\bar{z}_i}}W_k>=0\text{.}\]
Hence, if we can guarantee the existence of such local frames, $\SO(V^\cn)$ will become a holomorphic bundle over the complex manifold $M$. Now, let us see how to use the Koszul-Malgrange Theorem to get these frames. Let $M$ be a complex manifold and fix a local orthonormal and positive frame $\{W_i\}$ of $V$ around a point $z_0\in{M}$. We want a new frame $\{U_j\}$ for $V^\cn$ such that $<U_j,U_k>=\delta_{jk}$ and $\nabla_{\partial_{\bar{z}_i}}^VU_j=0$. Let $U_j=\sum_lU_{jl}W_l$ and define $\Gamma^n_{im}$ by
\[\nabla_{\partial_{\bar{z}_i}}W_m=\sum_n\Gamma^n_{im}W_n\text{.}\]
Since our frame $\{W_m\}$ is orthonormal we have
\begin{equation}\label{Equation:GammaijkAreSkewSymmetricInTheCaseForOnVectors}
\Gamma^n_{im}=<\nabla_{\partial_{\bar{z}_i}}W_m,W_n>=-<W_m,\nabla_{\partial_{\bar{z}_i}}W_n>=-\Gamma^m_{in}\text{.}
\end{equation}
With this notation fixed, the problem of finding our suitable frame $\{U_j\}$ transforms into that of finding complex functions $U_{jm}$ defined around $z_0$ for which
\[\begin{array}{l}<\sum{U}_{jm}W_m,\sum{U}_{kn}W_n>=\delta_{jk}\text{,~equivalently,}\,\sum_{mn}U_{jm}U_{km}=\delta_{jk}\text{~and}\\
\nabla_{\partial_{\bar{z}_i}}\big(\sum_mU_{jm}W_m\big)=0\text{,~equivalently}\,\partial_{\bar{z}_i}U_{jn}+\sum_mU_{jm}\Gamma^n_{im}=0.\end{array}\]
In matrix notation, we can rewrite the above equations as
\begin{equation}\label{Equation:TheEquationForONHolomorphicFrameOfVcnOnTheNonParametricCaseInMatricialNotation}
\begin{array}{l}U\cdot U^\top=\Id\text{~and}\\
\partial_{\bar{z}_i}U+U\cdot\Gamma_i=0\,\equi\,U^{-1}\partial_{\bar{z}_i}U=-\Gamma_i\,\equi\,U^{-1}\big\{\sum_i\partial_{\bar{z}_i}U\d\bar{z}_i\big\}=-\sum_i\Gamma_i\d\bar{z}_i.\end{array}
\end{equation}
Hence, writing $\alpha=-\sum_i\Gamma_i\d\bar{z}_i$, we are precisely under the conditions of Theorem \ref{Theorem:KoszulMalgrangeTheorem:ParametricVersion} (non-parametric version) where the group under consideration is $\SO(\cn,2k)$ (see \eqref{Equation:GammaijkAreSkewSymmetricInTheCaseForOnVectors}). Thus, a solution to our system of equations exists if and only if\addtolength{\arraycolsep}{-1.0mm}
\[\begin{array}{ll}~&\d^{02}\alpha(\partial_{\bar{z}_i},\partial_{\bar{z}_j})+[\alpha(\partial_{\bar{z}_i}),\alpha(\partial_{\bar{z}_j})]=0\\
\equi&\partial_{\bar{z}_i}\big(\alpha(\partial_{\bar{z}_j})\big)-\partial_{\bar{z}_j}\big(\alpha(\partial_{\bar{z}_i})\big)+\alpha(\partial_{\bar{z}_i}).\alpha(\partial_{\bar{z}_j})-\alpha(\partial_{\bar{z}_j}).\alpha(\partial_{\bar{z}_i})=0\\
\equi&-\partial_{\bar{z}_i}\Gamma_j+\partial_{\bar{z}_j}\Gamma_i+\Gamma_i\cdot\Gamma_j-\Gamma_j\cdot\Gamma_i=0.\end{array}\]\addtolength{\arraycolsep}{1.0mm}\noindent
If $R_V^{02}=0$ (\textit{i.e.}, $R_V(T^{01}M,T^{01}M)=0$), we can deduce\addtolength{\arraycolsep}{-1.0mm}
\[\begin{array}{ll}~&\nabla_{\partial_{\bar{z}_i}}\nabla_{\partial_{\bar{z}_j}}W_k-\nabla_{\partial_{\bar{z}_j}}\nabla_{\partial_{\bar{z}_i}}W_k=0\\
\equi&\partial_{\bar{z}_i}\Gamma^n_{jk}W_n+\sum_{n,m}\Gamma^m_{jk}\Gamma^n_{im}W_n-\partial_{\bar{z}_j}\Gamma^n_{ik}W_n-\sum_{n,m}\Gamma^m_{ik}\Gamma^n_{jm}W_n=0\\
\equi&\partial_{\bar{z}_i}(\Gamma_j)^n_k+(\Gamma_j\Gamma_i)^n_k-\partial_{\bar{z}_j}(\Gamma_i)^n_k-(\Gamma_i\Gamma_j)^n_k=0\\
\equi&\partial_{\bar{z}_i}\Gamma_j-\partial_{\bar{z}_j}\Gamma_i+\Gamma_j\Gamma_i-\Gamma_i\Gamma_j=0,\end{array}\]\addtolength{\arraycolsep}{1.0mm}\noindent
so that we can, indeed, obtain such an orthonormal holomorphic frame for $V^\cn$.

\begin{remark}\label{Remark:ConstructionOfFramesInTheParametricCase}
From Theorem \ref{Theorem:KoszulMalgrangeTheorem:ParametricVersion}, if $V$ is, as above, a smooth oriented Riemannian vector bundle over $I\times M$, $I$ open interval around the origin of $\rn$, we can do a similar construction to the above for each $t$, provided $R^{20}_{V_t}=0$ for each $t$. Moreover, if we are given a smooth orthonormal frame $W(t,z)$ we can again obtain a new \textit{smooth} frame $U(t,z)$, with $U_t(z)=U(t,z)$ orthonormal for all $t$ and with $\nabla^{V_t}_{\partial\bar{z}_i}U_t=0$.
\end{remark}

\begin{remark}\label{Remark:AboutTheUsualKoszulMalgrangeTheorem}
If we do not know anything about the metric or orientation but only the condition on the curvature, a similar argument shows that there exist (local) frames for which $\nabla^V_{\partial_{\bar{z}_i}}U_j=0$, now not necessarily orthonormal (the group will simply be $\GL(\cn,2k)$ as $\Gamma$ will not necessarily verify \eqref{Equation:GammaijkAreSkewSymmetricInTheCaseForOnVectors}). Moreover, notice that we did not use the fact that $V$ is even-dimensional: we are only stating everything in this particular case as it will be our main interest. Hence, we get the usual version of Koszul-Malgrange Theorem:
\end{remark}

\begin{theorem}\textup{(\cite{KoszulMalgrange:58})}\label{Theorem:TheUsualKoszulMalgangeTheorem}\index{Koszul-Malgrange!Theorem!}
If $V$ is a complex vector bundle over a complex manifold $M$ with connection $\nabla^V$ such that $R^{02}_V=0$ then $V$ is a holomorphic vector bundle for the complex structure induced by the condition
\begin{equation}\label{Equation:EquationIn:Theorem:TheUsualKoszulMalgangeTheorem}
\text{a~section~}U\text{~of~}V\text{~is~holomorphic~if~and~only~if~}\nabla^V_{\partial_{\bar{z}_i}}U=0\text{.}
\end{equation}
\end{theorem}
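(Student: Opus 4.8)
The plan is to equip $V$ with a holomorphic structure by exhibiting a covering atlas of \emph{flat} local frames --- frames $\{U_j\}$ satisfying $\nabla^V_{\partial_{\bar{z}_i}}U_j=0$ for all $i,j$ --- and declaring these to be the holomorphic frames. The characterization \eqref{Equation:EquationIn:Theorem:TheUsualKoszulMalgangeTheorem} then forces the holomorphic sections to be exactly those $U$ annihilated by every $\nabla^V_{\partial_{\bar{z}_i}}$, so the whole theorem reduces to two things: (a) the existence of such a frame around each point, and (b) the verification that the transition between two flat frames is holomorphic. Since this is precisely the (non-parametric) situation analysed just before Remark \ref{Remark:AboutTheUsualKoszulMalgrangeTheorem}, I would simply drop the orthonormality requirement used there and let the structure group be $\GL(\cn,2k)$ rather than the orthogonal subgroup.

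For (a), I would fix an arbitrary smooth local frame $\{W_m\}$ near a point $z_0$ and record the connection in it, writing $\nabla^V_{\partial_{\bar{z}_i}}W_m=\sum_n(\Gamma_i)^n_m W_n$. Seeking the new frame in the form $U_j=\sum_l U_{jl}W_l$, the flatness condition $\nabla^V_{\partial_{\bar{z}_i}}U_j=0$ becomes the linear first-order system $U^{-1}\partial_{\bar{z}_i}U=-\Gamma_i$, i.e.\ $U^{-1}\d U^{01}=\alpha$ for the $\gl(\cn,2k)$-valued $(0,1)$-form $\alpha=-\sum_i\Gamma_i\,\d\bar{z}_i$. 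This is exactly the Koszul--Malgrange equation \eqref{Equation:TheInitialKoszulMalgrangeEquation} for the group $\GL(\cn,2k)$, so Theorem \ref{Theorem:KoszulMalgrangeTheorem:ParametricVersion} (used without parameters) furnishes a local solution if and only if the integrability condition \eqref{Equation:TheInitialKoszulMalgrangeConditionToExistenceOfSolutions}, namely $\d\alpha^{02}+[\alpha,\alpha]=0$, is satisfied.

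The crux is therefore to identify this integrability condition with the hypothesis $R^{02}_V=0$. I would expand $R_V(\partial_{\bar{z}_i},\partial_{\bar{z}_j})=\nabla_{\partial_{\bar{z}_i}}\nabla_{\partial_{\bar{z}_j}}-\nabla_{\partial_{\bar{z}_j}}\nabla_{\partial_{\bar{z}_i}}$ in the frame $\{W_m\}$ --- the bracket term drops out because the coordinate fields commute --- and read off that, as a matrix, $R_V(\partial_{\bar{z}_i},\partial_{\bar{z}_j})$ equals $\partial_{\bar{z}_i}\Gamma_j-\partial_{\bar{z}_j}\Gamma_i+\Gamma_j\Gamma_i-\Gamma_i\Gamma_j$, which is (up to an overall sign and the placement of indices) exactly $(\d\alpha^{02}+[\alpha,\alpha])(\partial_{\bar{z}_i},\partial_{\bar{z}_j})$. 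Hence $R^{02}_V=0$ is equivalent to \eqref{Equation:TheInitialKoszulMalgrangeConditionToExistenceOfSolutions}, and the flat frames exist. This computation, with its bookkeeping of index positions and the ordering of the two quadratic $\Gamma$-terms, is the one delicate step; everything else is formal.

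Finally, for (b), given two flat frames $\{U_j\}$ and $\{W_k\}$ related by $U_j=\sum_k g_{jk}W_k$, I would differentiate: $0=\nabla^V_{\partial_{\bar{z}_i}}U_j=\sum_k(\partial_{\bar{z}_i}g_{jk})W_k$, whence $\partial_{\bar{z}_i}g_{jk}=0$ and the transition matrix $g$ is holomorphic. Thus the flat frames form a holomorphic atlas and endow $V$ with the asserted holomorphic vector-bundle structure. For the characterization of holomorphic sections, writing an arbitrary section in a flat frame as $U=\sum_j f_j U_j$ gives $\nabla^V_{\partial_{\bar{z}_i}}U=\sum_j(\partial_{\bar{z}_i}f_j)U_j$, which vanishes precisely when every $f_j$ is holomorphic, that is, precisely when $U$ is a holomorphic section. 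This establishes \eqref{Equation:EquationIn:Theorem:TheUsualKoszulMalgangeTheorem} and completes the proof.
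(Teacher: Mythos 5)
Your proposal is correct and takes essentially the same route as the paper: the paper obtains this theorem precisely by dropping the orthonormality requirement from its preceding frame construction (Remark \ref{Remark:AboutTheUsualKoszulMalgrangeTheorem}), integrating the $\gl(\cn,2k)$-valued $(0,1)$-form $\alpha=-\sum_i\Gamma_i\,\d\bar{z}_i$ via the non-parametric case of Theorem \ref{Theorem:KoszulMalgrangeTheorem:ParametricVersion} with group $\GL(\cn,2k)$, and identifying the integrability condition $\d\alpha^{02}+[\alpha,\alpha]=0$ with $R^{02}_V=0$ exactly as you do. Your check that transitions between flat frames are holomorphic, by differentiating $U_j=\sum_k g_{jk}W_k$ directly and using flatness of both frames (rather than the metric pairing available only in the orthonormal case), correctly supplies the one detail the paper leaves implicit.
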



\subsection{The holomorphic bundle \texorpdfstring{$\SO(V^\cn)$}{SO(Vc)}}\label{Subsection:TheHolomorphicBundleSOVCn}


From the previous discussion, we can state the following result:

\begin{proposition}[\textup{Existence~of~holomorphic~orthonormal~frames}]\label{Proposition:ExistenceOfHolomorphicOnFrames}\index{Holomorphic!and~orthonormal~sections}\index{Orthonormal~and~holomorphic~sections}
Let $M$ be a complex manifold and $V$ be an oriented even-dimensional Riemannian bundle. Consider the complexified bundle
$V^\cn$ with connection and metric obtained from those on $V$ by $\cn$-bilinear extension. Then, if $R_V^{02}=0$, there is (locally) an orthonormal frame $U_1,...,U_{2k}$ for $V^\cn$ such that
\begin{equation}\label{Equation:EquationIn:Proposition:ExistenceOfHolomorphicOnFrames}
\nabla^V_{\partial_{\bar{z}_i}}U_j=0,\quad\forall\,i,j\text{.}
\end{equation}
Moreover, the same holds in a parameter-dependent bundle: if $R^{20}_{V_t}=0$ for all $t$, $V$ bundle over $I\times M$, there is a smooth orthonormal frame $U_{1}(t,z),...,U_{2k}(t,z)$ for $V^{\cn}$ with equation
\eqref{Equation:EquationIn:Proposition:ExistenceOfHolomorphicOnFrames} verified for each $t$.
\end{proposition}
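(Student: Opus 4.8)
The plan is to reduce the construction of the frame to the solution of a first-order linear system of Koszul-Malgrange type, exactly as carried out in the discussion preceding this proposition; the proof then consists of collecting that computation and invoking Theorem \ref{Theorem:KoszulMalgrangeTheorem:ParametricVersion}. First I would fix a local orthonormal positive frame $\{W_i\}$ of $V$ near $z_0$ and seek the desired frame in the form $U_j=\sum_l U_{jl}W_l$. Writing $\nabla^V_{\partial_{\bar z_i}}W_m=\sum_n\Gamma^n_{im}W_n$, the two requirements, orthonormality $\langle U_j,U_k\rangle=\delta_{jk}$ and $\nabla^V_{\partial_{\bar z_i}}U_j=0$, translate into the matrix conditions $U\cdot U^\top=\Id$ and $U^{-1}\partial_{\bar z_i}U=-\Gamma_i$, precisely as in \eqref{Equation:TheEquationForONHolomorphicFrameOfVcnOnTheNonParametricCaseInMatricialNotation}. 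Setting $\alpha=-\sum_i\Gamma_i\,\d\bar z_i$, the second condition becomes the Koszul-Malgrange equation \eqref{Equation:FirstEquationIn:Theorem:KoszulMalgrangeTheorem:ParametricVersion} with initial value $U(z_0)=e$.

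Second, I would identify the correct structure group. Because $\{W_i\}$ is orthonormal, \eqref{Equation:GammaijkAreSkewSymmetricInTheCaseForOnVectors} shows that each $\Gamma_i$ is skew-symmetric, so $\alpha$ is $\so(\cn,2k)$-valued and the group in Theorem \ref{Theorem:KoszulMalgrangeTheorem:ParametricVersion} is $\SO(\cn,2k)$. This is the structural point that makes orthonormality automatic: a solution $U$ with $U(z_0)=e\in\SO(\cn,2k)$ and $\so(\cn,2k)$-valued logarithmic derivative remains in $\SO(\cn,2k)$, hence satisfies $U\cdot U^\top=\Id$ and preserves the complexified metric. Thus I need only solve the single equation $U^{-1}\partial_{\bar z_i}U=-\Gamma_i$.

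Third, I would apply Theorem \ref{Theorem:KoszulMalgrangeTheorem:ParametricVersion} in its non-parametric form. Its integrability condition $\d\alpha^{02}+[\alpha,\alpha]=0$ unwinds to $-\partial_{\bar z_i}\Gamma_j+\partial_{\bar z_j}\Gamma_i+[\Gamma_i,\Gamma_j]=0$, and comparing this with the expansion of $\nabla^V_{\partial_{\bar z_i}}\nabla^V_{\partial_{\bar z_j}}W_k-\nabla^V_{\partial_{\bar z_j}}\nabla^V_{\partial_{\bar z_i}}W_k$ shows it to be equivalent to $R_V^{02}=0$, which is the hypothesis. This produces the frame in the non-parametric case. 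For the parametric statement I would run the identical argument with all data depending smoothly on $t$: the matrices $\Gamma_i(t,z)$ are smooth in $(t,z)$ and skew-symmetric for each $t$, and the hypothesis $R^{20}_{V_t}=0$ supplies the integrability condition \eqref{Equation:SecondEquationIn:Theorem:KoszulMalgrangeTheorem:ParametricVersion} for every $t$; the full parametric version of Theorem \ref{Theorem:KoszulMalgrangeTheorem:ParametricVersion} then yields a smooth $t$-dependent orthonormal frame $U(t,z)$ with $\nabla^{V_t}_{\partial_{\bar z_i}}U_t=0$.

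The main point to get right, more a subtlety than a genuine obstacle since the computation is already in place, is the second step: one must observe that choosing the group $\SO(\cn,2k)$ rather than $\GL(\cn,2k)$ is exactly what delivers orthonormality for free, so that $U\cdot U^\top=\Id$ need not be imposed as an extra constraint. The remaining work, checking that the Koszul-Malgrange integrability condition coincides with $R_V^{02}=0$, is the routine curvature computation indicated above.
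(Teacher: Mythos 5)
Your proposal is correct and follows essentially the same route as the paper: fix a local orthonormal positive frame $\{W_m\}$, express the problem as the matrix system $U\cdot U^\top=\Id$, $U^{-1}\partial_{\bar z_i}U=-\Gamma_i$, note that skew-symmetry of the $\Gamma_i$ (equation \eqref{Equation:GammaijkAreSkewSymmetricInTheCaseForOnVectors}) makes $\alpha=-\sum_i\Gamma_i\,\d\bar z_i$ an $\so(\cn,2k)$-valued form so that the solution produced by Theorem \ref{Theorem:KoszulMalgrangeTheorem:ParametricVersion} lies in $\SO(\cn,2k)$ and orthonormality comes for free, and verify that $R^{02}_V=0$ (respectively $R^{02}_{V_t}=0$ for all $t$) is exactly the integrability condition, in both the non-parametric and parametric cases. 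Your explicit emphasis on why the choice of structure group $\SO(\cn,2k)$ rather than $\GL(\cn,2k)$ removes the orthonormality constraint is precisely the point the paper makes implicitly by asserting the solution exists in $\SO(\cn,2k)$.
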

\begin{proof}
As before, at each $(t,z)$, we take any local orthonormal and positive frame $\{W_{m,t}\}$ of $V$ and define $\Gamma^{n,t}_{im}$ by
\[\nabla_{\partial_{\bar{z}_i}}W_{m,t}=\sum_n\Gamma^{n,t}_{im}W_{n,t}\text{.}\]
Since our frame $\{W_{m,t}\}$ is orthonormal, equation \eqref{Equation:GammaijkAreSkewSymmetricInTheCaseForOnVectors}
\[\Gamma^{n,t}_{im}=<\nabla_{\partial_{\bar{z}_i}}W_{m,t},W^{n,t}>=-<W_{m,t},\nabla_{\partial_{\bar{z}_i}}W_{n,t}>=-\Gamma^{m,t}_{in}\]
is satisfied for each $t$. On the other hand, $R^{20}_{V_t}=0$ for all $t$ implies that
\[\partial_{\bar{z}_i}\Gamma^{t}_j-\partial_{\bar{z}_j}\Gamma^{t}_i+\Gamma^{t}_j\Gamma^{t}_i-\Gamma^{t}_i\Gamma^{t}_j=0\text{.}\]
Thus, writing $\alpha_t=-\sum_i\Gamma^{t}_i\d\bar{z}_i$, $\alpha$ is $\so(\cn,2k)$-valued and satisfies \eqref{Equation:SecondEquationIn:Theorem:KoszulMalgrangeTheorem:ParametricVersion}
\[\d\alpha_t^{02}+[\alpha_t,\alpha_t]=0\]
for all $t$. Hence, using Theorem \ref{Theorem:KoszulMalgrangeTheorem:ParametricVersion}, a smooth solution to (compare with \eqref{Equation:TheEquationForONHolomorphicFrameOfVcnOnTheNonParametricCaseInMatricialNotation})
\[U^{-1}_t\big\{\sum_i\partial_{\bar{z}_i}U_t\d\bar{z}_i\big\}=-\sum_i\Gamma^{t}_i\d\bar{z}_i\]
exists in $\SO(\cn,2k)$. Letting $U_{jl,t}$ denote the entries of $U_t$, we can define a new \emph{smooth} frame of $V^\cn$ by $U_{j,t}=\sum_lU_{jl,t}W_{l,t}$. This new frame satisfies \eqref{Equation:EquationIn:Proposition:ExistenceOfHolomorphicOnFrames} and therefore concludes our proof.
\end{proof}

\begin{proposition}[\textup{Holomorphic~structure~of~the~bundle~$\SO(V^\cn)$}]\label{Proposition:HolomorphicStructureOfTheBundleSOVn}\index{Koszul-Malgrange!complex~structure~on~$\SO(V^\cn)$}\index{$\SO(V^\cn)$!Koszul-Malgrange\\holomorphic~structure}
As before, let $M$ be a complex manifold and let $V$ be an oriented even-dimensional Riemannian vector bundle over $M$ with $R_V^{02}=0$. Then, $\SO(V^\cn)$ has a canonical structure as a holomorphic bundle over $M$.

More generally, if $V$ is an oriented even-dimensional Riemannian vector bundle over $I\times M$ with $R_{V_t}^{02}=0$ for all $t$, then $\SO(V^\cn)$ has a canonical structure as a \emph{smooth} bundle over $I\times M$ for which each $\SO(V^\cn_t)$ is a \emph{holomorphic} bundle over $M$.
\end{proposition}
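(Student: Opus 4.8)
The plan is to obtain the holomorphic structure on $\SO(V^\cn)$ directly from the special frames supplied by Proposition \ref{Proposition:ExistenceOfHolomorphicOnFrames}, so that the whole statement reduces to checking holomorphicity of transition functions. Since $R_V^{02}=0$, that proposition provides, around each point of $M$, a positive orthonormal frame $\{U_j\}$ of $V^\cn$ with $\nabla^V_{\partial_{\bar{z}_i}}U_j=0$ for all $i,j$. To each such frame, together with a holomorphic chart $\varphi$ of $M$, I would attach the trivialization $\eta_{\varphi,U}:(x,\lambda_x)\mapsto(\varphi(x),\mathcal{M}(\lambda_x,e_i,U_j))$, which is exactly the kind of chart already used to define the differentiable structure on $\SO(V^\cn)$.

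First I would check that these charts form a holomorphic atlas. For two of them, $\eta_{\varphi,U}$ and $\eta_{\psi,W}$, the transition map has the form $(z,\lambda_{ij})\mapsto(\varphi\circ\psi^{-1}(z),\tilde\lambda_{ij}(z))$ with $\tilde\lambda_{ij}(z)=\sum_k\lambda_{ik}\langle U_k(z),W_j(z)\rangle$. The first component is holomorphic because $M$ is complex; the second is manifestly holomorphic in $\lambda_{ij}$, and holomorphic in $z$ because
\[\partial_{\bar{z}_i}\langle U_j,W_k\rangle=\langle\nabla^V_{\partial_{\bar{z}_i}}U_j,W_k\rangle+\langle U_j,\nabla^V_{\partial_{\bar{z}_i}}W_k\rangle=0,\]
both frames having vanishing $\partial_{\bar{z}_i}$-derivatives. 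This is precisely the computation carried out just before Proposition \ref{Proposition:ExistenceOfHolomorphicOnFrames}; the only new input is that frames of the required kind actually exist. Since any two such charts are holomorphically related, they all lie in a single maximal holomorphic atlas, and the resulting structure is independent of the choices made — this is the sense in which it is canonical. This proves the first assertion.

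For the parametric statement I would run the same construction fibrewise over $t$, now invoking the parametric half of Proposition \ref{Proposition:ExistenceOfHolomorphicOnFrames}: under $R^{02}_{V_t}=0$ for every $t$ it yields a \emph{smooth} family of frames $U_j(t,z)$ that are orthonormal and satisfy $\nabla^{V_t}_{\partial_{\bar{z}_i}}U_j(t,\cdot)=0$ for each fixed $t$. The associated trivializations are then smooth in $(t,z)$, and for each fixed $t$ the transition computation above goes through verbatim with $V_t$ in place of $V$. Hence the transition functions are smooth in $t$ and holomorphic in $z$ for each $t$, which is exactly the data of a smooth bundle over $I\times M$ whose restriction over each $t$ is the holomorphic bundle $\SO(V^\cn_t)$ over $M$.

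The one point demanding care — and what I would regard as the main obstacle — is the smooth dependence on the parameter in the second part: one needs not merely that a holomorphic orthonormal frame exists for each individual $t$, but that such frames can be chosen to vary smoothly with $t$. This is not automatic from the ordinary Koszul--Malgrange theorem and is exactly what the parametric version, Theorem \ref{Theorem:KoszulMalgrangeTheorem:ParametricVersion} (used inside Proposition \ref{Proposition:ExistenceOfHolomorphicOnFrames}), is designed to deliver. Granting that, everything else is routine bookkeeping with transition functions.
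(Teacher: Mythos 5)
Your proposal is correct and follows essentially the same route as the paper: the paper's proof also defines charts $(t,z,\lambda^t_z)\mapsto(t,\varphi(z),\mathcal{M}(\lambda^t_z,e_i,U_{t,z}))$ using the smooth frames of Proposition \ref{Proposition:ExistenceOfHolomorphicOnFrames} and appeals to the transition-function computation carried out in the preceding subsection. Your write-up merely makes explicit what the paper compresses into ``as we have seen in the previous section,'' and correctly identifies the parametric Koszul--Malgrange theorem as the ingredient that makes the frames vary smoothly in $t$.
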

\begin{proof}
We define charts for our bundle which are smooth in $(t,z)$ and holomorphic in $z$ in the following way: take a point $(t,z)$ on $M$. Around that point, take a complex chart $(\openU,\varphi)$ for $M$ and, if necessary reducing $I$ and $\openU$, take $\{U_{j,t}\}$ the smooth frame constructed in the previous proposition. Define, as before,
\begin{equation}\label{Equation:EquationInTheProofOf:Proposition:HolomorphicStructureOfTheBundleSOVn}\addtolength{\arraycolsep}{-1.2mm}
\begin{array}{llll}
\eta: & \pi^{-1}(\openU)&\to&I\times\varphi(\openU)\times\SO(\cn^{2k}) \\
~ & (t,z,\lambda^t_z)&\to&(t,\varphi(z),\mathcal{M}(\lambda^t_z,e_i,U_{t,z}))
\end{array}
\end{equation}\addtolength{\arraycolsep}{1.2mm}\noindent
Then, these charts give a smooth structure to $\SO(\V^\cn)$ and, for each fixed $t$, a holomorphic structure to $\SO(\V^\cn)$, as we have seen in the previous section.
\end{proof}

\subsection{The bundle \texorpdfstring{$\Sigma^+
V$}{Sigma+V}}\label{Section:TheBundleSigmaPlusV}\label{Section:SectionInChapter:TwistorSpaces:HolomorphicStructureOnSigmaPlusVAndKoszulMalgrangeTheorem}


Let $V$ be an oriented Riemannian vector bundle over a complex manifold $M$ with rank $2k$. On each $V_x$, define $\Sigma^+ V_x$ as in Section \ref{Section:SectionIn:Chapter:TwistorSpaces:HermitianStructuresonAVectorSpace}
and then define the fibre bundle $\Sigma^+V$ over $M$ as
\begin{equation}
\Sigma^+ V:=\SO(V)\times_{\SO(2k)}\SO(2k)/\U(2k)\text{.}
\end{equation}
The fibre of $\Sigma^+V$ at $x$ is $\Sigma^+V_x$ and the projection map $\pi:\Sigma^+V\to M$ is defined by $\pi(x,J_x)\to x$. This is a subbundle of $\L(V,V)$ and the connection in the latter allows a splitting of $T\Sigma^+V$ into its \textit{horizontal} and \textit{vertical} parts as in the case $V=TM$.

In Section \ref{Subsection:SigmaPlusEAsTheComplexManifoldGkOplusIsoEcn}, we have seen that at each point $x\in{M}$, $\Sigma^+V_x\simeq G^+_{iso}(V_x^\cn)$. Hence, we have a natural identification between
$\Sigma^+V$ and $G^+_{iso}(V^\cn)$,
\begin{equation}\label{Equation:TheBundleGkOplusIsoVcn}\index{$G^+_{iso}(\cn^{2k})$}
G^+_{iso}(V^\cn)=\SO(V^\cn)\times_{\SO(\cn,2k)}G^+_{iso}(\cn^{2k}).
\end{equation}
On the other hand, in Section \ref{Subsection:TheHolomorphicBundleSOVCn}, we have seen that a complex structure can be introduced on $\SO(V^\cn)$ as long as $R_V^{02}=0$ so that a complex structure can be given, in that case, to $G^+_{iso}(V^\cn)$. Moreover, as we shall see, this structure is such that, identifying each almost Hermitian structure $J_x$ on $V_x$ with the corresponding $(1,0)$-subspace, $s^{10}_x$, of $V^\cn_x$ a section $s^{10}$ is holomorphic if and only if\index{$s^{10}$}
\begin{equation}\label{Equation:ConditionForHolomorphicityOfSectionsOfTheBundleSoVcnWithItsKoszulMalgrangeStructure}\index{Koszul-Malgrange!holomorphic~sections~of~$\Sigma^+V$}\index{Holomorphic!sections~of~$\Sigma^+V$}\index{$\Sigma^+V$!holomorphic~sections}
\nabla^V_{X^{01}}s^{10}\subseteq{s}^{10},\quad\forall\,X^{01}\in{T}^{01}M.
\end{equation}
More precisely, we may state the following

\begin{theorem}[\textup{Koszul-Malgrange~complex~structure~on~$\Sigma^+V$}]\label{Theorem:KoszulMalgrangeComplexStructureOnSigmaPlusV}
\index{Koszul-Malgrange!complex~structure~on~$\Sigma^+V$}\index{$\J^{KM}$}\index{$\Sigma^+V$!$\J^{KM}$} Let
$M^{2m}$ be a complex manifold and $V$ an oriented even-dimensional Riemannian vector bundle whose curvature tensor has vanishing $(0,2)$-part. Then, $\Sigma^+V$ can be given a unique structure as a holomorphic bundle such that a section $s^{10}$ is holomorphic if and only if \eqref{Equation:ConditionForHolomorphicityOfSectionsOfTheBundleSoVcnWithItsKoszulMalgrangeStructure} is satisfied.

More generally, if $V$ is an oriented even-dimensional Riemannian vector bundle over $I\times M$ with $R_{V_t}^{02}=0$ for all $t$, then $\Sigma^+V$ has a canonical structure as a \emph{smooth} bundle over $I\times M$ for which each $\Sigma^+V$ is a \emph{holomorphic} bundle over $M$.
\end{theorem}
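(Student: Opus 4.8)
The plan is to realize $\Sigma^+V$ as the fibre bundle associated to a holomorphic principal bundle whose typical fibre carries a holomorphic structure-group action, so that the total space inherits a holomorphic structure automatically, and then to translate that structure into the concrete condition \eqref{Equation:ConditionForHolomorphicityOfSectionsOfTheBundleSoVcnWithItsKoszulMalgrangeStructure} on sections. First I would invoke Proposition \ref{Proposition:HolomorphicStructureOfTheBundleSOVn}: since $R_V^{02}=0$, the principal bundle $\SO(V^\cn)$ carries a canonical holomorphic structure over the complex manifold $M$, with holomorphic local trivializations furnished by the orthonormal frames $\{U_j\}$ of Proposition \ref{Proposition:ExistenceOfHolomorphicOnFrames} (those with $\nabla^V_{\partial_{\bar z_i}}U_j=0$), the corresponding transition functions being holomorphic maps into the complex Lie group $\SO(\cn,2k)$. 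On the fibre side, Theorem \ref{Theorem:ComplexStructureOnTheGrassmannianOfThekDimensionalIsotropicPositiveSubspacesOfcnTwok} endows $G^+_{iso}(\cn^{2k})$ with the structure of a complex manifold on which $\SO(\cn,2k)$ acts holomorphically. Using the identification \eqref{Equation:TheBundleGkOplusIsoVcn}, namely $\Sigma^+V=\SO(V^\cn)\times_{\SO(\cn,2k)}G^+_{iso}(\cn^{2k})$, the holomorphic frames induce local trivializations $\Sigma^+V|_{\openU}\cong\openU\times G^+_{iso}(\cn^{2k})$ whose transition maps are given by the (holomorphic, $\SO(\cn,2k)$-valued) transition functions of $\SO(V^\cn)$ acting on the fibre; since that action is holomorphic, these transition maps are biholomorphisms depending holomorphically on the base point, so $\Sigma^+V$ becomes a holomorphic fibre bundle over $M$.

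The main work, and the step I expect to be the principal obstacle, is to verify that this holomorphic structure is the one characterized by \eqref{Equation:ConditionForHolomorphicityOfSectionsOfTheBundleSoVcnWithItsKoszulMalgrangeStructure}. Here I would pass to the viewpoint of $\Sigma^+V$ as the bundle of positive isotropic $k$-planes inside the holomorphic vector bundle $V^\cn$, which is holomorphic by the usual Koszul--Malgrange Theorem \ref{Theorem:TheUsualKoszulMalgangeTheorem}, its $\bar\partial$-operator being the $(0,1)$-part $\nabla^V_{X^{01}}$ of the connection. In a holomorphic frame $\{U_j\}$ a section $s^{10}$ spanned by $\{s_a=\sum_j f_{aj}U_j\}$ corresponds to the map $z\mapsto[f_{aj}(z)]\in G^+_{iso}(\cn^{2k})$, and since $\nabla^V_{\partial_{\bar z_i}}s_a=\sum_j(\partial_{\bar z_i}f_{aj})U_j$, holomorphicity of this map into the isotropic Grassmannian is equivalent to $s^{10}$ being a holomorphic subbundle of $V^\cn$. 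I would then invoke the standard fact that a smooth subbundle of a holomorphic vector bundle is holomorphic precisely when its sections are closed under $\bar\partial$, that is, $\nabla^V_{X^{01}}s^{10}\subseteq s^{10}$, which is exactly \eqref{Equation:ConditionForHolomorphicityOfSectionsOfTheBundleSoVcnWithItsKoszulMalgrangeStructure}. The delicate points are to confirm that the complex structure coming from the associated-bundle construction agrees with the one induced by viewing each $G^+_{iso}(V^\cn_x)$ as a complex submanifold of the Grassmannian of $V^\cn$, so that \emph{holomorphic section} is unambiguous, and that the isotropy and positivity constraints are respected; both follow because the $\SO(\cn,2k)$-action is the restriction of the holomorphic $\GL(\cn,2k)$-action and cuts out $G^+_{iso}(\cn^{2k})$ as an open subset of a complex submanifold.

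For uniqueness I would argue that holomorphic sections locally generate, so that two holomorphic bundle structures sharing the same holomorphic sections admit the same holomorphic local frames and hence coincide; equivalently, the structure is already pinned down as the one induced from the canonical holomorphic structure on $\SO(V^\cn)$. Finally, the parametric statement follows by repeating the construction with the parameter-dependent data: the frames $U_{j,t}$ of Proposition \ref{Proposition:ExistenceOfHolomorphicOnFrames} are smooth in $(t,z)$ and, for each fixed $t$, holomorphic in $z$, and by the parametric part of Proposition \ref{Proposition:HolomorphicStructureOfTheBundleSOVn} they make $\SO(V^\cn)$ a smooth bundle over $I\times M$ that is holomorphic in $z$ for each $t$. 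The associated-bundle construction then transfers this to $\Sigma^+V$, producing trivializations that are smooth in $(t,z)$ and biholomorphic in $z$ for each fixed $t$, as required.
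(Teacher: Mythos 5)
Your proposal is correct and takes essentially the same route as the paper: the holomorphic structure on $\Sigma^+V$ is obtained from the associated-bundle description \eqref{Equation:TheBundleGkOplusIsoVcn} via Proposition \ref{Proposition:HolomorphicStructureOfTheBundleSOVn} and Theorem \ref{Theorem:ComplexStructureOnTheGrassmannianOfThekDimensionalIsotropicPositiveSubspacesOfcnTwok}, and holomorphic sections are characterized by reduction to local frames $W_j$ with $\nabla^V_{\partial_{\bar{z}_i}}W_j=0$. The ``standard fact'' you invoke at the end --- that a smooth subbundle of a holomorphic bundle closed under $\bar{\partial}$ is locally holomorphically spanned --- is precisely the paper's Lemma \ref{Lemma:KoszulMalgrangeTheoremAndTheConditionNablaS10ContainedInS10}, proved there from the non-parametric Koszul--Malgrange theorem, so your argument matches the paper's proof step for step.
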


In particular, under the conditions in Theorem \ref{Theorem:KoszulMalgrangeComplexStructureOnSigmaPlusV}, $\Sigma^+V$ is an (almost) complex manifold. We shall call this (almost) complex structure the Koszul-Malgrange structure and denote by $(\Sigma^+V,\J^{KM})$ this (almost) complex manifold.

We shall prove the non-parametric version of the above result, since the general case follows with the appropriate modifications. Before that, we state a lemma that clarifies equation \eqref{Equation:ConditionForHolomorphicityOfSectionsOfTheBundleSoVcnWithItsKoszulMalgrangeStructure}:

\begin{lemma}\label{Lemma:KoszulMalgrangeTheoremAndTheConditionNablaS10ContainedInS10}
As before, let $V$ be an oriented Riemannian vector bundle over a complex manifold $M$ with $R^{20}_V=0$. Then, given a smooth subbundle $F$ of $V^{\cn}$
\begin{equation}\label{Equation:EquationIn:Lemma:KoszulMalgrangeTheoremAndTheConditionNablaS10ContainedInS10}
\nabla_{\partial_{\bar{z}_i}}F\subseteq F
\end{equation}
if and only if $F$ can be locally given as $\wordspan\{U_1,...,U_n\}$ with $\nabla_{\partial_{\bar{z}_i}}U_j=0$.
\end{lemma}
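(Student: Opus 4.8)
The plan is to establish the two implications separately, the forward one being immediate and the reverse one being where the Koszul--Malgrange Theorem enters. The implication ``$\Leftarrow$'' requires no curvature hypothesis: if $F$ is locally spanned by sections $U_1,\dots,U_n$ with $\nabla^V_{\partial_{\bar{z}_i}}U_j=0$, then any local section of $F$ has the form $s=\sum_j f_j U_j$ for smooth functions $f_j$, and the Leibniz rule gives $\nabla^V_{\partial_{\bar{z}_i}}s=\sum_j(\partial_{\bar{z}_i}f_j)\,U_j\in\Gamma(F)$. Hence $\nabla^V_{\partial_{\bar{z}_i}}F\subseteq F$.

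For the implication ``$\Rightarrow$'', the idea is to realise $F$ itself as a holomorphic bundle via Theorem \ref{Theorem:TheUsualKoszulMalgangeTheorem} and then take a local holomorphic frame. First I would endow $F$ with a connection: pick a smooth bundle projection $\proj_F:V^\cn\to F$ and set $\nabla^F_X s:=\proj_F(\nabla^V_X s)$ for $s\in\Gamma(F)$. A routine check with the Leibniz rule (using $\proj_F s=s$ for $s\in\Gamma(F)$) shows $\nabla^F$ is a genuine connection on $F$. The crucial point is that in the $(0,1)$-directions the hypothesis $\nabla^V_{\partial_{\bar{z}_i}}F\subseteq F$ forces $\proj_F$ to act as the identity on $\nabla^V_{\partial_{\bar{z}_i}}s$, so that $\nabla^F_{\partial_{\bar{z}_i}}s=\nabla^V_{\partial_{\bar{z}_i}}s$ for every $s\in\Gamma(F)$; in particular this part of the connection is independent of the choice of $\proj_F$.

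Next I would compute the $(0,2)$-curvature of $\nabla^F$. Since the coordinate fields commute, $[\partial_{\bar{z}_i},\partial_{\bar{z}_j}]=0$, and since $\nabla^F$ agrees with $\nabla^V$ on $\Gamma(F)$ in $(0,1)$-directions, for $s\in\Gamma(F)$ one obtains $R^{02}_F(\partial_{\bar{z}_i},\partial_{\bar{z}_j})s=\nabla^V_{\partial_{\bar{z}_i}}\nabla^V_{\partial_{\bar{z}_j}}s-\nabla^V_{\partial_{\bar{z}_j}}\nabla^V_{\partial_{\bar{z}_i}}s=R^{02}_V(\partial_{\bar{z}_i},\partial_{\bar{z}_j})s$. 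Because $R^{20}_V=0$ and $V$ is a real bundle, reality (conjugation, exchanging $T^{10}M$ and $T^{01}M$) yields $R^{02}_V=0$ as well, so $R^{02}_F=0$. Applying Theorem \ref{Theorem:TheUsualKoszulMalgangeTheorem} to $(F,\nabla^F)$ then makes $F$ a holomorphic vector bundle in which a section $U$ is holomorphic precisely when $\nabla^F_{\partial_{\bar{z}_i}}U=0$, that is, when $\nabla^V_{\partial_{\bar{z}_i}}U=0$. Taking any local holomorphic frame of $F$ produces sections $U_1,\dots,U_n$ with $F=\wordspan\{U_1,\dots,U_n\}$ and $\nabla^V_{\partial_{\bar{z}_i}}U_j=0$, as required.

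The main obstacle is the careful verification that the $(0,1)$-part of the auxiliary connection $\nabla^F$ coincides with $\nabla^V$ on sections of $F$: this is exactly what makes the two $(0,2)$-curvatures agree and is the only place where the stability hypothesis is used. The remaining subtlety is merely bookkeeping, namely recording the reality argument that converts the stated hypothesis $R^{20}_V=0$ into the form $R^{02}_V=0$ needed to invoke Koszul--Malgrange; everything else reduces to that theorem together with the local existence of holomorphic frames.
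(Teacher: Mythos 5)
Your proof is correct, and it packages the argument differently from the paper. The paper works entirely at the level of frames: it takes any local frame $\{W_j\}$ of $F$, uses the stability hypothesis to define matrices $\Gamma_i$ by $\nabla_{\partial_{\bar{z}_i}}W_j=\sum_k\Gamma^k_{ij}W_k$, checks (implicitly, using $R^{02}_V=0$ restricted to $F$) the integrability condition $\partial_{\bar{z}_i}\Gamma_j-\partial_{\bar{z}_j}\Gamma_i+\Gamma_j\Gamma_i-\Gamma_i\Gamma_j=0$, solves the gauge equation $U^{-1}\partial_{\bar{z}_i}U=-\Gamma_i$ in $\GL(\cn,n)$ via the (non-parametric) Theorem \ref{Theorem:KoszulMalgrangeTheorem:ParametricVersion}, and transforms the frame by $U$. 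You instead build an honest connection $\nabla^F=\proj_F\circ\nabla^V$ on the subbundle, observe that the stability hypothesis makes its $(0,1)$-part agree with $\nabla^V$ on $\Gamma(F)$ (hence $R^{02}_F=R^{02}_V|_F=0$), and then invoke the bundle-level Theorem \ref{Theorem:TheUsualKoszulMalgangeTheorem} to get a holomorphic structure on $F$ and a local holomorphic frame. The two proofs run on the same engine --- in the paper, Theorem \ref{Theorem:TheUsualKoszulMalgangeTheorem} is itself deduced from the same matrix equation --- but your version is more invariant and has the merit of isolating exactly where the hypothesis $\nabla_{\partial_{\bar{z}_i}}F\subseteq F$ is used (the agreement of $(0,1)$-parts, which is what makes the two $(0,2)$-curvatures coincide); this step is left tacit in the paper, whose computation of the integrability condition was carried out for a frame of all of $V^\cn$, not of a subbundle. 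The paper's version, in exchange, is more economical and directly exhibits the gauge transformation. Your reality remark converting $R^{20}_V=0$ into $R^{02}_V=0$ is also a legitimate and useful clarification, since the paper uses the two superscripts interchangeably for the complexified curvature of the real bundle $V$.
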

\begin{proof}
The ``if" part is obvious. As for the ``only if" part, we shall once again make use of Theorem
\ref{Theorem:KoszulMalgrangeTheorem:ParametricVersion} (non-parametric version): suppose that $F$ satisfies
\eqref{Equation:EquationIn:Lemma:KoszulMalgrangeTheoremAndTheConditionNablaS10ContainedInS10}; we then know that there are $\Gamma^k_{ij}$ with $\nabla_{\partial_{\bar{z}_i}}W_j=\sum_k \Gamma^k_{ij}W_k$ for any local frame $\{W_1,...,W_n\}$ for $F$. As in Section \ref{Subsection:SubsectionOf:Section:SectionOf:Chapter:TwistorSpaces:TheBundleSigmaPlusVCnAndKoszulMalgrangeTheorem:TheKoszulMalgrangeTheoremAndTheExistenceOfHolomorphicOrthonormalFrames},
since $R^{20}_V=0$, we can guarantee that there is a solution to the system of equations $U^{-1}\partial_{\bar{z}_i}U=-\Gamma_i$, $U\in{G}L(\cn,n)$. Hence, taking $U_j=U W_j$ is easy to check that
$\nabla_{\partial_{\bar{z}_i}}U_j=0$ as required.
\end{proof}

\noindent \textit{Proof of Theorem
\ref{Theorem:KoszulMalgrangeComplexStructureOnSigmaPlusV}.} We have seen that $\SO(V^\cn)$ can be given the holomorphic structure characterized in the following way: given $z\in{M}$, fixing a orthonormal frame $\{U_j\}_{j=1,...,2k}$ of $V^{\cn}$ with $\nabla_{\partial_{\bar{z}_i}}U_j=0$ then a section $z\to (z,\lambda_z)$ of $\SO(V^{\cn})$ is holomorphic if and only if $\mathcal{M}(\lambda_z,e_i,U_j)$ is holomorphic. On $\Sigma^+V=G^+_{iso}(V^{\cn})$ we have the holomorphic structure given by the construction
\[\SO(V^{\cn})\times_{\SO(\cn,2k)} G^{+}_{iso}(\cn^{2k})\]
so that a section $z\to(z, s^{10}_z)$ of $G^{+}_{iso}(V^{\cn})$ is holomorphic if and only if it admits a holomorphic lift to $\SO(V^\cn)\times \SO(\cn,2k)$: notice that we have the following holomorphic projections:\addtolength{\arraycolsep}{-1.2mm}
\[\begin{array}{cccll}\SO(V^\cn)&\times& G^{+}_{iso}(\cn^{2k})&\to& G^+_{iso}(V^\cn)\\
\big((z,\lambda_z)&,&F\big)&\to &(z,\lambda_z F)\\
\SO(V^\cn)&\times& \SO(\cn,2k)&\to& G^+_{iso}(V^\cn)\\
\big((z,\lambda_z)&,&\beta\big)&\to &(z,(\lambda_z \circ \beta) F_0)
\end{array}\]\addtolength{\arraycolsep}{1.2mm}\noindent
where $F_0$ is as in the proof of Proposition
\ref{Proposition:ComptibilityBetweenSigmaPlusEAndGrassmannianOfTheIsotropicSubspacesInEcnTheComplexStructure}.
Hence, $s^{10}$ is holomorphic if and only if it is spanned by some $W_j(z)=(\lambda_z\circ\beta_z)f_j$ with $z\to(z,\lambda_z)$, $z\to\beta_z$ holomorphic and $F_0=\wordspan\{f_j\}$, $f_j=e_{2j-1}-ie_{2j}$. Now, we can easily check that $\nabla_{\partial_{\bar{z}_i}}W_j=0$ as we have\addtolength{\arraycolsep}{-1.2mm}
\[\begin{array}{ll}
~&\nabla_{\partial_{\bar{z}_i}}W_j=0\,\equi\,\partial_{\bar{z}_i}<W_j,U_k>=0\,\equi\,\partial_{\bar{z}_i}<\lambda_z\beta_zf_j,U_k>=0\\
\equi&\text{(since~$\beta_zf_j=\sum_r<\beta_zf_j,e_r>e_r$)~}\partial_{\bar{z}_i}(\sum_r<\beta_zf_j,e_r><\lambda_ze_r,U_j>)=0,
\end{array}\]\addtolength{\arraycolsep}{1.2mm}\noindent
which is trivially true as $\beta_z$ is holomorphic as well as $\mathcal{M}(\lambda_z,e_i,U_j)$. From the preceding lemma, the result follows.\qed

\chapter{Harmonic and pluriharmonic maps. Harmonic morphisms}\label{Chapter:SeveralMaps}



In this chapter we quickly review some of the main concepts that we shall need in subsequent chapters. Namely, the concepts of \emph{harmonic} and \emph{conformal} maps as well as some of their natural generalizations and particular cases are introduced and analyzed (Sections \ref{Section:SectionIn:Chapter:SeveralMaps:ConformalPluriconformalAndRealIsotropicMaps} and \ref{Section:SectionIn:Chapter:SeveralMaps:HarmonicAndPluriharmonicMaps}). We pay particular attention to the case of maps defined on a \emph{Riemann surface}\footnote{By a Riemann surface $M^2$ we mean an oriented two-dimensional manifold equipped with a \emph{conformal structure} (\textit{i.e.}, a class of conformally equivalent metrics; see Section \ref{Section:SectionIn:Chapter:Addendums:RiemannSurfacesAndPluriharmonicMaps} for further details) so that a complex structure is well defined on $M^2$.}. Finally, in Section \ref{Section:SectionIn:Chapter:HarmonicAndPluriharmonicMapsHarmonicMorphisms:HarmonicMorphisms} we recall the notion of \textit{harmonic morphism} as well as some fundamental properties of this special class of harmonic maps.

\index{Riemann~surface!}\index{Conformal~structure}\index{Metric!conformally~equivalent}

\section{Conformal, pluriconformal and real isotropic maps}\label{Section:SectionIn:Chapter:SeveralMaps:ConformalPluriconformalAndRealIsotropicMaps}


\begin{definition}[\emph{(Weakly)~conformal~map}]\label{Definition:ConformalMap}\index{Conformal~map}\index{Regular~point}\index{Branch~point}\index{Conformality~factor|see{Conformal~map}}\index{Conformal~map!weakly}\index{Conformal~map!conformality~factor}
(see \cite{BairdWood:03}) Let $\varphi:M\to N$ be a smooth map between two Riemannian manifolds $M$ and $N$. Then, $\varphi$ is said to be \emph{weakly conformal} at $x\in{M}$ if there is $\Lambda_{x}\in\rn$ with \begin{equation}\label{Equation:EquationIn:Definition:ConformalMap}
<\d\varphi_{x}X,\d\varphi_{x}Y>=\Lambda_{x}<X,Y>,\quad\forall\,X,Y\in{T}_{x}M\text{.}
\end{equation}
Taking $X=Y$ shows that $\Lambda_{x}\geq0$ so that there is $\lambda_{x}\geq0$ with $\lambda^2=\Lambda$; $\lambda_{x}$ is called the \emph{conformality factor (of $\varphi$ at $x$)}. If $\Lambda_{x}\neq0$ then $x$ is said to be a \emph{regular point}\footnote{More generally, given any smooth map $\varphi:M\to N$, we shall always refer to points $x\in{M}$ for which $\d\varphi_x\neq0$ as \emph{regular}.} (of $\varphi$) and the map $\varphi$ is called \emph{conformal} at $x$. If $\Lambda_{x}=0$ then $\d\varphi_{x}=0$ and $x$ is called a \emph{branch point}. Moreover, a map which is conformal (respectively, weakly conformal) at all points $x\in{M}$ is said to be a \emph{conformal map} (respectively, a \emph{weakly conformal map}). A conformal map is always an immersion and, in the case $\lambda_x$ is constant, it is called a \emph{homothetic immersion} (or a \emph{homothety} when $\varphi$ is also a diffeomorphism). Finally, when $\lambda_x\equiv1$ we get a \emph{Riemannian} or \emph{isometric} immersion (or an
\emph{isometry} when $\varphi$ is also a diffeomorphism).
\end{definition}\index{Riemannian!immersion}\index{Homothety}\index{Isometry}\index{Isometric~immersion}\index{Homothetic~immersion}

A weaker condition than that of conformal map is the following (see \cite{EschenburgTribuzy:95}, \cite{OhnitaValli:90}):

\begin{definition}[(\emph{Weakly})~\emph{pluriconformal~map}]\label{Definition:PluriconformalMap}\index{Complexified~derivative}\index{Pluriconformal~map}\index{Pluriconformal~map!weakly}
Let $\varphi:M^{2m}\to N^n$ a smooth map from an almost Hermitian manifold $M^{2m}$ to a Riemannian manifold $N^n$ and consider at each point $x\in{M}$ its complexified derivative, $\d\varphi_x:T^\cn_x M\to T^\cn_{\varphi(x)} N$. We say that $\varphi$ is \emph{weakly pluriconformal} at $x$ if $\d\varphi_{x} (T^{10}_{x}M)$ is an isotropic subspace of
$T_{\varphi({x})}^\cn N$:
\begin{equation}\label{Equation:SecondEquationIn:Definition:PluriconformalMap}\index{Pluriconformal~map!and~isotropic~subspace}
<\d\varphi_{x}{X}^{10},\d\varphi_{x}{Y}^{10}>=0,\quad\forall\,X^{10},Y^{10}\in{T}^{10}_{x}M\text{.}
\end{equation}
If $\d\varphi_{x}$ is also an injective linear map we shall say that $\varphi$ is \emph{pluriconformal} at ${x}$. In addition, a map that is weakly pluriconformal (respectively, pluriconformal) at all points $x\in{M}$ is called a \emph{weakly pluriconformal map} (respectively, a \emph{pluriconformal map}).
\end{definition}

\begin{remark}\label{Remark:ConformalAndPluriconformalMapsFromARiemannSurface}\index{Conformal~map!and~pluriconformal~map}\index{Riemann~surface!conformal~and~pluriconformal~maps}\index{Conformal~map!from~a~Riemann~surface}\index{Pluriconformal~map!from~a~Riemann~surface}

(i) If $M$ and $N$ are almost Hermitian manifolds, any holomorphic (respectively, anti-holomorphic) map $\varphi:M\to N$ is (weakly) pluriconformal as it maps $T^{10} M$ to $T^{10}N$ (respectively, to $T^{01}N$).

(ii) If $\varphi:M^2\to N^n$ is a map from a \emph{Riemann surface} $M^2$, the concepts of conformal (weakly conformal) and pluriconformal (weakly pluriconformal) map coincide. However, in higher dimensions, conformality implies pluriconformality but not conversely. In fact, if $\varphi$ is conformal then\addtolength{\arraycolsep}{-1.0mm}
\[\begin{array}{ll}
~&<\d\varphi(X-iJX),\d\varphi(Y-iJY)>\\
=&\lambda<X,Y>-\lambda<JX,JY>-i(\lambda<JX,Y>+<X,JY>)=0,
\end{array}\]\addtolength{\arraycolsep}{1.0mm}\noindent
which shows that $\varphi$ is (weakly) pluriconformal (see also comment (iii) below). To show that the converse is false, consider the holomorphic (and therefore pluriconformal) map $\varphi(z_1,z_2)=(z_1,z_1.z_2)$, that in real coordinates is given by $\varphi(x_1,x_2,x_3,x_4)=(x_1,x_2,x_1x_3-x_2x_4,x_1x_4+x_2x_3)$. Now, $<\d\varphi_xe_1,\d\varphi_xe_3>=x_1x_3+x_2x_4$ which is, in general, nonzero and therefore cannot be written as $\lambda_x<e_1,e_3>\equiv 0$, showing that $\varphi$ is not conformal.

(iii) If $M$ is a complex manifold, a map $\varphi:M\to N$ is pluriconformal if and only if it is conformal along each \emph{complex curve} in $M$ (\textit{i.e.}, a one dimensional complex submanifold in $M$); this is sometimes taken to be the definition of pluriconformal map, \textit{e.g.} \cite{OhnitaValli:90}.
\end{remark}

\emph{Real isotropic maps} will play a crucial role in what follows. We start by recalling the definition of such maps defined on a Riemann surface (see~\cite{EellsWood:83},~\cite{Rawnsley:85},~\cite{Salamon:85}):
\begin{definition}[\emph{Real~isotropic~map~from~a~Riemann~surface}]\label{Definition:RealIsotropicMapFromARiemannSurface}\index{Real~isotropic~map!}\index{Real~isotropic~map!from~a~Riemann~surface}
Let $\varphi:M^2\to N^n$ be a smooth map from a Riemann surface. Then $\varphi$ is said to be \emph{real isotropic} if
\begin{equation}\label{Equation:EquationIn:Definition:RealIsotropicMapFromARiemannSurface}
<\partial_z^r\varphi,\partial_z^s\varphi>=0,\quad\forall\,r,s\geq 1
\end{equation}
where $\partial_z^r\varphi=\nabla_{\partial_z}\big(\partial_z^{r-1}\varphi\big)$ so that, for instance, $\partial^2_z\varphi=\nabla_{\partial_z}\partial_z\varphi=\nabla\d\varphi(\partial_z,\partial_z)$ (as $\nabla^M_{\partial_z}\partial_z=0$).
\end{definition}
In particular, a real isotropic map from a Riemann surface is (weakly) conformal since putting $r=s=1$ in
\eqref{Equation:EquationIn:Definition:RealIsotropicMapFromARiemannSurface} gives
\begin{equation}\label{Equation:EquationAfter:Definition:RealIsotropicMapFromARiemannSurface:ConformalityConditionOnARiemannSurface}\index{Conformal~map!from~a~Riemann~surface}\index{Riemann~surface!conformal~map~from}\index{Riemann~surface!real~isotropic~map~from}
<\partial_z\varphi,\partial_z\varphi>=0,
\end{equation}
which implies \eqref{Equation:EquationIn:Definition:ConformalMap}.

A stronger property (Proposition \ref{Proposition:ComplexIsotropicMapsAreRealIsotropic}) than that of real isotropy is complex isotropy: given a Kähler manifold $N$ and a smooth map $\varphi:M^2\to N$, $\varphi$ is called \emph{complex
isotropic} (see \cite{EellsWood:83}, \cite{Wood:02}) if
\begin{equation}\label{Equation:ComplexIsotropicMapDefinition}\index{Complex~isotropic~map}\index{Isotropic~map!complex|see{Complex~isotropic~map}}\index{Isotropic~map!real|see{Real~isotropic~map}}
<\nabla^{r-1}_{\partial^{r-1}_z}\partial^{10}_z\varphi,\nabla^{s-1}_{\partial^{s-1}_{\bar{z}}}\partial^{10}_{\bar{z}}\varphi>_{Herm}=0,\quad\forall\,r,s\geq1\text{.}
\end{equation}
Here, $<u,v>_{Herm}=<u,\bar{v}>$ (and the latter is, as before, obtained by complex bilinear extension of the usual metric) and $\partial^{10}_z\varphi$ (respectively, $\partial^{10}_{\bar{z}}\varphi$) denotes the $(1,0)$-part of $\d\varphi(\partial_z)$ (respectively, of $\d\varphi(\partial_{\bar{z}})$) (for more on real and complex isotropic maps from a Riemann surface see Section \ref{Section:SectionIn:Chapter:Addendums:RealAndComplexIsotropicMaps}).

We generalize the concept of real isotropic map for maps defined in arbitrary Hermitian manifolds:
\begin{definition}[\emph{Real~isotropic~map}]\label{Definition:RealIsotropicMapFromAHermitianManifold}\index{Real~isotropic~map!general~case}
Let $\varphi:M^{2m}\to N$ be a smooth map from a Hermitian manifold $M^{2m}$. We shall say that $\varphi$ is \emph{real isotropic} if
\begin{equation}\label{Equation:FirstEquationIn:Definition:RealIsotropicMapFromAHermitianManifold}
<\nabla^{r-1}_{T^{10}M}\d\varphi(T^{10}M),\nabla^{s-1}_{T^{10}M}\d\varphi(T^{10}M)>=0,\quad\forall\,r,s\geq1\text{,}
\end{equation}\addtolength{\arraycolsep}{-1.0mm}\noindent
\[\hspace{-2mm}\begin{array}{rll}\text{where }\qquad(\nabla^0\d\varphi)(X^{10})&=&\d\varphi(X^{10})\text{,}\\ (\nabla^1_{Y^{10}_1}\d\varphi)(X^{10})&=&(\nabla\d\varphi)(Y^{10}_1,X^{10})=\nabla_{Y^{10}_1}(\d\varphi{X}^{10})-\d\varphi(\nabla_{Y^{10}}X^{10})\,\footnotemark\text{and,}
\end{array}\]\addtolength{\arraycolsep}{1.0mm}\footnotetext{Notice that, since $M$ is Hermitian, $\nabla_{Y^{10}}X^{10}\in{T}^{10}M$ for all $X^{10},Y^{10}\in{T}^{10}M$.}recursively \footnote{This is just the usual connection induced from the Levi-Civita connection.},\addtolength{\arraycolsep}{-1.0mm}
\[\begin{array}{lll}(\nabla^r_{Y_1^{10},...,Y_r^{10}}\d\varphi)(X^{10})&=&(\nabla^r\d\varphi)(Y_1^{10},...,Y_r^{10},X^{10})\\
~&=&\nabla_{Y^{10}}\big(\nabla^{r-1}\d\varphi(Y_2,...,Y_r,X)\big)-\nabla^{r-1}\d\varphi(\nabla_{Y_1^{10}}Y_2^{10},...,X^{10})\\
~&~&-...-\nabla^{r-1}\d\varphi(Y_2^{10},...,\nabla_{Y_1^{10}}X^{10})\text{.}\end{array}\]\addtolength{\arraycolsep}{1.0mm}\noindent
Introducing complex coordinates $(z_1,...,z_m)$ on $M$, equation \eqref{Equation:FirstEquationIn:Definition:RealIsotropicMapFromAHermitianManifold} is equivalent to
\begin{equation}\label{Equation:SecondEquationIn:Definition:RealIsotropicMapFromAHermitianManifold}
<\partial^{i_1+...+i_m}_{z_1^{i_1}...z_m^{i_m}}\varphi,\partial^{j_1+...+j_m}_{z_1^{j_1}...z_m^{j_m}}\varphi>=0,\quad\forall\,(i_1,...,i_m),(j_1,...,j_m)\text{~with~\footnotesize{$\sum$}}i_k,\text{\footnotesize{$\sum$}}j_k\geq1.
\end{equation}
\end{definition}
\noindent Putting $m=1$ in \eqref{Equation:SecondEquationIn:Definition:RealIsotropicMapFromAHermitianManifold}, it gives \eqref{Equation:EquationIn:Definition:RealIsotropicMapFromARiemannSurface}. In particular, Definitions
\ref{Definition:RealIsotropicMapFromAHermitianManifold} and \ref{Definition:RealIsotropicMapFromARiemannSurface} agree when $m=1$.

We now turn our attention to \emph{totally umbilic maps}. The following definition can be found, for example, in \cite{Salamon:85}:
\begin{definition}[\emph{Totally~umbilic~map~from~a~Riemann~surface}]\label{Definition:TotalUmbilicMapFromARiemannSurface}\index{Totally~umbilic~map!from~a~Riemann~surface}\index{Totally~umbilic~map!}\index{Umbilic!map|see{Totally~umbilic~map}}\index{Umbilic!point|see{Totally~umbilic~map}}
Let $\varphi:M^2\to N$ be a smooth map from a Riemann surface $M^2$. We shall say that $z\in{M}^2$ is an \emph{umbilic point} (of $\varphi$) if $\{\partial_z\varphi(z),\partial^2_z\varphi(z)\}$ is a $\cn$-linearly dependent set. If $\varphi:M^2\to N$ is such that all points $z\in{M}^2$ are umbilic, we shall say that $\varphi$ is \textit{totally umbilic}.
\end{definition}

We can also generalize this notion to an arbitrary almost Hermitian manifold:

\begin{definition}[\emph{Totally~umbilic~map}]\label{Definition:TotalUmbilicMapFromAnAlmostHermitianManifold}\index{Totally~umbilic~map!from~an~almost~Hermitian~manifold}
Let $\varphi:M^{2m}\to N$ be a smooth map from an almost Hermitian manifold $M^{2m}$. We shall say that $x\in{M}^{2m}$ is an \emph{umbilic point} (of $\varphi$) if
\begin{equation}\label{Equation:FirstEquationIn:Definition:TotalUmbilicMapFromAnAlmostHermitianManifold}
\nabla\d\varphi_{x}(T^{10}M,T^{10}M)\subseteq\d\varphi_{x}(T^{10}M).
\end{equation}
If $\varphi$ is such that all points $x\in{M}^{2m}$ are umbilic, $\varphi$ is said to be \emph{totally umbilic}.
\end{definition}

In the case $m=1$, \eqref{Equation:FirstEquationIn:Definition:TotalUmbilicMapFromAnAlmostHermitianManifold}
is equivalent to requiring $\{\partial^2_z\varphi,\partial_z\varphi\}$ to be linearly dependent
so that both Definitions \ref{Definition:TotalUmbilicMapFromARiemannSurface} and \ref{Definition:TotalUmbilicMapFromAnAlmostHermitianManifold} agree in this case.


\section{Harmonic maps}\label{Section:SectionIn:Chapter:SeveralMaps:HarmonicAndPluriharmonicMaps}


Given two Riemannian manifolds $(M,g)$ and $(N,h)$, a smooth map $\varphi:M\to N$ is said to be \emph{harmonic} if it satisfies $\tau(\varphi)=0$, where $\tau(\varphi)=\trace\nabla\d\varphi$ is the \emph{tension field} of $\varphi$ and
$\nabla\d\varphi$ is the second fundamental form of $\varphi$ (\cite{EellsLemaire:78}). Variationally, harmonic maps are critical points for the \emph{energy functional}\index{Tension~field!}
\begin{equation}\label{Equation:EnergyFunctional}\index{Energy~functional}\index{Harmonic~map!as~critical~point~for~the~energy}\index{Harmonic~map!}\index{Tensor~field}\index{Second~fundamental~form!of~a~smooth~map}
E(\varphi)=\frac{1}{2}\int_D \|\d\varphi\|^2(x)\d\mu_M(x),
\end{equation}
\noindent where $D\subseteq M$ is compact, $\|\d\varphi\|$ is the Hilbert-Schmidt norm of $\d\varphi$ and $\mu_M$ is the measure on $M$ induced by the metric $g$ (see, \textit{e.g.}, \cite{EellsLemaire:88}). Indeed, for a $1$-parameter variation $\{\varphi_t\}$ of a smooth map $\varphi_0$ supported in $D$, we have \begin{equation}\label{Equation:OneParameterVariationOfEnergy}\index{One~parameter~variation!for~the~energy~functional}\index{Energy~functional!one-parameter~variations}
\left.\frac{\d{E}(\varphi_t)}{\d{t}}\right|_{t=0}=-\int_D<\tau(\varphi),\left.\frac{\partial\varphi_t}{\partial{t}}\right|_{t=0}>(x)\d\mu_M(x)\text{.}
\end{equation}
For example, when $N=\rn$ the energy functional reduces to the Dirichlet integral and a smooth function $f:M\to\rn$ is harmonic if it satisfies \emph{Laplace's equation}: $\triangle f=0$; \textit{i.e.}, it is a solution of the equation
\begin{equation}\label{Equation:LaplaceEquationForFunctionsOnARiemannianManifold}\index{$\triangle{f}$}\index{Laplace's~equation}\index{Harmonic~function}
g^{ij}\big(\frac{\partial^2 f}{\partial x_i\partial{x}_j}-\Gamma^k_{ij}\frac{\partial f}{\partial x_k}\big)=0\text{.}
\end{equation}

The following is a well-known result (see, for example, \cite{BairdWood:03}, p. 85) characterizing harmonic maps from a Riemann surface:
\begin{proposition}[\textup{Harmonic~map~from~a~Riemann~surface}]\label{Proposition:HarmonicMapsFromRiemannSurfaces}\index{Harmonic~map!from~Riemann~surfaces}
Let $\varphi:M^2\to N$ be a harmonic map from a Riemann surface $M^2$ to a Riemannian manifold $N$. Then, $\varphi$ is harmonic if and only if
\begin{equation}\label{Equation:EquationIn:Proposition:HarmonicMapsFromRiemannSurfaces}
\nabla_{\partial_{\bar{z}}}\partial_z\varphi=0.
\end{equation}
In particular, the concept of harmonic map on a Riemann surface is well-defined.
\end{proposition}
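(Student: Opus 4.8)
The plan is to reduce the invariant condition $\tau(\varphi)=\trace\nabla\d\varphi=0$ to the coordinate equation \eqref{Equation:EquationIn:Proposition:HarmonicMapsFromRiemannSurfaces} by working in a local isothermal (conformal) chart and then passing to the complex frame $\partial_z,\partial_{\bar{z}}$. First I would fix local isothermal coordinates $z=x+iy$ on $M^2$, so that $g=\lambda^2(\d x^2+\d y^2)$ for some positive function $\lambda$, and take the orthonormal frame $e_1=\lambda^{-1}\partial_x$, $e_2=\lambda^{-1}\partial_y$. By definition of the tension field this gives $\tau(\varphi)=\lambda^{-2}\big((\nabla\d\varphi)(\partial_x,\partial_x)+(\nabla\d\varphi)(\partial_y,\partial_y)\big)$, so harmonicity is equivalent to the vanishing of the bracketed expression.

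Next I would extend the second fundamental form $\nabla\d\varphi$ complex-bilinearly and use that it is symmetric. Writing $\partial_z=\tfrac12(\partial_x-i\partial_y)$ and $\partial_{\bar{z}}=\tfrac12(\partial_x+i\partial_y)$, a direct expansion shows that the two mixed terms $(\nabla\d\varphi)(\partial_x,\partial_y)$ and $(\nabla\d\varphi)(\partial_y,\partial_x)$ cancel, leaving $(\nabla\d\varphi)(\partial_z,\partial_{\bar{z}})=\tfrac14\big((\nabla\d\varphi)(\partial_x,\partial_x)+(\nabla\d\varphi)(\partial_y,\partial_y)\big)$. Combining with the previous step yields $\tau(\varphi)=4\lambda^{-2}(\nabla\d\varphi)(\partial_z,\partial_{\bar{z}})$.

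The key local fact I then need is that $\nabla^M_{\partial_{\bar{z}}}\partial_z=0$ in isothermal coordinates. I would verify this by computing the Christoffel symbols of $g$ in the complex frame: since the only nonvanishing metric component is $g_{z\bar{z}}=\tfrac12\lambda^2$, one gets $\Gamma^z_{zz}=\partial_z\log g_{z\bar{z}}$ while every symbol carrying a mixed lower index pair vanishes, so that both $\nabla^M_{\partial_{\bar{z}}}\partial_z$ and its conjugate $\nabla^M_{\partial_z}\partial_{\bar{z}}$ are zero. Using this in the definition of the second fundamental form, $(\nabla\d\varphi)(\partial_{\bar{z}},\partial_z)=\nabla_{\partial_{\bar{z}}}\big(\d\varphi(\partial_z)\big)-\d\varphi(\nabla^M_{\partial_{\bar{z}}}\partial_z)=\nabla_{\partial_{\bar{z}}}\partial_z\varphi$, and by symmetry this equals $(\nabla\d\varphi)(\partial_z,\partial_{\bar{z}})$. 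Hence $\tau(\varphi)=4\lambda^{-2}\nabla_{\partial_{\bar{z}}}\partial_z\varphi$, and since $\lambda>0$ the equivalence $\tau(\varphi)=0\Leftrightarrow\nabla_{\partial_{\bar{z}}}\partial_z\varphi=0$ follows at once.

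Finally, for the ``in particular'' assertion I would observe that the right-hand side of \eqref{Equation:EquationIn:Proposition:HarmonicMapsFromRiemannSurfaces} involves only the pullback of the Levi-Civita connection of $N$, which does not depend on the metric chosen on $M$, together with the vector fields $\partial_z,\partial_{\bar{z}}$, which are determined purely by the complex structure. Under a holomorphic change of coordinate $w=w(z)$ one computes $\nabla_{\partial_{\bar{z}}}\partial_z\varphi=|w'|^2\,\nabla_{\partial_{\bar{w}}}\partial_w\varphi$, the factor $|w'|^2$ being nonzero; thus the condition is independent of the choice of isothermal coordinate and passes to the Riemann surface. The only genuinely computational point is the vanishing of the relevant Christoffel symbols, which is where the conformality of the coordinates is used; everything else is bookkeeping with the complex-bilinear extension and the factor $4$.
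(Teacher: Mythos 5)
Your proof is correct and follows essentially the same route the paper relies on: the paper itself gives no proof of Proposition \ref{Proposition:HarmonicMapsFromRiemannSurfaces} (it cites it as well known from \cite{BairdWood:03}, p.~85), and your key step --- the vanishing of $\nabla^M_{\partial_{\bar{z}}}\partial_z$ in isothermal coordinates --- is precisely the complex form of the fact the paper records in its appendix, namely Proposition \ref{Proposition:IsothermalCoordinates}(ii), equation \eqref{Equation:SecondEquationIn:Proposition:IsothermalCoordinates}. The remaining bookkeeping (the identity $\tau(\varphi)=4\lambda^{-2}(\nabla\d\varphi)(\partial_z,\partial_{\bar{z}})$ via symmetry of $\nabla\d\varphi$) and the invariance computation $\nabla_{\partial_{\bar{z}}}\partial_z\varphi=|w'|^2\,\nabla_{\partial_{\bar{w}}}\partial_w\varphi$ justifying the ``in particular'' clause are both sound.
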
\index{Conformal~structure!}
Notice that $\nabla$ is the Levi-Civita connection associated to any metric within the \emph{conformal structure} of $(M,g)$ (see Section \ref{Section:SectionIn:Chapter:Addendums:RiemannSurfacesAndPluriharmonicMaps} for more on conformal structures). Note that, given a one-dimensional complex manifold $M$, any two Hermitian metrics on $M$ are conformally equivalent and, from the above, as harmonicity does not depend on the choice of metric within a conformal class, it makes sense to speak of harmonic map from a one-dimensional complex manifold.

Recall that an embedded or immersed submanifold $M^m$ of $N^n$ is said to be \textit{minimal} if $\mu^M=0$, where $\mu^M=\frac{1}{m}\trace B$ denotes the mean curvature of $M$ and $B$ the second fundamental form of $M$ given by $B(X,Y)=(\nabla_XY)^{\bot}$. The following is well-known (\cite{EellsSampson:64}):

\begin{proposition}[\textup{Harmonic~maps~and~minimal~submanifolds}]\label{Proposition:HarmonicMapsAndMinimalSubmanifolds}\index{Harmonic~map!and~minimal~submanifolds}\index{Minimal~submanifold!and~harmonic~maps}\index{Minimal~submanifold}\index{Submanifold!minimal}\index{Second~fundamental~form!of~a~submanifold}
If $M\subset N$ is a smooth manifold then $M$ is minimal if and only if the inclusion map is harmonic. More generally, if $\varphi:M\to N$ is a Riemannian immersion, $\varphi$ is harmonic if and only if the immersed submanifold $\varphi(M)$ is minimal.
\end{proposition}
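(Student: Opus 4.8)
The plan is to reduce both assertions to a single computation relating the tension field of an isometric immersion to the second fundamental form of the corresponding submanifold. The first statement is the special case of the second in which $\varphi$ is the inclusion $M\hookrightarrow N$ of a submanifold equipped with the induced metric (which is automatically a Riemannian immersion), so it suffices to establish the general claim for an arbitrary Riemannian immersion $\varphi:M\to N$.

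First I would use that, since $\varphi$ is a Riemannian immersion, $\d\varphi_x$ is injective and isometric at each point, so we may identify $T_xM$ with its image $\d\varphi_x(T_xM)\subseteq T_{\varphi(x)}N$ and regard $\varphi^{-1}TN$ as splitting orthogonally along $\varphi(M)$ into the tangential part $\d\varphi(TM)$ and the normal bundle $\big(\d\varphi(TM)\big)^\perp$. Under this identification, for $X,Y\in\Gamma(TM)$ the pullback connection acts as the restriction of the Levi-Civita connection $\nabla^N$ of $N$, and the \textbf{Gauss formula} decomposes it into its tangential and normal components:
\[\nabla^N_X Y = \nabla^M_X Y + B(X,Y),\]
where $(\nabla^N_X Y)^\top=\nabla^M_X Y$ is the Levi-Civita connection of the induced metric on $M$ and $(\nabla^N_X Y)^\perp=B(X,Y)$ is the normal-valued second fundamental form of the submanifold.

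The key step is then a direct computation of the second fundamental form of $\varphi$ as a map. By its very definition,
\[(\nabla\d\varphi)(X,Y)=\nabla^N_X(\d\varphi Y)-\d\varphi(\nabla^M_X Y)=\nabla^N_X Y-\nabla^M_X Y=(\nabla^N_X Y)^\perp=B(X,Y),\]
so the second fundamental form of $\varphi$ coincides with that of the submanifold $\varphi(M)$. Tracing over an orthonormal frame $\{e_i\}_{i=1}^m$ of $T_xM$ then gives
\[\tau(\varphi)=\trace\nabla\d\varphi=\sum_i B(e_i,e_i)=\trace B=m\,\mu^M,\]
whence $\tau(\varphi)=0$ if and only if $\mu^M=0$; that is, $\varphi$ is harmonic if and only if $\varphi(M)$ is minimal.

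The main obstacle here is bookkeeping rather than conceptual: one must justify identifying the pullback connection along the isometric immersion with the restriction of $\nabla^N$, and verify that the tangential part of $\nabla^N$ is genuinely the Levi-Civita connection $\nabla^M$ of the induced metric. This is exactly the content of the Gauss formula, which relies on $\nabla^N$ being torsion-free and metric; once it is in place, the identity $\nabla\d\varphi=B$ and the subsequent trace computation are immediate.
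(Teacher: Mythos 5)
Your proposal is correct. The paper does not prove this proposition itself — it states it as well known, citing Eells--Sampson — and your argument is precisely the standard one consistent with the paper's conventions: identifying $(\nabla\d\varphi)(X,Y)$ with the submanifold second fundamental form $B(X,Y)=(\nabla^N_XY)^\perp$ via the Gauss formula, and then tracing to get $\tau(\varphi)=\trace B=m\,\mu^M$, so that harmonicity of $\varphi$ is equivalent to minimality of $\varphi(M)$.
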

In the special case of a conformal map $\varphi:M^2\to N$ from a Riemann surface $M^2$ into $N$, we can take the pull-back metric $\varphi^\ast g_N$ on $M^2$, that belongs to the conformal structure on $M^2$ and makes $\varphi$ a Riemannian
immersion. Hence, $\varphi(M)$ is a minimal submanifold if and only if $\varphi$ is harmonic.

When $M^2=S^2$, any harmonic map $\varphi:S^2\to N^n$ is automatically weakly conformal (see \textit{e.g.}
\cite{BairdWood:03});\label{Page:HarmonicMapsFromAndToSpheresAndTheirConformalityOrRealIsotropy}\index{Harmonic~map!from~the~$2$-sphere}\index{Conformal~map!from~the~$2$-sphere}\index{Harmonic~map!into~the~$n$-sphere}\index{Real~isotropic~map!from~the~$2$-sphere}\index{Harmonic~map!into~$\cn{P}^n$}\index{Real~isotropic~map!from~the~$2$-sphere}\index{Real~isotropic~map!into~the~$n$-sphere}\index{Space~form!}\index{Complex~space~form}\index{Space~form!complex}
in particular, it defines a minimal submanifold at regular points. This result can be improved when $N^n$ is also a sphere (more generally, a space form) or the complex projective space (more generally, a complex space form): in that case, every harmonic map is real isotropic\footnote{More precisely, when the codomain $N$ is a complex space form, any harmonic map $\varphi:S^2\to N$ is complex isotropic (and therefore, also real isotropic).} (\textit{e.g.} \cite{Salamon:85}, p. 190 for the first case and \cite{EellsWood:83} for the second).


\subsection{Pluriharmonic and $(1,1)$-geodesic maps}\label{Subsection:SectionIn:Chapter:HarmonicAndPluriharmonicMapsHarmonicMorphisms:PluriharmonicAnd(11)GeodesicMaps}


We now turn our attention to a particular class of harmonic maps:

\begin{definition}[\emph{$(1,1)$-geodesic map}]\label{Definition:11GeodesicMap}\index{$(1,1)$-geodesic~map!}
\textup{(see \cite{BairdWood:03}, p. 254)} Let $\varphi:(M,J^M)\to{N}$ be a smooth map from an almost Hermitian manifold into a Riemannian manifold. Then, $\varphi$ is said to be \emph{$\mathit{(1,1)}$-geodesic} if
\begin{equation}\label{Equation:EquationIn:Definition:11GeodesicMap}
(\nabla\,\d\varphi)(Y^{10},Z^{01})=0,\quad\forall\,Y^{10}\in{T}^{10}M,\,Z^{01}\in{T}^{01}M.
\end{equation}
\end{definition}\label{Page:(11)GeodesicMapsAreHarmonic}\index{$(1,1)$-geodesic~map!is~harmonic}\index{Pluriharmonic~map!}

$(1,1)$-geodesic maps are harmonic as, for any $X\in{TM}$
\[0=\nabla\d\varphi(X-iJX,X+iJX)=\nabla\d\varphi(X,X)+\nabla\d\varphi(JX,JX)\]
so that on choosing an orthonormal frame $\{X_i,JX_i\}_{i=1,...,m}$ for $M^{2m}$ we see that $\trace \nabla\d\varphi=0$.

When $(M,J)$ is a complex manifold (not necessarily endowed with a metric), it also makes sense to speak of \emph{pluriharmonic maps} as those for which their restriction to any complex curve on $M$ is harmonic (see comments to Proposition \ref{Proposition:HarmonicMapsFromRiemannSurfaces}). When $(M,g,J)$ is Kähler, the notions of pluriharmonic and $(1,1)$-geodesic maps coincide (see Proposition \ref{Proposition:PluriharmonicAnd(1,1)GeodesicsMapsFromKahlerManifolds} and Corollary \ref{Corollary:PluriharmonicMapsAre(1,1)GeodesicMaps}) and imply harmonicity.\label{Page:(1,1)GeodesicAndPluriharmonicMaps}\index{Pluriharmonic~map!and~$(1,1)$-geodesic~maps}\index{$(1,1)$-geodesic~map!and~pluriharmonic~maps}\index{Pluriharmonic~map!and~harmonic~maps}\index{Superminimal~submanifold}\index{Submanifold!superminimal}\index{Minimal~submanifold!and~superminimal~submanifold}\index{Submanifold!pluriminimal}\index{Pluriminimal~submanifold!}\index{Minimal~submanifold!and~pluriminimal~submanifolds}

Let $(N,g,J)$ be an almost Hermitian manifold. An almost complex submanifold $M$ of $N$ is said \emph{superminimal} if $J$ is parallel along $M$ (\cite{BairdWood:03}, p. 223):
\begin{equation}\label{Equation:EquationIn:Definition:SuperminimalSubmanifolds}\index{$\iota$,~inclusion~map}\index{Inclusion~map}
\nabla^{N}_X J=0,\quad\forall\,X\in{T}M\text{.}
\end{equation}
Superminimal submanifolds are minimal (see \cite{BairdWood:03}, p. 224): if $\iota:M\to N$ denotes the inclusion map, $\nabla\d\iota(X,JY)=J\nabla\d \iota (X,Y)$ from which we can deduce $\trace\nabla \d \iota=0$. Notice that equation
\eqref{Equation:EquationIn:Definition:SuperminimalSubmanifolds} is stronger then imposing that $(M,g,J)$ be a Kähler
submanifold of $(N,g,J)$, as we require the total covariant derivative of $J$ along $M$, $\nabla_{TM}^N J$, vanish and not just $\nabla^M_{TM}J$. Of course, superminimal submanifolds are Kähler for the induced metric and the inclusion map becomes pluriharmonic (equivalently, $(1,1)$-geodesic): as a matter of fact, from \eqref{Equation:EquationIn:Definition:SuperminimalSubmanifolds}, we deduce $J\nabla^N_{T^{01}M}(X-iJX)=i\nabla_{T^{01}M}(X-iJX)$ for every $X\in{T}M$ so that $\nabla^N_{T^{01}M}T^{10}M\subseteq{T}^{10}N$; hence, $\nabla\d \iota(T^{01}M,T^{10}M)\subseteq T^{10}N$ (see also Lemma \ref{Lemma:AlternativeConditionFor(11)GeodesicMaps}). We call to submanifolds satisfying this weaker condition \emph{pluriminimal}: \textit{i.e.}, given an almost Hermitian manifold $(N,g,J)$, a pluriminimal submanifold $M$ is a \emph{Kähler submanifold} for which the inclusion map is pluriharmonic (\cite{BurstallEschenburgFerreiraTribuzy:04}, \cite{EschenburgTribuzy:95}, \cite{EschenburgTribuzy:98}). These are still minimal submanifolds, as the inclusion map is $(1,1)$-geodesic and therefore harmonic. The next three results were obtained in joint work with M.~Svensson (\cite{SimoesSvensson:06}). Firstly, when the codimension is two, the concepts of pluriminimal and superminimal coincide:

\begin{proposition}[\textup{Pluriminimal~and~superminimal~submanifolds~of~codimension~two}]\label{Proposition:PluriminimalAndSuperminimalInCodimension2}\index{Superminimal~submanifold!}\index{Superminimal~submanifold!and~pluriminimal~submanifolds}
Let $(N,g,J)$ be a Hermitian manifold and $M$ a pluriminimal submanifold of codimension two. Then, $M$ is superminimal.
\end{proposition}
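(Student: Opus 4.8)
The plan is to translate the superminimality condition $\nabla^N_X J = 0$ (for $X \in TM$) into a statement about the second fundamental form $B$ of $M$ in $N$, and then extract the vanishing from the two pieces of information available: the pluriharmonicity of the inclusion and the integrability of $J$ on $N$. First I would record the standing algebraic facts. Since $M$ is a K\"ahler submanifold, both $TM$ and the normal bundle $V=(TM)^\perp$ are $J$-invariant, so $V$ is a complex line bundle (this is where the codimension-two hypothesis first enters), and the induced structure satisfies $\nabla^M J=0$, i.e. $\nabla^M$ preserves $T^{10}M$ in every direction. Differentiating $J^2=-\Id$ and $J+J^\top=0$ shows that each $\nabla^N_X J$ is skew-symmetric and anticommutes with $J$, hence $\nabla^N_X J\in\m_J(T_xN)$.

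Next I would carry out the central computation. For $W^{10}\in T^{10}M$ and real $X\in TM$, using $JW^{10}=iW^{10}$ one gets $(\nabla^N_X J)W^{10}=(i-J)\nabla^N_X W^{10}=2i(\nabla^N_X W^{10})^{01}$, the $(0,1)$-part in $T^\cn N$. By the Gauss formula $\nabla^N_X W^{10}=\nabla^M_X W^{10}+B(X,W^{10})$, and since $\nabla^M_X W^{10}\in T^{10}M$ its $(0,1)$-part vanishes; thus the whole question reduces to showing $[B(X,W^{10})]^{01}=0$. Splitting $X=X^{10}+X^{01}$, the term $B(X^{01},W^{10})$ is exactly the $(1,1)$-part of $B$, which vanishes because the inclusion is $(1,1)$-geodesic (pluriharmonic on the K\"ahler domain $M$), giving $B(T^{10}M,T^{01}M)=0$. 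The remaining term $B(X^{10},W^{10})$ is the $(2,0)$-part, and here I would invoke that $N$ is Hermitian: by Lemma \ref{Lemma:Salamon1.2Generalized} (equivalently Proposition \ref{Proposition:SalamonsLemma1.3}) integrability of $J$ gives $\nabla^N_{T^{10}N}T^{10}N\subseteq T^{10}N$, so $\nabla^N_{X^{10}}W^{10}\in T^{10}N$; subtracting the tangential $(1,0)$-term and using that $B$ is normal-valued yields $B(X^{10},W^{10})\in V^\cn\cap T^{10}N=V^{10}$, whose $(0,1)$-part is zero. Combining the two, $(\nabla^N_X J)W^{10}=0$ for all $W^{10}$ and all $X$, so $\nabla^N_X J$ vanishes on all of $TM$.

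Finally I would close the argument on the normal bundle. Since $\nabla^N_X J$ is skew-symmetric and kills $TM$, it also maps $V$ into $V$, so its restriction is a skew-symmetric endomorphism of $V_x$ that still anticommutes with $J$; that is, it lies in $\m_J(V_x)$. But $V_x\cong\rn^2$ with $J|_{V_x}$ a complex structure, and in real dimension two $\so(V_x)=\u_J(V_x)$, so $\m_J(V_x)=0$. Hence $\nabla^N_X J$ vanishes on $V$ as well, and therefore $\nabla^N_X J=0$ for every $X\in TM$, which is superminimality. The step I expect to be the real crux is the treatment of the $(2,0)$-part $B(T^{10}M,T^{10}M)$: this is precisely where the Hermitian hypothesis on $N$ is indispensable (pluriminimality alone controls only the $(1,1)$-part), while the codimension-two hypothesis is what finally forces the residual vertical endomorphism to vanish.
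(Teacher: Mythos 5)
Your proof is correct, and it reaches the result by a genuinely different route in the crucial step. The paper also works block-by-block: the tangent--tangent block of $\nabla_X J$ vanishes because $M$ is K\"ahler, and the normal--normal block vanishes by exactly your closing observation — skew-symmetry plus anticommutation with $J$ leaves no room in a $2$-dimensional normal space (the paper phrases this as $h((\nabla_XJ)V,V)=h((\nabla_XJ)V,JV)=0$ for a frame $\{V,JV\}$ of $TM^\perp$, which is your $\m_J(V_x)=0$). The difference is in the mixed tangent--normal block. The paper stays entirely real: pluriminimality, in the form $\nabla\d\iota(X,JY)=\nabla\d\iota(JX,Y)$, yields the symmetry $h((\nabla_XJ)Y,V)=h((\nabla_YJ)X,V)$, and then integrability, in the form $\nabla_{JX}J=J\nabla_XJ$, produces the sign flip $h((\nabla_XJ)Y,V)=-h((\nabla_XJ)Y,V)$, forcing vanishing. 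You instead complexify and decompose the second fundamental form by type, identifying exactly which hypothesis kills which piece: $(1,1)$-geodesicity gives $B(T^{10}M,T^{01}M)=0$, while the Hermitian condition $\nabla^N_{T^{10}N}T^{10}N\subseteq T^{10}N$ (Proposition \ref{Proposition:SalamonsLemma1.3}, Lemma \ref{Lemma:Salamon1.2Generalized}) forces $B(T^{10}M,T^{10}M)\subseteq V^{10}$, so both have vanishing $(0,1)$-part. Your version is more transparent about the mechanism — it makes visible that in \emph{any} codimension a pluriminimal submanifold of a Hermitian manifold satisfies $(\nabla_XJ)|_{TM}=0$, codimension two being needed only for the normal--normal part — and it reuses the paper's general lemmas; the paper's version is a shorter, self-contained real computation. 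Two points you leave implicit and should state: passing from $(\nabla_XJ)W^{10}=0$ on $T^{10}M$ to vanishing on all of $TM$ uses reality of $\nabla_XJ$ (conjugate the identity to cover $T^{01}M$), and applying the Hermitian condition to $W^{10}$, a field defined only along $M$, is justified by the tensoriality of $(\nabla_{X^{10}}\omega)(Y^{10},Z^{10})=h((\nabla_{X^{10}}J)Y^{10},Z^{10})$ (or by extending $W^{10}$ locally off $M$).
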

\begin{proof}
Fix a local frame $\{V,JV\}$ for $TM^\perp$, and denote by $X,Y,Z$ general tangent vectors to $M$. Under the isomorphism $\tilde{g}$ in \eqref{Equation:IsomorphismTildegGivenByTheMetric}, $\nabla_XJ$ is skew-symmetric; on the other hand, it satisfies $(\nabla_XJ)J=-J(\nabla_XJ)$. It follows that $h((\nabla_XJ)V,V)=h((\nabla_XJ)V,JV)=0$ and, since $M$ is Kähler, \[h((\nabla_XJ)Y,Z)=h((\nabla^M_XJ)Y,Z)+h((\nabla^\perp_XJ)Y,Z)=0\text{.}\]
It remains to show that $h((\nabla_XJ)Y,V)=0$; by antisymmetry, then
also $h((\nabla_XJ)V,Y)$ vanishes. Denote by $\iota:M\to N$ the
inclusion map. As $M$ is pluriminimal, we have $\nabla\d\iota(X,JY)=\nabla\d\iota(JX,Y)$ and consequently
$h((\nabla_XJY,V)=h(\nabla_{JX}Y,V)$. Thus,\addtolength{\arraycolsep}{-1.0mm}
\[\begin{array}{lllll}
h((\nabla_XJ)Y,V)&=&h(\nabla_XJY,V)-h(J\nabla_XY,V)&=&h(\nabla_{JX}Y,V)-h(J\nabla_YX,V)\\
~&=&h(\nabla_YJX,V)-h(J\nabla_YX,V)&=&h((\nabla_YJ)X,V).
\end{array}\]\addtolength{\arraycolsep}{1.0mm}\noindent
Since $J$ is integrable, we have $\nabla_{JX}J=J\nabla_XJ$ (see Corollary \ref{Proposition:CorollaryToSalamonTheorem3.2ComplexAnd12SymplecticVersusJ1AndJ2Holomorphicity} and equation \eqref{Equation:FirstEquationAfter:Lemma:VerticalJaHolomorphicityNadNullityOfda}).\addtolength{\arraycolsep}{-1.0mm}
\[\begin{array}{lllll}
h((\nabla_XJ)Y,V)&=&-h(J^2(\nabla_XJ)Y,V)&=&-h(J(\nabla_{JX}J)Y,V)\\
~&=&-h(J(\nabla_YJ)JX,V)&=&h(J^2(\nabla_YJ)X,V)=-h((\nabla_XJ)Y,V).
\end{array}\]\addtolength{\arraycolsep}{1.0mm}\noindent
Hence, $\nabla_XJ=0$ so that $M$ is superminimal.
\end{proof}

In the same way that minimal submanifolds are closely related to harmonic maps (Proposition \ref{Proposition:HarmonicMapsAndMinimalSubmanifolds} and comments), pluriminimal submanifolds will be closely related
with pluriharmonic maps. Indeed, we have the following:

\begin{theorem}[\textup{Pluriharmonic~maps~and~pluriminimal~submanifolds}]\label{Theorem:PluriminimalSubmanifoldsAsImageOfPluriharmonicAndPluriconformalMaps}\index{Pluriminimal~submanifold!and~pluriharmonic~maps}
Let $\varphi:(M,J^M)\to(N^n,g,J^N)$ be a pluriharmonic holomorphic map from a complex manifold $(M,J)$ into an almost Hermitian manifold $(N,g,J^N)$. Then, $\varphi(M)$ is (locally) a pluriminimal submanifold.
\end{theorem}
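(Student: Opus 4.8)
The plan is to work locally near a point where $\varphi$ is an immersion; since the statement only asserts that $\varphi(M)$ is \emph{locally} pluriminimal we may discard the branch points, so that $P:=\varphi(M)$ is an embedded submanifold of $N$ with $TP=\d\varphi(TM)$. Because $\varphi$ is holomorphic, $J^N$ preserves $TP$ (indeed $J^N\d\varphi(X)=\d\varphi(J^MX)$), so $P$ inherits an almost Hermitian structure $J^P:=J^N|_{TP}$ compatible with the induced metric, with $T^{10}P=\d\varphi(T^{10}M)$. First I would check that $P$ is a genuine complex submanifold: the corestriction $\tilde\varphi\colon M\to P$ is a local biholomorphism with $\tilde\varphi_{*}J^M=J^P$, so integrability of $J^M$ transfers to $J^P$; equivalently $\d_1\omega_P=0$ in the sense of \eqref{Equation:FirstEquationIn:Lemma:SalamonsLemma1.3}.

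The heart of the argument is a single identity. Writing the pluriharmonicity hypothesis in local holomorphic coordinates $(z_1,\dots,z_m)$ on $M$, I would restrict $\varphi$ to an arbitrary holomorphic disc with initial velocity $v\in\cn^m$ and apply Proposition \ref{Proposition:HarmonicMapsFromRiemannSurfaces}; since the disc is flat this produces no Christoffel terms from $M$ and yields
\[
\sum_{j,k}v_j\,\overline{v_k}\,\nabla^\varphi_{\partial_{\bar z_k}}\partial_{z_j}\varphi=0,\qquad\forall\,v\in\cn^m .
\]
This is a sesquilinear expression in $v$ evaluated on the diagonal, so complex polarisation forces $\nabla^\varphi_{\partial_{\bar z_k}}\partial_{z_j}\varphi=0$ for all $j,k$. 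As $\varphi$ is an immersion onto $P$, the pull-back connection $\nabla^\varphi$ applied to the frame $\{\partial_{z_j}\varphi\}$ of $T^{10}P$ along the directions $\{\partial_{\bar z_k}\varphi\}$ spanning $T^{01}P$ agrees with the ambient derivative $\nabla^N$ restricted to $P$; hence the vanishing above reads
\[
\nabla^N_{T^{01}P}\,T^{10}P=0 .
\]

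From this master identity both defining properties of a pluriminimal submanifold fall out by separating tangential and normal parts. The tangential part gives $\nabla^P_{T^{01}P}T^{10}P=0\subseteq T^{10}P$, which by \eqref{Equation:SecondEquationIn:Lemma:SalamonsLemma1.3} means that $P$ is $(1,2)$-symplectic; together with $\d_1\omega_P=0$ this says $P$ is Kähler. The normal part gives, for the inclusion $\iota\colon P\hookrightarrow N$ and using symmetry of the second fundamental form, $(\nabla\d\iota)(Y^{10},Z^{01})=(\nabla^N_{Z^{01}}Y^{10})^{\perp}=0$, i.e.\ $\iota$ is $(1,1)$-geodesic in the sense of \eqref{Equation:EquationIn:Definition:11GeodesicMap}; since $P$ is Kähler, a $(1,1)$-geodesic map is pluriharmonic. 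Thus $\varphi(M)$ is a Kähler submanifold whose inclusion is pluriharmonic, which is exactly the assertion that it is (locally) pluriminimal.

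The step I expect to require the most care is the passage from the coordinate-free notion of pluriharmonicity (harmonicity along every complex curve) to the pointwise identity $\nabla^\varphi_{\partial_{\bar z_k}}\partial_{z_j}\varphi=0$, together with the identification of $\nabla^\varphi$ with $\nabla^N$ along $P$. This is delicate because $M$ carries no preferred metric and the naive $(1,1)$-geodesic condition would involve $\nabla^M$; the resolution is that differentiating $\partial_{z_j}\varphi$ \emph{only in the target} removes any dependence on a metric on $M$, and that the polarisation must be carried out over $\cn$ (not merely over $\rn$), where vanishing on the diagonal of a sesquilinear form forces every coefficient to vanish.
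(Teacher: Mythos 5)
Your proof is correct, but it takes a genuinely different route from the paper's. The paper argues on the domain: holomorphy gives pluriconformality, Theorem \ref{Theorem:PullBackMetricThroughAPluriharmonicAndPluriconformalMapIsKahler} then shows the pull-back metric $\varphi^\ast h$ on $M$ is K\"ahler (a computation that $\d\omega$ has vanishing $(1,2)$-part, with pluriharmonicity expressed through the auxiliary local K\"ahler metric of Lemma \ref{Lemma:ExistenceOfLocallyDefinedKahlerMetricsForComplexManifolds}), and the theorem follows because $\varphi$ becomes an isometric pluriharmonic immersion. You never pull the metric back to $M$: you work on the image $P=\varphi(M)$, derive the master identity $\nabla^{\varphi^{-1}}_{\partial_{\bar z_k}}\partial_{z_j}\varphi=0$ straight from the definition of pluriharmonicity via coordinate discs and sesquilinear polarisation (the same engine as the paper's appendix proof of Proposition \ref{Proposition:PluriharmonicAnd(1,1)GeodesicsMapsFromKahlerManifolds}, but with no metric on $M$ at all), and then split it by the Gauss formula: the tangential part yields the $(1,2)$-symplectic condition \eqref{Equation:SecondEquationIn:Lemma:SalamonsLemma1.3} for the induced structure on $P$, which together with integrability (transported through the local diffeomorphism) makes $P$ K\"ahler, while the normal part makes the inclusion $(1,1)$-geodesic, hence pluriharmonic. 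Your route buys economy of auxiliary structure and makes both halves of the definition of pluriminimality fall out of one identity; the paper's route buys the stand-alone Theorem \ref{Theorem:PullBackMetricThroughAPluriharmonicAndPluriconformalMapIsKahler}, which it reuses elsewhere (e.g.\ in Corollary \ref{Corollary:MinimalSubmanifoldsAndTwistorMaps}). One cosmetic caution: your displayed conclusion $\nabla^N_{T^{01}P}\,T^{10}P=0$ is a statement about the coordinate frame $\{\d\varphi(\partial_{z_j})\}$ only; for arbitrary sections of $T^{10}P$ the Leibniz rule leaves tangential terms, so the invariant statements are the containment $\nabla^P_{T^{01}P}T^{10}P\subseteq T^{10}P$ and the vanishing of the tensorial, normal-valued form $\nabla\d\iota(T^{10}P,T^{01}P)=0$ --- which is in fact exactly how you use the identity, so nothing breaks.
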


Therefore, pluriminimal submanifolds can be (locally) constructed as the image of a pluriharmonic holomorphic map $\varphi$ from a complex manifold into any almost Hermitian manifold. We prove Theorem \ref{Theorem:PluriminimalSubmanifoldsAsImageOfPluriharmonicAndPluriconformalMaps}
after the next theorem (which is given in \cite{EschenburgTribuzy:95} for the particular case when $M$ is Kähler and \cite{ArezzoPirolaSolci:04} when the codomain is the Euclidean space); see Corollary \ref{Corollary:MinimalSubmanifoldsAndTwistorMaps} for a ``twistorial" version of this fact):

\begin{theorem}\label{Theorem:PullBackMetricThroughAPluriharmonicAndPluriconformalMapIsKahler}\index{Metric!pull~back}\index{Pluriharmonic~map!and~pull-back~metric}\index{Compatible!metric~and~complex~structure}\index{$\nabla^{\varphi^{-1}}$,~pull-back~connection}\index{$\varphi^\ast{h}$,~pull-back~metric}
Let $\varphi:(M,J)\to (N^n,h)$ be a pluriharmonic and pluriconformal map from a complex manifold into a Riemannian manifold $(N,h)$. Then, the pull-back metric $g=\varphi^\ast h$ on $M$ is Kähler.
\end{theorem}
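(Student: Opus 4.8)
The plan is to show that the induced metric $g=\varphi^\ast h$ makes $(M,g,J)$ both Hermitian and $(1,2)$-symplectic, since by the characterisations of Proposition \ref{Proposition:SalamonsLemma1.3} (i.e. $\d_1\omega=0\Leftrightarrow J$ integrable and $\d_2\omega=0\Leftrightarrow(1,2)$-symplectic) a Hermitian $(1,2)$-symplectic manifold is Kähler. First I would record that, $\varphi$ being pluriconformal, $\d\varphi_x$ is injective; hence $\varphi$ is an immersion and $g=\varphi^\ast h$ is a genuine positive-definite metric. In a local holomorphic chart $(z_1,\dots,z_m)$ the isotropy condition \eqref{Equation:SecondEquationIn:Definition:PluriconformalMap} reads $g(\partial_{z_i},\partial_{z_j})=h(\d\varphi\,\partial_{z_i},\d\varphi\,\partial_{z_j})=0$, that is $g(T^{10}M,T^{10}M)=0$; taking the real part of this gives exactly the compatibility $g(JX,JY)=g(X,Y)$, so $(M,g,J)$ is almost Hermitian. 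Since $M$ is a complex manifold $J$ is integrable, so $(M,g,J)$ is Hermitian, i.e. $\d_1\omega=0$ by Proposition \ref{Proposition:SalamonsLemma1.3}.

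It then remains to prove $\d_2\omega=0$, equivalently (equation \eqref{Equation:SecondEquationIn:Lemma:Salamon1.2Generalized} of Lemma \ref{Lemma:Salamon1.2Generalized}) that $\nabla^g_{T^{01}M}T^{10}M\subseteq T^{10}M$, where $\nabla^g$ is the Levi-Civita connection of $g$. Here I would first translate pluriharmonicity into the local form $\nabla_{\partial_{\bar z_j}}(\d\varphi\,\partial_{z_i})=0$ for all $i,j$, $\nabla$ denoting the pull-back connection on $\varphi^{-1}TN$. Restricting $\varphi$ to the linear complex curve through a point in direction $\partial_{z_i}+c\,\partial_{z_j}$ and applying Proposition \ref{Proposition:HarmonicMapsFromRiemannSurfaces} (recall $\nabla^C_{\partial_{\bar w}}\partial_w=0$ for a conformal coordinate $w$ on a curve) yields $c\,\nabla_{\partial_{\bar z_i}}(\d\varphi\,\partial_{z_j})+\bar c\,\nabla_{\partial_{\bar z_j}}(\d\varphi\,\partial_{z_i})=0$ for every $c\in\cn$, and letting $c$ range over $\cn$ forces the vanishing of each term, including the mixed ones. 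Now, because $g=\varphi^\ast h$, the map $\varphi$ is an isometric immersion, so the Gauss formula gives $\nabla_X(\d\varphi\,Y)=\d\varphi(\nabla^g_X Y)+\nabla\d\varphi(X,Y)$ with $\nabla\d\varphi(X,Y)$ normal to $\d\varphi(TM)$. Complexifying and taking $X=\partial_{\bar z_j}$, $Y=\partial_{z_i}$, the left-hand side vanishes by pluriharmonicity, while $\d\varphi(\nabla^g_{\partial_{\bar z_j}}\partial_{z_i})$ is tangential and $\nabla\d\varphi(\partial_{\bar z_j},\partial_{z_i})$ is normal; a sum of a tangential and a normal vector being zero forces both to vanish. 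Injectivity of $\d\varphi$ then yields $\nabla^g_{\partial_{\bar z_j}}\partial_{z_i}=0$, so in particular $\nabla^g_{T^{01}M}T^{10}M\subseteq T^{10}M$, i.e. $\d_2\omega=0$.

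With $\d_1\omega=\d_2\omega=0$ established, the diagram following Proposition \ref{Proposition:SalamonsLemma1.3} gives $\d\omega=0$, so $g$ is Kähler, as claimed. The step I expect to be the main obstacle is the conversion of pluriharmonicity, which is defined through restrictions to \emph{arbitrary} complex curves, into the coordinate identities $\nabla_{\partial_{\bar z_j}}(\d\varphi\,\partial_{z_i})=0$ for the off-diagonal indices $i\neq j$: this is precisely where the polarisation over all directions $c$ is essential. One must also check that no curvature or torsion terms interfere, but they do not, since the coordinate fields commute and $\nabla$ is torsion-free, whence $\nabla_{\partial_{\bar z_j}}(\d\varphi\,\partial_{z_i})=\nabla_{\partial_{z_i}}(\d\varphi\,\partial_{\bar z_j})$. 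Everything else is a routine application of the Gauss formula together with Lemma \ref{Lemma:Salamon1.2Generalized} and Proposition \ref{Proposition:SalamonsLemma1.3}.
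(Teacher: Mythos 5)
Your proof is correct, but it differs from the paper's in both of its key steps, so it is worth comparing the two. The opening is common to both: compatibility of $g$ with $J$ from the isotropy of $\d\varphi(T^{10}M)$, and the reduction of the K\"ahler condition to the vanishing of the $(1,2)$-part of $\d\omega$ (you phrase this as $\d_1\omega=\d_2\omega=0$ via Proposition \ref{Proposition:SalamonsLemma1.3}, the paper as "integrability kills the $(3,0)$-part"). The first real divergence is how pluriharmonicity becomes a coordinate identity: the paper invokes the local K\"ahler reference metric determined by the holomorphic chart (Lemma \ref{Lemma:ExistenceOfLocallyDefinedKahlerMetricsForComplexManifolds}) together with the equivalence of pluriharmonic and $(1,1)$-geodesic for K\"ahler domain metrics (Proposition \ref{Proposition:PluriharmonicAnd(1,1)GeodesicsMapsFromKahlerManifolds}), and so gets $\nabla^{\varphi^{-1}}_{\partial_{z_k}}\d\varphi(\partial_{\bar z_m})=0$ in one line; you instead re-derive this identity from scratch by polarising over the affine complex curves in directions $\partial_{z_i}+c\,\partial_{z_j}$, which is, in effect, an inlined proof of that appendix proposition (note that your displayed identity $c\,\nabla_{\partial_{\bar z_i}}(\d\varphi\,\partial_{z_j})+\bar c\,\nabla_{\partial_{\bar z_j}}(\d\varphi\,\partial_{z_i})=0$ presupposes that the diagonal terms, obtained from the coordinate curves at $c=0$, have already been removed — fine, but it should be said). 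The second divergence is the finish: the paper never touches the Levi-Civita connection of $g$, computing $\d\omega(\partial_{z_k},\partial_{z_l},\partial_{\bar z_m})$ directly from metric-compatibility of the pull-back connection and $\nabla^{\varphi^{-1}}_{\partial_{z_k}}\d\varphi(\partial_{z_l})-\nabla^{\varphi^{-1}}_{\partial_{z_l}}\d\varphi(\partial_{z_k})=\d\varphi([\partial_{z_k},\partial_{z_l}])=0$, whereas you pass through the Gauss formula of the isometric immersion to obtain the stronger pointwise statement $\nabla^g_{\partial_{\bar z_j}}\partial_{z_i}=0$ and then quote Lemma \ref{Lemma:Salamon1.2Generalized}. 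Your route costs slightly more machinery (the tangential-normal Gauss splitting and injectivity of the complexified $\d\varphi$, which pluriconformality does supply), but it buys a sharper intermediate conclusion: $\nabla^g_{\partial_{\bar z_j}}\partial_{z_i}=0$ is precisely the holomorphic-coordinate characterisation of K\"ahler metrics, and the simultaneous vanishing of the normal part shows that $\varphi$ is $(1,1)$-geodesic as an isometric immersion, i.e.\ that its image is pluriminimal — which the paper obtains only afterwards, in Theorem \ref{Theorem:PluriminimalSubmanifoldsAsImageOfPluriharmonicAndPluriconformalMaps}. The paper's route is shorter and stays entirely within the pull-back bundle $\varphi^{-1}TN$.
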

\begin{proof} As $\d\varphi(T^{1,0}M)$ is isotropic, it follows easily that $g$ is \emph{compatible} with $J$, \textit{i.e.}, $(M,g,J)$ is a Hermitian manifold. Let $\omega=g(J\cdot,\cdot)$ be the corresponding Kähler form. As $J$ is integrable, $\d\omega$ has no $(3,0)$-part; to show that $\omega$ is closed it is therefore enough to show that the $(1,2)$-part of $\d\omega$ vanishes. To this end, let $(z_{1},...,z_{m})$ be local holomorphic coordinates on $M$; they determine locally a Kähler metric and a Levi-Civita connection $\widetilde{\nabla}$ (see Lemma \ref{Lemma:ExistenceOfLocallyDefinedKahlerMetricsForComplexManifolds}). As $\varphi$ is pluriharmonic, we get
\[0=\widetilde{\nabla}\d\varphi(\partial_{z_k},\partial_{\bar{z}_m})=\nabla^{\varphi^{-1}}_{\partial_{z_k}}\d\varphi(\partial_{\bar{z}_m})-\d\varphi(\widetilde{\nabla}_{\partial_{z_m}}\partial_{\bar{z}_m})=\nabla^{\varphi^{-1}}_{\partial_{z_k}}\d\varphi(\partial_{\bar{z}_m})\text{,}\]
where $\nabla^{\varphi^{-1}}$ is the pull-back connection on $\varphi^{-1}TN$. Hence,\addtolength{\arraycolsep}{-1.0mm}
\[\begin{array}{lll}
\d\omega(\partial_{z_k},\partial_{z_l},\partial_{\bar{z}_m})&=&\partial_{z_k}\omega(\partial_{z_l},\partial_{\bar{z}_m})-\partial_{z_l}\omega(\partial_{z_k},\partial_{\bar{z}_m})\\
~&=&~ih(\nabla^{\varphi^{-1}}_{\partial_{z_k}}\d\varphi(\partial_{z_l})-\nabla^{\varphi^{-1}}_{\partial_{z_l}}\d\varphi(\partial_{z_k}),\partial_{\bar{z}_m})\\
~&=&ih(\d\varphi([\partial_{z_k},\partial_{z_l}]),\partial_{\bar{z}_m})=0\text{.}
\end{array}\]\addtolength{\arraycolsep}{1.0mm}\noindent
This proves that $\d\omega=0$ so that $M$ is Kähler.
\end{proof}

Thus we can now easily prove Theorem \ref{Theorem:PluriminimalSubmanifoldsAsImageOfPluriharmonicAndPluriconformalMaps}: as $\varphi$ is holomorphic, it is pluriconformal (Remark \ref{Remark:ConformalAndPluriconformalMapsFromARiemannSurface}). Hence, for the pull-back metric, $(M,g,J)$ is Kähler and so we have an isometric pluriharmonic immersion, which shows that its image is pluriminimal.


\section{Harmonic~morphisms}\label{Section:SectionIn:Chapter:HarmonicAndPluriharmonicMapsHarmonicMorphisms:HarmonicMorphisms}


Harmonic morphisms are smooth maps that \textit{preserve} Laplace's equation \eqref{Equation:LaplaceEquationForFunctionsOnARiemannianManifold}; more precisely:

\begin{definition}\label{Definition:HarmonicMorphism}\index{Harmonic~morphism!}
Let $M$ and $N$ be two Riemannian manifolds and $\varphi:M\to N$ a smooth map. Then, $\varphi$ is said to be a \emph{harmonic morphism} if it pulls back (locally defined) harmonic functions on $N$ to (locally defined) harmonic functions on $M$. In other words, if $\V$ is an open set in $N$ with $\varphi^{-1}(\V)$ non-empty and $f:\V\to\rn$ a harmonic function, then $f\circ\varphi:\varphi^{-1}(\V)\to\rn$ is a harmonic function.
\end{definition}

To give a geometric characterization of harmonic morphisms, we need the following notion (see \textit{e.g.} \cite{BairdWood:03}):

\begin{definition}[\emph{Horizontally~(weakly)~conformal~map}]\label{Definition:HorizontallyWeaklyConformalMap}\index{Horizontally~(weakly)~conformal~map!}\index{Conformal~map!horizontally~(weakly)|see{Horizontally\\(weakly)~conformal~map}}\index{Horizontal~space|see{Harmonic~morphism}}\index{Harmonic~morphism!horizontal~space}
Let $\varphi:M\to N$ be a smooth map between two Riemannian manifolds $(M,g)$ and $(N,h)$. Then, $\varphi$ is called \emph{horizontally weakly conformal} if for all $x\in{M}$ either:

(i) $\d\varphi_x=0$, or

(ii) $\d\varphi_x:T_xM\to T_{\varphi(x)}N$ maps the \textit{horizontal space} $\H_x=(\ker\d\varphi_x)^\bot$ conformally onto $T_{\varphi(x)}N$; \textit{i.e.}, $\d\varphi_x$ is surjective and there is $\Lambda_x\neq 0$ with
\begin{equation}\label{Equation:EquationIn:Definition:HorizontallyWeaklyConformalMap}
h(\d\varphi_x X,\d\varphi_xY)=\Lambda_xg(X,Y),\quad\forall\,X,Y\in\H_x\text{.}
\end{equation}
\end{definition}
Of course, if we define $\Lambda$ to be zero at points $x\in{M}$ for which $\d\varphi_x=0$, we get (\textit{cf.} Definition \ref{Definition:ConformalMap}) a smooth non-negative function $\Lambda_x$ defined on $M$, and we can consider its square root $\lambda_x$, to which is usually called the \emph{dilation} (of $\varphi$ at $x$). We can now give the following characterization of harmonic morphisms (\cite{Fuglede:81}, \cite{Ishihara:79}, see also
\cite{BairdWood:03}):\index{Dilation~of~a~horizontally~weakly\\~conformal~map}\index{Horizontally~(weakly)~conformal~map!dilation~of}

\begin{theorem}[\textup{Geometric~characterization~of~harmonic~morphisms}]\label{Theorem:GeometricCharacterizationOfHarmonicMorphisms}\index{Harmonic~morphism!geometric~characterization}
A smooth map $\varphi:M\to N$ between Riemannian manifolds is a harmonic morphism if and only if it is simultaneously harmonic and horizontally weakly conformal.
\end{theorem}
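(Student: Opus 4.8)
The plan is to derive both implications from the standard \emph{composition formula} for the tension field: for a smooth map $\varphi:M\to N$ and a smooth function $f:N\to\rn$ one has
\[\tau(f\circ\varphi)=\trace_g(\nabla\d f)(\d\varphi\,\cdot\,,\d\varphi\,\cdot\,)+\langle\grad f,\tau(\varphi)\rangle\text{,}\]
where $\nabla\d f$ is the Hessian of $f$ on $N$ and the trace is taken over a $g$-orthonormal frame $\{e_i\}$ of $M$. Fixing $x\in M$ and setting $p=\varphi(x)$, I introduce the positive semi-definite symmetric $2$-tensor $T_x=\sum_i\d\varphi_x(e_i)\otimes\d\varphi_x(e_i)$ on $T_pN$, so that the first term reads $\langle\nabla\d f,T_x\rangle$. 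Then $f\circ\varphi$ is harmonic precisely when $\langle\nabla\d f,T_x\rangle+\langle\grad f,\tau(\varphi)_x\rangle=0$, and $\varphi$ is a harmonic morphism exactly when this holds at every $x$ for every locally harmonic $f$, that is, for every $f$ with $\trace_h\nabla\d f=0$.

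The technical heart of the argument is a \emph{pointwise realization lemma}: for any $p\in N$, any covector $\xi\in T^\ast_pN$ and any symmetric $2$-tensor $S$ at $p$ with $\trace_h S=0$, there is a locally defined harmonic function $f$ near $p$ with $\d f_p=\xi$ and $(\nabla\d f)_p=S$. I would prove this in geodesic normal coordinates centred at $p$: the quadratic polynomial $P(y)=\xi_\alpha y^\alpha+\tfrac12 S_{\alpha\beta}y^\alpha y^\beta$ already carries the prescribed $1$- and $2$-jet at $p$ and satisfies $\Delta^N P(p)=\trace_h S=0$; one then corrects $P$ by solving a Poisson equation $\Delta^N u=-\Delta^N P$ locally with $u$ arranged to vanish to second order at $p$, so that $f=P+u$ is genuinely harmonic with the same $2$-jet as $P$. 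This local solvability, together with the delicate point that the correction can be chosen so as not to disturb the $2$-jet, is the step I expect to be the main obstacle, since it is the only place where elliptic existence theory (rather than pure linear algebra) enters.

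Granting the lemma, the \emph{only if} direction splits cleanly. Choosing $S=0$ with $\xi$ arbitrary forces $\langle\grad f,\tau(\varphi)_x\rangle=\langle\xi,\tau(\varphi)_x\rangle=0$ for all $\xi$, hence $\tau(\varphi)_x=0$; as $x$ is arbitrary, $\varphi$ is harmonic. With $\tau(\varphi)=0$ the condition becomes $\langle S,T_x\rangle=0$ for every traceless symmetric $S$, so $T_x$ is pure trace: $T_x=\Lambda_x\,h$ for some scalar $\Lambda_x\geq0$, the non-negativity coming from positive semi-definiteness of $T_x$. Viewing $T_x$ as the endomorphism $\d\varphi_x\circ\d\varphi_x^\ast$ of $T_pN$, the identity $\d\varphi_x\circ\d\varphi_x^\ast=\Lambda_x\,\Id$ is exactly horizontal weak conformality: if $\Lambda_x=0$ then $\d\varphi_x=0$, while if $\Lambda_x>0$ then $\d\varphi_x$ restricts to a conformal surjection of $\H_x=(\ker\d\varphi_x)^\perp$ onto $T_pN$ with factor $\Lambda_x$, since $\langle\d\varphi_xX,\d\varphi_xY\rangle=\langle X,\d\varphi_x^\ast\d\varphi_xY\rangle=\Lambda_x\langle X,Y\rangle$ for $X,Y\in\H_x$.

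For the converse, suppose $\varphi$ is harmonic and horizontally weakly conformal with dilation $\lambda$, $\Lambda=\lambda^2$. Choosing an orthonormal frame adapted to the splitting $T_xM=\H_x\oplus\ker\d\varphi_x$ shows directly that $T_x=\Lambda_x\,h$, whence $\langle\nabla\d f,T_x\rangle=\Lambda_x\trace_h\nabla\d f=\Lambda_x(\Delta^Nf)(p)$. Substituting this together with $\tau(\varphi)=0$ into the composition formula gives $\tau(f\circ\varphi)=\Lambda\,(\Delta^Nf)\circ\varphi$, so $f\circ\varphi$ is harmonic whenever $f$ is; thus $\varphi$ is a harmonic morphism, completing the equivalence.
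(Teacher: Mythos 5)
The paper itself does not prove this theorem: it is quoted with references to Fuglede and Ishihara (and to Baird--Wood), so there is no internal proof to compare against. Your proposal is, in architecture, exactly the classical Fuglede-style argument: the composition law $\tau(f\circ\varphi)=\trace_g(\nabla\d f)(\d\varphi\cdot,\d\varphi\cdot)+\d f(\tau(\varphi))$, a pointwise realization lemma for $2$-jets of harmonic functions, and then pure linear algebra. Both directions of that linear algebra are correct as you wrote them: taking $S=0$ forces $\tau(\varphi)=0$; taking $\xi=0$ and $S$ traceless forces $T_x=\sum_i\d\varphi_x(e_i)\otimes\d\varphi_x(e_i)$ to be pure trace, and the identification of $T_x$ with $\d\varphi_x\circ\d\varphi_x^\ast$ correctly converts $T_x=\Lambda_x h$ into horizontal weak conformality in the sense of Definition \ref{Definition:HorizontallyWeaklyConformalMap} (including the degenerate case $\Lambda_x=0\Rightarrow\d\varphi_x=0$); and the converse computation $\tau(f\circ\varphi)=\Lambda\,(\Delta^N f)\circ\varphi$ is right.

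The one genuine soft spot is the step you yourself flagged: in the realization lemma you say one solves $\Delta^N u=-\Delta^N P$ locally ``with $u$ arranged to vanish to second order at $p$''. That cannot be arranged directly: the $2$-jet of a solution of a Poisson equation is not freely prescribable, so after adding any particular solution $u_0$ the harmonic function $f=P+u_0$ has $2$-jet $(\xi+\d u_0(p),\,S+\nabla\d u_0(p))$ rather than $(\xi,S)$. The rescue is that the defect is itself admissible: since $\Delta^N u_0(p)=-\Delta^N P(p)=0$, the Hessian $\nabla\d u_0(p)$ is $h$-traceless, so the error jet lies in the same space of data $(\xi,S)$ with $\trace_h S=0$. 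One then needs a quantitative elliptic estimate to close the loop: solving on a ball $B_r(p)$ with, say, zero Dirichlet data, the source $-\Delta^N P$ is $O(r)$ there, and Schauder estimates give that the jet of $u_0$ at $p$ is $O(r)$ times the norm of the input jet. Hence for $r$ small the map ``input jet $\mapsto$ defect jet'' is a contraction, and either a Neumann series or the open mapping theorem yields a harmonic $f$ with exactly the prescribed $1$-jet $\xi$ and traceless $2$-jet $S$. With that repair the lemma, and with it your whole proof, is sound; without it, the lemma as stated is unjustified, and it is precisely the point where Fuglede's original paper has to work.
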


The following theorem will be crucial in our subsequent work (\cite{BairdEells:81}, see also \cite{BairdWood:03}, p. 122):

\begin{theorem}[\textup{Harmonic~morphisms~and~mean~curvature~of~the~fibres}]\label{Theorem:HarmonicMorphismsAndMeanCurvatureOfFibresSpecialCaseOfRiemannSurfaces}\index{Harmonic~morphism!mean~curvature~of~the~fibres}
Let $\varphi:M^m\to N^n$ be a smooth non-constant horizontally weakly conformal map between Riemannian manifolds. Then, $\varphi$ is harmonic, and so a harmonic morphism, if and only if, at every regular point, the mean curvature vector field $\mu^\V$ of the fibres and the gradient of the dilation $\lambda$ of $\varphi$ are related by
\begin{equation}\label{Equation:EquationIn:Theorem:HarmonicMorphismsAndMeanCurvatureOfFibresSpecialCaseOfRiemannSurfaces}\index{Harmonic~morphism!minimal~submanifolds}\index{Minimal~submanifold!and~harmonic~morphisms}
(n-2)\H(\grad \log\lambda)+(m-n)\mu^\V=0\text{.}
\end{equation}
In particular, when $n=2$, $\varphi$ is harmonic, and so a harmonic morphism, if and only if at every regular point the fibres of $\varphi$ are minimal.
\end{theorem}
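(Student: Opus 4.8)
The plan is to reduce the entire statement to a single \emph{fundamental equation} expressing the tension field $\tau(\varphi)=\trace\nabla\d\varphi$ of a horizontally weakly conformal map in terms of the two geometric quantities in the theorem, and then read off the equivalence. First I would restrict to the open set of regular points, where by Definition~\ref{Definition:HorizontallyWeaklyConformalMap} the differential $\d\varphi_x$ restricts to a conformal isomorphism $\d\varphi_x|_{\H_x}:\H_x\to T_{\varphi(x)}N$ with factor $\lambda_x^2$. In particular $\d\varphi_x(T_xM)=T_{\varphi(x)}N=\d\varphi_x(\H_x)$, so there is a unique \emph{horizontal} vector field $\zeta$ with $\tau(\varphi)=\d\varphi(\zeta)$; the whole content is then the identity
\[\zeta=(n-2)\,\H(\grad\log\lambda)+(m-n)\,\mu^\V\]
(up to the sign convention chosen for the mean curvature $\mu^\V$ of the fibres, which is itself a horizontal field since the fibres are the vertical submanifolds $\varphi^{-1}(\text{pt})$ and $\mu^\V$ is the trace of their second fundamental form, valued in $\H$).

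The core step is to compute $\tau(\varphi)$ in an adapted orthonormal frame. At a regular point I would choose orthonormal horizontal fields $\{X_1,\dots,X_n\}$ and orthonormal vertical fields $\{V_1,\dots,V_{m-n}\}$ and split
\[\tau(\varphi)=\sum_{\alpha=1}^n\nabla\d\varphi(X_\alpha,X_\alpha)+\sum_{j=1}^{m-n}\nabla\d\varphi(V_j,V_j).\]
For the vertical sum, $\d\varphi(V_j)=0$ gives $\nabla\d\varphi(V_j,V_j)=-\d\varphi\big((\nabla_{V_j}V_j)^\H\big)$, and the horizontal components $\sum_j(\nabla_{V_j}V_j)^\H$ assemble by definition into $(m-n)\mu^\V$, producing the term $(m-n)\,\d\varphi(\mu^\V)$. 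For the horizontal sum I would differentiate the conformality relation $h(\d\varphi X,\d\varphi Y)=\lambda^2 g(X,Y)$ along horizontal directions and trace; using that $\{\lambda^{-1}\d\varphi(X_\alpha)\}$ is orthonormal on $N$ yields the gradient term, and it is precisely here that the factor $n-2$ emerges, as the trace over the $n$ horizontal directions plays off against the single normalisation by $\lambda$. I expect this horizontal computation, together with the bookkeeping needed to verify that all leftover vertical contributions cancel so that $\tau(\varphi)$ really is $\d\varphi$ of a horizontal vector, to be the main obstacle.

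Once the fundamental equation $\tau(\varphi)=\d\varphi\big((n-2)\H(\grad\log\lambda)+(m-n)\mu^\V\big)$ is established, the conclusion is immediate. Since $\d\varphi_x|_{\H_x}$ is an isomorphism at a regular point, $\tau(\varphi)_x=0$ if and only if the horizontal vector $(n-2)\H(\grad\log\lambda)+(m-n)\mu^\V$ vanishes at $x$. For a non-constant horizontally weakly conformal map the regular set is open and dense, and $\tau(\varphi)$ is continuous, so vanishing of this bracket at every regular point is equivalent to $\tau(\varphi)\equiv0$ on $M$, i.e.\ to harmonicity of $\varphi$; combined with the geometric characterization of Theorem~\ref{Theorem:GeometricCharacterizationOfHarmonicMorphisms}, this is exactly the assertion that $\varphi$ is a harmonic morphism. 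Finally, the case $n=2$ follows by inspection: the coefficient $n-2$ vanishes, the fundamental equation collapses to $(m-n)\,\d\varphi(\mu^\V)=0$, and since $\d\varphi|_\H$ is injective at regular points this holds if and only if $\mu^\V=0$ there, that is, if and only if the fibres of $\varphi$ are minimal.
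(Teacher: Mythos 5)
The paper does not prove this theorem at all: it is stated as a quoted result of Baird and Eells (\cite{BairdEells:81}; see also \cite{BairdWood:03}, p.~122), so there is no internal proof to compare yours against; the relevant benchmark is the standard argument of those references, and your proposal follows exactly that route (adapted orthonormal frame, splitting of $\trace\nabla\d\varphi$ into horizontal and vertical traces, injectivity of $\d\varphi|_{\H}$ at regular points). The route is sound, but note that the entire content of the theorem sits in the horizontal trace you leave as a promissory note. Carried out --- e.g.\ by computing $\nabla\d\varphi(\tilde E,\tilde F)$ on horizontal lifts via the Koszul formula and the relation $g(\tilde E,\tilde F)=\lambda^{-2}\,h(E,F)\circ\varphi$ --- it gives
$\nabla\d\varphi(\tilde E,\tilde F)=\tilde E(\log\lambda)\,\d\varphi(\tilde F)+\tilde F(\log\lambda)\,\d\varphi(\tilde E)-\lambda^{-2}h(E,F)\,\d\varphi\big(\H(\grad\log\lambda)\big)$,
whose trace over an orthonormal horizontal frame is $-(n-2)\,\d\varphi\big(\H(\grad\log\lambda)\big)$ ($+2$ from the first two terms, $-n$ from the third); together with the vertical trace $\sum_j\nabla\d\varphi(V_j,V_j)=-\d\varphi\big(\textstyle\sum_j(\nabla_{V_j}V_j)^{\H}\big)=-(m-n)\,\d\varphi(\mu^{\V})$ this yields
\[
\tau(\varphi)=-\,\d\varphi\Big((n-2)\,\H(\grad\log\lambda)+(m-n)\,\mu^{\V}\Big)\text{,}
\]
which differs from your displayed identity by an overall sign. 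That is harmless, since only the vanishing of the bracket matters; the feature your sketch does get right is the crucial one, namely that the two terms enter with the same relative sign.

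There is, however, one genuinely false step: your claim that the regular set of a non-constant horizontally weakly conformal map is open \emph{and dense}. It need not be dense. Every smooth map into a one-dimensional manifold is horizontally weakly conformal (condition (ii) of Definition \ref{Definition:HorizontallyWeaklyConformalMap} is automatic when $\d\varphi_x\neq0$ and $n=1$), so the non-constant map $\varphi:\rn^m\to\rn$ equal to $e^{-1/x_1}$ for $x_1>0$ and to $0$ for $x_1\le0$ is horizontally weakly conformal, yet its regular set $\{x_1>0\}$ is not dense. The repair costs one line: writing $R$ for the regular set and $C=M\setminus R$ for the critical set, one has $M=\overline{R}\cup\mathrm{int}(C)$; on $\mathrm{int}(C)$ one has $\d\varphi\equiv0$, hence $\varphi$ is locally constant there and $\tau(\varphi)=0$, while on $\overline{R}$ one gets $\tau(\varphi)=0$ by continuity from $R$, where it vanishes by the fundamental equation and the hypothesis. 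With that replacement, and the horizontal computation actually performed, your argument is complete; the $n=2$ collapse and the passage to ``harmonic morphism'' via Theorem \ref{Theorem:GeometricCharacterizationOfHarmonicMorphisms} are fine as you wrote them.
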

Of course, we immediately get from \eqref{Equation:EquationIn:Theorem:HarmonicMorphismsAndMeanCurvatureOfFibresSpecialCaseOfRiemannSurfaces} that a Riemannian submersion $\varphi:M^m\to N^n$ is a harmonic morphism if and only if it has minimal fibres (\cite{Fuglede:81}). In the particular case where $n=2$, harmonic morphisms do not depend on the conformal class of $N^2$ (\cite{BairdWood:03}; compare with \ref{Proposition:HarmonicMapsFromRiemannSurfaces}) as horizontal (weak) conformality remains unchanged by conformal changes of the metric on $N^2$ and the fact that the fibres are minimal does not depend on the metric on $N^2$ at all. In particular, the concept of harmonic morphism to a Riemann surface is well-defined.

\label{Page:TheIdeaToConstructHarmonicMorphismsAfter:Theorem:HarmonicMorphismsAndMeanCurvatureOfFibresSpecialCaseOfRiemannSurfaces}\index{Harmonic~morphism!and~holomorphic~maps}\index{Harmonic~morphism!to~a~Riemann~surface}\index{Riemann~surface!valued~harmonic~morphism}
If $(M,g,J)$ is an almost Hermitian manifold and $\varphi:M\to N^2$ is a holomorphic map, $\varphi$ is automatically horizontally weakly conformal, as it is easy to check. Therefore, in such a case, we have a harmonic morphism if and only if its fibres are minimal (at regular points). As we shall see in Chapter \ref{Chapter:HarmonicMorphismsAndTwistorSpaces}, this will be the fundamental idea for constructing harmonic morphisms using twistor methods.


\chapter{Harmonic maps and twistor spaces}\label{Chapter:HarmonicMapsAndTwistorSpaces}



In \cite{Salamon:85} (Corollary 4.2), it is shown that conformal and harmonic immersions \linebreak$\varphi:M^2\to N^{2n}$ always arise as the projection $\varphi=\pi\circ\psi$ of suitable holomorphic maps $\psi:M^2\to\Sigma^+N$:

\corollarywithoutnumber{\ref{Corollary:SalamonCorollary4.2}}{Let $M^2$ be a Riemann surface, $N^{2n}$ an oriented even-dimensional manifold. Consider $\varphi:M^2\to N^{2n}$ an immersion. Then, $\varphi$ is a conformal and harmonic map if and only if $\varphi$ is (locally) the projection of a $(J^M,\J^2)$-holomorphic map $\psi:M^2\to\Sigma^+N$.}

The fact that projections of $\J^2$-holomorphic maps $\psi:M^2\to\Sigma^+N$ are harmonic is a consequence of the
following theorem (\cite{Salamon:85}, Theorem 3.5):
\theoremwithoutnumber{\ref{Theorem:SalamonTheorem3.5}}{Let $(M^{2m},J^M,g)$ be a cosymplectic manifold and
$N^{2n}$ an oriented Riemannian manifold. Take $\psi:M^{2m}\to\Sigma^+ N^{2n}$ a $(J^M,\J^2)$-holomorphic map.
Then, the projected map $\varphi:M^{2m}\to N^{2n}$, $\varphi=\pi\circ\psi$, is harmonic.}

\noindent This is essentially a consequence of Proposition \ref{Proposition:LichnerowiczProposition} (\cite{Lichnerowicz:70}, see \cite{Salamon:85} p. 175):

\propositionwithoutnumber{\ref{Proposition:LichnerowiczProposition}}{Let $M^{2m}$ be a cosymplectic manifold and $N^{2n}$ a $(1,2)$-symplectic manifold. Then, any holomorphic map $\varphi:M^2\to N^{2n}$ is harmonic.}

\label{Page:IntroductionTo:Chapter:HarmonicMapsAndTwistorSpaces}What really matters in proving Proposition
\ref{Proposition:LichnerowiczProposition} is that there is indeed an almost complex structure on $N$ which renders $\varphi$ holomorphic and is ``$(1,2)$-symplectic along $\varphi$": this is the idea behind Theorem \ref{Theorem:SalamonTheorem3.5}. As for the converse in Corollary \ref{Corollary:SalamonCorollary4.2}, it follows from Theorem \ref{Theorem:SalamonsTheorem4.1}.

The main purpose of this chapter is to give more general versions of these results and, at the same time, using the tools developed in Chapter \ref{Chapter:TwistorSpaces}, provide unified proofs for the above stated and for the new results. More precisely, we prove (\cite{SimoesSvensson:06}, see also Remark 5.7 of \cite{Rawnsley:85})

\theoremwithoutnumber{\ref{Theorem:NewTheorem35}}{Let $M$ be a $(1,2)$-symplectic manifold and $N^{2n}$ an oriented
even-dimensional Riemannian manifold. Consider a $(J^M,\J^2)$-holomorphic map $\psi:M^{2m}\to\Sigma^+N^{2n}$. Then,
the projected map $\varphi:M^{2m}\to N^{2n}$, $\varphi=\pi\circ\psi$, is pluriconformal and $(1,1)$-geodesic.}

In the same way that Theorem \ref{Theorem:SalamonTheorem3.5} is a generalization of Proposition \ref{Proposition:LichnerowiczProposition}, we show that the above theorem can be seen as a generalization of the following result (see \cite{BairdWood:03}, Lemma 8.2.1):

\propositionwithoutnumber{\ref{Proposition:821Wood}}{If $\varphi:M\to{N}$ is a holomorphic map between $(1,2)$-symplectic manifolds $(M,g,J^M)$ and $(N,h,J^N)$, then $\varphi$ is $(1,1)$-geodesic.}

We also establish a converse to Theorem \ref{Theorem:NewTheorem35}: namely, we see under what circumstances we can guarantee the existence of a twistor lift for a given pluriconformal $(1,1)$-geodesic map (Theorem \ref{Theorem:NewTheorem4.1} and Corollary \ref{Corollary:NewSalamonCorollary4.2}).

We can already derive a consequence from Theorem \ref{Theorem:NewTheorem35}, that will be the starting point for our
construction of harmonic morphisms in the next chapter (\cite{SimoesSvensson:06}):

\begin{corollarywithoutsection}\label{Corollary:MinimalSubmanifoldsAndTwistorMaps}
\index{Minimal~submanifold!}\index{Pluriminimal~submanifold!and~$\J^2$-holomorphic~maps}
If $(M,g,J)$ is a Kähler manifold, $\psi:M\to\Sigma^+N$ a $\J^2$-holomorphic map and $\varphi=\pi\circ\psi$ an immersion\footnote{Equivalently, $\psi$ is nowhere vertical.}, then $\varphi(M)\subseteq N$ is an immersed (pluri)minimal submanifold of $N$.
\end{corollarywithoutsection}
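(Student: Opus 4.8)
The plan is to deduce the corollary directly from Theorem~\ref{Theorem:NewTheorem35}, combined with Theorem~\ref{Theorem:PullBackMetricThroughAPluriharmonicAndPluriconformalMapIsKahler}, essentially reproducing the argument given just after Theorem~\ref{Theorem:PluriminimalSubmanifoldsAsImageOfPluriharmonicAndPluriconformalMaps}. First I would observe that a K\"ahler manifold is $(1,2)$-symplectic: by the characterization $\d_2\omega=0\,\equi\,(1,2)$-symplectic of Proposition~\ref{Proposition:SalamonsLemma1.3} (equation \eqref{Equation:SecondEquationIn:Lemma:SalamonsLemma1.3}), the K\"ahler condition $\d_1\omega=\d_2\omega=0$ forces in particular $\d_2\omega=0$, which is exactly $(1,2)$-symplecticity. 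Hence the hypotheses of Theorem~\ref{Theorem:NewTheorem35} are met, and the projected map $\varphi=\pi\circ\psi$ is pluriconformal and $(1,1)$-geodesic.

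Next I would promote ``$(1,1)$-geodesic'' to ``pluriharmonic''. Since the domain $(M,g,J)$ is K\"ahler, the notions of $(1,1)$-geodesic and pluriharmonic map coincide, so $\varphi$ is a pluriharmonic, pluriconformal immersion. Recalling that pluriharmonicity of a map out of a complex manifold depends only on the complex structure $J$ and not on the metric chosen on $M$ (harmonicity along a complex curve being conformally invariant), I may apply Theorem~\ref{Theorem:PullBackMetricThroughAPluriharmonicAndPluriconformalMapIsKahler} to conclude that the pull-back metric $\varphi^\ast h$ is K\"ahler. As $\varphi$ is an immersion, $\varphi^\ast h$ is a genuine metric and $\varphi:(M,\varphi^\ast h,J)\to(N,h)$ is an isometric pluriharmonic immersion of a K\"ahler manifold; being pluriharmonic it is again $(1,1)$-geodesic, hence harmonic, so by Proposition~\ref{Proposition:HarmonicMapsAndMinimalSubmanifolds} its image $\varphi(M)$ is a minimal submanifold of $N$.

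Finally, to justify the stronger \emph{pluri}minimal conclusion I would supply the almost Hermitian structure on $\varphi(M)$, which is not given a priori on $N$ but is furnished by the twistor lift: at each point $\psi(x)$ is a Hermitian structure $J_\psi$ on $T_{\varphi(x)}N$, and the horizontal holomorphicity built into $\J^2$-holomorphicity of $\psi$ together with \eqref{Equation:HolomorphicityOfTheProjectionMap} yields $J_\psi\,\d\varphi(X)=\d\varphi(J^MX)$, so that $\varphi$ is holomorphic for $J_\psi$ and $\varphi(M)$ is $J_\psi$-invariant. Endowed with the induced (K\"ahler) metric, this exhibits $\varphi(M)$ as a K\"ahler submanifold whose inclusion is pluriharmonic, \textit{i.e.}\ pluriminimal. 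I expect the only delicate point to be precisely this last step---verifying that the structure defined by $\psi$ makes $\varphi(M)$ an almost complex submanifold and that the pull-back K\"ahler structure is compatible with it---whereas the minimality itself drops out at once from harmonicity once the pull-back metric is known to be K\"ahler.
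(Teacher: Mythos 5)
Your proposal is correct and follows essentially the same route as the paper's own proof: K\"ahler $\Rightarrow$ $(1,2)$-symplectic, then Theorem~\ref{Theorem:NewTheorem35} gives pluriconformality and $(1,1)$-geodesicity, the K\"ahler hypothesis upgrades this to pluriharmonicity, Theorem~\ref{Theorem:PullBackMetricThroughAPluriharmonicAndPluriconformalMapIsKahler} makes the pull-back metric K\"ahler, and metric-independence of pluriharmonicity yields a harmonic Riemannian immersion, hence a minimal image. Your final paragraph merely makes explicit the pluriminimality claim (via the structure $J_\psi$ supplied by the twistor lift, as in Theorem~\ref{Theorem:PluriminimalSubmanifoldsAsImageOfPluriharmonicAndPluriconformalMaps}), which the paper's proof leaves implicit, so there is no substantive divergence.
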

\begin{proof} As $M$ is $(1,2)$-symplectic, $\varphi$ is $(1,1)$-geodesic. But then it is also pluriharmonic as $M$ is Kähler. On the other hand, its pluriconformality is obvious from its holomorphicity relatively to the almost Hermitian structure induced by $\psi$. Therefore, the induced pull-back metric on $M$ is still Kähler (Theorem \ref{Theorem:PullBackMetricThroughAPluriharmonicAndPluriconformalMapIsKahler}). But pluriharmonicity is not influenced by change of compatible metrics so that $\varphi$ is still pluriharmonic for this new metric that makes it a Riemannian immersion. In particular, it is $(1,1)$-geodesic and therefore harmonic. Hence, $\varphi(M)\subseteq{N}$ is minimal.
\end{proof}
\begin{remarkwithoutsection}\label{Remark:RemarkTo:Corollary:MinimalSubmanifoldsAndTwistorMaps}
Notice the importance of $M$ being Kähler in Corollary \ref{Corollary:MinimalSubmanifoldsAndTwistorMaps}: we need $M$ to be $(1,2)$-symplectic; without this, we can not guarantee that the projected map $\varphi:M\to N$ is $(1,1)$-geodesic (Theorem \ref{Theorem:NewTheorem35}). On the other hand, we use the fact that $M$ is complex to make sure that the pull-back metric makes $M$ a $(1,2)$-symplectic manifold for which $\varphi$ is still $(1,1)$-geodesic (and therefore harmonic).
\end{remarkwithoutsection}


\section{Fundamental lemma}\label{Section:SectionIn:TwistorSpacesAndHarmonicMaps:FundamentalLemma}


Let $(M,g,J^M)$ be an almost Hermitian manifold and $(N,h)$ an oriented even-dimensional Riemannian
manifold. Let $\psi:M\to\Sigma^+N$, $\psi(x)=(\pi\circ\psi(x),J_\psi(x))$, be a map to the twistor
space $\Sigma^+N$ of $N$ and define $\varphi:M\to N$ by $\varphi=\pi\circ\psi$. We shall say that \emph{$\varphi$ is $(J^M,J_{\psi})$-holomorphic (at $x_0$)} if
\[\d\varphi_{x_0}(J^M X_{x_0})=J_{\psi}(x_0)(\d\varphi_{x_0} X_{x_0}),\quad\forall\,X_{x_0}\in{T}_{x_0}M\text{.}\]
Consider the vector bundle $\varphi^{-1}TN$ over $M$ with the orientation, metric and connection inherited from those on $TN$ and let $\Sigma^+\varphi^{-1}TN$ denote its twistor bundle. Let $\sigma_\psi$ denote the section \emph{associated} to $\psi$:\addtolength{\arraycolsep}{-1.2mm}
\[\begin{array}{llll}
\sigma_\psi:&M&\to&\Sigma^+\varphi^{-1}TN \\
~&x&\to& (x,J_{\psi}(x))\text{.}
\end{array}\]\addtolength{\arraycolsep}{1.2mm}\noindent
For the section $\sigma_\psi$, $(\varphi^{-1}TN,\varphi^{-1}h,\nabla^{\varphi^{-1}},\sigma_\psi)$ becomes an almost Hermitian vector bundle. Let $\omega_{\sigma}$ be the fundamental $2$-form associated with $\sigma_\psi$ and define $\d_a\omega_{\sigma}$ as in Chapter \ref{Chapter:TwistorSpaces}.

In the opposite direction, let $\varphi:M\to N$ be any smooth map and $\sigma$ a section of $\Sigma^+\varphi^{-1}TN$,
$\sigma:M\to\Sigma^+\varphi^{-1}TN$ with $\sigma(x)=(x,J_\sigma(x))$, $J_\sigma(x)\in\Sigma^+T_{\varphi(x)}N$. Then, $\sigma$ defines a map $\psi_\sigma:M\to\Sigma^+N$ by\addtolength{\arraycolsep}{-1.2mm}
\begin{equation}\label{Equation:SecondEquationIn:Lemma:FundamentalLemmaToProve3.5}
\begin{array}{llll}
\psi: & M&\to&\Sigma^+N \\
~ & x&\to&\big(\varphi(x),J_{\sigma}(x)\big)\text{.}
\end{array}
\end{equation}\addtolength{\arraycolsep}{1.2mm}\noindent

With this terminology, we can state the following result:

\begin{lemma}[\textup{Fundamental lemma}]\label{Lemma:FundamentalLemmaToProve3.5}
Let $\psi:M\to\Sigma^+N$, $\varphi=\pi\circ\psi$ and $\omega_\sigma$ be the fundamental $2$-form associated with $\sigma_\psi$. For any fixed $x_0\in{M}$, we have
\begin{equation}\label{Equation:FirstEquationIn:Lemma:FundamentalLemmaToProve3.5}
\psi\text{ is}\,(J^M,\J^a)\text{-holomorphic at~}x_0\impl\left\{
\begin{array}{l}
\d_a\omega_{\sigma}(x_0)=0 \\
\varphi\text{ is }(J^M,J_{\psi})\text{-holomorphic at~}x_0\
\end{array}\right.(a=1,2)
\end{equation}
Conversely, let $\varphi:M\to N$ be any smooth map, $\sigma$ a section of $\Sigma^+\varphi^{-1}TN$ and $\psi_\sigma$ as in \eqref{Equation:SecondEquationIn:Lemma:FundamentalLemmaToProve3.5}. Then, for any fixed $x_0\in{M}$, we have
\begin{equation}\label{Equation:ThirdEquationIn:Lemma:FundamentalLemmaToProve3.5}
\left\{\begin{array}{l}
\varphi\text{ is }(J^M,J_{\sigma})\text{-holomorphic at }x_0 \\
\d_a\omega_{\sigma}(x_0)=0 \
\end{array}\right.\,\impl\,\psi\text{ is~}(J^M,\J^a)\text{-holomorphic~at~}x_0\text{.}
\end{equation}
\end{lemma}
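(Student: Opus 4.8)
The plan is to split the $(J^M,\J^a)$-holomorphicity of $\psi$ along the stable decomposition $T\Sigma^+N=\H\oplus\V$ and to treat the horizontal and vertical pieces separately. Since $\J^a$ preserves both $\H$ and $\V$ by its very construction in \eqref{Equation:DefinitionOfTheAlmostComplexStructuresJ1AndJ2OnSigmaPlusM}, the decomposition $T\Sigma^+N=\H\oplus\V$ is $\J^a$-stable in the sense of Definition \ref{Definition:HolomorphicityInCertainSubbundles}. Projecting the identity $\d\psi(J^MX)=\J^a\,\d\psi X$ onto the two summands therefore shows, pointwise at $x_0$, that $\psi$ is $(J^M,\J^a)$-holomorphic if and only if it is simultaneously $\H$-holomorphic and $\V$-holomorphic. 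Both implications in the statement will follow once I identify $\H$-holomorphicity of $\psi$ with $(J^M,J_\psi)$-holomorphicity of $\varphi$, and $\V$-holomorphicity of $\psi$ with the vanishing $\d_a\omega_\sigma=0$.

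For the horizontal piece I would apply $\d\pi|_\H$ to the equation $(\d\psi(J^MX))^\H=\J^{\H,a}(\d\psi X)^\H$. Because $\d\pi$ annihilates the vertical part, $\d\pi|_\H(\d\psi X)^\H=\d\pi(\d\psi X)=\d\varphi X$; and since $\J^{\H,a}$ is by definition \eqref{Equation:DefinitionOfTheAlmostComplexStructureOnH} the transport of $J_\psi$ through $\d\pi|_\H$, the two sides become $\d\varphi(J^MX)$ and $J_\psi\,\d\varphi X$. Hence $\H$-holomorphicity of $\psi$ at $x_0$ is exactly $(J^M,J_\psi)$-holomorphicity of $\varphi$ at $x_0$. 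It should be emphasised that, unlike for sections of $\Sigma^+M$ in Lemma \ref{Lemma:HorizontalHolomorphicityOfAnySection}, this is here a genuine condition: $\psi$ is a map out of $M$, not a section over $N$, so $\pi\circ\psi=\varphi$ need not be $J_\psi$-holomorphic automatically.

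For the vertical piece I would first record the analogue of \eqref{EquationIn:Remark:DecompositionOfASectionIntoVerticalAndHorizontalParts}, namely that the vertical part of $\d\psi$ is the covariant derivative of the lift, $(\d\psi_xX)^\V=\nabla^{\varphi^{-1}}_XJ_\psi$, now computed with the connection induced on $\varphi^{-1}\L(TN,TN)$. Identifying $\V_{\psi(x)}\simeq\m_{J_\psi(x)}(T_{\varphi(x)}N)$ and using \eqref{Equation:EquationIn:Definition:AlmostComplexStructureOnMJV}, the $\V$-holomorphicity equation $(\d\psi(J^MX))^\V=\J^{\V,a}(\d\psi X)^\V$ rewrites as $\nabla^{\varphi^{-1}}_{J^MX}J_\psi=(-1)^{a+1}J_\psi\,\nabla^{\varphi^{-1}}_XJ_\psi$. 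This places us exactly in the setting of Lemma \ref{Lemma:VerticalJaHolomorphicityNadNullityOfda}, read for the almost Hermitian vector bundle $(\varphi^{-1}TN,\varphi^{-1}h,\nabla^{\varphi^{-1}},\sigma_\psi)$ over $(M,g,J^M)$; since the generalized Salamon identity Lemma \ref{Lemma:SalamonsLemma.1.1Generalized} was established for an \emph{arbitrary} almost Hermitian vector bundle, the computation there transports verbatim and yields that $\V$-holomorphicity at $x_0$ is equivalent to $\d_a\omega_\sigma(x_0)=0$. Combining the three pointwise equivalences gives both asserted implications: the forward one by reading off horizontal and vertical holomorphicity from $\J^a$-holomorphicity, and the converse by reassembling $\d\psi(J^MX)=\J^a\,\d\psi X$ from its two holomorphic components.

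The main obstacle I expect is making rigorous this transfer to the pulled-back bundle $\varphi^{-1}TN$. Concretely, I must verify that the horizontal/vertical splitting of $T\Sigma^+N$ along $\psi$ corresponds, via the natural bundle map $\varphi^\ast\Sigma^+N=\Sigma^+\varphi^{-1}TN\to\Sigma^+N$ covering $\varphi$, to the splitting seen when $\sigma_\psi$ is regarded as a section of $\Sigma^+\varphi^{-1}TN$; in particular that the vertical derivative really is $\nabla^{\varphi^{-1}}_XJ_\psi$, and that the horizontal complex structure on $\Sigma^+N$ (which transports $J_\psi$ — the flexible $V=TN$ situation, not the rigid one of Remark \ref{Remark:TheCaseOfGeneralVectorBundles}) coincides with $\J^{\H,a}$. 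Once these identifications are secured, the proof reduces to the three equivalences above and is complete.
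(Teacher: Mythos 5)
Your proposal is correct and follows essentially the same route as the paper: split along the $\J^a$-stable decomposition $T\Sigma^+N=\H\oplus\V$, identify the horizontal component (via the isomorphism $\d\pi|_\H$ and the holomorphicity of $\pi$) with $(J^M,J_\psi)$-holomorphicity of $\varphi$, and identify the vertical component (via $(\d\psi X)^\V=\nabla^{\varphi^{-1}}_XJ_\psi$, the isomorphism $\tilde{g}$, and the generalized Salamon Lemmas \ref{Lemma:SalamonsLemma.1.1Generalized}--\ref{Lemma:Salamon1.2Generalized} for the bundle $\varphi^{-1}TN$) with $\d_a\omega_\sigma(x_0)=0$. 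The identification you flag as the remaining obstacle is exactly what the paper invokes in its step (ii), by analogy with \eqref{Equation:DecompositionIntoHorizontalAndVerticalPartsOfTheDerivativeOfASection}, so your argument is complete at the same level of detail as the paper's.
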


\begin{proof}
Let $\psi:M\to\Sigma^+N$ be $(J^M,\J^a)$-holomorphic and $\varphi=\pi\circ\psi$.

(i) It is easy to check that $\varphi$ is $(J^M,\J_\psi)$-holomorphic: in fact,
\[\varphi=\pi\circ\psi\,\impl\,\d\varphi(J^MX)=\d\pi(d\psi(J^MX))=\d\pi(\J^a\d\psi{X})=J_\psi\d\pi(\d\psi{X})=J_\psi\d\varphi{X}\]
since $\psi$ is $(J^M,\J^a)$-holomorphic and $\pi$ is ``holomorphic" in the sense of equation
\eqref{Equation:HolomorphicityOfTheProjectionMap}.

(ii) As in equation \eqref{Equation:DecompositionIntoHorizontalAndVerticalPartsOfTheDerivativeOfASection}, we can show that the decomposition of $\d\psi X$ into horizontal and vertical parts is given by
\begin{equation}\label{Equation:FirstEquationInTheProofOf:Lemma:FundamentalLemmaToProve3.5}\index{Decomposition!of~a~map~into~$\Sigma^+N$}
\{d\psi{X}-\nabla^{\varphi^{-1}}_XJ_{\psi}\}\oplus\nabla^{\varphi^{-1}}_XJ_{\psi}\in\H\oplus\V\text{.}
\end{equation}

(iii) We can now finish the proof of the first implication: we want to show that $\d_a\omega_{\sigma}=0$. Indeed, we can write
\[\psi\,\text{ is }(J^M,\J^a)\text{-holomorphic }\impl\,(\d\psi(X^{10}))^\V\in\V^{10,a}=\left\{\begin{array}{c}
T^{02^\star}\,(a=1)\text{,} \\
T^{20^\star}\,(a=2)\text{.}
\end{array}\right.\]
Hence, since $(d\psi(X))^\V=\nabla_X J_{\psi}$ we deduce, for $a=1$,
\[<\big(\nabla_{X^{10}}J_{\psi}\big)Y^{10}_{J_{\psi}},Z^{10}_{J_{\psi}}>=0\,\equi\,\big(\nabla_{X^{10}}\omega_{\sigma}\big)(Y^{10}_{J_{\psi}},Z^{10}_{J_{\psi}})=0\,\equi\,\d_1\omega_\sigma=0\text{,}\]
whereas for $a=2$ we have
\[<\big(\nabla_{X^{10}}J_{\psi}\big)Y^{01}_{J_{\psi}},Z^{01}_{J_{\psi}}>=0\,\equi\,\big(\nabla_{X^{10}}\omega_{\sigma}\big)(Y^{01}_{J_{\psi}},Z^{01}_{J_{\psi}})=0\,\equi\,\d_2\omega_\sigma=0\text{,}\]
as required.

Conversely, suppose now that we have a map $\varphi:M\to N$ and a section\addtolength{\arraycolsep}{-1.2mm}
\[\begin{array}{llll}
\sigma: & M&\to&\Sigma^+\varphi^{-1}TN \\
~&x&\to&(x,J_\sigma(x))\text{,~where~}J_\sigma(x):T_{\varphi(x)}N\to{T}_{\varphi(x)}N
\end{array}\]\addtolength{\arraycolsep}{1.2mm}\noindent
such that $\varphi$ is $(J^M,J_{\sigma})$-holomorphic and $\d_a\omega_{\sigma}=0$. We want to show that the map\addtolength{\arraycolsep}{-1.2mm}
\[\begin{array}{llll}
\psi: & M&\to&\Sigma^+TN \\
~& x&\to&(\varphi(x),J_\sigma(x))
\end{array}\]\addtolength{\arraycolsep}{1.2mm}\noindent
is $(J^M,\J^a)$-holomorphic. Take the decomposition of $T\Sigma^+\varphi^{-1}TN$ into its horizontal and vertical parts.

(iv) For the horizontal part, we have\addtolength{\arraycolsep}{-1.0mm}
\[\begin{array}{lll}\big(\d\psi(J^MX)\big)^\H&=&\d\pi|_\H^{-1}\circ\d\pi|_\H\big(\d\psi(J^MX)\big)^\H=\d\pi|_\H^{-1}\d\pi\big(\d\psi(J^MX)\big)\\
~&=&\d\pi|_\H^{-1}\d\varphi(J^MX)=\d\pi|_\H^{-1}J_{\sigma}\d\varphi{X}=\d\pi|_\H^{-1}J_{\sigma}\d\pi|_\H(\d\psi{X})^\H\\
~&=&\J^\H(\d\psi{X})^\H\text{.}\end{array}\]\addtolength{\arraycolsep}{1.0mm}\noindent

(v) For the vertical part, all we have to show is that $(\d\psi(X^{10})\big)^\V\in\V^{10}_a$ ($a=1,2$) and this follows
from $\d_a\omega_{\sigma}=0$, just as in (iii), concluding our proof.
\end{proof}

\begin{remark}\label{Remark:RemarkToLemma:Lemma:FundamentalLemmaToProve3.5:TheProblemItSolvesAsWeDoNotHaveAlmostComplexStructures}
Notice that the main problem this lemma solves is that already pointed out in Remark \ref{Remark:TheCaseOfGeneralVectorBundles}: when we are considering the bundle $\Sigma^+N$ we have two natural almost
complex structures: namely, $\J^1$ and $\J^2$. When we take instead the bundle $\Sigma^+V$, conditions $\d_a\omega=0$ arise more naturally then that of defining $\J^a$ on the latter bundle (in the way we pointed out in Remark
\ref{Remark:TheCaseOfGeneralVectorBundles})
and then speak of holomorphicity relatively to these structures. Choosing this terminology, $\J^a$-holomorphicity on a
bundle $\Sigma^+{N}$ is replaced by the conditions $\d_a\omega=0$ on $\Sigma^+V$ and $\varphi$ holomorphic. Comparing with Theorem \ref{Theorem:SalamonsTheorem3.2}, now we have to impose that $\varphi$ be holomorphic, which will give the horizontal holomorphicity of the lift, that was automatically guaranteed in the case of sections of
$\Sigma^+N$ (Remark \ref{Remark:TheImportanceOfBeingDealingWithSectionsInSalamonTheorem3.2}).
\end{remark}

\begin{corollary}\label{Corollary:AnotherVersionOfTheFundamentalLemmaUsingT10AndT01Spaces}
As before, let $\psi:M\to\Sigma^+ N$ be a smooth map from the almost Hermitian manifold $(M,g,J^M)$ and $\varphi:M\to N$,
$\varphi=\pi\circ\psi$ the projected map. Then, for each $x_0\in{M}$, $\psi$ is $\J^{1}$-holomorphic at $x_0$ if and only if $\varphi$ is $(J^M,J_\psi)$-holomorphic at $x_0$ and $\nabla_{T^{10}_{x_0}M}T^{10}_{J_\psi}N\subseteq{T}^{10}_{J_\psi(x_0)}N$; equivalently, if and only if $\varphi$ is $(J^M,J_\psi)$-holomorphic (at $x_0$) and
\begin{equation}\label{Equation:FirstEquationIn:Corollary:AnotherVersionOfTheFundamentalLemmaUsingT10AndT01Spaces}
\forall\,X^{10}\in{T}^{10}_{x_0}M,Y^{10}\in\Gamma(\varphi^{-1}T^{10}_{J_\psi}N)\,\exists\,Z_{x_0}^{10}\in\varphi^{-1}T^{10}_{J_\psi(x_0)}N:\,\nabla_{X^{10}_{x_0}}Y^{10}=Z_{x_0}^{10}\text{.}
\end{equation}
Analogously, $\psi$ is $\J^2$-holomorphic at $x_0$ if and only if $\varphi$ is $(J^M,J_\psi)$-holomorphic at $x_0$ and
$\nabla_{T_{x_0}^{01}M}T^{10}_{J_\psi}N\subseteq{T}^{10}_{J_\psi(x_0)}N$. Equivalently, if and only if $\varphi$ is
$(J^M,J_\psi)$-holomorphic and
\begin{equation}\label{Equation:FourthEquationIn:Corollary:AnotherVersionOfTheFundamentalLemmaUsingT10AndT01Spaces}
\forall\,X_{x_0}^{01}\in{T}_{x_0}^{01}M,\,Y^{10}\in\Gamma(\varphi^{-1}T^{10}_{J_\psi}N)\,\exists\,Z_{x_0}^{10}\in\varphi^{-1}T^{10}_{J_\psi(x_0)}N:\,\nabla_{X_{x_0}^{01}}Y^{10}=Z_{x_0}^{10}\text{.}
\end{equation}
\end{corollary}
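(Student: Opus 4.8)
The plan is to read off the statement directly by combining the two halves of the Fundamental Lemma (Lemma~\ref{Lemma:FundamentalLemmaToProve3.5}) with the pointwise form of the generalized Salamon lemma (Lemma~\ref{Lemma:Salamon1.2Generalized}). First I would feed $\psi$ and its associated section $\sigma_\psi$ into the forward implication \eqref{Equation:FirstEquationIn:Lemma:FundamentalLemmaToProve3.5}, and then apply the converse implication \eqref{Equation:ThirdEquationIn:Lemma:FundamentalLemmaToProve3.5} with $\sigma=\sigma_\psi$ (so $J_\sigma=J_\psi$). Taken together these two one-sided implications upgrade to the clean equivalence
\[
\psi\text{ is }(J^M,\J^a)\text{-holomorphic at }x_0 \quad\equi\quad \left\{\begin{array}{l}\varphi\text{ is }(J^M,J_\psi)\text{-holomorphic at }x_0\\ \d_a\omega_\sigma(x_0)=0\end{array}\right.
\]
for $a=1,2$. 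Thus the only remaining task is to translate the scalar conditions $\d_1\omega_\sigma(x_0)=0$ and $\d_2\omega_\sigma(x_0)=0$ into the asserted stability conditions for the pull-back connection.

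For that translation I would invoke Lemma~\ref{Lemma:Salamon1.2Generalized}, applied to the almost Hermitian vector bundle $(\varphi^{-1}TN,\varphi^{-1}h,\nabla^{\varphi^{-1}},\sigma_\psi)$ over $(M,g,J^M)$, whose $(1,0)$-subbundle is precisely $\varphi^{-1}T^{10}_{J_\psi}N$. The pointwise identities \eqref{Equation:ThirdEquationIn:Lemma:Salamon1.2Generalized} and \eqref{Equation:FourthEquationIn:Lemma:Salamon1.2Generalized} then give, at the single point $x_0$,
\[
\d_1\omega_\sigma(x_0)=0 \,\equi\, \nabla_{T^{10}_{x_0}M}T^{10}_{J_\psi}N\subseteq T^{10}_{J_\psi(x_0)}N, \qquad \d_2\omega_\sigma(x_0)=0 \,\equi\, \nabla_{T^{01}_{x_0}M}T^{10}_{J_\psi}N\subseteq T^{10}_{J_\psi(x_0)}N.
\]
Substituting these two equivalences into the displayed equivalence above, once with $a=1$ and once with $a=2$, produces exactly the two characterizations in the statement.

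Finally, the reformulations \eqref{Equation:FirstEquationIn:Corollary:AnotherVersionOfTheFundamentalLemmaUsingT10AndT01Spaces} and \eqref{Equation:FourthEquationIn:Corollary:AnotherVersionOfTheFundamentalLemmaUsingT10AndT01Spaces} carrying the existential quantifier need no separate argument: the membership $\nabla_{X^{10}_{x_0}}Y^{10}\in T^{10}_{J_\psi(x_0)}N$ is by definition the assertion that such a $Z^{10}_{x_0}\in\varphi^{-1}T^{10}_{J_\psi(x_0)}N$ exists, and symmetrically in the $(0,1)$ case, so these displays merely spell out the inclusions. I do not expect a genuine obstacle here; the points demanding care are purely bookkeeping — verifying that $\sigma_\psi$ does turn $\varphi^{-1}TN$ into an almost Hermitian bundle so that Lemma~\ref{Lemma:Salamon1.2Generalized} is applicable, and recognising that the horizontal holomorphicity which was automatic for sections in Theorem~\ref{Theorem:SalamonsTheorem3.2} is here exactly the hypothesis that $\varphi$ be $(J^M,J_\psi)$-holomorphic, as stressed in Remark~\ref{Remark:RemarkToLemma:Lemma:FundamentalLemmaToProve3.5:TheProblemItSolvesAsWeDoNotHaveAlmostComplexStructures}.
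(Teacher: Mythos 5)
Your proposal is correct and follows essentially the same route as the paper: the paper's proof likewise combines Lemma~\ref{Lemma:FundamentalLemmaToProve3.5} (to get the equivalence of $\J^a$-holomorphicity of $\psi$ with $(J^M,J_\psi)$-holomorphicity of $\varphi$ plus $\d_a\omega_\sigma=0$) with Lemma~\ref{Lemma:Salamon1.2Generalized} (to translate $\d_a\omega_\sigma=0$ into the $T^{10}$-stability conditions). Your added care in noting that $\psi_{\sigma_\psi}=\psi$ so the two one-sided implications close into an equivalence, and in invoking the pointwise forms \eqref{Equation:ThirdEquationIn:Lemma:Salamon1.2Generalized}--\eqref{Equation:FourthEquationIn:Lemma:Salamon1.2Generalized}, only makes explicit what the paper leaves implicit.
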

\begin{proof}
Using Lemma \ref{Lemma:FundamentalLemmaToProve3.5}, we see that $\psi$ is $\J^a$-holomorphic if and only if $\varphi$ is
$(J^M,J_\psi)$-holomorphic and $\d_a\omega_\sigma=0$, where $\omega_\sigma$ is the fundamental $2$-form associated to $\psi$. From Lemma \ref{Lemma:Salamon1.2Generalized}, we have
\[\d_a\omega_\psi=0\,\equi\,\left\{\begin{array}{l}
\nabla_{T^{10}M}T^{10}_{J\psi}N\subseteq T^{10}_{J_\psi}N\,(a=1)\\
\nabla_{T^{01}M}T^{10}_{J\psi}N\subseteq T^{10}_{J_\psi}N\,(a=2)
\end{array}\right.\text{,}\]
which proves our assertion.
\end{proof}


\section{Harmonic maps and twistor projections}



\subsection{Proposition \ref{Proposition:LichnerowiczProposition}}\label{Subsection:ProofOfLichnerowiczProposition}


Recall the following result (\cite{Lichnerowicz:70}, see \cite{Salamon:85} p. 175):
\begin{proposition}\label{Proposition:LichnerowiczProposition}\index{Lichnerowicz~Proposition!}
Let $(M,g,J^M)$ be a cosymplectic manifold and $(N,h,J^N)$ a $(1,2)$-symplectic manifold. Then, any holomorphic map $\varphi:M\to{N}$ is harmonic.
\end{proposition}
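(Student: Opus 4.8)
The plan is to prove that a holomorphic map $\varphi:(M,g,J^M)\to(N,h,J^N)$ from a cosymplectic manifold to a $(1,2)$-symplectic manifold is harmonic by showing that its tension field $\tau(\varphi)=\trace\nabla\d\varphi$ vanishes. The natural strategy is to exploit holomorphicity to move the second fundamental form into the codomain, where the $(1,2)$-symplectic hypothesis forces the relevant components to lie in $T^{10}N$, and then to use the cosymplectic (trace-free) hypothesis on the domain side to make the trace collapse. Throughout I will work with the complexified tangent bundles and the $(1,0)$/$(0,1)$ decompositions, since both hypotheses are stated in those terms.

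First I would record what holomorphicity gives us: $\d\varphi(T^{10}M)\subseteq T^{10}N$ and $\d\varphi(T^{01}M)\subseteq T^{01}N$, and that $\varphi$ intertwines the complex structures, $\d\varphi\circ J^M=J^N\circ\d\varphi$. Next I would express the trace of $\nabla\d\varphi$ using a frame adapted to $J^M$. Choosing local complex vector fields $Z_j=\tfrac12(X_j-iJ^MX_j)$ with $\{X_j,J^MX_j\}$ an orthonormal frame for $TM$, the trace can be written (up to a constant factor) as $\sum_j\nabla\d\varphi(Z_j,\overline{Z}_j)$, so that $\tau(\varphi)$ is a sum of the mixed-type values $\nabla\d\varphi(Z_j,\overline{Z}_j)=\nabla\d\varphi(T^{10}M,T^{01}M)$. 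This is exactly the $(1,1)$-part of the second fundamental form, so the claim reduces to showing this mixed term is trace-null. This is where I would connect to the earlier material: the cosymplectic condition $\trace\nabla J^M=0$ (Definition \ref{Definition:TheMainTypesOfAlmostComplexManifolds}(iii)) controls precisely the failure of $\sum_j\nabla_{\overline{Z}_j}Z_j$ to be pure $(1,0)$, via the identity $(J\,\trace\nabla J)^{01}=4(\sum_j\nabla_{\overline{Z}_j}Z_j)^{01}$ used in the proof of Proposition \ref{Proposition:AnAlternativeCharacterizationOfAlmostComplexManifolds}(iii).

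The key computation I would then carry out is to expand $\nabla\d\varphi(Z,\overline{W})$ for $Z\in T^{10}M$, $\overline{W}\in T^{01}M$ using the definition $\nabla\d\varphi(Z,\overline{W})=\nabla^{\varphi^{-1}}_Z\d\varphi(\overline{W})-\d\varphi(\nabla^M_Z\overline{W})$. Since $\d\varphi(\overline{W})\in T^{01}N$ and $\varphi$ is holomorphic, I would use the $(1,2)$-symplectic hypothesis on $N$ in the form $\nabla^N_{T^{10}N}T^{01}N\subseteq T^{01}N$ (equivalently $\nabla^N_{T^{01}N}T^{10}N\subseteq T^{10}N$, see Lemma \ref{Lemma:Salamon1.2Generalized} and Proposition \ref{Proposition:SalamonsLemma1.3}) to control the type of $\nabla^{\varphi^{-1}}_Z\d\varphi(\overline{W})$ along $\varphi$. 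The point is that the $(1,2)$-symplectic condition on the codomain pins the codomain covariant derivative into a fixed type, while the domain term $\d\varphi(\nabla^M_Z\overline{W})$ contributes the piece that the cosymplectic condition annihilates upon tracing. Summing over the frame and taking the relevant type-projection, the cosymplectic identity kills the surviving obstruction and yields $\trace\nabla\d\varphi=0$.

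The main obstacle I anticipate is the bookkeeping of types: one must verify carefully that the cross-terms of mixed type, rather than vanishing individually, only vanish after summation over the frame and after applying the trace, and that the pull-back connection $\nabla^{\varphi^{-1}}$ interacts correctly with the codomain splitting (the hypotheses on $N$ are statements about $\nabla^N$, so one must transport them along $\varphi$ and use holomorphicity to guarantee the images stay in the right subbundles). A cleaner alternative, which I would note as a shortcut, is to observe that a holomorphic map into a $(1,2)$-symplectic manifold is automatically $(1,1)$-geodesic when the domain is also suitably compatible — indeed this is the content of Proposition \ref{Proposition:821Wood} in the K\"ahler case — and that $(1,1)$-geodesic maps are harmonic by the argument on page \pageref{Page:(11)GeodesicMapsAreHarmonic}. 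The honest difficulty is that here $M$ is only assumed cosymplectic, not $(1,2)$-symplectic, so $\varphi$ need not be $(1,1)$-geodesic; one genuinely needs the weaker cosymplectic hypothesis to survive only the traced equation, which is exactly why the trace-free condition $\trace\nabla J^M=0$ is the right assumption and no stronger integrability of $J^M$ is required.
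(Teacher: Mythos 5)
Your proposal is correct and follows essentially the same route as the paper's proof: write $\tau(\varphi)=4\sum_j\nabla\d\varphi(\overline{Z}_j,Z_j)$, kill the $(0,1)$-part of the domain term $\d\varphi\big(\sum_j\nabla^M_{\overline{Z}_j}Z_j\big)$ with the cosymplectic identity $(J^M\trace\nabla^M J^M)^{01}=4\big(\sum_j\nabla^M_{\overline{Z}_j}Z_j\big)^{01}$, force the pull-back term $\nabla^{\varphi^{-1}}_{\overline{Z}_j}\d\varphi(Z_j)$ into type $(1,0)$ using holomorphicity together with the $(1,2)$-symplectic hypothesis on $N$, and conclude $\tau(\varphi)=0$ by reality. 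The only difference is presentational: the paper packages the codomain step as the condition $\d_2\omega_{\sigma}=0$ for the section $\sigma$ of the pull-back twistor bundle $\Sigma^+\varphi^{-1}TN$ (via Lemma \ref{Lemma:Salamon1.2Generalized}), whereas you apply the $(1,2)$-symplectic condition on $N$ directly along $\varphi$ — the same type computation either way.
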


As we want to establish the similarity between the different results in the introduction of this chapter, we now present a new proof of Proposition \ref{Proposition:LichnerowiczProposition}, done in several steps:

\noindent\textit{1$^{\text{st}}$ step. The $(0,1)$-part of $J^N(\trace_{g}\nabla^{\varphi^{-1}} J^N)$} (see also the proof of Lemma 8.1.5 in \cite{BairdWood:03}, p. 251).

If $\varphi$ is a holomorphic map between two almost Hermitian manifolds $M$ and $N$, then
\begin{equation}\label{Equation:EquationInTheFirstStepToProveLichnerowiczProposition}
\{J^N(\trace_{g}\nabla^{\varphi^{-1}}J^N)\}^{01}=4\{\sum_j\nabla^{\varphi^{-1}}_{\overline{Z}_j}\d\varphi{Z}_j\}^{01}
\end{equation}
where $Z_j$, $\overline{Z}_j$ are the vectors $\frac{1}{2}(X_j-iJX_j)$, $\frac{1}{2}(X_j+iJX_j)$, respectively, with \{$X_j,JX_j\}$ an orthonormal frame and
\[\trace_{g}\nabla^{\varphi^{-1}}J^N:=\sum_i(\nabla^N_{\d\varphi{X}_i}J^N)\d\varphi{X}_i,\,\{X_i\}\text{~orthonormal~for~the~metric~on~}M\text{.}\]

\noindent\textit{Proof of the 1$^{\text{st}}$ step.} The proof follows from noticing that $\frac{1}{2}(\Id+iJ)Jv=(Jv)^{01}=-iv$ and decomposing both sides in equation
\eqref{Equation:EquationInTheFirstStepToProveLichnerowiczProposition}.

\noindent \textit{2$^{\text{nd}}$ step. Decomposing the tension field} (see Lemma 8.1.5., \cite{BairdWood:03}, p. 251).

Let $\varphi:(M,g,J)\to(N,g,J)$ be a holomorphic map between two almost Hermitian manifolds. Then,
\begin{equation}\label{Equation:EquationInTheSecondStepToProveLichnerowiczProposition}
\tau(\varphi)=J^N\big(\trace_{g}\nabla^{\varphi^{-1}}J^N\big)-\d\varphi\big(J^M\trace\nabla^M J^M\big)\text{.}
\end{equation}
\noindent \textit{Proof of the 2$^{\text{nd}}$ step.} Since $\nabla\d\varphi$ is symmetric, we have
\begin{equation}\label{Equation:FirstEquationIn:EquationInTheProofOf:TheSecondStepToProveLichnerowiczProposition}\index{Tension~field!of~a~map~from~a~Hermitian~manifold}
\tau(\varphi)=4\sum_j\big(\nabla\d\varphi\big)(\overline{Z}_j,Z_j)\text{.}
\end{equation}
Hence,
\[\tau(\varphi)=4\sum_j\nabla_{\overline{Z}_j}^{\varphi^{-1}TN}\d\varphi{Z}_j-4\d\varphi(\sum_j\nabla^M_{\overline{Z}_j}Z_j)\text{.}\]
If $\varphi$ is a holomorphic map between two almost Hermitian manifolds, it preserves the $(1,0)$ and $(0,1)$-parts. So, since\addtolength{\arraycolsep}{-1.0mm}
\[\begin{array}{lll}4\big\{\d\varphi\sum_j\nabla_{\overline{Z}_j}^MZ_j\}^{01}&=&\d\varphi\big(J^M\trace\nabla{J}\big)^{01}\text{~~and}\\
4\sum_j\nabla_{\overline{Z}_j}^{\varphi^{-1}TN}\d\varphi{Z}_j&=&\big(J^N\trace_{g}\nabla^{\varphi^{-1}}J^N\big)^{01}\text{,~by~\eqref{Equation:EquationInTheFirstStepToProveLichnerowiczProposition},}
\end{array}\]\addtolength{\arraycolsep}{1.0mm}\noindent
we deduce
\[\big(\tau(\varphi)\big)^{01}=\big\{J^N\big(\trace_{g}\nabla^{\varphi^{-1}}J^N\big)-\d\varphi\big(J^M\trace\nabla^MJ^M\big)\big\}^{01}\text{.}\]
As $\tau(\varphi)(x)\in{T}_{\varphi(x)}N$ is real, we conclude that equation
\eqref{Equation:EquationInTheSecondStepToProveLichnerowiczProposition} is satisfied, as desired. \qed

\noindent\textit{3$^{\text{rd}}$ step. (Fundamental step) $\d_2\omega_{\sigma}=0$.}

We show that, if $\varphi:M\to N$ is a holomorphic map, with $M$ almost Hermitian and $N$ $(1,2)$-symplectic, then $\d_2\omega_\sigma=0$, where $\omega_\sigma$ is the fundamental $2$-form associated to $\sigma$ and $\sigma$ is the section of $\Sigma^+\varphi^{-1}TN$ defined in the natural way,\addtolength{\arraycolsep}{-1.2mm}
\[\begin{array}{llll}
\sigma: & M&\to&\Sigma^+\varphi^{-1}TN \\
~& x&\to&\big(x,J^N_{\varphi(x)}\big)\text{.}
\end{array}\]\addtolength{\arraycolsep}{1.2mm}\noindent

\noindent\textit{Proof of the 3$^{\text{rd}}$ step.} Let us start by noticing that $(\varphi^{-1}TN,\varphi^{-1}h,\nabla^{\varphi^{-1}},\sigma)$ is an almost Hermitian vector bundle and there is a natural identification between $(\varphi^{-1}TN)_x^{10}$ (respectively, $(\varphi^{-1}TN)_x^{01}$) and $T_{\varphi(x)}^{10}N$ (respectively, $T_{\varphi(x)}^{01}N$).

Now, from Lemma \ref{Lemma:Salamon1.2Generalized}, $\d_2\omega_{\sigma}=0$ if an only if, for all $X^{10}\in{T}^{10}M$, $Y^{01},Z^{01}\in(\varphi^{-1}TN)^{01}$
\[(\nabla_{X^{10}}^{\varphi^{-1}TN}\omega_{\sigma})(Y^{01},Z^{01})=0\,\equi\,\big(\nabla^{N}_{\d\varphi{X}^{10}}\omega_{J^N}\big)(\tilde{Y}^{01},\tilde{Z}^{01})=0,\quad\forall\,\tilde{Y}^{01},\tilde{Z}^{01}\in{T}^{01}N\text{.}\]
Since $\varphi$ is holomorphic, $\d\varphi (X^{10})\in{T}^{10}N$ so that the above equation is trivially true as $N$ is $(1,2)$-symplectic and so $\d_2\omega_{J^N}=0$ (Proposition \ref{Proposition:SalamonsLemma1.3}).\qed

\noindent\textit{4$^{\text{th}}$ step. Conclusion of the proof of Proposition \ref{Proposition:LichnerowiczProposition}.}

Using the second step and the fact that $M$ is cosymplectic,
\[\tau(\varphi)=J^N\big(\trace_{g}\nabla^{\varphi^{-1}}J^N\big)-\d\varphi(J^M\trace\nabla^MJ^M)=J^N\big(\trace_{g}\nabla^{\varphi^{-1}}J^N\big)\text{.}\]
From the first step,
\[\big\{J^N\big(\trace_{g}\nabla^{\varphi^{-1}}J^N\big)\big\}^{01}=4\sum_j\big\{\nabla_{\overline{Z}_j}^{\varphi^{-1}TN}\d\varphi{Z}_j\big\}^{01}\text{.}\]
Now, since $\overline{Z}_j\in{T}^{01}M$, $\d\varphi(Z_j)\in(\varphi^{-1}TN)^{10}$ and $\d_2\omega_{\sigma}=0$
(from the third step), we can use equation \eqref{Equation:FirstEquationIn:Lemma:Salamon1.2Generalized} in
Lemma \ref{Lemma:Salamon1.2Generalized} to deduce that $\nabla_{\overline{Z}_j}^{\varphi^{-1}TN}\d\varphi Z_j$ lies in
$(\varphi^{-1}TN)^{10}$ (or, equivalently, in ${T}^{10}N$. Therefore, $\nabla_{\overline{Z}_j}^{\varphi^{-1}TN}\d\varphi Z_j$ has vanishing $(0,1)$-part and $\big(\tau(\varphi)\big)^{01}=0$. Hence, by reality, $\tau(\varphi)=0$ and $\varphi$ is harmonic, as desired. \qed


\subsection{Twistor projections}\label{Subsection:ProofOfSalamonsTheorem3.5}


We have the following twistorial generalization of Proposition \ref{Proposition:LichnerowiczProposition} (\cite{Salamon:85}, Theorem 3.5):
\begin{theorem}\label{Theorem:SalamonTheorem3.5}\index{$\J^2$-holomorphic!and~harmonic~maps}\index{$\J^2$-holomorphic!projection}
Let $(M^{2m},J^M,g)$ be a cosymplectic manifold and $N^{2n}$ an oriented Riemannian manifold. Take
$\psi:M^{2m}\to\Sigma^+ N^{2n}$ a $(J^M,\J^2)$-holomorphic map. Then, the projected map $\varphi:M^{2m}\to N^{2n}$,
$\varphi=\pi\circ\psi$, is harmonic.
\end{theorem}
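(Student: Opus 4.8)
The plan is to mirror, step for step, the new proof of Proposition \ref{Proposition:LichnerowiczProposition} given above, the only change being that the global hypothesis ``$N$ is $(1,2)$-symplectic'' gets replaced by its pointwise counterpart \emph{along} $\varphi$, which is handed to us for free by the Fundamental Lemma. First I would apply Lemma \ref{Lemma:FundamentalLemmaToProve3.5} with $a=2$ to the hypothesis that $\psi$ is $(J^M,\J^2)$-holomorphic. This produces, at every point, two things: (i) the projection $\varphi=\pi\circ\psi$ is $(J^M,J_\psi)$-holomorphic, where $J_\psi$ is the almost Hermitian structure carried along $\varphi$ by the lift; and (ii) $\d_2\omega_\sigma=0$, where $\sigma=\sigma_\psi$ is the associated section of $\Sigma^+\varphi^{-1}TN$ and $\omega_\sigma$ its fundamental $2$-form. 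Observe that (ii) is exactly the conclusion of the (fundamental) third step of the proof of Proposition \ref{Proposition:LichnerowiczProposition}, which there had to be wrung out of the $(1,2)$-symplecticity of $N$; here it is an immediate input.

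Next I would transcribe the first two steps of that proof to the almost Hermitian vector bundle $(\varphi^{-1}TN,\varphi^{-1}h,\nabla^{\varphi^{-1}},\sigma)$, whose complex structure is $J_\sigma=J_\psi$. The analogue of the first step is the $(0,1)$-identity
\[
\bigl\{J_\sigma\bigl(\trace_g\nabla^{\varphi^{-1}}J_\sigma\bigr)\bigr\}^{01}=4\Bigl\{\sum_j\nabla^{\varphi^{-1}}_{\overline{Z}_j}\d\varphi Z_j\Bigr\}^{01},
\]
with $Z_j=\tfrac12(X_j-iJ^MX_j)$ for an orthonormal frame $\{X_j,J^MX_j\}$ of $M$; it rests only on the pointwise algebraic identity relating $J_\sigma$ to the $(0,1)$-projection and on (i), which guarantees $\d\varphi(T^{10}M)\subseteq(\varphi^{-1}TN)^{10}$. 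The analogue of the second step, using the symmetry of $\nabla\d\varphi$ together with (i), gives the tension-field decomposition
\[
\tau(\varphi)=J_\sigma\bigl(\trace_g\nabla^{\varphi^{-1}}J_\sigma\bigr)-\d\varphi\bigl(J^M\trace\nabla^M J^M\bigr).
\]

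To conclude I would use cosymplecticity of $M$, i.e.\ $\trace\nabla^M J^M=0$, to discard the second term, leaving $\tau(\varphi)=J_\sigma(\trace_g\nabla^{\varphi^{-1}}J_\sigma)$. Taking $(0,1)$-parts and applying the first-step identity, $(\tau(\varphi))^{01}$ becomes a sum of terms $\nabla^{\varphi^{-1}}_{\overline{Z}_j}\d\varphi Z_j$ with $\overline{Z}_j\in T^{01}M$ and $\d\varphi Z_j\in(\varphi^{-1}TN)^{10}$. Since $\d_2\omega_\sigma=0$, Lemma \ref{Lemma:Salamon1.2Generalized} (in the form \eqref{Equation:SecondEquationIn:Lemma:Salamon1.2Generalized}) gives $\nabla^{\varphi^{-1}}_{T^{01}M}(\varphi^{-1}TN)^{10}\subseteq(\varphi^{-1}TN)^{10}$, so each of these terms is of type $(1,0)$ and has vanishing $(0,1)$-part. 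Hence $(\tau(\varphi))^{01}=0$, and by reality $\tau(\varphi)=0$, so $\varphi$ is harmonic.

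I expect the main obstacle to be conceptual rather than computational: the first two steps of the proof of Proposition \ref{Proposition:LichnerowiczProposition} were phrased for a holomorphic map between two almost Hermitian \emph{manifolds}, where $J^N$ is a genuine almost complex structure on all of $N$, whereas here $N$ carries no such structure and $J_\sigma$ is defined only on $\varphi^{-1}TN$ along $\varphi$. The point requiring care is that those steps never evaluate $J^N$ off the image of $\varphi$ and never invoke its integrability; they differentiate it only along $\varphi$ through $\nabla^{\varphi^{-1}}$, and all the type-decompositions involved refer only to the bundle data $(\varphi^{-1}TN,\nabla^{\varphi^{-1}},J_\sigma)$. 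Checking that this transcription is faithful is exactly what the Fundamental Lemma and the bundle reformulation of Salamon's lemmas from Chapter \ref{Chapter:TwistorSpaces} were designed to make possible, so once it is confirmed the argument closes without further input.
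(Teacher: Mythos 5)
Your proposal is correct and follows essentially the same route as the paper's own proof: the same four-step scheme transcribed from Proposition \ref{Proposition:LichnerowiczProposition}, with the Fundamental Lemma \ref{Lemma:FundamentalLemmaToProve3.5} (case $a=2$) supplying both $(J^M,J_\psi)$-holomorphicity of $\varphi$ and $\d_2\omega_\sigma=0$, and Lemma \ref{Lemma:Salamon1.2Generalized} closing the argument on the bundle $\varphi^{-1}TN$. Your citation of \eqref{Equation:SecondEquationIn:Lemma:Salamon1.2Generalized} is in fact the right one for the condition $\d_2\omega_\sigma=0$, and your closing remark about the bundle-level reformulation being what makes the transcription legitimate is precisely the point the paper's proof relies on.
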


Notice that we could have considered $\Sigma^-N$ instead of $\Sigma^+N$ in this theorem. In fact, $\Sigma^-N=\Sigma^+\tilde{N}$, where $\tilde{N}$ denotes the manifold $N$ with the \emph{opposite} orientation. Hence, if $\psi:M^{2m}\to\Sigma^-N$ is a $\J^2$-holomorphic map as above, $\varphi:M\to\tilde{N}$ is harmonic. But harmonicity does not depend on the orientation, so that $\varphi$ is still harmonic as a map to $N$.

We now present a new proof for this result, compatible with that for Proposition \ref{Proposition:LichnerowiczProposition}.
We divide it into the same steps as the proof of that proposition:

\noindent\textit{1$^{\text{st}}$ step. The $(0,1)$-part of $J^N(\trace_{g}\nabla^{\varphi^{-1}} J^N)$.}

Just as before, we can prove that, with $\psi$ and $\varphi$ as in Theorem \ref{Theorem:SalamonTheorem3.5},
\begin{equation}\label{Equation:EquationInTheFirstStepToProveSalamonTheorem3.5}
\big(J_{\psi}\trace_{g}\nabla^{\varphi^{-1}}J_{\psi}\big)^{01}=4\sum_j\big(\nabla_{\overline{Z}_j}\d\varphi{Z}_j\big)^{01}
\end{equation}
where, for any orthonormal frame $\{X_i\}$ of $TM$,\addtolength{\arraycolsep}{-1.0mm}
\[\trace_{g}\nabla^{\varphi^{-1}}J_{\psi}=\sum_i(\nabla^{\varphi^{-1}}J_{\psi})(\d\varphi{X}_i,\d\varphi{X}_i)\,\big(=\sum_i\nabla_{X_i}^{\varphi^{-1}TN}J_{\psi}\d\varphi{X}_i-J_{\psi}(\nabla^M_{X_i}X_i)\big)\text{.}\]\addtolength{\arraycolsep}{1.0mm}\noindent

\noindent\textit{Proof of the 1$^{\text{st}}$ step.}
As before, the proof follows from noticing that $\frac{1}{2}(\Id+iJ)Jv=(Jv)^{01}=-iv$ and decomposing both sides in equation \eqref{Equation:EquationInTheFirstStepToProveSalamonTheorem3.5}, noticing that $\varphi$ is $(J^M,J_{\psi})$-holomorphic (Lemma \ref{Lemma:FundamentalLemmaToProve3.5}).

\noindent\textit{2$^{\text{nd}} step$. Decomposing the tension field.}

Let $\varphi$ and $\psi$ be in the conditions of Theorem \ref{Theorem:SalamonTheorem3.5}. Then, we have
\begin{equation}\label{Equation:EquationInTheSecondStepToProveSalamonTheorem3.5}
\tau(\varphi)=J_{\psi}(\trace_{g}\nabla^{\varphi^{-1}}J_{\psi})-\d\varphi(J^M\trace\nabla^MJ^M)\text{.}
\end{equation}

\noindent\textit{Proof of the 2$^{\text{nd}}$ step.} Once again, the proof goes exactly as before, as $\varphi$ becomes a $(J^M,J_\psi)$-holomorphic map.

\noindent\textit{3$^{\text{rd}}$ step. (Fundamental step) $\d_2\omega_{\sigma}=0$.}

Let $\varphi$ and $\psi$ be as given and consider the section $\omega_\sigma$ of $\Sigma^+\varphi^{-1}TN$ as in Lemma
\ref{Lemma:FundamentalLemmaToProve3.5}; then, $\d_2\omega_{\sigma}=0$.

\noindent\textit{Proof of the 3$^{\text{rd}}$ step.} The proof is precisely Lemma \ref{Lemma:FundamentalLemmaToProve3.5}(equation \eqref{Equation:FirstEquationIn:Lemma:FundamentalLemmaToProve3.5})
for the case $a=2$.

\noindent\textit{4$^{\text{th}}$ step. Conclusion of the proof of Theorem \ref{Theorem:SalamonTheorem3.5}.}

We repeat the fourth step of the previous proof, with $J^N$ replaced by $J_\psi$ throughout. \qed


\section{$(1,1)$-geodesic maps and twistor projections}


\subsection{Proposition~\ref{Proposition:821Wood}}\label{Subsection:ProofOfWoodsLemma8.2.1}


In the spirit of Proposition \ref{Proposition:LichnerowiczProposition}, we have the following result (see \cite{BairdWood:03}, Lemma 8.2.1):
\begin{proposition}\label{Proposition:821Wood}
If $\varphi:M\to{N}$ is a holomorphic map between $(1,2)$-symplectic manifolds $(M,g,J^M)$ and $(N,h,J^N)$, then $\varphi$ is $(1,1)$-geodesic.
\end{proposition}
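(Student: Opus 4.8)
The plan is to prove that $\varphi$ is $(1,1)$-geodesic by showing directly that $(\nabla\d\varphi)(Y^{10},Z^{01})=0$ for all $Y^{10}\in T^{10}M$ and $Z^{01}\in T^{01}M$, exploiting the holomorphicity of $\varphi$ together with the $(1,2)$-symplectic hypotheses on both $M$ and $N$. Since $\varphi$ is holomorphic between almost Hermitian manifolds, it preserves types: $\d\varphi(T^{10}M)\subseteq T^{10}N$ and $\d\varphi(T^{01}M)\subseteq T^{01}N$. The key identity to expand is
\[
(\nabla\d\varphi)(Y^{10},Z^{01})=\nabla^{\varphi^{-1}}_{Y^{10}}\big(\d\varphi(Z^{01})\big)-\d\varphi\big(\nabla^M_{Y^{10}}Z^{01}\big),
\]
and I would analyse the type of each term on the right.

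First I would use that $(N,h,J^N)$ is $(1,2)$-symplectic. By Proposition \ref{Proposition:SalamonsLemma1.3}, equation \eqref{Equation:SecondEquationIn:Lemma:SalamonsLemma1.3}, this is equivalent to $\nabla^N_{T^{01}N}T^{10}N\subseteq T^{10}N$ (equivalently $\nabla^N_{T^{10}N}T^{01}N\subseteq T^{01}N$). Since $\d\varphi(Y^{10})\in T^{10}N$ and $\d\varphi(Z^{01})\in T^{01}N$, the term $\nabla^{\varphi^{-1}}_{Y^{10}}\big(\d\varphi(Z^{01})\big)$ is a covariant derivative of a $(0,1)$-section along a $(1,0)$-direction in $N$, hence lies in $T^{01}N$ by the $(1,2)$-symplectic condition on $N$. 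Meanwhile, since $(M,g,J^M)$ is $(1,2)$-symplectic, the same Proposition gives $\nabla^M_{T^{10}M}T^{01}M\subseteq T^{01}M$, so $\nabla^M_{Y^{10}}Z^{01}\in T^{01}M$ and therefore $\d\varphi\big(\nabla^M_{Y^{10}}Z^{01}\big)\in T^{01}N$ by holomorphicity. Thus $(\nabla\d\varphi)(Y^{10},Z^{01})$ lies in $T^{01}N$.

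To conclude I would run the conjugate argument. The second fundamental form is symmetric, so $(\nabla\d\varphi)(Y^{10},Z^{01})=(\nabla\d\varphi)(Z^{01},Y^{10})$; expanding this form with the roles reversed and applying the $(1,2)$-symplectic conditions as $\nabla^N_{T^{01}N}T^{10}N\subseteq T^{10}N$ and $\nabla^M_{T^{01}M}T^{10}M\subseteq T^{10}M$ shows symmetrically that the same quantity lies in $T^{10}N$. Since $T^{10}N\cap T^{01}N=0$, the vector must vanish, establishing $(1,1)$-geodesicity.

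I expect the main obstacle to be bookkeeping of types rather than any deep difficulty: one must be careful that the relevant covariant derivatives are the pulled-back connection $\nabla^{\varphi^{-1}}$ on $\varphi^{-1}TN$, and that the $(1,2)$-symplectic stability conditions on $N$ are applied \emph{along} $\varphi$ (i.e.\ to the image directions $\d\varphi(T^{10}M)\subseteq T^{10}N$), which is exactly why the holomorphicity of $\varphi$ and the type-preservation it gives are indispensable. The two halves of the argument differ only by conjugation, so once the first inclusion into $T^{01}N$ is established cleanly, the second follows by symmetry of $\nabla\d\varphi$ together with the same stability properties, and the intersection argument finishes the proof.
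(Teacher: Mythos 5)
Your proof is correct, and it rests on the same two ingredients as the paper's: holomorphicity preserves types, and the $(1,2)$-symplectic hypotheses give the stability conditions $\nabla_{T^{01}}T^{10}\subseteq T^{10}$ (and, by conjugation, $\nabla_{T^{10}}T^{01}\subseteq T^{01}$) via Proposition \ref{Proposition:SalamonsLemma1.3} and Lemma \ref{Lemma:Salamon1.2Generalized}. Where you genuinely diverge is the concluding mechanism. The paper expands only one ordering, $\nabla\d\varphi(Z^{01},Y^{10})=\nabla^{\varphi^{-1}}_{Z^{01}}\d\varphi(Y^{10})-\d\varphi(\nabla^M_{Z^{01}}Y^{10})$, obtains the single inclusion $\nabla\d\varphi(Y^{10},Z^{01})\in T^{10}_{J^N}N$, and then extracts vanishing from that one inclusion by appealing to its characterization of $(1,1)$-geodesic maps (Proposition \ref{Lemma:AlternativeConditionFor(11)GeodesicMaps}), whose proof runs a reality-plus-polarization argument. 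You instead expand in both orders, use the symmetry of the second fundamental form, and land the vector in both $T^{10}N$ and $T^{01}N$, so that vanishing follows from the trivial intersection $T^{10}N\cap T^{01}N=\{0\}$ with no polarization lemma needed; for this particular statement your route is the more elementary and self-contained one. What the paper's route buys is uniformity with the rest of the chapter: the pattern \emph{one-sided inclusion into the $(1,0)$-space of \emph{some} Hermitian structure, then invoke Proposition \ref{Lemma:AlternativeConditionFor(11)GeodesicMaps}} is exactly what survives in the twistorial generalization (Theorem \ref{Theorem:NewTheorem35}), where $N$ carries no global almost Hermitian structure, only one along the map, and the same four-step skeleton is reused verbatim. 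One last point on the bookkeeping you flagged: applying the $(1,2)$-symplectic condition of $N$ to the sections $\d\varphi(Y^{10})$, $\d\varphi(Z^{01})$ of $\varphi^{-1}TN$ is legitimate precisely because the stability conditions of Lemma \ref{Lemma:Salamon1.2Generalized} are tensorial and hold pointwise; this is what the paper packages as its ``fundamental step'' $\d_2\omega_\sigma=0$ for the pulled-back section $\sigma$, so your argument can be made fully rigorous with the machinery already in the text.
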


In the same way that we divided the proof of Proposition \ref{Proposition:LichnerowiczProposition} into several steps to show the similarity between that result and Theorem \ref{Theorem:SalamonTheorem3.5}, we shall now divide the proof of Proposition \ref{Proposition:821Wood} to exhibit its relation with Theorem \ref{Theorem:NewTheorem35} and with the two results from the previous section:

\noindent\textit{1$^{\text{st}}$ step. Alternative characterizations of $(1,1)$-geodesic maps.}

\begin{proposition}[\textup{$(1,1)$-geodesic~maps:~alternative~condition}]\label{Lemma:AlternativeConditionFor(11)GeodesicMaps}\index{$(1,1)$-geodesic~map!alternative~condition}
Let $\varphi:M\to N$ be a map from a Hermitian manifold $(M,g,J)$ to a Riemannian manifold $N$. Then, $\varphi$ is $(1,1)$-geodesic if and only if for every point $x\in{M}$ there is an almost Hermitian structure $J_{\varphi_{x}}$ on $T_{\varphi_x}N$ such that\footnote{Notice that we do not require any compatibility between $\varphi$ and the almost Hermitian structure; \textit{i.e.}, $\varphi$ may or may not be holomorphic with respect to this structure; see the next corollary for further details.}
\begin{equation}\label{Equation:FirstEquationIn:Lemma:AlternativeConditionFor(11)GeodesicMaps}
(\nabla\,\d\varphi)_{x}(Y^{10},Z^{01})\in{T}^{10}_{J_{\varphi_{x}}}N,\quad\forall\,Y^{10}\in{T}^{10}_{x}M,\,Z^{01}\in{T}^{01}_{x}M\text{.}
\end{equation}
\end{proposition}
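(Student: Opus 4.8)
The plan is to prove the equivalence pointwise, as it is a statement about the value of $\nabla\,\d\varphi$ at each $x$. The forward implication is essentially immediate, so the real content lies in the converse. The two ingredients I would lean on are the \emph{symmetry} of the second fundamental form $\nabla\,\d\varphi$ (it is a symmetric bilinear form on $TM$ with values in $\varphi^{-1}TN$) and the \emph{reality} of this tensor, i.e.\ that its complex bilinear extension satisfies $\overline{(\nabla\d\varphi)(U,V)}=(\nabla\d\varphi)(\overline U,\overline V)$, together with the elementary facts $\overline{T^{10}_{J}N}=T^{01}_{J}N$ and $T^{10}_{J}N\cap T^{01}_{J}N=\{0\}$ coming from the eigenspace decomposition associated with any Hermitian structure $J$ (Definition \ref{Definition:ComplexStructureOnAVectorSpace}).

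For the direction ``$\Rightarrow$'', suppose $\varphi$ is $(1,1)$-geodesic, so by Definition \ref{Definition:11GeodesicMap} we have $(\nabla\,\d\varphi)_x(Y^{10},Z^{01})=0$ for all $Y^{10}\in T^{10}_xM$, $Z^{01}\in T^{01}_xM$. Since the zero vector lies in $T^{10}_{J}N$ for \emph{any} Hermitian structure $J$ on the even-dimensional Euclidean space $T_{\varphi(x)}N$ (such $J$ exist by pairing up an orthonormal basis), the required condition \eqref{Equation:FirstEquationIn:Lemma:AlternativeConditionFor(11)GeodesicMaps} holds trivially for any such choice $J_{\varphi_x}$.

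For the substantial direction ``$\Leftarrow$'', I would fix $x$, abbreviate $B=(\nabla\,\d\varphi)_x$ and $J=J_{\varphi_x}$, and take arbitrary $Y^{10}\in T^{10}_xM$, $Z^{01}\in T^{01}_xM$. Set $Z^{10}:=\overline{Z^{01}}\in T^{10}_xM$ and $Y^{01}:=\overline{Y^{10}}\in T^{01}_xM$. Applying the hypothesis \eqref{Equation:FirstEquationIn:Lemma:AlternativeConditionFor(11)GeodesicMaps} to the admissible pair $(Z^{10},Y^{01})$ gives $B(Z^{10},Y^{01})\in T^{10}_{J}N$. Taking complex conjugates and using reality, then symmetry of $B$, yields
\[
\overline{B(Z^{10},Y^{01})}=B\big(\overline{Z^{10}},\overline{Y^{01}}\big)=B(Z^{01},Y^{10})=B(Y^{10},Z^{01}),
\]
while conjugation sends $T^{10}_{J}N$ into $T^{01}_{J}N$, so $B(Y^{10},Z^{01})\in T^{01}_{J}N$. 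On the other hand the hypothesis applied directly to $(Y^{10},Z^{01})$ gives $B(Y^{10},Z^{01})\in T^{10}_{J}N$. Since $T^{10}_{J}N\cap T^{01}_{J}N=\{0\}$, I conclude $B(Y^{10},Z^{01})=0$. As $Y^{10},Z^{01}$ and $x$ were arbitrary, $\varphi$ is $(1,1)$-geodesic.

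I do not expect any genuine analytic obstacle here: the whole argument is pointwise linear algebra on $T_{\varphi(x)}N$. The only point requiring care is bookkeeping in the converse, namely correctly feeding the \emph{swapped} pair $(Z^{10},Y^{01})$ into the hypothesis and matching it back to $B(Y^{10},Z^{01})$ after conjugation. The symmetry of $\nabla\,\d\varphi$ is exactly what makes these two expressions coincide, and it is the step one must not omit; without it, conjugation alone would not close the argument.
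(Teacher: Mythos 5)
Your proof is correct, but it takes a genuinely different route from the paper's. The paper first feeds the \emph{diagonal} pairs into the hypothesis: with $Z^{10}=X-iJX$, $Z^{01}=X+iJX$ it notes that $\nabla\d\varphi(Z^{01},Z^{10})=\nabla\d\varphi(X,X)+\nabla\d\varphi(JX,JX)$ is a \emph{real} vector lying in $T^{10}_{J_{\varphi_x}}N$, hence zero; it then runs a polarization argument to pass from the diagonal to arbitrary pairs, obtaining $\Real\big(\nabla\d\varphi(Y,\overline Z)\big)=0$, and finally uses again that a vector of $T^{10}_{J_{\varphi_x}}N$ with vanishing real part must vanish. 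You avoid polarization entirely: you apply the hypothesis twice, once to $(Y^{10},Z^{01})$ and once to the conjugate-swapped pair $(Z^{10},Y^{01})$, and use reality plus symmetry of $\nabla\d\varphi$ to conclude that $B(Y^{10},Z^{01})$ lies in both $T^{10}_{J}N$ and $T^{01}_{J}N$, hence is zero. The two arguments rest on the same three ingredients (reality, symmetry, and the eigenspace decomposition of $J$), but yours is shorter and more symmetric in structure, while the paper's has the side benefit of isolating the intermediate identity $\nabla\d\varphi(\overline Z,Z)=0$, which is exactly the quantity whose trace is the tension field and thus makes visible why $(1,1)$-geodesic maps are harmonic. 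Your bookkeeping of the swapped pair is accurate, and your identification of symmetry as the indispensable step is exactly right: it plays the same role in the paper's proof, where it both kills the imaginary part of the diagonal terms and matches $\nabla\d\varphi(\overline Z,Y)$ with $\nabla\d\varphi(Y,\overline Z)$ in the polarization.
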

\begin{proof}
If $\varphi$ is $(1,1)$-geodesic, take any almost Hermitian structure on $T_{\varphi_{x}}N$. Then, since
$(\nabla\,\d\varphi)_{x}(Z^{01},Y^{10})=0$ (Definition \ref{Definition:11GeodesicMap}),
\eqref{Equation:FirstEquationIn:Lemma:AlternativeConditionFor(11)GeodesicMaps}
obviously holds. Conversely, suppose that
\eqref{Equation:FirstEquationIn:Lemma:AlternativeConditionFor(11)GeodesicMaps}
holds for some $J_{\varphi_{x}}$. Then, taking $Z^{10}=X-iJX$, $Z^{01}=X+iJX$, $X\in{T}_{x}M$, we deduce
\[\nabla\,\d\varphi(Z^{01},Z^{10})\in{T}^{10}_{J_{\varphi_{x}}}N\text{.}\]
But since
\[\nabla\,\d\varphi(Z^{01},Z^{10})=\nabla\,\d\varphi(X,X)+\nabla\,\d\varphi(JX,JX)\]
is real, we deduce $\nabla\,\d\varphi(Z^{01},Z^{10})=0$. Now, using a polarization argument,\addtolength{\arraycolsep}{-1.0mm}
\[\begin{array}{ll}~&\nabla\,\d\varphi(\overline{Z},Z)=0,\quad\forall\,Z\in{T}^{10}M\,\impl\,\nabla\,\d\varphi(\overline{Z}+\overline{Y},Z+Y)=0\\
\impl&\nabla\,\d\varphi(\overline{Z},Z)+\nabla\,\d\varphi(\overline{Y},Y)+\nabla\,\d\varphi(\overline{Z},Y)+\nabla\,\d\varphi(\overline{Y},Z)=0\\
\impl&\Real\big(\nabla\,\d\varphi(Y,\overline{Z})\big)=0\text{.}\end{array}\]\addtolength{\arraycolsep}{1.0mm}\noindent
But then $\nabla\,\d\varphi(Y,\overline{Z})\in{T}^{10}_{J_{\varphi_{x}}}N$ and has vanishing real part, which implies that it must be zero. Hence, $\varphi$ is $(1,1)$-geodesic.
\end{proof}\label{Page:CompatibleTwistorLift}\index{Almost~Hermitian~structure!(strictly)~compatible}\index{Compatible!almost~Hermitian~structure}\index{Pluriconformal~map!and~(strictly)~compatible~structures}\index{Compatible!twistor~lift|see{Compatible~twistor~lift}}\index{Compatible~twistor~lift!}\index{Twistor~lift!compatible}\index{Twistor~lift!strictly~compatible}\index{Compatible~twistor~lift!strictly}

Given a smooth map $\varphi:(M,g,J)\to N$ obtained as the projection $\varphi=\pi\circ\psi$ of a map $\psi$ to $\Sigma^+N$, $\varphi=\pi\circ\psi$, without requiring further conditions \textit{a priori} on $\psi$, nothing guarantees that $\varphi$ is holomorphic relatively to the induced almost Hermitian structure $J_\psi$. However, if it is, we shall say that the structure $J_\psi$ is \emph{strictly compatible} with $\varphi$ (or that the map $\psi$ is a \emph{strictly compatible twistor lift} of $\varphi$). Of course, such a structure $J_{\psi}$ can exist if and only if $\d\varphi(T^{10}M)\subseteq T^{10}_{J_\psi}N$ is isotropic: in other words, if and only if $\varphi$ is (weakly)
pluriconformal\footnote{In the case $M^2$ is a Riemann surface, if and only if $\varphi$ is (weakly) conformal.}. If $J_\psi$ preserves $\d\varphi(TM)$ but does not necessarily render $\varphi$ holomorphic, we shall say that $J_\psi$ (or the map $\psi$) is \emph{compatible} with $\varphi$.

\begin{lemma}[\textup{More~on~$(1,1)$-geodesic~maps}]\label{Lemma:About(11)GeodesicMapsAndOneVersusAllAlmostHermitianStructures}\index{$(1,1)$-geodesic~map!and~almost~Hermitian~structures}

The following conditions are equivalent:

\textup{(i)} $\varphi$ is $(1,1)$-geodesic.

\textup{(ii)} For each $x\in{M}$ there is a Hermitian structure $J_{\varphi_{x}}$ in $T_{\varphi_{x}}N$ such that
\eqref{Equation:FirstEquationIn:Lemma:AlternativeConditionFor(11)GeodesicMaps} holds.

\textup{(iii)} For each $x\in{M}$ and for all almost Hermitian structures $J_{\varphi_{x}}$ at $T_{\varphi_{x}}N$,
\eqref{Equation:FirstEquationIn:Lemma:AlternativeConditionFor(11)GeodesicMaps} holds.

Moreover, if $\varphi$ is a (weakly) pluriconformal map, then the following are equivalent:

\textup{(i')} $\varphi$ is $(1,1)$-geodesic.

\textup{(ii')} For each $x\in{M}$ there is a compatible Hermitian structure $J_{\varphi_{x}}$ in $T_{\varphi_{x}}N$ such that \eqref{Equation:FirstEquationIn:Lemma:AlternativeConditionFor(11)GeodesicMaps} holds.

\textup{(iii')} For each $x\in{M}$ and for all compatible Hermitian structures $J_{\varphi_{x}}$ at $T_{\varphi_{x}}N$, \eqref{Equation:FirstEquationIn:Lemma:AlternativeConditionFor(11)GeodesicMaps} holds.
\end{lemma}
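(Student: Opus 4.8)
The plan is to derive the entire lemma from the previously established Proposition \ref{Lemma:AlternativeConditionFor(11)GeodesicMaps}, whose only non-trivial direction (the polarization argument showing that a single structure satisfying \eqref{Equation:FirstEquationIn:Lemma:AlternativeConditionFor(11)GeodesicMaps} already forces $(1,1)$-geodesy) I would reuse rather than repeat. I would first record that at a fixed point the notions of ``almost Hermitian structure'' and ``Hermitian structure'' on the Euclidean space $T_{\varphi_x}N$ coincide (Definition \ref{Definition:ComplexStructureOnAVectorSpace}), so condition (ii) is literally the hypothesis of that Proposition and the equivalence (i) $\equi$ (ii) is immediate.

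For the first block I would close the cycle (i) $\impl$ (iii) $\impl$ (ii) $\impl$ (i). The implication (i) $\impl$ (iii) is the trivial observation that if $\varphi$ is $(1,1)$-geodesic then $(\nabla\,\d\varphi)_x(Y^{10},Z^{01})=0$ by Definition \ref{Definition:11GeodesicMap}, and the zero vector lies in $T^{10}_{J_{\varphi_x}}N$ for \emph{every} Hermitian structure, so \eqref{Equation:FirstEquationIn:Lemma:AlternativeConditionFor(11)GeodesicMaps} holds for all of them. The implication (iii) $\impl$ (ii) only needs that $T_{\varphi_x}N$ carry at least one Hermitian structure, which it does as it is an even-dimensional oriented Euclidean space (existence of a $J$-adapted orthonormal basis, recorded after Definition \ref{Definition:ComplexStructureOnAVectorSpace}); thus the universally quantified statement specializes to an existential one. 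Finally (ii) $\impl$ (i) is exactly Proposition \ref{Lemma:AlternativeConditionFor(11)GeodesicMaps}, closing the cycle.

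For the second block the decisive extra ingredient is the existence of compatible structures, and this is precisely where (weak) pluriconformality is used: as noted in the discussion immediately preceding the lemma, $\d\varphi_x(T^{10}_xM)$ is isotropic exactly when $\varphi$ is (weakly) pluriconformal, and in that case it can be completed to the $(1,0)$-space of a Hermitian structure $J_{\varphi_x}$ on $T_{\varphi(x)}N$ (via the correspondence of Section \ref{Subsection:SigmaPlusEAsTheComplexManifoldGkOplusIsoEcn}); such a $J_{\varphi_x}$ renders $\varphi$ holomorphic, hence preserves $\d\varphi_x(T_xM)$ and is in particular compatible. With a compatible structure now guaranteed at each $x$, I would run the same three implications: (i') $\impl$ (iii') is the identical zero-vector argument, now quantified only over the smaller family of compatible structures; (iii') $\impl$ (ii') uses the existence just established so the universal statement is non-vacuous and yields an existential one; and (ii') $\impl$ (i') follows because a compatible Hermitian structure is, after forgetting compatibility, an admissible structure for condition (ii), whence Proposition \ref{Lemma:AlternativeConditionFor(11)GeodesicMaps} applies.

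The only step carrying genuine content is therefore the construction of a compatible (indeed strictly compatible) structure under pluriconformality: the extension of the isotropic subspace $\d\varphi_x(T^{10}_xM)$ to a maximal isotropic one, together with the verification that the resulting $J_{\varphi_x}$ preserves $\d\varphi_x(T_xM)$. Since this existence is already supplied by the discussion preceding the lemma, I expect no real obstacle — every remaining step is either the quoted Proposition or the triviality that $0$ belongs to every linear subspace.
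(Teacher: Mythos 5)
Your proposal is correct and follows essentially the same route as the paper: everything is reduced to Proposition \ref{Lemma:AlternativeConditionFor(11)GeodesicMaps} (whose polarization argument supplies (ii)$\impl$(i) and (ii')$\impl$(i')), the implications into the ``for all'' conditions are the trivial zero-vector observation, and (weak) pluriconformality enters only to produce a (strictly) compatible Hermitian structure by extending the isotropic subspace $\d\varphi_x(T^{10}_xM)$, exactly as in the paper's discussion preceding the lemma. Your extra remarks (pointwise coincidence of ``almost Hermitian'' and ``Hermitian'', and the non-vacuousness of (iii)$\impl$(ii)) only make explicit what the paper leaves implicit.
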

\begin{proof}
We have seen in the preceding Lemma \ref{Lemma:AlternativeConditionFor(11)GeodesicMaps} that conditions (i) and (ii) are equivalent. On the other hand, (iii) implies (ii). Finally, if (i) holds then $\nabla\d\varphi(T^{10}M,T^{10}M)=0$ and consequently (iii) also holds.

For the second part, if (iii') holds so does (ii'), and consequently so does (i') with the same proof as in Lemma \ref{Lemma:AlternativeConditionFor(11)GeodesicMaps}. If (ii') is true, then $\nabla\d\varphi(Y^{10},Z^{01})=0$ and so (iii') holds. Finally, if (i') is satisfied, since $\varphi$ is (weakly) pluriconformal, we can find a Hermitian structure on $T_{\varphi_{x}}N$ for which $\varphi$ is holomorphic at $x$ and, of course, for this structure, \eqref{Equation:FirstEquationIn:Lemma:AlternativeConditionFor(11)GeodesicMaps} is verified, so that (ii') is implied by (i'), concluding our proof.
\end{proof}

We now return to the proof of Proposition \ref{Proposition:821Wood}:

\noindent\textit{2$^{\text{nd}}$ step. Decomposing $\nabla \d\varphi(Y^{10},Z^{01})$.}

We have
\begin{equation}\label{Equation:FirstEquationInTheSecondStepOf:Subsection:ProofOfWoodsLemma8.2.1}
\nabla\,\d\varphi(Y^{10},Z^{01})=\nabla\d\varphi(Z^{01},Y^{10})=\nabla^{\varphi^{-1}}_{Z^{01}}\d\varphi(Y^{10})-\d\varphi\big(\nabla^M_{Z^{01}}Y^{10}\big)\text{.}
\end{equation}

\noindent\textit{Proof of the 2$^{\text{nd}}$ step.} Immediate.

\noindent\textit{3$^{\text{rd}}$ step. (Fundamental step) $\d_2\omega_{\sigma}=0$.}

As in the third step of the proof of Proposition \ref{Proposition:LichnerowiczProposition}, $\d_2\omega_{\sigma}=0$, where $\omega_{\sigma}$ is the fundamental $2$-form associated with $\sigma$,\addtolength{\arraycolsep}{-1.2mm}
\[\begin{array}{llll}
\sigma: & M&\to&\Sigma^+\varphi^{-1}TN \\
~& x&\to&\big(x,J^N_{\varphi(x)}\big)\text{.}
\end{array}\]\addtolength{\arraycolsep}{1.2mm}\noindent

\noindent\textit{Proof of the 3$^{\text{rd}}$ step.} The proof goes precisely as in the proof of Proposition \ref{Proposition:LichnerowiczProposition}.

\noindent\textit{4$^{\text{th}}$ step. Conclusion of the proof of Proposition \ref{Proposition:821Wood}.}

Using the second step, \eqref{Equation:FirstEquationInTheSecondStepOf:Subsection:ProofOfWoodsLemma8.2.1} holds. Since $\varphi$ is holomorphic, $\d\varphi(Y^{10})\in{T}^{10}N$; hence, using the third step and Lemma
\ref{Lemma:Salamon1.2Generalized} (equation \eqref{Equation:SecondEquationIn:Lemma:Salamon1.2Generalized}) we deduce that $\nabla_{Z^{01}}\d\varphi (Y^{10})$ lies in $T^{10}N$. As for the second term in the right-hand side of \eqref{Equation:FirstEquationInTheSecondStepOf:Subsection:ProofOfWoodsLemma8.2.1}, since $M$ is $(1,2)$-symplectic, using Proposition \ref{Proposition:SalamonsLemma1.3}, we deduce that $\nabla_{Z^{01}}Y^{10}$ lies in ${T}^{10}M$ and using the holomorphicity of $\varphi$ we can therefore write
\[\nabla\d\varphi(Y^{10},Z^{01})\in{T}^{10}_{J_\psi}N\text{.}\]
Lemma \ref{Lemma:AlternativeConditionFor(11)GeodesicMaps} finishes the proof by showing that $\varphi$ is $(1,1)$-geodesic.\qed


\subsection{Twistor projections}\label{Subsection:ProofOfNewTheorem3.5}


We can now generalize Proposition \ref{Proposition:821Wood} (\cite{SimoesSvensson:06}, see also Remark 5.7 of
\cite{Rawnsley:85}):
\begin{theorem}\label{Theorem:NewTheorem35}\index{$\J^2$-holomorphic!projection}\index{$\J^2$-holomorphic!and~$(1,1)$-geodesic~maps}
Let $M$ be a $(1,2)$-symplectic manifold and $N^{2n}$ an oriented even-dimensional Riemannian manifold. Consider
a $(J^M,\J^2)$-holomorphic map $\psi:M^{2m}\to\Sigma^+N^{2n}$. Then, the projected map $\varphi:M^{2m}\to N^{2n}$, $\varphi=\pi\circ\psi$, is $(1,1)$-geodesic\footnote{and pluriconformal (see Remark \ref{Remark:ConformalAndPluriconformalMapsFromARiemannSurface}). We could also establish a similar result replacing $\Sigma^+N$ with $\Sigma^-N$, with a justification similar to that after Theorem \ref{Theorem:SalamonTheorem3.5}.}.
\end{theorem}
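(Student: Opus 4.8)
The plan is to follow exactly the four-step template already used for Proposition \ref{Proposition:821Wood} and Theorem \ref{Theorem:SalamonTheorem3.5}, transporting everything to the twistor-lift setting via the Fundamental Lemma \ref{Lemma:FundamentalLemmaToProve3.5}. First I would fix $x_0\in M$ and set $\varphi=\pi\circ\psi$. Since $\psi$ is $(J^M,\J^2)$-holomorphic, Lemma \ref{Lemma:FundamentalLemmaToProve3.5} (the first implication, equation \eqref{Equation:FirstEquationIn:Lemma:FundamentalLemmaToProve3.5} with $a=2$) immediately gives two facts for free: that $\varphi$ is $(J^M,J_\psi)$-holomorphic at $x_0$, and that $\d_2\omega_\sigma(x_0)=0$, where $\sigma=\sigma_\psi$ is the associated section of $\Sigma^+\varphi^{-1}TN$ and $\omega_\sigma$ its fundamental $2$-form. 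This plays the role of the ``3$^{\text{rd}}$ (fundamental) step'' in the earlier proofs, but now it costs nothing extra because the Fundamental Lemma has absorbed all the work. Pluriconformality of $\varphi$ is then automatic: as $\varphi$ is holomorphic for $J_\psi$, it maps $T^{10}M$ into $T^{10}_{J_\psi}N$, which is isotropic, so \eqref{Equation:SecondEquationIn:Definition:PluriconformalMap} holds (this is exactly Remark \ref{Remark:ConformalAndPluriconformalMapsFromARiemannSurface}(i)).

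The substance is then to establish $(1,1)$-geodesity by the same computation as in the ``4$^{\text{th}}$ step'' of Proposition \ref{Proposition:821Wood}. I would take $Y^{10}\in T^{10}_{x_0}M$ and $Z^{01}\in T^{01}_{x_0}M$ and expand, as in \eqref{Equation:FirstEquationInTheSecondStepOf:Subsection:ProofOfWoodsLemma8.2.1},
\[
\nabla\,\d\varphi(Y^{10},Z^{01})=\nabla^{\varphi^{-1}}_{Z^{01}}\d\varphi(Y^{10})-\d\varphi\bigl(\nabla^M_{Z^{01}}Y^{10}\bigr).
\]
For the first term: since $\varphi$ is $(J^M,J_\psi)$-holomorphic we have $\d\varphi(Y^{10})\in(\varphi^{-1}TN)^{10}$, and since $\d_2\omega_\sigma(x_0)=0$, Lemma \ref{Lemma:Salamon1.2Generalized} (pointwise form \eqref{Equation:FourthEquationIn:Lemma:Salamon1.2Generalized}, with the bundle $V=\varphi^{-1}TN$) yields $\nabla^{\varphi^{-1}}_{Z^{01}}\d\varphi(Y^{10})\in T^{10}_{J_\psi}N$. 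For the second term: because $M$ is $(1,2)$-symplectic, Proposition \ref{Proposition:SalamonsLemma1.3} (equation \eqref{Equation:SecondEquationIn:Lemma:SalamonsLemma1.3}) gives $\nabla^M_{Z^{01}}Y^{10}\in T^{10}M$, and holomorphicity of $\varphi$ again sends this into $T^{10}_{J_\psi}N$. Hence $\nabla\,\d\varphi(Y^{10},Z^{01})\in T^{10}_{J_\psi}N$.

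To conclude, I would invoke Proposition \ref{Lemma:AlternativeConditionFor(11)GeodesicMaps}: having exhibited, at each $x_0$, an almost Hermitian structure $J_{\varphi_{x_0}}:=J_\psi(x_0)$ on $T_{\varphi(x_0)}N$ for which \eqref{Equation:FirstEquationIn:Lemma:AlternativeConditionFor(11)GeodesicMaps} holds, that proposition upgrades this to genuine $(1,1)$-geodesity, $\nabla\,\d\varphi(Y^{10},Z^{01})=0$, via the reality/polarization argument contained in its proof. This is where the essential content sits, though it is already packaged as a quotable result.

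The main obstacle is not any single calculation but the careful bookkeeping of the identification between $\varphi^{-1}TN$ (where $\sigma$ and $\omega_\sigma$ live) and $TN$ pulled back along $\varphi$, so that the curvature/Salamon lemmas for the general bundle $V=\varphi^{-1}TN$ are applied to the correct $(1,0)$/$(0,1)$ decompositions induced by $J_\psi$. One must check that the hypotheses of Lemma \ref{Lemma:Salamon1.2Generalized} are met with $V=\varphi^{-1}TN$ (not $TN$ itself) and that ``$\d_2\omega_\sigma=0$'' for the associated section translates correctly through this pullback — but this is exactly the setup already validated in the third step of Theorem \ref{Theorem:SalamonTheorem3.5}, so I expect no genuine difficulty, only the need to state the identifications precisely. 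Notably, unlike Theorem \ref{Theorem:SalamonTheorem3.5}, we do \emph{not} need $M$ merely cosymplectic; the stronger $(1,2)$-symplectic hypothesis is what forces $\nabla^M_{T^{01}M}T^{10}M\subseteq T^{10}M$ and thereby kills the second term above, which is precisely the improvement from ``harmonic'' to ``$(1,1)$-geodesic''.
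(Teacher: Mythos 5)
Your proposal is correct and follows essentially the same route as the paper's own proof: the Fundamental Lemma \ref{Lemma:FundamentalLemmaToProve3.5} (with $a=2$) supplies $(J^M,J_\psi)$-holomorphicity of $\varphi$ and $\d_2\omega_\sigma=0$, the second fundamental form is split exactly as in \eqref{Equation:FirstEquationInTheSecondStepOf:Subsection:ProofOfWoodsLemma8.2.1}, the two terms are handled via Lemma \ref{Lemma:Salamon1.2Generalized} and the $(1,2)$-symplectic hypothesis respectively, and the conclusion is drawn from Lemma \ref{Lemma:AlternativeConditionFor(11)GeodesicMaps}. Your closing observation about why $(1,2)$-symplectic (rather than merely cosymplectic) is needed is also the point the paper itself emphasizes.
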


\begin{proof}

The proof goes just as before: the first and second steps are similar to the preceding ones. As for the remaining steps:

\textit{3$^{\text{rd}}$ step. (Fundamental step) $\d_2\omega_{\sigma}=0$.}

Let $\psi$ and $\varphi$ be as given and consider the section of $\Sigma^+\varphi^{-1}TN$ as in Lemma \ref{Lemma:FundamentalLemmaToProve3.5}. Then, $\d_2\omega_{\sigma}=0$.

\noindent\textit{Proof of the 3$^{\text{rd}}$ step.} The proof of this step is precisely Lemma \ref{Lemma:FundamentalLemmaToProve3.5}.

\textit{4$^{\text{th}}$ step. Conclusion of the proof of Theorem \ref{Theorem:NewTheorem35}.}

Using the second step, equation \eqref{Equation:FirstEquationInTheSecondStepOf:Subsection:ProofOfWoodsLemma8.2.1}:
\[\nabla\,\d\varphi(Y^{10},Z^{01})=\nabla^{\varphi^{-1}}_{Z^{01}}\d\varphi{Y}^{10}-\d\varphi(\nabla^M_{Z^{01}}Y^{10})\]
holds. Since $\varphi$ is $(J^M,J_\psi)$-holomorphic, $\d\varphi({Y}^{10})$ lies in $(\varphi^{-1}TN)^{10}$; since $\d_2\omega_\psi=0$, using Lemma \ref{Lemma:Salamon1.2Generalized} we can deduce that $\nabla_{Z^{01}}(\d\varphi Y^{10})$ belongs to $(\varphi^{-1}TN)^{10}_{J\psi}$. As for the second term on the right-hand side of
\eqref{Equation:FirstEquationInTheSecondStepOf:Subsection:ProofOfWoodsLemma8.2.1}, since $M$ is $(1,2)$-symplectic and $\varphi$ $(J^M,J_\psi)$-holomorphic, we again deduce $\d\varphi(\nabla_{Z^{01}}Y^{10})\in{T}^{10}_{J_\psi}N$. Hence, using Lemma \ref{Lemma:AlternativeConditionFor(11)GeodesicMaps} we conclude that $\varphi$ is $(1,1)$-geodesic.
\end{proof}


\section{Lifts of harmonic maps}\label{Section:LiftsOfHarmonicMaps}


We already know that given a $\J^2$-holomorphic map $\psi:M\to\Sigma^+N^{2n}$, the projected map $\varphi=\pi\circ\psi$ is harmonic (if $M$ is cosymplectic) or $(1,1)$-geodesic (if $M$ is $(1,2)$-symplectic). Moreover, $\varphi$ is also (weakly) pluriconformal as it is $(J^M,J_{\psi})$-holomorphic, so it is (weakly) conformal when $M^2$ is a Riemann surface (Remark \ref{Remark:ConformalAndPluriconformalMapsFromARiemannSurface}). The idea is now to look for converses of these results. In \cite{Salamon:85}, a converse is obtained in the case of Riemann surfaces (see Theorem \ref{Theorem:SalamonsTheorem4.1}) and we generalize this to higher dimensions (Theorem \ref{Theorem:NewTheorem4.1}). In order to clarify the relation between these two last results, we reformulate the first, using the more general setting developed in Chapter \ref{Chapter:TwistorSpaces}.

Thus, in the next sections we shall be looking for ways of constructing a $\J^2$-holomorphic map $\psi:M\to\Sigma^+N$ with $\pi\circ\psi=\varphi$, where $\varphi:M^{2m}\to N^{2n}$ is a given map.


\subsection{The~case~of~Riemann~surfaces}\label{Subsection:SubsectionOfSection:LiftsOfHarmonicMaps:TheRiemannSurfacesCase}


Let $M^2$ be a Riemann surface and $\varphi:M^2\to N$ a conformal immersion, where $(N^{2n},h)$ is an oriented even-dimensional Riemannian manifold. We are searching for $\J^2$-holomorphic maps $\psi:M^2\to\Sigma^+N$ with $\varphi=\pi\circ\psi$, where $\pi:\Sigma^+N\to N$ is the canonical projection. In particular, if such a map exists, $\varphi$ is $(J^M,J_{\psi})$-holomorphic and, therefore, $\psi$ is a strictly compatible twistor lift for $\varphi$. Hence, we are only interested in almost Hermitian structures for $N$ defined at points $\varphi(x)$ (\textit{i.e.},
\emph{along} $\varphi$) which render $\varphi$ holomorphic.\index{Almost~Hermitian~structure!along a map}

Consider then the vector bundle $V$ over $M^2$ given at each point $x\in{M}$ as the orthogonal complement (for the $N$-metric) of $\d\varphi_x(T_xM)$, which is an even-dimensional vector subbundle of $\varphi^{-1}TN\to M^2$. Consider $V$ equipped with the metric and the connection induced by those on $\varphi^{-1}TN$:\addtolength{\arraycolsep}{-1.0mm}
\[\begin{array}{rll}<X_x,Y_x>_{V_x}&=&<X_x,Y_x>_N,\quad\forall\,X_x,Y_x\in{V}_x,\\
\nabla_{X}^VY&=&\proj_{V}(\nabla^{\varphi^{-1}}_XY),\quad\forall\,X\in\Gamma(TM),\,Y\in\Gamma(V)
\end{array}\]\addtolength{\arraycolsep}{1.0mm}\noindent
where $\proj_V$ is the orthogonal projection onto $V$. Since $\nabla^V g_V=0$, we have a Riemannian vector bundle. Finally, define an orientation on $V$ in the following way: given $x$ on $M^2$, on $\d\varphi_x(T_xM)$ use the orientation transported from $T_xM$\footnote{\textit{I.e.}, if $\{X_1,X_2\}$ is a positive basis for $T_xM$, $\{\d\varphi_xX_1,\d\varphi_xX_2\}$ is a positive basis for $\d\varphi_xT_xM$.} and define an orientation on $V_x$ by the condition that $\{Y_1,...,Y_{2n-2}\}$ is positive for $V_x$ if and only if $\{Y_1,...,Y_{2n-2},X_1,X_2\}$ is a positive basis for $T_{\varphi(x)}N$. We are therefore left with an oriented even-dimensional Riemannian vector bundle on $M^2$ as in Section \ref{Section:TheBundleSigmaPlusV} so that we can consider the associated bundle $\Sigma^+V\to M^2$.

To each positive almost Hermitian structure on $V_x$ corresponds one and only one positive almost Hermitian structure on
$T_{\varphi(x)}N$ such that $\varphi$ is holomorphic at $x$: if $J_{V_x}$ is defined on $V_x$, $J_{\varphi_x}$ is defined on $T_{\varphi(x)}N$ by
\begin{equation}\label{Equation:DefinitionOfTheHermitianStructureOnTvarphixNFromTheOneInVx}
\begin{array}{l}
J_{\varphi_x}:T_{\varphi(x)}N^{2n}\to T_{\varphi(x)}N^{2n}, \\
J_{\varphi_x}=\left\{\begin{array}{l}
J_{V_x},\text{ on }V_x\text{,}\\
J^M\text{~transported~from~$T_xM$~\textit{via}~}\varphi=\text{~rotation~by~}+\frac{\pi}{2}\text{~on~}\d\varphi_x(T_xM)\text{.}
\end{array}\right.
\end{array}
\end{equation}\index{Koszul-Malgrange!Theorem!and~Riemann~surfaces}\index{Riemann~surface!vanishing~of~$R^{02}_V$}\index{Riemann~surface!Koszul-Malgrange~Theorem}\noindent
This means that the set of positive almost Hermitian structures along $\varphi$ which render $\varphi$ holomorphic is precisely $\Sigma^+V$. Notice the importance of $\varphi$ being conformal: if this was not so, we could not make $J_{\varphi_x}$ into a metric-preserving complex structure on $T_{\varphi(x)}N$. Since $M^2$ is a Riemann surface, the $(0,2)$-part of the curvature of $V$ vanishes. Hence, by Theorem \ref{Theorem:KoszulMalgrangeComplexStructureOnSigmaPlusV}, we know that $\Sigma^+V$ is a holomorphic vector bundle over our Riemann surface $M^2$ with the Koszul-Malgrange complex structure $\J^{KM}$.

We consider the map\addtolength{\arraycolsep}{-1.2mm}
\begin{equation}\label{Equation:TheMapFromSigmaPlusVIntoSigmaPlusNOnTheSalamonTheorem4.1Proof}
\begin{array}{lcll}
\eta: & \Sigma^+V&\to&\Sigma^+N \\
~& (x,J_{V_x})&\to& (\varphi(x),J_{\varphi_x})\text{.}
\end{array}
\end{equation}\addtolength{\arraycolsep}{1.2mm}\noindent
Then, we have the following theorem (see also \cite{Salamon:85}, Theorem 4.1):

\begin{theorem}\label{Theorem:SalamonsTheorem4.1}\index{$\J^2$-holomorphic!lift}
Let $\varphi:M^2\to N^{2n}$ be a conformal immersion as above, where $M^2$ is a Riemann surface and $N^{2n}$ an oriented
even-dimensional Riemannian manifold. Then, the following conditions are equivalent:

\textup{(i)} The map $\varphi$ is harmonic.

\textup{(ii)} The map $\eta$ is $(\J^{KM},\J^2)$-holomorphic.

\textup{(iii)} (The image of) $\eta$ is $\J^2$-stable.
\end{theorem}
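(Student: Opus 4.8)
The plan is to test the holomorphicity of $\eta$ separately in the vertical (fibre) and horizontal directions of $\Sigma^+V$, reducing the horizontal part to the Fundamental Lemma and to the harmonic map equation. Since $M^2$ is a Riemann surface, $\Lambda^2T^{01}M=0$, so $R^{02}_V=0$ automatically and Theorem~\ref{Theorem:KoszulMalgrangeComplexStructureOnSigmaPlusV} equips $\Sigma^+V$ with the integrable structure $\J^{KM}$; moreover $\eta$ is an injective immersion (as $\varphi$ is), so its image is a submanifold and the statements in (iii) make sense. First I would observe that $\eta$ carries each fibre $\Sigma^+V_x$ into $\Sigma^+N_{\varphi(x)}$ by the isotropic-subspace description $s^{10}\mapsto s^{10}\oplus\d\varphi_x(T^{10}_xM)$, that is, by adjoining the fixed isotropic line $\d\varphi_x(T^{10}_xM)$, which is orthogonal to $V^\cn_x$ precisely because $\varphi$ is conformal. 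This is a holomorphic embedding of $G^+_{iso}(V^\cn_x)$ into $G^+_{iso}(T^\cn_{\varphi(x)}N)$; by Proposition~\ref{Proposition:ComptibilityBetweenSigmaPlusEAndGrassmannianOfTheIsotropicSubspacesInEcnTheComplexStructure} the Koszul--Malgrange structure restricts on each fibre to $-\J^\V$, which is exactly the vertical part of $\J^2$ (and not of $\J^1$). Hence $\eta$ is automatically $\V$-holomorphic, with no hypothesis on $\varphi$; this is the analogue, for the map $\eta$, of the automatic $\H$-holomorphicity of sections exploited in Theorem~\ref{Theorem:SalamonsTheorem3.2}, and it is what dictates that $\J^2$ (rather than $\J^1$) occur in the statement.

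It then remains to analyse the horizontal direction. As $\J^{KM}$ is integrable, through any point of $\Sigma^+V$ there is a local $\J^{KM}$-holomorphic section $\sigma$, and its holomorphic tangent space together with the vertical $(1,0)$-space spans $T^{10}(\Sigma^+V,\J^{KM})$. Thus $\eta$ is $(\J^{KM},\J^2)$-holomorphic at a point if and only if the composite $\psi_\sigma=\eta\circ\sigma\colon M^2\to\Sigma^+N$ is $\J^2$-holomorphic there. Now $\psi_\sigma$ is a strictly compatible twistor lift of $\varphi$ --- that is, $\varphi$ is $(J^M,J_\sigma)$-holomorphic by the very construction of $J_{\varphi_x}$ --- so the Fundamental Lemma~\ref{Lemma:FundamentalLemmaToProve3.5} reduces $\J^2$-holomorphicity of $\psi_\sigma$ to the single condition $\d_2\omega_\sigma=0$. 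By Lemma~\ref{Lemma:Salamon1.2Generalized} this means $\nabla^{\varphi^{-1}TN}_{T^{01}M}T^{10}_{J_\sigma}N\subseteq T^{10}_{J_\sigma}N$, where $T^{10}_{J_\sigma}N=s^{10}\oplus\d\varphi(T^{10}M)$ and $s^{10}$ is the $(1,0)$-subspace of $V^\cn$ determined by $\sigma$.

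The heart of the argument is to split this inclusion along $\varphi^{-1}TN=V\oplus\d\varphi(TM)$. The part internal to $V$, namely $\nabla^V_{\partial_{\bar z}}s^{10}\subseteq s^{10}$, is exactly the Koszul--Malgrange holomorphicity of $\sigma$ (equation~\eqref{Equation:ConditionForHolomorphicityOfSectionsOfTheBundleSoVcnWithItsKoszulMalgrangeStructure}) and so holds by hypothesis; the remaining mixed components are governed by $\nabla_{\partial_{\bar z}}\partial_z\varphi$, since $\nabla_{\partial_{\bar z}}\d\varphi(\partial_z)=\nabla_{\partial_{\bar z}}\partial_z\varphi$ (as $\nabla^M_{\partial_{\bar z}}\partial_z=0$) and, for $U\in s^{10}$, differentiating $\langle U,\d\varphi(\partial_z)\rangle\equiv0$ yields $\langle\nabla_{\partial_{\bar z}}U,\d\varphi(\partial_z)\rangle=-\langle U,\nabla_{\partial_{\bar z}}\partial_z\varphi\rangle$. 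Hence, given that $\sigma$ is holomorphic, $\d_2\omega_\sigma=0$ reduces to requiring that $\nabla_{\partial_{\bar z}}\partial_z\varphi$ be orthogonal to $s^{10}$ and lie in $s^{10}\oplus\d\varphi(T^{10}M)$. If $\varphi$ is harmonic then $\nabla_{\partial_{\bar z}}\partial_z\varphi=0$ by Proposition~\ref{Proposition:HarmonicMapsFromRiemannSurfaces}, so both requirements hold for every holomorphic $\sigma$, giving (i)$\impl$(ii). For the converse I would use conformality: differentiating $\langle\partial_z\varphi,\partial_z\varphi\rangle\equiv0$ and $\langle\partial_{\bar z}\varphi,\partial_{\bar z}\varphi\rangle\equiv0$ shows $\nabla_{\partial_{\bar z}}\partial_z\varphi$ is orthogonal to $\d\varphi(TM)$, i.e.\ normal, whence demanding $\nabla_{\partial_{\bar z}}\partial_z\varphi\in s^{10}\oplus\d\varphi(T^{10}M)$ for \emph{every} positive isotropic $s^{10}\subseteq V^\cn_x$ (as $\sigma$ ranges over holomorphic sections through the point) forces it into $\bigcap_{s^{10}}s^{10}=0$, so $\varphi$ is harmonic. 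I expect the main obstacle to be exactly this separation of variables: arranging the bookkeeping so that the Koszul--Malgrange condition absorbs the purely normal derivative while conformality together with the freedom to vary $s^{10}$ jointly annihilate the tension field.

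Finally, (ii)$\impl$(iii) is immediate, since a holomorphic map has $\J^2$-stable image. For (iii)$\impl$(ii) I would follow the pattern of the implication (iii)$\impl$(ii) in Theorem~\ref{Theorem:SalamonsTheorem3.2}: the $\V$-holomorphicity of $\eta$ established in the first paragraph plays here the role that the $\H$-holomorphicity of a section played there, so that $\J^2$-stability of the image, combined with injectivity of $\d\pi$ on horizontal vectors, upgrades to full horizontal holomorphicity and hence to (ii).
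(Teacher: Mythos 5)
Your implication (i)$\impl$(ii) is essentially the paper's own argument: the paper obtains Theorem \ref{Theorem:SalamonsTheorem4.1} as the surface case of Theorem \ref{Theorem:NewTheorem4.1}, whose proof runs through exactly your steps (Koszul--Malgrange holomorphic sections, the Fundamental Lemma \ref{Lemma:FundamentalLemmaToProve3.5}, and the splitting of $\nabla^{\varphi^{-1}}_{T^{01}M}$ along $V\oplus\d\varphi(TM)$), and your explicit handling of vertical tangent vectors there is even tidier than the paper's third step. The fatal flaw is in your first paragraph. Fibrewise holomorphicity of $\eta$ --- which you correctly establish, and which correctly explains why $\J^2$ rather than $\J^1$ occurs --- only verifies $(\d\eta(\J^{KM}X))^\V=\J^2(\d\eta X)^\V$ for $X$ \emph{vertical}; it says nothing for horizontal $X$, where $(\d\eta X)^\V$ is governed by $\nabla J_\varphi$, hence by $\nabla\d\varphi$, which is exactly where harmonicity lives. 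So ``$\eta$ is automatically $\V$-holomorphic with no hypothesis on $\varphi$'' is false. Concretely, take $2n=4$ and $\varphi$ conformal but not harmonic: then $V$ has rank two, $\Sigma^+V\cong M^2$ with $\J^{KM}=J^M$, and $\eta$ is the unique strictly compatible lift $\psi$ of $\varphi$; $\psi$ is always $\H$-holomorphic (Proposition \ref{Proposition:LocalHHolomorphicLifts}), so were it automatically $\V$-holomorphic it would be $\J^2$-holomorphic, forcing $\varphi$ to be harmonic by Theorem \ref{Theorem:NewTheorem35} --- a contradiction. Your step (iii)$\impl$(ii) rests entirely on this claim, so the cycle of implications is broken; nor can the intended analogy with Theorem \ref{Theorem:SalamonsTheorem3.2} be repaired cheaply, since there the upgrade from stability used injectivity of $\d\pi|_\H$ on the image of a section, and the vertical projection of $\d\eta(T\Sigma^+V)$ enjoys no analogous injectivity. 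The paper instead proves (iii)$\impl$(i) directly: Lemma \ref{Lemma:LemmaInTheSixthStepOfTheProofOf:Theorem:SalamonsTheorem4.1} manufactures, from $\J^2$-stability alone, a local section of $\Sigma^+V$ whose composite with $\eta$ is $\J^2$-holomorphic \emph{at a prescribed point} with prescribed fibre value, and then the Fundamental Lemma together with Lemma \ref{Lemma:Salamon1.2Generalized} yields harmonicity.

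There is a second, smaller gap in your (ii)$\impl$(i): the claim $\bigcap_{s^{10}}s^{10}=0$ fails precisely in the most classical case $2n=4$, where the fibre of $\Sigma^+V$ is a single point, there is only one positive isotropic line $s^{10}$, and the intersection equals that line. The correct mechanism --- and the one the paper uses, see the proof of Lemma \ref{Lemma:AlternativeConditionFor(11)GeodesicMaps} --- is realness: $\nabla_{\partial_{\bar z}}\partial_z\varphi=\tfrac14\big(\nabla\d\varphi(\partial_x,\partial_x)+\nabla\d\varphi(\partial_y,\partial_y)\big)$ is a \emph{real} vector, and a real vector lying in the isotropic subspace $s^{10}\oplus\d\varphi(T^{10}M)$ is null, hence zero; thus a \emph{single} compatible structure already forces $\nabla_{\partial_{\bar z}}\partial_z\varphi=0$, in every dimension. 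With that substitution your (ii)$\impl$(i) is sound; the ingredients still missing from your proposal are therefore this realness trick and the stability-to-pointwise-holomorphicity lemma needed to recover (iii)$\impl$(i).
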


This result follows from a more general result, which we now give.


\subsection{Lifts of $(1,1)$-geodesic maps}


We already know that the projection $\varphi=\pi\circ\psi$ of a $(J^M,\J^2)$-holomorphic map $\psi:M\to\Sigma^+N^{2n}$ from a $(1,2)$-symplectic manifold $M^{2m}$ is a $(1,1)$-geodesic and pluriconformal map. We now look for a converse of this result, \textit{i.e.}, given a $(1,1)$-geodesic map from a $(1,2)$-symplectic manifold to an oriented even-dimensional Riemannian manifold $N^{2n}$, when is it the projection of a $(J^M,\J^2)$-holomorphic map? For reasons that will become clear during the proof, this is the case whenever $M$ is Kähler and the normal bundle $V=(\d\varphi TM)^\bot$ with the induced connection and metric from $\varphi^{-1}TN$ has vanishing $(0,2)$-part of its curvature:
\begin{equation}\label{Equation:(02)PartOfTheCurvature}
R_V(T^{10}M,T^{10}M)=0.
\end{equation}

So, let $\varphi:M^{2m}\to N^{2n}$ be a pluriconformal map and set $V=\big(\d\varphi(TM)\big)^\bot$. Consider the twistor bundle $\Sigma^+V$ of $V$: this bundle has the Koszul-Malgrange holomorphic structure provided that the $(0,2)$-part of the curvature of $V$ vanishes. For each $x\,\in\,M$, define
\begin{equation}\label{Equation:DefinitionOfTheHermitianStructureOnTvarphixNFromTheOneInVxInTheGeneralCase}
\begin{array}{l}
J_{\varphi_x}:T_{\varphi(x)}N^{2n}\to T_{\varphi(x)}N^{2n}, \\
J_{\varphi_x}=\left\{
 \begin{array}{l}
 J_{V_x}\text{,~on~}V_x\text{,} \\
 \tilde{J}_x\text{,~on~}\d\varphi_{x}(T_xM),
 \end{array}\right.
\end{array}
\end{equation}
where $\tilde{J}_x$ is the almost Hermitian structure induced\footnote{Notice that $\tilde{J}_x$ is defined by declaring its $(1,0)$-subspace to be $\d\varphi(T^{10}M)$, which is possible since we are imposing that $\varphi$ be pluriconformal (see p.\ \pageref{Page:kDimensionalIsotropicSubspacesAndHermitianStructures})} on $\d\varphi(T_xM)$ from that on $T_xM$ \textit{via} $\varphi$.

Consider the map $\eta:\Sigma^+V\to\Sigma^+N$, defined as before:\addtolength{\arraycolsep}{-1.2mm}
\begin{equation}\label{Equation:TheMapFromSigmaPlusVIntoSigmaPlusNOnTheSalamonNewTheorem4.1}
\begin{array}{lcll}
\eta: & \Sigma^+V&\to&\Sigma^+N \\
~& (x,J_{V_x})&\to& (\varphi(x),J_{\varphi_x})\text{.}
\end{array}
\end{equation}\addtolength{\arraycolsep}{1.2mm}\noindent

The following result was obtained in joint work with M.~Svensson (\cite{SimoesSvensson:06}):

\begin{theorem}\label{Theorem:NewTheorem4.1}\index{$\J^2$-holomorphic!lift}
Let $\varphi:M^{2m}\to N^{2n}$ be a pluriconformal map, $M^{2m}$ a Kähler manifold and $N^{2n}$ an oriented Riemannian manifold. Set $V=\big(\d\varphi(TM)\big)^\bot$ and suppose that $R^{02}_V=0$ (\textit{i.e.}, the $(0,2)$-part of the curvature of the normal bundle $V$ vanishes). Then, the following conditions are equivalent:

\textup{(i)} The map $\varphi:M\to N$ is $(1,1)$-geodesic.

\textup{(ii)} The map $\eta$ is $(\J^{KM},\J^2)$-holomorphic.

\textup{(iii)} (The image of) $\eta$ is $\J^2$-stable.
\end{theorem}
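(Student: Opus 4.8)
The plan is to reduce everything to the Fundamental Lemma (Lemma \ref{Lemma:FundamentalLemmaToProve3.5}) evaluated along holomorphic sections of $\Sigma^+V$, treating this result as the lifting converse of Theorem \ref{Theorem:NewTheorem35}. The first observation is that, by the very definition \eqref{Equation:DefinitionOfTheHermitianStructureOnTvarphixNFromTheOneInVxInTheGeneralCase}, every structure $J_{\varphi_x}=\eta(x,J_{V_x})$ restricts to $\tilde J_x$ on $\d\varphi_x(T_xM)$, whose $(1,0)$-space is $\d\varphi(T^{10}M)$; thus $\varphi$ is automatically $(J^M,J_{\varphi_x})$-holomorphic, and it is precisely pluriconformality of $\varphi$ that makes $\tilde J_x$ well-defined. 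So the horizontal hypothesis of the Fundamental Lemma always holds, and only the vertical condition $\d_2\omega_\sigma=0$ is ever at stake — the analogue of the automatic $\H$-holomorphicity in Theorem \ref{Theorem:SalamonsTheorem3.2}. Next I would record that $\eta$ is automatically holomorphic in the fibre directions: on a fibre $\eta$ sends $J_{V_x}\in\Sigma^+V_x$ to the block structure $J_{V_x}\oplus\tilde J_x$, so $\d\eta$ on a vertical vector $\mu\in\m_{J_{V_x}}(V_x)$ is the extension by zero $\mu\oplus 0\in\m_{J_{\varphi_x}}(T_{\varphi(x)}N)$, which visibly intertwines $\J^\V$; since the Koszul--Malgrange vertical structure is $-\J^\V$ (the fibrewise identification $\Sigma^+V_x\leftrightarrow G^+_{iso}(V_x^\cn)$ is anti-holomorphic, Proposition \ref{Proposition:ComptibilityBetweenSigmaPlusEAndGrassmannianOfTheIsotropicSubspacesInEcnTheComplexStructure}) and the $\J^2$-vertical structure on $\Sigma^+N$ is likewise $-\J^\V$, these match. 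Hence $\eta$ is $(\J^{KM},\J^2)$-holomorphic if and only if $\psi_s:=\eta\circ s$ is $(J^M,\J^2)$-holomorphic for every local $\J^{KM}$-holomorphic section $s$ of $\Sigma^+V$, since the $(1,0)$-tangent spaces of $(\Sigma^+V,\J^{KM})$ are spanned by images of such sections together with the vertical $(1,0)$-vectors, and such sections through every point exist because $R_V^{02}=0$ (Theorem \ref{Theorem:KoszulMalgrangeComplexStructureOnSigmaPlusV}).

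With this reduction, (i)$\Rightarrow$(ii) proceeds by fixing a $\J^{KM}$-holomorphic section $s$ and showing $\psi_s$ is $\J^2$-holomorphic via Corollary \ref{Corollary:AnotherVersionOfTheFundamentalLemmaUsingT10AndT01Spaces}: since $\varphi$ is $(J^M,J_{\psi_s})$-holomorphic, it remains to verify $\nabla^{\varphi^{-1}}_{Z^{01}}\big(T^{10}_{J_{\psi_s}}N\big)\subseteq T^{10}_{J_{\psi_s}}N$ for all $Z^{01}\in T^{01}M$. I would split $T^{10}_{J_{\psi_s}}N=\d\varphi(T^{10}M)\oplus V^{10}$ and treat the summands separately. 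For $\d\varphi(Y^{10})$ one writes $\nabla^{\varphi^{-1}}_{Z^{01}}\d\varphi(Y^{10})=(\nabla\d\varphi)(Z^{01},Y^{10})+\d\varphi(\nabla^M_{Z^{01}}Y^{10})$; the first term vanishes because $\varphi$ is $(1,1)$-geodesic, and the second lies in $\d\varphi(T^{10}M)$ because $M$ is Kähler, hence $(1,2)$-symplectic, so $\nabla^M_{Z^{01}}Y^{10}\in T^{10}M$. For $W^{10}\in V^{10}$, the $V$-component of $\nabla^{\varphi^{-1}}_{Z^{01}}W^{10}$ equals $\nabla^V_{Z^{01}}W^{10}$, which lies in $V^{10}$ since $s$ is $\J^{KM}$-holomorphic (Theorem \ref{Theorem:KoszulMalgrangeComplexStructureOnSigmaPlusV} and Lemma \ref{Lemma:Salamon1.2Generalized}).

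The heart of the matter — and the step I expect to be the main obstacle — is controlling the remaining component of $\nabla^{\varphi^{-1}}_{Z^{01}}W^{10}$ lying in $\d\varphi(TM)$, i.e. the relevant piece of the second fundamental form of $V$ inside $\varphi^{-1}TN$, which one must show falls in the $(1,0)$-part $\d\varphi(T^{10}M)$ rather than $\d\varphi(T^{01}M)$. Here I would compute, for $Y^{10}\in T^{10}M$, that $\langle\nabla^{\varphi^{-1}}_{Z^{01}}W^{10},\d\varphi(Y^{10})\rangle=Z^{01}\langle W^{10},\d\varphi(Y^{10})\rangle-\langle W^{10},\nabla^{\varphi^{-1}}_{Z^{01}}\d\varphi(Y^{10})\rangle$, in which the first term vanishes because $V\perp\d\varphi(TM)$ and the second vanishes because $\nabla^{\varphi^{-1}}_{Z^{01}}\d\varphi(Y^{10})\in\d\varphi(T^{10}M)\oplus V^{10}$ (just proved) is bilinearly orthogonal to $W^{10}\in V^{10}$, using orthogonality of $V^{10}$ to $\d\varphi(TM)$ and isotropy of $V^{10}$. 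Since $\d\varphi(T^{10}M)$ and $\d\varphi(T^{01}M)$ are isotropic and bilinearly paired nondegenerately, annihilating $\d\varphi(T^{10}M)$ forces the $\d\varphi(TM)$-component into $\d\varphi(T^{10}M)$, as needed; this is exactly where the isotropy of $\d\varphi(T^{10}M)$, i.e. the full strength of pluriconformality, enters. The converse (ii)$\Rightarrow$(i) reverses this: holomorphicity of $\eta$ gives, along any $\J^{KM}$-holomorphic section, $\d_2\omega_{\sigma_s}=0$, hence $\nabla^{\varphi^{-1}}_{Z^{01}}\d\varphi(Y^{10})\in T^{10}_{J_\varphi}N$; subtracting $\d\varphi(\nabla^M_{Z^{01}}Y^{10})\in\d\varphi(T^{10}M)$ (Kähler again) yields $(\nabla\d\varphi)(Y^{10},Z^{01})\in T^{10}_{J_\varphi}N$, and Lemma \ref{Lemma:AlternativeConditionFor(11)GeodesicMaps} concludes that $\varphi$ is $(1,1)$-geodesic.

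Finally, (ii)$\Rightarrow$(iii) is immediate, since a holomorphic map has $\J^2$-stable image. For (iii)$\Rightarrow$(ii) I would mimic the bootstrap in the proof of Theorem \ref{Theorem:SalamonsTheorem3.2}: the lifts $\psi_s$ are automatically horizontally holomorphic (because $\varphi$ is $(J^M,J_{\psi_s})$-holomorphic) and $\d\pi|_\H$ is an isomorphism, so $\J^2$-stability of the image together with horizontal holomorphicity forces vertical $\J^2$-holomorphicity of each $\psi_s$, which is (ii); the vertical behaviour of $\eta$ is already under control from the first paragraph. Throughout, the role of the Kähler hypothesis (rather than merely $(1,2)$-symplectic) should be flagged and is twofold: complexness of $M$ is what endows $\Sigma^+V$ with the Koszul--Malgrange holomorphic structure and guarantees holomorphic sections, while $(1,2)$-symplecticity is what places $\nabla^M_{T^{01}M}T^{10}M$ inside $T^{10}M$ in the computations above.
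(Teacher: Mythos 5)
Your first three paragraphs are essentially correct and follow the paper's own route for (i)$\equi$(ii): the reduction of holomorphicity of $\eta$ to holomorphicity of the composites $\psi_s=\eta\circ s$ along $\J^{KM}$-holomorphic sections, the splitting $T^{10}_{J_{\psi_s}}N=\d\varphi(T^{10}M)\oplus V^{10}$, the use of $(1,1)$-geodesicity together with $(1,2)$-symplecticity on the tangential summand, $\J^{KM}$-holomorphicity of $s$ on the normal summand, and the bilinear-pairing argument controlling the mixed component are exactly the paper's fundamental step; your explicit verification that $\eta$ is fibrewise holomorphic is a welcome supplement to the paper's spanning argument, and your direct (ii)$\impl$(i) is fine.

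The genuine gap is (iii)$\impl$(ii). The bootstrap of Theorem \ref{Theorem:SalamonsTheorem3.2} cannot be mimicked here, because there the stable subspace is $\d\sigma(T_xM)$ itself, so the stability relation $\J^a\d\sigma_x(Y)=\d\sigma_x(Z)$ takes place inside a space on which $\d\pi$ is injective and pins down $Z$; hypothesis (iii) of the present theorem instead asserts stability of the much larger space $\d\eta\big(T_{(x,J_{V_x})}\Sigma^+V\big)$, which contains the entire image $\d\eta(\V_{\Sigma^+V})$ of the vertical distribution. Concretely, for $v=\d\psi_s(J^MX)$ stability only provides some $w\in T\Sigma^+V$ with $\J^2\d\psi_s(J^MX)=\d\eta(w)$; projecting horizontally and using injectivity of $\d\varphi$ gives $\d\pi_{\Sigma^+V}(w)=-X$, so $w=\d s(-X)+w^{\V}$ with $w^{\V}$ vertical, and applying $\J^2$ one obtains only
\[
\d\psi_s(J^MX)-\J^2\,\d\psi_s(X)=-\J^2\,\d\eta(w^{\V})\in\V\text{,}
\]
i.e.\ the $\H$-holomorphicity you already had: the vertical discrepancy is not controlled at all. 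Worse, the conclusion you want to force, namely vertical $\J^2$-holomorphicity of \emph{each} $\psi_s$, is false: granting (i) (hence (iii)), any smooth section $s$ of $\Sigma^+V$ that is \emph{not} $\J^{KM}$-holomorphic produces a lift $\psi_s$ which is $\H$-holomorphic (strict compatibility) but whose normal component violates $\nabla^V_{T^{01}M}s^{10}_V\subseteq s^{10}_V$, so $\d_2\omega_{\sigma_s}\neq0$ and, by Lemma \ref{Lemma:FundamentalLemmaToProve3.5}, $\psi_s$ is not $\J^2$-holomorphic; hence no correct argument can derive your claim from (iii). The paper closes this implication with a dedicated technical result, Lemma \ref{Lemma:LemmaInTheSixthStepOfTheProofOf:Theorem:SalamonsTheorem4.1}: stability (together with injectivity of $\d\eta$) lets one transport $\J^2$ to an almost complex structure $\tilde{\J}^2$ on $\Sigma^+V$, and in submersion-adapted charts one constructs, through any prescribed $J_{V_x}$, a smooth \emph{section} which is $(J^M,\tilde{\J}^2)$-holomorphic at the single point $x$; then $\eta\circ s$ is holomorphic at $x$, Lemma \ref{Lemma:FundamentalLemmaToProve3.5} gives $\d_2\omega_{J_\varphi}(x)=0$, and the pointwise statement \eqref{Equation:FourthEquationIn:Lemma:Salamon1.2Generalized} together with Lemma \ref{Lemma:AlternativeConditionFor(11)GeodesicMaps} yields $(1,1)$-geodesicity at $x$. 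You need this pointwise-section construction, proving (iii)$\impl$(i) directly, in place of your bootstrap; your (i)$\impl$(ii) then closes the cycle of equivalences.
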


Notice that we cannot expect fewer requirements on the manifold $M$: it should be $(1,2)$-symplectic if one wants to get $(1,1)$-geodesic maps and in order to get a holomorphic bundle (which will be what guarantees the existence of holomorphic maps to it!) we must also impose that $M$ be Hermitian. Hence, $M$ must be a Kähler manifold and $V$ must have $R^{02}_V=0$ so that we can introduce the complex structure $J^{KM}$.

In the case where $M^2$ is a Riemann surface, the condition on the curvature is automatically satisfied and pluriconformality reduces to conformality (Remark \ref{Remark:ConformalAndPluriconformalMapsFromARiemannSurface}). In particular, Theorem \ref{Theorem:SalamonsTheorem4.1} is just a particular case of Theorem \ref{Theorem:NewTheorem4.1}.

\noindent\textit{Proof of Theorem \ref{Theorem:NewTheorem4.1}.}

\noindent\textit{1$^{\text{st}}$ step. The complex structure on $\Sigma^+V$.}

We can put the Koszul-Malgrange holomorphic structure on $\Sigma^+V$. Moreover, identifying each almost Hermitian $J_V$ structure with its associated $(1,0)$-subspace, $s^{10}_V$,
\begin{equation}\label{Equation:FirstEquationInTheFirstStepOf:Theorem:NewTheorem4.1}
J_V\text{~is~a~holomorphic~section~if~and~only~if~}\nabla^V_{T^{01}M}s^{10}_V\subseteq{s}^{10}_V\text{.}
\end{equation}

\noindent\textit{Proof of the 1$^{\text{st}}$ step.} The fact that we can put the Koszul-Malgrange holomorphic structure on $\Sigma^+V$ is guaranteed by the assumption $R^{02}_V=0$. The fact that holomorphic sections are characterized by equation \eqref{Equation:FirstEquationInTheFirstStepOf:Theorem:NewTheorem4.1} is a direct consequence of \eqref{Equation:ConditionForHolomorphicityOfSectionsOfTheBundleSoVcnWithItsKoszulMalgrangeStructure}.

\noindent\textit{2$^{\text{nd}}$ step. (Fundamental step) $\d_2\omega_{J_\varphi}=0$.}

We prove that, if $\varphi$ is a $(1,1)$-geodesic pluriconformal map from a Kähler manifold to a Riemannian manifold with $R^{02}_V=0$, then for every $\J^{KM}$-holomorphic section $s^{10}_V$ of $\Sigma^+V$, the associated section $s^{10}_{TN}\simeq J_{\varphi}$ of $\Sigma^+\varphi^{-1}TN$ has $\d_2\omega_{J_\varphi}=0$.

\noindent\textit{Proof of the 2$^{\text{nd}}$ step.} We have to show that
\[\nabla^{\varphi^{-1}}_{T^{01}M}s^{10}_{TN}\subseteq s^{10}_{TN}\]
where $s^{10}_{TN}=s^{10}_V\oplus \d\varphi(T^{10}M)$. Now
\begin{equation}\label{Equation:SecondEquationOnTheProofOfTheSecondStepOfTheProofOf:Theorem:NewTheorem4.1}
\nabla^{\varphi^{-1}}_{T^{01}M}(s^{10}_V\oplus\d\varphi(T^{10}M))\subseteq\nabla^V_{T^{01}M}s^{10}_V+\nabla_{T^{01}M}^{TM}s^{10}_V+\nabla^{\varphi^{-1}}_{T^{01}M}\d\varphi(T^{10}M)\text{.}
\end{equation}
Since $s^{10}_V$ is $\J^{KM}$-holomorphic, $\nabla^V_{T^{01}M}s^{10}_V\subseteq{s}^{10}_V\subseteq s^{10}_{TN}$. For the third term on the right-hand side of \eqref{Equation:SecondEquationOnTheProofOfTheSecondStepOfTheProofOf:Theorem:NewTheorem4.1}, we have
\[\nabla^{\varphi^{-1}}_{T^{01}M}\d\varphi(T^{10}M)=\nabla\d\varphi(T^{01}M,T^{10}M)+\d\varphi(\nabla_{T^{01}M}^{M}T^{10}M)=\d\varphi(\nabla_{T^{01}M}^{M}T^{10}M)\text{,}\]
since $\varphi$ is $(1,1)$-geodesic. On the other hand, as $M$ is $(1,2)$-symplectic, using Proposition \ref{Proposition:SalamonsLemma1.3}, we deduce $\nabla_{T^{01}M}T^{10}M\subseteq T^{10}M$ so that $\nabla^{\varphi^{-1}}_{T^{01}M}\d\varphi(T^{10}M)\subseteq\d\varphi(T^{10}M)\subseteq{s}^{10}_{TN}$. Finally, for the second term,
\[\nabla^M_{T^{01}}s^{10}_V\,\subseteq\,\,\d\varphi(T^{01}M)\oplus \d\varphi(T^{10}M)\text{;}\]
this term will lie in $\d\varphi(T^{10}M)$ if and only if $<\nabla^M_{T^{01}M}s^{10}_V,\d\varphi(T^{10}M)>$ vanishes.
But\addtolength{\arraycolsep}{-1.0mm}
\[\begin{array}{lll}
<\nabla^M_{X^{01}}s^{10}_V,\d\varphi(T^{10}M)>&=&<\nabla^{\varphi^{-1}}_{X^{01}}s^{10}_V,\d\varphi(T^{10}M)>\\
~&=&X^{01}<s^{10}_V,\d\varphi(T^{10}M)>-<s^{10}_V,\nabla^{\varphi^{-1}}_{X^{01}}\d\varphi(T^{10}M)>\text{,}
\end{array}\]\addtolength{\arraycolsep}{1.0mm}\noindent
which vanishes as $s^{10}_V$ and $\d\varphi(T^{10}M)$ are orthogonal and $\nabla_{T^{01}M}\d\varphi(T^{10}M)\subseteq\d\varphi(T^{10}M)$ since $M$ is $(1,2)$-symplectic.

Once again, notice the importance of imposing that $\d_2 \omega_{J^M}=0$ (\textit{i.e.}, imposing that $M$ be $(1,2)$-symplectic) and $\d_1\omega_{J^M}=0$ (\textit{i.e.}, requiring $M$ to be Hermitian so that $\Sigma^+V$ has the Koszul-Malgrange holomorphic structure) in the preceding proof.\qed

\noindent\textit{3$^{\text{rd}}$ step. Holomorphicity of $\eta$.}

We prove that if composition with $\eta$ transforms each Koszul-Malgrange holomorphic section of $\Sigma^+V$ into a $(J^M,\J^2)$-holomorphic map, then $\eta$ is $(\J^{KM},\J^2)$-holomorphic.

\noindent\textit{Proof of the 3$^{\text{rd}}$ step.} Since $\Sigma^+V$ is a holomorphic vector bundle (for the complex
structure $\J^{KM}$) over the complex manifold $M$, for each point $(x,s_x)\in\Sigma^+V$ and each vector $v\in{T}_{(x,s_x)}\Sigma^+V$ there is a (locally defined) $(J^M,\J^{KM})$-holomorphic section $\sigma:M\to \Sigma^+V$ with $\sigma(x)=(x,s_x)$ and $\d\sigma_x(u)=v$ for some $u\in{T}_xM$. So, take $v\in{T}_{(x,s_x)}\Sigma^+V$ and $\sigma$ a $\J^{KM}$-holomorphic section of $\Sigma^+V$ with $d\sigma_x(u)=v$, for some $u\in{T}_xM$. Then, $\eta\circ\sigma$ is a $(J^M,\J^2)$-holomorphic map from $M$ to $\Sigma^+N$ and we get\addtolength{\arraycolsep}{-1.0mm}
\[\begin{array}{rll}\d\eta_{(x,s_x)}(\J^{KM}v)&=&\d\eta_{(x,s_x)}(\J^{KM}(\d\sigma_xu))=\d\eta_{(x,s_x)}(\d\sigma_x(J^Mu))\\
~&=&\J^2\big(\d\eta_{(x,s_x)}(\d\sigma_xu)\big)=\J^2(\d\eta_{(x,s_x)}v),\end{array}\]\addtolength{\arraycolsep}{1.0mm}\noindent
as required.\qed

\noindent\textit{4$^{\text{th}}$ step. Composition with $\eta$.}

We show that if $J_V$ is a $\J^{KM}$-holomorphic section of $\Sigma^+V$, then $\eta\circ{J}_V:M\to\Sigma^+N$ is $(J^M,\J^2)$-holomorphic.

\noindent\textit{Proof of the 4$^\text{th}$ step.} This is just our Lemma \ref{Lemma:FundamentalLemmaToProve3.5}: we know that $\varphi$ is $(J^M,J_{\varphi})$-holomorphic by the very definition of $J_{\varphi}$, and the latter is a section of $\Sigma^+\varphi^{-1}TN$ with $\d_2\omega_{J_\varphi}=0$, by the second step. Therefore, our map $\eta\circ J_V$ is $(J^M,\J^2)$-holomorphic.\qed

\noindent\textit{5$^{\text{th}}$ step. Conclusion of the implication (i)$\impl$(ii).}

We know from the third step that $\eta$ will be holomorphic if it transforms each $\J^{KM}$-holomorphic section into a $(J^M,\J^2)$-holomorphic map $M\to \Sigma^+N$. Take $J_V$ to be any $\J^{KM}$-holomorphic section. Then, from the fourth step, $\eta\circ J_V$ is $(J^M,\J^2)$-holomorphic and the proof is complete.\qed

\noindent\textit{6$^{\text{th}}$ step. Conclusion of the proof of Theorem \ref{Theorem:NewTheorem4.1}.}

We have shown that (i)$\impl$(ii). It is obvious that (ii)$\impl$(iii) and hence we are left with showing that (iii)$\impl$(i).

Since $\eta$ is $\J^2$-stable, we may choose a section $J_V$ such that $\eta\circ J_V=J_\varphi$ is $(J^M,\J^2)$-holomorphic at a point $x\in{M}$ (see Lemma \ref{Lemma:LemmaInTheSixthStepOfTheProofOf:Theorem:SalamonsTheorem4.1} below). Hence, $J_\varphi$ has $\d_2\omega_{J_\varphi}(x)=0$ (Lemma \ref{Lemma:FundamentalLemmaToProve3.5}). Using \eqref{Equation:FourthEquationIn:Lemma:Salamon1.2Generalized}, we have
\[\nabla^{\varphi^{-1}}_{T^{01}_xM}s^{10}_{TN}\subseteq{s}^{10}_{TN}\text{.}\]
But $s^{10}_{TN}$ is spanned by $s^{10}_V$ and $\d\varphi(T^{10}M)$. This means that, for every smooth vector field $f(z).V^{10}_z\oplus{g}(z).\d\varphi_z(X^{10}_z)$, with $V^{10}_z\in{s}^{10}_{V}(z)$,
$X^{10}_z\in{T}^{10}_zM$ and for any $Y^{10}_x\in{T}^{01}_xM$,
\[\nabla_{Y^{01}_z}^{\varphi^{-1}}\big(f(z)V^{10}+g(z)\d\varphi(X^{10})\big)\in{s}_V^{10}(x)\oplus\d\varphi(T_x^{10}M)=s^{10}_{TN}(x)\simeq{T}^{10}_{J_\varphi(x)}N\text{.}\]
In particular, since $V^{10},\d\varphi(X^{10})\in{s}^{10}_V\oplus\d\varphi(T^{10}_xM)$, we have\addtolength{\arraycolsep}{-1.0mm}
\[\begin{array}{ll}~&Y^{01}_x(f).V^{10}_x+f(x).\nabla_{Y^{01}_x}V^{10}+Y^{01}_x(g).\d\varphi_x(X^{10}_x)+g(x).\nabla_{Y^{01}_x}\d\varphi(X^{10})\in{T}^{10}_{J_\varphi(x)}N\\
\impl&\forall\,f,g\quad{f}(x).\nabla_{Y^{01}_x}V^{10}+g(x).\nabla_{Y^{01}_x}\d\varphi(X^{10})\in{T}^{10}_{J_\varphi(x)}N\\
\impl&\nabla^{\varphi^{-1}}_{Y^{01}_x}\d\varphi(X^{10})\in{T}^{10}_{J_\varphi(x)}N\text{.}
\end{array}\]\addtolength{\arraycolsep}{1.0mm}\noindent
Hence,
\[\nabla_{T^{01}_xM}\d\varphi(T^{10}M)\subseteq{T}^{10}_{J_\varphi}N.\]
Since $M$ is $(1,2)$-symplectic, we easily conclude that $(\nabla\d\varphi)_x(T^{01}M,T^{10}M)\subseteq{T}^{10}_{J_\varphi(x)}N$; varying $x$ on $M$ and using Lemma \ref{Lemma:AlternativeConditionFor(11)GeodesicMaps} shows that
$\varphi$ is $(1,1)$-geodesic, as we wanted.\qed

Our proof of Theorem \ref{Theorem:NewTheorem4.1} will be completed by the following technical result:

\begin{lemma}\label{Lemma:LemmaInTheSixthStepOfTheProofOf:Theorem:SalamonsTheorem4.1}
Let $M$ be a Hermitian manifold, $N^{2n}$ an oriented even-dimensional Riemannian manifold and $\varphi:M\to N$ a
pluriconformal map with $R^{02}_V=0$, $V=(\d\varphi(TM))^\bot$. Take the Koszul-Malgrange holomorphic structure on $\Sigma^+V$ and $\eta:\Sigma^+V\to\Sigma^+N$ as in \eqref{Equation:TheMapFromSigmaPlusVIntoSigmaPlusNOnTheSalamonNewTheorem4.1}. If $\eta$ is $\J^a$-stable ($a=1,2$) then, at any point $x\in{M}$ and for each $J_{V_x}\in\Sigma^+V_x$ there is a smooth section $s^{10}_V$ of $\Sigma^+V$ with $s^{10}_V(x)=J_{V_x}$ defined around $x$ such that $\eta\circ s$ is $(J^M,\J^a)$-holomorphic at the point $x$.
\end{lemma}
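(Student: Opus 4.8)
The plan is to recognise that the conclusion is a purely pointwise, first-order statement and to solve it by splitting a short exact sequence of complex vector spaces. Fix $x\in M$ and $J_{V_x}\in\Sigma^+V_x$, and write $p=(x,J_{V_x})$, $q=\eta(p)=(\varphi(x),J_{\varphi_x})$. For a section $s$ with $s(x)=J_{V_x}$, the map $\eta\circ s$ is $(J^M,\J^a)$-holomorphic \emph{at the single point} $x$ precisely when the linear map $\d(\eta\circ s)_x=\d\eta_p\circ\d s_x\colon (T_xM,J^M)\to(T_q\Sigma^+N,\J^a)$ is complex linear, and this depends only on the $1$-jet of $s$ at $x$. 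So it is enough to produce one complex-linear map of the correct shape and then realise it as the differential of an actual smooth section. Throughout, let $\pi^V\colon\Sigma^+V\to M$ and $\pi^N\colon\Sigma^+N\to N$ denote the canonical projections, so that $\pi^N\circ\eta=\varphi\circ\pi^V$.

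First I would check that $\eta$ is an immersion at $p$. Since $\varphi$ is pluriconformal, $\d\varphi_x$ is injective (Definition \ref{Definition:PluriconformalMap}); combined with the fact that $\eta$ restricts on each fibre to the injective inclusion $J_V\mapsto J_V\oplus\tilde J_x$, the relation $\d\pi^N\circ\d\eta=\d\varphi\circ\d\pi^V$ forces $\d\eta_p$ to be injective. Hence $D:=\d\eta_p(T_p\Sigma^+V)$ is a linear subspace of $T_q\Sigma^+N$, and the hypothesis that $\eta$ is $\J^a$-stable (Definition \ref{Definition:HolomorphicityInCertainSubbundles}) says exactly that $D$ is $\J^a$-invariant, so $(D,\J^a|_D)$ is a complex vector space.

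Next I would organise $D$ into a short exact sequence of complex vector spaces. By \eqref{Equation:HolomorphicityOfTheProjectionMap} the restriction $\d\pi^N|_D\colon (D,\J^a)\to(\d\varphi(T_xM),J_{\varphi_x})$ is complex linear; it is onto $\d\varphi(T_xM)$ (again from $\d\pi^N\circ\d\eta=\d\varphi\circ\d\pi^V$), and its kernel is $D\cap\ker\d\pi^N=\d\eta_p(\V_p)\cong(\m_{J_{V_x}}(V_x),\J^{\V,a})$, a $\J^a$-invariant subspace. On the quotient, $J_{\varphi_x}$ restricts to $\tilde J_x$ on $\d\varphi(T_xM)$, and by \eqref{Equation:DefinitionOfTheHermitianStructureOnTvarphixNFromTheOneInVxInTheGeneralCase} this makes $\d\varphi_x\colon(T_xM,J^M)\to(\d\varphi(T_xM),\tilde J_x)$ complex linear. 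Thus
\[
0\longrightarrow \big(\m_{J_{V_x}}(V_x),\J^{\V,a}\big)\longrightarrow (D,\J^a)\xrightarrow{\ \d\pi^N|_D\ }\big(\d\varphi(T_xM),\tilde J_x\big)\longrightarrow 0
\]
is exact in the category of complex vector spaces, so it admits a complex-linear splitting $\kappa$. Setting $\iota:=\kappa\circ\d\varphi_x$ yields a complex-linear map $\iota\colon(T_xM,J^M)\to(D,\J^a)$ with $\d\pi^N\circ\iota=\d\varphi_x$. Finally I would promote $\iota$ to a section: since $\d\eta_p$ is a linear isomorphism onto $D$, the composite $(\d\eta_p)^{-1}\circ\iota$ is defined, and injectivity of $\d\varphi_x$ together with $\d\varphi_x\circ\d\pi^V\circ(\d\eta_p)^{-1}=\d\pi^N|_D$ shows $\d\pi^V\circ(\d\eta_p)^{-1}\circ\iota=\mathrm{id}_{T_xM}$, i.e.\ it is a legitimate section $1$-jet. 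Choosing any smooth section $s$ of $\Sigma^+V$ with $s(x)=J_{V_x}$ and $\d s_x=(\d\eta_p)^{-1}\circ\iota$ (such $s$ exists in a local trivialisation), we obtain $\d(\eta\circ s)_x=\d\eta_p\circ\d s_x=\iota$, complex linear, so $\eta\circ s$ is $(J^M,\J^a)$-holomorphic at $x$, as desired.

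I expect the main obstacle to be essentially bookkeeping: verifying that $\eta$ is an immersion at $p$ and, above all, that $\ker(\d\pi^N|_D)$ and $\d\varphi(T_xM)$ genuinely carry the complex structures claimed, so that the displayed sequence really is one of complex vector spaces. Once that is in place the splitting is automatic and no analytic, curvature, or integrability input is needed; note in particular that the assertion concerns holomorphicity of $\eta\circ s$ at the one point $x$ only, so $s$ need not be $\J^{KM}$-holomorphic, and the hypotheses $R^{02}_V=0$ and the Koszul--Malgrange structure serve merely to make $\Sigma^+V$ and $\eta$ available as in \eqref{Equation:TheMapFromSigmaPlusVIntoSigmaPlusNOnTheSalamonNewTheorem4.1}.
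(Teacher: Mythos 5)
Your proof is correct. The paper establishes the lemma by a pointwise argument of exactly the same nature, but executed upstairs on $\Sigma^+V$: using the $\J^a$-stability of $\eta$ (and, implicitly, the injectivity of $\d\eta$), it transfers $\J^a$ through $\d\eta$ to an almost complex structure $\tilde{\J}^a$ on $\Sigma^+V$, together with the horizontal--vertical splitting pulled back from $T\Sigma^+N$; it then observes that any section is automatically horizontally holomorphic for this splitting, that $\tilde{\J}^a$ preserves $\ker\d\pi_{\Sigma^+V}$, and finally builds the section in product charts adapted to the submersion $\pi_{\Sigma^+V}$ by choosing its fibre component $f$ to be holomorphic at the point with the prescribed value $J_{V_x}$. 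Your route stays downstairs in $T_q\Sigma^+N$: you identify the image $D=\d\eta_p(T_p\Sigma^+V)$ as a complex vector space via stability, exhibit it as an extension of $\big(\d\varphi(T_xM),\tilde{J}_x\big)$ by $\big(\m_{J_{V_x}}(V_x),\J^{\V,a}\big)$, obtain the required $1$-jet from a complex-linear splitting of that short exact sequence, and only then realise the jet by a smooth section. The underlying decomposition (vertical kernel versus projected horizontal part) and the two driving facts --- stability supplies the complex structure, and only the $1$-jet at $x$ matters --- are the same in both arguments, so the difference is one of packaging rather than of ideas; but your packaging has two concrete advantages: it makes explicit the verification that $\d\eta_p$ is injective (via pluriconformality of $\varphi$), which the paper's ``transfer of $\J^a$'' tacitly requires, and it isolates cleanly, as complex-linearity of the kernel and quotient maps, the bookkeeping that in the paper is absorbed into the choice of adapted coordinates and the holomorphicity of $f$. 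Conversely, the paper's version generalises more directly to statements about sections holomorphic near (not just at) a point, since it produces the transferred structure $\tilde{\J}^a$ on a whole neighbourhood rather than a single splitting at $x$.
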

\begin{proof}
Let us start by emphasizing that we want a section of the bundle $\Sigma^+V$ and not just a map (in which case our problem would be just the existence of smooth maps which are holomorphic at a point). The idea of this proof is not very complicated: since $\eta$ is $\J^a$-stable, we can transfer to $\Sigma^+V$ the almost complex structure $\J^a$ \textit{via} the map $\eta$ to obtain $\tilde{\J}^a$. Hence, we are reduced to showing that there is a
section $s^{10}_V$ of $\Sigma^+V$ which is $(J^M,\tilde{\J}^a)$-holomorphic at a point $x$ with the initial
condition $J_{V_x}$. Also using $\eta$, we can induce a splitting of $T\Sigma^+V$ into horizontal and vertical parts, induced from that splitting on $\Sigma^+N$; such a splitting is preserved by $\tilde{\J}^a$. It is easy to check that any section $\sigma$ of $\Sigma^+V$ is horizontally holomorphic for this splitting. On the other hand, $\tilde{\J}^a$ preserves $\ker \d\pi_{\Sigma^+V}$, where $\pi_{\Sigma^+V}:\Sigma^+V\to M$ is just the canonical projection map\footnote{Equivalently, $\eta$ maps $\ker \d\pi_{\Sigma^+V}$ into the vertical part of $\Sigma^+N$.}. Since $\pi_{\Sigma^+V}$ is a submersion, we can choose charts $(\V,\nu)$ of $\Sigma^+V$ and $(\openU,\mu)$ of $M$ such that $\mu\circ \pi_{\Sigma^+V}\circ\nu^{-1}$ is given by $(x,y)\to x$. We can now construct a section which is holomorphic at $x$; the only part we have to be concerned about is the vertical part and, at a point $x$, it is easy to construct a
smooth map which is holomorphic at $x$. More precisely, we know that $\mu\circ \pi_{\Sigma^+V}\circ \nu^{-1}:A\times{B}\subseteq\rn^{2m}\times\rn^{2k-2m}\to\rn^{2m}$ is just the projection map and the kernel of $\d\pi_{\Sigma^+V}$ at $s_0$ is the image of the map $\d\big(y\to\nu^{-1}(0,y)\big)(0)$. We can transfer the structure
$\tilde{\J}^a_{s_0}$ to $T_0(\rn^{2m}\times\rn^{2k-2m})$, to get $\tilde{\J}^a_0$. Since $\tilde{\J}^a_{s_0}$ preserves
$\ker\d\pi_{\Sigma^+V}(s_0)$, $\tilde{\J}^a_0$ preserves $\rn^{2k-2m}$. Similarly, we can transport the complex structure of $M$ at $x$ \textit{via} $\mu$ to get an almost complex structure $J_0$ at $\rn^{2m}$. Take any $f:\rn^{2m}\to \rn^{2k-2m}$ $(J_0,\tilde{\J}_0^a)$-holomorphic at the origin and consider the section that in these local coordinates is written as $x\to(x,f(x))$. Then, it is obviously a section and is holomorphic. Conveniently choosing $f$ so that at zero it gives $J_{V_x}$ in this local coordinates, we conclude the proof.
\end{proof}


\section{Corollaries}


We can now use the results in the previous sections to relate pluriconformal, $(1,1)$-geodesic maps $\varphi:M\to N$ with holomorphic maps $\psi:M\to\Sigma^+N$ (\cite{SimoesSvensson:06}):

\begin{corollary}\label{Corollary:NewSalamonCorollary4.2}
Let $\varphi:M^{2m}\to N^{2n}$ be a smooth immersion from a Kähler manifold $M^{2m}$ into an oriented
even-dimensional manifold $N^{2n}$, and assume that its normal bundle $V$ has $R^{02}_V=0$. Then, $\varphi$ is a pluriconformal and $(1,1)$-geodesic map if and only if it is (locally) the projection of a $(J^M,\J^2)$-holomorphic map $\psi:M^{2m}\to\Sigma^+N^{2n}$.
\end{corollary}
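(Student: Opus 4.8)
The plan is to recognize this corollary as an assembly of the two main theorems of the chapter, handling the two implications separately. For the ``if'' direction I would start from a $(J^M,\J^2)$-holomorphic map $\psi:M^{2m}\to\Sigma^+N^{2n}$ with $\varphi=\pi\circ\psi$. Since $M$ is K\"ahler it is in particular $(1,2)$-symplectic, so Theorem \ref{Theorem:NewTheorem35} applies verbatim and yields that $\varphi=\pi\circ\psi$ is $(1,1)$-geodesic; pluriconformality comes along for free, because $\varphi$ is $(J^M,J_\psi)$-holomorphic (Lemma \ref{Lemma:FundamentalLemmaToProve3.5}) and holomorphic maps are always pluriconformal (Remark \ref{Remark:ConformalAndPluriconformalMapsFromARiemannSurface}). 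This direction is thus essentially a citation.

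For the ``only if'' direction, suppose $\varphi$ is pluriconformal and $(1,1)$-geodesic. Because $\varphi$ is an immersion, $V=(\d\varphi(TM))^\perp$ is a genuine oriented even-dimensional Riemannian subbundle of $\varphi^{-1}TN$, so the object $\Sigma^+V$ makes sense. The hypothesis $R^{02}_V=0$, together with the fact that $M$ is K\"ahler and hence a complex manifold, lets me endow $\Sigma^+V$ with the Koszul-Malgrange holomorphic structure $\J^{KM}$ by Theorem \ref{Theorem:KoszulMalgrangeComplexStructureOnSigmaPlusV}, and then form the map $\eta:\Sigma^+V\to\Sigma^+N$ of \eqref{Equation:TheMapFromSigmaPlusVIntoSigmaPlusNOnTheSalamonNewTheorem4.1}. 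At this point I would invoke the implication (i)$\impl$(ii) of Theorem \ref{Theorem:NewTheorem4.1}, which says precisely that the $(1,1)$-geodesicity of $\varphi$ forces $\eta$ to be $(\J^{KM},\J^2)$-holomorphic.

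The final step is to manufacture $\psi$ out of $\eta$. Since $\Sigma^+V$ is a holomorphic bundle over the complex manifold $M$, about any point there is a locally defined $(J^M,\J^{KM})$-holomorphic section $s$ of $\Sigma^+V$ (the same existence fact used in the third step of the proof of Theorem \ref{Theorem:NewTheorem4.1}). I would then set $\psi=\eta\circ s$: as the composition of a $(J^M,\J^{KM})$-holomorphic map with the $(\J^{KM},\J^2)$-holomorphic map $\eta$, it is $(J^M,\J^2)$-holomorphic, and from the definition of $\eta$ one reads off $\pi\circ\eta=\varphi\circ\pi_{\Sigma^+V}$, whence $\pi\circ\psi=\varphi$. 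This produces the desired local lift and closes the equivalence.

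I do not expect a genuine obstacle, since all the analytic and geometric content is already carried by Theorems \ref{Theorem:NewTheorem35} and \ref{Theorem:NewTheorem4.1}; the only points demanding care are bookkeeping ones. First, the immersion hypothesis is what guarantees that $V$ is a well-defined even-rank subbundle, so that $\Sigma^+V$ and its Koszul-Malgrange structure exist at all. Second, the lift is genuinely only local, because $\Sigma^+V$ admits holomorphic sections only locally, which is exactly why the statement carries the parenthetical \emph{(locally)}. If I wished to be fully careful I would also note that the almost Hermitian structure $J_{\varphi_x}$ built in \eqref{Equation:DefinitionOfTheHermitianStructureOnTvarphixNFromTheOneInVxInTheGeneralCase} is compatible with $\varphi$ by construction, so no extra consistency check is needed for the identity $\pi\circ\psi=\varphi$.
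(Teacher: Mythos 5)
Your proof is correct and follows essentially the same route as the paper: the ``if'' direction is a direct citation of Theorem \ref{Theorem:NewTheorem35} (with pluriconformality coming from $(J^M,J_\psi)$-holomorphicity), and the ``only if'' direction invokes Theorem \ref{Theorem:NewTheorem4.1} to make $\eta$ $(\J^{KM},\J^2)$-holomorphic, composes it with a local $\J^{KM}$-holomorphic section of $\Sigma^+V$, and reads off $\pi\circ\psi=\varphi$ from the definition of $\eta$ --- exactly as the paper does.
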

\begin{proof}
If $\varphi=\pi\circ\psi$ is the projection of a $\J^2$-holomorphic map $\psi:M^{2m}\to\Sigma^+N$, $\varphi$ is
$(1,1)$-geodesic and pluriconformal, by Theorem \ref{Theorem:NewTheorem35}.

Conversely, if $\varphi$ is a pluriconformal and $(1,1)$-geodesic map with $R^{02}_\bot=0$, Theorem \ref{Theorem:NewTheorem4.1} guarantees that the holomorphic bundle $\Sigma^+\varphi{TM}^\bot=\Sigma^+V$ is holomorphically mapped into $\Sigma^+N^{2n}$. Therefore, take any holomorphic section $s^{10}_V:M^{2m}\to\Sigma^+V$ of this holomorphic
bundle and consider the map $\psi=\eta\circ{s}^{10}_V:M^{2m}\to\Sigma^+N$. This is a $(J^M,\J^2)$-holomorphic map as it is the composition of holomorphic maps. Note that $\eta\circ{s}^{10}_V(x)=(\varphi(x),\eta_2(x))$, so that $\varphi=\pi_{\Sigma^+N}\circ\psi$ by the very definition of $\eta$, and the proof is complete.
\end{proof}

Once again, notice that we can change $\Sigma^+N$ with $\Sigma^-N\simeq\Sigma^+\tilde{N}$, where $\tilde{N}$ denotes the manifold $N$ with the opposite orientation. In fact, if $\psi:M\to\Sigma^-N$ is $\J^2$-holomorphic as above, the map $\varphi:M\to\tilde{N}$ is \emph{pluriconformal} and \emph{$(1,1)$-geodesic}. Hence, it has these same two properties as a map to $N$. Conversely, if $\varphi:M\to N$ verifies the conditions on Corollary \ref{Corollary:NewSalamonCorollary4.2}, then it does as a map to $\tilde{N}$. Hence, it also admits a twistor lift to $\Sigma^+\tilde{N}\simeq\Sigma^-N$.

When $m=1$ the condition on the curvature is automatically satisfied and pluriconformality reduces to conformality. Hence, we get (\cite{Salamon:85}, Corollary 4.2):

\begin{corollary}\label{Corollary:SalamonCorollary4.2}
Let $M^2$ be a Riemann surface, $N^{2n}$ an oriented even-dimensional manifold. Consider $\varphi:M^2\to N^{2n}$ an immersion. Then, $\varphi$ is a conformal and harmonic map if and only if $\varphi$ is (locally) the projection of a $(J^M,\J^2)$-holomorphic map $\psi:M^2\to\Sigma^+N$.
\end{corollary}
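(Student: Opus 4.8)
The final statement is Corollary \ref{Corollary:SalamonCorollary4.2}, which asserts that for a Riemann surface $M^2$ and an oriented even-dimensional manifold $N^{2n}$, an immersion $\varphi:M^2\to N^{2n}$ is conformal and harmonic if and only if it is locally the projection of a $(J^M,\J^2)$-holomorphic map $\psi:M^2\to\Sigma^+N$.

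Let me think about how I would prove this as a corollary of the general machinery developed in the chapter.

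The key observation is that this is precisely the case $m=1$ of Corollary \ref{Corollary:NewSalamonCorollary4.2}. So the whole strategy is reduction: I need to verify that when $M$ is a Riemann surface, all the hypotheses of the general corollary are automatically satisfied, and that the conclusions specialize correctly.

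First, the hypotheses. Corollary \ref{Corollary:NewSalamonCorollary4.2} requires $M^{2m}$ to be Kähler and the normal bundle $V=(\d\varphi(TM))^\perp$ to satisfy $R_V^{02}=0$. When $m=1$, $M^2$ is a Riemann surface, hence a one-dimensional complex manifold, which is automatically Kähler (any Hermitian metric on a complex curve is Kähler since $\d\omega$ is a 3-form on a real 2-manifold, hence vanishes — or more simply because the $(1,2)$-symplectic condition is vacuous in complex dimension one). Next, $T^{01}M$ is one-complex-dimensional, spanned by $\partial_{\bar z}$, so $R_V(T^{01}M,T^{01}M)=R_V(\partial_{\bar z},\partial_{\bar z})=0$ trivially by antisymmetry of the curvature. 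Thus $R_V^{02}=0$ holds automatically, as already noted in the discussion preceding Theorem \ref{Theorem:SalamonsTheorem4.1}.

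Second, the matching of the geometric notions. I would invoke Remark \ref{Remark:ConformalAndPluriconformalMapsFromARiemannSurface}(ii), which states that for a map from a Riemann surface the concepts of (weak) conformality and (weak) pluriconformality coincide. So \emph{pluriconformal} becomes \emph{conformal}. For the harmonicity side, I need that a pluriconformal $(1,1)$-geodesic map from a Riemann surface is the same as a conformal harmonic map. A $(1,1)$-geodesic map is always harmonic (as shown on p.\ \pageref{Page:(11)GeodesicMapsAreHarmonic}, tracing $\nabla\d\varphi$ over an orthonormal frame $\{X,JX\}$). Conversely, for a conformal map from a Riemann surface, harmonicity (characterized by $\nabla_{\partial_{\bar z}}\partial_z\varphi=0$ via Proposition \ref{Proposition:HarmonicMapsFromRiemannSurfaces}) is exactly the $(1,1)$-geodesic condition, since in complex dimension one the only nontrivial component of $\nabla\d\varphi$ of type $(1,1)$ is $\nabla\d\varphi(\partial_z,\partial_{\bar z})=\nabla_{\partial_{\bar z}}\partial_z\varphi$ (using $\nabla^M_{\partial_{\bar z}}\partial_z=0$ for the Levi-Civita connection of a Kähler metric in one complex variable). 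Hence \emph{conformal and harmonic} is equivalent to \emph{conformal ($=$pluriconformal) and $(1,1)$-geodesic}.

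With both the hypotheses verified and the terminology matched, the statement follows immediately from Corollary \ref{Corollary:NewSalamonCorollary4.2}: $\varphi$ is conformal and harmonic $\Longleftrightarrow$ $\varphi$ is pluriconformal and $(1,1)$-geodesic $\Longleftrightarrow$ $\varphi$ is locally the projection of a $(J^M,\J^2)$-holomorphic map $\psi:M^2\to\Sigma^+N^{2n}$. I do not anticipate a genuine obstacle here, as the substantive analytic content — the construction of the twistor lift via the Koszul-Malgrange holomorphic structure on $\Sigma^+V$ and the map $\eta$ — has already been carried out in Theorem \ref{Theorem:NewTheorem4.1}. The only thing requiring mild care is making sure the equivalences of the geometric notions are invoked cleanly, in particular that harmonicity of a conformal map from a Riemann surface really does coincide with the $(1,1)$-geodesic condition; this is the one place where I would spell out the complex-dimension-one simplification rather than merely cite it.
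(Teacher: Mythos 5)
Your proposal is correct and follows exactly the paper's route: the paper obtains this corollary by specializing Corollary \ref{Corollary:NewSalamonCorollary4.2} to $m=1$, noting (as you do) that the curvature condition $R^{02}_V=0$ holds automatically for a Riemann surface domain and that pluriconformality reduces to conformality. Your additional spelling-out of the equivalence between harmonicity and the $(1,1)$-geodesic condition in complex dimension one (via $\nabla_{\partial_{\bar z}}\partial_z\varphi=0$) is a detail the paper leaves implicit, but it is the correct justification.
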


Using Theorem \ref{Theorem:PullBackMetricThroughAPluriharmonicAndPluriconformalMapIsKahler}, we know that a pluriharmonic and pluriconformal map $\varphi:M\to N^{2n}$ defined on a complex manifold $(M,J)$ induces a Kähler metric on the domain manifold and, for this metric, $\varphi$ is still $(1,1)$-geodesic. Hence, if the $(0,2)$-part of the curvature of the normal bundle $V$ vanishes, using Corollary \ref{Corollary:NewSalamonCorollary4.2}, $\varphi$ is locally the projection of a $\J^2$-holomorphic map to $\Sigma^+N$. In some cases, we can guarantee the vanishing of $R^{02}_V$ for a pluriharmonic, pluriconformal map (\cite{SimoesSvensson:06}):

\begin{theorem}[\textup{The case of symmetric spaces}]\label{Theorem:TheVanishingOfR20InTheCaseOfSymmetricSpaces}
Let $N$ be a Riemannian symmetric space of Euclidean, compact or non-compact type and $\varphi: M\to N$ a pluriconformal, pluriharmonic immersion. Then the $(0,2)$-part of the curvature of its normal bundle vanishes. Consequently, $\varphi$ is locally the projection of a $\J^2$-holomorphic map to $\Sigma^+N$.
\end{theorem}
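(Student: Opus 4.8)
The plan is to reduce the statement to an application of the Ricci equation of the immersion and then to exploit the two defining features of a symmetric space: that its curvature tensor is parallel and that it is given algebraically by a double Lie bracket. First I would use pluriconformality together with pluriharmonicity to pass to a convenient metric. By Theorem \ref{Theorem:PullBackMetricThroughAPluriharmonicAndPluriconformalMapIsKahler} the pull-back metric $g=\varphi^*h$ is K\"ahler, and for this metric $\varphi$ becomes an \emph{isometric} pluriminimal immersion, i.e. a $(1,1)$-geodesic isometric immersion (this is exactly how the proof of Corollary \ref{Corollary:MinimalSubmanifoldsAndTwistorMaps} proceeds). Since $(1,1)$-geodesicity and the orthogonal splitting $\varphi^{-1}TN=\d\varphi(TM)\oplus V$ are unaffected by this change, it suffices to prove $R_V(X^{10},Y^{10})=0$ for $X^{10},Y^{10}\in T^{10}M$, where now $\d\varphi$ is an isometry onto its image and $M$ is K\"ahler, so that $g(T^{10}M,T^{10}M)=0$.

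Second I would invoke the classical Ricci equation for the normal connection $\nabla^V$, which for normal $\xi,\eta$ reads
\[
\langle R_V(X,Y)\xi,\eta\rangle=\langle R^N(\d\varphi X,\d\varphi Y)\xi,\eta\rangle-\langle[A_\xi,A_\eta]X,Y\rangle,
\]
$A_\xi$ denoting the shape operator. Taking $X=X^{10}$, $Y=Y^{10}$, the shape-operator term drops out: since $\varphi$ is $(1,1)$-geodesic the second fundamental form has no $(1,1)$-part, so $\langle A_\xi X^{10},Z^{01}\rangle=\langle\nabla\d\varphi(X^{10},Z^{01}),\xi\rangle=0$ forces $A_\xi$ to interchange $T^{10}M$ and $T^{01}M$; hence $A_\xi A_\eta X^{10}\in T^{10}M$, and its pairing with $Y^{10}$ vanishes by the isotropy $g(T^{10}M,T^{10}M)=0$. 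Thus the whole problem collapses to showing that the ambient term $\langle R^N(\d\varphi X^{10},\d\varphi Y^{10})\xi,\eta\rangle$ vanishes for all normal $\xi,\eta$.

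Third, and this is where the symmetric-space hypothesis is indispensable, I would use the Cartan description $T_{\varphi(x)}N\cong\p$ with $R^N(A,B)C=-[[A,B],C]$ (where $[\p,\p]\subseteq\k$ and $[\k,\p]\subseteq\p$) together with the parallelism $\nabla R^N=0$. By $\mathrm{ad}$-invariance of the metric the ambient term becomes $-\langle[[\d\varphi X^{10},\d\varphi Y^{10}],\xi],\eta\rangle=-\langle[\d\varphi X^{10},\d\varphi Y^{10}],[\xi,\eta]\rangle$, so everything reduces to controlling the bracket $[\d\varphi X^{10},\d\varphi Y^{10}]\in\k^{\cn}$ — equivalently the $(2,0)$-part of the pulled-back curvature, which by the structure equation equals $-\tfrac12[\alpha'_{\p}\wedge\alpha'_{\p}]$ with $\alpha'_{\p}=\d\varphi|_{T^{10}M}$. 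The \textbf{main obstacle} is precisely to show that this bracket is annihilated against $[\xi,\eta]$. I would obtain this from pluriharmonicity via the fact that a pluriharmonic map into a symmetric space carries an associated $S^1$-family of flat connections $\nabla^\lambda=\nabla+\lambda^{-1}\alpha'_{\p}+\lambda\alpha''_{\p}$, whose $\lambda^{-2}$-coefficient $[\alpha'_{\p}\wedge\alpha'_{\p}]$ is forced to vanish. For a Euclidean factor $R^N\equiv0$ and there is nothing to prove; for the compact and non-compact factors it is the definiteness of the invariant form (on $\k$, respectively on $\p$) that makes the vanishing of this bracket — and hence of the whole ambient term — a genuine consequence rather than a formal identity. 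Checking this step carefully, that pluriharmonicity forces $[\d\varphi X^{10},\d\varphi Y^{10}]$ to act trivially between normal vectors, is the only part requiring more than bookkeeping.

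Finally I would assemble the pieces: the Ricci equation now gives $R_V(X^{10},Y^{10})=0$, that is $R^{02}_V=0$. The ``consequently'' clause is then immediate, since $\varphi$ is a pluriconformal, $(1,1)$-geodesic immersion from the K\"ahler manifold $M$ whose normal bundle satisfies $R^{02}_V=0$; Corollary \ref{Corollary:NewSalamonCorollary4.2} therefore produces a local $(J^M,\J^2)$-holomorphic lift $\psi:M\to\Sigma^+N$ with $\pi\circ\psi=\varphi$.
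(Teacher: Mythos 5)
Your argument cannot be checked against the thesis itself, because the thesis contains no proof of this statement: it defers to \cite{SimoesSvensson:06}, Theorem 3.8. Judged on its own terms, your skeleton is sound and matches what such a proof must do: passing to the pull-back metric via Theorem \ref{Theorem:PullBackMetricThroughAPluriharmonicAndPluriconformalMapIsKahler} so that $M$ is K\"ahler and $\varphi$ is an isometric $(1,1)$-geodesic immersion; using the Ricci equation; killing the shape-operator term because $(1,1)$-geodesicity forces each $A_\xi$ to interchange $T^{10}M$ and $T^{01}M$, so that $[A_\xi,A_\eta]X^{10}\in T^{10}M$ pairs to zero with $Y^{10}$; and closing with Corollary \ref{Corollary:NewSalamonCorollary4.2}. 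All of this is correct.

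The gap is exactly the step you flag as the crux, and the mechanism you propose for it is circular. Given the Maurer--Cartan equation, pluriharmonicity is equivalent to the vanishing of the $\lambda^{\pm1}$-coefficients of the curvature of $\nabla^\lambda=\nabla+\lambda^{-1}\alpha'_{\p}+\lambda\alpha''_{\p}$, while the $\lambda^{\pm2}$-coefficient is $\tfrac12[\alpha'_{\p}\wedge\alpha'_{\p}]$ --- precisely the quantity you want to show vanishes. So ``pluriharmonicity carries a flat $S^1$-family, whose $\lambda^{-2}$-coefficient is forced to vanish'' assumes the conclusion: pluriharmonicity alone only gives flatness modulo the $\lambda^{\pm2}$-terms. (A second slip: $[\d\varphi X^{10},\d\varphi Y^{10}]$ lies in $\k^{\cn}$, so in the non-compact case it is still definiteness of the Killing form on $\k$, not on $\p$, that matters.) The missing step can be supplied concretely, and this is where the type hypothesis genuinely enters. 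First, apply the commutation identity
\[(\nabla^2\d\varphi)(X,Y,Z)-(\nabla^2\d\varphi)(Y,X,Z)=R^N(\d\varphi X,\d\varphi Y)\,\d\varphi Z-\d\varphi\big(R^M(X,Y)Z\big)\]
with $X,Y\in T^{10}M$, $Z\in T^{01}M$: pluriharmonicity together with the type-preservation of the K\"ahler connection makes the left-hand side vanish, and $R^M(T^{10}M,T^{10}M)=0$ kills the last term, giving $R^N(\d\varphi X,\d\varphi Y)\,\d\varphi Z=0$. Second, set $\kappa=[\d\varphi X,\d\varphi Y]\in\k^{\cn}$; by invariance of the Killing form $B$ and the conjugate of the identity just proved, $B(\kappa,\bar\kappa)=B\big(\d\varphi X,[\d\varphi Y,[\d\varphi\bar X,\d\varphi\bar Y]]\big)=0$, since $[[\d\varphi\bar X,\d\varphi\bar Y],\d\varphi Y]=-R^N(\d\varphi\bar X,\d\varphi\bar Y)\,\d\varphi Y=0$. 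Third, $B$ is negative definite on $\k$ for both compact and non-compact type, so $\kappa=0$; hence $R^N(\d\varphi T^{10}M,\d\varphi T^{10}M)=0$ and your Ricci-equation reduction finishes the proof, the Euclidean case being trivial. Alternatively you may quote this curvature identity for pluriharmonic maps as a known theorem (Ohnita--Valli \cite{OhnitaValli:90}); but without one of these inputs your proposal leaves the decisive step unestablished.
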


Together with Corollary \ref{Corollary:NewSalamonCorollary4.2}, given a smooth immersion $\varphi:M\to N^{2n}$, with $M$ Kähler and $N^{2n}$ a symmetric space of Euclidean, compact or non-compact case, this last result implies that $\varphi$ is pluriharmonic and pluriconformal if and only if it is (locally) the projection of a $\J^2$-holomorphic map to $\Sigma^+N$. In some particular cases, pluriconformality follows from pluriharmonicity and we can therefore guarantee that pluriharmonic maps are precisely the projections of $\J^2$-holomorphic maps. That is what happens, for instance, when $M$ is a compact Kähler manifold with quasi-positive Ricci curvature\footnote{The Ricci curvature is called quasi-positive if it is positive semi-definite everywhere, and positive definite at least at one point (\cite{Wu:88}).}. For more details and for a proof of Theorem \ref{Theorem:TheVanishingOfR20InTheCaseOfSymmetricSpaces}, see \cite{SimoesSvensson:06}, Theorem 3.8. and the examples thereafter.\index{Pluriharmonic~map!and~symmetric~spaces}\index{Symmetric!space!and~pluriharmonic~maps}


\section{Real~isotropic~and~totally~umbilic~maps}\label{Section:SectionInChapter:TwistorSpacesAndHarmonicMaps:TheHAndJ1CasesRealIsotropyAndTotallyUmbilicMaps}


We have seen in Theorems \ref{Theorem:SalamonTheorem3.5} and \ref{Theorem:NewTheorem35} that the projection $\varphi$ of a $\J^2$-holomorphic map $\psi:M^{2m}\to \Sigma^+N$ is a harmonic (when $M^{2m}$ is cosymplectic) or $(1,1)$-geodesic map (when $M^{2m}$ is Kähler). In Theorem \ref{Theorem:NewTheorem4.1} we obtained a partial converse to this result. We would now like to see what can we say about $\J^1$-holomorphic maps. We shall start by studying $\H$-holomorphic maps $\psi:M^{2m}\to\Sigma^+N$ .


\subsection{$\H$-holomorphic maps}


Recall Definition \ref{Definition:HolomorphicityInCertainSubbundles} for $\H$-holomorphic map: given two almost complex manifolds $(M,J^M)$ and $(Z,J^Z)$ with $TZ=\H\oplus\V$ a stable decomposition, $\psi:(M,J^M)\to(Z,J^Z)$ is $\H$-holomorphic if equation \eqref{Equation:EquationIn:Definition:HolomorphicityInCertainSubbundles} is verified:
\[(\d\psi(J^MX))^\H=J^Z(\d\psi{X})^\H,\quad\forall\,X\in{T}M\text{.}\]

If $N^{2n}$ is an oriented even-dimensional Riemannian manifold, we can take $Z=\Sigma^+N$ and consider the decomposition of $T\Sigma^+N$ into its horizontal and vertical spaces. This decomposition is stable for both the almost complex structures $\J^1$ and $\J^2$. Moreover, since $\J^1$ and $\J^2$ coincide in the horizontal subbundle, there is no ambiguity when we refer to $\H$-holomorphicity.

\begin{theorem}\label{Theorem:ProjectionsOfHHolomorphicMaps}\index{$\H$-holomorphic!projection}\index{Pluriconformal~map!$\H$-holomorphic~projections}
Let $\psi:M\to \Sigma^+N$ be an $\H$-holomorphic map defined on an almost Hermitian manifold $M^{2m}$. Then, the projected map $\varphi:M\to N$ is weakly pluriconformal.
\end{theorem}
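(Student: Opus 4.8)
The goal is to show that if $\psi:M\to\Sigma^+N$ is $\H$-holomorphic, then its projection $\varphi=\pi\circ\psi$ is weakly pluriconformal, that is, $\d\varphi_x(T^{10}_xM)$ is isotropic in $T^\cn_{\varphi(x)}N$ for each $x$. The plan is to exploit the fact that at each point $x$, the map $\psi$ picks out a Hermitian structure $J_\psi(x)$ on $T_{\varphi(x)}N$, and that $\H$-holomorphicity forces $\d\varphi$ to intertwine $J^M$ with this $J_\psi(x)$ on the level of $(1,0)$-subspaces. Concretely, I would begin by recalling the key structural fact about the horizontal space established in Section~\ref{Section:TheBundleSigmaPlusM}: the projection $\pi$ is ``holomorphic'' in the sense of \eqref{Equation:HolomorphicityOfTheProjectionMap}, namely $\d\pi_{(x,J_x)}(\J^a X)=J_x\,\d\pi_{(x,J_x)}X$, and $\d\pi$ restricted to $\H$ is an isomorphism onto $T_xM$ carrying $\J^\H$ to $J_x$.

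The central computation is to unwind the definition of $\H$-holomorphicity. By Definition~\ref{Definition:HolomorphicityInCertainSubbundles}, $\H$-holomorphicity of $\psi$ means
\[
(\d\psi(J^MX))^\H=\J^\H(\d\psi X)^\H,\quad\forall\,X\in TM.
\]
Applying $\d\pi$ to both sides and using that $\d\pi$ annihilates the vertical part while being holomorphic on the horizontal part (equation \eqref{Equation:HolomorphicityOfTheProjectionMap} with $a$ either $1$ or $2$, since the horizontal parts of $\J^1$ and $\J^2$ agree), I obtain
\[
\d\varphi(J^MX)=\d\pi\big((\d\psi(J^MX))^\H\big)=\d\pi\big(\J^\H(\d\psi X)^\H\big)=J_\psi\,\d\pi(\d\psi X)=J_\psi\,\d\varphi X.
\]
Thus $\varphi$ is $(J^M,J_\psi)$-holomorphic. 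This is exactly the argument already used in part (i) of the proof of the Fundamental Lemma \ref{Lemma:FundamentalLemmaToProve3.5}, except that there one assumed full $\J^a$-holomorphicity; here only the horizontal component is needed, which is precisely what $\H$-holomorphicity supplies.

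Once $(J^M,J_\psi)$-holomorphicity is in hand, weak pluriconformality follows immediately. For $X^{10},Y^{10}\in T^{10}_xM$, holomorphicity gives $\d\varphi(X^{10})=\d\varphi(X^{10})^{10}$ and $\d\varphi(Y^{10})=\d\varphi(Y^{10})^{10}$, i.e.\ $\d\varphi$ maps $T^{10}_xM$ into the $(1,0)$-subspace $T^{10}_{J_\psi(x)}N$ determined by the Hermitian structure $J_\psi(x)$. Since that $(1,0)$-subspace is isotropic for the complexified metric (this is the defining property of the $(1,0)$-subspace associated to a Hermitian structure, recalled in Section~\ref{Subsection:SigmaPlusEAsTheComplexManifoldGkOplusIsoEcn}), one concludes
\[
<\d\varphi_x X^{10},\d\varphi_x Y^{10}>=0,\quad\forall\,X^{10},Y^{10}\in T^{10}_xM,
\]
which is exactly condition \eqref{Equation:SecondEquationIn:Definition:PluriconformalMap} for weak pluriconformality. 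I expect no serious obstacle: the only delicate point is making sure one genuinely has only horizontal information available, so the argument must route everything through $\d\pi$ and invoke \eqref{Equation:HolomorphicityOfTheProjectionMap} rather than any vertical condition. The isotropy step is then purely algebraic and requires no regularity of $\d\varphi$, so the statement holds even at branch points where $\d\varphi_x$ may degenerate.
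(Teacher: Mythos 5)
Your proposal is correct and follows essentially the same route as the paper: first deduce $(J^M,J_\psi)$-holomorphicity of $\varphi$ by applying $\d\pi$ (which kills the vertical part and intertwines $\J^\H$ with $J_\psi$ via \eqref{Equation:HolomorphicityOfTheProjectionMap}) to the $\H$-holomorphicity identity, then conclude weak pluriconformality from the fact that holomorphic maps send $T^{10}M$ into the isotropic $(1,0)$-subspace of $J_\psi$. The only cosmetic difference is that you spell out this last isotropy step, while the paper delegates it to Remark~\ref{Remark:ConformalAndPluriconformalMapsFromARiemannSurface}, which contains exactly that argument.
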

\begin{proof}If $\varphi=\pi\circ\psi$, then $\varphi$ is $(J^M,J_\psi)$-holomorphic:\addtolength{\arraycolsep}{-1.0mm}
\[\begin{array}{lll}\d\varphi(J^MX)&=&\d\pi\circ\psi(J^MX)=\d\pi((\d\psi{J}^MX)^\H+(\d\psi{J}^MX)^\V)=\d\pi((\d\psi{J}^MX)^\H)\\
~&=&\d\pi(\J^H(\d\psi{X})^\H)=J_\psi\d\pi((\d\psi{X})^\H)=J_\psi\d\pi(\d\psi{X})=J_\psi\d\varphi{X}\text{.}\end{array}\]\addtolength{\arraycolsep}{1.0mm}\noindent
Therefore, $\varphi$ must be pluriconformal (Remark \ref{Remark:ConformalAndPluriconformalMapsFromARiemannSurface}).
\end{proof}

\begin{corollary}\label{Corollary:CorollaryTo:Theorem:ProjectionsOfHHolomorphicMaps}\index{$\H$-holomorphic!map!from~a~Riemann~surface} If $\psi:M^2\to\Sigma^+N$ is an $\H$-holomorphic map defined in a Riemann surface, the projected map $\varphi=\pi\circ\psi$ is a weakly conformal map.
\end{corollary}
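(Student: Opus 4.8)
The plan is to obtain this corollary as an immediate specialization of Theorem \ref{Theorem:ProjectionsOfHHolomorphicMaps} to the one-dimensional complex case. Since $M^2$ is a Riemann surface, it carries a canonical complex structure $J^M$ coming from its conformal structure, and hence it is in particular an almost Hermitian manifold with $m=1$. Thus the hypotheses of Theorem \ref{Theorem:ProjectionsOfHHolomorphicMaps} are met, and applying that theorem directly yields that the projected map $\varphi=\pi\circ\psi$ is weakly pluriconformal.

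The only remaining point is to translate \emph{weakly pluriconformal} into \emph{weakly conformal} in this low-dimensional setting. For this I would invoke Remark \ref{Remark:ConformalAndPluriconformalMapsFromARiemannSurface}(ii), which states precisely that for maps defined on a Riemann surface the notions of (weakly) conformal and (weakly) pluriconformal coincide. Hence weak pluriconformality of $\varphi$ is exactly weak conformality, completing the proof.

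I do not anticipate any genuine obstacle here: the corollary is essentially just Theorem \ref{Theorem:ProjectionsOfHHolomorphicMaps} read in the case $m=1$, combined with the dictionary of Remark \ref{Remark:ConformalAndPluriconformalMapsFromARiemannSurface}. The one thing worth being careful about is making explicit that a Riemann surface is being regarded as an almost Hermitian manifold (so that $J^M$ is available and $\psi$ can be called $\H$-holomorphic in the sense of Definition \ref{Definition:HolomorphicityInCertainSubbundles}), but this is a matter of recalling standing conventions rather than a substantive step. The resulting proof is therefore a single short paragraph citing the two earlier results.
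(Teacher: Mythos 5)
Your proposal is correct and is exactly the paper's own argument: the paper's proof reads ``Immediate from the preceding theorem and Remark \ref{Remark:ConformalAndPluriconformalMapsFromARiemannSurface}.'' You simply spell out the same two steps (Theorem \ref{Theorem:ProjectionsOfHHolomorphicMaps} with $m=1$, then the Riemann-surface dictionary between pluriconformal and conformal) in slightly more detail.
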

\begin{proof}
Immediate from the preceding theorem and Remark \ref{Remark:ConformalAndPluriconformalMapsFromARiemannSurface}.
\end{proof}

Recall from p.\ \pageref{Page:CompatibleTwistorLift} that a twistor lift $\psi:M\to\Sigma^+N$ is strictly compatible with $\varphi=\pi\circ\psi$ if $\varphi$ is holomorphic with respect to the almost Hermitian structure $J_\psi$ defined by $\psi$.

\begin{proposition}\label{Proposition:LocalHHolomorphicLifts}\index{$\H$-holomorphic!lift}\index{Conformal~map!and~$\H$-holomorphic~lifts}\index{Pluriconformal~map!and~$\H$-holomorphic~lifts}\index{Compatible~twistor~lift!and~$\H$-holomorphic~maps}
Let $\varphi:M^{2m}\to N^{2n}$ be a smooth map on an almost Hermitian manifold $M^{2m}$. Then, any strictly compatible lift $\psi$ of $\varphi$ is $\H$-holomorphic\footnote{Notice that we are not claiming that such a lift exists; following the discussion on p.\ \pageref{Page:CompatibleTwistorLift}, a necessary condition for the existence of such a lift is weakly pluriconformality of $\varphi$.}.
\end{proposition}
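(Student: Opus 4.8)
The plan is to mirror the proof of Lemma \ref{Lemma:HorizontalHolomorphicityOfAnySection}, with the strict compatibility of $\psi$ playing exactly the role that was automatically fulfilled by the identity map in the case of sections. The two structural facts I would lean on are the same ones used there: first, that $\d\pi_{\psi(x)}$ annihilates the vertical space $\V$, so composing with the projection discards the vertical part of any tangent vector; and second, that $\d\pi_{\psi(x)}|_\H$ is an isomorphism onto $T_{\varphi(x)}N$, with the horizontal almost complex structure given by $\J^\H=\d\pi|_\H^{-1}\circ J_\psi\circ\d\pi|_\H$ as in \eqref{Equation:DefinitionOfTheAlmostComplexStructureOnH} (recall that the horizontal parts of $\J^1$ and $\J^2$ coincide, so writing $\J^\H$ is unambiguous).

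First I would observe that, since $\d\pi$ vanishes on $\V$ and $\pi\circ\psi=\varphi$, for every $Y\in T_xM$
\[
\d\pi_{\psi(x)}\big((\d\psi_x Y)^\H\big)=\d\pi_{\psi(x)}(\d\psi_x Y)=\d(\pi\circ\psi)_x Y=\d\varphi_x Y.
\]
Applying $\d\pi_{\psi(x)}|_\H^{-1}$ and specialising to $Y=J^M X$ gives
\[
\big(\d\psi_x(J^M X)\big)^\H=\d\pi_{\psi(x)}|_\H^{-1}\big(\d\varphi_x(J^M X)\big).
\]
Here strict compatibility enters: by definition (see p.\ \pageref{Page:CompatibleTwistorLift}) it means precisely that $\varphi$ is $(J^M,J_\psi)$-holomorphic, i.e.\ $\d\varphi_x(J^M X)=J_\psi\,\d\varphi_x X$. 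Substituting this and then using the definition of $\J^\H$ together with the first displayed identity, I would obtain
\[
\big(\d\psi_x(J^M X)\big)^\H=\d\pi_{\psi(x)}|_\H^{-1}\big(J_\psi\,\d\varphi_x X\big)=\d\pi_{\psi(x)}|_\H^{-1}\Big(J_\psi\,\d\pi_{\psi(x)}|_\H\big((\d\psi_x X)^\H\big)\Big)=\J^\H\big(\d\psi_x X\big)^\H,
\]
which is exactly equation \eqref{Equation:EquationIn:Definition:HolomorphicityInCertainSubbundles} for $\psi$, establishing $\H$-holomorphicity.

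I do not anticipate any genuine obstacle: the argument is a short chain of identities. The only conceptual point worth flagging is that the strict-compatibility hypothesis is precisely what substitutes for the automatic holomorphicity of $\Id_M$ used in the section case of Lemma \ref{Lemma:HorizontalHolomorphicityOfAnySection}; without it, $\d\varphi_x(J^M X)$ and $J_\psi\,\d\varphi_x X$ need not agree and the computation collapses. Thus the heart of the matter is simply recognising that $\varphi=\pi\circ\psi$ forces $\d\pi\circ\d\psi=\d\varphi$, and that strict compatibility is exactly the condition making the horizontal projection intertwine $J^M$ and $J_\psi$.
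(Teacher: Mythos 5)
Your proposal is correct and follows essentially the same route as the paper: both arguments rest on the facts that $\d\pi$ kills the vertical part, that $\d\pi|_\H$ is an isomorphism intertwining $\J^\H$ with $J_\psi$, and that strict compatibility supplies exactly the identity $\d\varphi(J^MX)=J_\psi\d\varphi X$ needed to close the chain. The only cosmetic difference is that the paper writes the computation as a chain of equivalences (thereby recording that $\H$-holomorphicity of any lift is \emph{equivalent} to $(J^M,J_\psi)$-holomorphicity of $\varphi$, a fact it uses in the subsequent discussion), whereas you prove just the forward implication.
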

\begin{proof}For any lift $\psi$ of $\varphi$, we have\addtolength{\arraycolsep}{-1.0mm}
\[\begin{array}{ll}~&\J^\H(\d\psi{X})^\H=(\d\psi(JX))^\H\,\equi\,\d\pi(\J^\H(\d\psi{X})^\H)=\d\pi((\d\psi(JX))^\H)\\
\equi&{J}_\psi\d\varphi{X}=\d\varphi{JX}\text{.}\end{array}\]\addtolength{\arraycolsep}{1.0mm}\noindent
Hence, if $\psi$ is a strictly compatible lift of $\varphi$, $\psi$ is $\H$-holomorphic.
\end{proof}

We can therefore conclude that $\H$-holomorphicity of a twistor lift depends only on whether the projected map $\varphi$ is (or not) $(J^M,J_\psi)$-holomorphic.

\begin{proposition}\label{Proposition:LocalLiftsOfPluriconformalLifts}\index{Pluriconformal~map!and~$\H$-holomorphic~lifts}\index{$\H$-holomorphic!lift}
Let $\varphi:M\to N^{2n}$ be a pluriconformal map from an almost Hermitian manifold $M^{2m}$ and let $x\in{M}$. Then, there is some open neighbourhood $\openU\subseteq M$ of $x$ and an $\H$-holomorphic lift $\psi:\openU\subseteq{M}\to\Sigma^+N$.
\end{proposition}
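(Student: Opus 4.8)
The plan is to reduce the statement to Proposition \ref{Proposition:LocalHHolomorphicLifts}, which already guarantees that \emph{any} strictly compatible lift of $\varphi$ is $\H$-holomorphic. Thus it suffices to exhibit, on some neighbourhood $\openU$ of $x$, a strictly compatible twistor lift $\psi$ of $\varphi$; that is, a map $\psi:\openU\to\Sigma^+N$ with $\pi\circ\psi=\varphi$ for which $\varphi$ is $(J^M,J_\psi)$-holomorphic. The whole point is that pluriconformality is precisely the condition that makes such lifts exist (see p.\ \pageref{Page:CompatibleTwistorLift}): no curvature hypothesis or Koszul--Malgrange machinery is needed here, because we only want $\H$-holomorphicity, which lives in the horizontal subbundle and is insensitive to the vertical behaviour of $\psi$.

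First I would carry out the pointwise construction. Since $\varphi$ is pluriconformal, $\d\varphi_x$ is injective and $\d\varphi_x(T^{10}_xM)$ is an $m$-dimensional isotropic subspace of $T^\cn_{\varphi(x)}N$ (Definition \ref{Definition:PluriconformalMap}). By the correspondence between isotropic subspaces and Hermitian structures recorded on p.\ \pageref{Page:kDimensionalIsotropicSubspacesAndHermitianStructures}, this isotropic subspace determines a unique Hermitian structure $\tilde{J}_x$ on the real image $\d\varphi_x(T_xM)$, namely the one transporting $J^M$ along $\d\varphi_x$ and having $\d\varphi_x(T^{10}_xM)$ as $(1,0)$-subspace. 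I would then extend $\tilde{J}_x$ to a full positive Hermitian structure $J_{\varphi_x}$ on $T_{\varphi(x)}N$ exactly as in \eqref{Equation:DefinitionOfTheHermitianStructureOnTvarphixNFromTheOneInVxInTheGeneralCase}, by choosing any positive Hermitian structure on the normal space $V_x=(\d\varphi_x(T_xM))^\perp$; with the orientation convention on $V$ used in the setup of Theorem \ref{Theorem:NewTheorem4.1}, $J_{\varphi_x}$ is positive and hence a point of $\Sigma^+N$ over $\varphi(x)$ (passing to $\Sigma^-N\cong\Sigma^+\tilde{N}$ should the orientation come out the wrong way, which does not affect $\H$-holomorphicity since $\J^\H$ is common to both structures).

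Next I would globalise over a neighbourhood. As $\varphi$ has injective differential, $V=(\d\varphi(TM))^\perp$ is a smooth oriented Riemannian vector bundle of rank $2n-2m$, and $\Sigma^+V$ is a smooth fibre bundle over $M$ with non-empty fibres $\Sigma^+V_y$. Any fibre bundle with non-empty fibres admits smooth local sections, so after shrinking to a neighbourhood $\openU$ of $x$ on which $\Sigma^+V$ is trivial I obtain a smooth section $s^{10}_V:\openU\to\Sigma^+V$. Composing with the map $\eta$ of \eqref{Equation:TheMapFromSigmaPlusVIntoSigmaPlusNOnTheSalamonNewTheorem4.1} I set $\psi=\eta\circ s^{10}_V$. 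By construction $\pi\circ\psi=\varphi$ and $J_\psi(y)=J_{\varphi_y}$ renders $\varphi$ holomorphic at each $y\in\openU$, so $\psi$ is a strictly compatible lift; Proposition \ref{Proposition:LocalHHolomorphicLifts} then yields that $\psi$ is $\H$-holomorphic, completing the argument.

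There is no genuinely hard analytic step here: unlike Theorem \ref{Theorem:NewTheorem4.1}, we solve no $\overline{\partial}$-type system, so no vanishing-curvature condition on $V$ intervenes. The only points requiring care are bookkeeping ones: verifying that pluriconformality indeed delivers a metric-compatible $\tilde{J}_x$ (the isotropy computation behind Remark \ref{Remark:ConformalAndPluriconformalMapsFromARiemannSurface}), checking the orientation convention so that the extended structure lands in $\Sigma^+N$, and ensuring that $y\mapsto J_{\varphi_y}$ is smooth --- all of which are handled automatically once the smooth section $s^{10}_V$ of the bundle $\Sigma^+V$ is fixed and $\eta$ is applied.
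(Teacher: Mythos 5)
Your proposal is correct and follows essentially the same route as the paper: reduce to Proposition \ref{Proposition:LocalHHolomorphicLifts}, then produce a strictly compatible lift by taking any smooth local section $s^{10}_V$ of $\Sigma^+V$ and composing with the map $\eta$ of \eqref{Equation:TheMapFromSigmaPlusVIntoSigmaPlusNOnTheSalamonNewTheorem4.1}. The extra detail you supply (the pointwise construction of $J_{\varphi_x}$, the orientation bookkeeping, and the observation that no curvature or Koszul--Malgrange hypothesis enters) is just an expansion of what the paper leaves implicit.
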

\begin{proof}
From the preceding Proposition \ref{Proposition:LocalHHolomorphicLifts}, it suffices to construct a strictly compatible lift $\psi$ around the point $x\in{M}^{2m}$. As $\varphi$ is pluriconformal, we have a smooth map $\eta:\Sigma^+V\to\Sigma^+\varphi^{-1}TN$ as in Section \ref{Section:LiftsOfHarmonicMaps}. Take any smooth section $s^{10}_V$ of the bundle $\Sigma^+V$; then, the map $\psi$ in $\Sigma^+ N$ obtained by composing with $\eta$ is a strictly compatible lift of $\varphi$.
\end{proof}


\subsection{$\J^1$-holomorphic maps}


The following reduces to \cite{Salamon:85}, Proposition 4.4 in the case $m=1$:

\begin{theorem}[\textup{Projections~of~$\J^1$-holomorphic~maps}]\label{Theorem:GeneralizedTheorem4.4ProjectionsOfJ1HolomorphicMapsInTheHigherDimensionalCase}\index{$\J^1$-holomorphic!projection}\index{Real~isotropic~map!as~projection~of~a~$\J^1$-holomorphic~map}\index{$\J^1$-holomorphic!and~real~isotropic}
Let $\psi:M^{2m}\to \Sigma^+N$ be a $\J^1$-holomorphic map on a Hermitian manifold $M^{2m}$. Then the projected map
$\varphi=\pi\circ\psi$ is real isotropic.
\end{theorem}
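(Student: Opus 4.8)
The plan is to reduce the hypothesis that $\psi$ is $\J^1$-holomorphic to two structural conditions on $\varphi$ by invoking the corollary already established, and then to run a single induction showing that every iterated $T^{10}M$-covariant derivative of $\d\varphi$ stays inside the isotropic subbundle $T^{10}_{J_\psi}N$ along $\varphi$. Real isotropy will then follow immediately from isotropy of that subbundle.

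First I would apply Corollary \ref{Corollary:AnotherVersionOfTheFundamentalLemmaUsingT10AndT01Spaces}: since $\psi$ is $\J^1$-holomorphic, the projected map $\varphi$ is $(J^M,J_\psi)$-holomorphic and $\nabla_{T^{10}M}T^{10}_{J_\psi}N\subseteq T^{10}_{J_\psi}N$. The holomorphicity gives $\d\varphi(T^{10}M)\subseteq T^{10}_{J_\psi}N$, and $T^{10}_{J_\psi}N$ is an isotropic subspace of the complexified bundle because it is the $(1,0)$-space of the Hermitian structure $J_\psi$ on $T_{\varphi(x)}N$ (Section \ref{Subsection:SigmaPlusEAsTheComplexManifoldGkOplusIsoEcn}). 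Next I would prove, by induction on $r$, that
\[
\nabla^{r-1}_{T^{10}M}\d\varphi(T^{10}M)\subseteq T^{10}_{J_\psi}N,\qquad\forall\,r\geq1.
\]
The base case $r=1$ is precisely $\d\varphi(T^{10}M)\subseteq T^{10}_{J_\psi}N$. For the inductive step I would expand $(\nabla^r\d\varphi)(Y_1^{10},\dots,Y_r^{10},X^{10})$ via the recursive formula in Definition \ref{Definition:RealIsotropicMapFromAHermitianManifold}: the leading term $\nabla_{Y_1^{10}}\big(\nabla^{r-1}\d\varphi(Y_2,\dots,Y_r,X)\big)$ lies in $T^{10}_{J_\psi}N$ by combining the induction hypothesis with the condition $\nabla_{T^{10}M}T^{10}_{J_\psi}N\subseteq T^{10}_{J_\psi}N$, while each correction term replaces one argument by $\nabla_{Y_1^{10}}Y_j^{10}$ or $\nabla_{Y_1^{10}}X^{10}$, which remain in $T^{10}M$ because $M$ is Hermitian (the footnote to Definition \ref{Definition:RealIsotropicMapFromAHermitianManifold}); each correction is therefore again $\nabla^{r-1}\d\varphi$ evaluated on $T^{10}M$-vectors, hence in $T^{10}_{J_\psi}N$ by induction.

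Finally, since both $\nabla^{r-1}_{T^{10}M}\d\varphi(T^{10}M)$ and $\nabla^{s-1}_{T^{10}M}\d\varphi(T^{10}M)$ lie in the isotropic subbundle $T^{10}_{J_\psi}N$, their pairing vanishes, which is exactly the defining condition \eqref{Equation:FirstEquationIn:Definition:RealIsotropicMapFromAHermitianManifold} of a real isotropic map, and all assertions are pointwise so no global hypothesis is needed. The argument is essentially formal once the two conditions from the corollary are in hand; the only point requiring care is the bookkeeping of the correction terms in the induction together with the systematic use of the Hermitian hypothesis $\nabla_{T^{10}M}T^{10}M\subseteq T^{10}M$ to keep every argument in $T^{10}M$. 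It is precisely here that $M$ being merely Hermitian, rather than $(1,2)$-symplectic, is exactly what is required, which also explains why the $\J^1$ case produces \emph{real isotropy} in the $T^{10}M$ directions rather than the $(1,1)$-geodesic behaviour obtained in the $\J^2$ case.
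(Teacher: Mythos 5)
Your proposal is correct and follows essentially the same route as the paper's own proof: both invoke Corollary \ref{Corollary:AnotherVersionOfTheFundamentalLemmaUsingT10AndT01Spaces} to obtain $(J^M,J_\psi)$-holomorphicity of $\varphi$ together with $\nabla_{T^{10}M}T^{10}_{J_\psi}N\subseteq T^{10}_{J_\psi}N$, then run the induction $\nabla^{r-1}_{T^{10}M}\d\varphi(T^{10}M)\subseteq T^{10}_{J_\psi}N$ and conclude by isotropy of the $(1,0)$-subbundle. The only difference is that you spell out the inductive step (the correction terms and the use of $\nabla_{T^{10}M}T^{10}M\subseteq T^{10}M$ from the Hermitian hypothesis) which the paper leaves implicit.
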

\begin{proof}
Take $\psi$ to be a $\J^1$-holomorphic map to the twistor space $\Sigma^+N$. As in Section \ref{Section:SectionIn:TwistorSpacesAndHarmonicMaps:FundamentalLemma}, consider the almost Hermitian vector bundle $(\varphi^{-1}TN,\varphi^{-1}h,\nabla^{\varphi^{-1}},\sigma_\psi)$, where $\varphi=\pi\circ\psi$. Then $\varphi$ is $(J^M,J_\psi)$-holomorphic (Lemma \ref{Lemma:FundamentalLemmaToProve3.5}) so that $\d\varphi(T^{10}M)\subseteq{T}^{10}_{J_\psi}N\simeq(\varphi^{-1}TN)^{10}$.
Since $\psi$ is $\J^1$-holomorphic, using Corollary \ref{Corollary:AnotherVersionOfTheFundamentalLemmaUsingT10AndT01Spaces} we deduce $\nabla_{T^{10}M}\d\varphi(T^{10}M)\in(\varphi^{-1}TN)^{10}$. We then see by induction that
\[\nabla^{r-1}_{T^{10}M}\d\varphi(T^{10}M)\in(\varphi^{-1}TN)^{10}\simeq\varphi^{-1}(T^{10}_{J_\psi}N),\quad\forall\,r\geq1\text{.}\]
Thus,
\[<\nabla^{r-1}_{T^{10}M}\d\varphi(T^{10}M),\nabla^{s-1}_{T^{10}M}\d\varphi(T^{10}M)>=0,\quad\forall\,r,s\geq1\text{,}\]
showing that equation \eqref{Equation:FirstEquationIn:Definition:RealIsotropicMapFromAHermitianManifold} is satisfied and so $\varphi$ is real isotropic.
\end{proof}

We look for a converse of this result, starting with the case where the domain is a Riemann surface. As in the ``$\J^2$ case" (Section \ref{Section:LiftsOfHarmonicMaps}), given a conformal map $\varphi:M^2\to N^{2n}$, consider the normal bundle $V=(\d\varphi TM)^\bot\subseteq \varphi^{-1}TN$. But now take on $M^2$ the \emph{conjugate} Hermitian structure $(-J^M)$ on $M$ and note that $R_V^{02}$ still vanishes. Therefore we still have a Koszul-Malgrange holomorphic structure on the bundle $\Sigma^+V$, but now characterized by
\begin{equation}\label{Equation:TheKoszulMalgrangeHolomorphicStructureForTheConjugateBaseManifold}\index{Complex~structure!conjugate}\index{Koszul-Malgrange!complex~structure~on~$\Sigma^+V$!$\J^1$~case}
s^{10}_V\text{~is~a~holomorphic~section~if~and~only~if~}\nabla^V_{T^{10}M}s^{10}_V\subseteq{s}^{10}_V\text{.}
\end{equation}
As before, we have a map $\eta:\Sigma^+V\to\Sigma^+N$ defined as in \eqref{Equation:TheMapFromSigmaPlusVIntoSigmaPlusNOnTheSalamonTheorem4.1Proof}. Then (see \cite{Salamon:85}, Theorem 4.3), we have

\begin{theorem}\label{Theorem:SalamonTheorem4.3J1LiftsOfRealIsotropicMapsFromRiemannSurfaces}
Let $\varphi:M^2\to N^{2n}$ ($n\geq3$) be a totally umbilic conformal immersion into an oriented even-dimensional manifold. Then, there is a (locally defined) $\J^1$-holomorphic lift of $\varphi$ to $\Sigma^+N$. Further, the following conditions are equivalent:

\textup{(i)} $\varphi$ is totally umbilic.

\textup{(ii)} $\eta$ is $(\J^{KM},\J^1)$-holomorphic.

\textup{(iii)} $\eta$ is $\J^1$-stable.
\end{theorem}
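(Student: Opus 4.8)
The plan is to mirror the structure of the proof of Theorem \ref{Theorem:NewTheorem4.1}, exploiting the fact that Theorem \ref{Theorem:SalamonTheorem4.3J1LiftsOfRealIsotropicMapsFromRiemannSurfaces} is the ``$\J^1$-analogue'' in which the r\^ole of the base structure $J^M$ is played by its conjugate $-J^M$, and the condition $\d_2\omega_\sigma=0$ is replaced by $\d_1\omega_\sigma=0$. First I would dispose of the existence of a local $\J^1$-holomorphic lift: since $\varphi$ is a conformal immersion, it is weakly pluriconformal, so Proposition \ref{Proposition:LocalLiftsOfPluriconformalLifts} produces a strictly compatible (hence $\H$-holomorphic) lift around each point, and the remaining implication (i)$\impl$(ii) upgrades this to $\J^1$-holomorphicity. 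The equivalence is then proved as a cycle (i)$\impl$(ii)$\impl$(iii)$\impl$(i), exactly as in Theorem \ref{Theorem:NewTheorem4.1}.

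For the core implication (i)$\impl$(ii), the \textbf{fundamental step} is to show that total umbilicity forces $\d_1\omega_{J_\varphi}=0$ for the section $s^{10}_{TN}\simeq J_\varphi$ of $\Sigma^+\varphi^{-1}TN$ associated to any $\J^{KM}$-holomorphic section $s^{10}_V$ of $\Sigma^+V$. By Lemma \ref{Lemma:Salamon1.2Generalized}, equation \eqref{Equation:FirstEquationIn:Lemma:Salamon1.2Generalized}, this amounts to verifying $\nabla^{\varphi^{-1}}_{T^{10}M}s^{10}_{TN}\subseteq s^{10}_{TN}$, where $s^{10}_{TN}=s^{10}_V\oplus\d\varphi(T^{10}M)$. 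I would split $\nabla^{\varphi^{-1}}_{T^{10}M}(s^{10}_V\oplus\d\varphi(T^{10}M))$ into three contributions just as in the second step of Theorem \ref{Theorem:NewTheorem4.1}: the $V$-component of $\nabla^V_{T^{10}M}s^{10}_V$ stays in $s^{10}_V$ because $s^{10}_V$ is holomorphic for the conjugate Koszul-Malgrange structure \eqref{Equation:TheKoszulMalgrangeHolomorphicStructureForTheConjugateBaseManifold}; the term $\nabla^{\varphi^{-1}}_{T^{10}M}\d\varphi(T^{10}M)=\nabla\d\varphi(T^{10}M,T^{10}M)+\d\varphi(\nabla^M_{T^{10}M}T^{10}M)$ lands in $\d\varphi(T^{10}M)$ precisely because total umbilicity (Definition \ref{Definition:TotalUmbilicMapFromAnAlmostHermitianManifold}, here with $m=1$) gives $\nabla\d\varphi(T^{10}M,T^{10}M)\subseteq\d\varphi(T^{10}M)$ and $M^2$ being a complex curve makes $\nabla^M_{T^{10}M}T^{10}M\subseteq T^{10}M$ automatic; and the mixed term $\nabla^M_{T^{10}M}s^{10}_V$ is handled by the orthogonality computation $X^{10}<s^{10}_V,\d\varphi(T^{10}M)>-<s^{10}_V,\nabla^{\varphi^{-1}}_{X^{10}}\d\varphi(T^{10}M)>$, which vanishes since both subspaces are orthogonal and the second factor again lies in $\d\varphi(T^{10}M)$ by umbilicity. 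The $\H$-holomorphicity of $\eta\circ s^{10}_V$ then follows from the converse direction of Lemma \ref{Lemma:FundamentalLemmaToProve3.5} (case $a=1$), and the passage from ``$\eta$ transforms holomorphic sections into $\J^1$-holomorphic maps'' to ``$\eta$ is $\J^1$-holomorphic'' is the same pointwise-spanning argument as the third step of Theorem \ref{Theorem:NewTheorem4.1}, using that $\Sigma^+V$ is a holomorphic bundle so that every tangent vector is realized by a local holomorphic section.

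The implication (ii)$\impl$(iii) is immediate, as a holomorphic map has stable image. For (iii)$\impl$(i), I would adapt the sixth step of Theorem \ref{Theorem:NewTheorem4.1}: using the $\J^1$-version of Lemma \ref{Lemma:LemmaInTheSixthStepOfTheProofOf:Theorem:SalamonsTheorem4.1} (which is stated for $a=1,2$ and so applies verbatim), $\J^1$-stability lets me choose, at each $x$, a section $s^{10}_V$ with $\eta\circ s^{10}_V$ being $(J^M,\J^1)$-holomorphic at $x$; Lemma \ref{Lemma:FundamentalLemmaToProve3.5} then yields $\d_1\omega_{J_\varphi}(x)=0$, hence $\nabla^{\varphi^{-1}}_{T^{10}_xM}s^{10}_{TN}\subseteq s^{10}_{TN}$ by \eqref{Equation:ThirdEquationIn:Lemma:Salamon1.2Generalized}. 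A Leibniz/tensoriality argument isolating the $\d\varphi(T^{10}M)$ factor (choosing the function coefficients as in the sixth step) then extracts $\nabla_{T^{10}_xM}\d\varphi(T^{10}M)\subseteq T^{10}_{J_\varphi(x)}N$, which is exactly the umbilicity condition \eqref{Equation:FirstEquationIn:Definition:TotalUmbilicMapFromAnAlmostHermitianManifold} at $x$; varying $x$ finishes the proof.

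I expect the main obstacle to be bookkeeping rather than anything deep: one must be careful that the \emph{conjugate} Hermitian structure $-J^M$ on $M^2$ is the correct base structure throughout, so that all the ``$T^{10}M$'' appearing in the $\J^1$ computations are genuinely the $(1,0)$-spaces for this conjugate structure and match the characterization \eqref{Equation:TheKoszulMalgrangeHolomorphicStructureForTheConjugateBaseManifold} rather than \eqref{Equation:FirstEquationInTheFirstStepOf:Theorem:NewTheorem4.1}. The other delicate point, specific to the $\J^1$ case, is that $\J^1$ need not be integrable on $\Sigma^+N$, so ``holomorphic'' for $\eta$ must be read in the stable/almost-complex sense and the existence of enough local holomorphic sections of $\Sigma^+V$ must be invoked from its genuine complex (Koszul-Malgrange) structure, not from any structure on the target; this is exactly where the hypothesis $n\geq3$ and total umbilicity conspire, and I would flag it explicitly in the write-up.
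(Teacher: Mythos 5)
Your overall strategy --- specializing the argument of Theorem \ref{Theorem:NewTheorem4.1} with $\d_1\omega$ in place of $\d_2\omega$ and the conjugate Koszul-Malgrange structure on $\Sigma^+V$ --- is exactly what the paper does (it obtains this theorem as the case $m=1$ of Theorem \ref{Theorem:NewTheorem4.3:J1HolomorphicLiftsOfTotallyUmbilicMaps}), and your existence part and the implications (i)$\impl$(ii)$\impl$(iii) are sound. However, your (iii)$\impl$(i) step has a genuine gap. You choose, at each $x$, \emph{one} section $s^{10}_V$ with $\eta\circ s^{10}_V$ holomorphic at $x$, and assert that the resulting inclusion $\nabla_{T^{10}_xM}\d\varphi(T^{10}M)\subseteq T^{10}_{J_\varphi(x)}N$ ``is exactly the umbilicity condition''. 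It is not: $T^{10}_{J_\varphi(x)}N=s^{10}_{V_x}\oplus\d\varphi_x(T^{10}M)$ is strictly larger than $\d\varphi_x(T^{10}M)$, so containment in it for a \emph{single} compatible structure allows a nonzero normal component and yields only the weaker, real-isotropy-type condition; this some-versus-all distinction is precisely the content of Remark \ref{Remark:RemarkTo:Proposition:PluriconformalMapsAndAlmostHermitianStructuresAtAPoint}. To conclude, you must run the stability argument through \emph{every} $J_{V_x}\in\Sigma^+V_x$ (Lemma \ref{Lemma:LemmaInTheSixthStepOfTheProofOf:Theorem:SalamonsTheorem4.1} provides a section through each prescribed value), obtaining $\nabla\d\varphi_x(X^{10},Y^{10})\in s^{10}_{V_x}\oplus\d\varphi_x(T^{10}M)$ for \emph{all} $s^{10}_{V_x}$, and then invoke Proposition \ref{Proposition:PluriconformalMapsAndAlmostHermitianStructuresAtAPoint}: the intersection of all these $(1,0)$-subspaces equals $\d\varphi_x(T^{10}M)$, and only this gives total umbilicity.

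This is also where the hypothesis $n\geq3$ actually enters --- by Lemma \ref{Lemma:IsotropicSubspacesAndAlmostHermitianStructuresOnTheUsualRealEuclideanVectorSpace}, the set $Q_{\d\varphi(T^{10}M)}$ coincides with $\d\varphi(T^{10}M)$ if and only if $n-m\geq2$, i.e. $n\geq3$ for a Riemann surface domain --- and not, as your closing paragraph suggests, in securing local holomorphic sections of $\Sigma^+V$: those come from the Koszul-Malgrange structure alone, with no dimension restriction, and indeed the paper notes that (i)$\impl$(ii)$\impl$(iii) and the existence of the $\J^1$-holomorphic lift hold without any bound on $n-m$. Repair the quantifier and insert the appeal to Proposition \ref{Proposition:PluriconformalMapsAndAlmostHermitianStructuresAtAPoint}, and your proof coincides with the paper's.
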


This result is a particular case of Theorem \ref{Theorem:NewTheorem4.3:J1HolomorphicLiftsOfTotallyUmbilicMaps} below.

Notice that the class of maps given by projections (the class of real-isotropic maps) is not the same as the class of maps (the totally-umbilic maps) for which we can guarantee the existence of the lift, in contrast to the harmonic case, see also Remark \ref{Remark:RemarkTo:Proposition:PluriconformalMapsAndAlmostHermitianStructuresAtAPoint}.

In the proof of Theorem \ref{Theorem:NewTheorem4.1}, we used the alternative characterization of $(1,1)$-geodesic maps given in Lemma \ref{Lemma:About(11)GeodesicMapsAndOneVersusAllAlmostHermitianStructures}. In a similar fashion, totally umbilic maps can be characterized in the following way:

\begin{proposition}\label{Proposition:PluriconformalMapsAndAlmostHermitianStructuresAtAPoint}\index{Pluriconformal~map!}\index{Totally~umbilic~map!}\index{Totally~umbilic~map!and~Hermitian~structures}\index{Pluriconformal~map!and~Hermitian~structures}
Let $\varphi:M^{2m}\to N^{2n}$ be a pluriconformal map defined on a Hermitian manifold $M^{2m}$, $n-m\geq2$. Then, the following conditions are equivalent:

\textup{(i)} $\varphi$ is totally umbilic at $x$.

\textup{(ii)} For all $J_{\varphi}\in\Sigma^{+}T_{\varphi_{x}}N$ that render $\varphi$ holomorphic at $x$, \begin{equation}\label{Equation:EquationIn:Proposition:PluriconformalMapsAndAlmostHermitianStructuresAtAPoint}
\nabla\d\varphi(T^{10}M,T^{10}M)\subseteq{T}^{10}_{J_\varphi}N.
\end{equation}
\end{proposition}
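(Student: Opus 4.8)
The plan is to prove the equivalence of (i) and (ii) in Proposition~\ref{Proposition:PluriconformalMapsAndAlmostHermitianStructuresAtAPoint} by a strategy directly parallel to that of Lemma~\ref{Lemma:About(11)GeodesicMapsAndOneVersusAllAlmostHermitianStructures}, which characterized $(1,1)$-geodesic maps. The key structural fact to exploit is that totally umbilic means $\nabla\d\varphi(T^{10}M,T^{10}M)\subseteq\d\varphi(T^{10}M)$ (Definition~\ref{Definition:TotalUmbilicMapFromAnAlmostHermitianManifold}), and that for a pluriconformal map the subspace $\d\varphi(T^{10}M)$ is isotropic, so any $J_\varphi$ rendering $\varphi$ holomorphic at $x$ must satisfy $\d\varphi(T^{10}M)\subseteq T^{10}_{J_\varphi}N$ by the construction on p.~\pageref{Page:kDimensionalIsotropicSubspacesAndHermitianStructures}.

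The direction (i)$\impl$(ii) is the easy half: if $\varphi$ is totally umbilic at $x$, then $\nabla\d\varphi(T^{10}M,T^{10}M)\subseteq\d\varphi(T^{10}M)$, and for any $J_\varphi$ making $\varphi$ holomorphic at $x$ we have $\d\varphi(T^{10}M)\subseteq T^{10}_{J_\varphi}N$; composing the two inclusions immediately gives \eqref{Equation:EquationIn:Proposition:PluriconformalMapsAndAlmostHermitianStructuresAtAPoint}. First I would record this observation, noting that it requires no hypothesis on the codimension.

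The substantive direction is (ii)$\impl$(i), and this is where the assumption $n-m\geq2$ enters and where I expect the main obstacle. The idea is to exploit the freedom in choosing $J_\varphi$: condition (ii) asserts \eqref{Equation:EquationIn:Proposition:PluriconformalMapsAndAlmostHermitianStructuresAtAPoint} holds for \emph{every} compatible Hermitian structure, and the family of such structures is large precisely because the normal bundle $V=(\d\varphi TM)^\perp$ has rank $2(n-m)\geq4$, so $\Sigma^+V_x$ has positive dimension and one can vary $J_{V_x}$ while keeping $J_\varphi$ fixed on $\d\varphi(T_xM)$. The plan is to fix an arbitrary symmetric tensor value $W=\nabla\d\varphi(Y^{10},Z^{10})\in T^\cn_{\varphi(x)}N$ and decompose it relative to $T^\cn N=\d\varphi(T^{10}M)\oplus\d\varphi(T^{01}M)\oplus V^\cn$; pluriconformality forces the umbilicity condition to be equivalent to the vanishing of the component of $W$ lying outside $\d\varphi(T^{10}M)$. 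Since \eqref{Equation:EquationIn:Proposition:PluriconformalMapsAndAlmostHermitianStructuresAtAPoint} says $W\in T^{10}_{J_\varphi}N$ for all admissible $J_\varphi$, and since by varying $J_{V_x}$ over $\Sigma^+V_x$ the intersection $\bigcap_{J_\varphi}T^{10}_{J_\varphi}N$ collapses onto $\d\varphi(T^{10}M)$ (this is exactly where rank $\geq4$, equivalently $n-m\geq2$, is needed, so that no nonzero normal vector lies in the $(1,0)$-space of \emph{every} compatible structure), I would conclude $W\in\d\varphi(T^{10}M)$, which is total umbilicity at $x$.

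The delicate point, and the one I would spend most care on, is verifying that the intersection of the $(1,0)$-spaces over all compatible $J_\varphi$ is precisely $\d\varphi(T^{10}M)$: for a single normal direction $V$ one can always find a positive Hermitian structure on $V_x$ whose $(1,0)$-space omits $V$, provided $\dim_\cn V_x\geq2$, whereas in complex dimension one ($n-m=1$) the normal $(1,0)$-space is forced and the argument breaks down — consistent with the fact that the theorem excludes that case. I would phrase this as a short linear-algebra lemma about positive isotropic subspaces of $V_x^\cn$, using the transitive $\SO(V_x)$-action on $\Sigma^+V_x$ from \eqref{Equation:ActionOfSOVOnSigmaPlusV}, and then assemble the two directions to complete the proof.
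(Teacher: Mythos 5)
Your proposal is correct and follows essentially the same route as the paper: the easy direction (i)$\impl$(ii) by composing $\nabla\d\varphi(T^{10}M,T^{10}M)\subseteq\d\varphi(T^{10}M)\subseteq T^{10}_{J_\varphi}N$, and the converse by showing that the intersection of the $(1,0)$-spaces of \emph{all} strictly compatible positive Hermitian structures collapses to $\d\varphi(T^{10}M)$ exactly when $n-m\geq2$, which is precisely the paper's Lemma~\ref{Lemma:IsotropicSubspacesAndAlmostHermitianStructuresOnTheUsualRealEuclideanVectorSpace} (statement $Q_{\d\varphi(T^{10}M)}=\d\varphi(T^{10}M)$). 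The only difference is cosmetic: the paper proves that lemma by an explicit induction with concrete structures $J_0$, $J_1$, whereas you sketch it via the splitting off of the normal bundle and the $\SO(V_x)$-action, which amounts to the same argument.
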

The case $n=m$ is also trivially true if we do not require the positivity of $J_\varphi$ on (ii). The proof of this result will be an immediate consequence of the following lemma:

\begin{lemma}\label{Lemma:IsotropicSubspacesAndAlmostHermitianStructuresOnTheUsualRealEuclideanVectorSpace}
Let $F^{10}$ be a isotropic subspace in $\rn^{2n}\otimes\cn=\cn^{2n}$ with complex dimension $m<n$. Consider the set
$\Sigma^{+}_{F^{10}}\rn^{2n}$ of all positive Hermitian structures $J$ on $\rn^{2n}$ for which $F^{10}\,\subseteq\,T^{10}_J\rn^{2n}$.
Then the set
\begin{equation}\label{Equation:EquationIn:Lemma:IsotropicSubspacesAndAlmostHermitianStructuresOnTheUsualRealEuclideanVectorSpace}
Q_{F^{10}}=\{u\in\cn^{2n}:u\in{T}^{10}_J\rn^{2n},\quad\forall\,J\in\Sigma^{+}_{F^{10}}\rn^{2n}\}
\end{equation}
coincides with $F^{10}$ if and only if $n-m\geq2$.
\end{lemma}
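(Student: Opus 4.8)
The plan is to analyze the condition that a vector $u \in \cn^{2n}$ lies in $T^{10}_J\rn^{2n}$ for \emph{every} positive Hermitian structure $J$ containing the fixed isotropic subspace $F^{10}$. First I would fix an adapted orthonormal basis: since $F^{10}$ is isotropic of complex dimension $m$, I can write $F^{10} = \wordspan\{f_1,\dots,f_m\}$ where $f_j = e_{2j-1} - ie_{2j}$ for a suitable positive orthonormal frame $\{e_1,\dots,e_{2n}\}$ of $\rn^{2n}$. An element $J \in \Sigma^+_{F^{10}}\rn^{2n}$ is then precisely a positive Hermitian structure whose $(1,0)$-space is an isotropic $n$-dimensional subspace $T^{10}_J$ containing $F^{10}$. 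Because $F^{10}$ is isotropic of dimension $m$, its ``isotropic orthogonal complement'' relative to the complexified metric is $(F^{10})^{\perp}/F^{10}$, a complex vector space of dimension $2n - 2m$ carrying a nondegenerate symmetric form, and completing $F^{10}$ to a maximal isotropic $T^{10}_J$ amounts to choosing a maximal isotropic subspace of this quotient. Clearly $F^{10} \subseteq Q_{F^{10}}$ always, so the content is the reverse inclusion and its dependence on $n-m$.

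Next I would reduce the whole question to the quotient space $W := (F^{10})^{\perp}/F^{10} \cong \cn^{2(n-m)}$ with its induced nondegenerate complex-bilinear symmetric form. A vector $u$ lies in $T^{10}_J$ for all admissible $J$ if and only if (its class in $W$) lies in every maximal isotropic subspace of $W$. Thus the whole lemma collapses to the following statement about a complex vector space $W$ of even complex dimension $2\ell$ (here $\ell = n-m$) equipped with a nondegenerate symmetric bilinear form: the intersection of all maximal isotropic subspaces of $W$ is zero if and only if $\ell \geq 2$, whereas when $\ell = 1$ a nonzero vector survives in the intersection. Unwinding the reduction, $Q_{F^{10}} = F^{10}$ iff this intersection is trivial, i.e.\ iff $n - m \geq 2$.

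The key computation is therefore the intersection of maximal isotropics in $W \cong \cn^{2\ell}$. When $\ell = 1$, $W$ is a hyperbolic plane spanned by two isotropic lines $\cn w_+$, $\cn w_-$ with $\langle w_+, w_-\rangle \neq 0$; there are exactly two maximal (one-dimensional) isotropic subspaces, namely $\cn w_+$ and $\cn w_-$, but here the subtlety is that positivity of $J$ forces a specific \emph{choice} of orientation, so in fact only one of them is admissible, making its single line survive in $Q_{F^{10}}$, so $Q_{F^{10}} \supsetneq F^{10}$. When $\ell \geq 2$, I would show directly that for any nonzero $w \in W$ there exists a maximal isotropic subspace avoiding $w$: pick an isotropic or non-isotropic vector and use the freedom in the Witt/Grassmannian of isotropics (dimension $\geq 2$) to rotate a maximal isotropic off of $\cn w$. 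Concretely, in the standard hyperbolic basis one exhibits two maximal isotropics whose intersection is zero, which already forces the global intersection to be zero.

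The main obstacle I anticipate is handling the \emph{positivity} constraint on $J$ carefully, since $\Sigma^+_{F^{10}}\rn^{2n}$ consists only of \emph{positive} Hermitian structures, not all of them; I must verify that positivity does not shrink the family of admissible $T^{10}_J$ so drastically that extra vectors spuriously survive in $Q_{F^{10}}$ when $n-m \geq 2$. I would resolve this by checking that, once $F^{10}$ is fixed and $n-m\geq 2$, the positive completions of $F^{10}$ to a full $T^{10}_J$ still form a set large enough (a connected family parametrized by the appropriate positive isotropic Grassmannian of $W$) that their common intersection remains $F^{10}$; the even dimensionality of $W$ and $\ell \geq 2$ guarantee enough maximal positive isotropics to separate any extraneous vector. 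The borderline case $\ell = 1$ is exactly where positivity pins down a unique completion, explaining why $Q_{F^{10}}$ strictly contains $F^{10}$ there and giving the sharp threshold $n - m \geq 2$.
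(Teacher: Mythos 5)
Your reduction to the quotient $W=(F^{10})^{\perp}/F^{10}$ is a genuinely different (and attractive) starting point from the paper's argument, but the statement you reduce to is wrong, and the error propagates into a step that fails. The lemma does \emph{not} collapse to ``the intersection of all maximal isotropic subspaces of $W$ is zero iff $\ell\geq 2$'': that intersection is zero already when $\ell=1$, since the two isotropic lines of a hyperbolic plane meet in $0$. What positivity actually does --- and you only notice this in the $\ell=1$ case --- is select exactly \emph{one of the two families} (rulings) of maximal isotropics of $W$: under $S\mapsto \pi^{-1}(S)$ one has $\dim\big(\pi^{-1}(S_1)\cap\pi^{-1}(S_2)\big)=m+\dim(S_1\cap S_2)$, and membership of two maximal isotropics in a common family is governed by the parity of their intersection dimension, so the positive completions of $F^{10}$ correspond precisely to one family of maximal isotropics of $W$. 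The correct target statement is therefore: \emph{the intersection of all maximal isotropics lying in a single family of $\cn^{2\ell}$ is zero iff $\ell\geq2$}.

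Once the problem is posed correctly, your concrete mechanism for $\ell\geq2$ --- ``exhibit two maximal isotropics whose intersection is zero'' --- fails whenever $\ell=n-m$ is \emph{odd}. Two maximal isotropics in the same family satisfy $\dim(S_1\cap S_2)\equiv\ell\pmod 2$; for odd $\ell$ every pair of admissible (positive) completions meets in dimension at least $1$, and any pair with trivial intersection necessarily contains one member of the negative, inadmissible family. So for $n-m=3,5,\dots$ no two positive structures can separate all vectors: you need at least three (for instance in $\cn^{6}$ the isotropics $\wordspan\{f_1,f_2,f_3\}$, $\wordspan\{f_1,\bar f_2,\bar f_3\}$, $\wordspan\{\bar f_1,f_2,\bar f_3\}$ are pairwise non-transverse yet have trivial common intersection), or else an invariance argument: the common intersection of one family is an $\SO(2\ell,\cn)$-invariant subspace, and the standard representation of $\SO(2\ell,\cn)$ is irreducible for $\ell\geq2$, forcing it to vanish. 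Your closing paragraph merely asserts that the positive family is ``large enough,'' which is exactly the content to be proven, so as written the $\ell\geq2$ direction is not established. For comparison, the paper avoids the family/parity issue entirely by inducting on $n-m$ down to the base case $n-m=2$, using at each step just two explicitly positive structures $J_0,J_1$ to strip off the last two coordinates; your route can be completed, but only by adding either the three-isotropic construction or the irreducibility argument above.
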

\begin{proof}In order to see what is going on, we start by checking the case $m=1$. Without loss of generality, assume that $F^{10}$ is spanned (as a $\cn$-subspace) by $e_1-ie_2$. If $n=3$, we deduce that $Q_{F^{10}}$ is the set of vectors in $\rn^{6}\simeq\cn^3$ that belong to $T^{10}_J\rn^6$ for all $J\in\Sigma^+\rn^6$ for which $Je_1=e_2$. Take $J_0$ to be the usual Hermitian structure on $\rn^6$ and $J_1$ defined by $J_1e_1=e_2$, $J_1 e_3=e_5$, $J_1e_6=e_4$ (notice that a basis for $\rn^{6}$ is $\{e_1,J_1 e_1, e_3, J_1 e_3,e_4,J_1 e_4\}=\{e_1,e_2,e_3,e_5,e_4,-e_6\}$ which is clearly
positive). Then, taking $u\in{Q}_{F^{10}}$, since $u$ belongs to $T^{10}\rn^{2n}$ for both these structures, we deduce that there exist $a_{ij}\in\cn$ with\addtolength{\arraycolsep}{-1.0mm}
\[\begin{array}{ll}~&\left\{\begin{array}{l}
u=a_{11} (e_1-i e_2)+a_{12}(e_3-ie_4)+a_{13}(e_5-ie_6)\\
u=a_{21} (e_1-i e_2)+a_{22}(e_3-ie_5)+a_{23}(e_6-ie_4)
\end{array}\right.\\\noalign{\medskip}
\impl&\left[\begin{array}{cccc}
1 & 0 & -1 & 0\\
-i & 0 & 0 & +i\\
0 & 1 & i & 0\\
0 & -i & 0 & -1
\end{array}\right]\left[\begin{array}{l}
a_{12}\\
a_{13}\\
a_{22}\\
a_{23}\end{array}\right]=0\end{array}\]\addtolength{\arraycolsep}{1.0mm}\noindent
As the above matrix has nonzero determinant, we have $a_{12}=a_{13}=a_{22}=a_{23}=0$, which shows that $u\in\wordspan\{e_1-ie_2\}$. Moreover, notice that if we had taken $n=2$ (corresponding to $\rn^{4}$), then there would have been only one positive Hermitian structure (namely, $J_0$) for which $e_1-ie_2$ lies in its $(1,0)$-subspace; therefore, in that case, we would have $Q_{F^{10}}=\wordspan\{e_1-ie_2,e_3-ie_4\}\neq F^{10}$.

For the general case, proceed by induction on $j=n-m$: for $j=2$, $n=m+2$ and we proceed exactly as above: assume that $F^{10}=\wordspan\{e_1-ie_2,...,e_{2m-1}-ie_{2m}\}$, take the two positive Hermitian structures $J_0$ and $J_1$ on $\rn^{2m+4}$ with $J_{1}e_{2i-1}=e_{2i}$, $1\leq i\leq m$, $J_1 e_{2m+1}=e_{2m+3}$, $J_1 e_{2m+4}=e_{2m+2}$; both these structures preserve $F^{10}$ and, proceeding as before, if $u$ lies in the $(1,0)$-subspace for both $J_0$ and $J_1$ we deduce the existence of $a_{ij}$ with\addtolength{\arraycolsep}{-1.0mm}
\[\begin{array}{ll}~&\left\{\begin{array}{ll}
u = & a_{11}(e_1-ie_2)+...+a_{1m}(e_{2m-1}-ie_{2m})\\
~& +a_{1m+1}(e_{2m+1}-ie_{2m+2})+a_{1m+2}(e_{2m+3}-ie_{2m+4})\\
u=&a_{21}(e_1-ie_2)+...+a_{2m}(e_{2m-1}-ie_{2m})\\
~ &+a_{2m+1}(e_{2m+1}-ie_{2m+3})+a_{2m+2}(e_{2m+4}-ie_{2m+2})
\end{array}\right.\\\noalign{\medskip}
\impl&\left[\begin{array}{cccc}
1 & 0 & -1 & 0\\
-i & 0 & 0 & +i\\
0 & 1 & i & 0\\
0 & -i & 0 & -1
\end{array}\right]\left[\begin{array}{l}
a_{1m+1}\\
a_{1m+2}\\
a_{2m+1}\\
a_{2m+2}\end{array}\right]=0\end{array}\]\addtolength{\arraycolsep}{1.0mm}\noindent
and, therefore, $u\in{F}^{10}$.

Assume now that the result is true for $j=k\geq2$. Once again, assume that $F^{10}$ is spanned by $\{e_1-ie_2,...,e_{2m-1}-ie_{2m}\}$ and take $u\in{Q}_{F^{10}}$. Since $u$ belongs to the $(1,0)$-subspace of any positive Hermitian structure $J$ with $J e_1=e_2,...,Je_{2m-1}=e_{2m}$ we deduce that $u$ belongs to $(1,0)$-subspace of both the structures $J_0$ (canonical Hermitian structure in $\rn^{2m+2k+2}$) and $J_1$, where $J_1$ is defined as $J_0$ for all $\{e_i\}_{i=1,..,2m+2k-2}$ and $J_1e_{2m+2k-1}=e_{2m+2k+1}$, $J_1 e_{2m+2k+2}=e_{2m+2k}$ (positive with the same argument as before) so that there exist $a_{ij}$ with\addtolength{\arraycolsep}{-1.0mm}
\[\left\{\begin{array}{lll}
u&=&a_{11} (e_1-i e_2)+...+a_{1m+k-1}(e_{2m+2k-3}-ie_{2m+2k-2})\\
~&~&+ a_{1m+k}(e_{2m+2k-1}-ie_{2m+2k})+a_{1m+k+1}(e_{2m+2k+1}-ie_{2m+2k+2})\\
u&=&a_{21} (e_1-i e_2)+...+a_{2m+k-1}(e_{2m+2k-3}-ie_{2m+2k-2})\\
~&~&+ a_{2m+k}(e_{2m+2k-1}-ie_{2m+2k+1})+a_{2m+k+1}(e_{2m+2k+2}-ie_{2m+2k})\text{.}\\
\end{array}\right.\]\addtolength{\arraycolsep}{1.0mm}\noindent
Hence, as before, $a_{1m+k}=a_{1m+k+1}=0$ and $u\in\rn^{2m+2k}_0\subseteq\rn^{2m+2}$. We deduce that such a vector $u$ lies in $\rn^{2m+2k}_0\,\subseteq\,\rn^{2m+2k+2}$, and it must lie in the $(1,0)$-subspace for any Hermitian structure on $\rn^{2m+2k}$ with $\{Je_{2i-1}=e_{2i}\}_{i=1,...,m}$ since any such structure induces a positive Hermitian structure on
the all $\rn^{2m+2k+2}$ (with the additional condition $\tilde{J}e_{2m+2k+1}=e_{2m+2k+2}$) and $u$ lies in $(1,0)$-subspace of that Hermitian structure. By the induction hypothesis, the ``if" part of our result follows. The ``only if" part is a consequence from the observation that when $n=m+1$, then $Q$ is the $(1,0)$-subspace of the only positive Hermitian structure $J$ for which $F^{10}\subseteq{T}^{10}_J\rn^{2n}$ and $Q\neq F^{10}$.
\end{proof}

\noindent \textit{Proof of Proposition \ref{Proposition:PluriconformalMapsAndAlmostHermitianStructuresAtAPoint}.} Let $\varphi$ be a pluriconformal map. If $\varphi$ is totally umbilic, consider the isotropic subspace $\d\varphi(T^{10}M)$. Then, $\nabla\d\varphi(T^{10}M,T^{10}M)\subseteq\d\varphi(T^{10}M)\subseteq{T}^{10}_{J_\varphi}N$ for any $J_{\varphi}$ strictly compatible with $\varphi$ so that (ii) is satisfied. Conversely, if (ii) holds, then $\nabla\d\varphi(T^{10}M,T^{10}M)\subseteq T^{10}_{J_\varphi}N$ for all $J_{\varphi}$ that render $\varphi$ holomorphic at $x$ and therefore for all $J_{\varphi}$ that preserve $\d\varphi(T^{10}M)$; in the notation of Lemma \ref{Lemma:IsotropicSubspacesAndAlmostHermitianStructuresOnTheUsualRealEuclideanVectorSpace},
$\nabla\d\varphi(T^{10}M,T^{10}M)\subseteq Q_{\d\varphi(T^{10}M)}=\d\varphi(T^{10}M)$ and $\varphi$ is totally
umbilic, as desired. \qed

\begin{remark}\label{Remark:RemarkTo:Proposition:PluriconformalMapsAndAlmostHermitianStructuresAtAPoint}
In Lemma \ref{Lemma:About(11)GeodesicMapsAndOneVersusAllAlmostHermitianStructures}, we saw that $\varphi$ is $(1,1)$-geodesic if and only if, for each $x\in{M}$, $\nabla\d\varphi_{x}(T^{10}M,T^{01}M)$ lies in the $(1,0)$-subspace of \emph{some} or \emph{all} the almost Hermitian structures on $T_{\varphi(x)}N$: the \textit{some} justifies the fact that maps obtained as projections of $\J^2$-holomorphic maps are $(1,1)$-geodesic, whereas the \textit{all} guarantees the existence of a lift for a given $(1,1)$-geodesic map (plus, of course, the conditions stated in Theorem \ref{Theorem:NewTheorem4.1}). Now, for real isotropic and totally umbilic maps, the situation is different, since whether equation
\eqref{Equation:EquationIn:Proposition:PluriconformalMapsAndAlmostHermitianStructuresAtAPoint} is satisfied for \emph{some} or for \emph{all} the the almost Hermitian structures on $T_{\varphi(x)}N$ results in different conditions. As a matter of fact, as we shall see, the latter condition is necessary to guarantee the existence of a lift to the twistor space, whereas the first (plus the integrability of the complex structure on the domain manifold) gives real-isotropy of the map.
\end{remark}

\begin{theorem}[\textup{$\J^1$-holomorphic~lifts~of~pluriconformal~maps}]\label{Theorem:NewTheorem4.3:J1HolomorphicLiftsOfTotallyUmbilicMaps}\index{Pluriconformal~map!}\index{Totally~umbilic~map!}\index{$\J^1$-holomorphic!lift}
Let $\varphi:M\to N$ be a pluriconformal map from a Hermitian manifold $M^{2m}$ to an oriented even-dimensional Riemannian manifold $N^{2n}$, $n-m\geq2$. Define $\eta$ by
\eqref{Equation:TheMapFromSigmaPlusVIntoSigmaPlusNOnTheSalamonNewTheorem4.1}. Then, the following conditions are equivalent:

\textup{(i)} $\varphi$ is totally umbilic.

\textup{(ii)} $\eta$ is $(\J^{KM},\J^1)$-holomorphic.

\textup{(iii)} $\eta$ is $\J^1$-stable.
\end{theorem}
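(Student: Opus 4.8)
The plan is to mirror, as closely as possible, the architecture of the proof of Theorem \ref{Theorem:NewTheorem4.1} for the $\J^2$-case, since the two statements are structurally identical: both assert the equivalence of a geometric condition on $\varphi$, the holomorphicity of the lifting map $\eta$, and the $\J^a$-stability of its image. The only change is that $\J^2$ is replaced by $\J^1$, the geometric condition ``$(1,1)$-geodesic'' is replaced by ``totally umbilic'', and the characterization of $(1,1)$-geodesic maps via Lemma \ref{Lemma:AlternativeConditionFor(11)GeodesicMaps} is replaced by the characterization of totally umbilic maps via Proposition \ref{Proposition:PluriconformalMapsAndAlmostHermitianStructuresAtAPoint}. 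Accordingly, the implication $\textup{(ii)}\impl\textup{(iii)}$ is immediate (a $(\J^{KM},\J^1)$-holomorphic map is $\J^1$-stable), and the substantive work lies in establishing $\textup{(i)}\impl\textup{(ii)}$ and $\textup{(iii)}\impl\textup{(i)}$.

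For $\textup{(i)}\impl\textup{(ii)}$, I would first note that the Koszul--Malgrange structure $\J^{KM}$ on $\Sigma^+V$ is available: here $M$ is Hermitian, but crucially one uses the \emph{conjugate} structure $(-J^M)$ on the base as in the discussion preceding Theorem \ref{Theorem:SalamonTheorem4.3J1LiftsOfRealIsotropicMapsFromRiemannSurfaces}, so that holomorphic sections of $\Sigma^+V$ are characterized by $\nabla^V_{T^{10}M}s^{10}_V\subseteq s^{10}_V$ rather than the $T^{01}M$ condition; the hypothesis $R^{02}_V=0$ guarantees this structure exists. The key (fundamental) step is to show that if $\varphi$ is totally umbilic then every $\J^{KM}$-holomorphic section $s^{10}_V$ of $\Sigma^+V$ lifts, via $\eta$, to a map whose associated section $J_\varphi$ of $\Sigma^+\varphi^{-1}TN$ satisfies $\d_1\omega_{J_\varphi}=0$, equivalently $\nabla^{\varphi^{-1}}_{T^{10}M}s^{10}_{TN}\subseteq s^{10}_{TN}$ where $s^{10}_{TN}=s^{10}_V\oplus\d\varphi(T^{10}M)$. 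I would expand $\nabla^{\varphi^{-1}}_{T^{10}M}(s^{10}_V\oplus\d\varphi(T^{10}M))$ into three summands exactly as in \eqref{Equation:SecondEquationOnTheProofOfTheSecondStepOfTheProofOf:Theorem:NewTheorem4.1}: the $\nabla^V_{T^{10}M}s^{10}_V$ term lies in $s^{10}_V$ by holomorphicity of the section; the $\nabla^{\varphi^{-1}}_{T^{10}M}\d\varphi(T^{10}M)$ term decomposes as $\nabla\d\varphi(T^{10}M,T^{10}M)+\d\varphi(\nabla^M_{T^{10}M}T^{10}M)$, the first piece lying in $\d\varphi(T^{10}M)\subseteq s^{10}_{TN}$ precisely because $\varphi$ is totally umbilic (Definition \ref{Definition:TotalUmbilicMapFromAnAlmostHermitianManifold}) and the second because $M$ is Hermitian so $\nabla^M_{T^{10}M}T^{10}M\subseteq T^{10}M$ (Proposition \ref{Proposition:SalamonsLemma1.3}); the mixed term $\nabla^M_{T^{10}M}s^{10}_V$ is handled by the same orthogonality computation as in the $\J^2$ proof, using $<s^{10}_V,\d\varphi(T^{10}M)>=0$ and $\nabla_{T^{10}M}\d\varphi(T^{10}M)\subseteq\d\varphi(T^{10}M)$. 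Once $\d_1\omega_{J_\varphi}=0$ is in hand, the Fundamental Lemma \ref{Lemma:FundamentalLemmaToProve3.5} (case $a=1$) shows $\eta\circ s^{10}_V$ is $(J^M,\J^1)$-holomorphic, and then the same ``every vector is realized by a holomorphic section through it'' argument from the third and fifth steps of the proof of Theorem \ref{Theorem:NewTheorem4.1} upgrades this to holomorphicity of $\eta$ itself.

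For the converse $\textup{(iii)}\impl\textup{(i)}$, I would invoke Lemma \ref{Lemma:LemmaInTheSixthStepOfTheProofOf:Theorem:SalamonsTheorem4.1} (which is already stated for both $a=1,2$) to produce, through any prescribed $J_{V_x}$, a local section $s^{10}_V$ with $\eta\circ s^{10}_V$ being $(J^M,\J^1)$-holomorphic at $x$; the Fundamental Lemma then yields $\d_1\omega_{J_\varphi}(x)=0$, i.e. $\nabla^{\varphi^{-1}}_{T^{10}_xM}s^{10}_{TN}\subseteq s^{10}_{TN}$. Running the same leibniz-rule/arbitrary-coefficient argument as in the sixth step of the proof of Theorem \ref{Theorem:NewTheorem4.1}, I extract $\nabla_{T^{10}_xM}\d\varphi(T^{10}M)\subseteq T^{10}_{J_\varphi(x)}N$, which says $\nabla\d\varphi(T^{10}M,T^{10}M)\subseteq T^{10}_{J_\varphi(x)}N$ for \emph{this particular} compatible structure $J_\varphi$. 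The main obstacle — and the place where the $n-m\geq2$ hypothesis is essential and where this proof genuinely diverges from the $\J^2$-case — is converting this ``for some $J_\varphi$'' statement into the intrinsic total-umbilicity condition $\nabla\d\varphi(T^{10}M,T^{10}M)\subseteq\d\varphi(T^{10}M)$. Because $\J^1$-stability of $\eta$ holds at every fibre point, $J_\varphi$ can be taken to be \emph{any} positive Hermitian structure compatible with $\varphi$; thus $\nabla\d\varphi(T^{10}M,T^{10}M)$ lies in $T^{10}_{J_\varphi}N$ for all such $J_\varphi$, and Lemma \ref{Lemma:IsotropicSubspacesAndAlmostHermitianStructuresOnTheUsualRealEuclideanVectorSpace} forces $Q_{\d\varphi(T^{10}M)}=\d\varphi(T^{10}M)$ exactly when the codimension gap $n-m\geq2$, giving total umbilicity by Proposition \ref{Proposition:PluriconformalMapsAndAlmostHermitianStructuresAtAPoint}. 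I would therefore be careful in the $\textup{(iii)}\impl\textup{(i)}$ step to verify that $\J^1$-stability really does supply holomorphic lifts through an open set of initial structures $J_{V_x}$, so that the ``all $J_\varphi$'' hypothesis of the Lemma is legitimately met; this quantifier issue is the crux distinguishing the $\J^1$ theory from the $\J^2$ theory, as flagged in Remark \ref{Remark:RemarkTo:Proposition:PluriconformalMapsAndAlmostHermitianStructuresAtAPoint}.
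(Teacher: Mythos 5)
Your proposal is correct and follows essentially the same route as the paper's own proof: the same chain (i)$\impl$(ii)$\impl$(iii)$\impl$(i), the same Koszul--Malgrange structure characterized by $\nabla^V_{T^{10}M}s^{10}_V\subseteq s^{10}_V$, the same three-term decomposition of $\nabla_{T^{10}M}s^{10}_{TN}$ with the orthogonality computation for the mixed term, and the same use of Lemma \ref{Lemma:LemmaInTheSixthStepOfTheProofOf:Theorem:SalamonsTheorem4.1} ($a=1$), the Fundamental Lemma, and Proposition \ref{Proposition:PluriconformalMapsAndAlmostHermitianStructuresAtAPoint} with $n-m\geq2$ to close the converse. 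Your flagged concern about the ``some versus all'' quantifier is exactly how the paper handles it: the lemma supplies a holomorphic lift through \emph{every} $J_{V_x}\in\Sigma^+V_x$, so the hypothesis of the proposition is met.
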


\begin{proof}Following the same ideas as in the proof of Theorem \ref{Theorem:NewTheorem4.1}, we shall show that (i) $\impl$ (ii) $\impl$ (iii) $\impl$ (i). Now, $\eta$ will be $(\J^{KM},\J^1)$-holomorphic if it maps $\J^{KM}$-holomorphic
sections $s^{10}_V$ into sections $s^{10}_{TN}\,\simeq\,J_\varphi$ of $\Sigma^+\varphi^{-1}TN$ with $\d_1\omega_{J_\varphi}=0$. Using Lemma \ref{Lemma:Salamon1.2Generalized}, $\d_1\omega_{J_\varphi}=0$ if and only if $\nabla_{T^{10}M}s^{10}_{TN}\subseteq\,s^{10}_{TN}$. So, we must show that, if $s^{10}_V$ satisfies $\nabla^V_{T^{10}M}s^{10}_V\subseteq\,s^{10}_V$, then $\nabla_{T^{10}M}s^{10}_{TN}\subseteq\,s^{10}_{TN}$, where
$s^{10}_{TN}=s^{10}_V\oplus\,\d\varphi(T^{10}M)$. Now, taking $X^{10},Y^{10}\in{T}^{10}M$ and $Z^{10}\in{s}^{10}_V$,
\[\nabla_{X^{10}}(Z^{10}+{Y}^{10})=\nabla^V_{X^{10}}Z^{10}+\nabla^M_{X^{10}}Z^{10}+\d\varphi(Y^{10})+\nabla_{X^{10}}\d\varphi(Y^{10})\text{.}\]
The first term on the right-hand side of the above expression is not problematic, as $\nabla^V_{\partial_z}s^{10}_V\subseteq s^{10}_V$ from the very definition of our $\J^{KM}$-holomorphic structure (the third term is also non-problematic). Hence, we must make sure that, for all $\J^{KM}$-holomorphic sections $s^{10}_V$, $\nabla_{X^{10}}\d\varphi(Y^{10})\in{s}^{10}_V\oplus\d\varphi(T^{10}M)$ and $\nabla^M_{T^{10}M}s^{10}_V\subseteq s^{10}_V\oplus\d\varphi(T^{10}M)$. The first condition is satisfied
if and only if $\nabla_{X^{10}}\d\varphi(Y^{10})$ belongs to $T^{10}_{J_\varphi}N$ for every $J_\varphi$ that renders $\varphi$ holomorphic; the second is then automatic since
\[<\nabla^M_{X^{10}}Z^{10},\d\varphi(Y^{10})>=X^{10}<Z^{10},\d\varphi(Y^{10})>-<Z^{10},\nabla_{X^{10}}\d\varphi(Y^{10})>=0\text{.}\]
Since $\varphi$ is totally umbilic and $M$ is Hermitian we deduce $\nabla_{X^{10}}\d\varphi(Y^{10})\in\d\varphi(T^{10}M)$\footnote{Notice, moreover, that we do not really need $\varphi$ to be totally umbilic: we only need $\nabla\d\varphi(X^{10},Y^{10})$ to belong to the $(1,0)$-subspace of every strictly compatible Hermitian structure: see also Proposition \ref{Proposition:PluriconformalMapsAndAlmostHermitianStructuresAtAPoint}.} so that the first part of our proof is done: we proved that (i) implies (ii). That (ii) implies (iii) is trivial. To show that (iii) implies (i): since $\eta$ is $\J^1$-stable, using Lemma \ref{Lemma:LemmaInTheSixthStepOfTheProofOf:Theorem:SalamonsTheorem4.1} ($a=1$) for each point $x\in{M}^2$ and each $J_{V_x}\simeq s^{10}_{V_x}\in\Sigma^+V_x$ we can find a smooth section $s^{10}_V\simeq J_{V}$ of $\Sigma^+V$ with $\eta\circ s^{10}_V=s^{10}_{TN}\simeq J^N$ $(J^M,\J^1)$-holomorphic at $x$ and $s^{10}_V(x)=s^{10}_{V_x}$. Then, using Lemma \ref{Lemma:FundamentalLemmaToProve3.5} we can deduce
$\d_1\omega_{J^{N}}(x)=0$ so that using Lemma \ref{Lemma:Salamon1.2Generalized} (equation
\eqref{Equation:ThirdEquationIn:Lemma:Salamon1.2Generalized}) we obtain
\begin{equation}\label{Equation:EquationInTheProofOf:Theorem:SalamonTheorem4.3J1LiftsOfRealIsotropicMapsFromRiemannSurfaces}
\nabla^{\varphi^{-1}}_{T^{10}_xM}s^{10}_{TN}\subseteq s^{10}_{TN}(x)\text{.}
\end{equation}
We can repeat this argument for all $s^{10}_{TN}(x)$ obtained from some $s^{10}_V(x)$: $s^{10}_{TN}=s^{10}_V\oplus\d\varphi(T^{10}M)$; \textit{i.e.}, for all strictly compatible positive Hermitian structures on $T_{\varphi(x)}N$, equation \eqref{Equation:EquationInTheProofOf:Theorem:SalamonTheorem4.3J1LiftsOfRealIsotropicMapsFromRiemannSurfaces} holds. Hence, proceeding as in the sixth step in the proof of Theorem \ref{Theorem:SalamonsTheorem4.1} we deduce
\[\nabla\d\varphi_x(X^{10},Y^{10})\in{s}^{10}_{V_x}\oplus\d\varphi_x(T^{10}_xM)\]
for all $s^{10}_{V_x}\in\Sigma^+V_x$. As $n-m\geq2$ Proposition
\ref{Proposition:PluriconformalMapsAndAlmostHermitianStructuresAtAPoint} implies that $\varphi$ is totally umbilic, finishing the proof .
\end{proof}

Notice that for the implications (i) $\impl$ (ii) $\impl$ (iii) we only used the fact that $\varphi$ is a pluriconformal totally umbilic map, the fact $m-n\geq2$ being irrelevant. In particular, a pluriconformal totally umbilic map admits a $\J^1$-holomorphic lift even in the case $\dim N-\dim M=2$.

Comparing with Theorem \ref{Theorem:NewTheorem4.1}, we see that we do not need to require the domain to be Kähler, but only Hermitian. In fact, we need the integrability of the structure to use Koszul-Malgrange Theorem (and we need this in both the $\J^1$ and $\J^2$ cases) but in the present result we used $\d_1\omega_J=0$ (whereas before we needed $\d_2\omega_J=0$).

Finally, if $\varphi$ is a totally umbilic conformal immersion then there is (locally) a $(J^M,\J^1)$-holomorphic map $\psi$ with $\varphi=\pi\circ\psi$: as $\Sigma^+V$ is a holomorphic bundle over $M^2$ for the $\J^{KM}$-holomorphic structure introduced, take any holomorphic section $\sigma$ of this bundle; then $\eta\circ\sigma=\psi$ is a $(J^M,\J^1)$-holomorphic map with $\varphi=\pi\circ\psi$.


\section{The $4$-dimensional case in detail}


In this section, we describe what is special to the $4$-dimensional case. Firstly, given a conformal map $\varphi:M^2\to N^4$, with $N^4$ an oriented Riemannian manifold, there is one and only one strictly compatible lift $\psi^+$ to $\Sigma^+N$ (and another, $\psi^-$ to $\Sigma^-N$). Therefore, if $\varphi$ is also harmonic, from Corollary \ref{Corollary:SalamonCorollary4.2} it follows that $\psi^+$ is $\J^2$-holomorphic. From the remarks to Theorem
\ref{Theorem:NewTheorem4.3:J1HolomorphicLiftsOfTotallyUmbilicMaps}, if $\varphi$ is also totally umbilic, it also admits a $\J^1$-holomorphic lift to $\Sigma^+ N$ so that by uniqueness we conclude the existence of a twistor lift to $\Sigma^+N$ which is simultaneously $\J^1$ and $\J^2$-holomorphic. However, as we have seen after Theorem \ref{Theorem:NewTheorem4.3:J1HolomorphicLiftsOfTotallyUmbilicMaps}, what is really important to guarantee that $\varphi$ has a $\J^1$-holomorphic lift is that $\partial^2_z\varphi\in{T}^{10}_{J_{\varphi}}N$ for all strictly compatible $J_\varphi$. If the map $\varphi$ is real isotropic and $\partial_z\varphi,\partial^2_z\varphi$ are linearly independent, then, as they span an isotropic subspace, we can take $J_\varphi$ to be the only (now not necessarily positive) almost Hermitian structure whose $(1,0)$-subspace is spanned by those vectors: if $J_\varphi$ is positive, by uniqueness of the lift and because $\partial^2_z\varphi\in{T}^{10}_{J_\varphi}N$, it must be $\J^1$-holomorphic. Otherwise, $J_\varphi$ is negative but still $\J^1$-holomorphic as $\partial^2_z\varphi$ belongs to its $(1,0)$-subspace. In conclusion, we can state the following result (see also \cite{Salamon:85}):

\begin{theorem}\label{Theorem:J1HolomorphicLiftsInTheFourDimensionalCase}\index{Real~isotropic~map!in~the~$4$-dimensional~case}\index{$\J^1$-holomorphic!lift}
Let $N^4$ be an oriented Riemannian manifold and $\Sigma^{+}N$, $\Sigma^{-}N$ its positive and negative twistor spaces,
respectively. Consider a real isotropic map $\varphi:M^2\to N^4$. Then, at any point $z_0$ for which $\partial_z\varphi\neq 0$, there are (unique) strictly compatible lifts $\psi^{+}$ and $\psi^{-}$ to $\Sigma^{+}N$ and $\Sigma^{-}N$ respectively of $\varphi$. Further:

\textup{(i)} If $\partial_{z}^{2} \varphi(z_{0})$ and $\partial_z\varphi (z_0)$ are linearly independent, \emph{either} $\psi^+$ \emph{or} $\psi^-$ is $\J^1$-holomorphic around $z_0$.

\textup{(ii)} If $\partial_{z}^{2}\varphi$ and $\partial_z\varphi$ are linearly dependent around $z_0$ \emph{both} $\psi^+$ \emph{and} $\psi^-$ are $\J^1$-holomorphic.
\end{theorem}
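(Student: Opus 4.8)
The plan is to work at a fixed point $z_0$ where $\partial_z\varphi\neq 0$ and to treat the two parts separately, exploiting the $4$-dimensional feature that on an oriented $4$-manifold the twistor fibre $\Sigma^+ T_{\varphi(z_0)}N^4$ is a single $2$-sphere, so that a strictly compatible Hermitian structure on the normal bundle $V=(\d\varphi\,TM)^\perp$ is \emph{uniquely} determined. Indeed, $V_{z_0}$ has real dimension $2$, hence $\Sigma^+ V_{z_0}$ is a single point; this gives the existence and uniqueness of the strictly compatible lifts $\psi^+$ to $\Sigma^+N$ and $\psi^-$ to $\Sigma^-N$ asserted before (i) and (ii). For either orientation, the lift $\psi^\pm$ is automatically $\H$-holomorphic by Proposition \ref{Proposition:LocalHHolomorphicLifts} (its projection $\varphi$ is holomorphic with respect to $J_{\psi^\pm}$ by construction), so the only issue for $\J^1$-holomorphicity is the vertical part. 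By the Fundamental Lemma \ref{Lemma:FundamentalLemmaToProve3.5} (and Corollary \ref{Corollary:AnotherVersionOfTheFundamentalLemmaUsingT10AndT01Spaces} in the case $a=1$), $\psi^\pm$ is $\J^1$-holomorphic at $z_0$ precisely when $\d_1\omega_{\sigma}(z_0)=0$, equivalently when $\nabla_{\partial_z}\partial_z\varphi=\partial_z^2\varphi$ lies in the $(1,0)$-subspace $T^{10}_{J_{\psi^\pm}}N$. Thus the whole statement reduces to deciding, in each case, whether $\partial_z^2\varphi\in T^{10}_{J_{\psi^+}}N$ or $\in T^{10}_{J_{\psi^-}}N$.

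For part (i), I would argue as follows. Since $\varphi$ is real isotropic, in particular $\langle\partial_z\varphi,\partial_z\varphi\rangle=0$ and $\langle\partial_z\varphi,\partial_z^2\varphi\rangle=0$ and $\langle\partial_z^2\varphi,\partial_z^2\varphi\rangle=0$, so the span $F=\wordspan\{\partial_z\varphi,\partial_z^2\varphi\}$ is an isotropic $2$-dimensional subspace of $T^\cn_{\varphi(z_0)}N^4$ when the two vectors are linearly independent. On an oriented $4$-dimensional space, a maximal ($2$-dimensional) isotropic subspace is the full $(1,0)$-space of a \emph{unique} Hermitian structure $J$, which is either positive or negative (the two maximal isotropic families correspond exactly to $\Sigma^+$ and $\Sigma^-$). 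Let $J_F$ be this unique structure. If $J_F$ is positive, then by uniqueness of the positive strictly compatible lift it must coincide with $J_{\psi^+}$, and since $\partial_z^2\varphi\in F=T^{10}_{J_F}N$ the criterion above gives that $\psi^+$ is $\J^1$-holomorphic near $z_0$; if $J_F$ is negative, the same reasoning applied to $\Sigma^-N=\Sigma^+\tilde N$ shows that $\psi^-$ is $\J^1$-holomorphic. This yields the ``either/or'' dichotomy. The key point to justify carefully is that on an oriented $4$-dimensional Euclidean space a $2$-dimensional isotropic subspace determines a unique Hermitian structure of definite sign, and that positivity relative to $N$ matches the distinction between $\Sigma^+$ and $\Sigma^-$.

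For part (ii), when $\partial_z^2\varphi$ and $\partial_z\varphi$ are linearly dependent in a neighbourhood of $z_0$ (i.e. $\varphi$ is totally umbilic near $z_0$), the vector $\partial_z^2\varphi$ already lies in the line $\wordspan\{\partial_z\varphi\}=\d\varphi(T^{10}M)\subseteq T^{10}_{J_{\psi^\pm}}N$ for \emph{both} orientations, since $\d\varphi(T^{10}M)$ is contained in the $(1,0)$-space of every strictly compatible structure. Hence the criterion $\partial_z^2\varphi\in T^{10}_{J_{\psi^\pm}}N$ holds simultaneously for $\psi^+$ and $\psi^-$, and both lifts are $\J^1$-holomorphic; alternatively, this is exactly the totally umbilic case of Theorem \ref{Theorem:NewTheorem4.3:J1HolomorphicLiftsOfTotallyUmbilicMaps} applied to the conformal immersion into $N$ with each orientation. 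The main obstacle I anticipate is the linear-algebra lemma underlying part (i)—pinning down that an isotropic plane in $\cn^4$ is the full $(1,0)$-space of a unique Hermitian structure and tracking its sign—together with the bookkeeping that identifies $J_F$ with the appropriate uniquely determined lift $J_{\psi^+}$ or $J_{\psi^-}$; everything else is a direct application of the Fundamental Lemma and the real-isotropy relations.
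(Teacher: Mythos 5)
Your proposal is correct and follows essentially the same route as the paper's own (first) proof: both reduce $\J^1$-holomorphicity of a strictly compatible lift, via the Fundamental Lemma, to the condition $\nabla_{\partial_z}T^{10}_{J_\psi}N\subseteq T^{10}_{J_\psi}N$ (equivalently $\partial_z^2\varphi\in T^{10}_{J_\psi}N$, using that a maximal isotropic subspace of $T^\cn N^4$ is its own orthogonal complement), and in case (i) both identify the unique Hermitian structure whose $(1,0)$-space is $F=\wordspan\{\partial_z\varphi,\partial_z^2\varphi\}$ with $J_{\psi^+}$ or $J_{\psi^-}$ according to its sign, while in case (ii) both observe that $\partial_z^2\varphi\in\d\varphi(T^{10}M)$ lies in the $(1,0)$-space of \emph{both} compatible structures. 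The only differences are presentational (your justification of uniqueness of the compatible lifts via $\Sigma^+V_{z_0}$ being a point, versus the paper's remark at the start of its $4$-dimensional section).
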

\noindent\textit{First proof.} Since $\varphi$ is real isotropic, the space $F$ spanned by $\{\partial_z\varphi,\partial^{2}_{z}\varphi\}$ is isotropic; therefore, if these two vectors are linearly independent, there is one and only one almost Hermitian structure $J_{\psi}$ for which the $(1,0)$ tangent space is $F$; $J_{\psi}$ defines either $\psi^{+}$ or $\psi^{-}$, depending on whether $J_\psi$ is positive or negative (Definition \ref{Definition:PositiveHermitianStructureOnAEuclideanVectorSpace}). The map $\varphi$ is holomorphic with respect to $J_\psi$ since $\d\varphi(T^{10}M)\subseteq T^{10}_{J_\psi}N$ and $\psi$ becomes $\J^1$-holomorphic (as a map to $\Sigma^+ N$ or $\Sigma^-N$ accordingly) since from real isotropy of $\varphi$ it follows that $\nabla_{\partial_z}T^{10}_{J\psi}N\subseteq T^{10}_{J_\psi}N$.

If $\partial_{z}^{2}\varphi$ and $\partial_z\varphi$ are linearly dependent around $z_0$, we deduce $\partial^2_z\varphi\in{T}^{10}_{J_\psi^{\pm}}N$ so that both $J_{\psi^+}$ and $J_{\psi^-}$ are $\J^1$-holomorphic lifts of $\varphi$ with the same proof as in Theorem \ref{Theorem:NewTheorem4.3:J1HolomorphicLiftsOfTotallyUmbilicMaps} (and remarks thereafter). \qed

\smallskip We shall now give a second proof of (i) that will be useful in the sequel. We shall need the following:

\begin{lemma}\label{Lemma:AdaptedAlmostHermitianStructuresToRealIsotropicMaps}
Let $\varphi:M^2\to N^4$ a real isotropic map with $\{\partial_z\varphi,\partial_{z}^{2}\varphi\}$ linearly independent
in an open set $\U\subseteq M^2$. Consider the real and imaginary part of $\partial_{z}^{2}\varphi=u+iv$, where \[u=\nabla_{\partial_x}\partial_x\varphi-\nabla_{\partial_y}\partial_y\varphi\text{~and~}v=-\nabla_{\partial_x}\partial_y\varphi-\nabla_{\partial_y}\partial_x\varphi\]
for $z=x+iy$. Then, any almost Hermitian structure strictly compatible with $\varphi$ is given by\addtolength{\arraycolsep}{-1.0mm}
\begin{equation}\label{Equation:FirstEquationIn:Lemma:AdaptedAlmostHermitianStructuresToRealIsotropicMaps}
\begin{array}{ccc}
J(\partial_x\varphi)&=&\partial_y\varphi;\\
J(u)&=&\pm v\text{.}
\end{array}
\end{equation}\addtolength{\arraycolsep}{1.0mm}\noindent
\end{lemma}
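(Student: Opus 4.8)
The claim is a purely pointwise, linear-algebraic fact about almost Hermitian structures on $\rn^4\cong T_{\varphi(z)}N^4$ that are strictly compatible with $\varphi$, so the plan is to work at a fixed point $z\in\U$ and unravel what ``strictly compatible'' forces. Recall that $J_\psi$ is strictly compatible with $\varphi$ precisely when $\d\varphi(T^{10}M)\subseteq T^{10}_{J_\psi}N$; since $M^2$ is a Riemann surface, $T^{10}_zM=\wordspan\{\partial_z\}$, so this says $\d\varphi(\partial_z)=\partial_z\varphi\in T^{10}_{J}N$. First I would record the standard translation between the $(1,0)$-condition and the action of $J$ on real vectors: writing $\partial_z\varphi=\tfrac12(\partial_x\varphi-iJ^M\cdots)$ is not needed; rather, for a real vector $w$, one has $w\in\Real\,T^{10}_JN$ with $Jw$ determined by the rule that $w-iJw$ lies in $T^{10}_JN$. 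Applying this to $\partial_z\varphi=\tfrac12(\partial_x\varphi-i\,\partial_y\varphi)$ (using $\partial_z=\tfrac12(\partial_x-i\partial_y)$ and $\nabla^M_{\partial_z}\partial_z=0$) shows that $\partial_x\varphi-i\,\partial_y\varphi\in T^{10}_JN$, which is exactly the statement $J(\partial_x\varphi)=\partial_y\varphi$. This disposes of the first line of \eqref{Equation:FirstEquationIn:Lemma:AdaptedAlmostHermitianStructuresToRealIsotropicMaps} and holds for \emph{both} signs.

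Next I would pin down the action on $u$ and $v$. Since $\{\partial_z\varphi,\partial_z^2\varphi\}$ is linearly independent and (by real isotropy, using $r=s=1,2$ in Definition \ref{Definition:RealIsotropicMapFromARiemannSurface}) spans an isotropic $2$-dimensional subspace $F$ of $T^\cn N\cong\cn^4$, the four real vectors $\partial_x\varphi,\partial_y\varphi,u,v$ form a basis of the real tangent space $\rn^4$: indeed $\partial_z^2\varphi=u+iv$ together with $\partial_z\varphi=\tfrac12(\partial_x\varphi-i\partial_y\varphi)$ span $F$, and a dimension count using isotropy (so $F\cap\bar F=0$) gives real-linear independence of the four vectors. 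A strictly compatible $J$ must satisfy $\partial_z^2\varphi\in T^{10}_JN$ as well? This is \emph{not} required a priori — strict compatibility only constrains $\partial_z\varphi$ — so the content of the lemma is that $J$ is nonetheless forced on $u,v$ up to one sign. Here I would argue as follows: $J$ is an isometry with $J^2=-\Id$ compatible with the metric, already fixed on the plane $\wordspan\{\partial_x\varphi,\partial_y\varphi\}$ by $J\partial_x\varphi=\partial_y\varphi$. Because $J$ preserves orthogonal complements of $J$-stable subspaces and $\{\partial_x\varphi,\partial_y\varphi\}$ spans a $J$-stable plane, $J$ must map its orthogonal complement $P=\wordspan\{u,v\}^{\perp\perp}$ (the complementary $J$-stable $2$-plane) to itself; on a $2$-plane a metric-compatible complex structure is rotation by $\pm\tfrac\pi2$, giving exactly $Ju=\pm v$ for the two choices. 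I would make the identification $P=\wordspan\{u,v\}$ precise by checking, using isotropy of $F$ (i.e. $\langle\partial_z^2\varphi,\partial_z^2\varphi\rangle=0$ and $\langle\partial_z\varphi,\partial_z^2\varphi\rangle=0$), that $u,v$ are orthogonal to $\partial_x\varphi,\partial_y\varphi$ and that $|u|=|v|$, $\langle u,v\rangle=0$, so that $u,v$ is (up to scale) an orthonormal frame of the complementary plane.

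The main obstacle, and the step deserving genuine care, is extracting the orthogonality and norm relations $\langle u,\partial_x\varphi\rangle=\langle u,\partial_y\varphi\rangle=\langle v,\partial_x\varphi\rangle=\langle v,\partial_y\varphi\rangle=0$, $|u|=|v|$, $\langle u,v\rangle=0$ from the isotropy conditions on $F$. Concretely, expanding $0=\langle\partial_z\varphi,\partial_z^2\varphi\rangle$ and $0=\langle\partial_z^2\varphi,\partial_z^2\varphi\rangle$ in terms of $\partial_x\varphi,\partial_y\varphi,u,v$ and separating real and imaginary parts yields precisely these scalar relations; this is the routine-but-essential computation I would carry out, being careful that $\langle\cdot,\cdot\rangle$ here is the complex-bilinear (not Hermitian) extension. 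Once these relations are in hand, the two candidate structures $J^\pm$ defined by \eqref{Equation:FirstEquationIn:Lemma:AdaptedAlmostHermitianStructuresToRealIsotropicMaps} are genuine metric-compatible complex structures, and the argument above shows they are the \emph{only} ones strictly compatible with $\varphi$, completing the proof. A final brief remark would note that the sign ambiguity is exactly the distinction between $\psi^+$ and $\psi^-$, consistent with the statement of Theorem \ref{Theorem:J1HolomorphicLiftsInTheFourDimensionalCase}.
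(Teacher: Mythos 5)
Your overall strategy (any strictly compatible $J$ is already forced on $\d\varphi(TM)$, and on the complementary plane it can only be one of the two rotations by $\pm\tfrac{\pi}{2}$) is the right one, and your first steps — $J\partial_x\varphi=\partial_y\varphi$, and the fact that $\partial_x\varphi,\partial_y\varphi,u,v$ form a real basis because $F\cap\overline{F}=0$ — are correct. But there is a genuine gap exactly at the step you call the "routine-but-essential computation": the relations $\langle u,\partial_x\varphi\rangle=\langle u,\partial_y\varphi\rangle=\langle v,\partial_x\varphi\rangle=\langle v,\partial_y\varphi\rangle=0$ do \emph{not} follow from isotropy. The complex-bilinear equation $\langle\partial_z\varphi,\partial_z^2\varphi\rangle=0$ is only two real equations, and separating real and imaginary parts yields the \emph{mixed} relations $\langle u,\partial_x\varphi\rangle=-\langle v,\partial_y\varphi\rangle$ and $\langle u,\partial_y\varphi\rangle=\langle v,\partial_x\varphi\rangle$ (plus $\|u\|=\|v\|$, $\langle u,v\rangle=0$ from $\langle\partial_z^2\varphi,\partial_z^2\varphi\rangle=0$); this is precisely step (i) of the paper's proof, and it does not make $u,v$ normal to the image. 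A concrete counterexample to your claim: $\varphi(z)=(z,z^2)$ into flat $\rn^4$ is holomorphic, hence real isotropic, with $\partial_z\varphi,\partial_z^2\varphi$ everywhere independent, yet $u=(0,0,4,0)$, $v=(0,0,0,-4)$ and $\langle u,\partial_x\varphi\rangle=8x\neq0$ off the line $x=0$. Consequently $\wordspan\{u,v\}$ is in general \emph{not} the orthogonal complement of $\d\varphi(TM)$, and your key step — "$J$ preserves the complementary $J$-stable plane $P=\wordspan\{u,v\}$, hence $Ju=\pm v$" — collapses.

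The paper's proof is structured precisely to get around this. After deriving only the mixed relations, it proves two facts you omit: $u\in\d\varphi(TM)$ if and only if $v\in\d\varphi(TM)$ (step (ii)), and $u\in\d\varphi(TM)$ if and only if $\partial_z^2\varphi=\lambda\partial_z\varphi$ (step (iii)); under the linear-independence hypothesis these guarantee that the normal components $\tilde u=u-\proj_{\d\varphi(TM)}u$ and $\tilde v=v-\proj_{\d\varphi(TM)}v$ are nonzero. It is $\tilde u,\tilde v$ — not $u,v$ — that give an orthogonal frame of equal norms for $\d\varphi(TM)^\perp$, so that the rotation argument yields $J\tilde u=\pm\tilde v$; passing from this back to a statement about $u$ and $v$ themselves requires the mixed relations once more, since together with $J\partial_x\varphi=\partial_y\varphi$ they are what controls how $J$ acts on the tangential parts $\proj_{\d\varphi(TM)}u$, $\proj_{\d\varphi(TM)}v$. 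If you rerun your argument on the normal projections $\tilde u,\tilde v$ and supply the missing nonvanishing step (the paper's (ii)–(iii)), you essentially recover the paper's proof; as written, your proof is not correct.
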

\begin{proof}We shall do the proof in several steps:

(i) One always has
\begin{equation}\label{Equation:FirstEquationInProofOf:Lemma:AdaptedAlmostHermitianStructuresToRealIsotropicMaps}
\begin{array}{ll}\|\partial_x\varphi\|=\|\partial_y\varphi\|; &
<\partial_x\varphi,\partial_y\varphi>=0;
\\
\|u\|=\|v\|; &<u,v>=0;\\
<u,\partial_x\varphi>=-<v,\partial_y\varphi> ; &
<u,\partial_y\varphi>=<v,\partial_x\varphi>.
\end{array}
\end{equation}
In fact, conformality implies the first two equations and the other equations follow from
$<\partial_{z}^{2}\varphi,\partial_{z}^2\varphi>=<\partial_{z}\varphi,\partial_{z}^2\varphi>=0$.

(ii) \textit{$u\in\d\varphi(TM)$ if and only if $v\in\d\varphi(TM)$:}

If $u\in\d\varphi(TM)$, then $u=a\partial_x\varphi+b\partial_y\varphi$ where $<u,\partial_x\varphi>=a\|\partial_x\varphi\|^2$ and $<u,\partial_y\varphi>=b\|\partial_y\varphi\|^2$. Therefore,
$\|u\|^2=(a^2+b^2)\|\partial_x\varphi\|^2$. On the other hand, using
\eqref{Equation:FirstEquationInProofOf:Lemma:AdaptedAlmostHermitianStructuresToRealIsotropicMaps},
$<v,\partial_x\varphi>=b\|\partial_y\varphi\|^2$ and $<v,\partial_y\varphi>=-a\|\partial_x\varphi\|^2$ so that $\|\proj_{\d\varphi(TM)}v\|^2=(a^2+b^2)\|\partial_x\varphi\|^2$. Since $\|u\|=\|v\|$ from
\eqref{Equation:FirstEquationInProofOf:Lemma:AdaptedAlmostHermitianStructuresToRealIsotropicMaps},
we have $\|v\|=\|\proj_{\d\varphi(TM)}v\|$ so that $v\in\d\varphi(TM)$. Interchanging the roles of $u$ and $v$ gives (ii).

(iii) \textit{$u\in\d\varphi(TM)$ if and only if $\partial^{2}_{z}\varphi=\lambda\partial_z\varphi$:}

If $u\in\d\varphi(TM)$ (so that also $v\in\d\varphi(TM)$) then, with the same argument as above, $u=a\partial_x\varphi+b\partial_y\varphi$ and $v=b\partial_x\varphi-a\partial_y\varphi$ so that \[\partial_{z}^{2}\varphi=u+iv=(a+ib)\partial_x\varphi+(b-ia)\partial_y\varphi=(a+ib)(\partial_x\varphi-i\partial_y\varphi)=\lambda\partial_z\varphi\text{.}\]
Conversely, if $\partial_{z}^{2}\varphi=\lambda\partial_z\varphi$, we can deduce the existence of $a,b\in\rn$ with
\[u+iv=(a+ib)(\partial_x\varphi-i\partial_y\varphi)=a\partial_x\varphi+b\partial_y\varphi+i(b\partial_x\varphi-a\partial_y\varphi)\,\impl\,u,v\in\d\varphi(TM)\text{.}\]

(iv) Let us now finally prove that any strictly compatible almost Hermitian structure is given by
\eqref{Equation:FirstEquationIn:Lemma:AdaptedAlmostHermitianStructuresToRealIsotropicMaps}.
Such a structure must have $J(\partial_x\varphi)=\partial_y\varphi$ as it is compatible with $\varphi$. On the other hand, since we are assuming that $\partial_z\varphi$ and $\partial_{z}^{2}\varphi$ are linearly independent, we know that $u$ and $v$ do not belong to $\d\varphi(TM)$. Hence, $J u$ cannot lie in $\d\varphi(TM)$ and equations \eqref{Equation:FirstEquationIn:Lemma:AdaptedAlmostHermitianStructuresToRealIsotropicMaps} completely determine $J$. Take $\tilde{u}=u-\proj_{\d\varphi(TM)}u$. We have $<v,\tilde{u}>=0$ using
\eqref{Equation:FirstEquationInProofOf:Lemma:AdaptedAlmostHermitianStructuresToRealIsotropicMaps}\footnote{More explicitly, we use \eqref{Equation:FirstEquationInProofOf:Lemma:AdaptedAlmostHermitianStructuresToRealIsotropicMaps}
and $\proj_{\d\varphi(TM)}u=<u,\partial_x\varphi>\|\partial_x\varphi\|^{-2}\partial_x\varphi+<u,\partial_y\varphi>\|\partial_y\varphi\|^{-2}\partial_y\varphi$.} so that $\tilde{v}=v-proj_{\d\varphi(TM)}v$ lies in the orthogonal complement of $\wordspan\{\tilde{u},\d\varphi(TM)\}$. Finally, that $\|\tilde{u}\|=\|\tilde{v}\|$ is easy to verify. Therefore, $\{\partial_x\varphi,\partial_y\varphi,\tilde{u},\tilde{v}\}$ form an orthogonal basis for $TN$ with
$\|\tilde{u}\|=\|\tilde{v}\|$. As $J$ is almost Hermitian and preserves $\d\varphi(TM)$, it must map $\tilde{u}$ to $\pm\tilde{v}$. Hence, $J\tilde{u}=\pm \tilde{v}$; this implies that $J(u)=\pm v$.
\end{proof}

\noindent\textit{Second proof of Theorem \ref{Theorem:J1HolomorphicLiftsInTheFourDimensionalCase} (i).} As before, consider the real and imaginary part of $\partial_{z}^{2}\varphi=u+iv$. Then, any almost Hermitian structure strictly compatible with $\varphi$ verifies $J(u)=\pm v$. Let $J_\psi$ be the structure given by $J_{\psi}(u)=-v$\footnote{Therefore, $J_\psi\in\Sigma^+ N$ or $\Sigma^- N$ according as
$\{\partial_x\varphi,\partial_y\varphi,u,-v\}$ is a positively or negatively oriented basis for $TN$.}. It follows that $\partial_{z}^{2}\varphi=u+iv=u-iJ_\psi u\in{T}^{10}_{J_\psi}N$ and the map $\psi$ is $\J^1$-holomorphic since $T^{10}_{J_\psi}N$ is spanned by $\{\partial_z\varphi,\partial_{z}^{2}\varphi\}$ and $\nabla_{\partial_z}T^{10}_{J_\psi}N\subseteq T^{10}_{J_\psi}N$ as in the first proof, from $\varphi$ isotropy. \qed


\subsection{Examples of totally umbilic and real isotropic maps}


\begin{example}[\emph{Holomorphic~maps~are~real~isotropic}]\label{Example:EveryHolomorphicMapBetweenHermitianManifoldsIsRealIsotropic}
If $\varphi:M\to N$ is a holomorphic map between Hermitian manifolds, then $\varphi$ is real isotropic. Compare this result with Proposition \ref{Proposition:LichnerowiczProposition} and Proposition \ref{Proposition:821Wood}. In fact,
\[\d\varphi(T^{10}M)\subseteq{T}^{10}N,\,\nabla_{T^{10}M}\d\varphi(T^{10}M)\subseteq\nabla_{T^{10}M}T^{10}N\subseteq{T}^{10}N,..\text{.}\]
so that all $\nabla^j_{T^{10}M}\d\varphi(T^{10}M)\subseteq\,T^{10}N$; by isotropy of $T^{10}N$, the result follows.
\end{example}

\begin{example}[\emph{Pluriconformal,~totally-umbilic~and~real-isotropic~maps}]\label{Example:EveryTotallyUmbilicMapIsRealIsotropic}
If $\varphi:M\to N$, is a pluriconformal totally umbilic map from a Hermitian manifold $M$, then $\varphi$ is real isotropic. In fact, $\nabla\d\varphi(T^{10}M,T^{10}M)\subseteq\d\varphi(T^{10}M)$. Hence, since $M$ is Hermitian, $\nabla_{T^{10}M}\nabla^{T^{10}M}\d\varphi(T^{10}M)\subseteq\nabla_{T^{10}M}\d\varphi(T^{10}M)\subseteq\d\varphi(T^{10}M)$ and, inductively, $\nabla^j_{T^{10}M}\d\varphi(T^{10}M)\subseteq\d\varphi(T^{10}M)$. From pluriconformality of $\varphi$, it follows that \eqref{Equation:FirstEquationIn:Definition:RealIsotropicMapFromAHermitianManifold} holds and, therefore, $\varphi$ is real isotropic.
\end{example}

\begin{example}[\emph{A~non-holomorphic~example~of~a~real-isotropic (and totally umbilic) map}]\label{Example:ANonHolomorphicExampleOfRealIsotropicMap}
Let $\varphi:\cn\to\cn^2$ be the map defined by\addtolength{\arraycolsep}{-1.2mm}
\begin{equation}\label{Equation:EquationInThe:Example:ANonHolomorphicExampleOfRealIsotropicMap}
\begin{array}{llll}
\varphi:&\cn&\to&\cn^2\\
~& z&\to& (z^2+\bar{z},z^2+\bar{z})\text{.}
\end{array}
\end{equation}\addtolength{\arraycolsep}{1.2mm}\noindent
Then, we have
\[\frac{\partial\varphi}{\partial{z}}=(2z,2z)\text{~and~}\frac{\partial^2\varphi}{\partial{z}^2}=(2,2)=z.\frac{\partial\varphi}{\partial{z}}\text{.}\]
\end{example}

\begin{example}[\emph{Holomorphic~submersions~are~totally~umbilic}]\label{Example:EveryHolomorphicSubmersionBetweenHermitianManifoldsIsTotallyUmbilic}
If $\varphi:M\to N$ is a holomorphic submersion between two Hermitian manifolds, then it is totally umbilic. In fact, we have
\[\nabla\d\varphi(T^{10}M,T^{10}M)\subseteq\underbrace{\nabla_{T^{10}M}\underbrace{\d\varphi(T^{10}M)}_{\text{\tiny{$\subseteq\,\d\varphi(T^{10}M)=T^{10}N$}}}}_{\text{\tiny{$\subseteq{T}^{10}N=\d\varphi(T^{10}M)$}}}+\underbrace{\d\varphi(\underbrace{\nabla_{T^{10}M}T^{10}M)}_{\text{\tiny{$\subseteq{T}^{10}M$}}}}_{\text{\tiny{$\subseteq\,\d\varphi(T^{10}M)$}}}\,\subseteq\,\d\varphi(T^{10}M)\text{.}\]
\end{example}


\begin{example}[\emph{A~holomorphic~totally~umbilic~map}]\label{Example:ExampleOfATotallyUmbilicHolomorphicMapNotFromCnAndNotASubmersion}
The map\addtolength{\arraycolsep}{-1.2mm}
\begin{equation}\label{Equation:EquationIn:Example:ExampleOfATotallyUmbilicHolomorphicMapNotFromCnAndNotASubmersion}
\begin{array}{lcll}
\varphi:&\cn^2&\to&\cn^3 \\
~& (z,w)&\to& (z^2+w^2,z^2,w^2)
\end{array}
\end{equation}\addtolength{\arraycolsep}{1.2mm}\noindent
is a totally umbilic map, since
\[\begin{array}{l}\frac{\partial\varphi}{\partial z}=(2z,2z,0),\quad\frac{\partial\varphi}{\partial w}=(2w,2w,0)\,\text{~and}\\
\frac{\partial^2\varphi}{\partial{z}^2}=(2,2,0),\quad\frac{\partial^2\varphi}{\partial{w}^2}=(2,2,0),\quad\frac{\partial^2\varphi}{\partial{z}\partial{w}}=(0,0,0)\text{.}\end{array}\]
\end{example}

\begin{example}[\emph{A~totally~umbilic~non-holomorphic~map~and~its~lift}]\label{Example:ExampleOfATotallyUmbilicNonHolomorphicMapAndItsLift}
Consider the map\addtolength{\arraycolsep}{-1.2mm}
\begin{equation}\label{Equation:FirstEquationIn:Example:ExampleOfATotallyUmbilicNonHolomorphicMapAndItsLift}
\begin{array}{llll}
\varphi:&\cn&\to&\cn^2 \\
\text{ }& z&\to& \frac{1}{2}(z+\bar{z},z-\bar{z})\text{.}
\end{array}
\end{equation}\addtolength{\arraycolsep}{1.2mm}\noindent
This a totally umbilic map. We find a $\J^1$-holomorphic lift: take $z=x+iy$ and define $J(x,y)$ as the (constant!) Hermitian structure given by $J(x,y)(e_1)=e_3$,
$J(x,y)=e_4$, where $\cn^2\simeq\rn^4$. Consider the map\addtolength{\arraycolsep}{-1.2mm}
\begin{equation}\label{Equation:SecondEquationIn:Example:ExampleOfATotallyUmbilicNonHolomorphicMapAndItsLift}
\begin{array}{lcll}
\psi:&\cn\simeq\rn^2&\to&\Sigma^+(\rn^4) \\
~& (x,y)&\to& (f(x,y),J(x,y))\text{;}
\end{array}
\end{equation}\addtolength{\arraycolsep}{1.2mm}\noindent
it is easy to check that $\psi$ is a $\J^1$-holomorphic lift of
$\varphi$.

\end{example}



\section{Summary}


We summarize the main results of this chapter as follows\label{Page:PictureOfResume}

\setlength{\unitlength}{1cm}
\begin{picture}(20,10)(0,0)
\put(0,8){$\psi:M^{2m}\to\Sigma^+N$ $\J^2$-holomorphic}
\put(6.2,8.1){\vector(1,1){1}} \put(6.2,8.1){\vector(1,-1){1}}
\put(7.3,9){\begin{tabular}{l}$\varphi$ harmonic (and pluriconformal)\\ if $M^{2m}$ cosymplectic\end{tabular}} \put(7.3,7){\begin{tabular}{l}$\varphi$~$(1,1)$-geodesic~(and~pluriconformal)\\if~$M^{2m}$~$(1,2)$-symplectic\end{tabular}}
\put(7.2,7){\vector(-4,1){3}}\put(3.8,6.7){\begin{tabular}{l}if $R^{02}_\bot=0$\\and$ M^{2m}$ Kähler\end{tabular}}
\put(0,5){$\psi:M^{2m}\to\Sigma^+N$ $\J^1$-holomorphic}
\put(6.2,5.1){\vector(1,0){1}} \put(7.3,5){$\varphi$ real isotropic
if $M^{2m}$ Hermitian}
\put(6.2,4.8){$ \xy  {\ar@/^1pc/(10,0)*{};(0,0)*{}};\endxy$}
\put(6.9,4.2){if $\varphi$ totally umbilic and $R^{20}_{\bot}=0$}
\put(0,3){$\psi:M^{2m}\to\Sigma^+N$ $\H$-holomorphic}
\put(6.2,3.1){\vector(1,0){1}}\put(7.2,3.1){\vector(-1,0){1}}
\put(7.3,3){$\varphi$ $(J^M,J_\psi)$-holomorphic}
\end{picture}


\chapter{Harmonic morphisms and twistor spaces}\label{Chapter:HarmonicMorphismsAndTwistorSpaces}

\index{Harmonic~morphism!with~superminimal~fibres} Following the idea on p.\
\pageref{Page:TheIdeaToConstructHarmonicMorphismsAfter:Theorem:HarmonicMorphismsAndMeanCurvatureOfFibresSpecialCaseOfRiemannSurfaces},
we shall construct harmonic morphisms $\varphi:M^{2m}\to\cn$ by finding maps that are holomorphic with respect to some almost Hermitian structure on $M^{2m}$ (which guarantees that $\varphi$ is horizontally weakly conformal). These maps will have pluriminimal (hence, minimal) fibres. On the other hand, pluriminimal submanifolds can be constructed as the image of a pluriharmonic holomorphic map from a complex manifold into any almost Hermitian manifold (Theorem \ref{Theorem:PluriminimalSubmanifoldsAsImageOfPluriharmonicAndPluriconformalMaps}). Finally, Theorem \ref{Theorem:NewTheorem35} tells us how to construct maps $\varphi$ which are $(1,1)$-geodesic (pluriharmonic, when the domain is Kähler) as projections of $\J^2$-holomorphic maps $\psi$ into the twistor space. Hence, to obtain a harmonic morphism, we need to find suitable $\J^1$-holomorphic variations $H$ of those $\J^2$-holomorphic maps. Notice that we could think of choosing $H$ to be $\J^2$-holomorphic. However, some immediate problems would arise: first of all, $\J^2$ is never integrable and it would therefore be difficult to produce such holomorphic maps. On the other hand, as we look for maps from the \emph{complex manifold} $N\times{P}$, the fact that $h=\pi\circ H$ is a holomorphic (local) diffeomorphism would imply that the induced almost complex structure $J$ on $M^{2m}$ is integrable: but together with $\J^2$-holomorphicity we would conclude that $(M^{2m},g,J)$ is Kähler and, therefore, few harmonic morphisms could be constructed this way. Finally, we could try to construct $\J^2$-holomorphic maps whose domain is not necessarily a complex manifold; however, in such a case, we would have the problem noticed in Remark \ref{Remark:RemarkTo:Corollary:MinimalSubmanifoldsAndTwistorMaps}: the fibres may not be minimal! These observations lead us to the search for $\J^1$-holomorphic maps which are simultaneously $\J^2$-holomorphic in some variables (\textit{i.e.}, \emph{horizontal} in those variables), as we shall see below.

Note that when the codomain is a complex manifold of higher dimension, our construction will give holomorphic maps with superminimal fibres, and hence holomorphic families of superminimal (and so, minimal) submanifolds.


\section{Harmonic morphisms with superminimal fibres}


As motivated above, we have the following result, obtained in joint work with M.~Svensson (\cite{SimoesSvensson:06}):

\begin{theorem}\label{Theorem:ConstructionOfHarmonicMorphismsWithSuperminimalFibresBruSve}\index{Harmonic~morphism!holomorphic~parametrizations!}
Let $N^{2n}$ and $P^{2p}$ be complex manifolds and $M^{2(n+p)}$ an oriented Riemannian manifold. Denote by $\pi_1:N\times P\to N$ the projection onto the first factor. Assume that we have a $\J^1$-holomorphic map
\[H:W\subseteq N\times P\to\Sigma^+M,\quad(z,\xi)\mapsto H(z,\xi)\text{,}\]
defined on some open subset $W$, such that

\textup{(i)} for each $z$, the map $P\ni\xi\to{H}(z,\xi)\in\Sigma^+M$ is horizontal on its domain;

\textup{(ii)} the map $h=\pi\circ H$ is a diffeomorphism onto its image.

\noindent Then, the map $\pi_1\circ h^{-1}:h(W)\subseteq M\to N$ is holomorphic and has superminimal fibres with respect to the Hermitian structure on $h(W)$ defined by the section $H\circ h^{-1}$. In particular, when $n=1$, $\pi_1\circ h^{-1}$ is a harmonic morphism.
\end{theorem}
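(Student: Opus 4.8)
The plan is to unwind the definitions and reduce everything to the twistor-theoretic machinery already established, especially Theorem~\ref{Theorem:NewTheorem35}, Corollary~\ref{Corollary:MinimalSubmanifoldsAndTwistorMaps}, and the characterization of harmonic morphisms in Theorem~\ref{Theorem:HarmonicMorphismsAndMeanCurvatureOfFibresSpecialCaseOfRiemannSurfaces}. The key observation is that $H$ being $\J^1$-holomorphic while also being \emph{horizontal} in the $P$-variable is exactly what makes $h=\pi\circ H$ produce an \emph{integrable} almost Hermitian structure on $h(W)$ (via the section $H\circ h^{-1}$) for which the fibres of $\pi_1\circ h^{-1}$ are superminimal.

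First I would set up notation: write $J=J_{H\circ h^{-1}}$ for the almost Hermitian structure on $h(W)\subseteq M$ defined by the section $H\circ h^{-1}$ of $\Sigma^+M$, and let $F=\pi_1\circ h^{-1}:h(W)\to N$. Since $h$ is a diffeomorphism onto its image (condition (ii)), $H\circ h^{-1}$ is a genuine section of $\Sigma^+M$ over $h(W)$, so $J$ is well defined. The $\J^1$-holomorphicity of $H$ transfers, through the diffeomorphism $h$, to the statement that $H\circ h^{-1}$ is a $\J^1$-holomorphic section of $\Sigma^+M$; by Theorem~\ref{Theorem:SalamonsTheorem3.2} and Proposition~\ref{Proposition:CorollaryToSalamonTheorem3.2ComplexAnd12SymplecticVersusJ1AndJ2Holomorphicity}, this is equivalent to integrability of $J$, so $(h(W),g,J)$ is a Hermitian manifold. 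Moreover $F$ is holomorphic with respect to $J$: this follows because $F$ is, up to the diffeomorphism $h$, the projection $\pi_1$ of the complex manifold $N\times P$, and the $(1,0)$-subspace of $J$ at $h(z,\xi)$ is by construction $\d h(T^{10}_{(z,\xi)}(N\times P))$, which $\d F=\d\pi_1\circ\d h^{-1}$ maps into $T^{10}N$. Since $F$ is holomorphic, it is horizontally weakly conformal (as noted on p.~\pageref{Page:TheIdeaToConstructHarmonicMorphismsAfter:Theorem:HarmonicMorphismsAndMeanCurvatureOfFibresSpecialCaseOfRiemannSurfaces}), establishing the first half of the horizontal--conformality condition needed for a harmonic morphism.

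Next I would analyze the fibres. The fibres of $F=\pi_1\circ h^{-1}$ are exactly the images under $h$ of the slices $\{z\}\times P$. Condition (i), that $\xi\mapsto H(z,\xi)$ is horizontal for each fixed $z$, is precisely the hypothesis needed to show that along each such fibre the structure $J$ is parallel, i.e. $\nabla^M_X J=0$ for $X$ tangent to the fibre; indeed, horizontality of a curve into $\Sigma^+M$ means its vertical part vanishes, and by equation~\eqref{EquationIn:Remark:DecompositionOfASectionIntoVerticalAndHorizontalParts} the vertical part of $\d(H\circ h^{-1})$ along the fibre is exactly $\nabla^M J$ restricted to fibre directions. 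Hence each fibre is an almost complex submanifold of $(M,J)$ along which $J$ is parallel, which is the definition of a superminimal submanifold (equation~\eqref{Equation:EquationIn:Definition:SuperminimalSubmanifolds}). This gives the superminimality of the fibres with respect to $J$.

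The main obstacle, and the step I expect to require the most care, is the bookkeeping in this fibre computation: correctly identifying which variables of $N\times P$ become horizontal versus vertical directions in $M$ under $\d h$, and verifying that condition (i) translates into parallelism of $J$ along the fibres rather than merely into a weaker tangency statement. Once superminimality is in hand, the final claim is immediate: superminimal submanifolds are minimal (as recalled after equation~\eqref{Equation:EquationIn:Definition:SuperminimalSubmanifolds}), so the fibres of $F$ are minimal, and when $n=1$ the codomain $N$ is a Riemann surface; by Theorem~\ref{Theorem:HarmonicMorphismsAndMeanCurvatureOfFibresSpecialCaseOfRiemannSurfaces} a horizontally weakly conformal map to a surface whose fibres are minimal at regular points is harmonic, hence a harmonic morphism. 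Thus $F=\pi_1\circ h^{-1}$ is a harmonic morphism, completing the proof.
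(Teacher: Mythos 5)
Your proposal is correct and follows essentially the same route as the paper's (much terser) proof: holomorphicity of $\pi_1\circ h^{-1}$ with respect to the structure defined by the section $H\circ h^{-1}$, horizontality in $\xi$ translated via \eqref{EquationIn:Remark:DecompositionOfASectionIntoVerticalAndHorizontalParts} into parallelism of $J$ along the fibres (hence superminimality), and the $n=1$ case concluded from Theorem \ref{Theorem:HarmonicMorphismsAndMeanCurvatureOfFibresSpecialCaseOfRiemannSurfaces}. In fact you supply more detail than the paper does, notably the integrability of $J$ via Proposition \ref{Proposition:CorollaryToSalamonTheorem3.2ComplexAnd12SymplecticVersusJ1AndJ2Holomorphicity}, which the paper leaves implicit in the word ``Hermitian''.
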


\begin{proof} Let us write $\varphi=\pi_1\circ h^{-1}$. The map $\varphi$ is holomorphic with respect to the Hermitian
structure on $M$ defined by the section $H\circ h^{-1}$ of $\Sigma^+M$. The fact that $H$ is horizontal in its second argument implies that this complex structure is parallel along the fibres of $\varphi$, \textit{i.e.}, $\varphi$ has superminimal fibres.
\end{proof}
\begin{remark} When $M$ is not orientable a similar result is also clearly true; we must of course replace $\Sigma^+M$ with $\Sigma~M$, the bundle of all orthogonal complex structures on $M$. On the other hand, we may replace $\Sigma^+M$ by any smaller subbundle invariant under $\J^1$ and $\J^2$.
\end{remark}

\begin{remark} One could ask whether there is a similar construction to this yielding holomorphic maps to some Kähler manifold, the fibres of which are pluriminimal (therefore, still minimal) but not necessarily superminimal. In the case $n=1$ that produces harmonic morphisms the answer is negative, as pluriminimal and superminimal coincide by Proposition
\ref{Proposition:PluriminimalAndSuperminimalInCodimension2}.
\end{remark}

We also have a converse of this result (\cite{SimoesSvensson:06}):

\begin{theorem}\label{Theorem:ConverseToThe:Theorem:ConstructionOfHarmonicMorphismsWithSuperminimalFibresBruSve}
Assume that $(M^{2(n+p)},J)$ is a Hermitian manifold, $N^{2n}$ a complex manifold and
$\varphi:M\rightarrow N$ a holomorphic submersion with superminimal fibres. Then, around any point $x\in M$ there exists a neighbourhood $W$ of $(\varphi(x),0)\in N\times\cn^{p}$ and a holomorphic map
\[H:W\subseteq N\times\cn^{p}\rightarrow \Sigma^+M\text{,}\]
horizontal in the second argument, such that $\pi\circ H$ is a diffeomorphism onto a neighbourhood of $x$ and
\[\varphi(\pi\circ H(z,\xi))=z\qquad((z,\xi)\in W)\text{.}\]
\end{theorem}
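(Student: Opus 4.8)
The plan is to construct the map $H$ explicitly as a twistor lift of $\varphi$ and then extend it in the fibre directions using the superminimality hypothesis. First I would apply the results of Theorem \ref{Theorem:ConstructionOfHarmonicMorphismsWithSuperminimalFibresBruSve} in reverse: since $\varphi$ is a holomorphic submersion with superminimal fibres, the almost Hermitian structure $J$ on $M$ is parallel along the fibres, so the associated section $\sigma_J: M \to \Sigma^+M$ restricts to a horizontal map along each fibre. The heart of the construction is to parametrize a neighbourhood of $x$ using $N$ together with the fibre direction, and to realize $H$ as $H(z,\xi) = \sigma_J(F(z,\xi))$, where $F: W \subseteq N \times \cn^p \to M$ is a suitable local diffeomorphism with $\varphi \circ F(z,\xi) = z$.

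The key steps, in order, would be as follows. First I would fix $x \in M$ and choose local holomorphic coordinates adapted to the submersion, so that $\varphi$ looks like a projection $N \times \cn^p \to N$ near $x$; such coordinates exist because $\varphi$ is a holomorphic submersion and $M$ is Hermitian, hence a complex manifold by Proposition \ref{Proposition:AnAlternativeCharacterizationOfAlmostComplexManifolds}(i). This gives a biholomorphism $F$ from a neighbourhood $W$ of $(\varphi(x),0)$ onto a neighbourhood of $x$ in $M$, with $\varphi(F(z,\xi)) = z$. Second, I would define $H = \sigma_J \circ F$, where $\sigma_J$ is the section of $\Sigma^+M$ associated with the integrable structure $J$ (see p.\ \pageref{Page:DefinitionOfTheDifferentiableStructureOnSigmaPlusV} and the discussion of associated sections). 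Since $J$ is integrable, Corollary \ref{Corollary:Theorem7.1.3iiiBairdWoodComplexSubmanifoldsOfSigmaPlusMAndTheStructureJ1} guarantees that $\sigma_J$ is a $\J^1$-holomorphic section whose image is a complex submanifold of $(\Sigma^+M, \J^1)$; composing with the biholomorphism $F$ preserves $\J^1$-holomorphicity, so $H$ is $\J^1$-holomorphic.

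Third, I would verify the two required properties. That $\pi \circ H = F$ is a diffeomorphism onto a neighbourhood of $x$ is immediate, since $\pi \circ \sigma_J = \Id_M$ and $F$ is a biholomorphism; the identity $\varphi(\pi \circ H(z,\xi)) = \varphi(F(z,\xi)) = z$ then follows from the coordinate choice. The horizontality of $H$ in the second argument is where the superminimality hypothesis is used decisively: fixing $z$ and varying $\xi$ traces out a fibre of $\varphi$, and along each fibre the superminimal condition $\nabla^M_{TM_{\text{fibre}}} J = 0$ from \eqref{Equation:EquationIn:Definition:SuperminimalSubmanifolds} means precisely that the vertical part $(\d\sigma_J X)^\V = \nabla_X J$ (by \eqref{EquationIn:Remark:DecompositionOfASectionIntoVerticalAndHorizontalParts}) vanishes for $X$ tangent to the fibre. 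Hence $\d H$ applied to $\partial_{\xi}$-directions lands in the horizontal space, which is the definition of horizontality.

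The main obstacle I anticipate is the second step: establishing that the adapted holomorphic coordinates genuinely make $F$ a biholomorphism \emph{and} that $J$ remains parallel along the $\xi$-coordinate fibres in these coordinates. The delicate point is that the fibres of $\varphi$ are only superminimal, a condition on $\nabla^M_{TM} J$ restricted to fibre directions, so I must confirm that the product structure $N \times \cn^p$ identifies the $\cn^p$-factor with genuine fibre directions along which the required covariant derivative of $J$ vanishes. This amounts to checking that the horizontal distribution of the submersion and the fibre distribution interact correctly with the integrable structure $J$, which I would address by working infinitesimally at $x$ and invoking superminimality to kill the vertical component of $\d\sigma_J$ on fibre tangent vectors, exactly as in the proof of Theorem \ref{Theorem:ConstructionOfHarmonicMorphismsWithSuperminimalFibresBruSve}.
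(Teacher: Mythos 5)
Your proposal is correct and follows essentially the same route as the paper: take a holomorphic chart $\eta$ adapted to the holomorphic submersion with $\varphi(\eta(z,\xi))=z$, set $H=\sigma_J\circ\eta$, use integrability of $J$ to get $\J^1$-holomorphicity of $\sigma_J$ (and hence of $H$), and use superminimality of the fibres, via $(\d\sigma_J X)^\V=\nabla_X J=0$ for fibre-tangent $X$, to get horizontality in $\xi$. The concern you raise in your final paragraph dissolves exactly as you suggest, since $\varphi\circ F(z,\cdot)\equiv z$ forces the $\xi$-directions to push forward to genuine fibre tangents, which is all superminimality needs.
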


\begin{proof} From Corollary \ref{Proposition:CorollaryToSalamonTheorem3.2ComplexAnd12SymplecticVersusJ1AndJ2Holomorphicity}, integrability of $J$ is equivalent to the fact that the map $\sigma_J:M\rightarrow\Sigma^+M$ induced by $J$ is $\J^1$-holomorphic. As the fibres of $\varphi$ are superminimal, $\sigma_J$ will map these into the horizontal space of $\Sigma^+M$.

Fix a point $x\in M$ and a holomorphic chart
\[\eta:W\subseteq{N}\times\cn^p\to U\subseteq M\]
with $\varphi(\eta(z,\xi))=z$; this is possible as $\varphi$ is submersive. The map $H=\sigma_J\circ\eta$ is holomorphic as it is the composition of holomorphic maps. On the other hand, $\sigma_J$ is horizontal along the fibres, forcing $H$ to be horizontal in its second argument.
\end{proof}

\begin{remark}\label{Remark:RemarkTo:Theorem:ConverseToThe:Theorem:ConstructionOfHarmonicMorphismsWithSuperminimalFibresBruSve}
When $n=1$, \textit{i.e.}, $N$ is a Riemann surface, the condition that $J$ be integrable on $M$ is automatic from the fact that $\varphi$ has superminimal fibres, see \cite{BairdWood:03}, Theorem 7.9.1, p. 228.
\end{remark}


\section{Examples}


The following constructions were obtained in \cite{SimoesSvensson:06}.


\subsection{Maps from Euclidean spaces}


We construct local harmonic morphisms $\varphi:\mathbb{R}^{2n}\rightarrow\mathbb{C}$ using Theorem \ref{Theorem:ConstructionOfHarmonicMorphismsWithSuperminimalFibresBruSve}. An approach similar to this has been used in \cite{BairdWood:95} (see also \cite{BairdWood:03}), and we follow the notation set out there.

It is well known that $(\Sigma^{+}(\rn^{2n}),\J^1)$ is a complex manifold, and following \cite{BairdWood:95} we can easily construct a chart. For each $q\in\cn^n\cong\rn^{2n}$ and each $\mu=(\mu_{1},...,\mu_{n(n-1)/2})\in\cn^{n(n-1)/2}$, let
$M=M(\mu)\in\so(n,\cn)$ be the matrix with entries
\[M^i_{\overline{j}}(\mu)=\begin{pmatrix}
0 & \mu_{1} & \mu_{2} & ... & \mu_{m-1} \\
-\mu_{1} & 0 & \mu_{m} & ... & \mu_{2m-3} \\
-\mu_{2} & -\mu_{m} & 0 & ... & \mu_{3m-6} \\
... & ... & ... & ... & ... \\
-\mu_{m-1} & -\mu_{2m-3} & -\mu_{3m-6} & ... & 0
\end{pmatrix}\text{.}\]

Then $\mu$ determines the positive almost Hermitian structure $J(\mu)$ on $T_q\rn^{2n}$ whose $(1,0)$-cotangent space is spanned by $dq^i-M^i_{\overline{j}}\d\overline{q}^j$. For each $(q,\mu)$ we define $w\in\cn^n$ by $w^i=q^i-M^i_{\overline{j}}\overline{q}^j$, and this gives a holomorphic chart on an open dense subset of
$\Sigma^+(\rn^{2n})$, defined by
\[\psi:(q,J(\mu))\mapsto(w,\mu)\in\cn^n\times\cn^{n(n-1)/2}\text{.}\]

A \emph{holomorphic} map $f:\cn^{m}\rightarrow\Sigma^{+}(\rn^{2n})$ is \emph{horizontal} if and only if $\mu(f(z))$ \emph{is constant}. Hence, any holomorphic map $H:\cn^{k}\times\cn^{n-k}\rightarrow\Sigma^{+}(\rn^{2n})$ such that
\begin{itemize}
\item[(i)] $h=\pi\circ H:\cn^n\rightarrow\cn^n$ is a local
diffeomorphism, where $\pi:\Sigma^{+}\rightarrow \rn^{2n}$ is the
natural projection, and
\item[(ii)] $\psi\circ H(z,\xi)=(\omega(z,\xi),\mu(z))$ is holomorphic,
\end{itemize}
will define a holomorphic map $\pi_1\circ h^{-1}$ with superminimal fibres. When $k=1$, this will be a harmonic morphism.

\begin{example}
Take $n=3$, $k=1$ and choose $H:\cn\times\cn^{2}\rightarrow\Sigma^+(\rn^6)$ defined by \[H(z,\xi_1,\xi_2)=\big(h(z,\xi_1,\xi_2),J(\mu(z))\big)\text{,}\]
where $h(z,\xi_1,\xi_2)=\left(\dfrac{\xi_1+z\overline{\xi}_2}{1+\|z\|^2},\dfrac{\xi_2-z\overline{\xi_1}}{1+\|z\|^2},f(z)-\xi_1-\xi_2\right)$,
$f$ is any holomorphic function and
\[M(\mu(z))=\begin{pmatrix}
0 & z & 0 \\
-z & 0 & 0 \\
0 & 0 & 0
\end{pmatrix}\text{.}\]
As $\psi(H(z,\xi))=\big((\xi_1,\xi_2,f(z)-\xi_1-\xi_2),(z,0,0)\big)$,
we deduce that $H$ is horizontal in $\xi$; the resulting harmonic morphism $\varphi(q)=z$ will be a regular solution to the equation
\[f(z)+q^1+q^2+z(\overline{q}^1-\overline{q}^2)-q^3=0\text{.}\]
Choosing for example $f(z)=z$, gives the map \[\varphi(q^1,q^2,q^3)=\frac{q^3-q^1-q^2}{1+\overline{q}^1-\overline{q}^2}\text{.}\]
\end{example}


\subsection{Maps from complex projective spaces}


Let us denote by $G_p(\cn^{p+q})$ the Grassmannian of $p$-spaces in $\cn^{p+q}$. Our first result shows that there are only a few cases which allow non-trivial constructions of harmonic morphisms to surfaces (\cite{SimoesSvensson:06}).

\begin{proposition}\label{Proposition:PropositionThatShowsThatHarmonicMorphismsWithSuperminimalFibresFromCPNWithNGeq3AreCanonicallyHolomorphic}
Let $\openU\subseteq G_p(\cn^{p+q})$ be open and assume that $J$ is an almost Hermitian structure on $\openU$. If $p+q\geq4$, then any holomorphic map $\varphi:(\openU,J)\to N^2$ to a Riemann surface with superminimal fibres is $\pm$holomorphic with respect to the usual complex structure on $G_p(\cn^{p+q})$.
\end{proposition}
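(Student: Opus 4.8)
The plan is to convert the hypothesis of superminimal fibres into a pointwise algebraic constraint on $J$ coming from the curvature of the Grassmannian, and then to show that this constraint is rigid enough to force $J=\pm J^{0}$, where $J^{0}$ denotes the standard (Fubini--Study) complex structure on $G_p(\cn^{p+q})$. Throughout, the metric on $\openU$ is the invariant K\"ahler metric $g$ of the Grassmannian and $J$ is orthogonal with respect to $g$; I would work first at regular points of $\varphi$ and recover the conclusion on all of $\openU$ at the end by continuity. Concretely, by \eqref{Equation:EquationIn:Definition:SuperminimalSubmanifolds} the fibre $F=\varphi^{-1}(\mathrm{pt})$ through a regular point $x$ satisfies $\nabla_{X}J=0$ for every $X\in T_{x}F$, where $\nabla$ is the Levi-Civita connection of $g$. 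Since this holds at every point of $F$ and $TF$ is involutive, the covariant derivative of the tensor $J$ in fibre directions vanishes identically along $F$, so the Ricci (curvature) identity collapses to
\[[R(X,Y),J]:=R(X,Y)\circ J-J\circ R(X,Y)=0,\qquad\forall\,X,Y\in T_{x}F.\]
Writing $\V_{x}:=T_{x}F=\ker\d\varphi_{x}$, a $J$-complex hyperplane of $T_{x}G_p(\cn^{p+q})$ (of complex codimension one, as $\dim_{\cn}N=1$), the problem is reduced to a question of linear algebra at the single point $x$: every curvature operator $R(X,Y)$ with $X,Y\in\V_{x}$ commutes with $J$.

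Next I would use the symmetric-space model for the curvature. Identifying $T_{x}G_p(\cn^{p+q})$ with $\mathfrak{m}=\Hom(V,V^{\perp})$, where $V\subset\cn^{p+q}$ is the $p$-plane represented by $x$, the structure $J^{0}$ is multiplication by $i$ and the curvature is $R(X,Y)Z=-[[X,Y],Z]$, the brackets being taken in $\mathfrak{u}(p+q)$; this is an explicit expression quadratic in $X,Y$. The assumption $p+q\ge4$ gives $\dim_{\cn}\mathfrak{m}=pq\ge3$, hence $\dim_{\cn}\V_{x}\ge2$, so that a large family of commutation relations is available. It is precisely here that the hypothesis is needed: for $\cn P^{2}$ (that is, $p+q=3$) the vertical space $\V_x$ is a single complex line, the relations degenerate, and the conclusion can fail. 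The case $p=1$, i.e. $\cn P^{q}$ with $q\ge 3$, is the representative one for this subsection; the general Grassmannian is handled in the same way with the bracket formula above.

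The heart of the matter, and the step I expect to be the main obstacle, is the algebraic lemma: \emph{if an orthogonal complex structure $J$ on $\mathfrak{m}$ admits a $J$-complex hyperplane $\V$ with $[R(X,Y),J]=0$ for all $X,Y\in\V$, and $pq\ge3$, then $J=\pm J^{0}$.} I would prove it by substituting well-chosen pairs $X,Y\in\V$ into the explicit curvature, discarding the terms proportional to $J^{0}$ (which commute with $J^{0}$ automatically), and extracting from the remaining rank-two pieces that $[J^{0},J]$ annihilates enough of $\mathfrak{m}$ to vanish. The delicate point is that $\V$ is a priori only $J$-invariant and not $J^{0}$-invariant, so $J$ and $J^{0}$ cannot be simultaneously diagonalised at the outset; the computation must first show that $J$ and $J^{0}$ share a common decomposition of $\mathfrak{m}$ into complex lines, using $pq\ge3$ to generate sufficiently many independent relations, and only then conclude equality up to an overall sign.

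Granting the lemma, at each regular point one has $J_{x}=\epsilon(x)\,J^{0}_{x}$ with $\epsilon(x)\in\{+1,-1\}$. As $J$ and $J^{0}$ are continuous, $\epsilon$ is locally constant, hence constant on each connected component of the regular set, say equal to $\epsilon$. The $J$-holomorphicity of $\varphi$, namely $\d\varphi\circ J=J^{N}\circ\d\varphi$, then reads $\d\varphi\circ J^{0}=\epsilon\,J^{N}\circ\d\varphi$, so that $\varphi$ is holomorphic when $\epsilon=+1$ and anti-holomorphic when $\epsilon=-1$ with respect to the usual complex structure on $G_p(\cn^{p+q})$. Extending from the dense regular set to $\openU$ by continuity completes the proof.
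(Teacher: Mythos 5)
Your first step is correct: superminimality of the fibres, together with the involutivity of $\ker\d\varphi$, does give the pointwise relations $[R(X,Y),J]=0$ for all $X,Y\in\V_x=\ker\d\varphi_x$, and $\V_x$ is indeed a $J$-complex subspace of complex codimension one at regular points. The fatal problem is with the step you defer: the algebraic lemma on which the whole argument rests is \emph{false}, already in the representative case $\cn P^3$ (i.e. $p=1$, $q=3$, $pq=3$), so no amount of "substituting well-chosen pairs" can close the gap. Indeed, identify $T_x\cn P^3\cong\cn^3$ with $J^0=i$; the Fubini--Study curvature operator of any pair $X,Y$ decomposes (for any sign/scale convention) as
\[
R(X,Y)=A_{X,Y}+c_{X,Y}\,J^0,
\]
where $A_{X,Y}$ is $J^0$-complex-linear, has image inside the complex span of $\{X,Y\}$ and annihilates its Hermitian orthocomplement. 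Now take $\V=\cn e_1\oplus\cn e_2$ and $J=i\oplus i\oplus(-i)$ relative to the splitting $\cn e_1\oplus\cn e_2\oplus\cn e_3$. Then $J$ is orthogonal, $\V$ is a $J$-complex hyperplane, $J$ commutes with $J^0$, and for $X,Y\in\V$ the operator $A_{X,Y}$ is supported inside $\V$, where $J=J^0$; hence $[R(X,Y),J]=0$ for \emph{all} $X,Y\in\V$, although $J\neq\pm J^0$. So the hypotheses of your lemma are satisfied by a structure it is supposed to exclude, and dimension counting ($pq\ge3$) cannot rescue it.

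The underlying reason is that the first-order consequence of superminimality you extract only constrains $J$ through its interaction with curvature operators built from vertical vectors, and these operators never couple $J|_{\V}$ to $J|_{\V^\perp}$: any structure agreeing with $J^0$ on the vertical space and with $-J^0$ on the horizontal space passes your test. Such "mixed" structures are exactly what twistor constructions on $4$-manifolds produce, and excluding them when $p+q\ge4$ is the entire content of the proposition; it requires input that couples the two distributions, e.g.\ the integrability of $J$ (automatic here by \cite{BairdWood:03}, Theorem 7.9.1, as recalled in Remark \ref{Remark:RemarkTo:Theorem:ConverseToThe:Theorem:ConstructionOfHarmonicMorphismsWithSuperminimalFibresBruSve}, since the target is a surface) or derivatives of the equation $\nabla_XJ=0$ taken in horizontal directions along the foliation by fibres. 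Note finally that the thesis itself gives no proof of this proposition -- it is quoted from \cite{SimoesSvensson:06} -- so your argument must stand on its own, and as written its central step is not merely unproved but untrue.
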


In particular, when $n\geq3$, any map from some open subset of $\cn P^n$ to a surface, which is holomorphic with respect to some almost Hermitian structure on this subset, is necessarily $\pm$holomorphic with respect to the usual complex structure on $\cn P^n$.

%


\subsubsection{Maps from $\cn P^3$.}


We construct a holomorphic map from an open dense subset of $\cn P^3$. Let $Z$ be the flag manifold consisting of all orthogonal decompositions
\[\cn^4=E_1\oplus E_2\oplus E_3\text{,}\]
where
\[\dim E_1=2\ \text{ and }\ \dim E_2=\dim E_3=1\text{.}\]
This is a complex manifold; its complex structure is induced by the embedding
\[Z\ni(E_1,E_2,E_3)\mapsto (E_1,E_1\oplus E_2)\in G_2(\cn^4)\times G_3(\cn^4)\text{.}\]

The space $Z$ is fibred over $\cn P^3$ by the mapping
\[ \pi:Z\to\cn P^3,\quad \pi(E_1,E_2,E_3)=E_2\text{.}\]
With each such decomposition, we obtain a positive orthogonal complex structure $J$ on $T_{E_2}\cn P^3$ by identifying
\[T^\cn_{E_2}\cn P^3\cong\Hom(E_2,E_2^\perp)\oplus\Hom(E_2^\perp,E_2)\]
in the usual way (see, \textit{e.g.}, \cite{EschenburgTribuzy:99}), and defining the $(1,0)$-subspace of $J$ to be
\[\Hom(E_1,E_2)\oplus\Hom(E_2,E_3).\]
This embeds $Z$ as a subbundle of $\Sigma^+(\cn P^3)$; the complex structure on $Z$ is easily seen to be the restriction of $\J^1$ to $Z$.

Alternatively, as a bundle over $\cn P^3$, $Z$ may be identified with the Grassmannian $G_2(T^{1,0}\cn P^3)$ of $2$-dimensional subspaces of $T^{1,0}\cn P^3$, see \textit{e.g.}, \cite{DavidovSergeev:93}, page 59.

As a homogeneous space, we have
\[Z\cong\frac{\U(4)}{\U(2)\times\U(1)\times\U(1)}\text{.}\]
As it is well-known (see, \textit{e.g.}, \cite{EschenburgTribuzy:99}), $Z$ carries a $\U(4)$-invariant distribution, the \emph{horizontal distribution}, which we denote by $\H$. This distribution is transversal to the fibres of $\pi$ and has the property that, under the embedding of $Z$ into $\Sigma^+(\cn P^3)$, it is mapped onto the horizontal distribution of $\Sigma^+(\cn P^3)$.

Following the strategy laid out in the previous section, we aim to construct a holomorphic map $(z,\xi)\mapsto H(z,\xi)$, horizontal in $\xi$ with the property that the induced map $h=\pi\circ H$ is a diffeomorphism onto its image:
\begin{equation}\label{Equation:SchemeToProduceMapsFromCP3ToC2WithSuperminimalFibres}
\xymatrix{ \cn^2\times\cn\supseteq
U\ar[rr]^H\ar[rrd]^h\ar[d]^{\pi_1} & & Z
\ar[d]^\pi \\
\cn^2 &  & \cn P^3  }
\end{equation}
Then $\pi_1\circ h^{-1}$ will be holomorphic and have superminimal fibres with respect to some (integrable) Hermitian structure on the image of $h$.

The map $H$ is given by two holomorphic maps $$f:U\to G_2(\cn^4),\quad s:U\to G_3(\cn^4),$$ with the property that
$f\subseteq s$ at every point. The condition that $H$ is horizontal in $\xi$ can be expressed by the condition \[\partial_\xi f\subseteq s,\]
see, \textit{e.g.}, \cite{ErdemWood:83}.

To find such $f$, recall that we have an inclusion mapping with image on an open, dense subset
\[\Hom(\cn^2,\cn^2)\to G_2(\cn^4),\]
obtained by associating to a linear map $\phi:\cn^2\to\cn^2=(\cn^2)^\perp\subseteq\cn^4$ its
graph, regarded as an element in $G_2(\cn^4)$. Thus, assuming that $f$
takes values in $\Hom(\cn^2,\cn^2)$, we may write
\[f=\begin{pmatrix} \alpha & \beta \\   \gamma & \delta\end{pmatrix}\text{,}\]
where $\alpha$, $\beta$, $\gamma$ and $\delta$ are holomorphic,
complex valued functions on $\cn^2\times\cn$. This means that
\[f=\wordspan_\cn\{e_1+\alpha e_3+\gamma e_4,e_2+\beta e_3+\delta e_4\},\]
where $\{e_i\}_{i=1}^4$ is the standard basis for $\cn^4$.

In a similar fashion, we have $\Hom(\cn^3,\cn)\subseteq G_3(\cn^4)$, and we may thus write
\[s=\begin{pmatrix} u & v & w\end{pmatrix}\text{,}\]
for some holomorphic functions $u$, $v$ and $w$ on $U$, so that
\[s=\wordspan_\cn\{e_1+ue_4,e_2+ve_4,e_3+we_4\}\text{.}\]
In general, a vector $ae_1+be_2+ce_3+de_4$ belongs to $s$ if and only if
\[0=\det\begin{pmatrix} a & 1 & 0 & 0 \\ b & 0 & 1 & 0 \\ c & 0 & 0 & 1 \\ d & u & v & w \end{pmatrix}=au+bv+cw-d\text{.}\]
The requirement that $(z,\xi)\mapsto H(z,\xi)$ takes its values in
$Z$ and is horizontal in $\xi$ is thus expressed in the following
equations:\addtolength{\arraycolsep}{-1.0mm}
\begin{equation}\label{Equation:FirstEquationIn:Example:TheComplexProjectiveSpace}
\left.\begin{array}{lll}
u+\alpha w-\gamma&=&0\\
v+\beta w-\delta&=&0\\
w\partial_\xi\alpha-\partial_\xi\gamma&=&0\\
w\partial_\xi\beta-\partial_\xi\delta&=&0\text{.}
\end{array}\qquad\right\}
\end{equation}\addtolength{\arraycolsep}{1.0mm}\noindent
We can thus choose $w$, $\alpha$ and $\beta$ arbitrarily; this will determine $\gamma$ and $\delta$ up to additive functions of $z$, and from these, $u$ and $v$ will be determined.

The induced map $h$ is given by $f^\perp\cap s=[x_1,x_2,x_3,x_4]\in\cn P^3$; hence
\begin{equation*}\label{Equation:SecondEquationIn:Example:TheComplexProjectiveSpace}
\begin{array}{ll}
x_1u+x_2v+x_3w-x_4=0\text{,}
\\ x_1+x_3\overline\alpha+x_4\overline\gamma=0\text{,}\\
x_2+x_3\overline\beta+x_4\overline\delta=0\text{.}
\end{array}
\end{equation*}
Solving these equations using the two first equations of \eqref{Equation:FirstEquationIn:Example:TheComplexProjectiveSpace}, we get
\begin{equation}\label{Equation:ThirdEquationIn:Example:TheComplexProjectiveSpace}
\left.\begin{array}{lll}
x_1 & = \overline\gamma(\overline\beta\delta-|\beta|^2w-w)+\overline\alpha(\overline\delta\beta w-|\delta|^2-1)\\
x_2 & = \overline\delta(\overline\alpha\gamma-|\alpha|^2w-w)+\overline\beta(\overline\gamma\alpha w-|\gamma|^2-1)\\
x_3 & = 1+|\gamma|^2+|\delta|^2-w(\overline\gamma\alpha+\overline\delta\beta)\\
x_4 & = w(1+|\alpha|^2+|\beta|^2)-(\overline\alpha\gamma+\overline\beta\delta).
\end{array}\qquad\right\}
\end{equation}
It is now easy to construct maps $H$ satisfying the conditions of Theorem
\ref{Theorem:ConstructionOfHarmonicMorphismsWithSuperminimalFibresBruSve}.
\begin{example} We may choose our data as
\begin{equation*}
\alpha=\xi+z_1,\ \beta=\xi+z_2,\ \omega=2\xi,\ \gamma=\xi^2+z_1,\
\delta=\xi^2+z_2.
\end{equation*}
The last two lines of \eqref{Equation:FirstEquationIn:Example:TheComplexProjectiveSpace} are satisfied, and a lengthy calculation shows that the map $h$ is given by $h(z_1,z_2,\xi)=[x_1,x_2,x_3,x_4]$, with $x_1,x_2,x_3,x_4$ defined
by
\eqref{Equation:ThirdEquationIn:Example:TheComplexProjectiveSpace}
is a diffeomorphism in a neighbourhood of the origin. Thus it
induces a map from a subset of $\cn P^n$ to $\cn^2$; this map is
holomorphic with respect to some Hermitian structure and has
superminimal fibres.
\end{example}

\begin{example}[\emph{Harmonic morphisms with superminimal fibres}]
We next construct a harmonic morphism $\varphi:\cn P^3\to\cn$ with superminimal fibres. For that, we consider a scheme similar to \eqref{Equation:SchemeToProduceMapsFromCP3ToC2WithSuperminimalFibres}, now changing the roles of $\cn$ and $\cn^2$:
\begin{equation}
\xymatrix{ \cn\times\cn^2\supseteq
U\ar[rr]^H\ar[rrd]^h\ar[d]^{\pi_1} & & Z
\ar[d]^\pi \\
\cn &  & \cn P^3  }
\end{equation}
As before, we look for maps $f$ and $s$,
\[f=\begin{pmatrix}\alpha&\beta \\ \gamma & \delta\end{pmatrix}\text{,}\qquad s=\begin{pmatrix}u & v & w\end{pmatrix}\]
with $f\subseteq s$ and $\partial_{\xi_i}f\subseteq s$ ($i=1,2$). These conditions are equivalent to\addtolength{\arraycolsep}{-1.0mm}
\begin{equation}\label{Equation:FirstEquationIn:Example:HarmonicMorphismsWithSuperminimalFibres}
\left.\begin{array}{lll}
u+\alpha w-\gamma&=&0\\
v+\beta w-\delta&=&0\\
w\partial_{\xi_i}\alpha-\partial_{\xi_i}\gamma&=&0,\quad i=1,2\\
w\partial_{\xi_i}\beta-\partial_{\xi_i}\delta&=&0,\quad i=1,2\text{.}
\end{array}\qquad\right\}
\end{equation}\addtolength{\arraycolsep}{1.0mm}\noindent
Choose $w=P(z)$, $\alpha=Q(\xi_1)$, $\beta=i.R(\xi_2)$, $\gamma=w.\alpha$ and $\delta=w.\beta$, where $P,Q$ and $R$ are arbitrary holomorphic functions (for instance, polynomials) with $P(0)=Q(0)=R(0)=0$ and first derivatives at the origin given by nonzero real numbers $p,q$ and $r$. The last two equations in \eqref{Equation:FirstEquationIn:Example:HarmonicMorphismsWithSuperminimalFibres} are satisfied since
\[\partial_{\xi_i}\gamma=\omega.\partial_{\xi_i}\alpha\quad\text{and}\quad\partial_{\xi_i}\delta=\omega.\partial_{\xi_i}\beta,\quad{i}=1,2\]
and equations \eqref{Equation:ThirdEquationIn:Example:TheComplexProjectiveSpace} simplify to

\[\begin{split}
x_1&=-\overline{\alpha}(1+|w|^{2})\\
x_2 & =-\overline{\beta}(1+|w|^{2})\\
x_3 & =1\\
x_4 &=w\text{.}
\end{split}\]
Hence, the map $H$ we obtain to $\cn P^3$ will be a local diffeomorphism around the origin if and only if it is a diffeomorphism the map defined by $\tilde{H}=(x_1,x_2,x_4)$. We have
\begin{equation*}
\partial_{(z,\xi_1,\xi_2)}\tilde{H} (0)=
\left[\begin{array}{ccc}
0 & 0 & 0 \\
0 & 0 & 0 \\
p & 0 & 0 \
\end{array}\right]\quad\text{and}\quad\partial_{(\bar{z},\bar{\xi}_1,\bar{\xi}_2)}\tilde{H}(0)=
\left[\begin{array}{ccc}
0 & -q & 0 \\
0 & 0 & ir \\
0 & 0 & 0 \
\end{array}\right]\text{.}
\end{equation*}
So, as a real map $\tilde{H}:\rn^6\to\rn^6$, $\tilde{H}$ has derivative at the origin given by
\[\left[\begin{array}{cccccc}
0 & -q & 0 & 0 & 0 & 0 \\
0 & 0 & 0 & 0 & 0 & r \\
p & 0 & 0 & 0 & 0 & 0 \\
0 & 0 & 0 & 0 & q & 0 \\
0 & 0 & r & 0 & 0 & 0 \\
0 & 0 & 0 & p & 0 & 0
\end{array}\right]\]
with nonzero determinant as we assumed that $p$, $q$ and $r$ are different from zero. Hence, $H$ is a local diffeomorphism and it defines a (local) harmonic morphism $\varphi:\cn P^3\to\cn$ with superminimal fibres. Notice that $\varphi$ is holomorphic with respect to the complex structure on $\cn P^3$ defined by $H$ (Theorem \ref{Theorem:ConstructionOfHarmonicMorphismsWithSuperminimalFibresBruSve}) but also with respect to the canonical complex structure on $\cn P^3$ (Proposition \ref{Proposition:PropositionThatShowsThatHarmonicMorphismsWithSuperminimalFibresFromCPNWithNGeq3AreCanonicallyHolomorphic}).
\end{example}


\chapter{Jacobi fields and twistor spaces}\label{Chapter:JacobiVectorFieldsAndTwistorSpaces}



The infinitesimal deformations of harmonic maps are called \emph{Jacobi fields}. They satisfy a system of partial differential equations given by the linearization of those for harmonic maps. The use of twistor methods in the study of Jacobi fields has proved quite fruitful, leading to a series of results (\cite{LemaireWood:96}, \cite{MontielUrbano:97}, \cite{Wood:02}). In \cite{LemaireWood:02} and \cite{LemaireWood:07} several properties of Jacobi fields along harmonic maps from the $2$-sphere to the complex projective plane and to the $4$-sphere are obtained by carefully studying the twistorial construction of those harmonic maps. In particular, relating the infinitesimal deformations of the harmonic maps to those of the holomorphic data describing them. In this chapter, we show how results in Chapter \ref{Chapter:HarmonicMapsAndTwistorSpaces} extend to ``first order parametric versions", providing this way a unified twistorial framework for the results in \cite{LemaireWood:02} and \cite{LemaireWood:07}.


\section{Jacobi~vector~fields~along~harmonic~maps}\label{Section:SectionOf:Chapter:JacobiVectorFieldsAndTwistorSpaces:JacobiVectorFieldsAlongHarmonicMaps}


\index{Jacobi!equation}\index{Jacobi!vector~field|see{Jacobi~field}}\index{Vector~field!Jacobi|see{Jacobi~field}}\index{Jacobi!operator}
Given a harmonic map $\varphi:M\to N$ and a (smooth) vector field $v\in\Gamma(\varphi^{-1}TN)$ along it, $v$ is said to be a \textit{Jacobi field} (along $\varphi$) (see \textit{e.g.}, \cite{EellsLemaire:78}, p.11, \cite{Wood:06}) if it satisfies the linear \emph{Jacobi equation} $J_{\varphi}(v)=0$, where the \emph{Jacobi operator} $J_{\varphi}$ is defined by
\begin{equation}\label{Equation:JacobiEquationForVectorFields}
J_{\varphi}(v)=\triangle v-\trace R^N(\d\varphi,v)\d\varphi\text{.}
\end{equation}
Here, $\triangle$ is the Laplacian on $\varphi^{-1}TN$:
\begin{equation}\label{Equation:LaplacianOnAVectorAlongAMap}
\triangle{v}=-\sum_i(\nabla^{\varphi^{-1}}_{X_i}\nabla^{\varphi^{-1}}_{X_i}v-\nabla^{\varphi^{-1}}_{\nabla^M_{X_i}X_i}v)\text{~($X_i$~orthonormal~(local)~frame~for~$TM$)}
\end{equation}
and
\begin{equation}\label{Equation:TheTraceOfTheCurvatureOnTheJacobiEquation}
\trace{R}^N(\d\varphi,v)\d\varphi=\sum_iR^N(\d\varphi{X}_i,v)(\d\varphi{X}_i)\text{.}
\end{equation}

Jacobi fields are characterized as lying in the kernel of the second variation of the energy functional \eqref{Equation:EnergyFunctional}. Indeed, if $\varphi_{t,s}$ is a two-parameter variation of a harmonic map $\varphi_{(0,0)}$, then, writing $v=\left.\frac{\partial\varphi}{\partial t}\right|_{(0,0)}$ and $w=\left.\frac{\partial\varphi}{\partial s}\right|_{(0,0)}$, the \emph{Hessian} $H_{\varphi}$ of $\varphi$ is the bilinear operator on $\Gamma(\varphi^{-1}TN)$ given by
\begin{equation}\label{Equation:HessianOfAHArmonicMap}\index{Harmonic~map!Hessian}\index{$H_\varphi$,~Hessian~of~a~harmonic~map}\index{Jacobi~field!and~the~Hessian~$H_\varphi$}\index{Integrable~Jacobi~field}\index{Jacobi~field!integrable!}
H_\varphi(v,w):=\left.\frac{\partial^2E(\varphi_{t,s})}{\partial{t}\partial{s}}\right|_{(0,0)}=\int_M<J_\varphi(v),w>(x)\d\mu_M(x)
\end{equation}
so that a Jacobi field $v$ (along $\varphi$) is characterized by the condition
\[H_\varphi(v,w)=0,\quad\forall\,w\text{.}\]

As we stated in the introduction, the main idea is to generalize the results from the previous chapters, where no parameter $t$ is involved (and to which we shall refer as \emph{non-parametric}), to \emph{first order parametric} versions.

Given a (family of) map(s) $\varphi:I\times M\to N$, $(t,x)\to\varphi_t(x)$, we say that $\varphi$ \emph{is harmonic to first order} if
\begin{equation}\label{Equation:DefinitionOfHarmonicToFirstOrder}\index{Harmonic~map!to~first~order}\index{To~first~order!harmonic~map|see{Harmonic~map}}
\begin{array}{l}
\varphi_0\text{ is harmonic and }\ddtatzero\tau(\varphi_t)=0
\end{array}
\end{equation}
where $\ddtatzero\tau(\varphi_t)=\nabla_{\ddtatzero}^{\varphi^{-1}TN}\tau(\varphi_t)$ and $\tau(\varphi_t)=\trace\nabla\d\varphi_t\in\varphi^{-1}TN$. Note that, writing $\tau(\varphi_t)$ as $\sum\tau(\varphi_t)_j\partial_{\psi_j}$ where $(\psi_1,...,\psi_n)$ are local coordinates for $N$, condition \eqref{Equation:DefinitionOfHarmonicToFirstOrder} is equivalent to the requirement that $\tau(\varphi_0)_j=\ddtatzero(\tau(\varphi_t)_j)=0$ for all $1\leq j\leq n$ (\cite{Wood:06}).

Equivalently, $\varphi$ is harmonic to first order if and only if $\tau(\varphi_t)$ is $o(t)$.

Let $\varphi_0:M\to N$ be a harmonic map between two manifolds $M$ and $N$. Let $v\in\Gamma(\varphi_0^{-1}TN)$ be a vector field along $\varphi_0$ and let $\varphi:I\times M\to N$ a one-parameter variation of $\varphi_0$. We say that $\varphi$ is \emph{tangent} to $v$ if $v=\left.\frac{\partial\varphi_t}{\partial t}\right|_{t=0}$.

The following result is a key ingredient in what follows (\cite{LemaireWood:02}):

\begin{proposition}\label{Proposition:JacobiVectorFieldsInduceVariationsHarmonicToFirstOrder}\index{Harmonic~map!and~Jacobi~fields}\index{Jacobi~field!and~harmonic~to~first~order}
Let $\varphi_0:M\to N$ be a harmonic map between compact manifolds $M$ and $N$. Let $v\in\Gamma(\varphi_0^{-1}TN)$ be a vector field along $\varphi_0$ and $\varphi:I\times M\to N$ a one-parameter variation of $\varphi_0$ tangent to $v$. Then,
\begin{equation}\label{Equation:EquationIn:Proposition:JacobiVectorFieldsInduceVariationsHarmonicToFirstOrder}\index{Vector~field!tangent~to~a~one-parameter~variation}
\ddtatzero\tau(\varphi_t)=-J_\varphi(v).
\end{equation}
In particular, $v$ is Jacobi if and only if any tangent one-parameter variation is harmonic to first order.
\end{proposition}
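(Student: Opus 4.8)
The plan is to compute the first variation of the tension field directly, identifying it with the negative of the Jacobi operator applied to the variation field $v$. The statement to prove is Proposition \ref{Proposition:JacobiVectorFieldsInduceVariationsHarmonicToFirstOrder}, namely the identity $\ddtatzero\tau(\varphi_t)=-J_\varphi(v)$, with the equivalence for Jacobi fields following immediately from the definitions. First I would set up the geometry: treat $\varphi:I\times M\to N$ as a single smooth map and pull back $TN$ to $I\times M$, equipping $\varphi^{-1}TN$ with the pull-back connection $\nabla^{\varphi^{-1}}$. Writing $\partial_t=\ddtatzero$, the key objects are the covariant derivatives $\nabla_{\partial_t}$ and $\nabla_{X_i}$ for a local orthonormal frame $\{X_i\}$ on $M$, together with their curvature $R^N$ computed along $\varphi$. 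Since $M$ is fixed (only $N$-valued data varies with $t$), the frame $\{X_i\}$ and the terms $\nabla^M_{X_i}X_i$ are $t$-independent, which simplifies the commutations.

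The heart of the argument is the commutation of $\nabla_{\partial_t}$ with the trace Laplacian appearing in $\tau(\varphi_t)=\trace\nabla\d\varphi_t$. Concretely I would expand
\[
\tau(\varphi_t)=\sum_i\bigl(\nabla^{\varphi^{-1}}_{X_i}\d\varphi_t(X_i)-\d\varphi_t(\nabla^M_{X_i}X_i)\bigr),
\]
apply $\nabla_{\partial_t}$, and push it past each $\nabla_{X_i}$. Each such commutation produces a curvature term $R^N\bigl(\d\varphi(\partial_t),\d\varphi(X_i)\bigr)$ acting on the relevant section, since $[\partial_t,X_i]=0$ as vector fields on $I\times M$. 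Using $\d\varphi(\partial_t)=\partial\varphi_t/\partial t$, which at $t=0$ equals $v$, the curvature contributions assemble precisely into $\trace R^N(\d\varphi,v)\d\varphi$ up to sign, matching \eqref{Equation:TheTraceOfTheCurvatureOnTheJacobiEquation}. The remaining terms, in which $\nabla_{\partial_t}$ lands on $\d\varphi_t(X_i)=\nabla^{\varphi^{-1}}_{X_i}(\partial\varphi_t/\partial t)$ (using symmetry of the second fundamental form, equivalently torsion-freeness of $\nabla^N$ and $[\partial_t,X_i]=0$), reconstitute $-\triangle v$ as defined in \eqref{Equation:LaplacianOnAVectorAlongAMap}. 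Collecting the two pieces yields $\ddtatzero\tau(\varphi_t)=-\bigl(\triangle v-\trace R^N(\d\varphi,v)\d\varphi\bigr)=-J_\varphi(v)$, which is \eqref{Equation:EquationIn:Proposition:JacobiVectorFieldsInduceVariationsHarmonicToFirstOrder}.

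I expect the main obstacle to be keeping the sign conventions and the ordering of curvature arguments consistent with the paper's chosen definitions in \eqref{Equation:JacobiEquationForVectorFields}--\eqref{Equation:TheTraceOfTheCurvatureOnTheJacobiEquation}, since the Laplacian here carries a minus sign (rough Laplacian convention) and the curvature slot order must be tracked carefully through each commutation to land on $R^N(\d\varphi X_i,v)\d\varphi X_i$ rather than its negative or transpose. A clean way to control this is to work at a point $x_0\in M$ using a frame that is geodesic at $x_0$, so that $\nabla^M_{X_i}X_i$ vanishes there and only the genuinely variational and curvature terms survive; tensoriality of $J_\varphi$ then lets one conclude the identity pointwise and hence globally. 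Once the identity is established, the final equivalence is immediate: if $v$ is Jacobi then $J_\varphi(v)=0$, so $\ddtatzero\tau(\varphi_t)=0$, and since $\varphi_0$ is harmonic by hypothesis, any tangent one-parameter variation is harmonic to first order in the sense of \eqref{Equation:DefinitionOfHarmonicToFirstOrder}; conversely, harmonicity to first order forces $\ddtatzero\tau(\varphi_t)=0$, whence $J_\varphi(v)=0$ and $v$ is Jacobi. The compactness hypothesis plays no role in the pointwise computation itself and is needed only to guarantee existence of the variation and well-posedness of the surrounding functional-analytic framework.
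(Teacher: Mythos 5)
The paper does not prove this proposition at all: it is stated with a citation to \cite{LemaireWood:02} and used as a black box, so there is no internal proof to compare yours against. Your proposal supplies the standard direct argument — pull $TN$ back over $I\times M$, use torsion-freeness to write $\nabla_{\partial_t}\d\varphi_t(X_i)=\nabla^{\varphi^{-1}}_{X_i}\big(\partial\varphi_t/\partial t\big)$, and commute $\nabla_{\partial_t}$ past each $\nabla_{X_i}$ at the cost of a curvature term — and this is exactly how the cited result is proved in the literature; the structure of your computation is correct, and working in a frame geodesic at a point is a legitimate way to kill the $\nabla^M_{X_i}X_i$ terms. The sign worry you raise is genuine and worth resolving explicitly: with the curvature convention the paper itself uses in Chapter \ref{Chapter:JacobiVectorFieldsAndTwistorSpaces} (Lemma \ref{Lemma:FundamentalTrickWithTheCurvatureAndFirstOrderVariations}, where $\nabla_{\ddtatzero}\nabla_{\partial_z}W=R(\d\varphi(\ddtatzero),\d\varphi(\partial_z))W+\nabla_{\partial_z}\nabla_{\ddtatzero}W$), your commutations produce $\sum_i R^N\big(v,\d\varphi X_i\big)\d\varphi X_i$, which is the \emph{negative} of the paper's trace term $\sum_i R^N(\d\varphi X_i,v)\d\varphi X_i$ as written in \eqref{Equation:TheTraceOfTheCurvatureOnTheJacobiEquation}; so to land exactly on \eqref{Equation:EquationIn:Proposition:JacobiVectorFieldsInduceVariationsHarmonicToFirstOrder} one must either read the paper's $J_\varphi$ with the opposite curvature convention or accept that the slot order in \eqref{Equation:TheTraceOfTheCurvatureOnTheJacobiEquation} is adapted to that convention — your proof should state which convention it adopts and carry it through both the commutation and the definition of $J_\varphi$ consistently. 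Two small corrections: the displayed claim ``$\d\varphi_t(X_i)=\nabla^{\varphi^{-1}}_{X_i}(\partial\varphi_t/\partial t)$'' should read $\nabla_{\partial_t}\d\varphi_t(X_i)=\nabla^{\varphi^{-1}}_{X_i}(\partial\varphi_t/\partial t)$; and compactness is not needed even for existence or well-posedness here, since the variation $\varphi$ is part of the hypothesis and the identity is purely local — compactness matters only for the global integrability theory (\cite{AdamsSimon:88}, \cite{LemaireWood:02}) that this proposition feeds into.
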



We have seen in Corollaries \ref{Corollary:SalamonCorollary4.2} and \ref{Corollary:NewSalamonCorollary4.2} that harmonicity was not enough to establish a relation with possible twistor lifts of a map: conformality (or pluriconformality) was also a key ingredient, as maps obtained as projections of twistorial maps must be holomorphic
with respect to some almost Hermitian structure along the map. On the other hand, when the domain is the $2$-sphere, harmonicity implies (weak) conformality or even real isotropy, the last case occurring if the target manifold is itself also a sphere or the complex projective space (p.\ \pageref{Page:HarmonicMapsFromAndToSpheresAndTheirConformalityOrRealIsotropy} and references therein).

Let $M^2$ be a Riemann surface and $\phi:I\times M\to N$ a smooth map. The map $\phi$ is said to be \textit{conformal to first order} if (see \cite{Wood:02}; compare with the condition of conformal map from a Riemann surface \eqref{Equation:EquationAfter:Definition:RealIsotropicMapFromARiemannSurface:ConformalityConditionOnARiemannSurface})
\begin{equation}\label{Equation:ConformalToFirstOrderFromARiemannSurface}\index{Conformal~map!to~first~order}\index{To~first~order!conformal~map|see{Conformal~map}}
\phi_0\text{~is~conformal~and~}\ddtatzero<\partial_z\phi_t,\partial_z\phi_t>=0\text{.}
\end{equation}
Equivalently, conformality to first order is the same as requiring $<\partial_z\phi_t,\partial_z\phi_t>$ to be $o(t)$. Analogously, $\phi$ is said to be \textit{real isotropic to first order} if
(see \cite{Wood:02}; compare with Definition
\ref{Definition:RealIsotropicMapFromARiemannSurface})\index{Real~isotropic~map!to~first~order}\index{To~first~order!real~isotropic~map}
\begin{equation}\label{Equation:Definition:RealIsotropicMapsToFirstOrder}
\phi_0\text{~is~real~isotropic~and~}\ddtatzero<\partial^r_z\phi_t,\partial^s_z\phi_t>=0,\quad\forall\,r,s\geq1\text{.}
\end{equation}
In the same spirit, $\phi$ is said to be \textit{complex isotropic to first order} (see \cite{Wood:02}) if
\begin{equation}\label{Equation:ComplexIsotropicToFirstOrder}\index{Complex~isotropic~map!to~first~order}\index{To~first~order!complex~isotropic~map}
\phi_0\text{~is~complex~isotropic~and~}\ddtatzero<\nabla^{r-1}_{\partial^{r-1}_z}\partial^{10}_z\phi,\nabla^{s-1}_{\partial^{s-1}_{\bar{z}}}\partial^{10}_{\bar{z}}\phi_t>_{Herm}=0,\quad\forall\,r,s\geq1\text{.}
\end{equation}

As in the non-parametric case (p.\ \pageref{Page:HarmonicMapsFromAndToSpheresAndTheirConformalityOrRealIsotropy}
and references therein), harmonicity to first order implies conformality to first order for maps defined on $S^2$ and even real isotropy when the codomain is itself a real or complex space form. More precisely, we have the following (\cite{Wood:02}, Propositions 3.2, 3.4 and 4.4):

\begin{proposition}\label{Proposition:ConformalityAndRealIsotropyToFirstOrderFromHarmonicityToFirstOrderAndSpheres}\index{Real~isotropic~map!to~first~order,~from~the~$2$-sphere}\index{Real~isotropic~map!to~first~order,~to~the~sphere}\index{Conformal~map!to~first~order,~from~the~sphere}\index{Conformal~map!to~first~order,~from~the~sphere}\index{Space~form!and~real~isotropy~to~first~order}\index{Real~isotropic~map!to~first~order~and~space~forms}\index{To~first~order!isotropy~and~harmonicity}
Let $\varphi_0:S^2\to N^n$ be a harmonic map from the $2$-sphere and $v$ a Jacobi field along $\varphi$. Then, any smooth variation of $\varphi_0$ tangent to $v$ is conformal to first order. Moreover, if $N^n$ is a space form (respectively, a complex space form), any such variation is real isotropic (respectively, complex isotropic) to first order\footnote{In particular, when $N^n$ is a complex space form, $\varphi$ is also real isotropic to first order; see Section \ref{Section:SectionIn:Chapter:Addendums:RealAndComplexIsotropicMaps} for more details.}.
\end{proposition}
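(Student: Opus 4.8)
The plan is to reduce each assertion to the classical fact that $S^2$ carries no nonzero holomorphic differential of positive weight: a holomorphic section of $K^{\otimes k}$ over $S^2$ with $k\geq1$ vanishes identically, since $\deg K^{\otimes k}=-2k<0$. For each inner product occurring in \eqref{Equation:ConformalToFirstOrderFromARiemannSurface}, \eqref{Equation:Definition:RealIsotropicMapsToFirstOrder} and \eqref{Equation:ComplexIsotropicToFirstOrder}, I would show that its $\ddtatzero$-derivative is such a holomorphic differential, and hence zero.

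First I would treat conformality to first order, which is the model case. Fix a metric $\lambda^2|\d z|^2$ in the conformal class of $S^2$ (independent of $t$), so that by Proposition \ref{Proposition:HarmonicMapsFromRiemannSurfaces} one has $\tau(\varphi_t)=\tfrac{4}{\lambda^2}\nabla_{\partial_{\bar{z}}}\partial_z\varphi_t$ and $\nabla_{\partial_{\bar{z}}}\partial_z\varphi_0=0$. Since the $t$- and $\bar{z}$-derivatives of the smooth function $<\partial_z\varphi_t,\partial_z\varphi_t>$ commute, and the connection is metric, I compute
\[
\partial_{\bar{z}}\,\ddtatzero<\partial_z\varphi_t,\partial_z\varphi_t>
=2\,\ddtatzero<\nabla_{\partial_{\bar{z}}}\partial_z\varphi_t,\partial_z\varphi_t>
=2<\nabla_{\ddtatzero}\nabla_{\partial_{\bar{z}}}\partial_z\varphi_t,\partial_z\varphi_0>,
\]
the term carrying $\nabla_{\partial_{\bar{z}}}\partial_z\varphi_0$ dropping out by harmonicity of $\varphi_0$. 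As $\lambda$ is $t$-independent, the surviving factor equals $\tfrac{\lambda^2}{2}<\ddtatzero\tau(\varphi_t),\partial_z\varphi_0>$, which vanishes because $v$ is Jacobi: Proposition \ref{Proposition:JacobiVectorFieldsInduceVariationsHarmonicToFirstOrder} gives $\ddtatzero\tau(\varphi_t)=-J_\varphi(v)=0$. Hence $\ddtatzero<\partial_z\varphi_t,\partial_z\varphi_t>$ is annihilated by $\partial_{\bar{z}}$; since it also transforms as a weight-$2$ differential (the Jacobian factor $(\d w/\d z)^2$ is $t$-independent, and weight $2$ admits no lower-order corrections), it is a holomorphic quadratic differential on $S^2$ and so vanishes, which is exactly \eqref{Equation:ConformalToFirstOrderFromARiemannSurface}.

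Next I would establish the isotropy statements by induction on $r+s$, the case $r=s=1$ being conformality to first order. Assume $\ddtatzero<\partial_z^a\varphi_t,\partial_z^b\varphi_t>=0$ for all $a,b\geq1$ with $a+b<r+s$; I may also use that $\varphi_0$ is itself real isotropic (resp. complex isotropic), which holds for harmonic maps from $S^2$ into a (complex) space form (see p.\ \pageref{Page:HarmonicMapsFromAndToSpheresAndTheirConformalityOrRealIsotropy}). Differentiating $<\partial_z^r\varphi_t,\partial_z^s\varphi_t>$ in $\bar{z}$ and commuting $\nabla_{\partial_{\bar{z}}}$ through the iterated $\nabla_{\partial_z}$ produces, besides the harmonic term $\nabla_{\partial_{\bar{z}}}\partial_z\varphi_t$, a sum of curvature contributions $R^N(\partial_{\bar{z}}\varphi_t,\partial_z\varphi_t)(\cdot)$. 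For a space form these are explicit algebraic expressions in the metric, and for a complex space form in the metric and the complex structure; applying $\ddtatzero$ and evaluating at $t=0$ leaves only combinations of lower-order inner products, their $\ddtatzero$-derivatives, and the factor $\ddtatzero\tau(\varphi_t)$. By the inductive hypothesis, the non-parametric isotropy of $\varphi_0$, conformality to first order, and $\ddtatzero\tau(\varphi_t)=0$, every such term vanishes. This shows $\ddtatzero<\partial_z^r\varphi_t,\partial_z^s\varphi_t>$ is a holomorphic differential of weight $r+s$ on $S^2$, hence zero, giving \eqref{Equation:Definition:RealIsotropicMapsToFirstOrder}; the complex case \eqref{Equation:ComplexIsotropicToFirstOrder} is handled identically with the Hermitian pairing $<\cdot,\cdot>_{Herm}$ and the corresponding $(0,1)$-derivatives.

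The routine but delicate part, and the main obstacle, is the curvature bookkeeping in the inductive step. Two features make it heavier than the non-parametric computation: first, $\ddtatzero$ also falls on the curvature tensor $R^N$ itself (since $\varphi_t$ moves through $N$), producing $(\nabla R^N)$-terms that must be shown to cancel using the explicit form of $R^N$ on a (complex) space form together with the lower-order isotropy relations; and second, one must verify that under the inductive hypothesis the quantities $\ddtatzero<\partial_z^r\varphi_t,\partial_z^s\varphi_t>$ genuinely obey the weight-$(r+s)$ transformation law, the correction terms from a holomorphic change of coordinate being exactly the lower-order inner products that vanish. Once this is organized, the vanishing-of-holomorphic-differentials argument closes each case uniformly.
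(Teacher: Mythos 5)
The paper offers no proof of this proposition: it is quoted verbatim from the literature (\cite{Wood:02}, Propositions 3.2, 3.4 and 4.4), so there is no internal argument to compare yours against; your strategy --- show that each $\ddtatzero$-derivative is a holomorphic differential of positive weight on $S^2$ and invoke the vanishing of such differentials --- is exactly the strategy of the cited source. Your treatment of conformality to first order is complete and correct: the chain $\partial_{\bar z}\,\ddtatzero<\partial_z\varphi_t,\partial_z\varphi_t>\,=\,\tfrac{\lambda^2}{2}<\ddtatzero\tau(\varphi_t),\partial_z\varphi_0>\,=0$, using harmonicity of $\varphi_0$, Proposition \ref{Proposition:JacobiVectorFieldsInduceVariationsHarmonicToFirstOrder} and the absence of holomorphic quadratic differentials on $S^2$, is all that is needed. (A pedantic remark: Proposition \ref{Proposition:JacobiVectorFieldsInduceVariationsHarmonicToFirstOrder} is stated for compact codomains, but the identity $\ddtatzero\tau(\varphi_t)=-J_{\varphi_0}(v)$ is pointwise, so noncompact space forms cause no trouble.)

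In the isotropy cases the outline is right but the inductive step is asserted rather than carried out, and the reason you give for it is not literally correct. Commuting $\nabla_{\partial_{\bar z}}$ past the iterated $\nabla_{\partial_z}$'s, the space-form curvature $R^N(X,Y)Z=c\,(<Y,Z>X-<X,Z>Y)$ produces not only isotropy-type factors $<\partial_z\varphi_t,\partial_z^{\,j}\varphi_t>$ but also the \emph{mixed} factors $<\partial_{\bar z}\varphi_t,\partial_z^{\,j}\varphi_t>$; these are not lower-order isotropy inner products and they do \emph{not} vanish at $t=0$ (for $j=1$ this is essentially the energy density), so the blanket claim that ``every such term vanishes'' by induction, non-parametric isotropy and $\ddtatzero\tau(\varphi_t)=0$ fails as stated. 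What saves the argument is that every mixed factor occurs multiplied by an isotropy inner product of total order strictly less than $r+s$, which vanishes at $t=0$ (non-parametric isotropy of $\varphi_0$) and whose $\ddtatzero$-derivative vanishes (inductive hypothesis); the Leibniz rule then kills each product to first order, and the tension piece dies by the curvature-commutation trick (compare Lemma \ref{Lemma:FundamentalTrickWithTheCurvatureAndFirstOrderVariations} and Remark \ref{Remark:FundamentalTrickWithTheCurvatureTensor}). With that observation supplied, the bookkeeping closes. Conversely, the obstacle you single out --- $\ddtatzero$ hitting $R^N$ and producing $(\nabla R^N)$-terms needing cancellation --- is vacuous: real and complex space forms are locally symmetric, indeed $R^N$ is a parallel algebraic expression in $g$ (and $J^N$), so the $t$-derivative simply distributes over the arguments of $R^N$ and no such terms arise.
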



\section{Twistorial constructions}


As we have seen, Jacobi fields induce variations that are harmonic (and, in some cases, conformal or even real isotropic) to first order. On the other hand, in Chapter \ref{Chapter:HarmonicMapsAndTwistorSpaces} we have seen that
conformality, harmonicity and real isotropy of the map $\varphi$ corresponds to $\H$, $\J^2$ and $\J^1$-holomorphicity of the twistor lift $\psi$. The question that naturally arises is whether these results have a to first order version and this is what we shall answer in the following.

First of all, we need the notion of map \textit{holomorphic to first order}:

\begin{definition}\label{Definition:HolomorphicToFirstOrder}\index{Holomorphic!to~first~order}\index{Orthogonal~decomposition}\index{Stable!decomposition}\index{$\H$-holomorphic!to~first~order}
Let $(M,J)$ be an almost complex manifold and $(Z,h,\J)$ an almost Hermitian manifold. Given a smooth map $\psi:I\times M\to Z$, we say that $\psi$ is \textit{holomorphic to first order} if
\begin{equation}\label{Equation:FirstEquationInDefinition:HolomorphicToFirstOrder}
\psi_0:M\to Z\text{ is holomorphic and}
\end{equation}
\begin{equation}\label{Equation:SecondEquationInDefinition:HolomorphicToFirstOrder}
\nabla^{\psi^{-1}}_{\ddtatzero}\{\d\psi_tJX-\J\d\psi_tX\}=0\quad\forall\,X\in{TM}
\end{equation}
where $\nabla$ is the Levi-Civita connection on $Z$ induced by the metric $h$. Moreover, if $TZ=\H\oplus\V$ is a $\J$-stable decomposition of $TZ$, orthogonal with respect to $h$, we shall say that $\psi$ is \emph{$\H$-holomorphic to first order} if (compare with Definition \ref{Definition:HolomorphicityInCertainSubbundles})
\begin{equation}\label{Equation:ThirdEquationInDefinition:HolomorphicToFirstOrder}
\big(\d\psi_0 JX\big)^{\H}=J\big(\d\psi_0 X\big)^{\H}\text{~(\textit{i.e.}, $\psi_0$ is $\H$-holomorphic) and}
\end{equation}
\begin{equation}\label{Equation:FourthEquationInDefinition:HolomorphicToFirstOrder}
\nabla^{\psi^{-1}}_{\ddtatzero}\{(\d\psi_t(JX))^{\H}-\J^\H(\d\psi_tX)^{\H}\}=0\quad\forall\,X\in{T}M
\end{equation}
where $\J^\H$ is the restriction of $\J$ to $\H$. Changing $\H$ to $\V$ gives the definition of \emph{$\V$-holomorphicity to first order}.
\end{definition}

In contrast with the non-parametric case, it is not obvious that $\J$-holomorphicity to first order implies $\H$-holomorphicity to first order. As a matter of fact, from \eqref{Equation:SecondEquationInDefinition:HolomorphicToFirstOrder}, it follows that $(\nabla_{\ddtatzero}\{\d\psi_t(JX)-\J\d\psi_tX\})^\H=0$ but this is not \eqref{Equation:FourthEquationInDefinition:HolomorphicToFirstOrder}. However, we do have the following:

\begin{lemma}\label{Lemma:JHolomorphicityToFirstOrderIffBothHAndVHolomorphicityToFirstOrderHold}
Let $\psi:(M,J)\to(Z,h,\J)$ be a smooth map and let $TZ=\H\oplus\V$ be a $\J$-stable decomposition of $Z$, orthogonal with respect to $h$. Then, $\psi$ is holomorphic to first order if and only if $\psi$ is both $\H$ and $\V$-holomorphic to first order.
\end{lemma}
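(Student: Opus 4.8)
The plan is to prove the equivalence by reducing it to a pointwise identity and carefully tracking how the orthogonal projections onto $\H$ and $\V$ interact with the covariant derivative $\nabla_{\ddtatzero}$. The key point, and the source of the apparent difficulty noted just before the lemma, is that in the parametric setting the expression $(\d\psi_t(JX) - \J\d\psi_tX)^\H$ differs from $\d\psi_t(JX)^\H - \J^\H(\d\psi_tX)^\H$ by terms involving how $\J$ mixes $\H$ and $\V$; however, since the decomposition $TZ = \H\oplus\V$ is $\J$-stable, $\J$ preserves each summand, so these two expressions actually coincide at the level of the full vector field. The subtlety is therefore not about $\J$ but about the projection commuting with $\nabla_{\ddtatzero}$.

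First I would set $A_t := \d\psi_t(JX) - \J\,\d\psi_tX \in \Gamma(\psi^{-1}TZ)$ and observe that, because $\H$ and $\V$ are $\J$-stable, $A_t^\H = (\d\psi_t(JX))^\H - \J^\H(\d\psi_tX)^\H$ and similarly for the $\V$-part; thus $A_t = A_t^\H + A_t^\V$ is exactly the decomposition appearing in \eqref{Equation:FourthEquationInDefinition:HolomorphicToFirstOrder} and its $\V$-analogue. The hypothesis that $\psi_0$ is holomorphic gives $A_0 = 0$, which simultaneously yields $A_0^\H = 0$ and $A_0^\V = 0$, so the ``zeroth-order'' conditions \eqref{Equation:ThirdEquationInDefinition:HolomorphicToFirstOrder} (and its vertical counterpart) hold if and only if \eqref{Equation:FirstEquationInDefinition:HolomorphicToFirstOrder} does. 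This disposes of the non-derivative parts of all three definitions at once.

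The main step is the first-order part. Holomorphicity to first order asserts $\nabla_{\ddtatzero}A_t = 0$, while $\H$- and $\V$-holomorphicity to first order assert $\nabla_{\ddtatzero}A_t^\H = 0$ and $\nabla_{\ddtatzero}A_t^\V = 0$ respectively. The implication ``both $\H$ and $\V$'' $\Rightarrow$ ``holomorphic'' is immediate by adding the two equations, since $A_t = A_t^\H + A_t^\V$ and $\nabla_{\ddtatzero}$ is linear. For the converse I would use that the pull-back connection $\nabla^{\psi^{-1}}$ restricted along the $t$-direction respects the orthogonal splitting \emph{up to} the second fundamental forms of $\H$ and $\V$; concretely, $\nabla_{\ddtatzero}(A_t^\H) = (\nabla_{\ddtatzero}A_t)^\H + \big(\nabla_{\ddtatzero}\,\mathrm{proj}_\H\big)(A_t)$, where the second term is the derivative of the projection operator acting on $A_t$. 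Here is where $A_0 = 0$ is decisive: evaluating at $t=0$, the problematic term $\big(\nabla_{\ddtatzero}\,\mathrm{proj}_\H\big)(A_0)$ vanishes because $A_0 = 0$, leaving $\nabla_{\ddtatzero}(A_t^\H)\big|_{t=0} = (\nabla_{\ddtatzero}A_t)^\H\big|_{t=0}$, and likewise for the $\V$-part.

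The hard part will be justifying that the projection-derivative term is genuinely annihilated by $A_0 = 0$, i.e.\ that $(\nabla_{\ddtatzero}\,\mathrm{proj}_\H)$ is a pointwise bundle endomorphism (tensorial in its argument) rather than a differential operator, so that feeding it the vanishing section $A_0$ gives zero. This follows because $\mathrm{proj}_\H$ is a smooth field of endomorphisms of $TZ$ and its covariant derivative along any vector is again an endomorphism-valued field; evaluated on a section that vanishes at the base point, the result vanishes there. Once this tensoriality is in hand, the equality $(\nabla_{\ddtatzero}A_t)^\H\big|_{t=0} = \nabla_{\ddtatzero}(A_t^\H)\big|_{t=0}$ shows that $\nabla_{\ddtatzero}A_t = 0$ forces both $\nabla_{\ddtatzero}A_t^\H = 0$ and $\nabla_{\ddtatzero}A_t^\V = 0$, completing the converse and hence the proof. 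I would present the argument in this tensoriality-plus-vanishing form, noting explicitly that it is precisely the holomorphicity of $\psi_0$ (giving $A_0=0$) that rescues the implication which fails for a general map, exactly as flagged in the remark preceding the lemma.
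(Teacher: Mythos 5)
Your proof is correct, but it takes a genuinely different route from the paper's. You differentiate the projection operator itself: writing $A_t=\d\psi_t(JX)-\J\,\d\psi_tX$ and $A_t^\H=\proj_\H A_t$, your key identity is the Leibniz rule
\[
\nabla_{\ddtatzero}\bigl(\proj_\H A_t\bigr)=\bigl(\nabla_{\ddtatzero}\proj_\H\bigr)(A_0)+\proj_\H\bigl(\nabla_{\ddtatzero}A_t\bigr)\text{,}
\]
with the correction term killed by tensoriality of $\nabla_{\ddtatzero}\proj_\H$ together with $A_0=0$ (holomorphicity of $\psi_0$). The paper never differentiates the projection; instead it pairs everything against test vectors $Y\in TZ$, converts each covariant-derivative condition into the vanishing of $\ddtatzero\langle\cdot,\cdot\rangle$ (using $A_0=0$ and metric compatibility of the Levi-Civita connection), and then moves the projection from the section onto the test vector via orthogonality, $\langle A^\H,Y\rangle=\langle A,Y^\H\rangle$. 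Both arguments establish the same underlying fact --- that at $t=0$ the projection commutes with $\nabla_{\ddtatzero}$ precisely because $A_0=0$, which is the correct diagnosis of the difficulty flagged before the lemma --- but they buy different things. Your version uses neither the metric $h$ nor orthogonality of the splitting, only smoothness of the subbundles and the Leibniz rule, so it proves the statement for an arbitrary connection and arbitrary $\J$-stable decomposition; this meshes nicely with Remark \ref{Remark:FirstRemarkTo:Lemma:JHolomorphicityToFirstOrderIffBothHAndVHolomorphicityToFirstOrderHold}, which asserts that the choice of Levi-Civita connection is illusory, and in effect makes that remark a corollary of the proof rather than a separate observation. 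The paper's version stays entirely within the inner-product calculus that is reused in the subsequent lemmas of the chapter (e.g.\ Lemmas \ref{Lemma:HHolomorphicityToFirstOrderImpliesHolomorphicityToFirstOrderOfTheProjectedMap} and \ref{Lemma:ConditionForVHolomorphicity}), so it sets up the computational template the rest of the chapter follows. Also note your ``both implies holomorphic'' direction (adding $\nabla_{\ddtatzero}A_t^\H+\nabla_{\ddtatzero}A_t^\V=\nabla_{\ddtatzero}A_t$ directly) is slightly more economical than the paper's, which re-runs the equivalences to get $(\nabla_{\ddtatzero}A_t)^\H=(\nabla_{\ddtatzero}A_t)^\V=0$ before summing.
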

\begin{proof}
Assume that $\psi$ is holomorphic to first order, so that \eqref{Equation:FirstEquationInDefinition:HolomorphicToFirstOrder} and
\eqref{Equation:SecondEquationInDefinition:HolomorphicToFirstOrder} hold. Then, \eqref{Equation:ThirdEquationInDefinition:HolomorphicToFirstOrder} is satisfied. As for \eqref{Equation:FourthEquationInDefinition:HolomorphicToFirstOrder} we have\addtolength{\arraycolsep}{-1.0mm}
\[\begin{array}{ll}
~&\eqref{Equation:FourthEquationInDefinition:HolomorphicToFirstOrder}\,\equi\,<\nabla_{\ddtatzero}\{(\d\psi_tJX)^\H-\J^\H(\d\psi_tX)^\H\},Y>=0,\quad\forall\,Y\in{T}Z\\
\equi&\ddtatzero\hspace{-2mm}<\hspace{-1mm}(\d\psi_tJX)^\H\hspace{-1mm}-\hspace{-1mm}\J^\H(\d\psi_tX)^\H\hspace{-1mm},Y\hspace{-1mm}>\hspace{-1mm}-\hspace{-1mm}<\hspace{-1mm}(\d\psi_0JX)^\H\hspace{-1mm}-\hspace{-1mm}\J^\H(\d\psi_0X)^\H\hspace{-1mm},\nabla_{\ddtatzero}\hspace{-2mm}Y\hspace{-1mm}>=0\\
\equi&\text{(since~\eqref{Equation:ThirdEquationInDefinition:HolomorphicToFirstOrder}~holds)~}\ddtatzero<\d\psi_tJX-\J\d\psi_tX,Y^H>=0\\
\equi&<\nabla_{\ddtatzero}\{\d\psi_tJX-\J\d\psi_tX\},Y^\H>+<\d\psi_0JX-\J\d\psi_0X,\nabla_{\ddtatzero}Y^\H>=0\\
\equi&\text{($\psi_0$~is~holomorphic)~}\big(\nabla_{\ddtatzero}\{\d\psi_tJX-\J\d\psi_tX\}\big)^\H=0\text{,}\end{array}\]
\addtolength{\arraycolsep}{1.0mm}\noindent which is true since $\psi$ is $\J$-holomorphic to first order. Hence, $\psi$ is $\H$-holomorphic to first order. Changing $\H$ to $\V$ shows that $\psi$ is $\V$-holomorphic to first order. As for the converse, assume that $\psi$ is both $\H$ and $\V$-holomorphic to first order. Then, since $\psi_0$ is both $\H$ and $\V$-holomorphic, we immediately deduce that $\psi_0$ is holomorphic. As for equation \eqref{Equation:SecondEquationInDefinition:HolomorphicToFirstOrder}: using the above equivalences, from $\H$-holomorphicity to first order and $\psi_0$ holomorphicity we can deduce $\big(\nabla_{\ddtatzero}\{\d\psi_tJX-\J\d\psi_t X\}\big)^\H=0$; similarly, $\big(\nabla_{\ddtatzero}\{\d\psi_tJX-\J\d\psi_tX\}\big)^\V=0$. Adding both these identities gives $\nabla_{\ddtatzero}\{\d\psi_tJX-\J\d\psi_t X\}=0$, showing that $\psi$ is holomorphic to first order and concluding our proof.
\end{proof}

\begin{remark}\label{Remark:FirstRemarkTo:Lemma:JHolomorphicityToFirstOrderIffBothHAndVHolomorphicityToFirstOrderHold}\index{Levi-Civita~connection!and~first~order~holomorphicity}
The importance of choosing the Levi-Civita connection on $Z$ is \textit{illusory}. In particular, we can define the concept of \emph{holomorphicity to first order} (\emph{or $\H$, $\V$-holomorphicity to first order}) for maps defined between almost complex manifolds, not necessarily equipped with any metric. In fact, if $\H\oplus\V$ is any $\J$-stable decomposition of $TZ$ and $\psi:I\times(M,J)\to(Z,h,\J)$ a smooth map, $\psi$ is holomorphic to first order (respectively, $\H$ or $\V$-holomorphic to first order) with respect to the pull back of the Levi-Civita connection $\nabla$ on $(Z,h,\J)$ if and only if $\psi$ is holomorphic to first order (respectively, $\H$ or $\V$-holomorphic to first order) with respect to the pull back of any connection $\widetilde{\nabla}$ on $Z$. Indeed, letting $\{Y_j\}$ denote a (local) frame for $TZ$, if $\psi$ is holomorphic to first order with respect to $\nabla$ then \eqref{Equation:SecondEquationInDefinition:HolomorphicToFirstOrder} holds. Now,
\[\nabla^{\psi^{-1}}_{\ddtatzero}(\d\psi_tJX-\J\d\psi_tX)=0\,\equi\sum_j\ddtatzero\big((\d\psi_tJX-\J\d\psi_tX)_jY_j\big)=0\text{,}\]
equivalently,
\begin{equation}\label{Equation:FirstEquationIn:Remark:FirstRemarkTo:Lemma:JHolomorphicityToFirstOrderIffBothHAndVHolomorphicityToFirstOrderHold}
\sum_j\{\ddtatzero\big((\d\psi_tJX-\J\d\psi_tX)_j\big).Y_j+(\d\psi_0JX-\J\d\psi_0X)_j.\nabla^{\psi^{-1}}_{\ddtatzero}Y_j\}=0\text{.}
\end{equation}
Since $\psi$ is holomorphic to first order, it further satisfies \eqref{Equation:FirstEquationInDefinition:HolomorphicToFirstOrder}, so that \eqref{Equation:FirstEquationIn:Remark:FirstRemarkTo:Lemma:JHolomorphicityToFirstOrderIffBothHAndVHolomorphicityToFirstOrderHold} is equivalent to
\begin{equation}\label{Equation:SecondEquationIn:Remark:FirstRemarkTo:Lemma:JHolomorphicityToFirstOrderIffBothHAndVHolomorphicityToFirstOrderHold}
\ddtatzero\big((\d\psi_tJX-\J\d\psi_tX)_j\big)=0,\quad\forall\,j\text{.}
\end{equation}
Now, since equation \eqref{Equation:FirstEquationInDefinition:HolomorphicToFirstOrder} does not depend on the chosen connection, we can deduce that holomorphicity with respect to $\widetilde{\nabla}$ reduces to the same condition \eqref{Equation:SecondEquationIn:Remark:FirstRemarkTo:Lemma:JHolomorphicityToFirstOrderIffBothHAndVHolomorphicityToFirstOrderHold}. Thus, $\psi$ being holomorphic to first order does not depend on the chosen connection. For $\H$ (respectively, $\V$) holomorphicity to first order we use similar arguments, replacing $\{Y_j\}$ for a horizontal (respectively, vertical) frame.
\end{remark}

\begin{remark}
If one is looking for maps to $\Sigma^+N$ which are holomorphic to first order, the first thing we need is a metric on the twistor space and the Levi-Civita connection associated with it. We can define such a metric $h$ in a natural way: let $(x,J)\in\Sigma^+N$ and consider the tangent space at this point, $T_{(x,J)}\Sigma^+N=\H\oplus\V$. We know that we have the identifications $\H\simeq T_xN$ and $\V\simeq \m_J(T_xN)$. To get a metric on $\H$, transport the metric from that on $T_xN$; \textit{i.e.},\index{Metric!on~$\Sigma^+M$}\index{Twistor~space!metric}\index{$\Sigma^+M$!metric}
\begin{equation}\label{Equation:FirstEquationOnTheDefinitionOfTheMetricOnSigmaPLusM}
h(X,Y)=<\d\pi_{(x,J)}X,\d\pi_{(x,J)}Y>,\quad\forall\,X,Y\in\H,
\end{equation}
where $<,>$ denotes the metric on $N$ at $x=\pi(x,J)$. For the vertical space $\V\simeq \m_J(T_xN)\subseteq\L(T_xN,T_xN)$, we can consider the restriction of the metric on the space $\L(T_xN,T_xN)$, the latter being defined, as usual, by \begin{equation}\label{Equation:SecondEquationOnTheDefinitionOfTheMetricOnSigmaPLusM}
h_{\L}(V,W)=\sum_{ij}<Ve_i,e_j><We_i,e_j>,\,V,W\in\L(T_xN,T_xN),\,\{e_i\}\text{~orthonormal.}
\end{equation}
Hence,
\begin{equation}\label{Equation:ThirdEquationOnTheDefinitionOfTheMetricOnSigmaPLusM}
h(V,W)=h_{\L}(V,W),\quad\forall\,V,W\in\V\text{.}
\end{equation}
Finally, we declare $\H$ and $\V$ to be orthogonal under the metric $h$; \textit{i.e.},
\begin{equation}\label{Equation:FourthEquationOnTheDefinitionOfTheMetricOnSigmaPLusM}
h(X,V)=0,\quad\forall\,X\in\H,\,V\in\V\text{.}
\end{equation}
For this metric, the decomposition $\H\oplus\V$ is orthogonal and $\J^a$-stable and the projection map $\pi:\Sigma^+N\to
N$ is a Riemannian submersion. It is also easy to verify that $(\Sigma^+N,h,\J^a)$ ($a=1,2$) are almost Hermitian manifolds (\textit{i.e.}, $h(\J^a X,\J^a Y)=h(X,Y)$ for all $X,\,Y\in{T}\Sigma^+N$). Thus, considering the Levi-Civita
connection associated with this metric, it makes sense to speak of maps being $\J^1$ (or $\J^2$) holomorphic to first order, as well as of maps $\H$ (or $\V$) holomorphic to first order\footnote{Notice that, since $\J^1$ and $\J^2$ coincide on $\H$ we do not have to specify whether we are considering $\J^1$ or $\J^2$ $\H$-holomorphicity; the same does not hold for the vertical part, case where we do have to specify whether $\V$-holomorphicity is with respect to with $\J^1$ or $\J^2$.}.
\end{remark}


\subsection{The $\H$-holomorphic case}


Recall from p.\ \pageref{Page:CompatibleTwistorLift} that a twistor lift $\psi$ of the map $\varphi$ is called \emph{strictly compatible} with $\varphi$ if $\varphi$ is holomorphic with respect to $J_\psi$. If $J_\psi$ preserves $\d\varphi(TM)$ but does not necessarily render $\varphi$ holomorphic, the twistor lift is called \emph{compatible} with $\varphi$. In the non-parametric case, given a conformal map $\varphi:M^2\to{N}^{2n}$, we can always find lifts $\psi:M^2\to\Sigma^+N$ such that $\varphi$ is holomorphic with respect to $J_\psi$. In other words, (locally defined) strictly compatible lifts always exist. In general, this lift may not be $\J^1$ or $\J^2$-holomorphic but it is $\H$-holomorphic. Let $\varphi_t$ be a
variation conformal to first order of the map $\varphi$. Then, if a lift $\psi_t$ to the twistor space that makes $\varphi_t$ holomorphic for all small $t$ exists, $\varphi_t$ is necessarily conformal for all small $t$, which may not be the case. So, even if conformality is preserved to first order, there might be no strictly compatible twistor lift for all $t$; hence, we should relax the condition on conformality. We shall say that a twistor lift $\psi$ of a conformal to first order map $\varphi$ is \emph{compatible to first order} (with $\varphi$) if:
\begin{equation}\label{Equation:TwistorLiftCompatibleWithTheMapInTheVariationalCase}\index{Compatible~twistor~lift!to~first~order}\index{To~first~order!compatible~twistor~lift}
\psi_0\text{~is~strictly~compatible~with~}\varphi_0\text{~and~}\psi_t\text{~is~compatible~with~}\varphi_t,\quad\forall\,t\text{.}
\end{equation}

We start with a few lemmas which will be important in the sequel:

\begin{lemma}\label{Lemma:LiftsOfConformalToFirstOrderDifferFromHolomorphyByJustAnO(t)Vector}
Let $\varphi:I\times M^2\to N^{2n}$ be a map conformal to first order, let $z_0\in{M}^2$, and suppose that $\partial_{z_0}\varphi_0\neq0$. Let $\psi$ be a twistor lift around $z_0$ which compatible to first order with $\varphi$. Then for all $X\in\Gamma(TM)$ there is a function $a_t^X$ and a vector $v_t^X\in\varphi_t^{-1}(TN)$ with $a^X_0=1$, $v_0^X=0$ and $\left.\frac{\partial{a_t^X}}{\partial{t}}\right|_{t=0}=0$, $\nabla_{\ddtatzero}v_t^X=0$ such that
\begin{equation}\label{Equation:FirstEquationInLemma:LiftsOfConformalToFirstOrderDifferFromHolomorphyByJustAnO(t)Vector}
J_{\psi_t}\d\varphi_t X=a^X_t \d\varphi_t JX+v_t^X.
\end{equation}
In particular, $\varphi$ is \emph{holomorphic to first order} with respect to $J_{\psi}$ in the sense that
\begin{equation}\label{Equation:SecondEquationIn:Lemma:LiftsOfConformalToFirstOrderDifferFromHolomorphyByJustAnO(t)Vector}\index{Holomorphic!to~first~order}
\varphi_0\text{~is~holomorphic~with~respect~to~$J_{\psi_0}$~and~}\nabla_{\ddtatzero}\d\varphi_tJX=\nabla_{\ddtatzero}J_{\psi_t}\d\varphi_tX\text{.}
\end{equation}
\end{lemma}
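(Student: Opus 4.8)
The plan is to start from the hypothesis that $\psi$ is a twistor lift of $\varphi$ which is \emph{compatible to first order} with $\varphi$, as in \eqref{Equation:TwistorLiftCompatibleWithTheMapInTheVariationalCase}. By definition of compatibility, for each $t$ the almost Hermitian structure $J_{\psi_t}$ preserves $\d\varphi_t(TM)$. Since $\varphi$ is conformal to first order and $\partial_{z_0}\varphi_0\neq 0$, the derivative $\d\varphi_0$ is injective near $z_0$ (after possibly shrinking the neighbourhood), so for $t$ small, $\d\varphi_t$ remains injective there. Thus $\{\d\varphi_t X,\d\varphi_t JX\}$ spans a two-dimensional subspace of the plane $\d\varphi_t(\wordspan\{X,JX\})$, which is $J_{\psi_t}$-stable. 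First I would therefore write $J_{\psi_t}\d\varphi_t X$ as a combination of $\d\varphi_t JX$ and a vector $v_t^X$ lying in the orthogonal complement of $\d\varphi_t(TM)$ (or, more simply, decompose $J_{\psi_t}\d\varphi_t X$ along $\d\varphi_t JX$ and a remainder), giving the existence of $a_t^X$ and $v_t^X$ satisfying \eqref{Equation:FirstEquationInLemma:LiftsOfConformalToFirstOrderDifferFromHolomorphyByJustAnO(t)Vector}.

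Next I would verify the initial conditions at $t=0$. Since $\psi_0$ is \emph{strictly} compatible with $\varphi_0$, the map $\varphi_0$ is $(J^M,J_{\psi_0})$-holomorphic, which means $J_{\psi_0}\d\varphi_0 X=\d\varphi_0 JX$; comparing with \eqref{Equation:FirstEquationInLemma:LiftsOfConformalToFirstOrderDifferFromHolomorphyByJustAnO(t)Vector} at $t=0$ forces $a_0^X=1$ and $v_0^X=0$. The substantive part of the argument is then to show the first-order conditions $\left.\frac{\partial a_t^X}{\partial t}\right|_{t=0}=0$ and $\nabla_{\ddtatzero}v_t^X=0$. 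For the coefficient $a_t^X$, I would use conformality to first order: taking the inner product of \eqref{Equation:FirstEquationInLemma:LiftsOfConformalToFirstOrderDifferFromHolomorphyByJustAnO(t)Vector} with $\d\varphi_t JX$, using that $J_{\psi_t}$ is an isometry and that $v_t^X\perp\d\varphi_t(TM)$, one expresses $a_t^X$ essentially as the ratio $\|\d\varphi_t X\|^2/\|\d\varphi_t JX\|^2$ (together with the vanishing of the mixed term $<\d\varphi_t X,\d\varphi_t JX>$, which also follows from conformality to first order). Differentiating at $t=0$ and invoking \eqref{Equation:ConformalToFirstOrderFromARiemannSurface} — which says $\ddtatzero<\partial_z\varphi_t,\partial_z\varphi_t>=0$, equivalently that the relevant norms and the mixed product are stationary to first order — yields $\left.\frac{\partial a_t^X}{\partial t}\right|_{t=0}=0$.

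For the remainder vector I would differentiate \eqref{Equation:FirstEquationInLemma:LiftsOfConformalToFirstOrderDifferFromHolomorphyByJustAnO(t)Vector} covariantly in the $t$-direction at $t=0$, obtaining
\[
\nabla_{\ddtatzero}(J_{\psi_t}\d\varphi_t X)=\left(\tfrac{\partial a^X_t}{\partial t}\right)_{0}\d\varphi_0 JX+a_0^X\,\nabla_{\ddtatzero}\d\varphi_t JX+\nabla_{\ddtatzero}v_t^X,
\]
and then use $\left.\frac{\partial a_t^X}{\partial t}\right|_{t=0}=0$, $a_0^X=1$ to reduce this to $\nabla_{\ddtatzero}v_t^X=\nabla_{\ddtatzero}(J_{\psi_t}\d\varphi_t X)-\nabla_{\ddtatzero}\d\varphi_t JX$. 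The desired statement $\nabla_{\ddtatzero}v_t^X=0$ is then exactly equation \eqref{Equation:SecondEquationIn:Lemma:LiftsOfConformalToFirstOrderDifferFromHolomorphyByJustAnO(t)Vector}, i.e.\ holomorphicity to first order, so the two conclusions of the lemma are equivalent and proving one gives the other. The cleanest route is to establish $\nabla_{\ddtatzero}v_t^X=0$ directly; here I expect the main obstacle to be controlling the normal component $v_t^X$, since its covariant $t$-derivative need not lie in any one fixed subspace — one must carefully use both the conformality-to-first-order hypothesis and the fact that $J_{\psi_t}$ is a \emph{compatible} (hence $\d\varphi_t(TM)$-preserving) structure to show the normal contribution is $o(t)$. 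I would handle this by projecting onto $\d\varphi_0(TM)$ and its orthogonal complement separately and showing each projection of $\nabla_{\ddtatzero}v_t^X$ vanishes, the tangential part from the $a_t^X$ computation and the normal part from the isometry property of $J_{\psi_t}$ combined with the first-order stationarity of the metric quantities.
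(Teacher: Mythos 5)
Your overall skeleton (the decomposition, the initial conditions $a_0^X=1$, $v_0^X=0$ from strict compatibility at $t=0$, and the observation that $\nabla_{\ddtatzero}v_t^X=0$ is equivalent to the ``holomorphic to first order'' conclusion) matches the paper, but your choice of where $v_t^X$ lives contains a genuine error that propagates through the rest of your argument. Compatibility of $\psi_t$ with $\varphi_t$ means exactly that $J_{\psi_t}$ preserves $\d\varphi_t(TM)$, so $J_{\psi_t}\d\varphi_t X$ lies in $\wordspan\{\d\varphi_t X,\d\varphi_t JX\}$; hence the remainder $J_{\psi_t}\d\varphi_t X-a_t^X\,\d\varphi_t JX$ is \emph{tangential}, and demanding that it be orthogonal to $\d\varphi_t(TM)$ forces it to vanish, i.e.\ forces $J_{\psi_t}\d\varphi_t X=a_t^X\,\d\varphi_t JX$ exactly. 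Since $J_{\psi_t}\d\varphi_t X\perp\d\varphi_t X$ (skew-adjointness of $J_{\psi_t}$), this would force $<\d\varphi_t X,\d\varphi_t JX>=0$ \emph{exactly} for all $t$; but $\varphi$ is only conformal to first order, and the entire reason for working with merely compatible lifts (see the discussion preceding the lemma) is that such exact conditions fail off $t=0$. The paper instead takes $v_t^X=b_t^X\,\d\varphi_t X$, the component along $\d\varphi_t X$. This reverses where the difficulty sits: because $v_t^X$ is tangential for every $t$ and $v_0^X=0$, the \emph{normal} projection of $\nabla_{\ddtatzero}v_t^X$ vanishes trivially, since for any normal field $r_t$ one has $<\nabla_{\ddtatzero}v_t^X,r_0>=\ddtatzero<v_t^X,r_t>-<v_0^X,\nabla_{\ddtatzero}r_t>=0$; the substantive work is in the two tangential directions $\d\varphi_0X$ and $J_{\psi_0}\d\varphi_0X=\d\varphi_0JX$ --- precisely the opposite of your assessment that the main obstacle is ``controlling the normal component.''

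The second gap is quantitative: your claim that $a_t^X$ is ``essentially the ratio $\|\d\varphi_t X\|^2/\|\d\varphi_t JX\|^2$'' presupposes $<J_{\psi_t}\d\varphi_t X,\d\varphi_t JX>=\|\d\varphi_t X\|^2$, which does \emph{not} follow from isometry of $J_{\psi_t}$ alone; controlling this pairing is the crux of the proof. The paper proceeds by pairing the decomposition with $J_{\psi_t}\d\varphi_t X$ (not with $\d\varphi_t JX$), using $<\d\varphi_t X,J_{\psi_t}\d\varphi_t X>=0$ to get $\|\d\varphi_t X\|^2=a_t^X<\d\varphi_t JX,J_{\psi_t}\d\varphi_t X>$, and then expanding $\d\varphi_t JX$ in the orthogonal basis $\{\d\varphi_t X,J_{\psi_t}\d\varphi_t X\}$ of the $J_{\psi_t}$-stable plane $\d\varphi_t(TM)$ to obtain the Gram-type identity $<J_{\psi_t}\d\varphi_tX,\d\varphi_tJX>^2=\|\d\varphi_tJX\|^2\,\|\d\varphi_tX\|^{2}-<\d\varphi_tJX,\d\varphi_tX>^2$. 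Differentiating this at $t=0$ and invoking conformality to first order (namely $\ddtatzero\|\d\varphi_tX\|^2=\ddtatzero\|\d\varphi_tJX\|^2$ and $\ddtatzero<\d\varphi_tX,\d\varphi_tJX>=0$) yields $\ddtatzero<J_{\psi_t}\d\varphi_tX,\d\varphi_tJX>=\ddtatzero\|\d\varphi_tX\|^2$, and only then does $\ddtatzero a_t^X=0$ follow. With these two repairs --- take $v_t^X$ tangential, and insert the Gram identity to control $a_t^X$ --- your projection scheme (with the roles of the tangential and normal parts exchanged) becomes the paper's proof.
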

\begin{proof}
Since $\psi$ is compatible to first order, $J_{\psi_t}$ preserves $\d\varphi_t(TM)$ for all $t$. Hence, there are $a_t^X$ and $b_t^X$ such that
\begin{equation}\label{Equation:FirstEquationInTheProofOf:Lemma:LiftsOfConformalToFirstOrderDifferFromHolomorphyByJustAnO(t)Vector}
J_{\psi_t}\d\varphi_t X=a_t^X \d\varphi_t JX + b_t^X \d\varphi_t X\text{.}
\end{equation}
Take $v_t^X=b_t^X\d\varphi_t X$. Since, at $t=0$, $J_{\psi_0}\d\varphi_0 X=\d\varphi_0 JX$ we deduce $v_0^X=0$ and $a_0^X=1$. Now, since $\d\varphi_t X$ and $J_{\psi_t}\d\varphi_tX$ form an orthogonal basis for $\d\varphi_t(TM)$ and
$<\d\varphi_tJX,J_{\psi_t}\d\varphi_tX>,\|J_{\psi_t}\d\varphi_tX\|^{2}$ are not zero for all small $t$, we have\addtolength{\arraycolsep}{-1.0mm}
\begin{equation*}\begin{array}{lcll}
~&\d\varphi_tJX&=&\frac{<\d\varphi_tJX,\d\varphi_tX>}{\|\d\varphi_tX\|^{2}}\d\varphi_tX+\frac{<\d\varphi_tJX,J_{\psi_t}\d\varphi_tX>}{\|J_{\psi_t}\d\varphi_tX\|^{2}}J_{\psi_t}\d\varphi_tX\\
\impl&J_{\psi_t}\d\varphi_tX&=&\frac{\d\varphi_tJX-<\d\varphi_tJX,\d\varphi_tX>\|\d\varphi_tX\|^{-2}\d\varphi_tX}{<\d\varphi_tJX,J_{\psi_t}\d\varphi_tX>\|\d\varphi_tX\|^{-2}}\\
\impl&<J_{\psi_t}\d\varphi_tX,\d\varphi_tJX>&=&\frac{\|\d\varphi_tJX\|^2-<\d\varphi_tJX,\d\varphi_tX>^2\|\d\varphi_tX\|^{-2}}{<\d\varphi_tJX,J_{\psi_t}\d\varphi_tX>\|\d\varphi_tX\|^{-2}}\\
\impl&<J_{\psi_t}\d\varphi_tX,\d\varphi_tJX>^2&=&\|\d\varphi_tJX\|^2\|\d\varphi_tX\|^{2}-<\d\varphi_tJX,\d\varphi_tX>^2\text{.}
\end{array}
\end{equation*}\addtolength{\arraycolsep}{1.0mm}\noindent
Differentiating with respect to $t$ at the point $t=0$ the above identity yields
\begin{equation}\label{Equation:SecondEquationInTheProofOf:Lemma:LiftsOfConformalToFirstOrderDifferFromHolomorphyByJustAnO(t)Vector}
\ddtatzero<J_{\psi_t}\d\varphi_t X,\d\varphi_t JX>=\ddtatzero<\d\varphi_t X,\d\varphi_t X>\text{.}
\end{equation}
On the other hand, computing the inner product of \eqref{Equation:FirstEquationInTheProofOf:Lemma:LiftsOfConformalToFirstOrderDifferFromHolomorphyByJustAnO(t)Vector} with $J_{\psi_t}\d\varphi_t X$ and using the fact that $<\d\varphi_t X,J_{\psi_t}\d\varphi_t X>$ $=0$ for all $t$, we get
$<\d\varphi_t X,\d\varphi_t X>=a_t^X <\d\varphi_tJX,J_{\psi_t}\d\varphi_t X>$. So,\addtolength{\arraycolsep}{-1.0mm}
\[\begin{array}{lll}\ddtatzero<\d\varphi_tX,\d\varphi_tX>&=&\left.\frac{\partial{a}_t^X}{\partial{t}}\right|_{t=0}.<\d\varphi_0JX,J_{\psi_0}\d\varphi_0X>\\~&~&+a^X_0.\ddtatzero<\d\varphi_tJX,J_{\psi_t}\d\varphi_tX>\end{array}\]\addtolength{\arraycolsep}{1.0mm}\noindent
and we deduce $\ddtatzero{a}_t^X=0$, as $a_0^X=1$ and \eqref{Equation:SecondEquationInTheProofOf:Lemma:LiftsOfConformalToFirstOrderDifferFromHolomorphyByJustAnO(t)Vector} hold. Using \eqref{Equation:FirstEquationInTheProofOf:Lemma:LiftsOfConformalToFirstOrderDifferFromHolomorphyByJustAnO(t)Vector} again we can now write
\[\nabla_{\ddtatzero}v_t^X=\nabla_{\ddtatzero}J_{\psi_t}\d\varphi_tX-\left.\frac{\partial{a}_t^X}{\partial{t}}\right|_{t=0}.\d\varphi_0JX-1.\nabla_{\ddtatzero}\d\varphi_tJX\]
so that\addtolength{\arraycolsep}{-1.0mm}
\[\begin{array}{lll}<\nabla_{\ddtatzero}v_t^X,\d\varphi_0X>&=&\ddtatzero<J_{\psi_t}\d\varphi_tX,\d\varphi_tX>-<J_{\psi_0}\d\varphi_0X,\nabla_{\ddtatzero}\d\varphi_tX>\\
~&~&-\ddtatzero<\d\varphi_tJX,\d\varphi_tX>+<\d\varphi_0JX,\nabla_{\ddtatzero}\d\varphi_tX>,\end{array}\]\addtolength{\arraycolsep}{1.0mm}\noindent
which vanishes since $<J_{\psi_t}\d\varphi_tX,\d\varphi_t X>=0$ for all $t$, the second and last terms cancelling as $\varphi_0$ is $J_{\psi_0}$-holomorphic and $\varphi$ is conformal to first order. Analogously,\addtolength{\arraycolsep}{-1.0mm}
\[\begin{array}{lll}<\nabla_{\ddtatzero}v_t^X,J_{\psi_0}\d\varphi_0X>&=&\frac{1}{2}\ddtatzero\|J_{\psi_t}\d\varphi_tX\|^2-<\nabla_{\ddtatzero}\d\varphi_tJX,J_{\psi_0}\d\varphi_0X>\\
~&=&\frac{1}{2}\ddtatzero\|\d\varphi_tX\|^2-\frac{1}{2}\ddtatzero<\d\varphi_tJX,\d\varphi_tJX>=0\end{array}\]\addtolength{\arraycolsep}{1.0mm}\noindent
so that $<\nabla_{\ddtatzero} v_t^X,\d\varphi_0(TM)>=0$. For the orthogonal part, taking $r_t\in(\d\varphi_tTM)^\perp$,
\[<\nabla_{\ddtatzero}v_t^X,r_0>=\ddtatzero<v_t^X,r_t>-<v_0^X,\nabla_{\ddtatzero}r_t>=0\text{,}\]
showing that $\nabla_{\ddtatzero}v^X_t=0$ and concluding the proof.
\end{proof}

\begin{lemma}\label{Lemma:HHolomorphicityToFirstOrderImpliesHolomorphicityToFirstOrderOfTheProjectedMap}
Let $\psi:I\times M^2\to \Sigma^+N$ be $\H$-holomorphic to first order. Then, $\varphi=\pi\circ\psi$ is
$(J^M,J_{\psi})$-holomorphic to first order (in the sense of \eqref{Equation:SecondEquationIn:Lemma:LiftsOfConformalToFirstOrderDifferFromHolomorphyByJustAnO(t)Vector}).
\end{lemma}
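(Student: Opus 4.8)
The goal is to show that $\H$-holomorphicity to first order of the lift $\psi$ forces the projected map $\varphi=\pi\circ\psi$ to be holomorphic to first order with respect to $J_\psi$, meaning $\varphi_0$ is $J_{\psi_0}$-holomorphic and $\nabla_{\ddtatzero}\d\varphi_tJX=\nabla_{\ddtatzero}J_{\psi_t}\d\varphi_tX$. The natural strategy is to mimic the non-parametric computation from Theorem \ref{Theorem:ProjectionsOfHHolomorphicMaps}, where $\d\varphi(J^MX)=J_\psi\d\varphi X$ was derived by applying $\d\pi$ to the horizontal holomorphicity identity and using that $\pi$ is ``holomorphic'' in the sense of \eqref{Equation:HolomorphicityOfTheProjectionMap}. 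Here the task is to differentiate that argument in $t$, so the plan is to pass the projection $\d\pi$ through the $t$-derivative and exploit the fact that $\pi:\Sigma^+N\to N$ is a Riemannian submersion for the metric $h$ just constructed.

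First I would record the zeroth-order statement: since $\psi$ is $\H$-holomorphic to first order, \eqref{Equation:ThirdEquationInDefinition:HolomorphicToFirstOrder} holds, so $\psi_0$ is $\H$-holomorphic, and then Theorem \ref{Theorem:ProjectionsOfHHolomorphicMaps} (its proof) gives that $\varphi_0$ is $(J^M,J_{\psi_0})$-holomorphic, establishing the first half of \eqref{Equation:SecondEquationIn:Lemma:LiftsOfConformalToFirstOrderDifferFromHolomorphyByJustAnO(t)Vector}. For the first-order half, the key input is \eqref{Equation:FourthEquationInDefinition:HolomorphicToFirstOrder}, namely $\nabla^{\psi^{-1}}_{\ddtatzero}\{(\d\psi_t(J^MX))^\H-\J^\H(\d\psi_tX)^\H\}=0$. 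I would apply $\d\pi$ to the bracketed expression before differentiating, using that on the horizontal space $\d\pi|_\H$ is an isometric isomorphism intertwining $\J^\H$ with $J_\psi$ (by the definition \eqref{Equation:DefinitionOfTheAlmostComplexStructureOnH} of $\J^\H$ and the holomorphicity \eqref{Equation:HolomorphicityOfTheProjectionMap} of $\pi$), so that $\d\pi\big((\d\psi_t J^MX)^\H-\J^\H(\d\psi_tX)^\H\big)=\d\varphi_t J^MX-J_{\psi_t}\d\varphi_tX$.

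The main obstacle is the interchange of $\d\pi$ with the covariant derivative $\nabla_{\ddtatzero}$: because $\pi$ is only a Riemannian submersion (not a local isometry), $\d\pi$ does not commute with $\nabla$ in general, and the horizontal projection itself varies along the fibre. I would handle this exactly as in the proof of Lemma \ref{Lemma:JHolomorphicityToFirstOrderIffBothHAndVHolomorphicityToFirstOrderHold}: rewrite \eqref{Equation:FourthEquationInDefinition:HolomorphicToFirstOrder} in its equivalent ``pairing against all vectors'' form, use that the zeroth-order term $(\d\psi_0J^MX)^\H-\J^\H(\d\psi_0X)^\H$ vanishes by \eqref{Equation:ThirdEquationInDefinition:HolomorphicToFirstOrder} to kill the correction terms involving $\nabla_{\ddtatzero}$ of the frame or of the projection, and thereby reduce the vanishing of $\nabla_{\ddtatzero}$ of the $\Sigma^+N$-valued quantity to the vanishing of $\nabla_{\ddtatzero}$ of its $\d\pi$-image in $\varphi^{-1}TN$. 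Since $\d\pi\circ\psi$ is just $\varphi$, this yields $\nabla_{\ddtatzero}\{\d\varphi_tJ^MX-J_{\psi_t}\d\varphi_tX\}=0$, which is precisely the desired first-order identity. By Remark \ref{Remark:FirstRemarkTo:Lemma:JHolomorphicityToFirstOrderIffBothHAndVHolomorphicityToFirstOrderHold} the choice of connection on the target is immaterial, which lets me compute with whichever connection is most convenient, so the argument is robust against the submersion-versus-isometry subtlety.
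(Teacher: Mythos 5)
Your proposal is correct and follows essentially the same route as the paper's proof: establish the zeroth-order statement via Theorem \ref{Theorem:ProjectionsOfHHolomorphicMaps}, then handle the first-order part by pairing against arbitrary vectors, using the vanishing of the zeroth-order terms (from $\H$-holomorphicity of $\psi_0$ and $J_{\psi_0}$-holomorphicity of $\varphi_0$) to absorb the correction terms from $\nabla_{\ddtatzero}$, and using that $\pi$ is a Riemannian submersion satisfying \eqref{Equation:HolomorphicityOfTheProjectionMap} to pass the identity down from $\H\subseteq T\Sigma^+N$ to $\varphi^{-1}TN$. This is exactly the computation carried out in the paper.
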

\begin{proof}
In fact, since $\psi$ is $\H$-holomorphic to first order, $\nabla_{\ddtatzero}\{(\d\psi_tJX)^{\H}-\J^\H(\d\psi_tX)\}^{\H}=0$. Therefore, for all
$Y\in\H$,\addtolength{\arraycolsep}{-1.0mm}
\[\begin{array}{ll}~&0=\,<\nabla_{\ddtatzero}\{(\d\psi_t JX)^{\H}-\J^\H(\d\psi_tX)^{\H}\},Y>\\
=&\ddtatzero\hspace{-2mm}<(\d\psi_tJX)^{\H}-\J^\H(\d\psi_tX)^{\H},Y>\hspace{-1mm}-\hspace{-1mm}<(\d\psi_0JX)^{\H}\hspace{-1mm}-\hspace{-1mm}\J^\H(\d\psi_0X)^{\H},\nabla_{\ddtatzero}\hspace{-2mm}Y>\\
=&\ddtatzero<(\d\psi_tJX)^{\H}-\J^\H(\d\psi_tX)^{\H},Y>\text{,~as~$\psi_0$~is~$\H$-holomorphic.}
\end{array}\]\addtolength{\arraycolsep}{1.0mm}\noindent
Since $\pi$ is a Riemannian submersion and $(\J^a,J_\psi)$-holomorphic (see \eqref{Equation:HolomorphicityOfTheProjectionMap}), the above equation can be written as
\[0=\ddtatzero<\d\pi(\d\psi_tJX)-J_{\psi_t}\d\pi(\d\psi_t X),\d\pi(Y)>\text{.}\]
Hence, for all $\tilde{Y}\in{T}N$,\addtolength{\arraycolsep}{-1.0mm}
\[\begin{array}{lll}
0&=&\ddtatzero<\d\varphi_t JX-J_{\psi_t}\d\varphi_t X,\tilde{Y}>\\
~&=&<\nabla_{\ddtatzero}\{\d\varphi_tJX-J_{\psi_t}\d\varphi_tX\},\tilde{Y}>-<\d\varphi_0JX-J_{\psi_0}\d\varphi_0X,\nabla_{\ddtatzero}\tilde{Y}>\\
~&=&<\nabla_{\ddtatzero}\{\d\varphi_tJX-J_{\psi_t}\d\varphi_tX\},\tilde{Y}>\text{~,~since~$\varphi_0$~is~$J_{\psi_0}$~holomorphic,}\end{array}\]\addtolength{\arraycolsep}{1.0mm}\noindent
showing that $\nabla_{\ddtatzero}\d\varphi_tJX=\nabla_{\ddtatzero}J_{\psi_t}\d\varphi{X}$ and concluding our proof (compare with the non-parametric proof of Theorem
\ref{Theorem:ProjectionsOfHHolomorphicMaps}).
\end{proof}

\begin{proposition}[\textup{$\H$-holomorphicity~and~conformality~to~first~order}]\label{Proposition:ProjectionsAndLiftsOfConformalToFirstOrder}\index{$\H$-holomorphic!to~first~order!projection}
Let $I\subseteq\rn$ be an interval around $0$ and let $\psi:I\times M^2\to\Sigma^+ N$ be $\H$-holomorphic to first order. Then, the projected map $\varphi=\pi\circ\psi:I\times M^2\to N^{2n}$ is conformal to first order. Conversely, let $\varphi:I\times M^2\to N$ be a map conformal to first order and assume that $z_0\in{M}^2$ is such that $\partial_z\varphi_0(z_0)\neq 0$. Then, reducing $I$ if necessary, there is an open set $\V$ containing $z_0$ and a map $\psi:I\times\V\to\Sigma^+ N$ $\H$-holomorphic to first order which is compatible to first order with $\varphi$.
\end{proposition}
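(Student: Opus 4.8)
The plan is to prove the two implications separately, treating this as the ``first-order'' analogue of Theorem \ref{Theorem:ProjectionsOfHHolomorphicMaps} and Proposition \ref{Proposition:LocalLiftsOfPluriconformalLifts}, and leaning heavily on the two preparatory lemmas just established. For the forward direction, suppose $\psi$ is $\H$-holomorphic to first order. By Lemma \ref{Lemma:HHolomorphicityToFirstOrderImpliesHolomorphicityToFirstOrderOfTheProjectedMap}, the projected map $\varphi=\pi\circ\psi$ is $(J^M,J_\psi)$-holomorphic to first order, i.e.\ $\varphi_0$ is $J_{\psi_0}$-holomorphic and $\nabla_{\ddtatzero}\d\varphi_tJX=\nabla_{\ddtatzero}J_{\psi_t}\d\varphi_tX$. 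From $\varphi_0$ being holomorphic with respect to the Hermitian structure $J_{\psi_0}$ we get immediately, as in Remark \ref{Remark:ConformalAndPluriconformalMapsFromARiemannSurface}(ii), that $\varphi_0$ is (weakly) conformal, giving the first half of \eqref{Equation:ConformalToFirstOrderFromARiemannSurface}. The remaining step is to differentiate $<\partial_z\varphi_t,\partial_z\varphi_t>$ at $t=0$ and show it vanishes. Writing $\partial_z\varphi = \tfrac12(\d\varphi\,\partial_x - i\,\d\varphi\,\partial_y)$ with $\partial_y = J^M\partial_x$, I would expand $\ddtatzero<\partial_z\varphi_t,\partial_z\varphi_t>$ using the compatibility of $\nabla$ with the metric and substitute the first-order holomorphicity relation $\nabla_{\ddtatzero}\d\varphi_t(J^MX)=\nabla_{\ddtatzero}(J_{\psi_t}\d\varphi_tX)$ together with the fact that $J_{\psi_t}$ is, at each $t$, a metric-compatible complex structure; the isotropy of the $(1,0)$-space for $J_{\psi_0}$ at $t=0$ makes the boundary terms collapse exactly as in Lemma \ref{Lemma:LiftsOfConformalToFirstOrderDifferFromHolomorphyByJustAnO(t)Vector}.

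For the converse, the plan mirrors Proposition \ref{Proposition:LocalLiftsOfPluriconformalLifts} but now in the parametric setting. Given $\varphi$ conformal to first order with $\partial_z\varphi_0(z_0)\neq0$, I would first shrink to a neighbourhood $\V$ of $z_0$ on which $\d\varphi_0$ is injective, and reduce $I$ so that $\d\varphi_t$ stays injective and the normal bundle $V_t=(\d\varphi_t(TM))^\perp$ has constant rank for all $t$. The construction of Section \ref{Section:LiftsOfHarmonicMaps} applies fibrewise in $t$: for each $t$ one has the map $\eta_t:\Sigma^+V_t\to\Sigma^+\varphi_t^{-1}TN$ defined by declaring $\d\varphi_t(T^{10}M)$ to be the $(1,0)$-space along $\varphi_t$ (this is where conformality of each $\varphi_t$ would normally be needed). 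The key subtlety is that $\varphi_t$ need \emph{not} be conformal for $t\neq0$, only conformal to first order, so $\d\varphi_t(T^{10}M)$ is only isotropic to first order; hence I would \emph{not} require $\psi_t$ strictly compatible but merely \emph{compatible to first order} in the sense of \eqref{Equation:TwistorLiftCompatibleWithTheMapInTheVariationalCase}. Choosing any smooth section $s^{10}_V$ of the bundle $\Sigma^+V$ over $I\times\V$ and setting $\psi=\eta\circ s^{10}_V$, one obtains a lift with $\psi_0$ strictly compatible with $\varphi_0$ and $\psi_t$ compatible with $\varphi_t$ for all $t$.

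It then remains to check that this $\psi$ is $\H$-holomorphic to first order. The $t=0$ part, namely $\H$-holomorphicity of $\psi_0$, is precisely Proposition \ref{Proposition:LocalHHolomorphicLifts} applied to the strictly compatible lift $\psi_0$. For the first-order condition \eqref{Equation:FourthEquationInDefinition:HolomorphicToFirstOrder}, I would run the equivalences in the proof of Lemma \ref{Lemma:HHolomorphicityToFirstOrderImpliesHolomorphicityToFirstOrderOfTheProjectedMap} backwards: since $\pi$ is a Riemannian submersion that is $(\J^a,J_\psi)$-holomorphic, \eqref{Equation:FourthEquationInDefinition:HolomorphicToFirstOrder} reduces, after pairing against horizontal vectors and projecting by $\d\pi$, to the statement $\nabla_{\ddtatzero}\{\d\varphi_tJX-J_{\psi_t}\d\varphi_tX\}=0$, which is exactly \eqref{Equation:SecondEquationIn:Lemma:LiftsOfConformalToFirstOrderDifferFromHolomorphyByJustAnO(t)Vector}. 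This is supplied by Lemma \ref{Lemma:LiftsOfConformalToFirstOrderDifferFromHolomorphyByJustAnO(t)Vector}, whose hypotheses ($\varphi$ conformal to first order, $\psi$ compatible to first order, $\partial_{z_0}\varphi_0\neq0$) are all in force.

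The main obstacle I anticipate is \emph{not} the two directions individually but the careful bookkeeping of ``to first order'' versus ``exactly'': one must ensure that the horizontal projections and the connection terms that survive the $\ddtatzero$ differentiation are precisely those controlled by the lemmas, and that the boundary/correction terms (the $v_t^X$ and $a_t^X$ of Lemma \ref{Lemma:LiftsOfConformalToFirstOrderDifferFromHolomorphyByJustAnO(t)Vector}, whose first derivatives vanish at $t=0$) genuinely drop out. The delicate point is that $\varphi_t$ fails to be conformal away from $t=0$, so $\eta_t$ cannot be defined by the naive rotation-by-$\pi/2$ recipe; resolving this cleanly — by working with the merely compatible (not strictly compatible) lift and invoking that the defect is $o(t)$ — is where the argument requires the most care.
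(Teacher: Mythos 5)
Your forward direction is essentially the paper's argument: Lemma \ref{Lemma:HHolomorphicityToFirstOrderImpliesHolomorphicityToFirstOrderOfTheProjectedMap} gives $(J^M,J_{\psi})$-holomorphicity to first order of $\varphi$, conformality of $\varphi_0$ follows from its holomorphicity with respect to $J_{\psi_0}$, and the two identities $\ddtatzero\|\d\varphi_tJX\|^2=\ddtatzero\|\d\varphi_tX\|^2$ and $\ddtatzero\langle\d\varphi_tJX,\d\varphi_tX\rangle=0$ follow by substituting $\nabla_{\ddtatzero}\d\varphi_tJX=\nabla_{\ddtatzero}J_{\psi_t}\d\varphi_tX$ and using that each $J_{\psi_t}$ is an isometry. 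Likewise, your verification of the first-order condition in the converse (running the equivalences of Lemma \ref{Lemma:HHolomorphicityToFirstOrderImpliesHolomorphicityToFirstOrderOfTheProjectedMap} backwards and quoting Lemma \ref{Lemma:LiftsOfConformalToFirstOrderDifferFromHolomorphyByJustAnO(t)Vector}) is exactly what the paper does.

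The gap is in the construction of the compatible-to-first-order lift itself, and your final paragraph has the difficulty exactly backwards. What fails for $t\neq 0$ is the recipe you actually write down, namely declaring $\d\varphi_t(T^{10}M)$ to be the $(1,0)$-space (equivalently, transporting $J^M$ through $\d\varphi_t$): since $\varphi_t$ need not be conformal, this is not metric-compatible, and ``invoking that the defect is $o(t)$'' does not repair it --- you need an honest point of $\Sigma^+N$ over $\varphi_t(z)$ for \emph{every} $t$, not an approximate one. What does work is precisely the rotation-by-$\pi/2$ recipe you reject: genuine metric rotation by $+\frac{\pi}{2}$ on the oriented plane $\d\varphi_t(TM)$ (orientation imported from $TM$ \textit{via} $\d\varphi_t$, metric induced from $N$) is a metric-compatible complex structure on that plane for every $t$, conformal or not; it preserves $\d\varphi_t(TM)$, so the lift obtained by adjoining any smooth family of positive Hermitian structures on $V_t=(\d\varphi_t(TM))^\perp$ is compatible with $\varphi_t$ for all $t$, and at $t=0$ conformality of $\varphi_0$ makes the rotation coincide with transport of $J^M$, so $\psi_0$ is strictly compatible. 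This is exactly the distinction drawn in the footnote to the proof of Theorem \ref{Theorem:J2HolomorphicLiftsToFirstOrder}, where the same compatible lift is built; the paper's own proof of the present proposition simply starts from ``any twistor lift compatible to first order'', whose existence is this rotation construction. Once you replace your construction by it, the rest of your converse closes and coincides with the paper's proof.
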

\begin{proof}
Take $\psi:I\times M^2\to\Sigma^+ N$ $\H$-holomorphic to first order. Since $\varphi_0$ is holomorphic with respect to
$J_{\psi_0}$ (see proof of Theorem \ref{Theorem:ProjectionsOfHHolomorphicMaps}), we know that $\varphi_0$ is conformal (Proposition \ref{Proposition:LocalHHolomorphicLifts}). As for the first order variation, using the preceding Lemma
\ref{Lemma:HHolomorphicityToFirstOrderImpliesHolomorphicityToFirstOrderOfTheProjectedMap},\addtolength{\arraycolsep}{-1.0mm}
\[\begin{array}{lll}\ddtatzero\|\d\varphi_tJX\|^2&=&2<\nabla_{\ddtatzero}\d\varphi_t JX,\d\varphi_0 JX>\\
~&=&2<\nabla_{\ddtatzero}J_{\psi_t}\d\varphi_t X,J_{\psi_0}\d\varphi_{0}X>\\
~&=&\ddtatzero<J_{\psi_t}\d\varphi_tX,J_{\psi_t}\d\varphi_tX>=\ddtatzero<\d\varphi_tX,\d\varphi_tX>\text{.}\end{array}\]\addtolength{\arraycolsep}{1.0mm}\noindent
Using similar arguments we can show that
\[\ddtatzero<\d\varphi_t JX,\d\varphi_t X>=0\text{,}\]
concluding the first part of the proof.

For the converse, we shall make use of Lemma \ref{Lemma:LiftsOfConformalToFirstOrderDifferFromHolomorphyByJustAnO(t)Vector}.
Let $\psi$ be any twistor lift of $\varphi$ compatible to first order. Since we are assuming that $\partial_{z_0}\varphi\neq 0$, we can use Lemma \ref{Lemma:LiftsOfConformalToFirstOrderDifferFromHolomorphyByJustAnO(t)Vector} to deduce that there is a function $a_t^X$ and a vector field $v_t^X$ with
\[J_{\psi_t}\d\varphi_t X=a_t^X \d\varphi_t JX +v_t^X\]
where $a_0^X=1$, $v_0^X=0$, $\left.\frac{\partial{a_t^X}}{\partial{t}}\right|_{t=0}=0$ and $\nabla_{\ddtatzero}v_t^X=0$. Now, $\psi$ is $\H$-holomorphic to first order if and only if $\psi_0$ is $\H$-holomorphic (which is true from Proposition \ref{Proposition:LocalHHolomorphicLifts}) and equation \eqref{Equation:SecondEquationInDefinition:HolomorphicToFirstOrder} holds. Using the same argument as in Lemma \ref{Lemma:JHolomorphicityToFirstOrderIffBothHAndVHolomorphicityToFirstOrderHold}, \eqref{Equation:SecondEquationInDefinition:HolomorphicToFirstOrder} is equivalent to
\[0=\ddtatzero<(\d\psi_t JX)^{\H}-\J^\H(\d\psi_tX)^{\H},Y^\H>,\quad\forall\,Y^\H\in\H\text{,}\]
equivalently,\addtolength{\arraycolsep}{-1.0mm}
\[\begin{array}{ll}~&0=\ddtatzero<\d\varphi_tJX-J_{\psi_t}\d\varphi_tX,\tilde{Y}>\\
=&<\nabla_{\ddtatzero}\hspace{-2mm}\{\d\varphi_tJX\hspace{-1mm}-\hspace{-1mm}a_t^X\d\varphi_tJX\hspace{-1mm}-\hspace{-1mm}v_t^X\},\tilde{Y}>\hspace{-1mm}-\hspace{-1mm}<\underbrace{\d\varphi_0JX\hspace{-1mm}-\hspace{-1mm}a_0^X\d\varphi_0JX\hspace{-1mm}-\hspace{-1mm}v_0^X}_{\text{\tiny{$=0$}}},\nabla_{\ddtatzero}\hspace{-4mm}\tilde{Y}>\end{array}\]\addtolength{\arraycolsep}{1.0mm}\noindent
which is equivalent to
\[<\nabla_{\ddtatzero}\d\varphi_tJX-a_0^X\nabla_{\ddtatzero}\d\varphi_tJX-\left.\frac{\partial{a_t^X}}{\partial{t}}\right|_{t=0}\d\varphi_0JX-\nabla_{\ddtatzero}v_t^X,\tilde{Y}>=0\text{.}\]
Since this is clearly true from the given conditions on $a_t^X$ and $v_t^X$, we have established \eqref{Equation:SecondEquationInDefinition:HolomorphicToFirstOrder} and concluded the proof.
\end{proof}

\begin{lemma}[\textup{Condition~for~$\V$-holomorphicity~to~first~order}]\label{Lemma:ConditionForVHolomorphicity}
Let $\psi:I\times M^2\to\Sigma^+ N$ be a map $\J^a$ ($a=1$ \emph{or} $a=2$) $\V$-holomorphic to first order. Then
\begin{equation}\label{Equation:FirstEquationIn:Lemma:ConditionForVHolomorphicity}
\nabla^{\psi^{-1}\L(TN,TN)}_{\ddtatzero}\{\nabla_{JX}^{\psi^{-1}\L(TN,TN)}J_{\psi_t}+(-1)^{a}{J}_{\psi_t}\nabla_{X}^{\psi^{-1}\L(TN,TN)}J_{\psi_t}\}=0\text{.}
\end{equation}
Conversely, if $\psi_0$ is $\J^a$ $\V$-holomorphic and \eqref{Equation:FirstEquationIn:Lemma:ConditionForVHolomorphicity}
is satisfied then $\psi$ is $\J^a$ $\V$-holomorphic to first order.
\end{lemma}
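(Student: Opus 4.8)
\textbf{Proof plan for Lemma \ref{Lemma:ConditionForVHolomorphicity}.}

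The plan is to unwind the definition of $\V$-holomorphicity to first order (equation \eqref{Equation:FourthEquationInDefinition:HolomorphicToFirstOrder} with $\H$ replaced by $\V$) and translate it, via the identifications already established for $\Sigma^+N$, into the stated condition \eqref{Equation:FirstEquationIn:Lemma:ConditionForVHolomorphicity} on the covariant derivative of $J_{\psi_t}$ in the bundle $\psi^{-1}\L(TN,TN)$. The key structural fact I would invoke is the identification of the vertical space $\V_{(x,J_x)}$ with $\m_{J_x}(T_xN) \subseteq \L(T_xN,T_xN)$, together with equation \eqref{EquationIn:Remark:DecompositionOfASectionIntoVerticalAndHorizontalParts}, which tells us that the vertical part of $\d\psi_t X$ is exactly $\nabla^{\varphi^{-1}}_X J_{\psi_t}$ (interpreted in $\L(TN,TN)$). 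Under this identification, $\J^{\V,a}$ acts as $\lambda \mapsto (-1)^{a+1} J_{\psi_t}\circ\lambda$ by Definition \ref{Definition:AlmostComplexStructureOnMJV} and the sign convention in \eqref{Equation:DefinitionOfTheAlmostComplexStructuresJ1AndJ2OnSigmaPlusM}. Thus the bracketed quantity $(\d\psi_t(JX))^\V - \J^{\V,a}(\d\psi_t X)^\V$ becomes precisely $\nabla_{JX}J_{\psi_t} - (-1)^{a+1} J_{\psi_t}\nabla_X J_{\psi_t} = \nabla_{JX}J_{\psi_t} + (-1)^a J_{\psi_t}\nabla_X J_{\psi_t}$, which is the expression inside the outer covariant derivative in \eqref{Equation:FirstEquationIn:Lemma:ConditionForVHolomorphicity}. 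This is the same rewriting used in \eqref{Equation:FirstEquationAfter:Lemma:VerticalJaHolomorphicityNadNullityOfda} in the non-parametric setting, now carried out one order higher in $t$.

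First I would fix $x$ and $X\in\Gamma(TM)$ and write out \eqref{Equation:FourthEquationInDefinition:HolomorphicToFirstOrder} for the vertical decomposition: $\psi_0$ being $\V$-holomorphic gives the zeroth-order statement $(\d\psi_0(JX))^\V = \J^{\V,a}(\d\psi_0 X)^\V$, while the first-order condition is $\nabla^{\psi^{-1}}_{\ddtatzero}\{(\d\psi_t(JX))^\V - \J^{\V,a}(\d\psi_t X)^\V\}=0$. The second step is to substitute the vertical-part formula \eqref{EquationIn:Remark:DecompositionOfASectionIntoVerticalAndHorizontalParts} and the description of $\J^{\V,a}$, turning these into the zeroth-order equation $\nabla_{JX}J_{\psi_0} + (-1)^a J_{\psi_0}\nabla_X J_{\psi_0}=0$ (the hypothesis ``$\psi_0$ is $\V$-holomorphic'') and the first-order equation \eqref{Equation:FirstEquationIn:Lemma:ConditionForVHolomorphicity}. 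The care needed here is that the ambient connection on $\V$ is the restriction of the connection on $\L(TN,TN)$, so the covariant derivative $\nabla^{\psi^{-1}}_{\ddtatzero}$ appearing in \eqref{Equation:FourthEquationInDefinition:HolomorphicToFirstOrder} must be matched with $\nabla^{\psi^{-1}\L(TN,TN)}_{\ddtatzero}$; by Remark \ref{Remark:FirstRemarkTo:Lemma:JHolomorphicityToFirstOrderIffBothHAndVHolomorphicityToFirstOrderHold} the notion of holomorphicity to first order is independent of the chosen connection, so I am free to compute with the $\L(TN,TN)$-connection throughout, which is precisely the one making \eqref{Equation:FirstEquationIn:Lemma:ConditionForVHolomorphicity} meaningful. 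The converse direction is the same chain of equalities read backwards: assuming $\psi_0$ is $\V$-holomorphic supplies the vanishing zeroth-order term, and \eqref{Equation:FirstEquationIn:Lemma:ConditionForVHolomorphicity} supplies the vanishing first-order term, which together are exactly \eqref{Equation:FourthEquationInDefinition:HolomorphicToFirstOrder}.

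I expect the main obstacle to be bookkeeping rather than conceptual: one must verify that commuting $\J^{\V,a}$ through the $t$-derivative $\nabla_{\ddtatzero}$ is legitimate, i.e. that $\J^{\V,a}$ is parallel (or at least that the non-parallel correction terms, which involve $\nabla_{\ddtatzero}J_{\psi_t}$ acting on the already-vanishing zeroth-order quantity, drop out). This is handled exactly as in the proof of Lemma \ref{Lemma:JHolomorphicityToFirstOrderIffBothHAndVHolomorphicityToFirstOrderHold}: when one pairs $\nabla_{\ddtatzero}\{\cdots\}$ against a test vector and uses the Leibniz rule, the unwanted term carries a factor of the zeroth-order expression $(\d\psi_0(JX))^\V - \J^{\V,a}(\d\psi_0 X)^\V$, which is zero because $\psi_0$ is $\V$-holomorphic. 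Hence the derivative of $\J^{\V,a}$ never interferes, and the equivalence between \eqref{Equation:FourthEquationInDefinition:HolomorphicToFirstOrder} and \eqref{Equation:FirstEquationIn:Lemma:ConditionForVHolomorphicity} follows cleanly once the identification of $\V$ with $\m_{J}(T_xN)$ and the sign of $\J^{\V,a}$ are correctly tracked for $a=1,2$.
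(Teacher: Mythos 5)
Your proposal is correct and follows essentially the same route as the paper's proof: the same identification of the vertical part of $\d\psi_t X$ with $\nabla_X J_{\psi_t}$ and of $\J^{\V,a}$ with $(-1)^{a+1}J_{\psi_t}\circ(\cdot)$, the same rewriting of the defect as $\nabla_{JX}J_{\psi_t}+(-1)^{a}J_{\psi_t}\nabla_X J_{\psi_t}$, and the same mechanism (Leibniz rule plus vanishing of the zeroth-order term because $\psi_0$ is $\V$-holomorphic) to reconcile the connection in the definition with the $\L(TN,TN)$-connection in the lemma. The only cosmetic difference is that the paper performs this reconciliation directly, by pairing against test vectors and using metric compatibility together with $\H\perp\V$ — exactly the computation you sketch in your final paragraph — rather than by citing Remark \ref{Remark:FirstRemarkTo:Lemma:JHolomorphicityToFirstOrderIffBothHAndVHolomorphicityToFirstOrderHold}, whose statement strictly concerns two connections on $\Sigma^+N$ itself rather than the connection on $\psi^{-1}\L(TN,TN)$.
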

\begin{proof}
Recall from \eqref{Equation:FirstEquationInTheProofOf:Lemma:FundamentalLemmaToProve3.5} and \eqref{Equation:DefinitionOfTheAlmostComplexStructuresJ1AndJ2OnSigmaPlusM} that if $\psi:M\to\Sigma^+ N$ is any smooth map then, for $a=1,2$,
\[(\d\psi X)^\V=\nabla^{\psi^{-1}\L(TN,TN)}_X J_\psi\text{ and }\J^{a}(\d\psi X)^\V=(-1)^{a+1} J_\psi\nabla^{\psi^{-1}\L(TN,TN}_X J_\psi\]
Thus, we can rephrase equation \eqref{Equation:FirstEquationIn:Lemma:ConditionForVHolomorphicity}
as
\begin{equation}\label{Equation:FirstEquationInTheProofOf:Lemma:ConditionForVHolomorphicity}
\nabla^\V_{\ddtatzero}\{\d\psi_t(JX)-\J^{a}\d\psi_t X\}^\V=0\text{.}
\end{equation}
Hence, if $\psi$ is $\J^{a}$ $\V$-holomorphic to first order, then
\[\nabla_{\ddtatzero}\{\d\psi_t(JX)-\J^{a}\d\psi_tX\}^\V=0\]
which implies \eqref{Equation:FirstEquationInTheProofOf:Lemma:ConditionForVHolomorphicity}. Conversely, if
\eqref{Equation:FirstEquationInTheProofOf:Lemma:ConditionForVHolomorphicity} holds and $\psi_0$ is $\J^{a}$ $\V$-holomorphic, we have\addtolength{\arraycolsep}{-1.0mm}
\[\begin{array}{ll}~&<\nabla_{\ddtatzero}\{\d\psi_t (JX)-\J^{a}\d\psi_tX\}^\V,Y>\\
=&\ddtatzero<\{\d\psi_t(JX)-\J^{a}\d\psi_tX\}^\V,Y>-<\{\d\psi_0(JX)-\J^{a}\d\psi_0X\}^\V,\nabla_{\ddtatzero}\hspace{-2mm}Y>\\
=&\text{~(as~$\psi_0$~is~$\J^{a}$~$\V$-holomorphic)~}\ddtatzero<\{\d\psi_t(JX)-\J^{a}\d\psi_tX\}^\V,Y^\V>\\
=&<\nabla_{\ddtatzero}\hspace{-2mm}\{\d\psi_t(JX)-\J^{a}\d\psi_tX\}^\V\hspace{-1mm},Y^\V>\hspace{-1mm}+\hspace{-1mm}<\{\d\psi_0(JX)-\J^{a}\d\psi_0X\}^\V\hspace{-1mm},\nabla_{\ddtatzero}\hspace{-2mm}Y^\V>\\
=&\text{~(as~$\psi_0$~is~$\J^{a}$~$\V$-holomorphic)~}<\nabla^\V_{\ddtatzero}\{\d\psi_t(JX)-\J^{a}\d\psi_tX\}^\V,Y>=0\text{,}\end{array}\]\addtolength{\arraycolsep}{1.0mm}\noindent
showing that $\psi$ is $\J^{a}$ $\V$-holomorphic to first order and concluding the proof.
\end{proof}

\begin{corollary}\label{Corollary:HorizontalAndVerticalHolomorphicityToFirstOrderImplyHolomorphicity}
Let $\psi:I\times M^2\to\Sigma^+ N$ be a map such that:

\textup{(i)} $\psi_0$ is holomorphic.

\textup{(ii)} $\psi$ is $\H$-holomorphic to first order.

\textup{(iii)} $\psi$ satisfies equation \eqref{Equation:FirstEquationIn:Lemma:ConditionForVHolomorphicity} (for $a=1$ or $a=2$).

Then, $\psi$ is $\J^1$ (respectively, $\J^2$) holomorphic to first order.
\end{corollary}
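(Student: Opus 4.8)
The plan is to reduce this corollary to the combination of the two preceding results—Lemma \ref{Lemma:JHolomorphicityToFirstOrderIffBothHAndVHolomorphicityToFirstOrderHold} and Lemma \ref{Lemma:ConditionForVHolomorphicity}—since the statement is essentially their conjunction. Recall that Lemma \ref{Lemma:JHolomorphicityToFirstOrderIffBothHAndVHolomorphicityToFirstOrderHold} tells us that a map into $(\Sigma^+N,h,\J^a)$ is $\J^a$-holomorphic to first order if and only if it is both $\H$-holomorphic and $\V$-holomorphic to first order (for the orthogonal $\J^a$-stable decomposition $T\Sigma^+N=\H\oplus\V$ furnished by the metric constructed earlier). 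So the goal is to verify that hypotheses (i)--(iii) deliver exactly these two pieces.

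First I would dispose of $\H$-holomorphicity to first order: this is precisely hypothesis (ii), so nothing is needed there. Next I would obtain $\V$-holomorphicity to first order from the remaining hypotheses. This is where Lemma \ref{Lemma:ConditionForVHolomorphicity} enters: its converse direction states that if $\psi_0$ is $\J^a$ $\V$-holomorphic and equation \eqref{Equation:FirstEquationIn:Lemma:ConditionForVHolomorphicity} holds, then $\psi$ is $\J^a$ $\V$-holomorphic to first order. Hypothesis (iii) supplies exactly \eqref{Equation:FirstEquationIn:Lemma:ConditionForVHolomorphicity}, so the only missing input is that $\psi_0$ is $\J^a$ $\V$-holomorphic. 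But hypothesis (i) says $\psi_0$ is holomorphic, and a holomorphic map is in particular $\V$-holomorphic (indeed both $\H$- and $\V$-holomorphic, as noted in the discussion following Definition \ref{Definition:HolomorphicityInCertainSubbundles}). Hence $\psi_0$ is $\J^a$ $\V$-holomorphic, and Lemma \ref{Lemma:ConditionForVHolomorphicity} yields $\V$-holomorphicity to first order.

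Having both $\H$- and $\V$-holomorphicity to first order in hand, I would then invoke the ``if'' direction of Lemma \ref{Lemma:JHolomorphicityToFirstOrderIffBothHAndVHolomorphicityToFirstOrderHold} to conclude that $\psi$ is $\J^a$-holomorphic to first order, which is the desired statement (with $a=1$ or $a=2$ according to the choice made in (iii)). A small point worth checking explicitly is that the $\J^a$-stable decomposition $T\Sigma^+N=\H\oplus\V$ is orthogonal for the metric $h$ defined via \eqref{Equation:FirstEquationOnTheDefinitionOfTheMetricOnSigmaPLusM}--\eqref{Equation:FourthEquationOnTheDefinitionOfTheMetricOnSigmaPLusM}, so that Lemma \ref{Lemma:JHolomorphicityToFirstOrderIffBothHAndVHolomorphicityToFirstOrderHold} genuinely applies; this orthogonality and $\J^a$-stability were established precisely in that remark, so it is available.

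I do not anticipate any serious obstacle, since the corollary is a formal assembly of the two lemmas; the only mild subtlety is making sure the logical bookkeeping is correct—namely that hypothesis (i), being global holomorphicity of $\psi_0$, is strictly stronger than the $\V$-holomorphicity of $\psi_0$ that Lemma \ref{Lemma:ConditionForVHolomorphicity} actually requires, and that hypothesis (ii) already packages the $\H$-part of first-order holomorphicity (which tacitly includes $\H$-holomorphicity of $\psi_0$, consistent with (i)). Once these matchings are spelled out, the proof is a two-line citation of the preceding lemmas.
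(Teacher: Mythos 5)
Your proof is correct and matches the paper's own argument, which simply cites Lemma \ref{Lemma:ConditionForVHolomorphicity} (converse direction, using that holomorphicity of $\psi_0$ implies its $\V$-holomorphicity) together with Lemma \ref{Lemma:JHolomorphicityToFirstOrderIffBothHAndVHolomorphicityToFirstOrderHold}. You have merely spelled out the same two-lemma assembly that the paper declares ``immediate.''
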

\begin{proof}
Immediate from the previous lemma and Lemma
\ref{Lemma:JHolomorphicityToFirstOrderIffBothHAndVHolomorphicityToFirstOrderHold}
\end{proof}


\subsection{The $\J^1$-holomorphic case}\label{Subsection:J1CaseToFirstOrder}


Next, we give a useful characterization for maps to be $\J^a$-holomorphic to first order ($a=1$ or $2$):

\begin{lemma}\label{Lemma:AnotherCharacterizationOfJ1AndJ2HolomorphicMapsToFirstOrder}\index{$\J^1$-holomorphic!to~first~order!characterization~of}
Let $\psi:I\times M^2\to\Sigma^+ N$ be a smooth map. Then, $\psi$ is $\J^a$-holomorphic to first order ($a=1$ or $2$)  if and only if
\begin{equation}\label{Equation:FirstEquationIn:Lemma:AnotherCharacterizationOfJ1AndJ2HolomorphicMapsToFirstOrder}
\varphi\text{ is $(J^M,J_{\psi})$-holomorphic to first order and}
\end{equation}
\begin{equation}\label{Equation:SecondEquationIn:Lemma:AnotherCharacterizationOfJ1AndJ2HolomorphicMapsToFirstOrder}
\left.\begin{array}{l}\forall\,Y_t^{10}\in\varphi_t^{-1}(T^{10}_{J_{\psi_t}}N)\quad\exists\,Z_t^{10}\in\varphi_t^{-1}(T^{10}_{J_{\psi_t}}N)\quad\text{such~that}\\
\quad\nabla_{\ddtatzero}\nabla_{\partial_z}Y_t^{10}=\nabla_{\ddtatzero}Z_t^{10}\quad{and}\quad\nabla_{\partial_z}Y_0^{10}=Z_0^{10}
\end{array}\quad\right\}\text{~(a=1)}
\end{equation}
or
\begin{equation}\label{Equation:ThirdEquationIn:Lemma:AnotherCharacterizationOfJ1AndJ2HolomorphicMapsToFirstOrder}
\left.\begin{array}{l}\forall\,Y_t^{10}\in\varphi_t^{-1}(T^{10}_{J_{\psi_t}}N)\quad\exists\,Z_t^{10}\in\varphi_t^{-1}(T^{10}_{J_{\psi_t}}N)\quad\text{such~that}\\
\quad\nabla_{\ddtatzero}\nabla_{\partial_{\bar{z}}}Y_t^{10}=\nabla_{\ddtatzero}Z_t^{10}\quad{and}\quad\nabla_{\partial_{\bar{z}}}Y_0^{10}=Z_0^{10}
\end{array}\quad\right\}\text{~(a=2)}
\end{equation}
(compare with \eqref{Equation:FirstEquationIn:Corollary:AnotherVersionOfTheFundamentalLemmaUsingT10AndT01Spaces}).
\end{lemma}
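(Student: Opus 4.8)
The plan is to establish this characterization by mimicking the structure of the non-parametric Corollary \ref{Corollary:AnotherVersionOfTheFundamentalLemmaUsingT10AndT01Spaces}, but now tracking the extra first-order conditions through the covariant derivative $\nabla_{\ddtatzero}$. The natural starting point is Lemma \ref{Lemma:JHolomorphicityToFirstOrderIffBothHAndVHolomorphicityToFirstOrderHold}: a map $\psi$ to $\Sigma^+N$ is $\J^a$-holomorphic to first order if and only if it is both $\H$-holomorphic and $\V$-holomorphic to first order. For the horizontal part, Lemma \ref{Lemma:HHolomorphicityToFirstOrderImpliesHolomorphicityToFirstOrderOfTheProjectedMap} already shows that $\H$-holomorphicity to first order yields that $\varphi=\pi\circ\psi$ is $(J^M,J_\psi)$-holomorphic to first order, which is exactly \eqref{Equation:FirstEquationIn:Lemma:AnotherCharacterizationOfJ1AndJ2HolomorphicMapsToFirstOrder}. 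So the first reduction I would carry out is to identify the $\H$-holomorphic-to-first-order content with condition \eqref{Equation:FirstEquationIn:Lemma:AnotherCharacterizationOfJ1AndJ2HolomorphicMapsToFirstOrder}, and then show separately that the $\V$-holomorphic-to-first-order content is equivalent to \eqref{Equation:SecondEquationIn:Lemma:AnotherCharacterizationOfJ1AndJ2HolomorphicMapsToFirstOrder} (for $a=1$) or \eqref{Equation:ThirdEquationIn:Lemma:AnotherCharacterizationOfJ1AndJ2HolomorphicMapsToFirstOrder} (for $a=2$).

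For the vertical part, I would invoke Lemma \ref{Lemma:ConditionForVHolomorphicity}, which recasts $\V$-holomorphicity to first order as the vanishing condition \eqref{Equation:FirstEquationIn:Lemma:ConditionForVHolomorphicity} on $\nabla_{\ddtatzero}\{\nabla_{JX}J_{\psi_t}+(-1)^aJ_{\psi_t}\nabla_XJ_{\psi_t}\}$, provided $\psi_0$ is $\J^a$ $\V$-holomorphic (which, by the non-parametric analysis, corresponds to $\d_a\omega_{\sigma_0}=0$). The task is then purely algebraic-differential: to translate the condition expressed through $\nabla J_\psi$ into the statement about preservation of the $(1,0)$-spaces $\varphi_t^{-1}(T^{10}_{J_{\psi_t}}N)$ under $\nabla_{\partial_z}$ (for $a=1$) or $\nabla_{\partial_{\bar z}}$ (for $a=2$), differentiated in $t$. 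The key computational identity I expect to use is the relation between $\nabla_X J_\psi$ and the condition $\nabla_X Y^{10}\in T^{10}_{J_\psi}N$, exactly as in Lemma \ref{Lemma:Salamon1.2Generalized} and its use in Corollary \ref{Corollary:AnotherVersionOfTheFundamentalLemmaUsingT10AndT01Spaces}; differentiating the defining relation $J_\psi Y^{10}=iY^{10}$ along $\ddtatzero$ and pairing against test fields should convert \eqref{Equation:FirstEquationIn:Lemma:ConditionForVHolomorphicity} into the existence statements \eqref{Equation:SecondEquationIn:Lemma:AnotherCharacterizationOfJ1AndJ2HolomorphicMapsToFirstOrder}--\eqref{Equation:ThirdEquationIn:Lemma:AnotherCharacterizationOfJ1AndJ2HolomorphicMapsToFirstOrder}.

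The main obstacle I anticipate is bookkeeping the order of differentiation: the conditions \eqref{Equation:SecondEquationIn:Lemma:AnotherCharacterizationOfJ1AndJ2HolomorphicMapsToFirstOrder} and \eqref{Equation:ThirdEquationIn:Lemma:AnotherCharacterizationOfJ1AndJ2HolomorphicMapsToFirstOrder} require \emph{both} the zeroth-order membership $\nabla_{\partial_z}Y_0^{10}=Z_0^{10}\in\varphi_0^{-1}(T^{10}_{J_{\psi_0}}N)$ \emph{and} the first-order compatibility $\nabla_{\ddtatzero}\nabla_{\partial_z}Y_t^{10}=\nabla_{\ddtatzero}Z_t^{10}$, and the families $Y_t^{10}$, $Z_t^{10}$ themselves vary with $t$, so $\nabla_{\partial_z}Y_t^{10}$ need not stay in the $(1,0)$-space for $t\neq0$. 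The care needed is to ensure that when I differentiate a $t$-dependent section of $\varphi_t^{-1}(T^{10}_{J_{\psi_t}}N)$, the connection $\nabla_{\ddtatzero}$ acting on it does not acquire spurious vertical components — this is where the hypothesis that $\psi_0$ is already $\J^a$-holomorphic (not merely $\H$-holomorphic) is essential, letting terms involving $\{\d\psi_0(JX)-\J^a\d\psi_0X\}^\V$ drop out, exactly as in the cancellations in the proof of Lemma \ref{Lemma:ConditionForVHolomorphicity}. The converse direction, reconstructing \eqref{Equation:FirstEquationIn:Lemma:ConditionForVHolomorphicity} from \eqref{Equation:SecondEquationIn:Lemma:AnotherCharacterizationOfJ1AndJ2HolomorphicMapsToFirstOrder} or \eqref{Equation:ThirdEquationIn:Lemma:AnotherCharacterizationOfJ1AndJ2HolomorphicMapsToFirstOrder}, should then follow by running the same pairings in reverse together with the reality and orthogonality arguments used throughout Lemma \ref{Lemma:SalamonsLemma.1.1Generalized} and Lemma \ref{Lemma:Salamon1.2Generalized}.
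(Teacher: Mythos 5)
Your plan follows the paper's own proof essentially step for step: decompose via Lemma \ref{Lemma:JHolomorphicityToFirstOrderIffBothHAndVHolomorphicityToFirstOrderHold} into horizontal and vertical parts, use Lemma \ref{Lemma:HHolomorphicityToFirstOrderImpliesHolomorphicityToFirstOrderOfTheProjectedMap} to identify the horizontal content with \eqref{Equation:FirstEquationIn:Lemma:AnotherCharacterizationOfJ1AndJ2HolomorphicMapsToFirstOrder}, recast the vertical content through Lemma \ref{Lemma:ConditionForVHolomorphicity}, and translate algebraically into \eqref{Equation:SecondEquationIn:Lemma:AnotherCharacterizationOfJ1AndJ2HolomorphicMapsToFirstOrder}--\eqref{Equation:ThirdEquationIn:Lemma:AnotherCharacterizationOfJ1AndJ2HolomorphicMapsToFirstOrder} by working with $Y^{10}_t=\tfrac12(Y_t-iJ_{\psi_t}Y_t)$, with the converse reassembled via Corollary \ref{Corollary:HorizontalAndVerticalHolomorphicityToFirstOrderImplyHolomorphicity} after deducing $\J^a$-holomorphicity of $\psi_0$ from the non-parametric Corollary \ref{Corollary:AnotherVersionOfTheFundamentalLemmaUsingT10AndT01Spaces}. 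You also correctly identify the one genuinely delicate point (that holomorphicity of $\psi_0$ is what lets the extra terms from $\nabla_{\ddtatzero}$ drop out), so the proposal is sound and matches the paper's route.
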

\begin{proof}
We shall do the proof only for the $\J^1$ case, the $\J^2$ case being similar.

Assume that $\psi$ is $\J^1$-holomorphic to first order. Then, using Lemmas
\ref{Lemma:JHolomorphicityToFirstOrderIffBothHAndVHolomorphicityToFirstOrderHold} and \ref{Lemma:HHolomorphicityToFirstOrderImpliesHolomorphicityToFirstOrderOfTheProjectedMap}, $\varphi$ is $(J^M,J_{\psi})$-holomorphic to first order. On the other hand, $\psi$ satisfies equation
\eqref{Equation:FirstEquationIn:Lemma:ConditionForVHolomorphicity},
\[\nabla^{\L(TN,TN)}_{\ddtatzero}\{\nabla_{JX}J_{\psi_t}-J_{\psi_t}\nabla^{\L(TN,TN)}_{X}J_{\psi_t}\}=0\]
which implies that
\begin{equation}\label{Equation:FirstEquationInTheProofOf:Lemma:AnotherCharacterizationOfJ1AndJ2HolomorphicMapsToFirstOrder}
\nabla_{\ddtatzero}\{\nabla_{JX}(J_{\psi_t}Y_t)-\nabla_XY_t\}=\nabla_{\ddtatzero}\{J_{\psi_t}\big(\nabla_{JX}Y_t+\nabla_X(J_{\psi_t}Y_t)\big)\}\text{.}
\end{equation}
Take $Y^{10}_t$ in $T^{10}_{J_{\psi_t}}N$; then, $Y^{10}_t=\frac{1}{2}(Y_t-iJ_{\psi_t}Y_t)$ for some $Y_t$ and we can
deduce\addtolength{\arraycolsep}{-1.0mm}
\[\begin{array}{ll}~&\nabla_{\ddtatzero}\nabla_{\partial_z}Y^{10}_t=\frac{1}{4}\nabla_{\ddtatzero}\{\nabla_XY_t-\nabla_{JX}(J_{\psi_t}Y_t)-i\big(\nabla_X(J_{\psi_t}Y_t)+\nabla_{JX}Y_t\big)\}\\
=&\text{(using~\eqref{Equation:FirstEquationInTheProofOf:Lemma:AnotherCharacterizationOfJ1AndJ2HolomorphicMapsToFirstOrder})~}\frac{1}{4}\nabla_{\ddtatzero}\{-J_{\psi_t}\big(\nabla_{JX}Y_t+\nabla_X(J_{\psi_t}Y_t)\big)-i\big(\nabla_X(J_{\psi_t}Y_t)+\nabla_{JX}Y_t\big)\}\text{.}\end{array}\]\addtolength{\arraycolsep}{1.0mm}\noindent
Let $2iZ_t=\nabla_{JX}Y_t+\nabla_X(J_{\psi_t}Y_t)$ so that\addtolength{\arraycolsep}{-1.0mm}
\[\begin{array}{lll}\nabla_{\ddtatzero}\nabla_{\partial_z}Y^{10}_t&=&\frac{1}{4}\nabla_{\ddtatzero}\{-2iJ_{\psi_t}Z_t+2Z_t\}\\
~&=&\nabla_{\ddtatzero}\frac{1}{2}\{Z_t-iJ_{\psi_t}Z_t\}=\nabla_{\ddtatzero}Z^{10}_t\end{array}\]\addtolength{\arraycolsep}{1.0mm}\noindent
Moreover, since $\psi_0$ is holomorphic, $\nabla_XY_0-\nabla_{JX}(J_{\psi_0}Y_0)=-J_{\psi_0}\big(\nabla_{JX}Y_0+\nabla_X(J_{\psi_0}Y_0)\big)$ and\addtolength{\arraycolsep}{-1.0mm}
\[\begin{array}{lll}\nabla_{\partial_z}Y^{10}_0&=&\frac{1}{4}\{\nabla_XY_0-\nabla_{JX}(J_{\psi_0}Y_0)-i\big(\nabla_X(J_{\psi_0}Y_0)+\nabla_{JX}Y_0\big)\}\\
~&=&\frac{1}{4}\{-2iJ_{\psi_0}Z_0+2Z_0\}=Z^{10}_0\text{,}\end{array}\]\addtolength{\arraycolsep}{1.0mm}\noindent
finishing the ``only if" part of our proof.

For the converse: suppose now that
\eqref{Equation:FirstEquationIn:Lemma:AnotherCharacterizationOfJ1AndJ2HolomorphicMapsToFirstOrder} and \eqref{Equation:SecondEquationIn:Lemma:AnotherCharacterizationOfJ1AndJ2HolomorphicMapsToFirstOrder} hold. Then, $\psi_0$ is $\J^1$-holomorphic, using Corollary \ref{Corollary:AnotherVersionOfTheFundamentalLemmaUsingT10AndT01Spaces}. Take $Y^{10}_t\in{T}^{10}_{J_{\psi_t}}N$. As \eqref{Equation:SecondEquationIn:Lemma:AnotherCharacterizationOfJ1AndJ2HolomorphicMapsToFirstOrder} holds, there is $Z^{10}_t$ with $\nabla_{\partial_z}Y^{10}_0=Z^{10}_0$ and $\nabla_{\ddtatzero}\{\nabla_{\partial_z}Y^{10}_t-Z^{10}_t\}=0$, which implies that
\[\nabla_{\ddtatzero}\frac{1}{2}\{\nabla_XY_t-\nabla_{JX}(J_{\psi_t}Y_t)-i\big(\nabla_X(J_{\psi_t}Y_t)+\nabla_{JX}Y_t\big)\}-Z_t+iJ_{\psi_t}Z_t\}=0\]
so that
\[\left\{\begin{array}{l}\nabla_{\ddtatzero}\{\nabla_X Y_t-\nabla_{JX}(J_{\psi_t}Y_t)\}=2\nabla_{\ddtatzero}Z_t\\
\nabla_{\ddtatzero}\{\nabla_{X}(J_{\psi_t}Y_t)+\nabla_{JX}Y_t\}=2\nabla_{\ddtatzero}(J_{\psi_t}Z_t)\text{.}\end{array}\right.\]
In particular, the right-hand side of \eqref{Equation:FirstEquationInTheProofOf:Lemma:AnotherCharacterizationOfJ1AndJ2HolomorphicMapsToFirstOrder} is given by\addtolength{\arraycolsep}{-1.0mm}\noindent
\[\begin{array}{ll} ~&\nabla_{\ddtatzero}\{J_{\psi_t}\big(\nabla_{JX}Y_t+\nabla_X(J_{\psi_t}Y_t)\big)\}=\big(\nabla_{\ddtatzero}J_{\psi_t}\big)\big(\nabla_{JX}Y_0+\\
~&+\nabla_X(J_{\psi_0}Y_0)\big)+J_{\psi_0}\nabla_{\ddtatzero}\{\nabla_{JX}Y_t+\nabla_X(J_{\psi_t}Y_t)\}\\
=&\big(\nabla_{\ddtatzero}\hspace{-0.4mm}J_{\psi_t}\big)\big(\nabla_{JX}Y_0+\nabla_X(J_{\psi_0}Y_0)\big)+2J_{\psi_0}\nabla_{\ddtatzero}\hspace{-0.4mm}(J_{\psi_t}Z_t)\\
=&\text{~(since~$\nabla_{JX}Y_0+\nabla_X(J_{\psi_0}Y_0)=2J_{\psi_0}Z_0$)~}-2\nabla_{\ddtatzero}Z_t-2J_{\psi_0}\nabla_{\ddtatzero}(J_{\psi_t}Z_t)\\
~&+2J_{\psi_0}\nabla_{\ddtatzero}(J_{\psi_t}Z_t)=-2\nabla_{\ddtatzero}Z_t\end{array}\]\addtolength{\arraycolsep}{1.0mm}\noindent
which is the left-hand side of \eqref{Equation:FirstEquationInTheProofOf:Lemma:AnotherCharacterizationOfJ1AndJ2HolomorphicMapsToFirstOrder}. Hence, \eqref{Equation:FirstEquationInTheProofOf:Lemma:AnotherCharacterizationOfJ1AndJ2HolomorphicMapsToFirstOrder} is satisfied. Together with the fact that $\psi_0$ is $\J^1$-holomorphic, we can conclude that equation \eqref{Equation:FirstEquationIn:Lemma:ConditionForVHolomorphicity}
is verified. As for the horizontal part, we have that condition
\[\nabla_{\ddtatzero}(\d\psi_tJX-\J\d\psi_tX)^\H=0\]
is equivalent to
\begin{equation}\label{Equation:SecondEquationInTheProofOf:Lemma:AnotherCharacterizationOfJ1AndJ2HolomorphicMapsToFirstOrder}
\ddtatzero<\d\varphi_tJX-J_{\psi_t}\d\varphi_tX,Y>=0,\quad\forall\,Y\text{,}
\end{equation}
as $\psi_0$ is holomorphic. Now, we know that \eqref{Equation:FirstEquationIn:Lemma:AnotherCharacterizationOfJ1AndJ2HolomorphicMapsToFirstOrder}
holds so that \eqref{Equation:SecondEquationInTheProofOf:Lemma:AnotherCharacterizationOfJ1AndJ2HolomorphicMapsToFirstOrder} is trivially satisfied. Therefore, we are under the conditions of Corollary
\ref{Corollary:HorizontalAndVerticalHolomorphicityToFirstOrderImplyHolomorphicity} and can conclude that our map is $\J^1$-holomorphic to first order, as desired.
\end{proof}

\begin{remark}Recall that, in the non-parametric case, $\J^1$-holomorphicity is characterized by (Corollary \ref{Corollary:AnotherVersionOfTheFundamentalLemmaUsingT10AndT01Spaces})
\[\nabla_{\partial_z}T^{10}_{J_{\psi_0}}N\subseteq{T}^{10}_{J_{\psi_0}}N\text{.}\]
If we try to get an analogue for this condition but with a first order derivative, we immediately get problems, as
$\nabla_{\ddtatzero}T^{10}$ could mean $a.\nabla_{\ddtatzero}Z^{10}+\ddtatzero a.Z^{10}$ (for each vector
$Z^{10}$) or just $\nabla_{\partial_t}Z^{10}$. Hence, the above lemma gives a clean way of stating the analogue of
$\nabla_{\partial_z}T^{10}\subseteq T^{10}$ for the parametric case.
\end{remark}

From the preceding lemma we can also deduce the following:
\begin{lemma}\label{Lemma:FundamentalTrickWithTheCurvatureAndFirstOrderVariations}\index{Curvature~tensor~and~first~order~variations}
Let $\psi:I\times M^2\to\Sigma^+N$ be a map $\J^1$-holomorphic to first order and consider the projected map $\varphi=\pi\circ\psi$. Then for all $r\geq1$ there is $Z^{10}_{t}\in\varphi_t^{-1}(T^{10}_{J_{\psi_t}}N)$ with
\begin{equation}\label{Equation:EquationIn:Lemma:FundamentalTrickWithTheCurvatureAndFirstOrderVariations}
\partial^r_z\varphi_0=Z^{10}_{0}\text{~and~}\nabla_{\ddtatzero}\partial^r_z\varphi_t=\nabla_{\ddtatzero}Z^{10}_{t}\text{.}
\end{equation}
\end{lemma}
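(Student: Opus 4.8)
The plan is to prove Lemma \ref{Lemma:FundamentalTrickWithTheCurvatureAndFirstOrderVariations} by induction on $r$, using the characterization of $\J^1$-holomorphicity to first order established in Lemma \ref{Lemma:AnotherCharacterizationOfJ1AndJ2HolomorphicMapsToFirstOrder}. The key observation is that the condition \eqref{Equation:SecondEquationIn:Lemma:AnotherCharacterizationOfJ1AndJ2HolomorphicMapsToFirstOrder} is precisely a first-order analogue of the non-parametric statement $\nabla_{\partial_z}T^{10}_{J_{\psi_0}}N\subseteq T^{10}_{J_{\psi_0}}N$, so the whole proof is the natural ``to first order'' translation of the induction appearing in the proof of Theorem \ref{Theorem:GeneralizedTheorem4.4ProjectionsOfJ1HolomorphicMapsInTheHigherDimensionalCase}.

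For the base case $r=1$, I would start from the fact that $\psi$ is $\J^1$-holomorphic to first order, so by Lemmas \ref{Lemma:JHolomorphicityToFirstOrderIffBothHAndVHolomorphicityToFirstOrderHold} and \ref{Lemma:HHolomorphicityToFirstOrderImpliesHolomorphicityToFirstOrderOfTheProjectedMap} (or directly from \eqref{Equation:FirstEquationIn:Lemma:AnotherCharacterizationOfJ1AndJ2HolomorphicMapsToFirstOrder}) the projected map $\varphi$ is $(J^M,J_\psi)$-holomorphic to first order. This gives $\d\varphi_0(\partial_z)=\partial_z\varphi_0 \in T^{10}_{J_{\psi_0}}N$, establishing the first equality of \eqref{Equation:EquationIn:Lemma:FundamentalTrickWithTheCurvatureAndFirstOrderVariations} with $Z^{10}_t = \partial_z\varphi_t^{10}$ (the $(1,0)$-part relative to $J_{\psi_t}$), while the first-order holomorphicity condition \eqref{Equation:SecondEquationIn:Lemma:LiftsOfConformalToFirstOrderDifferFromHolomorphyByJustAnO(t)Vector} gives $\nabla_{\ddtatzero}(\d\varphi_t(J^M\partial_x)) = \nabla_{\ddtatzero}(J_{\psi_t}\d\varphi_t\partial_x)$, from which $\nabla_{\ddtatzero}\partial_z\varphi_t$ has vanishing $(0,1)$-part to first order; thus $\partial_z\varphi_t$ is a section of $\varphi_t^{-1}(T^{10}_{J_{\psi_t}}N)$ to first order and we may take $Z^{10}_t = \partial_z\varphi_t$ itself.

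For the inductive step, assume the claim holds at level $r-1$: there is $W^{10}_t \in \varphi_t^{-1}(T^{10}_{J_{\psi_t}}N)$ with $\partial^{r-1}_z\varphi_0 = W^{10}_0$ and $\nabla_{\ddtatzero}\partial^{r-1}_z\varphi_t = \nabla_{\ddtatzero}W^{10}_t$. Since $\partial^r_z\varphi_t = \nabla_{\partial_z}(\partial^{r-1}_z\varphi_t)$, I would apply condition \eqref{Equation:SecondEquationIn:Lemma:AnotherCharacterizationOfJ1AndJ2HolomorphicMapsToFirstOrder} with $Y^{10}_t = W^{10}_t$: this produces $Z^{10}_t \in \varphi_t^{-1}(T^{10}_{J_{\psi_t}}N)$ with $\nabla_{\partial_z}W^{10}_0 = Z^{10}_0$ and $\nabla_{\ddtatzero}\nabla_{\partial_z}W^{10}_t = \nabla_{\ddtatzero}Z^{10}_t$. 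The first of these gives $\partial^r_z\varphi_0 = \nabla_{\partial_z}W^{10}_0 = Z^{10}_0$. For the first-order part, I would compute
\[
\nabla_{\ddtatzero}\partial^r_z\varphi_t = \nabla_{\ddtatzero}\nabla_{\partial_z}\partial^{r-1}_z\varphi_t,
\]
and use the inductive hypothesis $\nabla_{\ddtatzero}\partial^{r-1}_z\varphi_t = \nabla_{\ddtatzero}W^{10}_t$ together with $\partial^{r-1}_z\varphi_0 = W^{10}_0$ to replace $\partial^{r-1}_z\varphi_t$ by $W^{10}_t$ inside the $\ddtatzero$-derivative, obtaining $\nabla_{\ddtatzero}\partial^r_z\varphi_t = \nabla_{\ddtatzero}\nabla_{\partial_z}W^{10}_t = \nabla_{\ddtatzero}Z^{10}_t$, as desired.

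The main obstacle is the commuting of $\nabla_{\ddtatzero}$ with $\nabla_{\partial_z}$ in the step that lets me substitute $W^{10}_t$ for $\partial^{r-1}_z\varphi_t$: the two vector fields agree at $t=0$ and have equal $\nabla_{\ddtatzero}$-derivatives, but to conclude that $\nabla_{\ddtatzero}\nabla_{\partial_z}$ applied to them coincides I must control the commutator $[\nabla_{\ddtatzero},\nabla_{\partial_z}]$, which is governed by the curvature $R^N$ of the pulled-back connection and the bracket $[\ddtatzero,\partial_z]=0$. The point is that the curvature term $R^N(\d\varphi\,\ddtatzero,\d\varphi\,\partial_z)$ acts the same way on two vectors that agree at $t=0$, so the difference $\nabla_{\ddtatzero}\nabla_{\partial_z}(\partial^{r-1}_z\varphi_t - W^{10}_t)$ reduces, using $\partial^{r-1}_z\varphi_0 = W^{10}_0$ and $\nabla_{\ddtatzero}(\partial^{r-1}_z\varphi_t - W^{10}_t)|_{t=0}=0$, to $\nabla_{\partial_z}$ of something vanishing to first order, hence vanishes. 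I would make this precise by writing the difference of the two sides explicitly and invoking that both the zeroth-order values and first-order derivatives match, which is exactly the bookkeeping that Lemma \ref{Lemma:AnotherCharacterizationOfJ1AndJ2HolomorphicMapsToFirstOrder} is designed to streamline; I expect this curvature/commutator bookkeeping to be the only genuinely delicate point, the rest being formal induction.
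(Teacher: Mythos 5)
Your proposal follows essentially the same route as the paper's proof: induction on $r$, with the inductive step powered by the characterization \eqref{Equation:SecondEquationIn:Lemma:AnotherCharacterizationOfJ1AndJ2HolomorphicMapsToFirstOrder} of $\J^1$-holomorphicity to first order, and the substitution of $W^{10}_t$ for $\partial^{r-1}_z\varphi_t$ justified by commuting $\nabla_{\ddtatzero}$ past $\nabla_{\partial_z}$ at the cost of curvature terms that only see the $t=0$ data. Your way of handling that commutation (apply the commutator identity to the difference $D_t=\partial^{r-1}_z\varphi_t-W^{10}_t$, which vanishes at $t=0$ and has $\nabla_{\ddtatzero}D_t=0$, so that both the curvature term $R(\cdot,\cdot)D_0$ and $\nabla_{\partial_z}\nabla_{\ddtatzero}D_t$ vanish) is equivalent to the paper's bookkeeping, in which the commutator is applied once to $\partial^{r-1}_z\varphi_t$ and once to $W^{10}_t$ and the two curvature terms cancel by antisymmetry of $R$; both arguments hinge on exactly the two pieces of data the induction hypothesis supplies, so your inductive step is sound.

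The one genuine flaw is at the end of your base case: you cannot ``take $Z^{10}_t=\partial_z\varphi_t$ itself''. The lemma demands $Z^{10}_t\in\varphi_t^{-1}(T^{10}_{J_{\psi_t}}N)$ for \emph{every} $t$, and $\partial_z\varphi_t=\d\varphi_tX-i\,\d\varphi_tJX$ lies in that bundle only at $t=0$: for $t\neq0$ the map $\varphi_t$ is in general not $J_{\psi_t}$-holomorphic, and ``being a $(1,0)$-section to first order'' is not the same as being a section of the $(1,0)$-bundle, which is what the statement (and its later use in Proposition \ref{Proposition:ProjectionsOfJ1HolomorphicToFirstOrderMapsAndRealIsotropy}, where one needs $\langle Z^{10}_t,Z^{10}_t\rangle=0$ for all $t$) requires. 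The correct choice is the one you mention first, namely a genuine $(1,0)$-part; the paper takes $Z^{10}_t=\d\varphi_tX-iJ_{\psi_t}\d\varphi_tX$, which is an honest section of $T^{10}_{J_{\psi_t}}N$, equals $\partial_z\varphi_0$ at $t=0$ by holomorphicity of $\varphi_0$, and satisfies $\nabla_{\ddtatzero}Z^{10}_t=\nabla_{\ddtatzero}\partial_z\varphi_t$ immediately, because \eqref{Equation:FirstEquationIn:Lemma:AnotherCharacterizationOfJ1AndJ2HolomorphicMapsToFirstOrder} gives $\nabla_{\ddtatzero}\big(J_{\psi_t}\d\varphi_tX\big)=\nabla_{\ddtatzero}\d\varphi_tJX$. (If you insist on the $(1,0)$-part of $\partial_z\varphi_t$, the same verification works but needs the first-order holomorphicity condition applied to both $X$ and $JX$.) With this replacement your argument is complete.
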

\begin{proof}We shall do the proof by induction on $r$. For $r=1$, we have $\partial_z\varphi_t=\d\varphi_tX-i\d\varphi_t JX$ so that using
\eqref{Equation:FirstEquationIn:Lemma:AnotherCharacterizationOfJ1AndJ2HolomorphicMapsToFirstOrder} we have
\[\nabla_{\ddtatzero}\partial_z\varphi_t=\nabla_{\ddtatzero}\{\d\varphi_tX-i\d\varphi_tJX\}=\nabla_{\ddtatzero}\{\d\varphi_tX-iJ_{\psi_t}\d\varphi_tX\}\text{.}\]
Taking $Z^{10}_t=\d\varphi_t X-iJ_{\psi_t}\d\varphi_tX\in\varphi_t^{-1}(T^{10}_{J_{\psi_t}}N)$ we obtain the desired result as $\varphi_0$ $(J^M,J_{\psi_0})$-holomorphic implies
$Z^{10}_0=\d\varphi_0 X-iJ_{\psi_0}\d\varphi_0 X=\d\varphi_0(X-iJ^M
X)=\partial_z\varphi_0$. Assume now that the result is valid for $r=k$; \textit{i.e.}, there is $Z^{10,k}_t$ such that
\[\nabla_{\ddtatzero}\partial^k_z\varphi_t=\nabla_{\ddtatzero}Z^{10,k}_t\text{~and~}\partial^k_z\varphi_0=Z^{10,k}_0\text{.}\]
Let us show that it also holds when $r=k+1$. Since $\partial^{k+1}_z\varphi_t=\partial_z\partial^k_z\varphi_t$,\addtolength{\arraycolsep}{-1.0mm}
\[\begin{array}{lll}\nabla_{\ddtatzero}\partial^{k+1}_z\varphi&=&\nabla_{\ddtatzero}\nabla_{\partial_z}\partial^k_z\varphi_t=R(\left.\frac{\partial\varphi_t}{\partial{t}}\right|_{t=0},\partial_z\varphi_t)\partial^k_z\varphi_t+\nabla_{\partial_z}\nabla_{\ddtatzero}\partial^k_z\varphi_t\\
~&~&+\nabla_{[\left.\frac{\partial\varphi_t}{\partial{t}}\right|_{t=0},\partial_z\varphi_t]}\partial^k_z\varphi_t\text{.}\end{array}\]\addtolength{\arraycolsep}{1.0mm}\noindent
Using the fact that
$[\left.\frac{\partial\varphi_t}{\partial_t}\right|_{t=0},\partial_z\varphi_t]=\d\varphi_t[\partial_z,\ddtatzero]=0$,
that $R$ is tensorial and the induction hypothesis, the latter expression becomes\addtolength{\arraycolsep}{-1.0mm}
\[\begin{array}{ll}~&R(\left.\frac{\partial\varphi_t}{\partial{t}}\right|_{t=0},\partial_z\varphi_0)Z^{10,k}_0+\nabla_{\partial_z}\nabla_{\ddtatzero}Z^{10,k}_t\\
=&R(\left.\frac{\partial\varphi_t}{\partial{t}}\right|_{t=0},\partial_z\varphi_0)Z^{10,k}_0\hspace{-1mm}+\hspace{-1mm}\nabla_{\ddtatzero}\hspace{-2mm}\nabla_{\partial_z}Z^{10,k}_t\hspace{-1mm}+\hspace{-1mm}R(\partial_z\varphi_0,\left.\frac{\partial\varphi_t}{\partial{t}}\right|_{t=0})Z^{10,k}_0=\nabla_{\ddtatzero}\hspace{-2mm}\nabla_{\partial_z}Z^{10,k}_t\text{,}\end{array}\]\addtolength{\arraycolsep}{1.5mm}\noindent
as $R$ is antisymmetric on the first two arguments. Now, since $\psi$ is $\J^1$-holomorphic,
\eqref{Equation:SecondEquationIn:Lemma:AnotherCharacterizationOfJ1AndJ2HolomorphicMapsToFirstOrder}
holds so that there is $Z^{10,k+1}_t$ such that
\[\nabla_{\ddtatzero}\nabla_{\partial_z}Z^{10,k}_t=\nabla_{\ddtatzero}Z^{10,k+1}_t\text{~and~}\nabla_{\partial_z}Z^{10,k}_0=Z^{10,k+1}_0\text{.}\]
But the second condition gives
$\partial_z^{k+1}\varphi_0=\partial_z\partial^k_z\varphi_0=\nabla_{\partial_z}Z^{10,k}_0=Z^{10,k+1}_0$ whereas the first holds precisely that
$\nabla_{\ddtatzero}\partial_z^{k+1}\varphi_t=\nabla_{\ddtatzero}\nabla_{\partial_z}Z^{10,k}_t=\nabla_{\ddtatzero}Z^{10,k+1}_t$,
as we wanted to show.
\end{proof}

\begin{remark}\label{Remark:FundamentalTrickWithTheCurvatureTensor}
Notice the ``fundamental trick" with the curvature tensor: from
$\H$-holomorphicity to first order, equation
\eqref{Equation:SecondEquationIn:Lemma:LiftsOfConformalToFirstOrderDifferFromHolomorphyByJustAnO(t)Vector}
is verified. In particular, we can
write\addtolength{\arraycolsep}{-1.0mm}
\[\begin{array}{ll}
~&\nabla_{\ddtatzero}\nabla_{Y}J_{\psi_t}\d\varphi_tX=R(\ddtatzero,Y)(J_{\psi_0}\d\varphi_0X)+\nabla_Y\nabla_{\ddtatzero}J_{\psi_t}\d\varphi_tX\\
=&R(\ddtatzero,Y)(\d\varphi_0JX)+\nabla_Y\nabla_{\ddtatzero}\d\varphi_tJX\\
=&R(\ddtatzero,Y)(\d\varphi_0JX)+R(Y,\ddtatzero)(\d\varphi_0JX)+\nabla_{\ddtatzero}\nabla_Y\d\varphi_tJX\end{array}\]\addtolength{\arraycolsep}{1.0mm}\noindent
so that from $R$ antisymmetry we conclude
\begin{equation}\label{Equation:EquationIn:Remark:FundamentalTrickWithTheCurvatureTensor:TheFundamentalTrickWithTheCurvatureTensor}
\nabla_Y\nabla_{\ddtatzero}J_{\psi_t}\d\varphi_tX=\nabla_Y\nabla_{\ddtatzero}\d\varphi_tJX,\quad\forall\,X,Y\in{T}M\text{.}
\end{equation}
\end{remark}

\begin{proposition}[\textup{Projections~of~maps~$\J^1$-holomorphic~to~first~order}]\label{Proposition:ProjectionsOfJ1HolomorphicToFirstOrderMapsAndRealIsotropy}\index{$\J^1$-holomorphic!to~first~order!projection}\index{Real~isotropic~map!to~first~order}\index{Curvature~tensor~and~first~order~variations}
Let $\psi:I\times{M}^2\to\Sigma^+N$ be a map $\J^1$-holomorphic to first order, where $M^2$ is any Riemann surface. Then, the projection map $\varphi=\pi\circ\psi$ is real isotropic to first order.
\end{proposition}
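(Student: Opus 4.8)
The plan is to mimic the non-parametric proof of Theorem \ref{Theorem:GeneralizedTheorem4.4ProjectionsOfJ1HolomorphicMapsInTheHigherDimensionalCase}, upgrading each statement to its ``to first order'' analogue, with Lemma \ref{Lemma:FundamentalTrickWithTheCurvatureAndFirstOrderVariations} playing the central role. Recall that real isotropy to first order of $\varphi$ means (see \eqref{Equation:Definition:RealIsotropicMapsToFirstOrder}) that $\varphi_0$ is real isotropic and that $\ddtatzero<\partial^r_z\varphi_t,\partial^s_z\varphi_t>=0$ for all $r,s\geq1$. So the proof naturally splits into showing first that $\varphi_0$ is real isotropic, and then establishing the first-order identity.

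The first part is immediate: since $\psi$ is $\J^1$-holomorphic to first order, in particular $\psi_0$ is $\J^1$-holomorphic (condition \eqref{Equation:FirstEquationInDefinition:HolomorphicToFirstOrder}), so by Theorem \ref{Theorem:GeneralizedTheorem4.4ProjectionsOfJ1HolomorphicMapsInTheHigherDimensionalCase} the projected map $\varphi_0=\pi\circ\psi_0$ is real isotropic. For the first-order part, I would invoke Lemma \ref{Lemma:FundamentalTrickWithTheCurvatureAndFirstOrderVariations}: for each $r\geq1$ there is a section $Z^{10,r}_t\in\varphi_t^{-1}(T^{10}_{J_{\psi_t}}N)$ with $\partial^r_z\varphi_0=Z^{10,r}_0$ and $\nabla_{\ddtatzero}\partial^r_z\varphi_t=\nabla_{\ddtatzero}Z^{10,r}_t$. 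Then I compute, for fixed $r,s\geq1$,
\[
\ddtatzero<\partial^r_z\varphi_t,\partial^s_z\varphi_t>=<\nabla_{\ddtatzero}\partial^r_z\varphi_t,\partial^s_z\varphi_0>+<\partial^r_z\varphi_0,\nabla_{\ddtatzero}\partial^s_z\varphi_t>.
\]
Substituting the identities from the lemma, this equals $<\nabla_{\ddtatzero}Z^{10,r}_t,Z^{10,s}_0>+<Z^{10,r}_0,\nabla_{\ddtatzero}Z^{10,s}_t>$, which I then want to recognize as $\ddtatzero<Z^{10,r}_t,Z^{10,s}_t>$, exploiting that $Z^{10,r}_0=\partial^r_z\varphi_0$ and $Z^{10,s}_0=\partial^s_z\varphi_0$ agree at $t=0$ with the original derivatives.

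The key observation is that $<Z^{10,r}_t,Z^{10,s}_t>\equiv0$ for all $t$, because both vectors lie in the isotropic subspace $T^{10}_{J_{\psi_t}}N$ (isotropy of the $(1,0)$-space holds pointwise for every $t$, being a property of the Hermitian structure $J_{\psi_t}$). Differentiating the identically-zero function $t\mapsto<Z^{10,r}_t,Z^{10,s}_t>$ gives $\ddtatzero<Z^{10,r}_t,Z^{10,s}_t>=0$, and by the product-rule manipulation above this forces $\ddtatzero<\partial^r_z\varphi_t,\partial^s_z\varphi_t>=0$, as required. I expect the main obstacle to be bookkeeping the two distinct meanings of ``$(1,0)$-part at varying $t$'' already flagged in the remark after Lemma \ref{Lemma:AnotherCharacterizationOfJ1AndJ2HolomorphicMapsToFirstOrder}: one must be careful that the sections $Z^{10,r}_t$ genuinely take values in $\varphi_t^{-1}(T^{10}_{J_{\psi_t}}N)$ for all $t$ (not merely at $t=0$), so that the complex-bilinear metric vanishes identically in $t$ and its $t$-derivative is legitimately zero. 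Since Lemma \ref{Lemma:FundamentalTrickWithTheCurvatureAndFirstOrderVariations} produces exactly such $t$-families of $(1,0)$ sections, this condition is met, and the product-rule step that swaps $\nabla_{\ddtatzero}\partial^r_z\varphi_t$ for $\nabla_{\ddtatzero}Z^{10,r}_t$ is justified precisely because the two agree at $t=0$, so no extra zeroth-order correction terms survive the evaluation at $t=0$.
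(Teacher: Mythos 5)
Your proof is correct and follows essentially the same route as the paper's: both hinge on Lemma \ref{Lemma:FundamentalTrickWithTheCurvatureAndFirstOrderVariations}, trading $\partial^r_z\varphi_t$ for genuine $t$-families of $(1,0)$-sections $Z^{10}_t\in\varphi_t^{-1}(T^{10}_{J_{\psi_t}}N)$ and then using isotropy of $T^{10}_{J_{\psi_t}}N$ for every $t$ to kill the derivative. The only cosmetic difference is that you handle general pairs $(r,s)$ directly by applying the lemma twice, whereas the paper first reduces to the diagonal case $r=s$ (via Lemma \ref{Lemma:AnInductionProofREalIsotropicMapsAndOnlyTheRRCase}) and then runs the same computation once.
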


Notice that we could replace $\Sigma^+N$ with $\Sigma^-N$, as real isotropy (to first order) does not depend on the fixed orientation on $N$.

\begin{proof}
That $\varphi_0$ is real isotropic follows from the non-parametric case for projections of maps from the twistor space (Theorem \ref{Theorem:GeneralizedTheorem4.4ProjectionsOfJ1HolomorphicMapsInTheHigherDimensionalCase}). Therefore, we are left with proving that
\[\ddtatzero<\partial_z^r\varphi,\partial_z^r\varphi>=0,\quad\forall\,r\geq 1\text{.}\]
Using Lemma \ref{Lemma:FundamentalTrickWithTheCurvatureAndFirstOrderVariations}, for fixed $r\geq1$ choose $Z^{10}_t\in\varphi_t^{-1}(T^{10}_{J_{\psi_t}}N)$ with $\partial^r_z\varphi_0=Z^{10}_0$ and $\nabla_{\ddtatzero}\partial^r_z\varphi_t=\nabla_{\ddtatzero}Z^{10}_t$. Then\addtolength{\arraycolsep}{-1.0mm}
\[\begin{array}{lll}\ddtatzero<\partial_z^r\varphi,\partial_z^r\varphi>&=&2<\nabla_{\ddtatzero}\partial_z^r\varphi_t,\partial_z^r\varphi_0>=2<\nabla_{\ddtatzero}Z^{10}_t,Z^{10}_0>\\
~&=&\ddtatzero<Z^{10}_t,Z^{10}_t>=0\end{array}\]\addtolength{\arraycolsep}{1.0mm}\noindent
as $Z^{10}_t\in\varphi^{-1}(T^{10}_{J_{\psi_t}}N)$ for all $t$.
\end{proof}

We now turn our attention to the existence of lifts $\J^1$-holomorphic to first order for a given map $\varphi:I\times M^2\to N^4$ into an oriented $4$-manifold, with real isotropic to first order. Recall
that in the non-parametric case such lift exists (see Theorem \ref{Theorem:J1HolomorphicLiftsInTheFourDimensionalCase}). Moreover, as we have seen in the second proof of this result, the lift was defined by $J(\d\varphi X)=\d\varphi JX$ and $J(u)=-v$ where $\partial^2_z\varphi=u+iv$. An analogue for the parametric case is as follows:

\begin{theorem}\label{Theorem:J1HolomorphicToFirstOrderLiftsInTheFourDimensionalCase}\index{$\J^1$-holomorphic!to~first~order!lift}
Let $\varphi:I\times M^2\to N^4$ be a map real isotropic to first order. Let $z_0\,\in\,M^2$ and suppose that $\partial_z\varphi_0 (z_0)$ and $\partial^2_z\varphi_0(z_0)$ are linearly independent. Then, reducing
$I$ if necessary, there is an open set $\openU$ around $z_0$ and either a map $\psi^+:I\times\openU\to\Sigma^+N^4$ or a map $\psi^-:I\times\openU\to \Sigma^-N^4$ which is $\J^1$-holomorphic to first order and compatible to first order with
$\varphi$.
\end{theorem}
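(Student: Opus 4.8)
The plan is to mimic the second proof of Theorem \ref{Theorem:J1HolomorphicLiftsInTheFourDimensionalCase} (the non-parametric $4$-dimensional case) and upgrade it to a first-order statement, using the characterization of $\J^1$-holomorphicity to first order provided by Lemma \ref{Lemma:AnotherCharacterizationOfJ1AndJ2HolomorphicMapsToFirstOrder}. First I would invoke Lemma \ref{Lemma:AdaptedAlmostHermitianStructuresToRealIsotropicMaps}: since $\partial_z\varphi_0(z_0)$ and $\partial_z^2\varphi_0(z_0)$ are linearly independent, writing $\partial_z^2\varphi_0=u_0+iv_0$ the only strictly compatible almost Hermitian structures for $\varphi_0$ at $z_0$ satisfy $J(\partial_x\varphi_0)=\partial_y\varphi_0$ and $J(u_0)=\pm v_0$. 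Picking the sign so that $J_{\psi_0}(u_0)=-v_0$, exactly as in the second proof of Theorem \ref{Theorem:J1HolomorphicLiftsInTheFourDimensionalCase}, determines a (positive or negative, hence a lift to $\Sigma^+N^4$ or $\Sigma^-N^4$) structure $J_{\psi_0}$ that is strictly compatible with $\varphi_0$ and is $\J^1$-holomorphic, because $\partial_z^2\varphi_0=u_0-iJ_{\psi_0}u_0\in T^{10}_{J_{\psi_0}}N$ and real isotropy gives $\nabla_{\partial_z}T^{10}_{J_{\psi_0}}N\subseteq T^{10}_{J_{\psi_0}}N$. This fixes the initial condition $\psi_0$.

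Next I would construct the one-parameter family $J_{\psi_t}$. By continuity, for $t$ small (reducing $I$) the vectors $\partial_z\varphi_t(z_0)$ and $\partial_z^2\varphi_t(z_0)$ remain linearly independent on a neighbourhood $\openU$ of $z_0$; since $\varphi$ is conformal to first order (this follows from real isotropy to first order by putting $r=s=1$), $\varphi_t$ is conformal for small $t$ and Lemma \ref{Lemma:AdaptedAlmostHermitianStructuresToRealIsotropicMaps} applies to each $\varphi_t$. I would then define $J_{\psi_t}$ on $\d\varphi_t(T\openU)$ by $J_{\psi_t}(\partial_x\varphi_t)=\partial_y\varphi_t$ and on its orthogonal complement by declaring its $(1,0)$-subspace to contain the $(1,0)$-part of $\partial_z^2\varphi_t$, choosing the branch continuously with the one fixed at $t=0$. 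This produces a smooth section $\psi:I\times\openU\to\Sigma^\pm N$, with each $J_{\psi_t}$ preserving $\d\varphi_t(TM)$, so $\psi$ is \emph{compatible to first order} with $\varphi$ in the sense of \eqref{Equation:TwistorLiftCompatibleWithTheMapInTheVariationalCase}; moreover $T^{10}_{J_{\psi_t}}N$ is spanned by $\partial_z\varphi_t$ and $\partial_z^2\varphi_t$.

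The heart of the argument is to verify the two conditions of Lemma \ref{Lemma:AnotherCharacterizationOfJ1AndJ2HolomorphicMapsToFirstOrder} (for $a=1$). For \eqref{Equation:FirstEquationIn:Lemma:AnotherCharacterizationOfJ1AndJ2HolomorphicMapsToFirstOrder}, that $\varphi$ is $(J^M,J_\psi)$-holomorphic to first order in the sense of \eqref{Equation:SecondEquationIn:Lemma:LiftsOfConformalToFirstOrderDifferFromHolomorphyByJustAnO(t)Vector}, I would apply Lemma \ref{Lemma:LiftsOfConformalToFirstOrderDifferFromHolomorphyByJustAnO(t)Vector} directly, since $\psi$ is a compatible-to-first-order lift of a conformal-to-first-order map with $\partial_{z_0}\varphi_0\neq0$; equivalently, one shows $\psi$ is $\H$-holomorphic to first order via Proposition \ref{Proposition:ProjectionsAndLiftsOfConformalToFirstOrder}. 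For the second condition \eqref{Equation:SecondEquationIn:Lemma:AnotherCharacterizationOfJ1AndJ2HolomorphicMapsToFirstOrder}, since $T^{10}_{J_{\psi_t}}N$ is spanned over $\varphi_t^{-1}$ by $\partial_z\varphi_t$ and $\partial_z^2\varphi_t$, it suffices to show that for each basis generator $Y_t^{10}$ there is $Z_t^{10}\in\varphi_t^{-1}(T^{10}_{J_{\psi_t}}N)$ with $\nabla_{\partial_z}Y_0^{10}=Z_0^{10}$ and $\nabla_{\ddtatzero}\nabla_{\partial_z}Y_t^{10}=\nabla_{\ddtatzero}Z_t^{10}$. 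The key computation is that $\nabla_{\partial_z}\partial_z^r\varphi_t=\partial_z^{r+1}\varphi_t$ lies in $T^{10}_{J_{\psi_t}}N$ to first order; here I expect to need the ``fundamental trick with the curvature tensor'' of Remark \ref{Remark:FundamentalTrickWithTheCurvatureTensor}, together with real isotropy to first order $\ddtatzero\langle\partial_z^r\varphi_t,\partial_z^s\varphi_t\rangle=0$, to commute the $t$- and $z$-derivatives and absorb curvature terms by antisymmetry, exactly as in the induction of Lemma \ref{Lemma:FundamentalTrickWithTheCurvatureAndFirstOrderVariations}. \textbf{The main obstacle} I anticipate is precisely controlling the $(0,1)$-component of $\nabla_{\ddtatzero}\nabla_{\partial_z}\partial_z^2\varphi_t$ relative to the moving structure $J_{\psi_t}$: one must show it is killed to first order, which is where the hypothesis of real isotropy \emph{to first order} (rather than merely at $t=0$) is indispensable, and where the explicit description of $J_{\psi_t}$ from Lemma \ref{Lemma:AdaptedAlmostHermitianStructuresToRealIsotropicMaps} must be differentiated in $t$. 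Once both conditions of Lemma \ref{Lemma:AnotherCharacterizationOfJ1AndJ2HolomorphicMapsToFirstOrder} hold, that lemma yields that $\psi$ is $\J^1$-holomorphic to first order, completing the proof.
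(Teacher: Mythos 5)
Your skeleton does match the paper's: fix $\psi_0$ by the non-parametric Theorem \ref{Theorem:J1HolomorphicLiftsInTheFourDimensionalCase}, build a family $J_{\psi_t}$ compatible to first order, get condition \eqref{Equation:FirstEquationIn:Lemma:AnotherCharacterizationOfJ1AndJ2HolomorphicMapsToFirstOrder} from Lemma \ref{Lemma:LiftsOfConformalToFirstOrderDifferFromHolomorphyByJustAnO(t)Vector}, then verify \eqref{Equation:SecondEquationIn:Lemma:AnotherCharacterizationOfJ1AndJ2HolomorphicMapsToFirstOrder}. But there is a genuine gap at the construction of the family, and its root is your claim that conformality to first order gives conformality of $\varphi_t$ for small $t$. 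It does not: conformality to first order only says $\langle\partial_z\varphi_t,\partial_z\varphi_t\rangle$ is $o(t)$, and the paper warns explicitly (just before \eqref{Equation:TwistorLiftCompatibleWithTheMapInTheVariationalCase}) that a strictly compatible lift of every $\varphi_t$ may fail to exist for exactly this reason. Consequently your prescription $J_{\psi_t}(\partial_x\varphi_t)=\partial_y\varphi_t$ is \emph{not} metric-compatible for $t\neq 0$ (it would require $\|\partial_x\varphi_t\|=\|\partial_y\varphi_t\|$ and $\langle\partial_x\varphi_t,\partial_y\varphi_t\rangle=0$), so your $\psi_t$ does not take values in $\Sigma^{\pm}N$ at all; the same objection kills the normal-bundle prescription via $\partial^2_z\varphi_t$, since the relations $\|u\|=\|v\|$, $\langle u,v\rangle=0$ hold only at $t=0$ and to first order. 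The paper's construction avoids this: $J_{\psi_t}$ is the unique positive (resp.\ negative) structure merely \emph{compatible} with $\varphi_t$, i.e.\ rotation by $+\pi/2$ on $\d\varphi_t(TM)$ with respect to the metric and the orientation transported from $M$ (an isometry of that plane whether or not $\varphi_t$ is conformal), extended to the normal plane by positivity/negativity.

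This flaw propagates: your statement that $T^{10}_{J_{\psi_t}}N$ is spanned by $\partial_z\varphi_t$ and $\partial^2_z\varphi_t$ for each $t$ is false for $t\neq 0$ (it holds only at $t=0$, and only ``to first order'' thereafter), so condition \eqref{Equation:SecondEquationIn:Lemma:AnotherCharacterizationOfJ1AndJ2HolomorphicMapsToFirstOrder} cannot be checked on that basis. The paper instead works with $Y^{10}_{1_t}=\d\varphi_tX-iJ_{\psi_t}\d\varphi_tX$ and $Y^{10}_{2_t}=u^X_t-iJ_{\psi_t}u^X_t$, which lie in $T^{10}_{J_{\psi_t}}N$ by construction, and the entire weight of the argument falls on two first-order identities that you only flag as ``anticipated obstacles'': $\nabla_{\ddtatzero}J_{\psi_t}u^X_t=-\nabla_{\ddtatzero}v^X_t$ (Lemma \ref{Lemma:FirstAuxiliarLemmaToProve:Theorem:J1HolomorphicToFirstOrderLiftsInTheFourDimensionalCase}, the first-order surrogate of $J_{\psi_0}u_0=-v_0$) and $\nabla_{\ddtatzero}\partial^3_z\varphi_t=\nabla_{\ddtatzero}\{a_t\partial_z\varphi_t+b_t\partial^2_z\varphi_t\}$ (Lemma \ref{Lemma:SecondAuxiliarLemmaToProve:Theorem:J1HolomorphicToFirstOrderLiftsInTheFourDimensionalCase}); both are deduced from real isotropy to first order together with the linear-independence hypothesis. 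Note also that you cannot lean on Lemma \ref{Lemma:FundamentalTrickWithTheCurvatureAndFirstOrderVariations}, even ``as in its induction'': its hypothesis is $\J^1$-holomorphicity to first order, which is the conclusion you are after; only the curvature-commutation trick of Remark \ref{Remark:FundamentalTrickWithTheCurvatureTensor} is available, and by itself it does not substitute for the two auxiliary lemmas above.
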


Before proving Theorem \ref{Theorem:J1HolomorphicToFirstOrderLiftsInTheFourDimensionalCase}, we give a couple of lemmas:

\begin{lemma}\label{Lemma:FirstAuxiliarLemmaToProve:Theorem:J1HolomorphicToFirstOrderLiftsInTheFourDimensionalCase}
Let $\varphi$ be as in the preceding theorem. Consider
\begin{equation}
\label{Equation:FirstEquationIn:Lemma:FirstAuxiliarLemmaToProve:Theorem:J1HolomorphicToFirstOrderLiftsInTheFourDimensionalCase}
u_t^X=\nabla_X\d\varphi_tX-\nabla_{JX}\d\varphi_tJX\quad\text{and}\quad{v}_t^X=-\nabla_X\d\varphi_tJX-\nabla_{JX}\d\varphi_tX\text{.}
\end{equation}
Suppose that the $\J^1$-holomorphic lift of $\varphi_0$ is $\psi_0^+\in\Sigma^+ N$ (respectively, $\psi_0^-\in\Sigma^-N$). Take $J_{\psi_t}$ the unique positive (respectively, negative) almost Hermitian structure on $T_{\varphi_t}N$ compatible with $\varphi_t$.
Then,
\begin{equation}\label{Equation:SecondEquationIn:Lemma:FirstAuxiliarLemmaToProve:Theorem:J1HolomorphicToFirstOrderLiftsInTheFourDimensionalCase}
\nabla_{\ddtatzero}J_{\psi_t}u_t^X=-\nabla_{\ddtatzero}v_t^X\text{.}
\end{equation}
\end{lemma}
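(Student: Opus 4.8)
The plan is to reduce everything to the two-dimensional geometry established in the \emph{non-parametric} Lemma \ref{Lemma:AdaptedAlmostHermitianStructuresToRealIsotropicMaps} and then differentiate carefully in $t$, exploiting the ``fundamental trick with the curvature tensor'' of Remark \ref{Remark:FundamentalTrickWithTheCurvatureTensor}. First I would observe that $u_t^X$ and $v_t^X$ are exactly the real and imaginary parts of $\partial_z^2\varphi_t$ (up to the identification $z=x+iy$, $X=\partial_x$, $JX=\partial_y$), so that $\partial_z^2\varphi_t=u_t^X+iv_t^X$. Since $\varphi$ is real isotropic to first order, at $t=0$ the structure $J_{\psi_0}$ is $\J^1$-holomorphic and, by the second proof of Theorem \ref{Theorem:J1HolomorphicLiftsInTheFourDimensionalCase}, satisfies $J_{\psi_0}u_0^X=-v_0^X$; this gives the initial condition. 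Because $\partial_z\varphi_0,\partial_z^2\varphi_0$ are linearly independent at $z_0$, Lemma \ref{Lemma:AdaptedAlmostHermitianStructuresToRealIsotropicMaps} shows that for each small $t$ the unique compatible structure $J_{\psi_t}$ is completely determined by $J_{\psi_t}(\d\varphi_t X)=\d\varphi_t JX$ and $J_{\psi_t}u_t^X=\pm v_t^X$, with the sign fixed (by continuity in $t$) to be the one selected at $t=0$.

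Next I would differentiate the relation $J_{\psi_t}u_t^X=-v_t^X$ covariantly in $t$. Formally this reads
\[
\nabla_{\ddtatzero}\bigl(J_{\psi_t}u_t^X\bigr)=\bigl(\nabla_{\ddtatzero}J_{\psi_t}\bigr)u_0^X+J_{\psi_0}\nabla_{\ddtatzero}u_t^X,
\]
so the desired identity \eqref{Equation:SecondEquationIn:Lemma:FirstAuxiliarLemmaToProve:Theorem:J1HolomorphicToFirstOrderLiftsInTheFourDimensionalCase} is equivalent to showing that the full $t$-derivative of $J_{\psi_t}u_t^X+v_t^X$ vanishes. The naive approach of differentiating the defining sign relation directly runs into the obstacle that $u_t^X,v_t^X$ need not lie in a fixed subbundle and the sign relation only pins down $J_{\psi_t}$ on the span of $u_t^X$ modulo $\d\varphi_t(TM)$; so I would instead work with the $(1,0)$-subspace. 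The key is that $\psi_0$ being $\J^1$-holomorphic and $\varphi$ being real isotropic to first order force $\partial_z^2\varphi_t$ to lie in $T^{10}_{J_{\psi_t}}N$ to first order, i.e. there is $Z^{10}_t\in\varphi_t^{-1}(T^{10}_{J_{\psi_t}}N)$ with $\partial_z^2\varphi_0=Z^{10}_0$ and $\nabla_{\ddtatzero}\partial_z^2\varphi_t=\nabla_{\ddtatzero}Z^{10}_t$—this is precisely the content of Lemma \ref{Lemma:FundamentalTrickWithTheCurvatureAndFirstOrderVariations} for $r=2$, adapted to the present situation where the lift is being constructed rather than assumed.

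Concretely, I would compute $\nabla_{\ddtatzero}$ applied to $\partial_z^2\varphi_t=u_t^X+iv_t^X$ and to the relation $u_t^X-iv_t^X=\partial_z^2\varphi_t-2iv_t^X$, comparing real and imaginary parts. Writing $Z^{10}_t=\tfrac12(w_t-iJ_{\psi_t}w_t)$ for a suitable real vector field $w_t$ and using that membership $\partial_z^2\varphi_t\in T^{10}_{J_{\psi_t}}N$ to first order means $\nabla_{\ddtatzero}\bigl(v_t^X+J_{\psi_t}u_t^X\bigr)$ is the imaginary part of $\nabla_{\ddtatzero}\bigl(\partial_z^2\varphi_t - (\text{the }(1,0)\text{-projection})\bigr)$, which vanishes by the real-isotropy-to-first-order hypothesis and Remark \ref{Remark:FundamentalTrickWithTheCurvatureTensor}. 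The main obstacle I anticipate is the bookkeeping of where the covariant $t$-derivative of $J_{\psi_t}$ lands: one must verify that $\nabla_{\ddtatzero}J_{\psi_t}$, evaluated against $u_0^X$, produces exactly the cross term needed, and that the ``fundamental trick'' identity \eqref{Equation:EquationIn:Remark:FundamentalTrickWithTheCurvatureTensor:TheFundamentalTrickWithTheCurvatureTensor} (antisymmetry of $R$ in its first two slots) converts the second-order covariant derivatives $\nabla_X\nabla_{\ddtatzero}(\cdots)$ appearing in $\nabla_{\ddtatzero}u_t^X$ and $\nabla_{\ddtatzero}v_t^X$ into the holomorphicity-to-first-order relation $\nabla_{\ddtatzero}\d\varphi_t JX=\nabla_{\ddtatzero}J_{\psi_t}\d\varphi_t X$. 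Once these curvature terms cancel, taking the component of $\nabla_{\ddtatzero}(J_{\psi_t}u_t^X+v_t^X)$ along each of the orthogonal directions $\d\varphi_0 X,\d\varphi_0 JX,\tilde u,\tilde v$ (as in part (iv) of Lemma \ref{Lemma:AdaptedAlmostHermitianStructuresToRealIsotropicMaps}) and showing each vanishes will conclude the proof.
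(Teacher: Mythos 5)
Your proposal has two genuine gaps, both forms of circularity, and together they undermine the core of the argument. First, the claim that for each small $t$ the compatible structure satisfies $J_{\psi_t}(\d\varphi_t X)=\d\varphi_t JX$ and $J_{\psi_t}u_t^X=\pm v_t^X$ (so that one may ``differentiate the relation $J_{\psi_t}u_t^X=-v_t^X$ covariantly in $t$'') is false for $t\neq 0$. Lemma \ref{Lemma:AdaptedAlmostHermitianStructuresToRealIsotropicMaps} applies only to a map that \emph{is} real isotropic and to structures \emph{strictly} compatible with it; here $\varphi_t$ is real isotropic only to first order, and $J_{\psi_t}$ is merely compatible --- indeed Lemma \ref{Lemma:LiftsOfConformalToFirstOrderDifferFromHolomorphyByJustAnO(t)Vector} shows $J_{\psi_t}\d\varphi_tX$ equals $\d\varphi_tJX$ only up to terms vanishing to first order. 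The identity $J_{\psi_t}u_t^X=-v_t^X$ holds at $t=0$, and the assertion that its covariant $t$-derivative at $0$ vanishes is \emph{exactly} the content of the lemma; you cannot obtain it by differentiating an identity that is not known to hold off $t=0$. Second, your fallback --- invoking Lemma \ref{Lemma:FundamentalTrickWithTheCurvatureAndFirstOrderVariations} with $r=2$ to get $Z^{10}_t$ with $\partial_z^2\varphi_0=Z^{10}_0$ and $\nabla_{\ddtatzero}\partial_z^2\varphi_t=\nabla_{\ddtatzero}Z^{10}_t$ --- is circular: that lemma assumes $\psi$ is $\J^1$-holomorphic to first order, which is precisely what Theorem \ref{Theorem:J1HolomorphicToFirstOrderLiftsInTheFourDimensionalCase} (the result this auxiliary lemma serves) is trying to establish for the constructed lift. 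Your parenthetical ``adapted to the present situation where the lift is being constructed rather than assumed'' is where the entire difficulty lives, and no such adaptation is supplied; note that first-order membership of $\partial_z^2\varphi_t$ in $T^{10}_{J_{\psi_t}}N$ is equivalent to the lemma's conclusion (plus its companion identity for $v$), so assuming it assumes the result.

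The paper's proof avoids both traps by never leaving $t=0$ data except through the scalar consequences of the hypothesis: real isotropy to first order gives $\ddtatzero\langle\partial^2_z\varphi_t,\partial_z\varphi_t\rangle=0$ and $\ddtatzero\langle\partial^2_z\varphi_t,\partial^2_z\varphi_t\rangle=0$, which unpack into four inner-product identities relating $u_t^X$, $v_t^X$, $\d\varphi_tX$, $\d\varphi_tJX$. One then tests $\nabla_{\ddtatzero}(J_{\psi_t}u_t^X+v_t^X)$ against the basis $\{\d\varphi_0X,\d\varphi_0JX,u_0^X,v_0^X\}$ (your final paragraph correctly anticipates this step), using only: compatibility of $J_{\psi_t}$ for all $t$ to move $J_{\psi_t}$ across the metric inside $\ddtatzero$ of inner products, and the pointwise relation $J_{\psi_0}u_0^X=-v_0^X$ at $t=0$. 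No curvature identity and no first-order $(1,0)$-membership statement is needed in this lemma; the ``fundamental trick'' with $R$ enters only later, in the proof of the theorem itself. To repair your proposal you would need to discard the differentiation-of-an-identity strategy and the appeal to Lemma \ref{Lemma:FundamentalTrickWithTheCurvatureAndFirstOrderVariations}, and instead carry out the four component computations directly from the isotropy-to-first-order identities.
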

\begin{proof}
Since $\varphi$ is real isotropic to first order, $\ddtatzero<\partial^2_z\varphi_t,\partial_z\varphi_t>=0$, equivalently, $\ddtatzero<u_t^X+iv_t^X,\d\varphi_tX-i\d\varphi_t JX>$. Thus, we have
\begin{align}
\notag\ddtatzero<u_t^X,\d\varphi{X}>&=-\ddtatzero<v_t^X,\d\varphi_tJX>\text{~and}\\
\label{Equation:SecondEquationInTheProofOf:Lemma:FirstAuxiliarLemmaToProve:Theorem:J1HolomorphicToFirstOrderLiftsInTheFourDimensionalCase}\ddtatzero<u_t^X,\d\varphi_tJX>&=\ddtatzero<v_t^X,\d\varphi_tX>\text{.}
\end{align}
Similarly, $\ddtatzero<\partial^2_z\varphi_t,\partial^2_z\varphi_t>=0$ is equivalent to $\ddtatzero<u_t^X+iv_t^X,u_t^X+iv_t^X>=0$ and implies
\begin{align}
\notag\ddtatzero<u_t^X,u_t^X>&=\ddtatzero<v_t^X,v_t^X>\text{~and}\\
\label{Equation:FourthEquationInTheProofOf:Lemma:FirstAuxiliarLemmaToProve:Theorem:J1HolomorphicToFirstOrderLiftsInTheFourDimensionalCase}\ddtatzero<u_t^X,v_t^X>&=0\text{.}
\end{align}
As $\psi$ is compatible with $\varphi$, using Lemma \ref{Lemma:LiftsOfConformalToFirstOrderDifferFromHolomorphyByJustAnO(t)Vector}, we know that $\varphi$ is holomorphic to first order with respect to $J_\psi$. On the other hand, since $\partial_z\varphi_0$ and $\partial^2_z\varphi_0$ are linearly independent, we deduce that $\d\varphi_0 X, \d\varphi_0 JX$, $u_0^X$ and $v_0^X$ form a basis for $T_{\varphi_0}N$ (see \ref{Lemma:AdaptedAlmostHermitianStructuresToRealIsotropicMaps}). Hence,
\eqref{Equation:SecondEquationIn:Lemma:FirstAuxiliarLemmaToProve:Theorem:J1HolomorphicToFirstOrderLiftsInTheFourDimensionalCase}
will be satisfied if and only if the following four points are verified:

(i) $<\nabla_{\ddtatzero}J_{\psi_t}u_t^X,\d\varphi_0X>=-<\nabla_{\ddtatzero}v_t^X,\d\varphi_0X>$.

From the second step in the proof of Theorem \ref{Theorem:J1HolomorphicLiftsInTheFourDimensionalCase}, we know that $J_{\psi_0}u_0^X=-v_0^X$. Thus, we have\addtolength{\arraycolsep}{-1.0mm}
\[\begin{array}{rll}<\nabla_{\ddtatzero}J_{\psi_t}u_t^X,\d\varphi_0X>&=&\ddtatzero\hspace{-0.5mm}<J_{\psi_t}u_t^X,\d\varphi_tX>-<J_{\psi_0}u_0^X,\nabla_{\ddtatzero}\hspace{-0.5mm}\d\varphi_tX>\\
~&=&-\ddtatzero<u_t^X,J_{\psi_t}\d\varphi_tX>+<v_0^X,\nabla_{\ddtatzero}\d\varphi_tX>\\
~&=&-\ddtatzero<u_t^X,\d\varphi_tJX>+<v_0^X,\nabla_{\ddtatzero}\d\varphi_tX>\\
~&=&\hspace{-0.5mm}\text{(using~\eqref{Equation:SecondEquationInTheProofOf:Lemma:FirstAuxiliarLemmaToProve:Theorem:J1HolomorphicToFirstOrderLiftsInTheFourDimensionalCase})~}\hspace{-1.5mm}-\hspace{-1.5mm}\ddtatzero\hspace{-2mm}<v_t^X\hspace{-1mm},\d\varphi_tX\hspace{-1mm}>\hspace{-1mm}+\hspace{-1mm}<\hspace{-1mm}v_0^X,\hspace{-1mm}\nabla_{\hspace{-1mm}\ddtatzero}\hspace{-4.5mm}\d\varphi_tX\hspace{-1mm}>\\
~&=&<\nabla_{\ddtatzero}v_t^X,\d\varphi_0X>.\end{array}\]\addtolength{\arraycolsep}{1.0mm}\noindent

(ii) $<\nabla_{\ddtatzero}J_{\psi_t}u_t^X,\d\varphi_0JX>=-<\nabla_{\ddtatzero}v_t^X,\d\varphi_0 JX>$.

The argument is similar to the one in (i).

(iii)
$<\nabla_{\ddtatzero}J_{\psi_t}u_t^X,u_0^X>=-<\nabla_{\ddtatzero}v_t^X,u_0^X>$.\addtolength{\arraycolsep}{-1.0mm}

In fact,
\[\begin{array}{rll}
<\nabla_{\ddtatzero}J_{\psi_t}u_t^X,u_0^X>&=&\ddtatzero<J_{\psi_t}u_t^X,u_t^X>-<J_{\psi_0}u_0^X,\nabla_{\ddtatzero}u_t^X>\\
~&=&\ddtatzero<v_t^X,u_t^X>-<\nabla_{\ddtatzero}v_t^X,u_0^X>\\
~&=&\text{~(using~\eqref{Equation:FourthEquationInTheProofOf:Lemma:FirstAuxiliarLemmaToProve:Theorem:J1HolomorphicToFirstOrderLiftsInTheFourDimensionalCase})~}-<\nabla_{\ddtatzero}v_t^X,u_0^X>\text{.}
\end{array}\]\addtolength{\arraycolsep}{1.0mm}\noindent

(iv) Finally, let us prove $<\nabla_{\ddtatzero}J_{\psi_t}u_t^X,v_0^X>=-<\nabla_{\ddtatzero}v_t^X,v_0^X>$.\addtolength{\arraycolsep}{-1.0mm}

Indeed,
\[\begin{array}{lll}
<\nabla_{\ddtatzero}J_{\psi_t}u_t^X,v_0^X>&=&-\frac{1}{2}\ddtatzero<J_{\psi_t}u_t^X,J_{\psi_t}u_t^X>=-\frac{1}{2}\ddtatzero<u_t^X,u_t^X>\\
~&=&-\frac{1}{2}\ddtatzero<v_t^X,v_t^X>=-<\nabla_{\ddtatzero}v_t^X,v_0^X>,
\end{array}\]\addtolength{\arraycolsep}{1.0mm}\noindent
concluding our proof.
\end{proof}

\begin{lemma}\label{Lemma:SecondAuxiliarLemmaToProve:Theorem:J1HolomorphicToFirstOrderLiftsInTheFourDimensionalCase}
Let $\varphi$ be as in the preceding Theorem \ref{Theorem:J1HolomorphicToFirstOrderLiftsInTheFourDimensionalCase}. Then
\begin{equation}
\label{Equation:EquationIn:Lemma:SecondAuxiliarLemmaToProve:Theorem:J1HolomorphicToFirstOrderLiftsInTheFourDimensionalCase}
\nabla_{\ddtatzero}\partial_z^3\varphi_t=\nabla_{\ddtatzero}\{a_t\partial_z\varphi_t+b_t\partial^2_z\varphi_t\}
\end{equation}
for some $a_t$, $b_t$.
\end{lemma}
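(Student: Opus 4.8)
The plan is to prove the decomposition \eqref{Equation:EquationIn:Lemma:SecondAuxiliarLemmaToProve:Theorem:J1HolomorphicToFirstOrderLiftsInTheFourDimensionalCase} by combining the ordinary (non-parametric) real isotropy of $\varphi_0$ with the first-order information coming from the fact that $\varphi$ is real isotropic to first order, in the spirit of Lemma \ref{Lemma:FundamentalTrickWithTheCurvatureAndFirstOrderVariations}. First I would record the non-parametric content: since $\varphi_0$ is real isotropic and $N^4$ is four-dimensional, the vectors $\partial_z\varphi_0$ and $\partial_z^2\varphi_0$ span the (two-complex-dimensional) isotropic subspace $T^{10}_{J_{\psi_0}}N$, and $\partial_z^3\varphi_0$ must again lie in this same isotropic subspace by the isotropy relations $\langle\partial_z^3\varphi_0,\partial_z^s\varphi_0\rangle=0$ (together with the fact that a maximal isotropic subspace in $\cn^4$ has complex dimension $2$). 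Hence there exist functions $a_0,b_0$ with $\partial_z^3\varphi_0=a_0\partial_z\varphi_0+b_0\partial_z^2\varphi_0$; this gives the lemma at $t=0$ and fixes the zeroth-order part of the equation.

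Next I would extend this pointwise linear dependence to the derivative in $t$. The natural route is to choose a smooth twistor lift $\psi$ compatible to first order with $\varphi$ (produced by Theorem \ref{Theorem:J1HolomorphicToFirstOrderLiftsInTheFourDimensionalCase}, whose proof these lemmas support, or directly by Lemma \ref{Lemma:LiftsOfConformalToFirstOrderDifferFromHolomorphyByJustAnO(t)Vector}), so that for each $t$ the three vectors $\partial_z\varphi_t$, $\partial_z^2\varphi_t$, $\partial_z^3\varphi_t$ all lie in $\varphi_t^{-1}(T^{10}_{J_{\psi_t}}N)$ — the key point being that $T^{10}_{J_{\psi_t}}N$ is a smoothly varying isotropic $2$-plane, and real isotropy to first order forces these iterated $\partial_z$-derivatives into it up to terms that are $o(t)$. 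Concretely, I would define $a_t,b_t$ for small $t$ by solving the (invertible, since $\partial_z\varphi_0,\partial_z^2\varphi_0$ are linearly independent near $z_0$) linear system expressing the component of $\partial_z^3\varphi_t$ in the plane spanned by $\partial_z\varphi_t,\partial_z^2\varphi_t$, and then show that the residual vector $w_t:=\partial_z^3\varphi_t-a_t\partial_z\varphi_t-b_t\partial_z^2\varphi_t$ satisfies $w_0=0$ and $\nabla_{\ddtatzero}w_t=0$. Establishing $\nabla_{\ddtatzero}w_t=0$ is exactly the content of \eqref{Equation:EquationIn:Lemma:SecondAuxiliarLemmaToProve:Theorem:J1HolomorphicToFirstOrderLiftsInTheFourDimensionalCase}.

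To show the residual vanishes to first order I would test $\nabla_{\ddtatzero}w_t$ against the basis $\{\partial_z\varphi_0,\partial_{\bar z}\varphi_0,\dots\}$ (equivalently against a frame adapted to $\d\varphi_0(TM)$ and its complement, as in Lemma \ref{Lemma:AdaptedAlmostHermitianStructuresToRealIsotropicMaps}), using the real-isotropy-to-first-order relations $\ddtatzero\langle\partial_z^r\varphi_t,\partial_z^s\varphi_t\rangle=0$ to kill the inner products with the isotropic directions, and using $w_0=0$ to discard the terms where the covariant derivative lands on the test vector. The relation $\ddtatzero\langle w_t,\partial_z^s\varphi_t\rangle=0$ rewrites as $\langle\nabla_{\ddtatzero}w_t,\partial_z^s\varphi_0\rangle=0$ for all relevant $s$, and a parallel argument pairs $w_t$ against directions conjugate-isotropic to those, so that $\nabla_{\ddtatzero}w_t$ is orthogonal to a full basis of $T_{\varphi_0}N$. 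The main obstacle I anticipate is the bookkeeping of the curvature term that appears when commuting $\nabla_{\ddtatzero}$ past $\nabla_{\partial_z}$: exactly as in the proof of Lemma \ref{Lemma:FundamentalTrickWithTheCurvatureAndFirstOrderVariations}, one must invoke that $[\ddtatzero,\partial_z]=0$ and the antisymmetry of $R^N$ in its first two slots to ensure the curvature contributions cancel and do not obstruct membership of $\nabla_{\ddtatzero}\partial_z^3\varphi_t$ in the span of $\nabla_{\ddtatzero}(a_t\partial_z\varphi_t+b_t\partial_z^2\varphi_t)$; getting this cancellation clean, together with carefully tracking that $a_t,b_t$ need not themselves be differentiated in $t$ because they multiply vectors whose relevant inner products are controlled, is where the real work lies.
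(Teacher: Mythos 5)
Your core strategy coincides with the paper's: expand $\partial_z^3\varphi_t$ in the moving frame $\{\partial_z\varphi_t,\partial_z^2\varphi_t,\overline{\partial_z\varphi_t},\overline{\partial_z^2\varphi_t}\}$ (a basis of $T^{\cn}N^4$ near $(0,z_0)$, thanks to the linear-independence hypothesis and the isotropy of $\varphi_0$), note that the off-plane component $w_t$ vanishes at $t=0$, and kill its $t$-derivative by pairing against $\partial_z\varphi_0$ and $\partial_z^2\varphi_0$ via the first-order isotropy relations. The paper does exactly this and nothing more: writing $\partial_z^3\varphi_t=a_t\partial_z\varphi_t+b_t\partial_z^2\varphi_t+c_t\overline{\partial_z\varphi_t}+d_t\overline{\partial_z^2\varphi_t}$ with $c_0=d_0=0$, the two isotropic pairings give a linear system in $(\dot c_0,\dot d_0)$ with matrix
\begin{equation*}
\begin{pmatrix}\|\partial_z\varphi_0\|^2 & \langle\overline{\partial_z^2\varphi_0},\partial_z\varphi_0\rangle\\[1mm] \langle\overline{\partial_z\varphi_0},\partial_z^2\varphi_0\rangle & \|\partial_z^2\varphi_0\|^2\end{pmatrix}
\end{equation*}
whose determinant is nonzero by the equality case of Cauchy--Schwarz together with linear independence, whence $\dot c_0=\dot d_0=0$, which is \eqref{Equation:EquationIn:Lemma:SecondAuxiliarLemmaToProve:Theorem:J1HolomorphicToFirstOrderLiftsInTheFourDimensionalCase}. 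Two of your detours should be dropped. First, no twistor lift appears anywhere in the paper's proof; invoking Theorem \ref{Theorem:J1HolomorphicToFirstOrderLiftsInTheFourDimensionalCase} here would be circular (its proof rests on this lemma), and in fact nothing in your final paragraph uses the lift. Second, the curvature commutation you anticipate never arises: the identities $\ddtatzero\langle\partial_z^r\varphi_t,\partial_z^s\varphi_t\rangle=0$ are hypotheses of the lemma, not things to be re-derived by commuting $\nabla_{\ddtatzero}$ past $\nabla_{\partial_z}$; that trick belongs to Lemma \ref{Lemma:FundamentalTrickWithTheCurvatureAndFirstOrderVariations} and to the proof of the theorem, not here.

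The genuinely shaky step is the sentence ``a parallel argument pairs $w_t$ against directions conjugate-isotropic to those.'' Real isotropy to first order controls only the bilinear products $\langle\partial_z^r\varphi_t,\partial_z^s\varphi_t\rangle$ and says nothing about the Hermitian products $\langle\partial_z^r\varphi_t,\overline{\partial_z^s\varphi_t}\rangle$ --- indeed $\langle\partial_z\varphi_t,\overline{\partial_z\varphi_t}\rangle=\|\partial_z\varphi_t\|^2\neq0$ --- so there is no ``parallel'' isotropy argument for the conjugate directions; extracting exactly this Hermitian information is the whole difficulty of the lemma. The gap is closable, but by the structure of $w_t$ rather than by isotropy of $\varphi$: choose $a_t,b_t$ so that $w_t$ lies in $\wordspan\{\overline{\partial_z\varphi_t},\overline{\partial_z^2\varphi_t}\}$ (equivalently, take the Hermitian-orthogonal projection, so that $\langle w_t,\overline{\partial_z^s\varphi_t}\rangle\equiv0$ by construction). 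Then $w_0=0$ forces $\nabla_{\ddtatzero}w_t$ to lie in $\wordspan\{\overline{\partial_z\varphi_0},\overline{\partial_z^2\varphi_0}\}$, which is an isotropic plane, so its pairings with the conjugate directions vanish for free; combined with the two isotropic pairings, your ``orthogonal to a full basis, hence zero'' conclusion becomes correct, and is then just a repackaging of the paper's Gram-determinant computation. As written, however, the decisive step is left unjustified, and its most natural reading (an isotropy relation for Hermitian pairings) is false.
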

\begin{proof}
We know that $\partial_z\varphi_t$, $\partial^2_z\varphi_t$, $\overline{\partial_z\varphi_t}$, $\overline{\partial^2_z\varphi_t}$ span $T^{\cn}N^4$. Hence, there are $a_t,b_t,c_t$ and $d_t$ with
\[\partial_z^3\varphi_t=a_t\partial_z\varphi_t+b_t\partial^2_z\varphi_t+c_t\overline{\partial_z\varphi_t}+d_t\overline{\partial^2_z\varphi_t}\]
where $c_0=d_0=0$ since $\partial_z^3\varphi_0\in\wordspan\{\partial_z\varphi_0,\partial^2_z\varphi_0\}=T^{10}_{J_{\psi_0}}N$.
Therefore,\addtolength{\arraycolsep}{-1.0mm}
\[\begin{array}{lll}
\nabla_{\ddtatzero}\partial_z^3\varphi_t&=&a_0\nabla_{\ddtatzero}\partial_z\varphi_t+\left.\frac{\partial{a_t^X}}{\partial{t}}\right|_{t=0}\partial_z\varphi_0+b_0\nabla_{\ddtatzero}\partial^2_z\varphi_t+\left.\frac{\partial{b_t^X}}{\partial{t}}\right|_{t=0}\partial^2_z\varphi_0+\\
~&~&+\left.\frac{\partial{c_t^X}}{\partial{t}}\right|_{t=0}\overline{\partial_z\varphi_0}+\left.\frac{\partial{a_t^X}}{\partial{t}}\right|_{t=0}\overline{\partial^2_z\varphi_0}\text{.}\end{array}\]\addtolength{\arraycolsep}{1.0mm}\noindent
Now, using the fact that $\varphi$ is real isotropic to first order,
we have
\[<\nabla_{\ddtatzero}\partial_z^3\varphi_t,\partial_z\varphi_0>=-<\partial_z^3\varphi_0,\nabla_{\ddtatzero}\partial_z\varphi_t>\]
which implies\addtolength{\arraycolsep}{-1.0mm}
\[\begin{array}{ll}~&a_0<\nabla_{\ddtatzero}\hspace{-1mm}\partial_z\varphi_t,\partial_z\varphi_0>+\left.\frac{\partial{a_t^X}}{\partial{t}}\right|_{t=0}\hspace{-1mm}<\partial_z\varphi_0,\partial_z\varphi_0>+b_0<\nabla_{\ddtatzero}\partial^2_z\varphi_t,\partial_z\varphi_0>+\\
~&+\left.\frac{\partial{b_t^X}}{\partial{t}}\right|_{t=0}<\partial^2_z\varphi_0,\partial_z\varphi_0>+\left.\frac{\partial{c_t^X}}{\partial{t}}\right|_{t=0}<\overline{\partial_z\varphi_0},\partial_z\varphi_0>+\left.\frac{\partial{d_t^X}}{\partial{t}}\right|_{t=0}<\overline{\partial^2_z\varphi_0},\partial_z\varphi_0>\\
=&-<a_0\partial_z\varphi_0+b_0\partial^2_z\varphi_0,\nabla_{\ddtatzero}\partial_z\varphi_t>\text{.}\end{array}\]\addtolength{\arraycolsep}{1.0mm}\noindent
Since \[<\nabla_{\ddtatzero}\partial_z\varphi_t,\partial_z\varphi_0>=<\partial^2_z\varphi_0,\partial_z\varphi_0>=<\partial_z\varphi_0,\partial_z\varphi_0>=0\]
and
\[<\nabla_{\ddtatzero}\partial^2_z\varphi_t,\partial_z\varphi_0>=-<\partial^2_z\varphi_0,\nabla_{\ddtatzero}\partial_z\varphi_t>\text{,}\]
we deduce
\begin{equation}\label{Equation:FirstEquationInTheProofOf:Lemma:SecondAuxiliarLemmaToProve:Theorem:J1HolomorphicToFirstOrderLiftsInTheFourDimensionalCase}
\left.\frac{\partial{c_t^X}}{\partial{t}}\right|_{t=0}\|\partial_z\varphi_0\|^2+\left.\frac{\partial{d_t^X}}{\partial{t}}\right|_{t=0}<\overline{\partial^2_z\varphi_0},\partial_z\varphi_0>=0\text{.}
\end{equation}
Similarly, from
\[<\nabla_{\ddtatzero}\partial_z^3\varphi_t,\partial^2_z\varphi_0>=-<\partial_z^3\varphi_0,\nabla_{\ddtatzero}\partial^2_z\varphi_t>\]
we have
\begin{equation}\label{Equation:SecondEquationInTheProofOf:Lemma:SecondAuxiliarLemmaToProve:Theorem:J1HolomorphicToFirstOrderLiftsInTheFourDimensionalCase}
\left.\frac{\partial{c_t^X}}{\partial{t}}\right|_{t=0}<\overline{\partial_z\varphi_0},\partial^2_z\varphi_0>+\left.\frac{\partial{d_t^X}}{\partial{t}}\right|_{t=0}\|\partial^2_z\varphi_0\|^2=0\text{.}
\end{equation}
Writing $\lambda=\left.\frac{\partial{c_t^X}}{\partial{t}}\right|_{t=0}$,
$\beta=\left.\frac{\partial{d_t^X}}{\partial{t}}\right|_{t=0}$ and
$r=<\partial_z\varphi_0,\overline{\partial^2_z\varphi_0}>$, \eqref{Equation:FirstEquationInTheProofOf:Lemma:SecondAuxiliarLemmaToProve:Theorem:J1HolomorphicToFirstOrderLiftsInTheFourDimensionalCase} and
\eqref{Equation:SecondEquationInTheProofOf:Lemma:SecondAuxiliarLemmaToProve:Theorem:J1HolomorphicToFirstOrderLiftsInTheFourDimensionalCase} imply that
\[\left\{\begin{array}{l}\lambda\|\partial_z\varphi_0\|^2+\beta r=0\\
\lambda \bar{r}+\beta\|\partial^2_z\varphi_0\|^2=0\end{array}\right.\text{which~give~}
\left\{\begin{array}{l}\lambda\|\partial_z\varphi_0\|^2\bar{r}+\beta\|r\|^2=0\\
\lambda\|\partial_z\varphi_0\|^2\bar{r}+\beta\|\partial^2_z\varphi_0\|^2\|\partial_z\varphi_0\|^2=0\end{array}\right.\]
and imply
$\beta\big(\|r\|^2-\|\partial_z\varphi_0\|^2\|\partial^2_z\varphi_0\|^2\big)=0$; consequently, $\beta=0\quad\text{or}\quad\|r\|^2=\|\partial_z\varphi_0\|^2\|\partial^2_z\varphi_0\|^2$.
If $\|r\|^2=\|\partial_z\varphi_0\|^2\|\partial^2_z\varphi_0\|^2$ then, from
$\|<\partial_z\varphi_0,\overline{\partial^2_z\varphi_0}>\|=\|\partial_z\varphi_0\|\|\partial^2_z\varphi_0\|$,
we could deduce that $\partial^2_z\varphi_0$ lies in
$\wordspan\{\partial_z\varphi_0\}$, which we are assuming as
false. Therefore, $\beta=\left.\frac{\partial{d}^X_t}{\partial{t}}\right|_{t=0}=0$ and,
consequently also $\lambda=0$ (from the first equation above).
Thus, \eqref{Equation:EquationIn:Lemma:SecondAuxiliarLemmaToProve:Theorem:J1HolomorphicToFirstOrderLiftsInTheFourDimensionalCase}
holds, as wanted.
\end{proof}

We are finally ready to prove Theorem \ref{Theorem:J1HolomorphicToFirstOrderLiftsInTheFourDimensionalCase}:

\noindent\textit{Proof of Theorem \ref{Theorem:J1HolomorphicToFirstOrderLiftsInTheFourDimensionalCase}.}
As before, take $\psi^+_0$ or $\psi^-_0$ the $\J^1$-holomorphic lift of $\varphi_0$. Assume, without loss of generality, that it is $\psi^+_0$. Then, at each $t$ take $J_{\psi_t}$ the unique positive almost Hermitian structure compatible with $\varphi_t$ and let us prove that this map $\psi$ is $\J^1$-holomorphic to first order. Using Lemma
\ref{Lemma:LiftsOfConformalToFirstOrderDifferFromHolomorphyByJustAnO(t)Vector} $\varphi$ is $J_{\psi}$ holomorphic to first order and we are left with proving that \eqref{Equation:SecondEquationIn:Lemma:AnotherCharacterizationOfJ1AndJ2HolomorphicMapsToFirstOrder}
holds. It is enough to prove that there is a basis $\{Y^{10}_{1_t},Y^{10}_{2_t}\}$ of $\varphi^{-1}_t(T^{10}_{J_{\psi_t}}N)$ for which \eqref{Equation:SecondEquationIn:Lemma:AnotherCharacterizationOfJ1AndJ2HolomorphicMapsToFirstOrder} holds. Now, take $Y^{10}_{1_t}=\d\varphi_t X-iJ_{\psi_t}\d\varphi_t X$ and
$Y^{10}_{2_t}=u_t^X-iJ_{\psi_t}u_t^X$ where $u_t^X$ is as in
\eqref{Equation:FirstEquationIn:Lemma:FirstAuxiliarLemmaToProve:Theorem:J1HolomorphicToFirstOrderLiftsInTheFourDimensionalCase}.
Then,\addtolength{\arraycolsep}{-1.0mm}
\[\begin{array}{rll}
\nabla_{\ddtatzero}\nabla_{\partial_z}Y^{10}_{1_t}&=&R(\left.\frac{\partial\varphi_t}{\partial{t}}\right|_{t=0},\partial_z\varphi_t)Y^{10}_{1_0}+\nabla_{\partial_z}\hspace{-0.5mm}\nabla_{\ddtatzero}\hspace{-1mm}(\d\varphi_tX-iJ_{\psi_t}\d\varphi_tX)\\
~&=&\nabla_{\ddtatzero}\nabla_{\partial_z}\big(\d\varphi_tX-i\d\varphi{JX}\big)=\nabla_{\ddtatzero}\big(u_t^X+iv_t^X\big)\\
~&=&\nabla_{\ddtatzero}(u_t^X-iJ_{\psi_t}u_t^X)=\nabla_{\ddtatzero}Y^{10}_{2_t}\text{.}\end{array}\]\addtolength{\arraycolsep}{1.0mm}\noindent
Analogously,\addtolength{\arraycolsep}{-1.0mm}
\[\begin{array}{rll}~&\nabla_{\ddtatzero}\nabla_{\partial_z}Y^{10}_{2_t}=R(\left.\frac{\partial\varphi_t}{\partial{t}}\right|_{t=0},\partial_z\varphi_0)Y^{10}_{2_0}+\nabla_{\partial_z}\nabla_{\ddtatzero}\big(u_t^X-iJ_{\psi_t}u_t^X\big)\\
=&R(\left.\frac{\partial\varphi_t}{\partial{t}}\right|_{t=0},\partial_z\varphi_0)\partial^2_z\varphi_0+R(\partial_z\varphi_0,\left.\frac{\partial\varphi_t}{\partial{t}}\right|_{t=0})(u_0^X+iv_0^X)+\nabla_{\ddtatzero}\nabla_{\partial_z}(u_t^X+iv_t^X)\\
=&\nabla_{\ddtatzero}(a\partial_z\varphi+b\partial^2_z\varphi_t)=\frac{\partial{a}}{\partial{t}}Y^{10}_{1_0}+\frac{\partial{b}}{\partial{t}}Y^{10}_{2_0}+a\nabla_{\ddtatzero}\partial_z\varphi_t+\nabla_{\ddtatzero}\partial^2_z\varphi_t\\
=&\frac{\partial{a}}{\partial{t}}Y^{10}_{1_0}+\frac{\partial{b}}{\partial{t}}Y^{10}_{2_0}+a\nabla_{\ddtatzero}Y^{10}_{1_t}+\nabla_{\ddtatzero}Y^{10}_{2_t}=\nabla_{\ddtatzero}\big(aY^{10}_{1_t}+bY^{10}_{2_t}\big)\text{,}\end{array}\]\addtolength{\arraycolsep}{1.0mm}\noindent
where we have used $Y^{10}_{2_0}=\partial^2_z\varphi_0$, $\nabla_{\ddtatzero}J_{\psi_t}u_t^X=\nabla_{\ddtatzero}v^X_t$, $u_0^X+iv_0^X=\partial^2_z\varphi_0$ and $\nabla_{\partial_z}(u_t^X+iv_t^X)=\partial_z^3\varphi_t$. Hence, $Y^{10}_{1_t}$ and $Y^{10}_{2_t}$ satisfy equation \eqref{Equation:SecondEquationIn:Lemma:AnotherCharacterizationOfJ1AndJ2HolomorphicMapsToFirstOrder}, concluding our proof.\qed


\subsection{The $\J^2$-holomorphic case}


We prove the following:

\begin{theorem}\label{Theorem:ProjectionsOfJ2HolomorphicToFirstOrderMaps}\index{$\J^2$-holomorphic!to~first~order!projection} Let $\psi:I\times M^2\to\Sigma^+ N$ be a map $\J^2$-holomorphic to first order. Then, $\varphi=\pi\circ\psi:I\times M^2\to N$ is harmonic to first order\footnote{and conformal to first order from Lemma \ref{Lemma:JHolomorphicityToFirstOrderIffBothHAndVHolomorphicityToFirstOrderHold} and Proposition
\ref{Proposition:ProjectionsAndLiftsOfConformalToFirstOrder}.}.
\end{theorem}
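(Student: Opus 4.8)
The plan is to mirror exactly the structure already used for the non-parametric result (Theorem \ref{Theorem:SalamonTheorem3.5}), lifting each step to its ``to first order'' analogue using the machinery developed in this section. The overall strategy follows the decomposition of the tension field used in the proof of Proposition \ref{Proposition:LichnerowiczProposition}: we write $\tau(\varphi_t)$ in terms of $J_{\psi_t}$ and covariant derivatives along $\varphi_t$, then show that applying $\nabla_{\ddtatzero}$ annihilates it. First I would record that since $\psi$ is $\J^2$-holomorphic to first order, Lemma \ref{Lemma:JHolomorphicityToFirstOrderIffBothHAndVHolomorphicityToFirstOrderHold} guarantees that $\psi$ is in particular $\H$-holomorphic to first order, so by Lemma \ref{Lemma:HHolomorphicityToFirstOrderImpliesHolomorphicityToFirstOrderOfTheProjectedMap} the projected map $\varphi$ is $(J^M,J_\psi)$-holomorphic to first order; this is the first-order analogue of the statement that $\varphi$ is holomorphic, used throughout the non-parametric proof.

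Next I would reproduce the two algebraic steps of the non-parametric proof. The first step, $\{J^N(\trace_g\nabla^{\varphi^{-1}}J^N)\}^{01}=4\{\sum_j\nabla_{\overline{Z}_j}\d\varphi Z_j\}^{01}$, is a pointwise identity that continues to hold fibrewise for each $\varphi_t$ with $J_\psi$ in place of $J^N$, exactly as in the first step of Theorem \ref{Theorem:SalamonTheorem3.5}; I would then apply $\nabla_{\ddtatzero}$ to the decomposition of $\tau(\varphi_t)$ given by the second step,
\[\tau(\varphi_t)=J_{\psi_t}(\trace_g\nabla^{\varphi^{-1}}J_{\psi_t})-\d\varphi_t(J^M\trace\nabla^M J^M)\text{.}\]
Since $M^2$ is a Riemann surface it is K\"ahler, hence cosymplectic, so $\trace\nabla^M J^M=0$ and the second term vanishes identically in $t$; thus $\nabla_{\ddtatzero}\tau(\varphi_t)=\nabla_{\ddtatzero}\{J_{\psi_t}(\trace_g\nabla^{\varphi^{-1}}J_{\psi_t})\}$, and it suffices to show this has vanishing $(0,1)$-part (reality then forcing it to vanish, exactly as in the fourth step of the non-parametric argument).

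The crux, which is the first-order version of the ``fundamental step'' $\d_2\omega_\sigma=0$, is to show $\nabla_{\ddtatzero}\{\nabla_{\overline{Z}_j}^{\varphi^{-1}TN}\d\varphi_t Z_j\}$ lies in $(\varphi^{-1}TN)^{10}$ at $t=0$, i.e.\ has no $(0,1)$-part. Here I would invoke Lemma \ref{Lemma:AnotherCharacterizationOfJ1AndJ2HolomorphicMapsToFirstOrder} for the case $a=2$: $\J^2$-holomorphicity to first order gives, for every $Y_t^{10}\in\varphi_t^{-1}(T^{10}_{J_{\psi_t}}N)$, a field $Z_t^{10}\in\varphi_t^{-1}(T^{10}_{J_{\psi_t}}N)$ with $\nabla_{\partial_{\bar z}}Y_0^{10}=Z_0^{10}$ and $\nabla_{\ddtatzero}\nabla_{\partial_{\bar z}}Y_t^{10}=\nabla_{\ddtatzero}Z_t^{10}$. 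Taking $Y_t^{10}=\d\varphi_t Z_j$ (which is $(1,0)$ since $\varphi$ is $(J^M,J_\psi)$-holomorphic to first order) and recalling $\overline{Z}_j\in T^{01}M$, I would conclude $\nabla_{\ddtatzero}\nabla_{\overline{Z}_j}\d\varphi_t Z_j=\nabla_{\ddtatzero}Z_t^{10}$, so that the first-order variation stays $(1,0)$ and its $(0,1)$-part is zero. Summing over $j$, taking the $(0,1)$-part and using reality then yields $\nabla_{\ddtatzero}\tau(\varphi_t)=0$.

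I expect the main obstacle to be the careful bookkeeping in the crux step: the identity $\nabla_{\partial_{\bar z}}Y_0^{10}=Z_0^{10}$ only captures the $(1,0)$-containment pointwise at $t=0$, and one must verify that the inner-product manipulations of the first step commute properly with $\nabla_{\ddtatzero}$. The reality argument at the end is delicate because $\nabla_{\ddtatzero}\tau(\varphi_t)$ is a real vector field along $\varphi_0$, so showing its $(0,1)$-part vanishes does force it to vanish, but one must confirm that $\nabla_{\ddtatzero}$ of the real field $\tau(\varphi_t)$ genuinely equals the quantity whose $(0,1)$-part we controlled; this is where the conformality-to-first-order consequence (the footnote's appeal to Lemma \ref{Lemma:JHolomorphicityToFirstOrderIffBothHAndVHolomorphicityToFirstOrderHold} and Proposition \ref{Proposition:ProjectionsAndLiftsOfConformalToFirstOrder}) may be needed to keep the frame decomposition valid to first order.
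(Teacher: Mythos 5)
Your plan (complexify the non-parametric proof of Theorem \ref{Theorem:SalamonTheorem3.5} and control first $t$-derivatives at $t=0$) is genuinely different from the paper's proof, which stays with real vector fields: the paper applies the vertical-holomorphicity identity \eqref{Equation:FirstEquationIn:Lemma:ConditionForVHolomorphicity} to $\d\varphi_0X$, then kills the resulting right-hand side using Lemma \ref{Lemma:HHolomorphicityToFirstOrderImpliesHolomorphicityToFirstOrderOfTheProjectedMap}, symmetry of $\nabla\d\varphi_t$, and the curvature commutation of Remark \ref{Remark:FundamentalTrickWithTheCurvatureTensor}. However, your crux step has two genuine gaps, both coming from treating ``holomorphic to first order'' as if it gave holomorphicity of $\varphi_t$ for each fixed $t$. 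First, $Y_t^{10}=\d\varphi_tZ_j$ is not a legitimate input to Lemma \ref{Lemma:AnotherCharacterizationOfJ1AndJ2HolomorphicMapsToFirstOrder}: that lemma quantifies over families lying in $\varphi_t^{-1}(T^{10}_{J_{\psi_t}}N)$ \emph{for every} $t$, while $\d\varphi_tZ_j$ is $(1,0)$ only at $t=0$, since for $t\neq0$ the map $\varphi_t$ need not be $(J^M,J_{\psi_t})$-holomorphic. (The same objection hits your use, for every $t$, of the decomposition $\tau(\varphi_t)=J_{\psi_t}(\trace_{g}\nabla^{\varphi^{-1}}J_{\psi_t})-\d\varphi_t(J^M\trace\nabla^MJ^M)$, whose proof in the paper requires holomorphicity of $\varphi_t$.) The repair is to take $Y_t^{10}=\tfrac12(\d\varphi_tX-iJ_{\psi_t}\d\varphi_tX)$, which is $(1,0)$ for all $t$, equals $\d\varphi_0Z_j$ at $t=0$, and satisfies $\nabla_{\ddtatzero}Y_t^{10}=\nabla_{\ddtatzero}\d\varphi_tZ_j$ by Lemma \ref{Lemma:HHolomorphicityToFirstOrderImpliesHolomorphicityToFirstOrderOfTheProjectedMap}, and then to transfer this agreement through $\nabla_{\partial_{\bar z}}$ by the curvature trick, exactly as in Lemma \ref{Lemma:FundamentalTrickWithTheCurvatureAndFirstOrderVariations}.

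Second, and more seriously, your conclusion that $\nabla_{\ddtatzero}Z_t^{10}$ ``stays $(1,0)$'' is false in general: differentiating $J_{\psi_t}Z_t^{10}=iZ_t^{10}$ gives $(\nabla_{\ddtatzero}Z_t^{10})^{01}=\tfrac{1}{2i}\bigl(\nabla_{\ddtatzero}J_{\psi_t}\bigr)Z_0^{10}$, so the derivative of a $t$-family of $(1,0)$-vectors acquires a $(0,1)$-component precisely because $J_{\psi_t}$ itself moves. In the present situation this component does vanish, but only because $Z_0^{10}=\nabla_{\partial_{\bar z}}\partial_z\varphi_0=\tfrac14\tau(\varphi_0)=0$, i.e.\ because $\varphi_0$ is harmonic --- a fact you never establish (it follows from the non-parametric Theorem \ref{Theorem:SalamonTheorem3.5} applied to $\psi_0$, and it is needed on its own in any case, since ``harmonic to first order'' requires $\varphi_0$ harmonic in addition to $\ddtatzero\tau(\varphi_t)=0$). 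With these two repairs --- the correct $(1,0)$-family plus curvature commutation, and the explicit use of $\tau(\varphi_0)=0$ to kill the $(\nabla_{\ddtatzero}J_{\psi_t})$-term before invoking reality --- your complexified route does close, and it would constitute a legitimate alternative to the paper's real-variable computation.
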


Since harmonicity (to first order) does not depend on the orientation on $N$, we could replace $\Sigma^+N$ by $\Sigma^-N$.

\begin{proof}That $\varphi_0$ is harmonic follows from Theorem \ref{Theorem:SalamonTheorem3.5}. Hence, we are left with proving that $\ddtatzero\tau(\varphi_t)=0$. Since $\psi$ is $\J^2$-holomorphic to first order, we deduce that $\psi$ is both ($\J^2$) $\H$ and $\V$-holomorphic to first order (Lemma \ref{Lemma:JHolomorphicityToFirstOrderIffBothHAndVHolomorphicityToFirstOrderHold}). From vertical holomorphicity have (see \eqref{Equation:FirstEquationIn:Lemma:ConditionForVHolomorphicity}) \[\nabla_{\ddtatzero}\big(\nabla_{JX}J_{\psi_t}+J_{\psi_t}\nabla_X J_{\psi_t}\big)=0\]
so that\addtolength{\arraycolsep}{-1.0mm}
\[\begin{array}{ll}~&\{\nabla_{\ddtatzero}\big(\nabla_{JX}J_{\psi_t}+J_{\psi_t}\nabla_XJ_{\psi_t}\big)\}(\d\varphi_0X)=0\\
\impl&\nabla_{\ddtatzero}\hspace{-1mm}\{\nabla_{JX}(J_{\psi_t}\d\varphi_tX)-J_{\psi_t}\nabla_{JX}\d\varphi_tX+J_{\psi_t}\nabla_{X}(J_{\psi_t}\d\varphi_tX)+\nabla_X\d\varphi_tX\}=0\\
\impl&\nabla_{\ddtatzero}\hspace{-1mm}\{\underbrace{\nabla_{JX}\d\varphi_tJX+\nabla_X\d\varphi_tX}_{\text{\tiny{$=\tau(\varphi_t)$}}}\}\hspace{-0.2mm}=\hspace{-0.2mm}\nabla_{\ddtatzero}\hspace{-1mm}\{J_{\psi_t}\big(\nabla_{JX}\d\varphi_tX-\nabla_X(J_{\psi_t}\d\varphi_tX)\big)\}\\
\impl&\nabla_{\ddtatzero}\tau(\varphi_t)=\nabla_{\ddtatzero}\{J_{\psi_t}\big(\nabla_{JX}\d\varphi_tX-\nabla_X(J_{\psi_t}\d\varphi_tX)\big)\}\text{.}\end{array}\]\addtolength{\arraycolsep}{1.0mm}\noindent
Using Lemma
\ref{Lemma:HHolomorphicityToFirstOrderImpliesHolomorphicityToFirstOrderOfTheProjectedMap}
and equation
\eqref{Equation:SecondEquationIn:Lemma:LiftsOfConformalToFirstOrderDifferFromHolomorphyByJustAnO(t)Vector}
together with symmetry of the second fundamental form of $\varphi_0$, the right-hand side of the above identity becomes\addtolength{\arraycolsep}{-1.0mm}
\[\begin{array}{ll}~&(\nabla_{\ddtatzero}\hspace{-2.6mm}J_{\psi_t})(\nabla_{JX}\d\varphi_0X\hspace{-0.3mm}-\hspace{-0.3mm}\nabla_XJ_{\psi_0}\d\varphi_0X)\hspace{-0.3mm}+\hspace{-0.3mm}J_{\psi_0}\{\nabla_{\ddtatzero}\hspace{-1mm}(\nabla_{JX}\d\varphi_tX\hspace{-0.3mm}-\hspace{-0.3mm}\nabla_XJ_{\psi_t}\d\varphi_tX)\}\\
=&\nabla_{\ddtatzero}J_{\psi_t}\{\nabla\d\varphi_0(JX,X)-\nabla\d\varphi_0(X,JX)+\d\varphi_0(\nabla_{JX}X-\nabla_XJX)\}\\
~&+J_{\psi_0}\{\nabla_{\ddtatzero}\big(\nabla_{JX}\d\varphi_tX-\nabla_X\d\varphi_tJX\big)\}\\
=&J_{\psi_0}\{\nabla_{\ddtatzero}\big(\nabla\d\varphi_t(JX,X)-\nabla\d\varphi_t(X,JX)+\d\varphi_t(\nabla_{JX}X-\nabla_XJX)\big)\}=0\text{,}\end{array}\]\addtolength{\arraycolsep}{1.0mm}\noindent
so that $\ddtatzero\tau(\varphi_t)=0$, concluding the proof.
\end{proof}

\begin{theorem}\label{Theorem:J2HolomorphicLiftsToFirstOrder}\index{$\J^2$-holomorphic!to~first~order!lift}
Let $\varphi:I\times M^2\to N^{2n}$ be a map harmonic and conformal to first order and let $z_0\,\in\,M^2$. Assume that $\partial_z\varphi_0(z_0)\neq 0$. Then, reducing $I$ if necessary, there is an open set $\openU$ around $z_0$ and a map $\psi:I\times \openU\to\Sigma^+N$ which is $\J^2$-holomorphic to first order and with $\varphi=\pi\circ\psi$.
\end{theorem}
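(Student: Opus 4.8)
The plan is to carry out the first-order analogue of the proof of Theorem~\ref{Theorem:SalamonsTheorem4.1} (equivalently Corollary~\ref{Corollary:SalamonCorollary4.2}): build the lift from a \emph{parametric} Koszul--Malgrange frame for the normal bundle, and then verify $\J^2$-holomorphicity to first order through the characterization in Lemma~\ref{Lemma:AnotherCharacterizationOfJ1AndJ2HolomorphicMapsToFirstOrder}. First I would localize: since $\partial_z\varphi_0(z_0)\neq0$, the map $\varphi_0$ is a conformal immersion near $z_0$, so by continuity $\varphi_t$ is an immersion on a (smaller) neighbourhood $\openU$ of $z_0$ for $t$ in a (smaller) interval $I$. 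On $I\times\openU$ the normal bundle $V_{(t,z)}=(\d\varphi_t(T_zM))^\perp$ is then a smooth, oriented, even-rank Riemannian subbundle of $\varphi^{-1}TN$, with the induced metric and connection. As the base is a Riemann surface, $T^{01}M$ is a complex line and hence $R^{02}_{V_t}=R^{20}_{V_t}=0$ for every $t$; Proposition~\ref{Proposition:ExistenceOfHolomorphicOnFrames} and the parametric part of Theorem~\ref{Theorem:KoszulMalgrangeComplexStructureOnSigmaPlusV} then furnish a smooth family of Koszul--Malgrange holomorphic sections $s^{10}_{V_t}$ of $\Sigma^+V_t$, i.e. $\nabla^{V_t}_{\partial_{\bar z}}s^{10}_{V_t}\subseteq s^{10}_{V_t}$ \emph{exactly} for all $t$. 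I would choose $s^{10}_{V_0}$ so that the induced lift at $t=0$ is the $\J^2$-holomorphic lift of $\varphi_0$ produced by Corollary~\ref{Corollary:SalamonCorollary4.2}.

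Next I would assemble $\psi$. For each $t$ let $J_{\psi_t}$ be the positive Hermitian structure on $T_{\varphi_t}N$ that equals $s^{10}_{V_t}$ on $V_t$ and, on the $2$-plane $\d\varphi_t(TM)$, is the rotation by $+\pi/2$ for the induced metric and orientation; at $t=0$ conformality makes this the strictly compatible structure of \eqref{Equation:DefinitionOfTheHermitianStructureOnTvarphixNFromTheOneInVx}, so $\psi_0$ is the lift of the previous paragraph and $\psi=\eta\circ s^{10}_V$ is compatible to first order with $\varphi$. Conformality to first order together with $\partial_z\varphi_0(z_0)\neq0$ then lets me apply Lemma~\ref{Lemma:LiftsOfConformalToFirstOrderDifferFromHolomorphyByJustAnO(t)Vector}: $\varphi$ is holomorphic to first order with respect to $J_\psi$, which is exactly hypothesis \eqref{Equation:FirstEquationIn:Lemma:AnotherCharacterizationOfJ1AndJ2HolomorphicMapsToFirstOrder} of the $\J^2$-case of Lemma~\ref{Lemma:AnotherCharacterizationOfJ1AndJ2HolomorphicMapsToFirstOrder}, and which shows in particular that the honest tangential $(1,0)$-vector $\widetilde Y^{10}_t:=\d\varphi_tX-iJ_{\psi_t}\d\varphi_tX$ agrees with $\partial_z\varphi_t$ to first order, i.e. $\widetilde Y^{10}_0=\partial_z\varphi_0$ and $\nabla_{\ddtatzero}\widetilde Y^{10}_t=\nabla_{\ddtatzero}\partial_z\varphi_t$ as sections along $\varphi_0$ (using $a^X_0=1$, $v^X_0=0$, $\ddtatzero a^X_t=0$, $\nabla_{\ddtatzero}v^X_t=0$ from that lemma).

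It then remains to verify condition \eqref{Equation:ThirdEquationIn:Lemma:AnotherCharacterizationOfJ1AndJ2HolomorphicMapsToFirstOrder}, i.e. that $\nabla_{\partial_{\bar z}}$ preserves $T^{10}_{J_{\psi_t}}N=\mathrm{span}\{\widetilde Y^{10}_t\}\oplus s^{10}_{V_t}$ to first order. By the Leibniz rule it suffices to check this on the frame $\{\widetilde Y^{10}_t\}$ together with a frame of $s^{10}_{V_t}$. For the tangential generator, the curvature-commutation trick of Remark~\ref{Remark:FundamentalTrickWithTheCurvatureTensor} gives $\nabla_{\ddtatzero}\nabla_{\partial_{\bar z}}\widetilde Y^{10}_t=\nabla_{\ddtatzero}\nabla_{\partial_{\bar z}}\partial_z\varphi_t$ (both have the same value and $\nabla_{\ddtatzero}$ at $t=0$), and this vanishes by harmonicity to first order, since $\tau(\varphi_t)=4\lambda^{-2}\nabla_{\partial_{\bar z}}\partial_z\varphi_t$ with the conformal factor $\lambda$ of the fixed domain metric independent of $t$; moreover $\nabla_{\partial_{\bar z}}\widetilde Y^{10}_0=\nabla_{\partial_{\bar z}}\partial_z\varphi_0=0\in T^{10}$ by Proposition~\ref{Proposition:HarmonicMapsFromRiemannSurfaces}. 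For a normal generator $Y^{10}_t\in s^{10}_{V_t}$, the $T^{01}_{J_{\psi_t}}N$-part of $\nabla_{\partial_{\bar z}}Y^{10}_t$ splits into its $s^{01}_{V_t}$-component, which equals $(\nabla^{V_t}_{\partial_{\bar z}}Y^{10}_t)^{01}$ and vanishes identically by the Koszul--Malgrange property, and its $\overline{\mathrm{span}\{\widetilde Y^{10}_t\}}$-component, whose coefficient is read off from $\langle\nabla_{\partial_{\bar z}}Y^{10}_t,\widetilde Y^{10}_t\rangle=-\langle Y^{10}_t,\nabla_{\partial_{\bar z}}\widetilde Y^{10}_t\rangle$ (orthogonality of $Y^{10}_t$ to $\d\varphi_t(TM)$); the value and $\ddtatzero$ of this pairing vanish by the same curvature trick and harmonicity to first order, once conformality to first order ($\ddtatzero\langle\partial_z\varphi_t,\partial_z\varphi_t\rangle=0$) is used to decouple the tangential $(1,0)$ and $(0,1)$ coefficients. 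Setting $Z^{10}_t$ equal to $\nabla_{\partial_{\bar z}}Y^{10}_t$ minus its $T^{01}_{J_{\psi_t}}N$-part in each case supplies the sections required by \eqref{Equation:ThirdEquationIn:Lemma:AnotherCharacterizationOfJ1AndJ2HolomorphicMapsToFirstOrder}, and Lemma~\ref{Lemma:AnotherCharacterizationOfJ1AndJ2HolomorphicMapsToFirstOrder} yields that $\psi$ is $\J^2$-holomorphic to first order with $\varphi=\pi\circ\psi$.

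The hard part will be exactly this normal-generator computation. For $t\neq0$ the map $\varphi_t$ need not be conformal, so the tangential $(1,0)$-line of $J_{\psi_t}$ is \emph{not} $\mathrm{span}\{\partial_z\varphi_t\}$; the two coincide only to first order. Keeping precise track of ``agreement to first order'' — value plus $\nabla_{\ddtatzero}$ — while pairing against $\widetilde Y^{10}_t$, and in particular invoking conformality to first order at just the right place to kill the cross-term between the tangential $(1,0)$ and $(0,1)$ parts, is the delicate step; everything else is the bookkeeping already established in the preceding lemmas.
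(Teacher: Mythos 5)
Your construction is exactly the paper's: a parametric Koszul--Malgrange holomorphic section of $\Sigma^+V_t$ for the normal bundle $V_t=(\d\varphi_t(TM))^\perp$ (using $R^{02}_{V_t}=0$ over a Riemann surface), completed tangentially by rotation through $+\frac{\pi}{2}$ on $\d\varphi_t(TM)$, yielding a lift compatible to first order with $\varphi$. Your verification --- holomorphicity of $\varphi$ to first order with respect to $J_\psi$ via Lemma~\ref{Lemma:LiftsOfConformalToFirstOrderDifferFromHolomorphyByJustAnO(t)Vector}, plus first-order preservation of the $(1,0)$-space checked on a frame using the curvature-commutation trick and harmonicity to first order --- is a correct, equivalent repackaging (through Lemma~\ref{Lemma:AnotherCharacterizationOfJ1AndJ2HolomorphicMapsToFirstOrder}) of the paper's case-by-case verification of the vertical condition \eqref{Equation:FirstEquationIn:Lemma:ConditionForVHolomorphicity}, the only slight imprecision being that conformality to first order is not what decouples the tangential $(1,0)$ and $(0,1)$ coefficients (the vector $\d\varphi_tX-iJ_{\psi_t}\d\varphi_tX$ is automatically isotropic since $J_{\psi_t}$ is Hermitian); conformality to first order enters earlier, in establishing the cited lemma itself.
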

Once again, since harmonicity (to first order) does not depend on the orientation of $N$, we could replace $\Sigma^+N$ with $\Sigma^-N$.

\begin{proof}
For each $t$ consider $V_t=\d\varphi_t(TM)^{\perp}\subseteq\varphi_t^{-1}(TN)$, bundle over $M^2$. Since $M^2$ is a Riemann surface, $R^{20}_{V_t}=0$ and we can conclude that for each $t$ there is a Koszul-Malgrange holomorphic structure on $\Sigma^+V_t$. Moreover, Theorem \ref{Theorem:KoszulMalgrangeTheorem:ParametricVersion} guarantees the existence of a smooth section $s^{10}_V$ with $s^{10}_{V_t}$ a Koszul-Malgrange holomorphic section of
$\Sigma^+V_t$: $\nabla^\perp_{\partial_{\bar{z}}}
s^{10}_{V_t}\subseteq s^{10}_{V_t}$. So,
\[J_{\psi_t}(\nabla^\perp_{X+iJX}(v_t-iJ_{\psi_t}v_t))=i\nabla^\perp_{X+iJX}(v_t-iJ_{\psi_t}v_t),\quad\forall\,v_t\in\d\varphi_t(TM)^\perp\text{,}\]
equivalently,
\begin{equation}\label{Equation:FirstEquationInTheProofOf:Theorem:J2HolomorphicLiftsToFirstOrder}
\left\{\begin{array}{l}J_{\psi_t}(\nabla^{\perp}_Xv_t+\nabla^\perp_{JX}J_{\psi_t}v_t)=-\nabla^\perp_{JX}v_t+\nabla^\perp_XJ_{\psi_t}v_t\text{,}\\
J_{\psi_t}(\nabla^\perp_{JX}v_t-\nabla^\perp_XJ_{\psi_t}v_t)=\nabla^\perp_Xv_t+\nabla^\perp_{JX}J_{\psi_t}v_t\text{.}\end{array}\right.
\end{equation}
Take $s^{10}_{TN}=s^{10}_{V}\oplus T^{10,\top}$ where $T^{10,\top}_t$ is the $(1,0)$-part on $\d\varphi_t(TM)^{\cn}$ determined by $J^{\top}_t=\text{rotation~by~}+\frac{\pi}{2}$ on $\d\varphi_t(TM)$\footnote{Notice that as $\varphi_t$ is not conformal we might not get a Hermitian structure by setting $T^{10,\top}=\d\varphi_t(T^{10}M)$; on the other hand, positive rotation by $\pi/2$ comes from the natural orientation on $\d\varphi_t(TM)$ imported from $TM$ \textit{via} $\d\varphi_t$}. Then $s^{10}_{TN}$ defines a compatible (in the sense of \eqref{Equation:TwistorLiftCompatibleWithTheMapInTheVariationalCase})
twistor lift of $\varphi$. Let us check that $\psi$ is $\J^2$-holomorphic to first order. That $\psi_0$ is holomorphic is
immediate from the proof of Theorem \ref{Corollary:SalamonCorollary4.2}. From the proof of Proposition \ref{Proposition:ProjectionsAndLiftsOfConformalToFirstOrder}, we deduce that $\psi$ is $\H$-holomorphic to first order as it is compatible to first order with $\varphi$ and the latter is conformal to first order. Hence, using Corollary
\ref{Corollary:HorizontalAndVerticalHolomorphicityToFirstOrderImplyHolomorphicity},
we are left with proving that \eqref{Equation:FirstEquationIn:Lemma:ConditionForVHolomorphicity}:
\[\nabla_{\ddtatzero}\nabla_{JX}J_{\psi_t}=-\nabla_{\ddtatzero}J_{\psi_t}\nabla_XJ_{\psi_t}\]
holds. We shall establish this equation by showing that both sides agree when applied to any vector $v\in{TN}$. For that, we consider, in turn, the three cases $v=\d\varphi_0 X$, $\d\varphi_0JX$ and $v\in\d\varphi_0(TM)^\perp$.

(i) $v=\d\varphi_0 X$.

From $\psi_0$ holomorphicity, we have $\nabla_{JX}J_{\psi_0}=-J_{\psi_0}\nabla_X J_{\psi_0}$. On the other
hand, as $\psi$ is $\H$-holomorphic to first order, equations
\eqref{Equation:SecondEquationIn:Lemma:LiftsOfConformalToFirstOrderDifferFromHolomorphyByJustAnO(t)Vector}
and
\eqref{Equation:EquationIn:Remark:FundamentalTrickWithTheCurvatureTensor:TheFundamentalTrickWithTheCurvatureTensor}
are satisfied. Finally, for all $t$,
\[\nabla_{JX}\d\varphi_tX-\nabla_X\d\varphi_tJX=\nabla\d\varphi_t(JX,X)-\nabla\d\varphi_t(X,JX)+\d\varphi_t(\nabla_{JX}X-\nabla_XJX)=0\text{.}\]
Thus,
\[\begin{array}{ll}~&(\nabla_{\ddtatzero}\nabla_{JX}J_{\psi_t})\d\varphi_0X=-(\nabla_{\ddtatzero}J_{\psi_t}\nabla_XJ_{\psi_t})\d\varphi_0X\\
\equi&\nabla_{\ddtatzero}((\nabla_{JX}J_{\psi_t})\d\varphi_tX)-(\nabla_{JX}J_{\psi_0})(\nabla_{\ddtatzero}\d\varphi_tX)\\
~&=-\nabla_{\ddtatzero}(J_{\psi_t}(\nabla_XJ_{\psi_t})\d\varphi_tX)+J_{\psi_0}(\nabla_XJ_{\psi_0})(\nabla_{\ddtatzero}\d\varphi_tX)\\
\equi&\nabla_{\ddtatzero}\nabla_{JX}(J_{\psi_t}\d\varphi_tX)-\nabla_{\ddtatzero}(J_{\psi_t}\nabla_{JX}\d\varphi_tX)\\
~&=-\nabla_{\ddtatzero}(J_{\psi_t}\nabla_X(J_{\psi_t}\d\varphi_tX)-\nabla_{\ddtatzero}\nabla_X\d\varphi_tX\\
\equi&\nabla_{\ddtatzero}\nabla_{JX}\d\varphi_tJX+\nabla_{\ddtatzero}\nabla_X\d\varphi_tX\\
~&=(\nabla_{\ddtatzero}J_{\psi_t})(\nabla_{JX}\d\varphi_0X)+J_{\psi_0}(\nabla_{\ddtatzero}\nabla_{JX}\d\varphi_tX)\\
~&\quad-(\nabla_{\ddtatzero}J_{\psi_t})(\nabla_X(J_{\psi_0}\d\varphi_0X))-J_{\psi_0}(\nabla_{\ddtatzero}\nabla_X(J_{\psi_t}\d\varphi_tX)\\
~&=(\nabla_{\ddtatzero}J_{\psi_t})(\nabla_{JX}\d\varphi_0X-\nabla_X\d\varphi_0JX)\\
~&\quad+J_{\psi_0}(\nabla_{\ddtatzero}\nabla_{JX}\d\varphi_tX-\nabla_{\ddtatzero}\nabla_X\d\varphi_tJX)\\
\equi&\ddtatzero\tau(\varphi_t)=J_{\psi_0}\nabla_{\ddtatzero}(\nabla_{JX}\d\varphi_tX-\nabla_{X}\d\varphi_tJX)\,\equi\,\ddtatzero\tau(\varphi_t)=0\text{,}\end{array}\]
which is true from harmonicity to first order of $\varphi$.

(ii) $v=\d\varphi_0 JX$.

The argument is very similar to the preceding one.

(iii) $v\in\d\varphi_0(TM)^\perp$.

We have\addtolength{\arraycolsep}{-1.0mm}
\[\begin{array}{ll}~&(\nabla_{\ddtatzero}\nabla_{JX}J_{\psi_t})v=-(\nabla_{\ddtatzero}J_{\psi_t}\nabla_XJ_{\psi_t})v\\
\equi&\nabla_{\ddtatzero}\big((\nabla_{JX}J_{\psi_t})v\big)+(\nabla_{JX}J_{\psi_0})(\nabla_{\ddtatzero}v)\\
~&=-\nabla_{\ddtatzero}\big((J_{\psi_t}\nabla_XJ_{\psi_t})v\big)-(J_{\psi_0}\nabla_XJ_{\psi_0})(\nabla_{\ddtatzero}v)\\
\equi&\nabla_{\ddtatzero}\hspace{-1mm}\nabla_{JX}(J_{\psi_t}v)-\nabla_{\ddtatzero}\hspace{-1mm}(J_{\psi_t}\nabla_{JX}v)=-\nabla_{\ddtatzero}\hspace{-1mm}\big(J_{\psi_t}\nabla_X(J_{\psi_t}v)\big)-\nabla_{\ddtatzero}\hspace{-1mm}\nabla_Xv\text{.}\end{array}\]\addtolength{\arraycolsep}{1.0mm}\noindent
This equation is satisfied if the following two equations hold:
\begin{align}
\label{Equation:FirstEquationInTheThirdStepOfTheProofOf:Theorem:J2HolomorphicLiftsToFirstOrder}\nabla_{\ddtatzero}\hspace{-0.5mm}\nabla^\perp_{JX}(J_{\psi_t}v)-\nabla_{\ddtatzero}\hspace{-0.5mm}(J_{\psi_t}\nabla^\perp_{JX}v)+\nabla_{\ddtatzero}\hspace{-0.5mm}\big(J_{\psi_t}\nabla^\perp_X(J_{\psi_t}v)\big)+\nabla_{\ddtatzero}\hspace{-0.5mm}\nabla^\perp_Xv=0\text{,}\\
\label{Equation:SecondEquationInTheThirdStepOfTheProofOf:Theorem:J2HolomorphicLiftsToFirstOrder}\nabla_{\ddtatzero}\hspace{-0.5mm}\nabla^{\top}_{JX}(J_{\psi_t}v)-\nabla_{\ddtatzero}\hspace{-0.5mm}(J_{\psi_t}\nabla^{\top}_{JX}v)+\nabla_{\ddtatzero}\hspace{-0.5mm}\big(J_{\psi_t}\nabla^{\top}_X(J_{\psi_t}v)\big)+\nabla_{\ddtatzero}\hspace{-0.5mm}\nabla^{\top}_Xv=0\text{.}
\end{align}
Now, \eqref{Equation:FirstEquationInTheThirdStepOfTheProofOf:Theorem:J2HolomorphicLiftsToFirstOrder} is easy to check using the fact that $s^{10}$ is Koszul-Malgrange holomorphic for each $t$ so that \eqref{Equation:FirstEquationInTheProofOf:Theorem:J2HolomorphicLiftsToFirstOrder} holds. As for \eqref{Equation:SecondEquationInTheThirdStepOfTheProofOf:Theorem:J2HolomorphicLiftsToFirstOrder}, letting $Q(v)$ denote its left-hand side, we shall prove $<Q(v),w>=0$ for all $w\in{T}N$. Again, we do this by establishing the three cases $w=\d\varphi_0 X$, $w=\d\varphi_0 JX$ and $w\in\d\varphi_0(TM)^\perp$.

(iii$_\text{a}$) When $w=\d\varphi_0X$, we have\addtolength{\arraycolsep}{-1.0mm}
\[\begin{array}{lll}Q(v,\d\varphi_0X)&=&\ddtatzero<\nabla^{\top}_{JX}(J_{\psi_t}v_t)-J_{\psi_t}\nabla^{\top}_{JX}v_t+J_{\psi_t}\nabla^{\top}_X(J_{\psi_t}v_t)+\nabla^{\top}_Xv_t,\d\varphi_tX>\\
~&~&-<\nabla^{\top}_{JX}(J_{\psi_0}v)-J_{\psi_0}\nabla^{\top}_{JX}v+J_{\psi_t}\nabla^{\top}_X(J_{\psi_0}v)+\nabla^{\top}_Xv,\nabla_{\ddtatzero}\d\varphi_tX>\\
~&=&\ddtatzero(-<J_{\psi_t}v_t,\nabla_{JX}\d\varphi_tX>-<v_t,\nabla_X\d\varphi_tX>)\\
~&~&+\ddtatzero(-<v_t,\nabla_{JX}\d\varphi_t JX>+<J_{\psi_t}v_t,\nabla_X\d\varphi_tJX>)\\
~&=&-\ddtatzero<v_t,\nabla_X\d\varphi_tX+\nabla_{JX}\d\varphi_tJX>\\
~&~&+\ddtatzero<J_{\psi_t}v_t,\nabla_X\d\varphi_tJX-\nabla_{JX}\d\varphi_tX>\\
~&=&-<\nabla_{\ddtatzero}v_t,\tau(\varphi_0)>-<v_0,\nabla_{\ddtatzero}\tau(\varphi_t)>=0\text{,}\end{array}\]\addtolength{\arraycolsep}{1.0mm}\noindent
as required.

(iii$_\text{b}$) For $\d\varphi_0 (JX)$ the argument is similar to
the preceding one.

(iii$_\text{c}$) Let $w_t\in\d\varphi_t(TM)^\perp$. Then,\addtolength{\arraycolsep}{-1.0mm}
\[\begin{array}{lll}Q(v,w)&=&\ddtatzero<\nabla^{\top}_{JX}(J_{\psi_t}v)-J_{\psi_t}\nabla^{\top}_{JX}v+J_{\psi_t}\nabla^{\top}_X(J_{\psi_t}v)+\nabla^{\top}_Xv,w>\\
~&~&-<\nabla^{\top}_{JX}(J_{\psi_0}v)-J_{\psi_0}\nabla^{\top}_{JX}v+J_{\psi_0}\nabla^{\top}_X(J_{\psi_0}v)+\nabla^{\top}_Xv,\nabla_{\ddtatzero}w>\text{.}\end{array}\]\addtolength{\arraycolsep}{1.0mm}\noindent
The first term on the right side of the above equation vanishes as $w_t$ lies in $\d\varphi_t(TM)^\perp$, whereas the second is zero from $\psi_0$-holomorphicity, concluding our proof.
\end{proof}


\subsection{The $4$-dimensional case}


\begin{theorem}\label{Theorem:HorizontalHolomorphicLiftsUpToFirstOrderInThe4DimensionalCase}
Let $\varphi:I\times M^2\to N^4$ be harmonic and real isotropic to first order and let $z_0\in{M}^2$. Assume that $\partial_z\varphi_0(z_0)$ and $\partial^2_z\varphi_0(z_0)$ are linearly independent. Then, reducing $I$ if necessary, there is an open set $\openU$ around $z_0$ and either a map $\psi^+:I\times\U \to\Sigma^+N$ or a map $\psi^-:I\times\openU\to\Sigma^-N$ which is simultaneously $\J^1$ and $\J^2$-holomorphic to first order
and has $\varphi=\pi\circ\psi$. Conversely, if $\psi:I\times M^2\to\Sigma^{+}N^4$ (or $\psi:I\times M^2\to\Sigma^-N^4$) is $\J^1$ and $\J^2$-holomorphic to first order, the projected map $\varphi=\pi\circ\psi:I\times M^2\to N^4$ is harmonic and real isotropic to first order.
\end{theorem}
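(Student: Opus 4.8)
The plan is to assemble this final theorem from the first-order machinery developed throughout the chapter, treating the two directions separately but drawing on the same bank of lemmas. The statement is a biconditional relating a map $\varphi:I\times M^2\to N^4$ that is \emph{harmonic and real isotropic to first order} with the existence (or consequences) of a lift $\psi$ to $\Sigma^\pm N^4$ that is \emph{simultaneously $\J^1$ and $\J^2$-holomorphic to first order}. My strategy is to observe that this is a $4$-dimensional synthesis of Theorem \ref{Theorem:J1HolomorphicToFirstOrderLiftsInTheFourDimensionalCase} (the $\J^1$ construction) and Theorem \ref{Theorem:J2HolomorphicLiftsToFirstOrder} (the $\J^2$ construction), exactly as the non-parametric Theorem \ref{Theorem:J1HolomorphicLiftsInTheFourDimensionalCase} combines the $\J^1$ and $\J^2$ pictures using uniqueness of the compatible lift in dimension four.

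For the forward direction, I would first record that harmonicity to first order together with the $4$-dimensional codomain forces conformality to first order, and that by hypothesis $\varphi$ is already real isotropic to first order. Since $\partial_z\varphi_0(z_0)$ and $\partial_z^2\varphi_0(z_0)$ are linearly independent, I can invoke Theorem \ref{Theorem:J1HolomorphicToFirstOrderLiftsInTheFourDimensionalCase} to produce, on a neighbourhood $\openU$ of $z_0$ (after shrinking $I$), a lift $\psi^+$ to $\Sigma^+N$ (or $\psi^-$ to $\Sigma^-N$) which is $\J^1$-holomorphic to first order and compatible to first order with $\varphi$. The crucial point is that in dimension four the positive almost Hermitian structure $J_{\psi_t}$ compatible with $\varphi_t$ is \emph{unique} (Lemma \ref{Lemma:AdaptedAlmostHermitianStructuresToRealIsotropicMaps} and the discussion after Theorem \ref{Theorem:J1HolomorphicLiftsInTheFourDimensionalCase}), so the lift furnished by the $\J^1$ construction and the lift furnished by the $\J^2$ construction of Theorem \ref{Theorem:J2HolomorphicLiftsToFirstOrder} must coincide as maps into $\Sigma^+N$. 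Applying Theorem \ref{Theorem:J2HolomorphicLiftsToFirstOrder} to this same $\varphi$ (harmonic and conformal to first order) shows that the lift is also $\J^2$-holomorphic to first order. Thus the single lift $\psi^+$ is simultaneously $\J^1$ and $\J^2$-holomorphic to first order, with $\varphi=\pi\circ\psi$.

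The converse is the more direct half. Given $\psi$ that is both $\J^1$ and $\J^2$-holomorphic to first order with $\varphi=\pi\circ\psi$, I would apply Proposition \ref{Proposition:ProjectionsOfJ1HolomorphicToFirstOrderMapsAndRealIsotropy} to the $\J^1$-holomorphicity to conclude that $\varphi$ is real isotropic to first order, and Theorem \ref{Theorem:ProjectionsOfJ2HolomorphicToFirstOrderMaps} to the $\J^2$-holomorphicity to conclude that $\varphi$ is harmonic to first order. Both hold independently of the chosen orientation on $N$, so the statement is insensitive to whether we lifted to $\Sigma^+N$ or $\Sigma^-N$, as remarked after each of those results.

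The main obstacle, and the step demanding genuine care, is the matching argument in the forward direction: I must verify that the lift obtained from the $\J^1$ construction and the lift obtained from the $\J^2$ construction are \emph{the same} lift, not merely two lifts each holomorphic to first order in their respective structures. This rests on the four-dimensional uniqueness of the strictly compatible almost Hermitian structure given the isotropic plane $\wordspan\{\partial_z\varphi_0,\partial_z^2\varphi_0\}$, which pins down $J_{\psi_0}$ at $t=0$, combined with the fact that for each small $t$ the \emph{positive} structure $J_{\psi_t}$ compatible with $\varphi_t$ is itself unique; one then checks that both constructions single out this same $t$-family $J_{\psi_t}$. Once this identification is secured, the two first-order holomorphicity conclusions attach to one and the same $\psi$, and the theorem follows.
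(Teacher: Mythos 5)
Your proposal is correct and follows essentially the same route as the paper: the converse by citing Proposition \ref{Proposition:ProjectionsOfJ1HolomorphicToFirstOrderMapsAndRealIsotropy} and Theorem \ref{Theorem:ProjectionsOfJ2HolomorphicToFirstOrderMaps}, and the forward direction by running both lift constructions (Theorems \ref{Theorem:J1HolomorphicToFirstOrderLiftsInTheFourDimensionalCase} and \ref{Theorem:J2HolomorphicLiftsToFirstOrder}) and identifying the two lifts via the four-dimensional uniqueness of the compatible positive (or negative) structure. One small inaccuracy that is not load-bearing: conformality to first order does not follow from harmonicity to first order plus the four-dimensional codomain, but it does follow from the real-isotropy-to-first-order hypothesis (take $r=s=1$), which you also invoke, so the application of Theorem \ref{Theorem:J2HolomorphicLiftsToFirstOrder} is justified.
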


\begin{proof}
The converse is obvious from Theorems \ref{Proposition:ProjectionsOfJ1HolomorphicToFirstOrderMapsAndRealIsotropy}
and \ref{Theorem:ProjectionsOfJ2HolomorphicToFirstOrderMaps}. As for the first part, in Theorem
\ref{Theorem:J1HolomorphicToFirstOrderLiftsInTheFourDimensionalCase} we saw that we can lift the map $\varphi$ to a map $\J^1$-holomorphic to first order. Moreover, this
lift could be defined as the unique positive or negative almost complex structure compatible with $\varphi$. On the other hand, in Theorem \ref{Theorem:J2HolomorphicLiftsToFirstOrder} we have seen that there is a map $\J^2$-holomorphic to first order with $\varphi=\pi\circ\psi$ and for which $\varphi$ is compatible. From the comment after Theorem \ref{Theorem:J2HolomorphicLiftsToFirstOrder}, there is also a twistor lift of $\varphi$ into $\Sigma^-N$. Therefore, from the dimension of $N$, we conclude that the lifts constructed in both cited results are the same and, therefore, simultaneously $\J^1$ and $\J^2$-holomorphic to first order.
\end{proof}

We would now like to guarantee the \textit{uniqueness to first order} of our twistor lift. Before stating such a result we start with a lemma:

\begin{lemma}\label{Lemma:FirstAuxiliarLemmaToProve:Proposition:UnicityToFirstOrderOfHorizontalHolomorphicLifts}
Let $\psi:I\times{M}^2\to\Sigma^+ N$ a map $\J^1$-holomorphic to first order. Consider the twistor projection $\varphi_t=\pi\circ\psi$ and the vectors
\[\partial^2_z\varphi_t=u_t+iv_t\]
so that
\[\left\{\begin{array}{l}
u_t=\nabla_X\d\varphi_t X-\nabla_{JX}\d\varphi_t JX\text{,}\\
v_t=-\nabla_X\d\varphi_t JX-\nabla_{JX}\d\varphi_t X\text{.}
\end{array}\right.\]
Then, for all $z_0$ for which $\partial_z\varphi_0(z_0)$ and $\partial^2_z\varphi_0(z_0)$ are linearly independent, the following equations are satisfied
\begin{align}
\label{Equation:FirstEquationIn:Lemma:FirstAuxiliarLemmaToProve:Proposition:UnicityToFirstOrderOfHorizontalHolomorphicLifts}\nabla_{\ddtatzero}\d\varphi_tJX&=\nabla_{\ddtatzero}J_{\psi_t}\d\varphi_tX\text{,}\\
\label{Equation:SecondEquationIn:Lemma:FirstAuxiliarLemmaToProve:Proposition:UnicityToFirstOrderOfHorizontalHolomorphicLifts}\nabla_{\ddtatzero}J_{\psi_t}u_t&=\nabla_{\ddtatzero}-v_t\text{,}\\
\label{Equation:ThirdEquationIn:Lemma:FirstAuxiliarLemmaToProve:Proposition:UnicityToFirstOrderOfHorizontalHolomorphicLifts}\nabla_{\ddtatzero}J_{\psi_t}v_t&=\nabla_{\ddtatzero}u_t\text{.}
\end{align}
\end{lemma}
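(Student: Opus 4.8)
The plan is to obtain all three identities from the ``fundamental trick with the curvature tensor'' already packaged in Lemma \ref{Lemma:FundamentalTrickWithTheCurvatureAndFirstOrderVariations}, so that no fresh curvature computation is needed. The first identity \eqref{Equation:FirstEquationIn:Lemma:FirstAuxiliarLemmaToProve:Proposition:UnicityToFirstOrderOfHorizontalHolomorphicLifts} is essentially free: it is exactly the second half of the condition \eqref{Equation:SecondEquationIn:Lemma:LiftsOfConformalToFirstOrderDifferFromHolomorphyByJustAnO(t)Vector} that $\varphi$ be $(J^M,J_\psi)$-holomorphic to first order. Since $\psi$ is assumed $\J^1$-holomorphic to first order, this holds by \eqref{Equation:FirstEquationIn:Lemma:AnotherCharacterizationOfJ1AndJ2HolomorphicMapsToFirstOrder} of Lemma \ref{Lemma:AnotherCharacterizationOfJ1AndJ2HolomorphicMapsToFirstOrder} (equivalently, by Lemma \ref{Lemma:JHolomorphicityToFirstOrderIffBothHAndVHolomorphicityToFirstOrderHold} together with Lemma \ref{Lemma:HHolomorphicityToFirstOrderImpliesHolomorphicityToFirstOrderOfTheProjectedMap}), so I would simply quote it.

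For the two remaining identities \eqref{Equation:SecondEquationIn:Lemma:FirstAuxiliarLemmaToProve:Proposition:UnicityToFirstOrderOfHorizontalHolomorphicLifts} and \eqref{Equation:ThirdEquationIn:Lemma:FirstAuxiliarLemmaToProve:Proposition:UnicityToFirstOrderOfHorizontalHolomorphicLifts} I would apply Lemma \ref{Lemma:FundamentalTrickWithTheCurvatureAndFirstOrderVariations} with $r=2$, producing a section $Z^{10}_t\in\varphi_t^{-1}(T^{10}_{J_{\psi_t}}N)$ with $\partial^2_z\varphi_0=Z^{10}_0$ and $\nabla_{\ddtatzero}\partial^2_z\varphi_t=\nabla_{\ddtatzero}Z^{10}_t$. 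The decisive observation is that $Z^{10}_t$ lies in the $(+i)$-eigenspace of $J_{\psi_t}$ for \emph{every} $t$, so that $(J_{\psi_t}-i\,\Id)Z^{10}_t\equiv 0$ as a $t$-family, whence its $\nabla_{\ddtatzero}$-derivative vanishes. Applying the Leibniz rule for the pull-back connection on $\varphi^{-1}\L(TN,TN)$ and evaluating the coefficients at $t=0$ gives
\[
\nabla_{\ddtatzero}\bigl(J_{\psi_t}\,\partial^2_z\varphi_t\bigr)
=(\nabla_{\ddtatzero}J_{\psi_t})Z^{10}_0+J_{\psi_0}\nabla_{\ddtatzero}Z^{10}_t
=\nabla_{\ddtatzero}\bigl(J_{\psi_t}Z^{10}_t\bigr)
=\nabla_{\ddtatzero}\bigl(iZ^{10}_t\bigr)=i\,\nabla_{\ddtatzero}\partial^2_z\varphi_t,
\]
where the second equality uses $\partial^2_z\varphi_0=Z^{10}_0$ and $\nabla_{\ddtatzero}\partial^2_z\varphi_t=\nabla_{\ddtatzero}Z^{10}_t$, and the third uses $(J_{\psi_t}-i\,\Id)Z^{10}_t\equiv 0$.

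It then remains to substitute $\partial^2_z\varphi_t=u_t+iv_t$ into the displayed complex identity and separate real and imaginary parts; this is legitimate because $u_t,v_t$ are real sections of $\varphi_t^{-1}TN$, $J_{\psi_t}$ is a real endomorphism, and $\nabla_{\ddtatzero}$ commutes with the complexification. The real part yields $\nabla_{\ddtatzero}(J_{\psi_t}u_t)=-\nabla_{\ddtatzero}v_t$, which is \eqref{Equation:SecondEquationIn:Lemma:FirstAuxiliarLemmaToProve:Proposition:UnicityToFirstOrderOfHorizontalHolomorphicLifts}, and the imaginary part yields $\nabla_{\ddtatzero}(J_{\psi_t}v_t)=\nabla_{\ddtatzero}u_t$, which is \eqref{Equation:ThirdEquationIn:Lemma:FirstAuxiliarLemmaToProve:Proposition:UnicityToFirstOrderOfHorizontalHolomorphicLifts}. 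I do not expect a genuine obstacle here: the whole argument is a careful bookkeeping of the Leibniz rule at $t=0$ and of the reality of the objects involved. The one point worth stressing in writing is that it is the membership $Z^{10}_t\in T^{10}_{J_{\psi_t}}N$ \emph{for all} $t$ (not merely at $t=0$) that annihilates the right-hand side. I would also remark that the standing linear-independence hypothesis on $\partial_z\varphi_0(z_0),\partial^2_z\varphi_0(z_0)$ is retained because it is the context in which the lemma is later invoked, although the three identities themselves follow from the above reasoning without it.
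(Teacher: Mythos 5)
Your proof is correct, but it reaches the two main identities by a genuinely different route from the paper's. The paper never invokes Lemma \ref{Lemma:FundamentalTrickWithTheCurvatureAndFirstOrderVariations} inside this proof: it instead uses real isotropy to first order of $\varphi$ (Proposition \ref{Proposition:ProjectionsOfJ1HolomorphicToFirstOrderMapsAndRealIsotropy}), the fact that $\{\d\varphi_0X,\d\varphi_0JX,u_0,v_0\}$ is a basis of $T_{\varphi_0(z_0)}N$ --- which is precisely where the linear-independence hypothesis and the four-dimensionality of $N$ both enter --- and the relation $J_{\psi_0}u_0=-v_0$ from Theorem \ref{Theorem:J1HolomorphicLiftsInTheFourDimensionalCase}, and then it verifies \eqref{Equation:SecondEquationIn:Lemma:FirstAuxiliarLemmaToProve:Proposition:UnicityToFirstOrderOfHorizontalHolomorphicLifts} by pairing against each of the four basis vectors in turn (four inner-product computations), with \eqref{Equation:ThirdEquationIn:Lemma:FirstAuxiliarLemmaToProve:Proposition:UnicityToFirstOrderOfHorizontalHolomorphicLifts} declared ``similar''. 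You short-circuit this chain: applying Lemma \ref{Lemma:FundamentalTrickWithTheCurvatureAndFirstOrderVariations} with $r=2$ and exploiting that $(J_{\psi_t}-i\,\Id)Z^{10}_t\equiv0$ for \emph{all} $t$ (not just $t=0$), a single Leibniz-rule computation gives $\nabla_{\ddtatzero}(J_{\psi_t}\partial^2_z\varphi_t)=i\,\nabla_{\ddtatzero}\partial^2_z\varphi_t$, and both identities drop out simultaneously on separating real and imaginary parts. Your route is shorter, needs neither $\dim N=4$ nor the linear-independence hypothesis (your closing remark about its superfluity is accurate), and sidesteps entirely the subtlety the paper flags right after the statement --- that $J_{\psi_t}$ need not preserve $\d\varphi_t(TM)$ for $t\neq0$ --- since you never use any compatibility of $J_{\psi_t}$ with $\varphi_t$. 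Note also that the paper's argument is not logically more economical: its key input, real isotropy to first order, is itself deduced from Lemma \ref{Lemma:FundamentalTrickWithTheCurvatureAndFirstOrderVariations}, so both proofs ultimately rest on the same lemma, yours just directly; what the paper's component-wise computation buys is an explicit structural parallel with Lemma \ref{Lemma:FirstAuxiliarLemmaToProve:Theorem:J1HolomorphicToFirstOrderLiftsInTheFourDimensionalCase}, which it deliberately mirrors. Your treatment of the first identity coincides with the paper's.
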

Notice the similarity with Lemma \ref{Lemma:FirstAuxiliarLemmaToProve:Theorem:J1HolomorphicToFirstOrderLiftsInTheFourDimensionalCase}:
the main difference is that in that lemma, we were given $\varphi$ and \emph{defined} the twistor lift as the unique lift
compatible with $\varphi$. Now, we are given the twistor map $\psi$ but nothing guarantees that projecting the map to $\varphi_t$ makes $\varphi$ compatible; \textit{i.e.}, $J_{\psi_t}$ may not preserve $\d\varphi_t(TM)$.

\begin{proof}
That $\nabla_{\ddtatzero}\d\varphi_t JX=\nabla_{\ddtatzero}J_{\psi_t}\d\varphi_t X$ follows from the proof of
\ref{Proposition:ProjectionsAndLiftsOfConformalToFirstOrder}. Since $\partial_z\varphi_0(z_0)$ and $\partial^2_z\varphi_0(z_0)$ are linearly independent vectors, we can deduce that $\d\varphi_t X$, $\d\varphi_t JX$, $u_t$ and $v_t$ form a basis for $T_{\varphi_t(z)}N$ for $(t,z)$ is a neighbourhood of $(0,z_0)$. On the other hand, as $\varphi$ is the projection of a map $\J^1$-holomorphic to first order, we know that it must be real isotropic to first order from Proposition\ref{Proposition:ProjectionsOfJ1HolomorphicToFirstOrderMapsAndRealIsotropy}. Hence,
\[\ddtatzero<\partial^2_z\varphi_t,\partial_z\varphi_t>=0\text{,}\]
equivalently, $\ddtatzero<u_t+iv_t,\d\varphi_t X-i\d\varphi_t JX>=0$
and therefore
\begin{align}
\notag\ddtatzero<u_t,\d\varphi{X}>&=-\ddtatzero<v_t,\d\varphi_tJX>\text{,}\\
\label{Equation:SecondEquationInTheProofOf:Lemma:FirstAuxiliarLemmaToProve:Proposition:UnicityToFirstOrderOfHorizontalHolomorphicLifts}\ddtatzero<u_t,\d\varphi_tJX>&=\ddtatzero<v_t,\d\varphi_tX>\text{.}
\end{align}
Similarly, $\ddtatzero<\partial^2_z\varphi_t,\partial^2_z\varphi_t>=0$ is equivalent to $\ddtatzero<u_t+iv_t,u_t+iv_t>=0$
and implies
\begin{align}
\notag\ddtatzero<u_t,u_t>&=\ddtatzero<v_t,v_t>\text{,}\\
\label{Equation:FourthEquationInTheProofOf:Lemma:FirstAuxiliarLemmaToProve:Proposition:UnicityToFirstOrderOfHorizontalHolomorphicLifts}\ddtatzero<u_t,v_t>&=0
\end{align}
The argument to establish \eqref{Equation:SecondEquationIn:Lemma:FirstAuxiliarLemmaToProve:Proposition:UnicityToFirstOrderOfHorizontalHolomorphicLifts}, will now be similar to the one in Lemma \ref{Lemma:FirstAuxiliarLemmaToProve:Theorem:J1HolomorphicToFirstOrderLiftsInTheFourDimensionalCase}:

(i) We start by proving that $<\nabla_{\ddtatzero}J_{\psi_t}u_t,\d\varphi_0X>=-<\nabla_{\ddtatzero}v_t,\d\varphi_0X>$. Indeed,\addtolength{\arraycolsep}{-1.0mm}
\[\begin{array}{ll}
~&<\nabla_{\ddtatzero}J_{\psi_t}u_t,\d\varphi_0X>=\ddtatzero<J_{\psi_t}u_t,\d\varphi_tX>-<J_{\psi_0}u_0,\nabla_{\ddtatzero}\d\varphi_tX>\\
=&\text{~($J_{\psi_0}u_0=-v_0$,~Th.~\ref{Theorem:J1HolomorphicLiftsInTheFourDimensionalCase})~}-\ddtatzero<u_t,J_{\psi_t}\d\varphi_tX>+<v_0^X,\nabla_{\ddtatzero}\d\varphi_tX>\\
=&-<\nabla_{\ddtatzero}\hspace{-1mm}u_t,J_{\psi_0}\d\varphi_0X>-<u_0,\nabla_{\ddtatzero}\hspace{-1mm}J_{\psi_t}\d\varphi_tX>+<v_0,\nabla_{\ddtatzero}\d\varphi_tX>\\
=&-\ddtatzero<u_t,\d\varphi_tJX>+<v_0,\nabla_{\ddtatzero}\d\varphi_tX>\\
=&\text{~(using~\eqref{Equation:SecondEquationInTheProofOf:Lemma:FirstAuxiliarLemmaToProve:Proposition:UnicityToFirstOrderOfHorizontalHolomorphicLifts})~}-\ddtatzero<v_t,\d\varphi_tX>+<v_0,\nabla_{\ddtatzero}\d\varphi_tX>\\
=&-<\nabla_{\ddtatzero}v_t,\d\varphi_0X>\text{.}\end{array}\]\addtolength{\arraycolsep}{1.0mm}\noindent

(ii) Replacing $\d\varphi_0X$ by $\d\varphi_0JX$ and using similar arguments, we can show that $<\nabla_{\ddtatzero}J_{\psi_t}u_t,\d\varphi_0JX>=-<\nabla_{\ddtatzero}v_t,\d\varphi_0JX>$.

(iii) Next, we prove that $<\nabla_{\ddtatzero}J_{\psi_t}u_t,u_0>=-<\nabla_{\ddtatzero}v_t,u_0>$. In fact,\addtolength{\arraycolsep}{-1.0mm}
\[\begin{array}{ll}~&<\nabla_{\ddtatzero}J_{\psi_t}u_t,u_0>=\ddtatzero<J_{\psi_t}u_t,u_t>-<J_{\psi_0}u_0,\nabla_{\ddtatzero}u_t>\\
=&\text{~($J_{\psi_0}u_0=-v_0$)~}\ddtatzero<v_t,u_t>-<\nabla_{\ddtatzero}v_t,u_0>\\
=&\text{~(using~\eqref{Equation:FourthEquationInTheProofOf:Lemma:FirstAuxiliarLemmaToProve:Proposition:UnicityToFirstOrderOfHorizontalHolomorphicLifts})~}-<\nabla_{\ddtatzero}v_t,u_0>\text{.}\end{array}\]\addtolength{\arraycolsep}{1.0mm}\noindent

(iv) Finally, we are left with proving
$<\nabla_{\ddtatzero}J_{\psi_t}u_t,v_0>=-<\nabla_{\ddtatzero}v_t,v_0>$. Since $v_0=-J_{\psi_0}u_0$, we have\addtolength{\arraycolsep}{-1.0mm}
\[\begin{array}{lll}<\nabla_{\ddtatzero}J_{\psi_t}u_t,v_0>&=&-\frac{1}{2}\ddtatzero<J_{\psi_t}u_t,J_{\psi_t}u_t>=-\frac{1}{2}\ddtatzero<u_t,u_t>\\
~&=&-\frac{1}{2}\ddtatzero<v_t,v_t>=-<\nabla_{\ddtatzero}v_t,v_0>.\end{array}\]\addtolength{\arraycolsep}{1.0mm}\noindent

This establishes \eqref{Equation:SecondEquationIn:Lemma:FirstAuxiliarLemmaToProve:Proposition:UnicityToFirstOrderOfHorizontalHolomorphicLifts}. The last equation
\eqref{Equation:ThirdEquationIn:Lemma:FirstAuxiliarLemmaToProve:Proposition:UnicityToFirstOrderOfHorizontalHolomorphicLifts}
has a similar proof.
\end{proof}

\begin{proposition}\label{Proposition:UnicityToFirstOrderOfHorizontalHolomorphicLifts}
Let $\psi^1,\psi^2:I\times M^2\to\Sigma^+ N^4$ be two maps $\J^1$-holomorphic to first order such
that $\psi_0^1=\psi_0^2$ and the variational vector fields induced on $N^4$ are the same; \textit{i.e.}, writing  $a_i:=\ddtatzero(\pi\circ\psi^i)$, $i=1,2$, we have $a_1=a_2$. Then, at all points $z_0$ for which $\partial_z\varphi_0(z_0)$ and $\partial^2_z\varphi_0(z_0)$ are linearly independent, writing $w_i=\ddtatzero\psi_t^i$, $i=1,2$, we have
\begin{equation}
\label{Equation:FirstEquationIn:Proposition:UnicityToFirstOrderOfHorizontalHolomorphicLifts}
w_1=w_2\text{.}
\end{equation}
\end{proposition}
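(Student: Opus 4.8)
The plan is to compare the two variational vectors $w_i=\ddtatzero\psi^i_t\in T_{\psi_0}\Sigma^+N$ through the splitting $T_{\psi_0}\Sigma^+N=\H_{\psi_0}\oplus\V_{\psi_0}$, noting that since $\psi_0^1=\psi_0^2=:\psi_0$ both vectors lie in the \emph{same} tangent space. For the horizontal part, applying $\d\pi_{\psi_0}$ gives $\d\pi_{\psi_0}(w_i)=\ddtatzero(\pi\circ\psi^i_t)=a_i$, and since $\d\pi_{\psi_0}$ is an isomorphism on $\H_{\psi_0}$ and vanishes on $\V_{\psi_0}$, the horizontal part $w_i^\H$ is the horizontal lift of $a_i$; as $a_1=a_2$ we get $w_1^\H=w_2^\H$ at once. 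So everything reduces to showing that the vertical parts coincide. Under the identification $\V_{\psi_0}\simeq\m_{J_{\psi_0}}(T_{\varphi_0}N)$ the vertical part is $w_i^\V=\nabla_{\ddtatzero}J_{\psi^i_t}$ (the $t$-analogue of $(\d\psi X)^\V=\nabla_X J_\psi$ used in the proof of Lemma \ref{Lemma:ConditionForVHolomorphicity}); differentiating $J_{\psi_t}^2=-\Id$ shows each $w_i^\V$ anticommutes with $J_{\psi_0}$, so both are genuine elements of $\m_{J_{\psi_0}}(T_{\varphi_0}N)$.

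Since $\partial_z\varphi_0(z_0)$ and $\partial^2_z\varphi_0(z_0)$ are linearly independent, the vectors $\{\d\varphi_0 X,\d\varphi_0 JX,u_0,v_0\}$ form a basis of $T_{\varphi_0}N$ (Lemma \ref{Lemma:AdaptedAlmostHermitianStructuresToRealIsotropicMaps}), so it suffices to check that $\nabla_{\ddtatzero}J_{\psi^1_t}$ and $\nabla_{\ddtatzero}J_{\psi^2_t}$ agree on each of these four vectors. Before doing so I would record the key observation that the quantities $\nabla_{\ddtatzero}\d\varphi^i_t X=\nabla_X a_i$, $\nabla_{\ddtatzero}\d\varphi^i_t JX=\nabla_{JX}a_i$, $\nabla_{\ddtatzero}u^i_t$ and $\nabla_{\ddtatzero}v^i_t$ are all \emph{first-order data}, i.e. determined by $\varphi_0$, $a_i$ and the (fixed) curvature of $N$ alone. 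For the first two this is the torsion-free commutation coming from $[\ddtatzero,X]=0$; for $u^i_t,v^i_t$ one uses the curvature commutation of Remark \ref{Remark:FundamentalTrickWithTheCurvatureTensor}, e.g. $\nabla_{\ddtatzero}\nabla_X\d\varphi^i_t X=R(a_i,\d\varphi_0 X)\d\varphi_0 X+\nabla_X\nabla_X a_i$. Because $\varphi^1_0=\varphi^2_0$ and $a_1=a_2$, all four quantities coincide for $i=1,2$.

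Finally I would evaluate $\nabla_{\ddtatzero}J_{\psi^i_t}$ on the basis by extending each basis vector to a vector field along $\varphi^i_t$ and applying the Leibniz rule $(\nabla_{\ddtatzero}J_{\psi^i_t})Y_0=\nabla_{\ddtatzero}(J_{\psi^i_t}Y^i_t)-J_{\psi_0}\nabla_{\ddtatzero}Y^i_t$, feeding in the three identities of Lemma \ref{Lemma:FirstAuxiliarLemmaToProve:Proposition:UnicityToFirstOrderOfHorizontalHolomorphicLifts}. With $Y^i_t=\d\varphi^i_t X$, equation \eqref{Equation:FirstEquationIn:Lemma:FirstAuxiliarLemmaToProve:Proposition:UnicityToFirstOrderOfHorizontalHolomorphicLifts} gives $(\nabla_{\ddtatzero}J_{\psi^i_t})\d\varphi_0 X=\nabla_{JX}a_i-J_{\psi_0}\nabla_X a_i$; with $Y^i_t=u^i_t$, equation \eqref{Equation:SecondEquationIn:Lemma:FirstAuxiliarLemmaToProve:Proposition:UnicityToFirstOrderOfHorizontalHolomorphicLifts} gives $(\nabla_{\ddtatzero}J_{\psi^i_t})u_0=-\nabla_{\ddtatzero}v^i_t-J_{\psi_0}\nabla_{\ddtatzero}u^i_t$; and with $Y^i_t=v^i_t$, equation \eqref{Equation:ThirdEquationIn:Lemma:FirstAuxiliarLemmaToProve:Proposition:UnicityToFirstOrderOfHorizontalHolomorphicLifts} gives $(\nabla_{\ddtatzero}J_{\psi^i_t})v_0=\nabla_{\ddtatzero}u^i_t-J_{\psi_0}\nabla_{\ddtatzero}v^i_t$. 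Each right-hand side is first-order data, hence independent of $i$. For the remaining basis vector I would use that $\varphi_0$ is $J_{\psi_0}$-holomorphic (so $\d\varphi_0 JX=J_{\psi_0}\d\varphi_0 X$) together with the anticommutation of $\nabla_{\ddtatzero}J_{\psi^i_t}$ with $J_{\psi_0}$, giving $(\nabla_{\ddtatzero}J_{\psi^i_t})\d\varphi_0 JX=-J_{\psi_0}(\nabla_{\ddtatzero}J_{\psi^i_t})\d\varphi_0 X$, again independent of $i$. Thus $\nabla_{\ddtatzero}J_{\psi^1_t}=\nabla_{\ddtatzero}J_{\psi^2_t}$, so $w_1^\V=w_2^\V$, and together with $w_1^\H=w_2^\H$ this yields $w_1=w_2$. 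The only genuinely delicate point is the first-order-data observation of the middle paragraph; once it is in place the rest is bookkeeping with Lemma \ref{Lemma:FirstAuxiliarLemmaToProve:Proposition:UnicityToFirstOrderOfHorizontalHolomorphicLifts}.
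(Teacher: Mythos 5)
Your proof is correct and follows essentially the same route as the paper's: split $w_i$ into horizontal and vertical parts, reduce to showing $\nabla_{\ddtatzero}J_{\psi^1_t}=\nabla_{\ddtatzero}J_{\psi^2_t}$, evaluate on the basis $\{\d\varphi_0X,\d\varphi_0JX,u_0,v_0\}$ using the three identities of Lemma \ref{Lemma:FirstAuxiliarLemmaToProve:Proposition:UnicityToFirstOrderOfHorizontalHolomorphicLifts}, and use the curvature-commutation trick to see that $\nabla_{\ddtatzero}u^i_t$, $\nabla_{\ddtatzero}v^i_t$ are determined by $\varphi_0$, $a_i$ and $R$. The only (harmless) deviation is that you dispatch the $\d\varphi_0JX$ case via the anticommutation of $\nabla_{\ddtatzero}J_{\psi^i_t}$ with $J_{\psi_0}$, where the paper repeats the direct computation of the $\d\varphi_0X$ case.
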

\begin{proof}
Let $\varphi^i=\pi\circ\psi_i$ ($i=1,2$) denote the projection maps. From our hypothesis, it follows that $w_1^\H=w_2^\H$. Hence, the only thing left is to prove that the vertical parts coincide. Now, from the proof of Lemma \ref{Lemma:ConditionForVHolomorphicity}, $(\ddtatzero\psi^i_t)^\V=\nabla^{\L(TN,TN)}_{\ddtatzero}J_{\psi^i_t}$ so that our result follows if $\nabla_{\ddtatzero}J_{\psi^1_t}=\nabla_{\ddtatzero}J_{\psi^2_t}$. We prove this identity showing that $Q(Y)=0$ for all $Y$, where
\[Q(Y)=(\nabla_{\ddtatzero}J_{\psi^1_t})Y-(\nabla_{\ddtatzero}J_{\psi^2_t})Y\text{.}\]
We consider the four possible cases for $Y$; namely, when $Y$ is equal to $\d\varphi_0 X$, $\d\varphi_0 JX$, $u_0$ or $v_0$, where $u_0$ and $v_0$ are as in the preceding lemma (notice that, since $\psi^1_0=\psi^2_0$, then $u^1_0=u^2_0$ and $v^1_0=v^2_0$).

(i) When $Y=\d\varphi_0X$, we have\addtolength{\arraycolsep}{-1.0mm}
\[\begin{array}{lll}Q(\d\varphi_0X)&=&\nabla_{\ddtatzero}(J_{\psi^1_t}\d\varphi^1_t)-J_{\psi^1_0}(\nabla_{\ddtatzero}\d\varphi_tX)\\
~&~&-\nabla_{\ddtatzero}(J_{\psi^2_t}\d\varphi^2_t)+J_{\psi^2_0}(\nabla_{\ddtatzero}\d\varphi^2_tX)\\
~&=&\nabla_{\ddtatzero}\d\varphi^1_tJX-\nabla_{\ddtatzero}\d\varphi^2_tJX-J_{\psi_0}(\nabla_Xa_1-\nabla_Xa_2)\\
~&=&\text{~(since~$a_1=a_2$)~}\nabla_{JX}a_1-\nabla_{JX}a_2=0\text{,}\end{array}\]\addtolength{\arraycolsep}{1.0mm}\noindent
where we have used Lemma
\ref{Lemma:FirstAuxiliarLemmaToProve:Proposition:UnicityToFirstOrderOfHorizontalHolomorphicLifts},
as well as the fact that $J_{\psi^2_0}=J_{\psi^1_0}$ and
$\nabla_{\ddtatzero}\d\varphi^i_tX=\nabla_X\left.\frac{\partial\varphi^i_t}{\partial{t}}\right|_{t=0}=\nabla_Xa^i$.

(ii) For $Y=\d\varphi_0JX$ we use similar arguments.

(iii) Taking $Y=u_0$, we have
\[Q(u_0)=\nabla_{\ddtatzero}(J_{\psi^1_t}u^1_t)-J_{\psi^1_0}(\nabla_{\ddtatzero}u^1_t)-\nabla_{\ddtatzero}(J_{\psi^2_t}u^2_t)+J_{\psi^2_0}(\nabla_{\ddtatzero}u^2_t)\text{,}\]
equivalently,
\begin{equation}\label{Equation:FirstEquationInTheProofOf:Proposition:UnicityToFirstOrderOfHorizontalHolomorphicLifts}
Q(u_0)=-\nabla_{\ddtatzero}v^1_t+\nabla_{\ddtatzero}v^2_t-J_{\psi_0}(\nabla_{\ddtatzero}u^1_t-\nabla_{\ddtatzero}u^2_t)\text{.}
\end{equation}
But\addtolength{\arraycolsep}{-1.0mm}
\[\begin{array}{lll}\nabla_{\ddtatzero}v^1_t&=&-\nabla_{\ddtatzero}\nabla_X\d\varphi^1_tJX-\nabla_{\ddtatzero}\nabla_{JX}\d\varphi^1_tX\\
~&=&-R(\left.\frac{\partial\varphi^1_t}{\partial{t}}\right|_{t=0},\d\varphi^1_tX)\d\varphi^1_tJX+\nabla_X\nabla_{\ddtatzero}\d\varphi^1_tJX\\
~&~&+R(\left.\frac{\partial\varphi^1_t}{\partial{t}}\right|_{t=0},\d\varphi^1_tJX)\d\varphi^1_tX+\nabla_{JX}\nabla_{\ddtatzero}\d\varphi^1_tX\\
~&=&\hspace{-0.4mm}-R(a_1,\d\varphi_0X)\d\varphi_0JX\hspace{-0.3mm}+\hspace{-0.3mm}\nabla_X\hspace{-0.1mm}\nabla_{JX}a_1\hspace{-0.3mm}+\hspace{-0.3mm}R(a_1,\d\varphi_0JX)\d\varphi_0X\hspace{-0.3mm}+\hspace{-0.3mm}\nabla_{JX}\nabla_{X}a_1\text{.}\end{array}\]\addtolength{\arraycolsep}{1.0mm}\noindent
As $a_1=a_2$ we deduce $\nabla_{\ddtatzero}v^1_t=\nabla_{\ddtatzero}v^2_t$; with similar
reasoning $\nabla_{\ddtatzero}u^1_t=\nabla_{\ddtatzero}u^2_t$ so that the right-hand side of \eqref{Equation:FirstEquationInTheProofOf:Proposition:UnicityToFirstOrderOfHorizontalHolomorphicLifts} vanishes and, consequently, $Q(u_0)=0$.

(iv) For $Y=v_0$ we use similar arguments to the above, concluding our proof.
\end{proof}

Hence, the twistor lifts constructed in Theorem \ref{Theorem:HorizontalHolomorphicLiftsUpToFirstOrderInThe4DimensionalCase} are \emph{unique to first order}, in the sense that the vector field $w$ induced on $\Sigma^+N^4$ (or $\Sigma^-N^4$) by the map $\psi$, $w=\ddtatzero\psi_t$ depends only on the initial projected map $\varphi_0$ and on the Jacobi field $v$ along $\varphi_0$.


\section{Summary}


We have found the first order analogues of the results in Chapter \ref{Chapter:HarmonicMapsAndTwistorSpaces}. Namely, we established the following correspondences:

\setlength{\unitlength}{1cm}
\begin{picture}(20,10)(0,0)
\put(0,8){$\begin{array}{ll}\psi:M^{2}\to\Sigma^+N&\J^2\text{-holomorphic}\\
~&\text{to~first~order}\end{array}$}
\put(6.2,8.1){\vector(1,0){1}} \put(7.2,8.1){\vector(-1,0){1}}
\put(7.3,8){$\begin{array}{l}\varphi\text{~harmonic (and conformal)}\\\text{to first order}\end{array}$}
\put(0,6){$\psi:M^{2}\to\Sigma^+N$ $\J^1$-holomorphic}
\put(6.2,6.1){\vector(1,0){1}} \put(7.3,6){$\varphi$ real isotropic to first order}
\put(6.2,5.8){$ \xy   {\ar@/^1pc/(10,0)*{};(0,0)*{}};\endxy$}
\put(6.9,5.2){if $\dim  N=4$, possibly to $\Sigma^-N$}
\put(0,3.5){$\begin{array}{ll}\psi:M^{2}\to\Sigma^+N&\H\text{-holomorphic}\\~&\text{to first order}\end{array}$}
\put(6.2,3.6){\vector(1,0){1}}\put(7.2,3.6){\vector(-1,0){1}}
\put(7.3,3.5){$\begin{array}{ll}\varphi&(J^M,J_\psi)\text{-holomorphic}\\~&\text{to first order}\end{array}$}
\end{picture}

Moreover, taking $N^4$ the $4$-sphere or the complex projective plane, letting $\varphi:M^2\to N^4$ be a harmonic map and $v\in\varphi^{-1}(TN)$ is a Jacobi field, real isotropy to first order is immediately guaranteed (Proposition \ref{Proposition:ConformalityAndRealIsotropyToFirstOrderFromHarmonicityToFirstOrderAndSpheres}). Hence, the previous construction allows a (local) unified proof of the twistor correspondence between Jacobi fields and twistor vector fields that are tangent to variations on $\Sigma^+N^4$ which are simultaneously $\J^1$ and $\J^2$-holomorphic (\emph{infinitesimal horizontal holomorphic deformations} in \cite{LemaireWood:07}). We can also conclude that different what properties (namely, conformality, real isotropy or harmonicity) are related with those of the twistor lift (respectively, $\H$, $\J^1$ or $\J^2$-holomorphicity).





\renewcommand{\thechapter}{\Alph{chapter}}
\renewcommand{\chaptername}{}             
\addtocounter{chapter}{-5}                


\titleformat{\chapter}[display]          
{\bf\Huge}{\chaptertitlename}{0pt}{}{}   
\titlespacing{\chapter}{0pt}{-8pt}{15pt} 

\addtocounter{chapter}{-1} 
\chapter{Appendix}\label{Chapter:Appendix}
\addtocounter{chapter}{1}  




\section{Complex Lie Groups and the proof of Theorem \ref{Theorem:ComplexStructureOnTheGrassmannianOfThekDimensionalIsotropicPositiveSubspacesOfcnTwok}}\label{Section:SectionIn:Chapter:Appendix:ComplexLieGroupsAndProofOf:Theorem:ComplexStructureOnTheGrassmannianOfThekDimensionalIsotropicPositiveSubspacesOfcnTwok}


In this section, we prove Theorem \ref{Theorem:ComplexStructureOnTheGrassmannianOfThekDimensionalIsotropicPositiveSubspacesOfcnTwok}.
We start by establishing the following
\begin{proposition}\label{Proposition:ComplexLieGroupsAndDensities}Let $E$ be a (finite-dimensional) complex vector space and let $\L_\cn(E,E)$ denote the space of all complex linear endomorphisms of $E$. Let $\G$ be a complex Lie group contained in $\GL(\cn,E)$ and $F$ a complex linear subspace of $\L_\cn(E,E)$ such that $\G\cap F\subseteq\L_\cn(E,E)$ is a subgroup of $\G$ (not necessarily closed or holomorphic) and $\G\cap V$ is dense in $F$. Then, $\G\cap F$ is a complex Lie subgroup of $\G$ and
\begin{equation}\label{Equation:FirstEquationIn:Proposition:ComplexLieGroupsAndDensities}
T_e(\G\cap F)=T_e\G\cap F\text{.}
\end{equation}
\end{proposition}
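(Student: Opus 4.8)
The plan is to establish the two assertions — that $\G\cap F$ is a complex Lie subgroup of $\G$ and that its Lie algebra is $T_e\G\cap F$ — more or less simultaneously, since the key to both is a local analytic description of $\G\cap F$ near the identity. First I would fix a holomorphic chart for $\G$ around $e$, say a biholomorphism $\Phi$ from an open neighbourhood $\openU$ of $e$ in $\G$ onto an open neighbourhood of $0$ in $T_e\G$, with $\Phi(e)=0$ and $\d\Phi_e=\Id$. Since $F$ is a complex linear subspace of $\L_\cn(E,E)$ and $\G\subseteq\GL(\cn,E)\subseteq\L_\cn(E,E)$ is a complex submanifold, the inclusion map $\iota:\G\to\L_\cn(E,E)$ is holomorphic, and the condition $g\in F$ reads $\iota(g)\in F$, i.e. $P\circ\iota(g)=0$ where $P:\L_\cn(E,E)\to\L_\cn(E,E)/F$ is the (complex linear) quotient projection. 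Thus $\G\cap F$ is the zero set in $\G$ of the holomorphic map $h:=P\circ\iota:\G\to\L_\cn(E,E)/F$, an \emph{analytic} subvariety of $\G$.

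The heart of the argument is to show that this analytic set is in fact a smooth complex submanifold near $e$, with tangent space exactly $T_e\G\cap F$. Here the density hypothesis and the \emph{group} structure of $\G\cap F$ do the essential work. The plan is as follows: the differential $\d h_e:T_e\G\to\L_\cn(E,E)/F$ has kernel containing $T_e\G\cap F$; I would argue that $\G\cap F$, being a subgroup that is dense in $F$, must be ``large enough'' that its tangent directions at $e$ fill out all of $T_e\G\cap F$. Concretely, for any $\lambda\in T_e\G\cap F$ I would like to produce a holomorphic one-parameter family in $\G\cap F$ through $e$ with velocity $\lambda$; since $\G$ is a complex Lie group, the candidate is $t\mapsto\exp(t\lambda)$, and because $\lambda\in F$ with $F$ a linear subspace, each truncation of the exponential series lies in $F$, and one uses that $\G\cap F$ is closed under multiplication together with the density of $\G\cap F$ in $F$ to conclude $\exp(t\lambda)\in\G\cap F$ for small $t$. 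This shows $T_e\G\cap F\subseteq T_e(\G\cap F)$ in the sense of tangent vectors realised by curves; the reverse inclusion $T_e(\G\cap F)\subseteq T_e\G\cap F$ is immediate from $\G\cap F\subseteq F$ and $\G\cap F\subseteq\G$, since any curve in $\G\cap F$ has velocity lying in both $T_e\G$ and $F$.

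With the tangent space identified, I would then invoke the standard fact that a subgroup of a Lie group which coincides, near the identity, with an analytic submanifold is automatically a Lie subgroup, propagating the submanifold structure by left translation: for $g\in\G\cap F$ the left translation $L_g$ is a biholomorphism of $\G$ carrying $e$ to $g$ and carrying $\G\cap F$ to itself (since $\G\cap F$ is a group), so the local model near $e$ transports to a local model near $g$. This gives $\G\cap F$ the structure of a complex Lie subgroup, and equation \eqref{Equation:FirstEquationIn:Proposition:ComplexLieGroupsAndDensities} records the tangent space computation.

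The step I expect to be the main obstacle is passing from the density of $\G\cap F$ in $F$ to the infinitesimal statement that $\exp(t\lambda)$ actually lies in $\G\cap F$ for $\lambda\in T_e\G\cap F$ and small $t$. Density is only a topological hypothesis and by itself controls neither smoothness nor the precise tangent cone; the subgroup property is what must be leveraged to upgrade density into a genuine analytic-submanifold statement. I would handle this by combining density with the closure properties of a group — using that products and inverses of elements of $\G\cap F$ stay in $\G\cap F\subseteq F$, and that $F$ is linearly closed — to show that the (a priori only analytic) germ of $\G\cap F$ at $e$ has tangent space of the full expected dimension $\dim_\cn(T_e\G\cap F)$, whence the zero set of $h$ is smooth there by the implicit function theorem applied to $h$ whose differential has the correct rank.
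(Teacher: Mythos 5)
There are two genuine gaps, and the first sits at the crux of the whole proposition. You assert that for $\lambda\in T_e\G\cap F$ ``each truncation of the exponential series lies in $F$'' because $\lambda\in F$ and $F$ is a linear subspace. That justification is false: the truncations involve the powers $\lambda^k$, and a linear subspace is not closed under products, so nothing puts $\lambda^2$ in $F$. Density must enter precisely here, not afterwards as in your sketch (indeed, if the truncations really were in $F$, density would no longer be needed at all: $\exp(t\lambda)\in F$ would follow from closedness of $F$, and $\exp(t\lambda)\in\G$ from $\lambda\in T_e\G$). The paper's proof makes this step work by approximating the tangent vector $\lambda$ by \emph{group elements} $v_n\in\G\cap F$ (this is where density is used), observing that $v_n^k\in\G\cap F\subseteq F$ for all $k$ because $\G\cap F$ is a subgroup, hence $\exp(tv_n)\in F$ since $F$ is a closed linear subspace, and finally letting $n\to\infty$ to get $\exp(t\lambda)\in F$, so $\exp(t\lambda)\in\G\cap F$. (Equivalently, one can first prove $F\cdot F\subseteq F$ from density, the subgroup property and continuity of multiplication; only then does your truncation argument become correct. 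Some such step is indispensable.)

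The second gap is the smoothness step. Even granting $\exp(t\lambda)\in\G\cap F$ for every $\lambda\in T_e\G\cap F$, your conclusion that the zero set of $h=P\circ\iota$ is smooth near $e$ ``by the implicit function theorem applied to $h$ whose differential has the correct rank'' does not follow. The identity $\ker\d h_e=T_e\G\cap F$ is automatic and costs nothing; what the implicit function theorem needs is surjectivity (or local constancy of the rank) of $\d h$, which you never establish and which need not hold. Nor does the fact that every vector of $\ker\d h_e$ is tangent to a curve inside the zero set force that set to be a manifold: it does not exclude points of $\G\cap F$ arbitrarily close to $e$ lying off $\exp(T_e\G\cap F)$. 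What closes this gap --- and what the paper uses --- is an observation you never make: $F$ is a finite-dimensional, hence closed, subspace of $\L_\cn(E,E)$, so $\G\cap F$ is a \emph{closed} subgroup of $\G$, and Cartan's closed-subgroup theorem gives at once that it is an embedded real Lie subgroup. The exponential argument then identifies its Lie algebra as $T_e\G\cap F$, and since this subspace is stable under multiplication by $i$ (both $T_e\G$ and $F$ being complex subspaces), the real Lie subgroup is in fact a complex Lie subgroup. Your left-translation remark is fine once the local structure at $e$ is actually established, but it cannot substitute for this step.
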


In order to prove this proposition, we shall need the following
\begin{lemma}\label{Lemma:LemmaToProve:Proposition:ComplexLieGroupsAndDensities}
Under the same hypothesis as in Proposition \ref{Proposition:ComplexLieGroupsAndDensities}, $\G\cap F$ is a real Lie subgroup of $\G$ and equation \eqref{Equation:FirstEquationIn:Proposition:ComplexLieGroupsAndDensities} is satisfied.
\end{lemma}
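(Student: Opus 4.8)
The goal is to prove Lemma \ref{Lemma:LemmaToProve:Proposition:ComplexLieGroupsAndDensities}: that under the density hypothesis, $\G\cap F$ is a \emph{real} Lie subgroup of $\G$ and that $T_e(\G\cap F)=T_e\G\cap F$. The plan is to exploit Cartan's closed subgroup theorem together with the density assumption, replacing the closedness hypothesis (which $\G\cap F$ need not satisfy) by the linear-subspace condition on $F$.

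First I would reduce to a statement about the tangent space at the identity. The natural candidate for the Lie subalgebra is $\mathfrak{h}:=T_e\G\cap F$, which is a real (indeed complex) linear subspace of $T_e\G=\g$. The key step is to show that $\mathfrak{h}$ is a Lie subalgebra and that it is exactly the set of infinitesimal generators of one-parameter subgroups of $\G$ lying in $\G\cap F$. For the inclusion $T_e(\G\cap F)\subseteq T_e\G\cap F$: any tangent vector to $\G\cap F$ at $e$ is the velocity of a curve in $\G\cap F\subseteq F$, and since $F$ is a \emph{linear} subspace (hence closed under taking velocities of curves through a point of $F$), such a velocity lies in $F$; it also lies in $T_e\G$ since the curve lies in $\G$. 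The subtler inclusion is the reverse, and this is where I expect the main work.

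For the reverse inclusion I would argue as follows. Take $\xi\in T_e\G\cap F$. Consider the one-parameter subgroup $t\mapsto\exp(t\xi)$ in $\G$. I want to show $\exp(t\xi)\in F$ for all $t$, so that it lies in $\G\cap F$ and hence $\xi\in T_e(\G\cap F)$. The density hypothesis is the crucial tool: since $\G\cap F$ is a subgroup of $\G$ and is dense in $F$, and since $F$ is a linear subspace, I would show that the real span of the exponentials of elements of $\mathfrak{h}$ generates a subgroup whose closure meets $F$ in a controlled way. Concretely, I expect to use that a one-parameter subgroup with initial velocity in $F$ stays, to first order, in $F$, and then bootstrap using the subgroup property and the linearity of $F$ to conclude it stays in $F$ to all orders. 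Once $\mathfrak{h}$ is identified as an honest Lie subalgebra of $\g$, Cartan's theorem (or the correspondence between Lie subalgebras and connected Lie subgroups) yields a connected real Lie subgroup $H\subseteq\G$ with $T_eH=\mathfrak{h}$, and the density argument shows $H$ and $\G\cap F$ have the same identity component, giving $\G\cap F$ the structure of a real Lie subgroup with $T_e(\G\cap F)=\mathfrak{h}=T_e\G\cap F$.

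The main obstacle, I anticipate, is precisely showing that $\mathfrak{h}=T_e\G\cap F$ closes under the Lie bracket and that it integrates to a subgroup contained in $\G\cap F$, rather than merely in $\G$. The density of $\G\cap F$ in $F$ must be leveraged carefully here: it is what prevents $\mathfrak{h}$ from being strictly larger than the true tangent algebra of $\G\cap F$, and what allows one to recover group-level membership in $F$ from infinitesimal membership. I would handle this by approximating $\exp(t\xi)$ by elements of the dense subgroup $\G\cap F$ and using that $F$, being a closed linear subspace of the finite-dimensional space $\L_\cn(E,E)$, contains all such limits. The passage from the real Lie subgroup statement here to the \emph{complex} Lie subgroup statement in Proposition \ref{Proposition:ComplexLieGroupsAndDensities} is deferred, and will follow by checking that $\mathfrak{h}=T_e\G\cap F$ is a complex subspace (being the intersection of the complex subspaces $T_e\G$ and $F$) and hence invariant under the complex structure, so that the integral subgroup is holomorphic.
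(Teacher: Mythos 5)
Your premise that $\G\cap F$ ``need not satisfy'' the closedness hypothesis of Cartan's theorem is mistaken, and it sends you down a much harder road than necessary. Since $F$ is a linear subspace of the finite-dimensional space $\L_\cn(E,E)$, it is automatically closed there; hence $\G\cap F$ is closed in $\G$, and Cartan's closed subgroup theorem applies directly to give that $\G\cap F$ is a real (embedded) Lie subgroup. This is exactly how the paper begins, and it removes any need to prove that $\mathfrak{h}=T_e\G\cap F$ is a Lie subalgebra, to integrate it, or to compare identity components --- steps in your plan that you leave unresolved. Indeed, there is no a priori reason for $T_e\G\cap F$ to be closed under the bracket: $F$ is only a linear subspace, not closed under products, so $[\xi,\eta]=\xi\eta-\eta\xi$ need not lie in $F$ for $\xi,\eta\in T_e\G\cap F$; in the end it is a subalgebra, but only because it coincides with $T_e(\G\cap F)$, which is the very thing being proved.

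The second and more serious gap is in the inclusion $T_e\G\cap F\subseteq T_e(\G\cap F)$. Your plan to ``approximate $\exp(t\xi)$ by elements of the dense subgroup $\G\cap F$'' is circular: $\G\cap F$ is dense in $F$, not in $\G$, so to approximate $\exp(t\xi)$ by elements of $\G\cap F$ you would already need to know that $\exp(t\xi)$ lies in $F$, which is precisely the point at issue. Your alternative of bootstrapping ``from first order to all orders'' also fails as stated: $\xi\in F$ gives no control on $\xi^2,\xi^3,\dots$, so the Taylor coefficients of $\exp(t\xi)$ need not lie in $F$. The missing idea (the paper's) is to approximate the \emph{vector} $\xi\in F$ by \emph{group elements}: by density choose $v_n\in\G\cap F$ with $v_n\to\xi$ (both live in the ambient matrix space). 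Because $\G\cap F$ is a subgroup, $v_n^k\in\G\cap F\subseteq F$ for every $k$, so every partial sum of $\exp(tv_n)=\sum_{k\geq0}t^kv_n^k/k!$ lies in the linear subspace $F$, and closedness of $F$ gives $\exp(tv_n)\in F$. Letting $n\to\infty$ and using continuity of $\exp$ together with closedness of $F$ once more, $\exp(t\xi)\in F$; since also $\xi\in T_e\G$, the curve $t\mapsto\exp(t\xi)$ lies in $\G\cap F$, and its velocity at $t=0$ shows $\xi\in T_e(\G\cap F)$. This power trick --- which works only for group elements, not for tangent vectors --- is the heart of the proof and is absent from your proposal.
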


\noindent\textit{Proof of Lemma \ref{Lemma:LemmaToProve:Proposition:ComplexLieGroupsAndDensities}.}
Since $F$ is a vector subspace of $\L_\cn(E,E)$, we can deduce it is closed. Thus, $\G\cap F$ is closed in $\G$ and therefore a real Lie subgroup of it. Hence, it makes sense to write $T_e(\G\cap F)$.

It is clear that $T_e(\G\cap F)\subseteq T_e\G\cap F$. Conversely, let $v\in{T}_e\G\cap F$. Since $\G\cap F$ is dense in $F$ (and, hence, in $\G\cap F$), there is a sequence $v_n\in\G\cap F$ which tends to $v$. As $\G$ is a subgroup of $\GL(E)$, its exponential map is just the restriction of the exponential map of the latter group. The same holds for $\exp_{\G\cap F}=\exp_{\G}|_{\G\cap F}$. But
\[\exp(t.v_n)=\sum_{k\geq 0}\frac{(tv_n)^k}{(k)!}\in{F}\]
since
\[v_n\in\G\cap{F}\,\impl\,v_n^2\in\G\cap{F}\,\impl\,...\,\impl\,v_n^k\in{F}\,\impl\,\sum_{k\geq0}\frac{(t.v_n)^k}{k!}\in{F}\text{.}\]
Thus, $v_n\to v\,\impl\,\exp(t.v_n)\to \exp(t.v)$ and $\exp(t.v_n)\in{F}$ implies $\exp(t.v)\in{F}$. Hence, $\exp(t.v)$ lies in $\G\cap F$ and
\[v=\ddtatzero\big(\exp(t.v)\big)\,\impl\,v\in{T}_e(\G\cap F)\]
concluding our proof.\qed

\noindent\textit{Proof of Proposition \ref{Proposition:ComplexLieGroupsAndDensities}.} Since $\G\cap F$ is
closed in $\G$, it is a (real) subgroup of the latter. To prove that it is complex is then enough to show its stability under the complex structure. But $T_e(\G\cap F)=T_e\G\cap F$, by the previous Lemma. Since both these spaces are stable under the complex structure, so is their intersection our argument is concluded.\qed


\subsection{Proof~of~Theorem~\ref{Theorem:ComplexStructureOnTheGrassmannianOfThekDimensionalIsotropicPositiveSubspacesOfcnTwok}}\label{Section:ProofOf:Theorem:ComplexStructureOnTheGrassmannianOfThekDimensionalIsotropicPositiveSubspacesOfcnTwok}


\noindent\textit{1$^{\text{st}}$ step. The action
\eqref{Equation:FirstEquationIn:Theorem:ComplexStructureOnTheGrassmannianOfThekDimensionalIsotropicPositiveSubspacesOfcnTwok}.}

We show that the map defined in \eqref{Equation:FirstEquationIn:Theorem:ComplexStructureOnTheGrassmannianOfThekDimensionalIsotropicPositiveSubspacesOfcnTwok}, \addtolength{\arraycolsep}{-1.2mm}
\[\begin{array}{ccclc}
\SO(\cn,E^\cn)&\times& G^+_{iso}(E^\cn)& \to & G^+_{iso}(E^\cn) \\
(\lambda&,&F)&\to&\lambda(F)
\end{array}\]\addtolength{\arraycolsep}{1.2mm}\noindent
is a well-defined transitive left action of the group $\SO(\cn,E^\cn)$ on the set $G^+_{iso}(E^\cn)$.

\noindent\textit{Proof of the 1$^{\text{st}}$ step.} The first non-trivial task is to show that indeed $\lambda(F)$ lies in $G^+_{iso}(E^\cn)$. It is not difficult to check that $\lambda(F)$ is in fact an isotropic $k$-dimensional complex linear subspace of $E^\cn$; however, it is harder to see that this subspace remains positive\footnote{If we wanted the Grassmannian of just isotropic subspaces (so, dropping the ``positive" condition), we could change our complex Lie group to $\O(\cn,E^\cn)$ and this part of our proof would be unnecessary.}. Take $\{u_i,J_Fu_i\}_{i=1,...,k}$ a positive basis of $E$, where $J_F$ is the Hermitian structure determined by $F$ (hence, $u_1-iJ_Fu_1,...,u_k-iJ_Fu_k$ is a basis of $F$). Then, $w_i=\lambda(u_i-iJ_Fu_i)=\lambda(u_i)-i\lambda(J_Fu_i)$,
$i=1,...,k$ forms a basis for $\lambda(F)$ and $\overline{w}_i$ is a
basis for $\overline{\lambda(F)}$. Notice that, in general, we do not have $\overline{\lambda(F)}=\lambda(\overline{F})$ and this complicates the proof, since we cannot conclude that a basis for $\overline{\lambda(F)}$ is given by $\lambda(u_i)+i\lambda(Ju_i)$. Take
\[\left\{\begin{array}{l}
v_i=w_i+\overline{w}_i\in{E} \quad(w_i\in\lambda(F),\,\overline{w}_i\in\overline{\lambda(F)})\\
\tilde{v}_i=i(w_i-\overline{w}_i)\in{E}\text{.}
\end{array}\right.\]
It is clear that $\{v_i,\tilde{v}_i$, $i=1,...,k\}$ is a basis for $E$ and that
\[J_{\lambda(F)}(v_i)=J^\cn_{\lambda(F)}(w_i+\overline{w}_i)=iw_i-i\overline{w}_i=\tilde{v}_i\text{.}\]
Hence, all we have to verify is that $\{v_1,\tilde{v}_1,...,v_k,\tilde{v}_k\}$ is a positive basis for $E$.
Consider the map\addtolength{\arraycolsep}{-1.2mm}
\[\begin{array}{lcll}
\tilde{\alpha}: &E&\to& E \\
~& u_i & \to & v_i \\
~& J_Fu_i & \to & \tilde{v}_i \,(=J_{\lambda(F)}v_i)
\end{array}\]\addtolength{\arraycolsep}{1.2mm}\noindent
so that what we have to check is that it has positive determinant. Take the new map\addtolength{\arraycolsep}{-1.2mm}
\[\begin{array}{llll}
\alpha: & E&\to& E\\
~ & u&\to&\lambda u-i\lambda{J}_Fu+\overline{\lambda u}+i\overline{\lambda{J}_Fu}\text{.}
\end{array}\]\addtolength{\arraycolsep}{1.2mm}\noindent
Then, $\alpha(u_i)=v_i$ and $\alpha(J_F u_i)=\tilde{v}_i$; consequently, $\alpha=\tilde{\alpha}$. But $\alpha=(\lambda-i\lambda J_F)+(\overline{\lambda}+i\overline{\lambda} J_F)$, where $\overline{\lambda}(u)=\overline{\lambda(u)}$. Next, consider\addtolength{\arraycolsep}{-1.2mm}
\[\begin{array}{lcll}
\beta: & \L_\cn(E^\cn,E^\cn)&\to&\L_\rn(E,E) \\
~& \lambda&\to & \Real\lambda+\Imag\lambda\circ J_F
\end{array}\]\addtolength{\arraycolsep}{1.2mm}\noindent
where $(\Real\lambda)u=\Real(\lambda u)$ and analogously for the imaginary part. This map is continuous and therefore so is the map\addtolength{\arraycolsep}{-1.2mm}
\[\begin{array}{lcll}
\det\circ\beta: & \L_\cn(E^\cn,E^\cn)&\to &\rn \\
~& \lambda &\to &\det(\beta(\lambda))\text{.}
\end{array}\]\addtolength{\arraycolsep}{1.2mm}\noindent
Moreover, it takes the value one at the identity map, as $\Real I+\Imag I\circ J_F=I$. Since $\SO(\cn,E^\cn)$ is connected, $\det(\beta(\lambda))$ is not zero ($\Real\lambda+\Imag\lambda\circ J_F$ maps $u_i$ to $v_i$ and $J_Fu_i$ to $\tilde{v}_i$) and $\det\beta(I)=1$, we can conclude that $\det \beta(\lambda)>0$, for every $\lambda\in\SO(\cn,E^\cn)$, as desired.

That we have a left action does not require any arguments and to prove its transitivity we must show that for any isotropic positive $k$-dimensional linear subspaces $F_1$ and $F_2$, there is
$\lambda\in\SO(\cn,E^\cn)$ with $\lambda(F_1)=F_2$. Indeed, if we take $J_{F_1}$ and $J_{F_2}$, we know that there is a
$\tilde{\lambda}\in\SO(E)$ such that $\tilde{\lambda}\circ{J}_{F_1}\circ\tilde{\lambda}^{-1}=J_{F_2}$ \footnote{Recall the argument to give a differentiable structure on $\Sigma^+E$ as the quotient $\SO(E)/\U(E)$: the left action $\SO(E)\times\Sigma^+E\to\Sigma^+E$ defined by $(\lambda,J)\to\lambda J\lambda^{-1}$ is a transitive left action with isotropy subgroup $\U(E)$.}. Hence, we can consider $\lambda$ as the complexified $\tilde{\lambda}$ and it is clear that
$\lambda\in\SO(\cn,E^\cn)$; that $\lambda(F_1)=F_2$ follows easily. Notice that this argument shows that we could have considered the transitive action of the group $\SO(E)$ on $G^+_{iso}(E^\cn)$ instead of using the group $\SO(\cn,E^\cn)$.

\noindent\textit{2$^{\text{nd}}$ step. The complex structure on the manifold $G^+_{iso}(E^\cn)$.}

We show that the isotropy subgroup of the action
\eqref{Equation:FirstEquationIn:Theorem:ComplexStructureOnTheGrassmannianOfThekDimensionalIsotropicPositiveSubspacesOfcnTwok}
is a complex Lie subgroup of $\SO(\cn,E^\cn)$.

\noindent\textit{Proof of the 2$^{\text{nd}}$ step.} Identifying $E$ with $\rn^{2k}$ (so that $E^\cn\simeq\cn^{2k}$) and choosing the positive isotropy subspace $F_0=T^{10}_{J_0}\rn^{2k}$ associated with the usual complex structure $J_0$ on $\rn^{2k}$, we can reduce ourselves to the study of the isotropy group at this point. This isotropy subgroup $\K_{F_0}$ is given by\addtolength{\arraycolsep}{-1.0mm}
\[\begin{array}{lll}\K_{F_0}&=&\{\lambda\in\SO(\cn,2k):\,\lambda(F_0)=F_0\}=\{\lambda\in\SO(\cn,2k):\,\lambda(F_0)\subseteq{F}_0\}\\
~&=&\underbrace{\SO(\cn,2k)\cap\underbrace{\{\lambda\in\L(\cn,2k):\,\lambda(F_0)=F_0\}}_{\text{\tiny{complex~subspace~of~$\L(\cn,2k)$}}}}_{\text{\tiny{closed~in~$\SO(\cn,2k)$}}}\text{.}
\end{array}\]\addtolength{\arraycolsep}{1.0mm}\noindent
On the other hand,\addtolength{\arraycolsep}{-1.0mm}
\[\begin{array}{ll}~&\SO(\cn,2k)\cap\{\lambda\in\L(\cn,2k):\,\lambda(F_0)\subseteq{F}_0\}\\
=&\SO(\cn,2k)\cap\big(\{\lambda\in\L(\cn,2k):\,\lambda(F_0)\subseteq{F}_0\}\cap\GL(\cn,2k)\big)\text{.}
\end{array}\]\addtolength{\arraycolsep}{1.0mm}\noindent
Now, $\{\lambda\in\L(\cn,2k):\,\lambda(F_0)\subseteq F_0\}\cap\GL(\cn,2k)$ is closed in $\GL(\cn,2k)$, it is a subgroup of $\GL(\cn,2k)$ and dense in $\{\lambda\in\L(\cn,2k):\lambda(F_0)\subseteq F_0\}$ by density of $\GL(\cn,2k)$ in $\L(\cn,2k)$. Thus, by Proposition \ref{Proposition:ComplexLieGroupsAndDensities} it is a complex Lie subgroup of $\GL(\cn,2k)$ with tangent space at the identity given by
\[T_{\Id}\GL(\cn,2k)\cap\{\lambda\in\L(\cn,2k):\,\lambda(F_0)\subseteq{F}_0\}=\{\lambda\in\L(\cn,2k):\,\lambda(F_0)\subseteq{F}_0\}\text{.}\]
Hence, $\K_{F_0}$ is the intersection of two complex Lie subgroups and consequently a complex Lie subgroup, with tangent space at the identity given by the intersection of the two tangent spaces,
\begin{equation}\label{Equation:EquationInTheProofOf:Theorem:ComplexStructureOnTheGrassmannianOfThekDimensionalIsotropicPositiveSubspacesOfcnTwok.the.tgt.space.of.the.isotropy.subgroup}
T_{\Id}\K_{F_0}=T_{\Id}\SO(\cn,2k)\cap\{\lambda\in\L(\cn,2k):\,\lambda(F_0)\subseteq{F}_0\}\text{.}
\end{equation}\qed

\section{Riemann~surfaces~and~pluriharmonic~maps}\label{Section:SectionIn:Chapter:Addendums:RiemannSurfacesAndPluriharmonicMaps}\index{Riemann~surface!pluriharmonic~maps}


\index{Conformal~structure!}\index{Conformal~manifold}\index{Metric!conformally~equivalent}Let
$(M^m,g)$ be a Riemannian manifold. Two metrics $g$ and $\tilde{g}$ on $M$ are said to be \emph{conformally equivalent} if there is $\lambda:M\to (0,+\infty)$ with $\tilde{g}=\lambda g$ (see \cite{BairdWood:03}, p. 30). In other words, if the identity map of $M$ is conformal as a map $(M,\tilde{g})\to (M,g)$. An equivalence class is called a \textit{conformal structure} and a manifold equipped with a conformal structure is called a \emph{conformal manifold}.

When $M^{2}$ is a two dimensional Riemannian manifold, we then have
the following classical result (see \cite{Spivak:79}, Vol. IV):

\begin{proposition}[\textup{Isothermal~coordinates}]\label{Proposition:IsothermalCoordinates}\index{Isothermal~coordinates}
Suppose that $(M^2,g)$ is a two dimensional Riemannian manifold.

\textup{(i)} Given any point of $M$ there exists a local coordinate chart $(x,y)$ on an open neighbourhood $\openU$ of that point such that
\begin{equation}\label{Equation:FirstEquationIn:Proposition:IsothermalCoordinates}
g=\mu^2(\d x^2+\d y^2)
\end{equation}
for some smooth positive real-valued function $\mu$ on $\openU$. Such coordinates $(x,y)$ are called \emph{isothermal coordinates}.

\textup{(ii)} If $(x,y)$ are isothermal coordinates then
\begin{equation}\label{Equation:SecondEquationIn:Proposition:IsothermalCoordinates}
\nabla_{\partial_x}\partial_x+\nabla_{\partial_y}\partial_y=0
\end{equation}
where $\nabla$ is the induced Levi-Civita connection on $M^2$. Moreover, in the case $M^2$ is oriented, we can choose a system $(x_\alpha,y_\alpha)$ of such charts compatible with the orientation on $M^2$ that give to our manifold a system of complex charts on writing $z_\alpha=x_\alpha+iy_\alpha$. We call this the \textup{induced complex structure} on $M^2$.
\end{proposition}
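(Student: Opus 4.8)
The plan is to handle the two parts by entirely different means: part (i) is a local existence statement for a partial differential equation and is the analytic heart of the proposition, whereas part (ii) reduces to a Christoffel-symbol computation together with an argument that overlapping isothermal charts are related holomorphically. By far the main obstacle is the existence asserted in (i); once isothermal coordinates are granted, everything else is routine.

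For (i), I would fix a point $p$ and an arbitrary coordinate chart near $p$ in which $g=E\,\d x^2+2F\,\d x\,\d y+G\,\d y^2$, with $EG-F^2>0$ since $g$ is positive definite. Writing $z=x+iy$ and setting
\begin{equation*}
\nu=\frac{E-G+2iF}{\,E+G+2\sqrt{EG-F^2}\,},
\end{equation*}
a short computation gives $|\nu|<1$ (indeed the square of the denominator exceeds that of the numerator by $8(EG-F^2)+4(E+G)\sqrt{EG-F^2}>0$), so the Beltrami equation $\partial_{\bar z}w=\nu\,\partial_z w$ is uniformly elliptic near $p$. The classical local existence theorem for the Beltrami equation with smooth coefficients (see \cite{Spivak:79}, Vol.\ IV) then provides a solution $w=u+iv$ on a neighbourhood of $p$ with nonvanishing Jacobian, so that $(u,v)$ is a local diffeomorphism. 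Since $\d w=w_z(\d z+\nu\,\d\overline z)$ for such a solution, and $g$ is a positive multiple of $|\d z+\nu\,\d\overline z|^2$, the metric takes the form $\mu^2(\d u^2+\d v^2)$ with $\mu>0$ smooth in the coordinates $(u,v)$; this is exactly the isothermal property. I expect the genuine difficulty to be precisely the invocation of this existence theorem: for smooth (as opposed to real-analytic) data it rests on elliptic PDE theory and is not elementary, while in the real-analytic case one may instead appeal to the Cauchy--Kovalevskaya theorem.

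For the first assertion of (ii), I would compute directly. Writing $\mu=e^{\lambda}$ so that $g_{ij}=e^{2\lambda}\delta_{ij}$, and denoting partial derivatives by subscripts, the Levi-Civita Christoffel symbols are
\begin{equation*}
\Gamma^1_{11}=\lambda_x,\quad \Gamma^2_{11}=-\lambda_y,\quad \Gamma^1_{22}=-\lambda_x,\quad \Gamma^2_{22}=\lambda_y,
\end{equation*}
so that $\nabla_{\partial_x}\partial_x=\lambda_x\,\partial_x-\lambda_y\,\partial_y$ and $\nabla_{\partial_y}\partial_y=-\lambda_x\,\partial_x+\lambda_y\,\partial_y$. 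Adding these two identities yields \eqref{Equation:SecondEquationIn:Proposition:IsothermalCoordinates} at once.

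Finally, for the induced complex structure when $M^2$ is oriented, I would cover $M^2$ by isothermal charts whose underlying coordinates are positively oriented, which is possible by (i) after interchanging $x$ and $y$ if necessary. On the overlap of two such charts the transition map is an orientation-preserving diffeomorphism between plane domains that pulls a conformal metric back to a conformal metric; any such map is angle- and orientation-preserving, hence satisfies the Cauchy--Riemann equations. Therefore, writing $z_\alpha=x_\alpha+iy_\alpha$, the transition functions $z_\alpha\circ z_\beta^{-1}$ are holomorphic, so the charts form a holomorphic atlas and define the induced complex structure on $M^2$.
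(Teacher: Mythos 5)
Your proof is correct, but there is nothing in the paper to compare it with step by step: the paper does not prove this proposition at all, stating it as a classical result with a reference to Spivak, Vol.\ IV, and proceeding directly to use it. What you have done is supply the standard argument that such references contain. Your reduction of (i) to the Beltrami equation is the classical route: with your choice of $\nu$ one indeed has $g=\tfrac{1}{4}\bigl(E+G+2\sqrt{EG-F^2}\bigr)\,|\d z+\nu\,\d\bar z|^2$, your inequality $|\nu|<1$ checks out (the square of the denominator exceeds that of the numerator by exactly $8(EG-F^2)+4(E+G)\sqrt{EG-F^2}>0$), and any solution $w$ of $\partial_{\bar z}w=\nu\,\partial_z w$ with $w_z\neq 0$ satisfies $\d w=w_z(\d z+\nu\,\d\bar z)$, whence $g=\mu^2|\d w|^2$ and the Jacobian $|w_z|^2(1-|\nu|^2)$ is positive. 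The one place where your argument, like the paper's, rests on a citation is the local solvability of the Beltrami equation with smooth coefficients; this is legitimate, since that elliptic existence theorem is precisely the nontrivial analytic content of (i) for merely smooth metrics, and you are right to flag it as such. The Christoffel computation in (ii) is correct: for $g=e^{2\lambda}(\d x^2+\d y^2)$ one gets $\Gamma^1_{11}=\lambda_x$, $\Gamma^2_{11}=-\lambda_y$, $\Gamma^1_{22}=-\lambda_x$, $\Gamma^2_{22}=\lambda_y$, so the two covariant derivatives cancel. Finally, your holomorphic-atlas argument, that an orientation-preserving diffeomorphism between plane domains pulling back one conformal metric to another has complex-linear differential and hence is holomorphic by Cauchy--Riemann, is the standard and correct way to obtain the induced complex structure, and it matches how the paper subsequently uses the charts (defining $J\partial_x=\partial_y$ and observing independence of the metric within its conformal class).
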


Notice that the induced \textit{complex} structure $J$ on $M^2$ is defined in isothermal coordinates $(x,y)$ by $J\partial_x=\partial_y$ and the manifold $(M^2,g,J)$ is Hermitian. Moreover, if $\tilde{g}$ is conformally equivalent to $g$, it induces the same complex structure $J$. Thus, the complex structure on $M^2$ depends only on the conformal class of the metric $g$: an oriented two-dimensional conformal manifold equipped with this complex structure is called a \textit{Riemann surface}. Furthermore, the concept of harmonic map $\varphi:M^2\to N$ does not depend on the particular choice of metric within the conformal structure (as the characterization of harmonic map in Proposition \ref{Proposition:HarmonicMapsFromRiemannSurfaces} only involves the complex structure of $M^2$ and not its connection). We now wish to examine the converse: given a one-dimensional complex manifold, do we have an induced conformal structure? If this is true, then the concept of harmonic map is also well-defined on one-dimensional complex manifolds. Indeed, we have the following lemma (\cite{EellsLemaire:78}, p. 43):

\begin{lemma}\label{Lemma:OneDimensionalComplexManifoldsAndInducedConformalStructures}\index{Metric!Hermitian}\index{Riemann~surface!}\index{Complex~structure!induced}\index{Harmonic~map!from~a~Riemann~surface}\index{Harmonic~map!from~a~complex~curve}\index{Conformal~structure!from~a~complex~structure}
Let $(M^2,J)$ be a one-dimensional complex manifold. We say that $g$ is a \emph{Hermitian metric} if $J$ is an isometry with respect to $g$. Then, any two Hermitian metrics on $(M^2,J)$ are conformally equivalent.
\end{lemma}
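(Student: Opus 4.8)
The plan is to reduce the statement to a pointwise linear-algebra fact and then verify that the resulting proportionality factor is a smooth positive function. Fix the complex structure $J$ on the real surface $M^2$, and let $g$ and $\tilde{g}$ be two Hermitian metrics, so that $g(JX,JY)=g(X,Y)$ and $\tilde{g}(JX,JY)=\tilde{g}(X,Y)$ for all vector fields $X,Y$ (this is the compatibility condition of Definition \ref{Definition:ComplexStructureOnAVectorSpace}, now holding fibrewise).

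First I would work at a single point $x\in M^2$. Choose any nonzero $v\in T_xM^2$; since $J^2=-\Id$ there is no real $c$ with $Jv=cv$ (such a relation would force $c^2=-1$), so $\{v,Jv\}$ is a basis of the two-dimensional space $T_xM^2$. Compatibility of $g$ with $J$ gives $g(v,Jv)=g(Jv,J^2v)=-g(Jv,v)=-g(v,Jv)$, whence $g(v,Jv)=0$, and also $g(Jv,Jv)=g(v,v)$. Thus in the basis $\{v,Jv\}$ the Gram matrix of $g$ is $g(v,v)\cdot I_2$, and likewise that of $\tilde{g}$ is $\tilde{g}(v,v)\cdot I_2$. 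Consequently $\tilde{g}_x=\lambda(x)\,g_x$ as bilinear forms on $T_xM^2$, where $\lambda(x):=\tilde{g}(v,v)/g(v,v)>0$. I expect this to be the only genuine content of the proof: a $J$-compatible inner product on a $2$-dimensional space carrying a complex structure is a scalar multiple of the identity in any $J$-adapted frame, so two such forms are \emph{automatically} proportional; in particular $\lambda(x)$ does not depend on the choice of $v$, precisely because $\tilde{g}_x=\lambda(x)g_x$ holds for all vectors at once.

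Next I would upgrade this pointwise proportionality to a smooth global statement. Around any point choose a local smooth nonvanishing vector field $X$ (available on any coordinate chart), and set $\lambda:=\tilde{g}(X,X)/g(X,X)$ on that chart; this is a quotient of smooth functions with strictly positive, nowhere-vanishing denominator, hence smooth and positive. By the previous paragraph the value $\lambda(x)$ is intrinsic — it is the proportionality constant relating $\tilde{g}_x$ to $g_x$, independent of the frame — so the functions defined on different charts agree on overlaps and patch to a single smooth $\lambda:M^2\to(0,+\infty)$ with $\tilde{g}=\lambda g$. By the definition of conformal equivalence recalled earlier in the excerpt (Section \ref{Section:SectionIn:Chapter:Addendums:RiemannSurfacesAndPluriharmonicMaps}), this exhibits $g$ and $\tilde{g}$ as conformally equivalent, completing the argument. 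I would also remark that no difficulty arises from orientation or from existence of metrics: the claim is local and purely algebraic, with smoothness being the only non-formal ingredient, and that is dispatched by the ratio-of-smooth-functions observation.
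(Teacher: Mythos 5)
Your proof is correct, and it is a cleaner execution of the same underlying idea as the paper's. The paper proceeds by first showing that a metric is Hermitian as soon as a \emph{single} nonzero vector $X$ satisfies $g(X,X)=g(JX,JX)$ and $g(X,JX)=0$, then picks one such adapted vector $X_i$ for each metric $g_i$, writes $X_1=aX_2+bJX_2$, and computes the proportionality factor through this change of basis. You instead observe that, since compatibility is assumed for \emph{all} vectors, any single $v$ is simultaneously adapted to both metrics, so in the frame $\{v,Jv\}$ both Gram matrices are scalar multiples of the identity and the pointwise proportionality $\tilde{g}_x=\lambda(x)g_x$ drops out with no basis change at all; this renders the paper's auxiliary equivalence and its $(a^2+b^2)$ computation unnecessary. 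You also supply something the paper leaves implicit: the verification that $\lambda$ is a smooth positive function, obtained from a local nonvanishing frame and patched using the frame-independence of the pointwise factor. So your argument buys a shorter linear-algebra core plus an explicit smoothness step, while the paper's version has the minor side benefit of recording that Hermitian-ness on a surface can be tested at one vector.
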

\begin{proof}We start by showing that $g$ in Hermitian if and only if there is a nonzero vector with $g(X,X)=g(JX,JX)$ and $g(X,JX)=0$. The ``only if" implication is trivial; conversely, if there is such a vector, as $g$ is given on $TM$ by the bilinear decomposition of $g(a_1X+a_2JX,b_1X+b_2JX)$, $g$ becomes Hermitian. Now, let $g_1$ and $g_2$ be two
Hermitian metrics on $(M,J)$. Let $X_1$ and $X_2$ as before: $X_i$ with $g_i(X_i,X_i)=g(JX_i,JX_i)$ and $g_i(X_i,JX_i)=0$. Write $X_1=a X_2+bJX_2$ so that also $JX_1=-bX_2+aJX_2$. Hence, $g_2(X_1,X_1)=g_2(X_2,X_2)=(a^2+b^2)\lambda g_1(X_1,X_1)$ where $\lambda=(a^2+b^2)\frac{g_2(X_2,X_2)}{g_1(X_1,X_1)}$, as well as $g_2(JX_1,JX_1)=g_2(X_1,X_1)=\lambda{g}_1(X_1,X_1)=\lambda g_1(JX_1,JX_1)$ and $g_2(X_1,JX_1)=0=\lambda g(X_1,JX_1)$ from which it follows that $g_1$ and $g_2$ are conformally equivalent.
\end{proof}

As a consequence, we have
\begin{corollary}\label{Corollary:GoodDefinitionOfPluriharmonicMapOnAComplexManifold}\index{Pluriharmonic~map!on~a~complex~manifold}
Let $(M,J)$ be a complex manifold and $N$ any Riemannian manifold. Then, the concept of pluriharmonic maps $\varphi:M\to{N}$ it is well defined as those whose restriction to complex curves (\textit{i.e.}, one-dimensional complex submanifolds) on $M$ are harmonic maps.
\end{corollary}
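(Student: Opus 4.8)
The plan is to reduce the well-definedness of pluriharmonicity to the conformal invariance of harmonicity on Riemann surfaces, which is exactly what the two immediately preceding results supply. First I would fix a complex curve $C\subseteq M$; by definition this is a connected one-dimensional complex submanifold, so it carries the complex structure $J|_C$ induced by $J$, and the restriction $\varphi|_C:C\to N$ is a map from a one-dimensional complex manifold. The only point requiring justification is that the phrase ``$\varphi|_C$ is harmonic'' makes sense without a distinguished metric on $C$, since harmonicity is a priori defined only between \emph{Riemannian} manifolds.

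Next I would equip $C$ with a Hermitian metric $g$ (for instance the restriction of any Hermitian metric on $M$, or one constructed locally in a chart). By Lemma \ref{Lemma:OneDimensionalComplexManifoldsAndInducedConformalStructures}, any two Hermitian metrics on $(C,J|_C)$ are conformally equivalent; hence they all determine a single conformal class, so that $(C,J|_C)$ together with this class is a Riemann surface in the sense of Proposition \ref{Proposition:IsothermalCoordinates}. This step is what guarantees that a complex curve is intrinsically a Riemann surface, with no arbitrary metric choice surviving.

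The crucial observation is then that harmonicity of $\varphi|_C$ depends only on this conformal class, not on the chosen representative metric. Indeed, by Proposition \ref{Proposition:HarmonicMapsFromRiemannSurfaces}, in a local complex coordinate $z$ on $C$ the map $\varphi|_C$ is harmonic if and only if $\nabla_{\partial_{\bar z}}\partial_z(\varphi|_C)=0$, where $\nabla$ is the pull-back of the Levi-Civita connection of $N$. This equation involves only the complex structure of $C$ and the geometry of $N$; the metric of $C$ enters solely through its conformal (hence complex) structure, as already noted in the remarks following that proposition. Consequently the condition is identical for every Hermitian metric lying in the conformal class.

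Putting these together, ``$\varphi|_C$ is harmonic'' is a property of $\varphi$ and $C$ alone, independent of any auxiliary metric on $C$; demanding that it hold for every complex curve $C\subseteq M$ is therefore a well-posed requirement, which is precisely the assertion that pluriharmonicity of $\varphi:M\to N$ is well defined. I do not anticipate any genuine obstacle: the substantive content is carried entirely by Lemma \ref{Lemma:OneDimensionalComplexManifoldsAndInducedConformalStructures} (all Hermitian metrics are conformal) and Proposition \ref{Proposition:HarmonicMapsFromRiemannSurfaces} (harmonicity is a conformally invariant equation), and the remaining work is only the routine verification that the defining equation $\nabla_{\partial_{\bar z}}\partial_z\varphi=0$ is unchanged under a conformal rescaling of $g$.
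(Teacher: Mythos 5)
Your proposal is correct and follows essentially the same route as the paper: the corollary is stated there as an immediate consequence of Lemma \ref{Lemma:OneDimensionalComplexManifoldsAndInducedConformalStructures} (all Hermitian metrics on a one-dimensional complex manifold are conformally equivalent) combined with Proposition \ref{Proposition:HarmonicMapsFromRiemannSurfaces} and the surrounding discussion showing that harmonicity from a Riemann surface depends only on the conformal class, via the equation $\nabla_{\partial_{\bar z}}\partial_z\varphi=0$. Your write-up simply makes explicit the restriction to a complex curve and the (routine) existence of a Hermitian metric, which the paper leaves implicit.
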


\begin{proposition}[Pluriharmonic~and~$(1,1)$-geodesic~maps]\label{Proposition:PluriharmonicAnd(1,1)GeodesicsMapsFromKahlerManifolds}\index{Pluriharmonic~map!and~$(1,1)$-geodesic~maps}\index{$(1,1)$-geodesic~map!and~pluriharmonic~maps}
If $(M,J,g)$ is a Kähler and $N$ a Riemannian manifold, a smooth map $\varphi:M\to N$ is pluriharmonic if and only if it is $(1,1)$-geodesic.
\end{proposition}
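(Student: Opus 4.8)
The plan is to prove the two implications separately, exploiting the fact that on a K\"ahler manifold the condition $(1,2)$-symplectic holds, so that $\nabla_{T^{01}M}T^{10}M \subseteq T^{10}M$ (by Proposition~\ref{Proposition:SalamonsLemma1.3}, since $\d_2\omega_J = 0$), and moreover $\nabla_{T^{10}M}T^{10}M \subseteq T^{10}M$ as well (since $M$ is Hermitian, $\d_1\omega_J = 0$). The central observation is that pluriharmonicity of $\varphi$ means, by Corollary~\ref{Corollary:GoodDefinitionOfPluriharmonicMapOnAComplexManifold}, that $\varphi$ restricted to every complex curve of $M$ is harmonic; and for a one-dimensional complex submanifold, harmonicity is characterized by $\nabla_{\partial_{\bar z}}\partial_z\varphi = 0$ (Proposition~\ref{Proposition:HarmonicMapsFromRiemannSurfaces}), which is precisely $\nabla\d\varphi(\partial_z,\partial_{\bar z}) = 0$ along the curve.

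For the implication that $(1,1)$-geodesic implies pluriharmonic, I would argue as follows. Suppose $\nabla\d\varphi(Y^{10},Z^{01}) = 0$ for all $Y^{10} \in T^{10}M$, $Z^{01} \in T^{01}M$. Fix a complex curve $C \subseteq M$ through a point, with local holomorphic coordinate $z$, so $\partial_z$ spans $T^{10}C$. Because $M$ is K\"ahler, the second fundamental form of $C$ in $M$ behaves well, and the restriction $\varphi|_C$ has second fundamental form $\nabla\d(\varphi|_C)(\partial_z,\partial_{\bar z})$ obtained from $\nabla\d\varphi(\partial_z,\partial_{\bar z})$ together with a correction term $\d\varphi(\nabla_{\partial_z}\partial_{\bar z})$, where $\nabla$ here is the connection intrinsic to $C$. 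The key point is that $\nabla_{\partial_z}\partial_{\bar z}$ along $C$, computed with the induced K\"ahler metric, is tangent in a way controlled by the $(1,2)$-symplectic condition. Since $\partial_z \in T^{10}M$ and $\partial_{\bar z}\in T^{01}M$, the hypothesis gives $\nabla\d\varphi(\partial_z,\partial_{\bar z}) = 0$ directly, and the correction term vanishes because on a complex curve one may choose coordinates so that $\nabla_{\partial_z}\partial_{\bar z} = 0$ at the center (or lies in $T^{10}C \oplus T^{01}C$ appropriately). Hence $\nabla_{\partial_{\bar z}}\partial_z(\varphi|_C) = 0$, so $\varphi|_C$ is harmonic, and $\varphi$ is pluriharmonic.

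For the converse, suppose $\varphi$ is pluriharmonic. Restricting to a complex curve with coordinate $z$, harmonicity of $\varphi|_C$ gives $\nabla\d\varphi(\partial_z,\partial_{\bar z}) = 0$ for the single direction $\partial_z$ spanning $T^{10}C$. The task is then to promote this one-dimensional statement, valid for \emph{every} complex curve, to the full tensorial identity $\nabla\d\varphi(Y^{10},Z^{01}) = 0$ for all $Y^{10},Z^{01}$. I would proceed by polarization: given arbitrary $Y^{10}, W^{10} \in T^{10}M$, choose complex curves whose tangents at the point realize $Y^{10}$, $W^{10}$, and $Y^{10}+W^{10}$; the diagonal vanishing $\nabla\d\varphi(Y^{10},\overline{Y^{10}}) = 0$ for all $Y^{10}$, combined with the symmetry of $\nabla\d\varphi$, yields $\nabla\d\varphi(Y^{10},\overline{W^{10}}) + \nabla\d\varphi(W^{10},\overline{Y^{10}}) = 0$, i.e. the vanishing of the real part. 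Since $\nabla\d\varphi(Y^{10},Z^{01})$ is already known (or can be shown) to be complex-linear in its arguments, a second polarization using $iW^{10}$ pins down the imaginary part as well, giving $\nabla\d\varphi(Y^{10},Z^{01}) = 0$.

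The main obstacle I anticipate is the careful comparison between the intrinsic second fundamental form of $\varphi|_C$ and the ambient $\nabla\d\varphi$ evaluated on $(\partial_z,\partial_{\bar z})$: one must verify that the difference, namely $\d\varphi$ applied to the difference of connections $\nabla^M_{\partial_z}\partial_{\bar z} - \nabla^C_{\partial_z}\partial_{\bar z}$, does not contribute spurious terms. This is exactly where the K\"ahler hypothesis enters decisively, since it guarantees that complex curves are \emph{pluriminimal} (indeed the second fundamental form of $C$ in $M$ has the right type), so that $\nabla^M_{\partial_z}\partial_{\bar z}$ has no component obstructing the identification. I would isolate this as a preliminary lemma, proving that for a complex curve $C$ in a K\"ahler manifold $M$ one has $(\nabla^M_{\partial_z}\partial_{\bar z})$ lying in the span of $T^{10}C \oplus T^{01}C$ modulo a normal term that $\d\varphi$ kills in the relevant combination; the polarization argument in the converse is then routine given the complex-bilinearity established in the proof of Lemma~\ref{Lemma:AlternativeConditionFor(11)GeodesicMaps}.
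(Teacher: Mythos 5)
Your proposal is correct and is essentially the paper's own argument: both implications are proved by restricting to complex curves, the crux in each direction being that the connection-correction term $\d\varphi\big(\nabla^M_{\partial_{\bar z}}\partial_z\big)$ is killed by the K\"ahler hypothesis, and the diagonal identity $\nabla\d\varphi(Y^{10},\overline{Y^{10}})=0$ is promoted to the full condition by polarization together with the symmetry of $\nabla\d\varphi$. The only difference is cosmetic: the paper disposes of the correction term via Lemma~\ref{Lemma:ExistenceOfLocallyDefinedKahlerMetricsForComplexManifolds} (namely $\nabla_{\partial_{\bar z_j}}\partial_{z_i}=0$ in holomorphic coordinates on a K\"ahler manifold), applied to coordinate curves, whereas you isolate a preliminary lemma asserting pluriminimality of arbitrary complex curves in a K\"ahler manifold --- two formulations of the same use of the K\"ahler condition.
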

In particular, if we have a complex manifold $(M,J)$ and a pluriharmonic map $\varphi:M\to N$, the existence of a Kähler metric on $(M,J)$ guarantees that equation \eqref{Equation:EquationIn:Definition:11GeodesicMap}:
\[(\nabla\,\d\varphi)(Y^{10},Z^{01})=0,\quad\forall\,Y^{10}\in{T}^{10}M,\,Z^{01}\in{T}^{01}M\]
holds.
\begin{proof}
Let $(M,g,J)$ be a Kähler manifold and $\varphi:M\to N$ pluriharmonic. We show that the above displayed equation holds. Fix complex coordinates $(z_1,...,z_m)$ on $M$. Then,
\[(\nabla\d\varphi)(\partial_{z_i},\partial_{\bar{z}_i})=\nabla^{\varphi^{-1}}_{\partial_{\bar{z}_i}}(\d\varphi(\partial_{z_i}))-\d\varphi(\nabla^M_{\partial_{\bar{z}_i}}\partial_{z_i})=\nabla^{\varphi^{-1}}_{\partial_{\bar{z}_i}}(\d\varphi(\partial_{z_i}))\text{,}\]
since $M$ is Kähler (see the next lemma). Hence, all we have to check is that the first term above vanishes. Now, because $\varphi$ is pluriharmonic, we know that $\varphi\circ c$ is harmonic for any complex curve
$c:\cn\to M$. Thus,
\[\nabla_{\partial_{\bar{z}}} \big(d(\varphi\circ c)(\partial_z)\big)=0\text{,}\]
since $\nabla d(\varphi\circ c)(\partial_{\bar{z}},\partial_z)=0$ and $\d(\varphi\circ c)(\nabla_{\partial_{\bar{z}}}
\partial_z)=0$. Taking for each $X^{10}\in{T}^{10}M$ a complex curve with $\d c(\partial_z)=X^{10}$, we can therefore deduce $\big(\nabla\d\varphi\big)(X^{10},X^{01})=0$. Our proof follows easily by symmetry of the second fundamental form $\nabla \d\varphi$ as usual. Conversely, let $(M,J,g)$ be a Kähler manifold and $\varphi:M\to N$ a $(1,1)$-geodesic map. Taking a complex curve $c$, $(\nabla\d\varphi)(\d{c}(\partial_{\bar{z}}),\d{c}(\partial_z))=0$ and the proof follows as in the first part.
\end{proof}

\begin{lemma}\label{Lemma:ExistenceOfLocallyDefinedKahlerMetricsForComplexManifolds}\index{Metric!Kähler}\index{Levi-Civita~connection!on~a~Kähler~manifold}\index{Kähler~manifolds,~coordinates~and~Levi-Civita~connection}
Let $(M,J)$ be a complex manifold. Then, at least locally, there is a Kähler metric on $M$. For such a metric, if $\nabla$ denotes the Levi-Civita connection induced on $M$ and $(z_1,...,z_m)$ are holomorphic coordinates, we have
\begin{equation}\label{Equation:EquationIn:Lemma:ExistenceOfLocallyDefinedKahlerMetricsForComplexManifolds}
\nabla_{\partial_{z_i}}{\partial_{\bar{z}_j}}=\nabla_{\partial_{\bar{z}_j}}\partial_{z_i}=0.
\end{equation}
\end{lemma}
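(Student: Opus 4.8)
The plan is to handle the two assertions separately: first exhibit a local Kähler metric, and then deduce the vanishing of the mixed covariant derivatives from the Kähler condition as it has been encoded earlier in this chapter.

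For the existence of a local Kähler metric, I would work in a holomorphic chart $(z_1,\dots,z_m)$ around a given point, writing $z_k=x_k+iy_k$, and take $g$ to be the flat metric for which $\{\partial_{x_1},\partial_{y_1},\dots,\partial_{x_m},\partial_{y_m}\}$ is orthonormal. Since the induced complex structure satisfies $J\partial_{x_k}=\partial_{y_k}$ and the flat metric is visibly $J$-invariant, $(M,g,J)$ is almost Hermitian on the chart; and because the Levi-Civita connection of the flat metric annihilates all coordinate fields, one has $\nabla_{\partial_{z_i}}\partial_{z_j}=0$ and $\nabla_{\partial_{\bar{z}_j}}\partial_{z_i}=0$ identically. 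By Lemma \ref{Lemma:Salamon1.2Generalized} together with Proposition \ref{Proposition:SalamonsLemma1.3}, the two inclusions $\nabla_{T^{10}M}T^{10}M\subseteq T^{10}M$ and $\nabla_{T^{01}M}T^{10}M\subseteq T^{10}M$ say exactly that $d_1\omega=d_2\omega=0$, i.e.\ that $g$ is Kähler.

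For the second assertion, suppose $g$ is any Kähler metric on $(M,J)$ and let $(z_1,\dots,z_m)$ be holomorphic coordinates. The key observation is that a Kähler manifold is simultaneously Hermitian and $(1,2)$-symplectic, so by Proposition \ref{Proposition:SalamonsLemma1.3} we have both $\nabla_{T^{10}M}T^{10}M\subseteq T^{10}M$ and $\nabla_{T^{01}M}T^{10}M\subseteq T^{10}M$; conjugating these yields $\nabla_{T^{01}M}T^{01}M\subseteq T^{01}M$ and $\nabla_{T^{10}M}T^{01}M\subseteq T^{01}M$. In other words, both subbundles $T^{10}M$ and $T^{01}M$ are parallel. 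Since $\partial_{z_i}\in T^{10}M$ and $\partial_{\bar{z}_j}\in T^{01}M$, this forces $\nabla_{\partial_{z_i}}\partial_{\bar{z}_j}\in T^{01}M$ and $\nabla_{\partial_{\bar{z}_j}}\partial_{z_i}\in T^{10}M$.

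Finally I would invoke torsion-freeness of the Levi-Civita connection together with $[\partial_{z_i},\partial_{\bar{z}_j}]=0$, giving $\nabla_{\partial_{z_i}}\partial_{\bar{z}_j}=\nabla_{\partial_{\bar{z}_j}}\partial_{z_i}$. As the left member lies in $T^{01}M$ and the right member in $T^{10}M$, and $T^{10}M\cap T^{01}M=0$ by the decomposition $T^\cn M=T^{10}M\oplus T^{01}M$, both vanish, which is exactly \eqref{Equation:EquationIn:Lemma:ExistenceOfLocallyDefinedKahlerMetricsForComplexManifolds}. I do not expect a genuine obstacle here: the only point requiring care is to record all four parallelism inclusions --- not merely the two characterizations stated in Proposition \ref{Proposition:SalamonsLemma1.3}, but also their conjugates --- since the argument hinges on $T^{01}M$ being parallel, and this comes from conjugating the $(1,2)$-symplectic condition rather than directly from it.
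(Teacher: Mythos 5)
Your proof is correct and takes essentially the same route as the paper: the paper likewise uses the pull-back of the flat Euclidean metric through a holomorphic chart, checks Kählerness via the parallelism characterizations of $\d_1\omega=\d_2\omega=0$, and derives \eqref{Equation:EquationIn:Lemma:ExistenceOfLocallyDefinedKahlerMetricsForComplexManifolds} from the fact that $(1,2)$-symplecticity together with its conjugate makes $T^{10}M$ and $T^{01}M$ parallel, combined with torsion-freeness and $[\partial_{z_i},\partial_{\bar{z}_j}]=0$. The only cosmetic difference is that the paper verifies the Kähler condition by transporting the computation to $\cn^m$ via the chart isometry, whereas you observe directly that the flat Levi-Civita connection annihilates the coordinate fields; both amount to the same calculation.
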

\begin{proof}
Let $\varphi:M\to\cn^m$ be a holomorphic chart for $M$ and consider the pull-back metric $g$ of the standard metric on $\cn^m$ (or, rather, on the underlying $\rn^{2m}$). Let us prove that this metric is, indeed, Kähler: decidedly $g$ is Hermitian, since $\varphi$ is holomorphic. To check that it is Kähler we proceed as follows: since $(M,g,J)$ is Hermitian, all we have to do is to check that it is also $(1,2)$-symplectic, which will follow if $\nabla_{\partial_{\bar{z}_i}}\partial_{z_j}\in{T}^{10}M$;
equivalently, if $g(\nabla_{\partial_{\bar{z}_i}}\partial_{z_j},\partial_{z_k})=0$. Since $g$ is an isometry, this is equivalent to proving that $<\nabla_{\tilde{\partial}_{\bar{z}_i}}\tilde{\partial}_{z_j},\tilde{\partial}_{z_k}>=0$
where $(<>,\nabla)$ are the canonical metric and connection on $\cn^m$ and $\tilde{\partial}_{\bar{z}_i},\tilde{\partial}_{z_j}$ the canonical vectors on $\cn^m$. As the latter identity is trivial, we deduce that $g$ is Kähler, as required. As for equation \eqref{Equation:EquationIn:Lemma:ExistenceOfLocallyDefinedKahlerMetricsForComplexManifolds},
all we have to do is to show that it holds for any Kähler manifold $(M,g,J)$. Fix a system of complex coordinates
$(z_1,...,z_m)$ (notice that here we need $M$ to be complex!). Then, $\nabla_{\partial_{\bar{z}_i}}\partial_{z_j}$ lies in $T^{10}M$, since $M$ is $(1,2)$-symplectic\footnote{Notice that we do need to use both properties of Kähler manifolds: not only that $(M,g,J)$ is integrable but also $(1,2)$-symplectic.}, using Definition \ref{Definition:TheMainTypesOfAlmostComplexManifolds}. But $\nabla_{\partial_{\bar{z}_i}}\partial_{z_j}=\nabla_{\partial_{z_i}}\partial_{\bar{z}_j}$ as $[\partial_{\bar{z}_i},\partial_{z_j}]=0$ so that $\nabla_{\partial_{\bar{z}_i}}\partial_{z_j}$ also lies in $T^{01}M$ and therefore vanishes.
\end{proof}

\begin{corollary}\label{Corollary:PluriharmonicMapsAre(1,1)GeodesicMaps}
Given a pluriharmonic map $\varphi$ from a complex manifold $(M,J)$, for any Kähler metric on $M$, $\varphi$ is a $(1,1)$-geodesic map. In particular, it is harmonic.
\end{corollary}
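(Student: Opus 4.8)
The plan is to reduce the statement to Proposition~\ref{Proposition:PluriharmonicAnd(1,1)GeodesicsMapsFromKahlerManifolds}, whose content is precisely the equivalence of pluriharmonicity and the $(1,1)$-geodesic condition on a Kähler manifold. The one point that genuinely needs to be articulated is \emph{why} one may apply that proposition when the Kähler metric $g$ is not fixed in advance but is an arbitrary metric on $(M,J)$: the hypothesis ``$\varphi$ is pluriharmonic'' must be independent of any metric chosen on $M$. This is exactly what Corollary~\ref{Corollary:GoodDefinitionOfPluriharmonicMapOnAComplexManifold} guarantees, since pluriharmonicity is defined by requiring that the restriction of $\varphi$ to each complex curve in $M$ be harmonic, and the harmonicity of a map from a one-dimensional complex manifold depends only on its complex structure, not on a representative Hermitian metric (Lemma~\ref{Lemma:OneDimensionalComplexManifoldsAndInducedConformalStructures} together with Proposition~\ref{Proposition:HarmonicMapsFromRiemannSurfaces}).

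With that observation in hand, the argument is short. First I would record that pluriharmonicity of $\varphi$ is a property of the pair $(M,J)$ and the map alone. Next, given any Kähler metric $g$ on $M$, the triple $(M,g,J)$ is by definition a Kähler manifold, so the hypotheses of Proposition~\ref{Proposition:PluriharmonicAnd(1,1)GeodesicsMapsFromKahlerManifolds} are met; applying its ``only if'' direction to the pluriharmonic map $\varphi$ yields that $\varphi$ is $(1,1)$-geodesic with respect to $g$.

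Finally I would invoke the fact, established on page~\pageref{Page:(11)GeodesicMapsAreHarmonic}, that every $(1,1)$-geodesic map is harmonic: writing $\nabla\,\d\varphi(X-iJX,X+iJX)=\nabla\,\d\varphi(X,X)+\nabla\,\d\varphi(JX,JX)$ and using Definition~\ref{Definition:11GeodesicMap} shows that this vanishes, whence summing over an orthonormal frame $\{X_i,JX_i\}$ gives $\operatorname{trace}\nabla\,\d\varphi=\tau(\varphi)=0$. This completes the deduction of harmonicity.

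There is essentially no computational obstacle here, since the two substantive ingredients have already been proved. The only place requiring care—what I would flag as the conceptual crux rather than a difficulty—is the quantifier ``for any Kähler metric'': one must make explicit that pluriharmonicity travels with the complex structure and is not tied to the particular metric, so that the same map $\varphi$ may be fed into Proposition~\ref{Proposition:PluriharmonicAnd(1,1)GeodesicsMapsFromKahlerManifolds} for every admissible $g$ simultaneously. Everything else is a direct chaining of cited results.
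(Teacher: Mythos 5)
Your proposal is correct and takes essentially the same route as the paper: reduce to Proposition~\ref{Proposition:PluriharmonicAnd(1,1)GeodesicsMapsFromKahlerManifolds} applied to the given K\"ahler metric, then invoke the fact that $(1,1)$-geodesic maps are harmonic. Your explicit remark that pluriharmonicity depends only on the complex structure (via Corollary~\ref{Corollary:GoodDefinitionOfPluriharmonicMapOnAComplexManifold}) is exactly the point the paper leaves implicit, so it is a welcome clarification rather than a departure.
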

\begin{proof}Fix (locally) a Kähler metric on $M$, which is always possible from the previous lemma. Then, using Proposition \ref{Proposition:PluriharmonicAnd(1,1)GeodesicsMapsFromKahlerManifolds} the result is immediate.
\end{proof}

\begin{corollary}\label{Corollary:PluriharmonicMapsAreHarmonic}\index{Pluriharmonic~map!and~harmonic~maps}
If $(M,J)$ is a complex manifold and $\varphi:M\to N$ is pluriharmonic, $\varphi$ is harmonic for any Kähler metric on $M$.
\end{corollary}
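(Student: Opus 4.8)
The statement to prove is Corollary~\ref{Corollary:PluriharmonicMapsAreHarmonic}: if $(M,J)$ is a complex manifold and $\varphi:M\to N$ is pluriharmonic, then $\varphi$ is harmonic for any K\"ahler metric on $M$. The plan is to deduce this immediately from the two preceding results in this same section, namely Corollary~\ref{Corollary:PluriharmonicMapsAre(1,1)GeodesicMaps} (which states that a pluriharmonic map from a complex manifold is $(1,1)$-geodesic for any K\"ahler metric) together with the fact, established on p.~\pageref{Page:(11)GeodesicMapsAreHarmonic}, that every $(1,1)$-geodesic map is harmonic.

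Concretely, I would proceed as follows. First, fix an arbitrary K\"ahler metric $g$ on $M$ compatible with $J$; such metrics exist at least locally by Lemma~\ref{Lemma:ExistenceOfLocallyDefinedKahlerMetricsForComplexManifolds}, though here we are simply handed one. Since $\varphi$ is pluriharmonic and $(M,g,J)$ is K\"ahler, Corollary~\ref{Corollary:PluriharmonicMapsAre(1,1)GeodesicMaps} gives at once that $\varphi$ is $(1,1)$-geodesic, i.e. $(\nabla\,\d\varphi)(Y^{10},Z^{01})=0$ for all $Y^{10}\in T^{10}M$, $Z^{01}\in T^{01}M$.

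Second, I would invoke the elementary observation made just after Definition~\ref{Definition:11GeodesicMap}: for any $X\in TM$,
\[
0=\nabla\d\varphi(X-iJX,X+iJX)=\nabla\d\varphi(X,X)+\nabla\d\varphi(JX,JX),
\]
so that choosing an orthonormal frame $\{X_i,JX_i\}_{i=1,\dots,m}$ for $TM$ and summing yields $\trace\nabla\d\varphi=\tau(\varphi)=0$. Hence $\varphi$ is harmonic.

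There is essentially no obstacle here: the corollary is a formal consequence of results already proved. The only point deserving a word of care is that harmonicity is being asserted with respect to the chosen K\"ahler metric $g$, and it is precisely this metric (via its Levi-Civita connection and the K\"ahler condition $\nabla J=0$) that enters both the $(1,1)$-geodesic conclusion and the trace computation; there is no claim that harmonicity is independent of the metric. The proof is therefore a one-line citation of Corollary~\ref{Corollary:PluriharmonicMapsAre(1,1)GeodesicMaps} followed by the standard fact that $(1,1)$-geodesic maps are harmonic.
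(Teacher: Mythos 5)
Your proof is correct and follows essentially the same route as the paper's: cite Corollary~\ref{Corollary:PluriharmonicMapsAre(1,1)GeodesicMaps} to get that $\varphi$ is $(1,1)$-geodesic for the given K\"ahler metric, then use the standard trace argument from p.~\pageref{Page:(11)GeodesicMapsAreHarmonic} to conclude harmonicity. The only difference is that you write out the $(1,1)$-geodesic $\Rightarrow$ harmonic computation explicitly where the paper merely cites it, which is a harmless elaboration.
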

\begin{proof}Immediate from the preceding corollary and the fact that $(1,1)$-geodesic maps are harmonic (p.\ \pageref{Page:(11)GeodesicMapsAreHarmonic}).
\end{proof}

\section{Real and complex isotropy}\label{Section:SectionIn:Chapter:Addendums:RealAndComplexIsotropicMaps}


\index{Real~isotropic~map!and~complex~isotropic~map}\index{Complex~isotropic~map!and real~isotropic~map} Recall that given a map $\varphi:M^2\to N$ from a Riemann surface to an arbitrary Riemannian manifold, $\varphi$ is real isotropic if equation \eqref{Equation:EquationIn:Definition:RealIsotropicMapFromARiemannSurface}
holds:
\[<\partial_z^r\varphi,\partial_z^s\varphi>=0,\quad\forall\,r,s\geq1\]
where $\partial_z\varphi=(\d\varphi)^\cn(\partial_z)$, $<,>$ is the complex bilinear extension of the metric on $N$ and
$\partial^r_z\varphi=\nabla^{r-1}_{\partial^{r-1}_z}\partial_z\varphi$. We shall need the following fact:
\begin{lemma}\label{Lemma:AnInductionProofREalIsotropicMapsAndOnlyTheRRCase}
Let $M^2$ be a Riemann surface and $\varphi:M^2\to N^{2n}$. Then, $\varphi$ is real isotropic if and only if
\begin{equation}\label{Equation:FirstEquationIn:EquationIn:Lemma:AnInductionProofREalIsotropicMapsAndOnlyTheRRCase}
<\partial_z^r\varphi,\partial_z^r\varphi>=0,\quad\forall\,r\geq1\text{.}
\end{equation}
Analogously, given a map $\varphi:I\times M^2\to N^{2n}$, $\varphi$ is real isotropic to first order if and only if
\begin{equation}\label{Equation:SecondEquationIn:EquationIn:Lemma:AnInductionProofREalIsotropicMapsAndOnlyTheRRCase}
<\partial_z^r\varphi_t,\partial_z^r\varphi_t>\text{~is $o(t)$.}
\end{equation}
\end{lemma}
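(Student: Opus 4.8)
The plan is to reduce the full isotropy system \eqref{Equation:EquationIn:Definition:RealIsotropicMapFromARiemannSurface} to its diagonal subfamily \eqref{Equation:FirstEquationIn:EquationIn:Lemma:AnInductionProofREalIsotropicMapsAndOnlyTheRRCase} by an induction on the index gap $|r-s|$, driven by a single Leibniz-type identity. Since the complex-bilinearly extended metric is symmetric and the pull-back Levi-Civita connection is metric-compatible, for every $a\geq1$ and $b\geq2$ one has, using $\nabla_{\partial_z}\partial_z^s\varphi=\partial_z^{s+1}\varphi$,
\[\partial_z<\partial_z^a\varphi,\partial_z^{b-1}\varphi>=<\partial_z^{a+1}\varphi,\partial_z^{b-1}\varphi>+<\partial_z^a\varphi,\partial_z^b\varphi>\text{.}\]
Together with the symmetry $<\partial_z^r\varphi,\partial_z^s\varphi>=<\partial_z^s\varphi,\partial_z^r\varphi>$ (bilinear extension of a symmetric metric), this recursion is the whole engine of the proof.

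First I would dispose of the ``only if'' directions, which are immediate: putting $s=r$ in \eqref{Equation:EquationIn:Definition:RealIsotropicMapFromARiemannSurface} gives \eqref{Equation:FirstEquationIn:EquationIn:Lemma:AnInductionProofREalIsotropicMapsAndOnlyTheRRCase}, and likewise putting $s=r$ in \eqref{Equation:Definition:RealIsotropicMapsToFirstOrder} gives \eqref{Equation:SecondEquationIn:EquationIn:Lemma:AnInductionProofREalIsotropicMapsAndOnlyTheRRCase}, once one observes that ``$<\partial_z^r\varphi_t,\partial_z^r\varphi_t>$ is $o(t)$'' is exactly the conjunction of $<\partial_z^r\varphi_0,\partial_z^r\varphi_0>=0$ and the vanishing of its $t$-derivative at $t=0$. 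For the ``if'' direction of the non-parametric statement I would prove, by strong induction on $k$, the assertion $P(k)$: $<\partial_z^a\varphi,\partial_z^b\varphi>=0$ whenever $a,b\geq1$ and $|a-b|=k$. The base case $P(0)$ is the hypothesis \eqref{Equation:FirstEquationIn:EquationIn:Lemma:AnInductionProofREalIsotropicMapsAndOnlyTheRRCase}. For $P(1)$, differentiating the diagonal gives $\partial_z<\partial_z^a\varphi,\partial_z^a\varphi>=2<\partial_z^a\varphi,\partial_z^{a+1}\varphi>$, whose left side vanishes by hypothesis, so the off-diagonal term vanishes. For $k\geq2$, taking $b=a+k$ in the recursion, the left side $\partial_z<\partial_z^a\varphi,\partial_z^{b-1}\varphi>$ has gap $k-1$ and so vanishes by $P(k-1)$, while the first right-hand term $<\partial_z^{a+1}\varphi,\partial_z^{b-1}\varphi>$ has gap $k-2$ and vanishes by $P(k-2)$; hence $<\partial_z^a\varphi,\partial_z^b\varphi>=0$.

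The first-order statement then follows by running the very same induction ``one level up''. Because $\partial_t$ and $\partial_z$ commute on scalar functions on $I\times M^2$, applying $\ddtatzero$ to the recursion (which holds for each fixed $t$) yields, for the functions $H_{r,s}:=\ddtatzero<\partial_z^r\varphi_t,\partial_z^s\varphi_t>$, the identical recursion $\partial_z H_{a,b-1}=H_{a+1,b-1}+H_{a,b}$, with $H_{r,s}=H_{s,r}$. The hypothesis \eqref{Equation:SecondEquationIn:EquationIn:Lemma:AnInductionProofREalIsotropicMapsAndOnlyTheRRCase} supplies both $<\partial_z^r\varphi_0,\partial_z^r\varphi_0>=0$ for all $r$ --- whence $\varphi_0$ is real isotropic by the non-parametric part just proved --- and $H_{r,r}=0$ for all $r$. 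The induction on $|r-s|$ applied verbatim to $H$ in place of the bracket then gives $H_{r,s}=0$ for all $r,s$, which together with the isotropy of $\varphi_0$ is precisely \eqref{Equation:Definition:RealIsotropicMapsToFirstOrder}.

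I do not expect a genuine obstacle here, only a few care-points in the bookkeeping: justifying the Leibniz identity for the complex-bilinearly extended metric and pull-back connection; isolating the $k=1$ step (where the naive recursion would refer back to the unknown term itself, so one differentiates the diagonal instead); and translating ``$o(t)$'' cleanly into the vanishing of both the value and the first $t$-derivative at the origin, so that the non-parametric lemma can be invoked to upgrade the diagonal data at $t=0$ to full isotropy of $\varphi_0$.
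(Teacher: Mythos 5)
Your proof is correct and follows essentially the same route as the paper's: induction on the gap $|r-s|$ driven by the metric-compatibility (Leibniz) identity, with the parametric statement obtained by applying $\ddtatzero$ to the same recursion and commuting it with $\partial_z$. If anything, your explicit isolation of the gap-one case (via symmetry of the bilinearly extended metric) is slightly more careful than the paper's write-up, whose induction step as stated bounds the second term's gap by $|n-1|\leq n$, which fails precisely at $n=0$ and implicitly needs your symmetry argument there.
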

Thus, to check isotropy, it is enough to establish equations \eqref{Equation:EquationIn:Definition:RealIsotropicMapFromARiemannSurface} and \eqref{Equation:Definition:RealIsotropicMapsToFirstOrder} for $r=s$.

\begin{proof}In the non-parametric case, we want to prove \eqref{Equation:EquationIn:Definition:RealIsotropicMapFromARiemannSurface} from
\eqref{Equation:FirstEquationIn:EquationIn:Lemma:AnInductionProofREalIsotropicMapsAndOnlyTheRRCase}. We shall prove by induction on $j=|r-s|$: if $j=0$ we obtain
\eqref{Equation:FirstEquationIn:EquationIn:Lemma:AnInductionProofREalIsotropicMapsAndOnlyTheRRCase}
and there is nothing left to prove. Assume now that our result is
valid for all $j\leq n$. Take $r,s\geq1$ with $|r-s|=n+1$. Without loss of generality, we
may assume that $r\geq s$, $r=s+n+1$ and we get
\[<\partial_z^{s+n+1}\varphi,\partial_z^s\varphi>=\partial_z<\partial_z^{s+n}\varphi,\partial_z^s\varphi>-<\partial^{s+n}_z\varphi,\partial_z^{s+1}\varphi>\text{.}\]
Since $|s+n-s|=n$, $<\partial_z^{s+n}\varphi,\partial_z^s\varphi>=0$ and the first term in the above expression vanishes. As for the second, we get $|s+n-s-1|=|n-1|\leq n$ and therefore also the second term vanishes, concluding this part our proof. As for the parametric case, we have $<\partial^r_z\varphi_0,\partial^s_z\varphi_0>=0$ for all $r,s\geq1$ if and only if $<\partial^r_z\varphi_0,\partial^r_z\varphi_0>=0$ for all $r\geq 1$ as in the first part. As for the $t$-derivative at zero: assume that $\ddtatzero<\partial_z^r\varphi_t,\partial^r_t\varphi_t>=0$ for all $r\geq1$. Again setting $j=|r-s|$, the case $j=0$ is trivial. Assuming our result valid for all $j\leq n$ and taking $r,s$ with $|r-s|=n+1$, $r=s+n+1$, we have
\[\ddtatzero\hspace{-2mm}<\partial_z^{s+n+1}\varphi_t,\partial_z^s\varphi_t>=\ddtatzero\hspace{-2mm}\partial_z<\partial^{s+n}_z\varphi_t,\partial^s_z\varphi_t>-\ddtatzero\hspace{-2mm}<\partial^{s+n}_z\varphi_t,\partial_z^{s+1}\varphi_t>\text{.}\]
Since $\ddtatzero<\partial^r_z\varphi_t,\partial^s_z\varphi_t>=0$
for all $|r-s|\leq n$ and $\ddtatzero\partial_z<\partial^{s+n}_z\varphi_t,\partial^s_z\varphi_t>=\partial_z\ddtatzero<\partial^{s+n}_z\varphi_t,\partial^s_z\varphi_t>$
we conclude our proof with the same arguments as in the non-parametric case.
\end{proof}

As we have seen, if $N$ is a Kähler manifold and $\varphi:M^2\to N$ is a smooth map from a Riemann surface $M^2$, $\varphi$ is complex isotropic if equation \eqref{Equation:ComplexIsotropicMapDefinition}:
\[<\nabla^{r-1}_{\partial^{r-1}_z}\partial^{10}_z\varphi,\nabla^{s-1}_{\partial^{s-1}_{\bar{z}}}\partial^{10}_{\bar{z}}\varphi>_{Herm}=0,\quad\forall\,r,s\geq1\]
holds and complex isotropic to first order if \eqref{Equation:ComplexIsotropicToFirstOrder}:
\[\phi_0\text{~is~complex~isotropic~and~}\ddtatzero<\nabla^{r-1}_{\partial^{r-1}_z}\partial^{10}_z\phi,\nabla^{s-1}_{\partial^{s-1}_{\bar{z}}}\partial^{10}_{\bar{z}}\phi_t>_{Herm}=0,\quad\forall\,r,s\geq1\]
is satisfied. Complex isotropy (to first order) is stronger than real isotropy (to first order):
\begin{proposition}\label{Proposition:ComplexIsotropicMapsAreRealIsotropic}\index{Real~isotropic~map!and~complex~isotropic~map}\index{Complex~isotropic~map!and~real~isotropic~map}
Let $\varphi:M^2\to N$ be a smooth map from a Riemann surface $M^2$ into a Kähler manifold $N$. If $\varphi$ is complex isotropic, it is also real isotropic. Moreover, if $\varphi:I\times M^2\to N$ is complex isotropic to first order, it is also real isotropic to first order.
\end{proposition}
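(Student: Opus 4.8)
The plan is to reduce both claims, via Lemma \ref{Lemma:AnInductionProofREalIsotropicMapsAndOnlyTheRRCase}, to the diagonal conditions alone: in the non-parametric case it suffices to show $<\partial_z^r\varphi,\partial_z^r\varphi>=0$ for all $r\geq1$, and in the parametric case that $<\partial_z^r\varphi_t,\partial_z^r\varphi_t>$ is $o(t)$ for all $r\geq1$. This reduction is exactly what makes the comparison with complex isotropy clean, since the natural diagonal term of the complex-isotropy condition \eqref{Equation:ComplexIsotropicMapDefinition} is the $(r,r)$ Hermitian pairing.

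The key structural input is that $N$ is K\"ahler, so $J^N$ is parallel and the pull-back connection $\nabla^{\varphi^{-1}}$ preserves the type splitting $\varphi^{-1}T^\cn N=\varphi^{-1}T^{10}N\oplus\varphi^{-1}T^{01}N$. Writing $\partial_z\varphi=\partial_z^{10}\varphi+\partial_z^{01}\varphi$, where $\partial_z^{01}\varphi$ is the $(0,1)$-part of $\d\varphi(\partial_z)$, and differentiating $r-1$ times along $\partial_z$, I would set $A_r:=\nabla^{r-1}_{\partial_z}\partial_z^{10}\varphi$ and $B_r:=\nabla^{r-1}_{\partial_z}\partial_z^{01}\varphi$, so that $\partial_z^r\varphi=A_r+B_r$; an easy induction from the parallelism of $J^N$ gives $A_r\in\varphi^{-1}T^{10}N$ and $B_r\in\varphi^{-1}T^{01}N$. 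Since both $T^{10}N$ and $T^{01}N$ are isotropic for the complex-bilinear extension of the metric, the self-pairing collapses to the cross term: $<\partial_z^r\varphi,\partial_z^r\varphi>=<A_r,A_r>+2<A_r,B_r>+<B_r,B_r>=2<A_r,B_r>$.

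It then remains to identify $<A_r,B_r>$ with the complex-isotropy quantity, and here I would invoke two reality facts. First, since $\varphi$ is real-valued, $\overline{\partial_z\varphi}=\partial_{\bar z}\varphi$, and because conjugation interchanges the two types one gets $\partial_z^{01}\varphi=\overline{\partial_{\bar z}^{10}\varphi}$. Second, the real Levi-Civita connection commutes with conjugation, $\overline{\nabla_{\partial_z}X}=\nabla_{\partial_{\bar z}}\overline{X}$, whence $B_r=\overline{\nabla^{r-1}_{\partial_{\bar z}}\partial_{\bar z}^{10}\varphi}$. Consequently $<A_r,B_r>=<\nabla^{r-1}_{\partial_z}\partial_z^{10}\varphi,\overline{\nabla^{r-1}_{\partial_{\bar z}}\partial_{\bar z}^{10}\varphi}>=<\nabla^{r-1}_{\partial_z}\partial_z^{10}\varphi,\nabla^{r-1}_{\partial_{\bar z}}\partial_{\bar z}^{10}\varphi>_{Herm}$, which is precisely the $r=s$ term of \eqref{Equation:ComplexIsotropicMapDefinition} and hence vanishes. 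This settles the non-parametric statement. For the first-order version I would run the identical computation for each fixed $t$: the type decomposition is determined pointwise by the fixed K\"ahler structure $J^N$, so $A_{r,t}\in\varphi_t^{-1}T^{10}N$ and $B_{r,t}\in\varphi_t^{-1}T^{01}N$ for all $t$, making $<A_{r,t},A_{r,t}>$ and $<B_{r,t},B_{r,t}>$ vanish identically in $t$. Hence $\ddtatzero<\partial_z^r\varphi_t,\partial_z^r\varphi_t>=2\,\ddtatzero<\nabla^{r-1}_{\partial_z}\partial_z^{10}\varphi_t,\nabla^{r-1}_{\partial_{\bar z}}\partial_{\bar z}^{10}\varphi_t>_{Herm}$, which is the diagonal term of \eqref{Equation:ComplexIsotropicToFirstOrder} and vanishes; together with vanishing at $t=0$ (the non-parametric case applied to $\varphi_0$) this gives $o(t)$, and Lemma \ref{Lemma:AnInductionProofREalIsotropicMapsAndOnlyTheRRCase} concludes.

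I expect the main obstacle to be the careful bookkeeping of the two conjugation identities, namely $\partial_z^{01}\varphi=\overline{\partial_{\bar z}^{10}\varphi}$ and $\overline{\nabla_{\partial_z}X}=\nabla_{\partial_{\bar z}}\overline{X}$, as these must be lined up exactly so that the $(0,1)$ cross term becomes the $\partial_{\bar z}$-differentiated Hermitian pairing; everything else (isotropy of $T^{10}N$ and $T^{01}N$, and type-preservation under $\nabla$) follows immediately from the K\"ahler hypothesis.
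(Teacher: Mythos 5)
Your proof is correct and is essentially the paper's own argument: both reduce to the diagonal terms $r=s$ via Lemma \ref{Lemma:AnInductionProofREalIsotropicMapsAndOnlyTheRRCase}, then use the K\"ahler parallelism of $J^N$ (so that $\nabla^{\varphi^{-1}}$ commutes with $J^N$, equivalently preserves types) together with reality to identify the diagonal real-isotropy pairing with the $r=s$ Hermitian pairing in \eqref{Equation:ComplexIsotropicMapDefinition}, and both handle the parametric case by differentiating that same identity at $t=0$. The only difference is bookkeeping: the paper expands $<\cdot,\cdot>_{Herm}$ directly and arrives at $<X,X>+i<X,J^NX>=0$ for $X=\partial_z^r\varphi$, whereas you decompose $\partial_z^r\varphi=A_r+B_r$ into its $(1,0)$ and $(0,1)$ parts and invoke isotropy of $T^{10}N$ and $T^{01}N$ to isolate the cross term --- the same computation read in the opposite direction.
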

\begin{proof}We start by showing the non-parametric case. Let $\varphi:M^2\to N$ be a complex isotropic map. Writing $z=x+iy$,
\[\partial^{10}_z\varphi=\partial_z\varphi-iJ^N\partial_z\varphi=\frac{1}{2}\{\partial_x\varphi-i\partial_y\varphi-iJ^N(\partial_x\varphi-i\partial_y\varphi)\}\text{.}\]
Then, using the fact that $N$ is Kähler, the left-hand side in \eqref{Equation:ComplexIsotropicMapDefinition} can be rewritten as\addtolength{\arraycolsep}{-1.0mm}
\[\begin{array}{ll}
~&\frac{1}{2}<\hspace{-0.1cm}\nabla^{r-1}_{\partial^{r-1}_z}(\d\varphi(\partial_x\hspace{-0.1cm}-\hspace{-0.1cm}i\partial_y)\hspace{-0.1cm}-\hspace{-0.1cm}iJ^N\d\varphi(\partial_x\hspace{-0.1cm}-\hspace{-0.1cm}i\partial_y)),\overline{\nabla^{s-1}_{\partial^{s-1}_{\bar{z}}}(\d\varphi(\partial_x\hspace{-0.1cm}+\hspace{-0.1cm}i\partial_y)\hspace{-0.1cm}-\hspace{-0.1cm}iJ^N\d\varphi(\partial_x\hspace{-0.1cm}+\hspace{-0.1cm}i\partial_y))}>\\
=&\frac{1}{2}<\hspace{-0.1cm}\nabla^{r-1}_{\partial^{r-1}_z}(\d\varphi(\partial_x\hspace{-0.1cm}-\hspace{-0.1cm}i\partial_y)\hspace{-0.1cm}-\hspace{-0.1cm}iJ^N\d\varphi(\partial_x\hspace{-0.1cm}-\hspace{-0.1cm}i\partial_y)),\nabla^{s-1}_{\partial^{s-1}_z}(\d\varphi(\partial_x\hspace{-0.1cm}-\hspace{-0.1cm}i\partial_y)\hspace{-0.1cm}+\hspace{-0.1cm}iJ^N\d\varphi(\partial_x\hspace{-0.1cm}-\hspace{-0.1cm}i\partial_y))>\\
=&\frac{1}{2}\big\{<\nabla^{r-1}_{\partial^{r-1}_z}\d\varphi(\partial_x-i\partial_y),\nabla^{s-1}_{\partial^{s-1}_z}\d\varphi^{\cn}(\partial_x-i\partial_y)>\\
~&+<J^N\nabla^{r-1}_{\partial^{r-1}_z}\d\varphi(\partial_x-i\partial_y),J^N\nabla^{s-1}_{\partial^{s-1}_z}\d\varphi(\partial_x-i\partial_y)>\\
~&+i\{-<J^N\nabla^{r-1}_{\partial^{r-1}_z}\d\varphi(\partial_x-i\partial_y),\nabla^{s-1}_{\partial^{s-1}_z}\d\varphi^{\cn}(\partial_x-i\partial_y)>\\
~&+<\nabla^{r-1}_{\partial^{r-1}_z}\d\varphi(\partial_x-i\partial_y),J^N\nabla^{s-1}_{\partial^{s-1}_z}\d\varphi^{\cn}(\partial_x-i\partial_y)>\}\big\}\\
=&<\nabla^{r-1}_{\partial^{r-1}_z}\partial_z\varphi,\nabla^{s-1}_{\partial^{s-1}_z}\partial_z\varphi>+i<\nabla^{r-1}_{\partial^{r-1}_z}\partial_z\varphi,J^N\nabla^{s-1}_{\partial^{s-1}_z}\partial_z\varphi>\text{.}
\end{array}\]\addtolength{\arraycolsep}{1.0mm}\noindent
In\hspace{4mm} particular,\hspace{4mm} when\hspace{4mm} $r=s$, \hspace{4mm}writing\hspace{4mm} $X=\nabla^{r-1}_{\partial^{r-1}_{z}}\partial_z\varphi$, \hspace{4mm}it\hspace{4mm} follows\hspace{4mm} that $<X,X>+i<X,JX>=0$. If $X=X_1+iX_2$, the latter condition implies that $<X_1,X_1>=<X_2,X_2>$ and $<X_1,X_2>=0$.
Thus, complex isotropy implies
\[<\nabla^{r-1}_{\partial^{r-1}_z}\partial_z\varphi,\nabla^{r-1}_{\partial^{r-1}_z}\partial_z\varphi>=0,\quad\forall\,r\geq1\text{.}\]
From Lemma \ref{Lemma:AnInductionProofREalIsotropicMapsAndOnlyTheRRCase} we conclude that $\varphi$ is real isotropic.

For the parametric case, let $\varphi:I\times M^2\to N$ be a map complex isotropic to first order. Then, $\varphi_0$ is real isotropic to first order, from the above. Hence, we are left with the $t$-derivative at zero. Repeating the argument above with $\ddtatzero$, we conclude that
\[\ddtatzero\{<\nabla^{r-1}_{\partial^{r-1}_z}\partial_z\varphi,\nabla^{r-1}_{\partial^{r-1}_z}\partial_z\varphi>\}=0,\quad\forall\,r\geq1\text{.}\]
Now, using Lemma \ref{Lemma:AnInductionProofREalIsotropicMapsAndOnlyTheRRCase} (equation \eqref{Equation:SecondEquationIn:EquationIn:Lemma:AnInductionProofREalIsotropicMapsAndOnlyTheRRCase}), we deduce that $\varphi$ is real isotropic to first order, concluding the proof.
\end{proof}

\section{A~parametric~Koszul-Malgrange~Theorem}\label{Section:SectionIn:Chapter:Addendums:KoszulMalgrangeVariationalTheorem}


In this section we prove Theorem \ref{Theorem:KoszulMalgrangeTheorem:ParametricVersion}. We shall need a parametric version of the Frobenius Theorem, so that we divide this section in two: in the first, we deal with this parametric version of the Frobenius Theorem and on the second we prove the referred Theorem
\ref{Theorem:KoszulMalgrangeTheorem:ParametricVersion}.


\subsection{Parametric~Frobenius~theorem}\label{Subsection:ParametricFrobeniusTheorem}


The following result can be found in \cite{Machado:97} (Theorem IV.8.7) (see \cite{Dieudonne:60} for the non-parametric version):

\begin{theorem}[\textup{Frobenius~Theorem:~parametric~version}]\label{Theorem:FrobeniusTheoremParametricVersion}\index{Frobenius~Theorem!parametric~version}
Let $E$, $F$ and $G$ be three vector spaces. Let $F_0\,\subseteq F$ be an arbitrary set, $M\subseteq E$ a smooth manifold, $A$ open in $F_0\times G\times M$ and $X:A\to\L(G,E)$ a $\C^1$ map such that for each $(t,x,y),\in{A}$ we have:

\textup{(i)} the linear map $X(t,x,y)\in\L(G,E)$ maps $G$ to $T_yM$.

\textup{(ii)} the bilinear map
\begin{equation}\label{Equation:EquationIn:Theorem:FrobeniusTheoremParametricVersion}\addtolength{\arraycolsep}{-1.2mm}
\begin{array}{cll}
G \times G&\to& E\\
(w,\tilde{w})&\to& \d X_{t,x,y}(0,w,X(t,x,y)(w))(\tilde{w})
\end{array}
\end{equation}\addtolength{\arraycolsep}{1.2mm}\noindent
is symmetric.

For each $(t,x,y)\in{A}$, let $f_{t,x,y}:\V_{t,x,y}\to M$ the maximal solution of the total differential equation defined by $X_t:A_t\to\L(G,E)$ with the initial condition $(x,y)$. Let $\Omega\subseteq F_0\times G\times G\times M$ be the set of elements $(t,x_1,x_2,y)$ such that $(t,x_2,y)\in{A}$ and $x_1\in\V_{t,x_2,y}$ and take $\omega:\Omega\to M$ the parametric solution defined by
\[\omega(t,x_1,x_2,y)=f_{t,x_2,y}(x_1)\text{.}\]
Then:

\textup{(i)} $\Omega$ is open in $F_0\times G\times G\times M$ and $\omega:\Omega\to M$ is a $\C^1$ map.

\textup{(ii)} If $X:A\to\L(G,E)$ is $\C^p$ $(1\leq p\leq+\infty)$, then $\omega$ is also $\C^p$.
\end{theorem}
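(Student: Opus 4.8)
The plan is to deduce the parametric statement from the non-parametric Frobenius theorem (\cite{Dieudonne:60}) by the classical device of promoting the parameter $t$ to a \emph{dependent} variable whose derivative in the independent direction is forced to vanish. First I would regard the unknown of the total differential equation as the pair $(t,y)\in F\times E$ rather than $y$ alone, keeping $x\in G$ as the single independent variable, and introduce the augmented field
\[
\hat{X}:\hat{A}\to\L(G,F\times E),\qquad \hat{X}(x,(t,y))(w)=\big(0,\,X(t,x,y)(w)\big),
\]
defined on the natural open set $\hat A$ and taking values in $T_{(t,y)}(F\times M)=F\times T_yM$ by hypothesis (i). The vanishing first component encodes exactly the constraint that $t$ be constant along the integral leaves, so that for a fixed initial value $(x,(t,y))$ the leaf is the graph $x_1\mapsto(t,f_{t,x,y}(x_1))$.

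The key computation is that the complete-integrability (symmetry) condition for $\hat X$ is \emph{precisely} hypothesis (ii). Indeed, the integrability expression for a total differential equation $Du=\hat X(x,u)$ is the bilinear map $(w,\tilde w)\mapsto\big[\partial_x\hat X(x,u)(w)+\partial_u\hat X(x,u)(\hat X(x,u)(w))\big](\tilde w)$. Its $F$-component vanishes identically because the first slot of $\hat X$ is constant, and in its $E$-component the direction fed into $\partial_u\hat X$ is $\hat X(x,u)(w)=(0,X(t,x,y)(w))$, whose vanishing $F$-part kills the $\partial_t X$ contribution; what survives is
\[
(w,\tilde w)\longmapsto\d X_{t,x,y}\big(0,\,w,\,X(t,x,y)(w)\big)(\tilde w),
\]
which is symmetric exactly when (ii) holds. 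Thus $\hat X$ satisfies the hypotheses of the non-parametric theorem with no parameter present, and the smooth-dependence-on-initial-conditions conclusion of that theorem yields a map $\hat\omega(x_1,x_2,(t,y))=\hat f_{x_2,(t,y)}(x_1)$, defined on an open set and jointly $\C^p$ in all its arguments whenever $X$ is $\C^p$. Since the $t$-component of any leaf is constant, projecting $\hat\omega$ onto its $E$-factor returns $\omega(t,x_1,x_2,y)=f_{t,x_2,y}(x_1)$; reading off the domain gives openness of $\Omega$, and the projection preserves $\C^p$ regularity, establishing both assertions of the statement.

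The main obstacle is that $F_0$ is an \emph{arbitrary} subset of $F$, not an open set, so $F\times M$ is a genuine manifold but $F_0\times M$ is not, and ``$\C^p$ on $A\subseteq F_0\times G\times M$'' must be read as the restriction of a $\C^p$ map on an open neighbourhood. I would therefore fix a point of $\Omega$ with parameter $t_0\in F_0$, choose a $\C^p$ extension $\tilde X$ of $X$ to an open set of $F\times G\times M$, and run the augmented construction with $\tilde X$. The delicate point is that $\tilde X$ need not satisfy (ii) off $F_0$, so the augmented system built from it is not integrable on a full open set; the resolution is that every integral leaf of the frozen-$t$ system stays inside a slice $\{t=t_0\}$ with $t_0\in F_0$, where $\tilde X=X$ and (ii) genuinely holds. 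Hence the leafwise construction underlying the non-parametric theorem only ever samples $\tilde X$ and its derivatives where integrability is valid, while the extra smoothness in $t_0$ comes for free from the joint $\C^p$ regularity of the extension. Making this last reduction precise --- that is, checking that the non-parametric proof uses integrability only along leaves rather than on its whole open domain, or equivalently arguing by uniqueness that solutions cannot leave the $F_0$-slice --- is the step I expect to require the most care.
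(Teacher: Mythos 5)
The paper offers no proof of this theorem to compare against: it is quoted from Machado (Theorem IV.8.7), with Dieudonn\'e cited for the non-parametric case. Judged on its own, your reduction is the classical one, and your central computation is right: the integrability bilinear form of the augmented field $\hat{X}(x,(t,y))(w)=(0,X(t,x,y)(w))$ has vanishing $F$-component, and its $E$-component is exactly $\d X_{t,x,y}(0,w,X(t,x,y)(w))(\tilde w)$, so symmetry for $\hat X$ is precisely hypothesis (ii). If $F_0$ were open in $F$, invoking the non-parametric theorem on $\hat X$ and projecting would finish the proof.

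The genuine gap is the one you flag but do not close: for an arbitrary $F_0$, the non-parametric theorem cannot be applied as a black box to the augmented system built from a $\C^p$ extension $\tilde X$, because hypothesis (ii) is only known on the slices $\{t\}\times A_t$ with $t\in F_0$, a set that may have empty interior in $F\times G\times M$; the cited theorem's hypotheses thus fail on every open set you could feed it. Of your two proposed remedies, the second ("by uniqueness, solutions cannot leave the $F_0$-slice") misdiagnoses the problem: confinement to a slice is automatic, since the $F$-component of $\hat X$ vanishes identically, so no uniqueness argument is needed or relevant --- the obstruction is applicability of the theorem, not escape of leaves. Your first remedy (check that the non-parametric proof uses integrability only along the graph of the candidate leaf) is the right instinct --- in Dieudonn\'e's ray-integration proof every evaluation of the field, its derivative, and the symmetry condition occurs at points $(x_2+s(x_1-x_2),v(s))$ on that graph, which for the augmented system lies in the slice $t=t_0\in F_0$ --- but carrying this out, together with the continuation/gluing needed to reach the \emph{maximal} solutions $f_{t,x_2,y}$ and to prove openness of $\Omega$, is essentially the whole content of the theorem and cannot be deferred. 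A cleaner repair that avoids re-opening the Frobenius proof: use the non-parametric theorem only slice-wise (for each fixed $t\in F_0$ its hypotheses hold genuinely on the open set $A_t\subseteq G\times M$, no extension required) to get existence and uniqueness of $f_{t,x_2,y}$; then obtain joint $\C^p$ dependence and openness of $\Omega$ from smooth dependence of ODE solutions on parameters and initial data applied to the ray equation $v'(s)=\tilde X\bigl(t,x_2+s(x_1-x_2),v(s)\bigr)(x_1-x_2)$ with the extension $\tilde X$ --- ODE dependence theory needs no integrability hypothesis at all --- followed by a finite chain/compactness argument to cover the maximal domain.
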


(i) Note that $F_0$ can be regarded as the space of parameters. Taking $F_0=\{0\}$ we get the non-parametric version
of this result. We can also conclude that the solution $f$ in this case is of class $\C^p$ if $X$ is $\C^p$.

(ii) When we claim that $X$ (or the solution $f$) is $\C^p$ on $A\subseteq F_0\times G\times M$, we mean, as is standard, that it admits a $\C^p$ extension to an open set of the vector space $F\times G\times M$.

We give some consequences of this result. Firstly, we obtain a parametric version of the Frobenius Theorem using differential forms:

\begin{theorem}[\textup{Frobenius~Theorem:parametric~version}]\label{Theorem:IntegrationOnLieGroups:ParametricVersion}
Let $\G$ be a connected (real) Lie group and $M$ a manifold. Let $F_0\,\subseteq F$ be any subset containing the origin of the real vector space $F$ and $A$ open in $F_0\times M$ containing $(0,x_0)$. Then, if $\alpha$ is a smooth $\g$-valued $1$-form on $M$ defined on $A\subseteq F_0\times M$ (\textit{i.e.}, $\alpha:F_0\times M\to \L(TM,\g)$, $\alpha(t,x)=\alpha_t(x):T_xM\to\g$ is smooth in $(t,x)$) with
\begin{equation}\label{Equation:FirstEquationIn:Theorem:IntegrationOnLieGroups:ParametricVersion}
\d\alpha_t+[\alpha_t,\alpha_t]=0,\quad\forall\,t\in{F}_0
\end{equation}
then there is a (unique, locally defined) $\G$-valued smooth solution of the equation
\begin{equation}
f(t,x)^{-1}\d f_{t_x}=\alpha_t(x),\quad\forall\,(t,x),\,f_t(x_0)=e\text{.}
\end{equation}
\end{theorem}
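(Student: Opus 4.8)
The plan is to deduce Theorem \ref{Theorem:IntegrationOnLieGroups:ParametricVersion} from the vector-field form of the parametric Frobenius theorem (Theorem \ref{Theorem:FrobeniusTheoremParametricVersion}) by rewriting the ``logarithmic'' equation $f(t,x)^{-1}\d f_{t_x}=\alpha_t(x)$ as a parametric total differential equation on the group $\G$. Since the assertion is local, I would first fix charts of $M$ near $x_0$ and of $\G$ near $e$ (say $\psi\colon U\subseteq\G\to\rn^n$ with $\psi(e)=0$, $n=\dim\G$), so that in these coordinates the unknown $f_t$ becomes an $\rn^n$-valued function, the target manifold $M_{\mathrm{Frob}}$ of Theorem \ref{Theorem:FrobeniusTheoremParametricVersion} is an open subset of $E=\rn^n$, and the independent-variable space $G_{\mathrm{Frob}}$ is $\rn^m$ ($m=\dim M$). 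In this dictionary the parameter $t\in F_0$ plays the same role in both theorems, the point of $M$ is the independent variable, and the group element is the dependent variable.

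Second, I would define the datum $X$ of Theorem \ref{Theorem:FrobeniusTheoremParametricVersion} by
\[
X(t,x,g)(v)=(\d L_g)_e\bigl(\alpha_t(x)(v)\bigr),\qquad v\in T_xM,
\]
where $L_g\colon\G\to\G$ is left translation; read in the chosen charts this is a smooth $\L(\rn^m,\rn^n)$-valued map, linear in $v$, and its solutions $f$ are exactly those satisfying $\d f_x=(\d L_{f(x)})_e\circ\alpha_t(x)$, i.e. $f(t,x)^{-1}\d f_{t_x}=\alpha_t(x)$. Hypothesis (i) of Theorem \ref{Theorem:FrobeniusTheoremParametricVersion} is automatic here, since the target is open in $E$ and so $T_yM_{\mathrm{Frob}}=E$.

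Third --- and this is the heart of the matter --- I would verify the symmetry hypothesis (ii), namely that for each fixed $t$ the bilinear map $(w,\tilde w)\mapsto \d X_{t,x,g}(0,w,X(t,x,g)(w))(\tilde w)$ of \eqref{Equation:EquationIn:Theorem:FrobeniusTheoremParametricVersion} is symmetric. The essential observation is that the first slot of this derivative is $0$, so no $t$-derivative of $\alpha$ ever enters: the parameter is completely inert and the computation is performed fibrewise in $t$. For each fixed $t$ the symmetry is the classical identity relating the integrability of a total differential equation on a Lie group to the Maurer--Cartan structure equation: differentiating $X$ along the candidate integral direction and using that the Lie bracket of the left-invariant vector fields $g\mapsto(\d L_g)_e\xi$ and $g\mapsto(\d L_g)_e\eta$ is the left-invariant field generated by $[\xi,\eta]$, one finds that the antisymmetric part of the bilinear map equals, after left translation by $g$, the expression $\d\alpha_t(w,\tilde w)+[\alpha_t(w),\alpha_t(\tilde w)]$. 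Thus hypothesis \eqref{Equation:FirstEquationIn:Theorem:IntegrationOnLieGroups:ParametricVersion} is precisely what makes this bilinear map symmetric. I expect this identity --- keeping track of the left-invariant bracket term and matching conventions for $[\alpha_t,\alpha_t]$ --- to be the main obstacle, though it is the standard curvature-vanishing computation and the parameter does not complicate it.

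Finally, Theorem \ref{Theorem:FrobeniusTheoremParametricVersion} then yields an open set $\Omega$ and a smooth parametric solution $\omega(t,x_1,x_2,y)$. I would specialize the initial condition to the base point and the identity, setting $f(t,x):=\omega(t,x,x_0,e)$ (reading $e$ through the chart as $\psi(e)=0$). By parts (i)--(ii) of Theorem \ref{Theorem:FrobeniusTheoremParametricVersion} this is smooth in $(t,x)$ on a neighbourhood of $(0,x_0)$, satisfies $f_t(x_0)=e$ for all $t$, and solves $f(t,x)^{-1}\d f_{t_x}=\alpha_t(x)$; connectedness of $\G$ guarantees the solution remains in the identity component so that the chart construction is consistent. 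Uniqueness on its domain is inherited from the uniqueness of maximal solutions of the total differential equation in Theorem \ref{Theorem:FrobeniusTheoremParametricVersion}.
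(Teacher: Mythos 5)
Your proof is correct, and it rests on the same key result as the paper's own argument: the parametric Frobenius theorem (Theorem \ref{Theorem:FrobeniusTheoremParametricVersion}) applied to the datum $X(t,x,g)=(\d L_g)_e\circ\alpha_t(x)$, with the symmetry hypothesis \eqref{Equation:EquationIn:Theorem:FrobeniusTheoremParametricVersion} turning out to be precisely the Maurer--Cartan condition \eqref{Equation:FirstEquationIn:Theorem:IntegrationOnLieGroups:ParametricVersion}. The genuine difference is in how the group is handled. The paper's first move is to invoke the fact that every Lie group is locally isomorphic to a linear group (\cite{Cohn:57}, \cite{Gilmore:74}), so it may assume $\G\subseteq\GL(E)$; then $X(t,x,y)=y\cdot\tilde{\alpha}_t(x)$ is plain matrix multiplication, condition (i) of Theorem \ref{Theorem:FrobeniusTheoremParametricVersion} is checked via $T_y\G=\d L_y(e)(\g)$, and the symmetry verification is an explicit matrix computation in which the commutator $\alpha_i\alpha_j-\alpha_j\alpha_i$ appears directly. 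You instead keep $\G$ abstract, flatten it with a chart near $e$ (which makes condition (i) vacuous, since the target becomes an open subset of $\rn^n$), and identify the antisymmetric part of the bilinear map with the left-translate of $\d\alpha_t+[\alpha_t,\alpha_t]$ via the identity $[X_\xi,X_\eta]=X_{[\xi,\eta]}$ for left-invariant vector fields; since $(\d L_g)_e$ is injective, symmetry is then equivalent to the hypothesis, exactly as you say. What each route buys: the paper's makes every verification a one-line matrix calculation, at the price of citing the nontrivial local linearizability of Lie groups; yours is self-contained on that point and works verbatim for any Lie group, at the price of the slightly more abstract bracket computation. One small correction: connectedness of $\G$ is not what keeps your solution inside the chart --- the statement and the Frobenius theorem are purely local, so after shrinking the neighbourhood of $(0,x_0)$ the solution $f(t,x)=\omega(t,x,x_0,e)$ stays near $e$ by continuity, and connectedness plays no essential role in either proof.
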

\begin{proof}As every Lie group is locally isomorphic to a linear group (\cite{Cohn:57}, \cite{Gilmore:74}) and we are looking for local solutions, we can assume that $\G\subseteq\GL(E)$. Take a local chart $(\openU,\eta)$ of $M$ around $x_0$ and write $\alpha(t,x)=\sum_i \alpha_i(t,x)\d\eta_i$, where $\alpha_i(t,x)$ are the smooth $\g$-valued functions determined by the condition $\d\alpha_t(x)(X)=\sum_i\alpha_i(t,x)\d\eta(x)(X)$. We are then looking for a function $f(t,x)$ defined around $(0,x_0)$ satisfying the equation (on $x$):
\[f^{-1}\d f_{t_x}(\partial_{\eta_i})=\alpha_i(t,x),\quad\forall\,t\text{.}\]
Composing with $\eta$ we can reduce to the case where
$M=\eta(\openU)\subseteq\rn^m$ and look for solutions
$\tilde{f}:\V\subseteq A\subseteq F\times \rn^m\to \G$ of the
equation
\[\tilde{f}^{-1}\frac{\partial \tilde{f}}{\partial x_i}=\tilde{\alpha}_i(t,x),\,f(t,0)=e\text{.}\]
Consider the map \addtolength{\arraycolsep}{-1.0mm}
\begin{equation}\label{Equation:FirstEquationInTheProofOf:Theorem:FrobeniusTheoremParametricVersion}
\begin{array}{lcccccll}
X:& F_0 &\times &\rn^m&\times&\G&\to& \L(\rn^m,\L(E,E))\\
~& (t&,&x&,&y)&\to& X(t,x,y)=y.\tilde{\alpha}^t(x):\rn^m\to\L(E,E)\\
~&~&~&~&~&~&~&\hspace{3.9cm} e_i\,\,\,\to y.\alpha^t(x)(e_i)\text{.}
\end{array}
\end{equation}\addtolength{\arraycolsep}{1.0mm}
Notice that $\tilde{\alpha}_t(x)$ is a $\g$-valued $1$-form on $\rn^m$ so that $\tilde{\alpha}_t(x):T_{x}\rn^m\to\g\subseteq\L(E,E)$. On the other hand, if $\tilde{f}_t$ is a solution of $X_t$ then $\tilde{f}:\V\to\G$ has $(x,\tilde{f}_t(x))\in{A}_t\subseteq\rn^m\times\G$ and $\d\tilde{f}_{t_x}(e_i)=X_t(x,\tilde{f}(x))(e_i)=\tilde{f}(x).\tilde{\alpha}_t(x)(e_i)$, which is precisely what we want. Therefore, we are left with proving that the conditions of Theorem \ref{Theorem:FrobeniusTheoremParametricVersion} are verified. For the first condition, we show that $X^t(x,y)$ maps $\rn^m$ to $T_y\G$. As $T_y\G=\d L_y (e)(\g)$ we then have $X^t(x,y)(e_i)=y.\tilde{\alpha}_t(x)(e_i)=\d{L}_y(e)(\tilde{\alpha}_t(x)(e_i))$; since $\tilde{\alpha}_t(x)(e_i)\in\g$ this part is proved. For the second condition, we now have\addtolength{\arraycolsep}{-1.0mm}
\[\begin{array}{ll}~&\d X^t_{(x,y)}(u,v)(e_i)=\d X^{t,y}_x(u)(e_i)+\d X^{t,x}_y(v)(e_i)\\
=&\d (x\to y.\tilde{\alpha}_t(x))_x(u)(e_i)+\d (y\to{y}.\tilde{\alpha}_t(x))_y(v)(e_i)\\
=&\d(x\to{y}.\tilde{\alpha}_t(x)(e_i))_x(u)+\d(y\to{y}.\tilde{\alpha}_t(x)(e_i))_y(v)=y.\d\tilde{\alpha}_{t_{i_x}}(u)+v.\tilde{\alpha}_t(x)(e_i)
\end{array}\]\addtolength{\arraycolsep}{1.0mm}\noindent
so that taking $u=e_j$ and
$v=X^t(x,y)(e_j)=y.\tilde{\alpha}_t(x)(e_j)$ we get
\[\d{X}^t_{(x,y)}(e_j,X^t(x,y)(e_j))(e_i)=y.\d\tilde{\alpha}^t_{i_x}(e_j)+y.\tilde{\alpha}^t(x)(e_j).\tilde{\alpha}^t(x)(e_i)\text{.}\]
Hence, condition (ii) of Theorem \ref{Theorem:FrobeniusTheoremParametricVersion} is satisfied if and only if
\[-y.\big(\d\tilde{\alpha}_{t_{i_x}}(e_j)+\tilde{\alpha}_t(x)(e_j).\tilde{\alpha}^t(x)(e_i)-\d\tilde{\alpha}_{t_{j_x}}(e_i)-\tilde{\alpha}_t(x)(e_i)\tilde{\alpha}_t(x)(e_j)\big)=0\text{.}\]
As $y\in\G$ we deduce that this is equivalent to
\[\partial_{x_i}(\tilde{\alpha}_t(x)(e_j))-\partial_{x_j}(\tilde{\alpha}_t(x)(e_i))+\tilde{\alpha}_t(x)(e_i)\tilde{\alpha}_t(x)(e_j)-\tilde{\alpha}_t(x)(e_j)\tilde{\alpha}_t(x)(e_i),\quad\forall\,i,j\]
which is precisely
\eqref{Equation:FirstEquationIn:Theorem:IntegrationOnLieGroups:ParametricVersion},
concluding our proof.
\end{proof}

We now replace the real (vector-valued) parameter $t$ by a complex one and consider the holomorphic dependence on this parameter $z$. Notice that we shall not be solving any kind of holomorphic equation: we still want a solution in real variables for each fixed $z$ but now holomorphically-dependent on $z$. More precisely:

\begin{proposition}\label{Proposition:HolomorphicDependentSolutionOnALieGroup}\index{Holomorphic!dependence~of~the~solution}
Suppose that $\G$ is a complex Lie group and $M$ a (real or complex) manifold. Let $\alpha:F_0\times M\to\L(TM,\g)$ be a $\g$-valued $1$-form on $M$ as in Theorem \ref{Theorem:IntegrationOnLieGroups:ParametricVersion}, holomorphic as a function on $z$ and smooth as a function on $(z,x)$. Around each point $x_0\in{M}$ consider the (unique) smooth solution $f(z,x)$ guaranteed by Theorem \ref{Theorem:IntegrationOnLieGroups:ParametricVersion} to the equation $f^{-1}\d{f}_z=\alpha_z$, $f_z(x_0)=e$. Then, this solution is holomorphic in $z$.
\end{proposition}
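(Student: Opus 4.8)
The plan is to prove directly that $f$ satisfies the Cauchy--Riemann equation $\partial_{\bar z}f = 0$ in the parameter, taking advantage of the holomorphy of $\alpha$ in $z$. Writing $z = s + it$ so that $\partial_{\bar z} = \tfrac12(\partial_s + i\,\partial_t)$, I first reduce to the situation in the proof of Theorem~\ref{Theorem:IntegrationOnLieGroups:ParametricVersion}, where $\G$ is realised locally as a subgroup of $\GL(\cn,E) \subseteq \L_\cn(E,E)$; since $f_z(x_0) = e$ for every $z$ and only a local solution near $x_0$ is sought, $f$ takes its values inside this linear model. Fixing a chart $(x_1,\dots,x_m)$ on $M$ about $x_0$ and setting $\alpha_i(z,x) := \alpha_z(x)(\partial_{x_i}) \in \g$, the equation $f^{-1}\d f_z = \alpha_z$ reads $\partial_{x_i}f = f\,\alpha_i$. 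By part (ii) of Theorem~\ref{Theorem:IntegrationOnLieGroups:ParametricVersion} the solution $f$ is smooth in $(s,t,x)$, so all mixed second-order partial derivatives commute; this is what legitimises the computation below.

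The heart of the argument is to produce a linear homogeneous equation for the $\L_\cn(E,E)$-valued function $A := f^{-1}\,\partial_{\bar z}f$ (which in fact lies in $\g$, since $\partial_{\bar z}f$ is tangent to $\G$ at $f$, though this is not needed). Differentiating $\partial_{x_i}f = f\alpha_i$ in $\bar z$ and using $\partial_{\bar z}\alpha_i = 0$ --- precisely the holomorphy hypothesis --- gives $\partial_{\bar z}\partial_{x_i}f = fA\alpha_i$, whereas differentiating $\partial_{\bar z}f = fA$ in $x_i$ gives $\partial_{x_i}\partial_{\bar z}f = f\alpha_i A + f\,\partial_{x_i}A$. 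Equating these and cancelling the invertible factor $f$ yields
\begin{equation}\label{Equation:HolomorphicDependenceODE}
\partial_{x_i}A = A\alpha_i - \alpha_i A = -[\alpha_i, A], \qquad i = 1,\dots,m,
\end{equation}
a homogeneous linear system. Because $f(z,x_0) = e$ is constant in $z$, we have $\partial_{\bar z}f(z,x_0) = 0$, and hence the initial condition $A(z,x_0) = 0$.

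It then remains only to conclude that $A \equiv 0$. This is the one place requiring a little care, but it is elementary: equation~\eqref{Equation:HolomorphicDependenceODE} says that $A$ solves the homogeneous total differential equation $\d A = -[\alpha, A]$ with vanishing value at $x_0$, and the zero function solves it with the same initial data; restricting to a smooth path from $x_0$ to an arbitrary point turns this into a linear ordinary differential equation with smooth coefficients, so uniqueness for linear ODEs forces $A = 0$ along every such path, and connectedness of the chart domain gives $A \equiv 0$. Consequently $\partial_{\bar z}f = fA = 0$, so $f$ is holomorphic in $z$; for a vector-valued parameter $z = (z_1,\dots,z_k)$ the identical argument applied to each $\partial_{\bar z_j}$ finishes the proof. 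I expect the only real obstacle to be bookkeeping: confirming that the local reduction $\G \subseteq \GL(\cn,E)$ is harmless, and checking that the hypothesis $\partial_{\bar z}\alpha = 0$ is invoked exactly at the step where an inhomogeneous term would otherwise survive and spoil the uniqueness conclusion.
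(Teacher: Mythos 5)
Your proposal is correct and follows essentially the same route as the paper: there, too, one differentiates $f^{-1}\partial_{x_i}f=\alpha_i$ in $\bar z$, uses holomorphy of $\alpha$ to show that $\tilde f:=\partial_{\bar z}f$ satisfies the homogeneous linear system $\partial_{x_i}\tilde f=\tilde f\,\alpha_i$ with $\tilde f(z,x_0)=0$, and concludes $\tilde f\equiv 0$ by uniqueness. The only differences are minor: you work with the conjugated variable $A=f^{-1}\partial_{\bar z}f$ (giving the bracket form $\partial_{x_i}A=-[\alpha_i,A]$), and you settle the uniqueness step by an elementary linear-ODE-along-paths argument, whereas the paper re-invokes the uniqueness clause of its Frobenius machinery (Theorem \ref{Theorem:FrobeniusTheoremParametricVersion}, non-parametric version) applied to the $\L(E,E)$-valued total differential equation $\d\tilde f=\tilde f\,\alpha_z$.
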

Notice that equation $\d f_z=\alpha^z$ is only a ``real" equation: if $\alpha_z$ is the $\g$-valued $1$-form, we write
$\alpha_z=\sum_{i=1}^{2m}\alpha_{i,z}\d\varphi_i$, where $(\openU,\varphi)$ is a chart of $M$ (or, when $M$ is complex, the real chart associated with a complex chart of $M$) and we solve the equations $\d{f}_z(\partial_{\varphi_i})=\alpha_{i,z}$, where $\alpha(z,x)$ are $\g$-valued maps, smooth in $(z,x)$ and holomorphic in $z$.
\begin{proof}As before, we assume that $\G$ is a linear group. We shall show that our solution is itself a solution of a differential equation \textit{on} $z$: as $f^{-1}\partial_{\varphi_i}f_z=\alpha_{i,z}$, we can apply $\partial_{\bar{z}}$ to this identity to obtain (using $\alpha$-holomorphicity)
\[-f^{-1}\partial_{\bar{z}}ff^{-1}\partial_{\varphi_i}f+f^{-1}\partial^{2}_{\bar{z}\varphi_i}f=0\,\impl\,\partial_{\bar{z}}f\alpha_i=\partial_{\varphi_i}\partial_{\bar{z}}f\text{.}\]
Let $\tilde{f}(z,x)=\partial_{\bar{z}}f$; then, $\tilde{f}_z$ is solution of the equation
\begin{equation}\label{Equation:FirstEquationInTheProofOf:Proposition:HolomorphicDependentSolutionOnALieGroup}
\tilde{f}_z\alpha_{i,z}=\partial_{\varphi_i}\tilde{f}_z.
\end{equation}
Notice that now $\tilde{f}$ is a map to $\L(E,E)$ and no longer to the Lie group $\G$. Moreover, as $f_{z}(x_0)=e$, we have $\tilde{f}_z(x_0)=0$ for all $z$; in particular, $\tilde{f}$ is a solution to the equation
\eqref{Equation:FirstEquationInTheProofOf:Proposition:HolomorphicDependentSolutionOnALieGroup} with initial condition $\tilde{f}^z(0)=0$. Then, obviously, $\tilde{f}_z\equiv 0$ is a solution to this equation and initial condition and if it is the unique solution we can deduce $\partial_{\bar{z}}f\equiv0$ which proves that $f$ is holomorphic in $z$. We can now use once again Theorem \ref{Theorem:FrobeniusTheoremParametricVersion} (the non-parametric version) on each $z$ to guarantee the uniqueness of solutions: take (in local coordinates) the map\addtolength{\arraycolsep}{-1.0mm}
\[\begin{array}{lcccll}
X^z: & \rn^m & \times & \L(E,E) & \to & \L(\rn^m,\L(E,E))\\
~& x & ,& \lambda & \to & \lambda.\alpha_z(x):\rn^m\to \L(E,E)\\
~&~&~&~&~& \hspace{1.8cm}e_i\,\,\to \lambda.\alpha_z(x)(e_i)\text{.}
\end{array}\]\addtolength{\arraycolsep}{1.0mm}\noindent
Then, a solution to the total differential equation defined by $X^z$ is a map $\tilde{f}:\V\subseteq\rn^m\to \L(E,E)$ with $\d\tilde{f}=X(x,\tilde{f})=\tilde{f}.\alpha_z(x)$ (as we want) and if there is a solution for a given initial condition it is unique, concluding our proof.
\end{proof}


\subsection{Parametric~Koszul-Malgrange~Theorem}\label{Subsection:ParametricKoszulMalgrangeTheorem}\index{Koszul-Malgrange!Theorem!parametric~version}


In this section we prove Theorem \ref{Theorem:KoszulMalgrangeTheorem:ParametricVersion}. This proof is a parametric version of that in \cite{KoszulMalgrange:58}:
\begin{lemma}\label{Lemma:FirstLemmaToProve:Theorem:KoszulMalgrangeTheorem:ParametricVersion:GeneralizationOfLemma2InKoszulMalgrangePaper}
Let $F$ be a (real or complex) vector space\footnote{That we shall regard as the set of parameters.} and $F_0\subseteq F$ any subset containing the origin. Let $\g$ be a complex vector space and let $\L(\g,\g)$ denote the space of linear endomorphisms of $\g$. Given a smooth\footnote{As in Theorem \ref{Theorem:FrobeniusTheoremParametricVersion}, smoothness in the sense that we have a smooth extension to an open set.} function $M(t_1,...,t_n,z_1,...,z_m)$ (respectively, $\psi(t_1,...,t_n,z_1,...,z_m)$) defined on an open set $\openU\subseteq F_0\times\cn^m$ containing $(0,0)$ with values on $\L(\g,\g)$ (respectively, on $\g$), holomorphic in $(z_1,...,z_p)$ ($0\leq p < m$), there is a smooth function $\varphi(t_1,..,t,t_n,z_1,...,z_m)$ defined on $\V\subseteq\openU$, $(0,0)\in\V$, with values in $\g$ and holomorphic in $(z_1,...,z_p)$ that satisfies the equation
\begin{equation}\label{Equation:EquationIn:Lemma:Lemma2OnKoszulMalgrange:TheParametricVersion}
\partial_{\bar{z}_{p+1}}\varphi=M\varphi+\psi\text{.}
\end{equation}
\end{lemma}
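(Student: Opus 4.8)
The plan is to single out $w := z_{p+1}$ as the only active variable and to regard $t=(t_1,\dots,t_n)$ together with $z_1,\dots,z_p,z_{p+2},\dots,z_m$ as parameters. The equation $\partial_{\bar{z}_{p+1}}\varphi = M\varphi+\psi$ then becomes, for each fixed value of the parameters, a single inhomogeneous $\bar\partial$-equation $\partial_{\bar{w}}\varphi=M\varphi+\psi$ in one complex variable. Since $\partial_{\bar{w}}$ is (up to a factor) the Cauchy--Riemann operator, I would invert it by means of the Cauchy--Pompeiu transform on a small disc $D_R=\{|w|\le R\}$,
\[
(Tf)(w)=\frac{1}{2\pi i}\iint_{D_R}\frac{f(\zeta)}{\zeta-w}\,\d\zeta\wedge\d\bar\zeta,
\]
which satisfies $\partial_{\bar{w}}(Tf)=f$ on $D_R$ and which I shall use through three standard properties: it is bounded on $C^0(\overline{D_R})$ with norm $O(R)$; it gains one derivative, $T\colon C^{k,\alpha}\to C^{k+1,\alpha}$; and, because its kernel and domain involve only $w$, it commutes with every parameter derivative, in particular $\partial_{\bar{z}_j}(Tf)=T(\partial_{\bar{z}_j}f)$ for $1\le j\le p$.

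First I would recast the problem as the fixed-point equation $\varphi = T(M\varphi+\psi)=(TM)\varphi+T\psi$, i.e.\ $(\Id-TM)\varphi=T\psi$. Choosing $R$ (and the parameter neighbourhood) small enough that $\|TM\|_{C^0}\le CR\,\sup\|M\|<1$, the operator $\Id-TM$ is invertible by the Neumann series, and I would define
\[
\varphi=\sum_{k\ge 0}(TM)^k\,(T\psi).
\]
Applying $\partial_{\bar{w}}$ to the fixed-point identity shows at once that this $\varphi$ solves $\partial_{\bar{w}}\varphi=M\varphi+\psi$, which is the desired equation, and the construction keeps $(0,0)$ in the (shrunken) domain $\V$.

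The holomorphy in $(z_1,\dots,z_p)$ then comes essentially for free. For $1\le j\le p$ the data satisfy $\partial_{\bar{z}_j}M=0=\partial_{\bar{z}_j}\psi$; since multiplication by $M$ commutes with $\partial_{\bar{z}_j}$ (as $\partial_{\bar{z}_j}(Mu)=M\,\partial_{\bar{z}_j}u$ by holomorphy of $M$) and $T$ commutes with $\partial_{\bar{z}_j}$, an easy induction gives $\partial_{\bar{z}_j}\big((TM)^kT\psi\big)=(TM)^kT(\partial_{\bar{z}_j}\psi)=0$. Summing the series yields $\partial_{\bar{z}_j}\varphi=0$, so $\varphi$ is holomorphic in $z_1,\dots,z_p$, as required. (When $p=0$ there is no holomorphy constraint and this step is vacuous.)

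I expect the genuine work to be the smoothness of $\varphi$ in all variables at once. In the active variable this is a bootstrap: from $\varphi\in C^0$ and $(\Id-TM)\varphi=T\psi$ one reads off $\varphi=TM\varphi+T\psi\in C^{\alpha}$, then $M\varphi\in C^{\alpha}$ forces $\varphi\in C^{1,\alpha}$, and iterating the regularity gain of $T$ (equivalently, elliptic regularity for $\partial_{\bar{w}}$) gives $\varphi\in C^\infty$ in $w$. Smooth dependence on $t$ and on the remaining $z$-variables I would obtain by differentiating the Neumann series under the integral sign: each parameter derivative of $\varphi$ satisfies an equation of exactly the same type (with $\partial M,\partial\psi$ in place of $M,\psi$, plus lower-order terms involving already-controlled derivatives), so it too is represented by a convergent series of Cauchy transforms. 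Organising these estimates so as to control mixed derivatives jointly---rather than one variable at a time---is the delicate point; it is handled cleanly by working in Hölder spaces in $w$ valued in spaces of smooth functions of the parameters, exactly as in the non-parametric argument of \cite{KoszulMalgrange:58}.
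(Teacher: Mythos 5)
Your argument is correct in outline, but it takes a genuinely different route from the paper's. The paper proves this lemma by a two-line complexification trick: identifying $F$ with $\rn^n$, it sets $\tilde{M}(a_1,\dots,a_n,z)=M(\Real a_1,\dots,\Real a_n,z)$ and $\tilde{\psi}(a,z)=\psi(\Real a,z)$ on an open subset of $\cn^n\times\cn^m$; these are smooth and still holomorphic in $(z_1,\dots,z_p)$ (they are not holomorphic in the $a$-variables, but the non-parametric lemma never asks for that), so the non-parametric Lemma 2 of \cite{KoszulMalgrange:58}, applied with $n+m$ complex variables, produces $\tilde\varphi$, and $\varphi(t,z):=\tilde\varphi(t_1+i0,\dots,t_n+i0,z)$ is the required solution. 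The parameters are thus absorbed into extra complex variables and all analysis is delegated to the cited lemma. You instead rebuild the solvability from scratch: Cauchy--Pompeiu transform in the single active variable, Neumann series for $(\Id-TM)^{-1}$ on a small disc, holomorphy in $(z_1,\dots,z_p)$ by commuting $\partial_{\bar z_j}$ through each term (this last point is cleanest if phrased as locally uniform convergence of a series of functions holomorphic in those variables), and bootstrap plus parameter differentiation for regularity. What your route buys is self-containedness and generality -- you never invoke the Koszul--Malgrange lemma as a black box, and your argument in fact reproves it -- but you then pay for exactly what the paper's reduction sidesteps: joint smoothness of the fixed point in all variables simultaneously, which you only sketch. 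That step is standard but is the real work in your approach; one clean way to close it is to observe that $t\mapsto M(t,\cdot)$ is smooth into each $C^{k,\alpha}$ of the disc, that $\Id-TM(t,\cdot)$ is invertible there (Neumann series in $C^0$ plus bootstrap, closed graph), and that operator inversion is analytic, so $t\mapsto\varphi(t,\cdot)\in C^{k,\alpha}$ is smooth for every $k$. Had you used the paper's observation that real parameters can simply be regarded as real parts of additional complex variables on which the data depend smoothly but not holomorphically, your entire parametric apparatus would collapse onto the non-parametric case.
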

\begin{proof}If $F$ is complex, the result is immediate from the non-parametric version of this result
(\cite{KoszulMalgrange:58}, Lemma 2.). If $F$ is real, identify it with $\rn^n$ and let us we proceed in the natural way: from the functions $M(t_1,...,t_n,z_1,...,z_m)$ and $\psi(t_1,...,t_n,z_1,...,z_m)$, defined on open set in $\rn^n\times\cn^m$, define new functions $\tilde{M}(a_1,...,a_n,z_1,....,z_m)$ and $\tilde{\psi}(a_1,...,a_n,z_1,....,z_m)$ defined on an open set in $\cn^n\times\cn^m$ by $\tilde{M}(a_1,...,a_n,z_1,....,z_m)=M(\Real{a}_1,...,\Real{a}_n,z_1,...,z_m)$ and $\tilde{\psi}(a_1,...,a_n,z_1,....,z_m)=\psi(\Real a_1,...,\Real a_n,z_1,...,z_m)$; these are still smooth and holomorphic in $(z_1,...,z_p)$, for $0\leq p<m$. Hence, we have now a complex vector space of the form $\cn^{n+m}$ and we can use the non-parametric version of this lemma to deduce that there is a smooth solution $\tilde{\varphi}(a_1,...,a_n,z_1,...,z_m)$ defined on an open set around $(0,0)\in\cn^{n+m}$ to the equation $\partial_{\bar{z}_{p+1}}\tilde{\varphi}=\tilde{M}\tilde{\varphi}+\tilde{\psi}$, holomorphic in $(z_1,...,z_p)$. Take
$\varphi(t_1,...,t_n,z_1,...,z_m)=\tilde{\varphi}(t_1+i0,....,t_n+i0,z_1,...,z_m)$; this is smooth and holomorphic in $(z_1,..,z_p)$. Let us show that $\varphi$ satisfies equation \eqref{Equation:EquationIn:Lemma:Lemma2OnKoszulMalgrange:TheParametricVersion}. Indeed,\addtolength{\arraycolsep}{-1.0mm}
\[\begin{array}{ll}~&\partial_{\bar{z}_{p+1}}\varphi(t_1,...,t_n)=\partial_{\bar{z}_{p+1}}\tilde{\varphi}(t_1+i0,....,t_n+i0,z_1,...,z_m)\\
=&\tilde{M}(t_1+i0,....,t_n+i0,z_1,...,z_m)\tilde{\varphi}(t_1+i0,....,t_n+i0,z_1,...,z_m)\\
~&+\tilde{\psi}(t_1+i0,....,t_n+i0,z_1,...,z_m)\\
=&M(t_1,....,t_n,z_1,...,z_m)\varphi(t_1,...,t_n,z_1,...,z_m)+\psi(t_1+,....,t_n,z_1,...,z_m),
\end{array}\]\addtolength{\arraycolsep}{1.0mm}\noindent
concluding our proof.
\end{proof}

\begin{lemma}\label{Lemma:SecondLemmaToProve:Theorem:KoszulMalgrangeTheorem:ParametricVersion:GeneralizationOfImplicitLemmaInKoszulMalgrangePaper}
Let $\G\subseteq\L(E,E)$ be a Lie group and let $\g=T_e\G\subseteq\L(E,E)$ and take $F_0\subseteq F$ as in Lemma
\ref{Lemma:FirstLemmaToProve:Theorem:KoszulMalgrangeTheorem:ParametricVersion:GeneralizationOfLemma2InKoszulMalgrangePaper}.
Let $L(t,z)$ and $K(t,z)$ be $\g$-valued maps, smooth in $F_0\times\openU$ and holomorphic in
$z_1,...,z_p$ ($0\leq p<m$). Assume that
\begin{equation}\label{Equation:FirstEquationIn:Lemma:SecondLemmaToProve:Theorem:KoszulMalgrangeTheorem:ParametricVersion:GeneralizationOfImplicitLemmaInKoszulMalgrangePaper}
\frac{\partial K}{\partial \bar{z}_{p+1}}-\frac{\partial L}{\partial z_{p+1}}+[L,K]=0\text{.}
\end{equation}
Then, for fixed $(t,z_1,...,z_p,z_{p+2},...,z_{m})=(t,\hat{z}_{p+1})$, there is a (unique) function $h$ with values on $\G$ such that
\begin{equation}\label{Equation:SecondEquationIn:Lemma:SecondLemmaToProve:Theorem:KoszulMalgrangeTheorem:ParametricVersion:GeneralizationOfImplicitLemmaInKoszulMalgrangePaper}
h(0)=e,\,h^{-1}\frac{\partial h}{\partial z_{p+1}}=K\text{~and~}h^{-1}\frac{\partial h}{\partial\bar{z}_{p+1}}=L\text{.}
\end{equation}
Moreover, the resulting map $h(t,z_1,...,z_m)=h{t,\hat{z}_{p+1}}(z_{p+1})$ is smooth and holomorphic in the variables $z_1,...,z_p$.
\end{lemma}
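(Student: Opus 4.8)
The plan is to reduce the problem, for each fixed value of the parameters $(t,\hat z_{p+1})$, to a Frobenius problem in the single complex variable $z_{p+1}=x+iy$, viewed as a two-real-dimensional manifold, and then to invoke the parametric results already established in this section. Writing $\partial_{z_{p+1}}=\tfrac12(\partial_x-i\partial_y)$ and $\partial_{\bar z_{p+1}}=\tfrac12(\partial_x+i\partial_y)$, the two prescribed equations $h^{-1}\partial_{z_{p+1}}h=K$ and $h^{-1}\partial_{\bar z_{p+1}}h=L$ are together equivalent to prescribing the full logarithmic differential of $h$ in the $z_{p+1}$-plane, namely $h^{-1}\partial_x h=K+L$ and $h^{-1}\partial_y h=i(K-L)$. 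First I would therefore introduce the $\g$-valued $1$-form $\alpha$ on an open set of the $z_{p+1}$-plane defined by $\alpha(\partial_x)=K+L$ and $\alpha(\partial_y)=i(K-L)$, treating the remaining variables $(t,\hat z_{p+1})$ as parameters. Since the statement is local and $h(0)=e$, I may replace $\G$ by its identity component in order to apply the connected-group version below.

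The heart of the argument is to check that the Maurer--Cartan integrability condition $\d\alpha+[\alpha,\alpha]=0$ required by Theorem \ref{Theorem:IntegrationOnLieGroups:ParametricVersion} is precisely the hypothesis of the lemma. Evaluating on $(\partial_x,\partial_y)$ gives $\d\alpha(\partial_x,\partial_y)=\partial_x(\alpha(\partial_y))-\partial_y(\alpha(\partial_x))$ and $[\alpha,\alpha](\partial_x,\partial_y)=[\alpha(\partial_x),\alpha(\partial_y)]$; substituting $\alpha(\partial_x)=K+L$, $\alpha(\partial_y)=i(K-L)$ and re-expressing $\partial_x=\partial_{z_{p+1}}+\partial_{\bar z_{p+1}}$, $\partial_y=i(\partial_{z_{p+1}}-\partial_{\bar z_{p+1}})$, a short computation yields
\[\d\alpha(\partial_x,\partial_y)+[\alpha,\alpha](\partial_x,\partial_y)=2i\Big(\partial_{\bar z_{p+1}}K-\partial_{z_{p+1}}L+[L,K]\Big)\text{,}\]
so the integrability condition holds exactly when \eqref{Equation:FirstEquationIn:Lemma:SecondLemmaToProve:Theorem:KoszulMalgrangeTheorem:ParametricVersion:GeneralizationOfImplicitLemmaInKoszulMalgrangePaper} is satisfied. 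With this in hand, Theorem \ref{Theorem:IntegrationOnLieGroups:ParametricVersion}, applied with base manifold the $z_{p+1}$-plane and with $(t,\hat z_{p+1})$ as parameters, furnishes a $\G$-valued solution $h$ with $h(0)=e$ and $h^{-1}\d h=\alpha$; reading off the parts of $h^{-1}\d h$ along $\partial_{z_{p+1}}$ and $\partial_{\bar z_{p+1}}$ recovers the two desired equations, and uniqueness is inherited from the uniqueness asserted in that theorem. Smoothness of $h$ jointly in all the variables follows at once from the smooth-dependence-on-parameters clause of the same theorem.

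It remains to establish holomorphicity in $z_1,\dots,z_p$, and this is the step I expect to be the main obstacle, since it is not part of the classical Frobenius statement. Because $K$ and $L$ are holomorphic in $z_1,\dots,z_p$, the coefficients $K+L$ and $i(K-L)$ of $\alpha$ depend holomorphically on these variables as well; I would therefore invoke Proposition \ref{Proposition:HolomorphicDependentSolutionOnALieGroup}, treating $z_1,\dots,z_p$ as holomorphic parameters, applying it one variable at a time. Concretely, differentiating $h^{-1}\d h=\alpha$ with respect to $\bar z_j$ for $1\le j\le p$ shows that $\partial_{\bar z_j}h$ satisfies a linear equation with vanishing initial data, whence $\partial_{\bar z_j}h\equiv 0$ by uniqueness, giving the required holomorphicity. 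This mirrors the proof of Proposition \ref{Proposition:HolomorphicDependentSolutionOnALieGroup}, and the only care needed is to verify that the resulting auxiliary linear equation still enjoys uniqueness of solutions for the prescribed initial condition.
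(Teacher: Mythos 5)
Your proposal is correct and follows essentially the same route as the paper's proof: the same $\g$-valued $1$-form $\alpha=(K+L)\,\d x_{p+1}+i(K-L)\,\d y_{p+1}$, the same computation showing $\d\alpha+[\alpha,\alpha]=2i\bigl(\partial_{\bar z_{p+1}}K-\partial_{z_{p+1}}L+[L,K]\bigr)$, and the same appeal to Theorem \ref{Theorem:IntegrationOnLieGroups:ParametricVersion} together with Proposition \ref{Proposition:HolomorphicDependentSolutionOnALieGroup} for existence, smooth parameter dependence, and holomorphicity in $z_1,\dots,z_p$. Your sketch of the holomorphicity step (differentiating in $\bar z_j$ and using uniqueness of solutions with vanishing initial data) is exactly the mechanism inside that proposition, so nothing is missing.
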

\begin{proof} Consider $(t,z_1,...,z_p,z_{p+2},...,z_m)$ as the space of parameters. Then, we are solving the parametric equations given by equation \eqref{Equation:FirstEquationIn:Lemma:SecondLemmaToProve:Theorem:KoszulMalgrangeTheorem:ParametricVersion:GeneralizationOfImplicitLemmaInKoszulMalgrangePaper}.
Moreover, notice that we can rewrite these equations as
\[\left\{\begin{array}{l}
h^{-1}\big(\frac{\partial h}{\partial_{x_{p+1}}}-i\frac{\partial{h}}{\partial y_{p+1}}\big)=2K\\
h^{-1}\big(\frac{\partial h}{\partial_{x_{p+1}}}+i\frac{\partial{h}}{\partial y_{p+1}}\big)=2L\\
\end{array}\right.\,\equi\,\left\{\begin{array}{l}
h^{-1}\frac{\partial h}{\partial_{x_{p+1}}}=K+L\\
h^{-1}\frac{\partial{h}}{\partial_{y_{p+1}}}=i(K-L)\end{array}\right.\,\equi\,h^{-1}\d
h=\alpha\] where $\alpha=(K+L)\d x_{p+1}+i(K-L)\d y_{p+1}$,
$z_{p+1}=x_{p+1}+iy_{p+1}$. Using Theorem \ref{Theorem:IntegrationOnLieGroups:ParametricVersion} and
Proposition \ref{Proposition:HolomorphicDependentSolutionOnALieGroup}, we deduce the existence of $h$ satisfying our conditions if and only if $\d\alpha+[\alpha,\alpha]=0$. But we have\addtolength{\arraycolsep}{-1.0mm}
\[\begin{array}{ll}~&\d\alpha(\partial_{x_{p+1}},\partial_{y_p+1})+[\alpha(\partial_{x_{p+1}}),\alpha(\partial_{y_{p+1}})]\\
=&\partial_{x_{p+1}}(i(K-L))-\partial_{y_{p+1}}((K+L))+i(K+L)(K-L)-i(K-L)(K+L)\\
=&\frac{2i}{2}\big(\partial_{x_{p+1}}K+i\partial_{y_{p+1}}K-(\partial_{x_{p+1}}L-i\partial_{y_{p+1}}L)\big)+i(LK-KL+LK-KL)\\
=&2i\big(\frac{\partial{K}}{\partial\bar{z}_{p+1}}-\frac{\partial{L}}{\partial{z}_{p+1}}+[L,K]\big)\text{.}
\end{array}\]\addtolength{\arraycolsep}{1.0mm}\noindent
This last expression vanishes, from our hypothesis
\eqref{Equation:FirstEquationIn:Lemma:SecondLemmaToProve:Theorem:KoszulMalgrangeTheorem:ParametricVersion:GeneralizationOfImplicitLemmaInKoszulMalgrangePaper}, concluding the proof.
\end{proof}

For the non-parametric version of the following result, see \cite{KoszulMalgrange:58}, Lemma 1:

\begin{lemma}\label{Lemma:ThirdLemmaToProve:Theorem:KoszulMalgrangeTheorem:ParametricVersion:GeneralizationOfLemma1InKoszulMalgrangePaper}
Under the same notations as before, let $L(t,z)$ be smooth in $F_0\times \openU$ with values in $\g$. If $L$ is holomorphic in $z_1,...,z_p$ ($0\leq p< m$), there is a smooth function $h$ on an open set containing $(0,0)$ in $F_0\times \cn^m$, with values in $\G$ and with
\begin{equation}\label{Equation:EquationIn:Lemma:ThirdLemmaToProve:Theorem:KoszulMalgrangeTheorem:ParametricVersion:GeneralizationOfLemma1InKoszulMalgrangePaper}
h^{-1}\frac{\partial h^t}{\partial \bar{z}_{p+1}}=L\text{.}
\end{equation}
\end{lemma}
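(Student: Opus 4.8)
The plan is to deduce the single equation $h^{-1}\partial_{\bar{z}_{p+1}}h=L$ from Lemma~\ref{Lemma:SecondLemmaToProve:Theorem:KoszulMalgrangeTheorem:ParametricVersion:GeneralizationOfImplicitLemmaInKoszulMalgrangePaper}, which solves the \emph{coupled} pair $h^{-1}\partial_{z_{p+1}}h=K$, $h^{-1}\partial_{\bar{z}_{p+1}}h=L$ as soon as the datum $(L,K)$ satisfies the integrability condition \eqref{Equation:FirstEquationIn:Lemma:SecondLemmaToProve:Theorem:KoszulMalgrangeTheorem:ParametricVersion:GeneralizationOfImplicitLemmaInKoszulMalgrangePaper}. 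Since that lemma delivers $h$ with values in $\G$, smooth and holomorphic in $z_1,\dots,z_p$, and since its conclusion already contains the very relation we seek, it suffices to manufacture an auxiliary $\g$-valued map $K$, holomorphic in $z_1,\dots,z_p$, for which $(L,K)$ is integrable. The only datum we are handed is $L$, so $K$ must be produced out of $L$.

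First I would read the integrability condition $\partial_{\bar{z}_{p+1}}K-\partial_{z_{p+1}}L+[L,K]=0$ as a \emph{linear} equation for the unknown $K$, namely
\[\partial_{\bar{z}_{p+1}}K=-[L,K]+\partial_{z_{p+1}}L\text{.}\]
This is exactly of the shape $\partial_{\bar{z}_{p+1}}\varphi=M\varphi+\psi$ solved in Lemma~\ref{Lemma:FirstLemmaToProve:Theorem:KoszulMalgrangeTheorem:ParametricVersion:GeneralizationOfLemma2InKoszulMalgrangePaper} (equation \eqref{Equation:EquationIn:Lemma:Lemma2OnKoszulMalgrange:TheParametricVersion}), with $M=-[L,\cdot]\in\L(\g,\g)$ and $\psi=\partial_{z_{p+1}}L\in\g$. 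To invoke that lemma I must check that $M$ and $\psi$ are smooth and holomorphic in $z_1,\dots,z_p$. Smoothness is clear. Holomorphy of $M$ is immediate, since $M$ depends linearly on $L$, which is holomorphic in these variables; holomorphy of $\psi$ follows by commuting derivatives: from $\partial_{\bar{z}_j}L=0$ for $j\leq p$ one gets $\partial_{\bar{z}_j}(\partial_{z_{p+1}}L)=\partial_{z_{p+1}}(\partial_{\bar{z}_j}L)=0$ (the indices $p+1$ and $j\leq p$ being distinct, so the two derivations commute freely).

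Lemma~\ref{Lemma:FirstLemmaToProve:Theorem:KoszulMalgrangeTheorem:ParametricVersion:GeneralizationOfLemma2InKoszulMalgrangePaper} then yields a smooth $\g$-valued $K$, holomorphic in $z_1,\dots,z_p$ and defined near $(0,0)$, solving this linear equation---equivalently, making $(L,K)$ satisfy the integrability condition. Feeding $(L,K)$ into Lemma~\ref{Lemma:SecondLemmaToProve:Theorem:KoszulMalgrangeTheorem:ParametricVersion:GeneralizationOfImplicitLemmaInKoszulMalgrangePaper} produces the required $h$: a $\G$-valued map, smooth and holomorphic in $z_1,\dots,z_p$, with $h(0)=e$ and, in particular, $h^{-1}\partial_{\bar{z}_{p+1}}h=L$, which is the assertion of the lemma (the extra holomorphy in $z_1,\dots,z_p$ comes for free and is precisely what a subsequent induction on $p$ will exploit in the proof of Theorem~\ref{Theorem:KoszulMalgrangeTheorem:ParametricVersion}). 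I expect the only real obstacle to be bookkeeping: one must match the holomorphy hypotheses of the two auxiliary lemmas, the key point being that $K$ emerges automatically holomorphic in $z_1,\dots,z_p$ (so the hypothesis of Lemma~\ref{Lemma:SecondLemmaToProve:Theorem:KoszulMalgrangeTheorem:ParametricVersion:GeneralizationOfImplicitLemmaInKoszulMalgrangePaper} is met) and that the passage $L\mapsto\partial_{z_{p+1}}L$ preserves this holomorphy. All the genuinely analytic content---the local solvability of the $\bar\partial$-type equations---is already absorbed into Lemmas~\ref{Lemma:FirstLemmaToProve:Theorem:KoszulMalgrangeTheorem:ParametricVersion:GeneralizationOfLemma2InKoszulMalgrangePaper} and \ref{Lemma:SecondLemmaToProve:Theorem:KoszulMalgrangeTheorem:ParametricVersion:GeneralizationOfImplicitLemmaInKoszulMalgrangePaper}.
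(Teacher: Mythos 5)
Your proposal is correct and follows essentially the same route as the paper: rewrite the integrability condition of Lemma \ref{Lemma:SecondLemmaToProve:Theorem:KoszulMalgrangeTheorem:ParametricVersion:GeneralizationOfImplicitLemmaInKoszulMalgrangePaper} as the linear $\bar\partial$-equation $\partial_{\bar{z}_{p+1}}K=\partial_{z_{p+1}}L-[L,K]$ for an auxiliary $K$, solve it by Lemma \ref{Lemma:FirstLemmaToProve:Theorem:KoszulMalgrangeTheorem:ParametricVersion:GeneralizationOfLemma2InKoszulMalgrangePaper} with $M(\lambda)=\lambda L-L\lambda$ and $\psi=\partial_{z_{p+1}}L$, then feed the pair $(L,K)$ into Lemma \ref{Lemma:SecondLemmaToProve:Theorem:KoszulMalgrangeTheorem:ParametricVersion:GeneralizationOfImplicitLemmaInKoszulMalgrangePaper} to obtain $h$. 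Your explicit verification that $\partial_{z_{p+1}}L$ and $-[L,\cdot]$ remain holomorphic in $z_1,\dots,z_p$ is a detail the paper leaves implicit, but it is the same argument.
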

\begin{proof}We shall prove the latter lemma by means of the two preceding ones. Consider the equation on $K$ given by
\begin{equation}\label{Equation:FirstEquationInTheProofOf:Lemma:ThirdLemmaToProve:Theorem:KoszulMalgrangeTheorem:ParametricVersion:GeneralizationOfLemma1InKoszulMalgrangePaper}
\frac{\partial{K}}{\partial\bar{z}_{p+1}}=\frac{\partial{L}}{\partial{z}_{p+1}}-[L,K]=\frac{\partial{L}}{\partial{z}_{p+1}}-LK+KL\text{.}
\end{equation}
Using Lemma \ref{Lemma:FirstLemmaToProve:Theorem:KoszulMalgrangeTheorem:ParametricVersion:GeneralizationOfLemma2InKoszulMalgrangePaper},
where $\psi=\frac{\partial L}{\partial z_{p+1}}$ and $M$ is the function with values on $\L(\g,\g)$ defined by $M(\lambda)=\lambda L-L\lambda$, we deduce the existence of a smooth map $K$ satisfying
\eqref{Equation:FirstEquationInTheProofOf:Lemma:ThirdLemmaToProve:Theorem:KoszulMalgrangeTheorem:ParametricVersion:GeneralizationOfLemma1InKoszulMalgrangePaper}, holomorphic in $z_1,...,z_p$. Using now Lemma \ref{Lemma:SecondLemmaToProve:Theorem:KoszulMalgrangeTheorem:ParametricVersion:GeneralizationOfImplicitLemmaInKoszulMalgrangePaper}
we can guarantee that there is a smooth map $h$, holomorphic in $z_1,...,z_p$, satisfying equation \eqref{Equation:EquationIn:Lemma:ThirdLemmaToProve:Theorem:KoszulMalgrangeTheorem:ParametricVersion:GeneralizationOfLemma1InKoszulMalgrangePaper}.
\end{proof}

We are finally ready to prove Theorem \ref{Theorem:KoszulMalgrangeTheorem:ParametricVersion}; we shall show that Lemma
\ref{Lemma:ThirdLemmaToProve:Theorem:KoszulMalgrangeTheorem:ParametricVersion:GeneralizationOfLemma1InKoszulMalgrangePaper}
implies our result. Write $\alpha=\sum_j \alpha_j\d z_j$, where the $\alpha_j$ are smooth (but not
necessarily holomorphic in any variable) on the open set $A\subseteq{F}_0\times \cn^m$. Then, take $f_0\equiv e$ and
$L_0=a_1=f_0.\alpha_1.f_0^{-1}-\frac{\partial f_0}{\partial \bar{z}_1}.f_0^{-1}$. Using Lemma
\ref{Lemma:ThirdLemmaToProve:Theorem:KoszulMalgrangeTheorem:ParametricVersion:GeneralizationOfLemma1InKoszulMalgrangePaper}, solve the equation $h_0^{-1}\frac{\partial h^t_0}{\partial \bar{z}_{1}}=L_0$, which solution is
a smooth map, not necessarily holomorphic in any variable, as neither is $L_0$. Writing $f_1=h_0=h_0.f_0$, we have $f_1^{-1}\frac{\partial f_1}{\partial \bar{z}_1}=L_0=a_1$. Let $L_1=f_1.\alpha_2.f_1^{-1}-\frac{\partial{f}_1}{\partial\bar{z}_2}f^{-1}_1$ and let us show that $\frac{\partial{L}_1}{\partial\bar{z}_1}=0$. Indeed, using the fact that $\alpha$ satisfies
\eqref{Equation:SecondEquationIn:Theorem:KoszulMalgrangeTheorem:ParametricVersion}, and that $f^{-1}_1\frac{\partial{f}_1}{\partial\bar{z}_1}=a_1$, we have\addtolength{\arraycolsep}{-1.0mm}
\[\begin{array}{lll}
\frac{\partial{L}_1}{\partial\bar{z}_1}&=&\frac{\partial{f}_1}{\partial\bar{z}_1}\alpha_2f_1^{-1}+f_1\frac{\partial\alpha_2}{\partial\bar{z}_1}f_1^{-1}-f_1\alpha_2{f}^{-1}_1\frac{\partial{f}_1}{\partial\bar{z}_1}f_1^{-1}-\frac{\partial^2{f}_1}{\partial\bar{z}_1\partial\bar{z}_2}f_1^{-1}+\frac{\partial{f}_1}{\partial\bar{z}_2}f_1^{-1}\frac{\partial{f}_1}{\partial\bar{z}_1}f_1^{-1}\\
~&=&f_1\alpha_1\alpha_2{f}_1^{-1}+f_1\frac{\partial\alpha_2}{\partial\bar{z}_1}f_1^{-1}-f_1\alpha_2\alpha_1{f}_1^{-1}-(\frac{\partial{f}_1}{\partial\bar{z}_2}\alpha_1+f\frac{\partial\alpha_1}{\partial\bar{z}_2}){f}^{-1}+\frac{\partial{f}_1}{\partial\bar{z}_2}\alpha_1{f}_1^{-1}\\
~&=&f_1(\alpha_1\alpha_2-\alpha_2\alpha_1+\frac{\partial\alpha_2}{\partial\bar{z}_1}-\frac{\partial\alpha_1}{\partial\bar{z}_1})f_1^{-1}=0
\end{array}\]\addtolength{\arraycolsep}{1.0mm}\noindent
as required. Next, take $h_1$ a smooth solution of the equation $h_1^{-1}\frac{\partial h_1^t}{\partial\bar{z}_2}=L_1^t$; since $L_1$ is holomorphic in $z_1$, so is $h_1$ (Lemma
\ref{Lemma:ThirdLemmaToProve:Theorem:KoszulMalgrangeTheorem:ParametricVersion:GeneralizationOfLemma1InKoszulMalgrangePaper}).
Write $f_2=h_1.f_1$ (which is not necessarily holomorphic in any variable) and let us show that $f_2^{-1}\frac{\partial{f}_2}{\partial\bar{z}_j}=\alpha_j$, $j=1,2$. In fact,
\[\frac{\partial{f}_2}{\partial\bar{z}_j}=\frac{\partial{h}_1}{\partial\bar{z}_j}f_1+h_1\frac{\partial{f}_1}{\partial\bar{z}_j}\text{.}\]
Thus, if $j=1$, from the $z_1$-holomorphicity of $h_1$ and because $\frac{\partial f_1}{\partial\bar{z}_1}=f_1\alpha_1$, then we have $\frac{\partial f_2}{\partial\bar{z}_1}=h_1f_1\alpha_1=f_2\alpha_1$. As for $j=2$, since $\frac{\partial{h}_1}{\partial\bar{z}_2}=h_2 L_1$, we have \[\frac{\partial{f}_2}{\partial\bar{z}_2}=h_1L_1f_1+h_1\frac{\partial{f}_1}{\partial\bar{z}_2}=h_1\big(f_1.\alpha_2.f_1^{-1}-\frac{\partial{f}_1}{\partial\bar{z}_2}f^{-1}_1\big)f_1+h_1\frac{\partial{f}_1}{\partial\bar{z}_2}=f_2\alpha_2\text{.}\] Finally, writing
\[L_2=f_2\alpha_3f_2^{-1}-\frac{\partial f_2}{\partial\bar{z}_3}f_2^{-1}\]
we have that $L_2$ is holomorphic with respect to $z_1$ and $z_2$ since, for $j=1,2$,\addtolength{\arraycolsep}{-1.0mm}
\[\begin{array}{lll}\frac{\partial L_2}{\partial\bar{z}_j}&=&\frac{\partial f_2}{\partial \bar{z}_j}\alpha_3f_2^{-1}+f_2\frac{\partial\alpha_3}{\partial\bar{z}_j}f_2^{-1}-f_2\alpha_3f_2^{-1}\frac{\partial f_2}{\partial\bar{z}_j}f_2^{-1}-\frac{\partial^2 f_2}{\partial\bar{z}_j\partial\bar{z}_3}f_2^{-1}+\frac{\partial f_2}{\partial\bar{z}_3}f_2^{-1}\frac{\partial{f}_2}{\partial\bar{z}_j}f_2^{-1}\\
~&=&f_2\alpha_j\alpha_3f_2^{-1}+f_2\frac{\partial\alpha_3}{\partial \bar{z}_j}f_2^{-1}-f_2\alpha_3\alpha_jf_2^{-1}-\partial_{\bar{z}_3}\big(f_2\alpha_j\big)f_2^{-1}+\frac{\partial f_2}{\partial \bar{z}_3}\alpha_jf_2^{-1}\\
~&=&f_2\big(\alpha_j\alpha_3-\alpha_3\alpha_j+\frac{\partial\alpha_3}{\partial\bar{z}_j}-\frac{\partial\alpha_j}{\partial\bar{z}_3}\big)f_2^{-1}=0
\end{array}\]\addtolength{\arraycolsep}{1.0mm}\noindent
as $\alpha$ satisfied \eqref{Equation:SecondEquationIn:Theorem:KoszulMalgrangeTheorem:ParametricVersion}.
Hence, now solving equation $h_2^{-1}\frac{\partial h}{\partial\bar{z}_3}=L_2$ gives a smooth map which is holomorphic in $z_1$ and $z_2$. We continue by induction until we get the map $f_m$. This map is smooth in $(t,z)$ (although not necessarily holomorphic in any variable) and it satisfies $f_m^{-1}\frac{\partial{f}_m}{\partial\bar{z}_j}=\alpha_j$ for all $1\leq j\leq m$. So, $f_m$ is the desired map $f$ of Theorem \ref{Theorem:KoszulMalgrangeTheorem:ParametricVersion}, finishing our proof.



\addcontentsline{toc}{chapter}{\numberline{}Index}

\printindex

\end{document}